\numberwithin{equation}{section}
\theoremstyle{norm}
\newtheorem{thm}{Theorem}[section]
\newtheorem{lem}[thm]{Lemma}
\newtheorem{notation}[thm]{Notation}
\newtheorem{prop}[thm]{Proposition}
\newtheorem{df}[thm]{Definition}
\newtheorem{conj}[thm]{Conjecture}
\newtheorem{cor}[thm]{Corollary}
\newtheorem{terminology}[thm]{Terminology}
\newtheorem{rem}[thm]{Remark}
\newtheorem{exam}[thm]{Example}
\newtheorem{quest}[thm]{Question}
\newcommand{\CP}{\mathbb{C}P}
\DeclareMathOperator{\Thom}{\textup{Thom}}
\DeclareMathOperator{\Pin}{\textup{Pin(2)}}
\title[Intersection Forms of Spin 4-Manifolds and Mahowald Invariant]{Intersection Forms of Spin 4-Manifolds and the $\Pin$-Equivariant Mahowald Invariant}
\author{Michael J. Hopkins}
\author{Jianfeng Lin}
\author{XiaoLin Danny Shi}
\author{Zhouli Xu}
\begin{document}

\maketitle
\begin{abstract}
In studying the ``11/8-Conjecture'' on the Geography Problem in 4-dimensional topology, Furuta proposed a question on the existence of Pin(2)-equivariant stable maps between certain representation spheres.  In this paper, we present a complete solution to this problem by analyzing the Pin(2)-equivariant Mahowald invariants.   As a geometric application of our result, we prove a ``10/8+4"-Theorem.

We prove our theorem by analyzing maps between certain finite spectra arising from BPin(2) and various Thom spectra associated with it.  To analyze these maps, we use the technique of cell diagrams, known results on the stable homotopy groups of spheres, and the $j$-based Atiyah--Hirzebruch spectral sequence. 

\end{abstract}

\tableofcontents

\section{Introduction}
\subsection{The classification problem of simply connected 4-manifolds} A fundamental question in four-dimensional topology is the following: 
\begin{quest}\label{classification of 4-manifolds}
How to classify all closed simply connected topological 4-manifolds?
\end{quest}


To start our discussion, let $N$ be a simply connected topological 4-manifold. There are two important invariants of $N$:
\begin{enumerate}
\item The intersection form $Q_{N}$: this is a symmetric unimodular bilinear form over $\mathbb{Z}$ given by the cup-product 
\begin{eqnarray*}
Q_{N}:H^{2}(N;\mathbb{Z})\times H^{2}(N;\mathbb{Z})&\longrightarrow& \mathbb{Z}, \\
(a,b)&\longmapsto& \langle a\cup b,[N]\rangle.
\end{eqnarray*}
\item The Kirby--Siebenmann invariant $ks(N)$ (defined in \cite{Kirby-Siebenmann1977}): this is an element in $H^4(N; \mathbb{Z}/2) = \mathbb{Z}/2$.
\end{enumerate}
Question \ref{classification of 4-manifolds} was resolved by the following famous work of Freedman: 

\begin{thm}[Freedman \cite{Freedman1982}]\label{Freedman}\hfill
\begin{enumerate}
\item Two closed simply connected topological 4-manifolds are homeomorphic if and only if their intersection forms are isomorphic and their Kirby--Siebenmann invariants are the same. 
\item When the form is not even, any combination of the symmetric unimodular bilinear form and Kirby--Siebenmann invariant can be realized by a closed simply connected topological 4-manifold. 
\item When the form is even, the combination can be realized if and only if the Kirby--Siebenmann invariant is equal to the signature of the form divided by 8 modulo 2.  (Note that the signature of an even form must be divisible by $8$.  See \cite[Section 1.1.3]{Donaldson-Kronheimer1990} for example.)
\end{enumerate}
\end{thm}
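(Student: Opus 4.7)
The plan is to attack the theorem in two stages: uniqueness (part 1) and realization (parts 2 and 3), using topological surgery together with the Disk Embedding Theorem as the central technical engine.

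For uniqueness, given two closed simply connected topological 4-manifolds $N_1, N_2$ with isomorphic intersection forms and equal Kirby--Siebenmann invariants, I would first produce a 5-dimensional normal cobordism $W$ between them. The isomorphism of intersection forms makes the oriented bordism obstruction vanish, and the equality of $ks$ matches the smoothing-theoretic data on the boundary; simple connectivity of the $N_i$ allows one to modify $W$ by 1- and 2-surgeries to make $W$ itself simply connected. The core step is then to upgrade $W$ to an $h$-cobordism and prove a topological 5-dimensional $h$-cobordism theorem for simply connected ends, yielding $W \cong N_1 \times [0,1]$ and hence $N_1 \cong N_2$. Both upgrades depend on performing the Whitney trick topologically in dimension 4, which is precisely where the Disk Embedding Theorem enters: one must embed locally flat 2-disks whose algebraic intersections vanish, in order to cancel pairs of double points.

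For realization (parts 2 and 3), I would use plumbing. Any symmetric unimodular form $Q$ is realized algebraically by plumbing disk bundles over $S^2$ along a graph presenting $Q$, producing a simply connected 4-manifold $M_Q$ with boundary a homology 3-sphere $\Sigma_Q$. The plan is to cap off $\Sigma_Q$ by a contractible topological 4-manifold, which the Disk Embedding Theorem again supplies. When $Q$ is odd, one has enough flexibility, for instance by also connected-summing with a Chern manifold $\ast \CP^2$ (whose $ks$ is nontrivial), to realize either value of the Kirby--Siebenmann invariant. When $Q$ is even, the Rokhlin invariant of $\Sigma_Q$ equals $\sigma(Q)/8 \pmod{2}$ and obstructs any smooth filling; the $ks$ of the capped manifold is therefore forced to equal this residue, which is the congruence asserted in part (3). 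That an even form must have signature divisible by $8$ is the classical algebraic fact cited from \cite{Donaldson-Kronheimer1990}.

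The main obstacle, which dominates the entire argument, is the Disk Embedding Theorem itself. In dimensions at least $5$ the Whitney trick is geometric and elementary, but in dimension $4$ one only obtains immersed Whitney disks with excess self-intersections. Freedman's resolution is to replace each such disk by a Casson handle, an iterated tower of immersed $2$-handles, and then to prove that every Casson handle is homeomorphic to the standard open $2$-handle via an infinite re-imbedding and convergence argument that produces the required locally flat embedded disk. Assembling and controlling these towers is the deep content of \cite{Freedman1982}; once it is in hand, the surgery- and plumbing-theoretic steps above become essentially formal.
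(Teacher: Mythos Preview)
The paper does not prove this statement at all: Theorem~\ref{Freedman} is stated as background and attributed entirely to Freedman~\cite{Freedman1982}, with no proof or sketch given. It functions purely as a quoted classification result that motivates the Geography Problem, so there is no ``paper's own proof'' to compare your proposal against.

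Your outline is a reasonable high-level summary of how Freedman's theorem is actually established in~\cite{Freedman1982} (surgery/$h$-cobordism for uniqueness, plumbing plus capping by contractible pieces for existence, with the Disk Embedding/Casson handle theorem as the engine, and the Rokhlin constraint forcing the $ks \equiv \sigma/8 \pmod 2$ relation in the even case). But in the context of this paper no such argument is expected or supplied; the appropriate ``proof'' here is simply the citation.
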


Therefore, given two manifolds, one can deduce whether they are homeomorphic or not by computing their intersection forms and Kirby--Siebenmann invariants.  Moreover, Theorem~\ref{Freedman} implies that any symmetric unimodular bilinear form can be realized by exactly two non-homeomorphic closed simply connected topological 4-manifolds if it is non-even, and by exactly one manifold if it is even. \vspace{0.1in}

We will now move on to the smooth category.

\begin{quest}\label{classification of smooth 4-manifolds}
How to classify all closed simply connected smooth 4-manifolds?
\end{quest}
By the works of Cairns, Whitehead, Munkres, Hirsch, and Kirby--Siebenmann \cite{Cairns, Whitehead, Munkres1960, Munkres1964a, Munkres1964b, Hirsch1963, Kirby-Siebenmann1977},  the Kirby--Siebenmann invariant of any smooth manifold, and in particular, a smooth 4-manifold, is zero.  This fact, combined with Theorem~\ref{Freedman}, shows that two closed simply connected smooth 4-manifolds are homeomorphic if and only if they have isomorphic intersection forms.  Therefore, Question~\ref{classification of smooth 4-manifolds} naturally breaks down into the following two questions:
\begin{quest}\label{geography}
Given a symmetric unimodular bilinear form $Q$, can it be realized as the intersection form of a closed simply connected smooth 4-manifold?
\end{quest}
\begin{quest}\label{botany}
Suppose that the answer to Question~\ref{geography} is yes, then how many non-diffeomorphic 4-manifolds can realize the given form?  
\end{quest}

In other words, Question \ref{geography} is asking which closed simply connected topological 4-manifolds admit a smooth structure.  Question \ref{botany} is asking that if they do, how many different smooth structures do they admit.  Topologists often refer Question~\ref{geography} as the ``Geography Problem'' and Question~\ref{botany} as the ``Botany Problem''.  

The main motivation of our work comes from the Geography Problem.  In the past thirty years, starting with Donaldson's groundbreaking work in \cite{Donaldson1983}, significant progress towards the resolution of the Geography Problem has been made.  

Let's divide symmetric unimodular bilinear forms $Q$ over $\mathbb{Z}$ into two categories: the definite ones and the indefinite ones. For definite forms, a complete algebraic classification is still unknown. Nevertheless, Donaldson proved the following seminal theorem.

\begin{thm}[Donaldson's Diagonalizability Theorem \cite{Donaldson1983}]\label{Donaldson theorem A} A definite symmetric unimodular bilinear form $Q$ can be realized as the intersection form of a closed simply connected smooth 4-manifold if and only if $Q$ can be represented by the matrix $I$ or $-I$.
\end{thm}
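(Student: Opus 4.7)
The ``if'' direction is immediate: the forms $I_n$ and $-I_n$ are realized by connected sums of $n$ copies of $\CP^2$ and $\overline{\CP^2}$ respectively, so I focus on the converse. After reversing orientation if needed, let $X$ be a closed simply connected smooth $4$-manifold whose intersection form $Q$ has rank $n$ and is negative definite. My plan is to follow the gauge-theoretic strategy Donaldson introduced, analyzing the moduli space of Yang--Mills instantons on $X$.

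Fix a Riemannian metric on $X$ and let $P \to X$ be the principal $SU(2)$-bundle with $c_2(P)=1$. Let $\mathcal{M}$ be the moduli space of anti-self-dual connections on $P$ modulo gauge equivalence. Its virtual dimension is $8 c_2(P) - 3(1+b^+(X)) = 5$, using $b^+(X)=0$. The first task is to show, for a generic metric, that the irreducible locus of $\mathcal{M}$ is a smooth oriented $5$-manifold, via Freed--Uhlenbeck transversality together with Donaldson's orientation of the determinant line bundle.

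Next I would analyze the Uhlenbeck compactification $\overline{\mathcal{M}}$, which has two non-generic features: instanton bubbling produces an end of $\mathcal{M}$ diffeomorphic to $X \times (0,\varepsilon)$, and the reducible connections appear as isolated cone singularities modeled on the cone on $\CP^2$. The reducibles are in bijection with pairs $\pm e \in H^2(X;\mathbb{Z})$ satisfying $Q(e,e)=-1$; write $r$ for their number, which is finite since $Q$ is negative definite. Excising the bubble end and a conical neighborhood of each reducible yields a smooth compact oriented cobordism $W$ between $X$ and $r$ copies of $\CP^2$ (with appropriate orientations). Signature invariance under cobordism, combined with $\operatorname{sign}(X) = -n$ and $|\operatorname{sign}(\CP^2)| = 1$, then forces $r = n$; in particular $Q$ admits exactly $n$ pairs of vectors of square $-1$.

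The concluding step is purely arithmetic: a negative definite unimodular lattice of rank $n$ containing $n$ independent pairs $\pm e_1,\dots,\pm e_n$ of square-$(-1)$ vectors must be isomorphic to $-I_n$. I would deduce this by an inductive orthogonalization argument, using unimodularity to force each $e_i$ to be primitive and to split off an orthogonal summand of rank $1$, then recursing on the orthogonal complement (which inherits both unimodularity and, via a refined count of reducibles, enough $(-1)$-vectors to iterate). The principal obstacles I foresee are the full analytic package (transversality, orientability, and Uhlenbeck compactness) needed for $\mathcal{M}$ to have the claimed smooth structure, and the identification of local Kuranishi models near each reducible as cones on $\CP^2$; the lattice combinatorics, though delicate, should follow once the gauge-theoretic input is secured.
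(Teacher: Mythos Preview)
The paper does not prove this theorem; it is stated with a citation to Donaldson's 1983 paper and invoked only as background for the Geography Problem. Your sketch follows Donaldson's original gauge-theoretic argument, which is the standard proof and is not reproduced here.

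Two small corrections to your outline. First, the cobordism step alone gives only $r \geq n$: since signature is an oriented cobordism invariant, $-n = \operatorname{sign}(X) = \sum_{i=1}^r \epsilon_i$ with each $\epsilon_i = \pm 1$, so $r \geq n$; the equality $r=n$ comes from combining this with the lattice bound. Second, the lattice step is cleaner than your recursive scheme and requires no further gauge-theoretic input: if $e,f$ are square-$(-1)$ vectors in a negative definite lattice with $e \neq \pm f$, then Cauchy--Schwarz gives $Q(e,f)\in\{-1,0,1\}$, and $Q(e,f)=\pm 1$ forces $Q(e\mp f,e\mp f)=0$, hence $e=\pm f$. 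Thus distinct pairs of $(-1)$-vectors are automatically orthogonal, they span an orthogonal summand isomorphic to $-I_r$, so $r\leq n$, and $r=n$ forces $Q\cong -I_n$.
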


This gives a complete answer to Question \ref{geography} in the case when $Q$ is definite.

For indefinite forms, a powerful algebraic theorem of Hasse and Minkowski (see \cite{Serre1977}) states that if $Q$ is not even, it must be isomorphic to a diagonal form with entries $\pm 1$, and if $Q$ is even, it must be isomorphic to 

\begin{equation} \label{equ:even indef}
	k E_{8}\oplus  q\begin{pmatrix}0&1\\1&0\end{pmatrix}
\end{equation}
for some $k\in \mathbb{Z}$ and $q\in \mathbb{N}$ (for negative $k$, $kE_{8}$ denotes the direct sum of $|k|$ copies of $-E_{8}$). 

When the bilinear form $Q$ is not even, by the theorem of Hasse and Minkowski, $Q$ can always be realized by a connected sum of copies of $\mathbb{C}P^{2}$ and $\overline{\mathbb{C}P^{2}}$.

When the bilinear form $Q$ is even, by Wu's formula \cite{Wu1950}, the closed simply connected 4-manifold $M$ realizing $Q$ must be spin. Furthermore, by Rokhlin's theorem \cite{Rohlin1952}, the integer $k$ in (\ref{equ:even indef}) must be even. By reversing the orientation of $M$, we may assume that $k \geq 0$. 

To this end, the following celebrated conjecture of Matsumoto \cite{Matsumoto1982} serves as the last missing piece to this puzzle:  

\begin{conj}[The $\frac{11}{8}$-Conjecture, version 1]\label{11/8 version 1} The form $$2pE_{8}\oplus q\begin{pmatrix}0&1\\1&0\end{pmatrix}$$ can be realized as the intersection form of a closed smooth spin $4$-manifold if and only if $q\geq 3p$.
\end{conj}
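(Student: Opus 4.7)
The conjecture has two directions. The ``if'' direction admits a purely topological construction; the ``only if'' direction is the deep analytic/homotopical half, where the machinery of the present paper is to be brought to bear.

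For the ``if'' direction, assume $p\geq 0$ and $q\geq 3p$ (the case $p<0$ reduces to this by reversing orientation). The K3 surface is a closed simply connected smooth spin $4$-manifold with intersection form $-2E_8\oplus 3H$, where $H=\bigl(\begin{smallmatrix}0&1\\ 1&0\end{smallmatrix}\bigr)$. Taking the connected sum of $p$ copies of $\overline{K3}$ with $(q-3p)$ copies of $S^2\times S^2$ produces a closed simply connected smooth spin $4$-manifold whose intersection form is $2pE_8\oplus qH$, settling this half of the conjecture.

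For the ``only if'' direction, let $M$ be a closed smooth spin $4$-manifold with intersection form $2pE_8\oplus qH$ and $p\geq 1$. The plan is to feed $M$ into the Bauer--Furuta construction for the Seiberg--Witten equations: after finite-dimensional approximation this yields a $\Pin$-equivariant stable map between representation spheres, schematically
\[
\mu(M)\colon S^{q\mathbb{H}}\longrightarrow S^{p\tilde{\mathbb{H}}},
\]
with appropriate trivial and sign-representation suspensions, and this map is stably nontrivial. Consequently, to prove $q\geq 3p$ it suffices to establish the purely homotopical non-existence statement: whenever $q<3p$, the $\Pin$-equivariant stable homotopy group in which $\mu(M)$ is forced to live vanishes. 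This is exactly the question the present paper is designed to attack, by means of the $\Pin$-equivariant Mahowald invariant of powers of the Euler class of $\tilde{\mathbb{H}}$, accessed through cell-diagram analyses of finite Thom spectra over $B\Pin$, input from the classical stable stems, and the $j$-based Atiyah--Hirzebruch spectral sequence.

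The decisive --- and genuinely open --- step is to push the Mahowald-invariant computation to obtain \emph{slope-$3$} non-existence: one must show that the $n$th $\Pin$-equivariant Mahowald invariant lives in a stem whose filtration grows at rate at least $3n+O(1)$ in $n$, so that any Bauer--Furuta class with $q<3p$ is forced to be zero. The methods gathered here --- cell diagrams, $j$-theory, and $\Pin$-equivariant Thom isomorphisms --- deliver only slope $2$, which is why they yield the ``$10/8+4$''-Theorem rather than the full conjecture. Closing the gap from slope $2$ to slope $3$ is the main obstacle, and I expect it to require genuinely new input beyond the present framework: for instance, a $\Pin$-equivariant complex-bordism refinement of the Mahowald invariant, an $S^1$-equivariant enhancement of Bauer--Furuta that exploits the full family of spin${}^c$ structures on $M$, or a new cohomological operation on the relevant Thom spectra which is invisible to the current cell-level analysis. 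Any one of these could in principle supply the missing ``extra $p$'' in the bound; none is currently known to work in full generality, which is precisely why the $\frac{11}{8}$-Conjecture remains open.
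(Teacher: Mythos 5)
The statement you were asked about is a \emph{conjecture} (Matsumoto's $\frac{11}{8}$-Conjecture), and the paper does not prove it; indeed, Remark~\ref{rem: limit} shows that the Furuta--Mahowald-class framework can yield at best the $\frac{10}{8}+4$ bound. Your treatment matches the paper's exactly on the only part that is actually provable: the ``if'' direction via $\mathop{\#}_{p}K3\mathop{\#}_{q-3p}(S^{2}\times S^{2})$ (up to the orientation convention the paper also invokes), and your assessment that the ``only if'' direction is open, with the present methods topping out at slope~$2$ rather than slope~$3$, is accurate. One small slip in your sketch of the open direction: the Bauer--Furuta class is a map $S^{p\mathbb{H}}\to S^{q\widetilde{\mathbb{R}}}$ (Theorem~\ref{lem: SW map is Furuta}), not $S^{q\mathbb{H}}\to S^{p\tilde{\mathbb{H}}}$; the representations $\mathbb{H}$ and $\widetilde{\mathbb{R}}$ and the roles of $p$ and $q$ are interchanged in your display, though this does not affect your (correct) conclusion that the homotopical non-existence statement needed for $q\geq 3p$ is false in general and that the conjecture remains open.
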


\begin{rem}\rm
Note that Conjecture \ref{11/8 version 1} is for general closed smooth spin $4$-manifolds, which are not necessarily simply connected.
\end{rem}

The ``if'' part of Conjecture~\ref{11/8 version 1} is straightforward: if $q\geq 3p$, then the form can be realized by 
$$\mathop{\#}\limits_{p}K3\mathop{\#}\limits_{q-3p}(S^{2}\times S^{2}).$$
Recall that the intersection form of $K3$ and $S^{2}\times S^{2}$ are 
$$2E_{8}\oplus 3\begin{pmatrix}0&1\\1&0\end{pmatrix} \text{ and } \begin{pmatrix}0&1\\1&0\end{pmatrix},$$  
respectively. 
	
The ``only if'' part of Conjecture~\ref{11/8 version 1} can be reformulated as follows:

\begin{conj}[The $\frac{11}{8}$-Conjecture, version 2]\label{11/8 version 2} 
Any closed smooth spin 4-manifold $M$ must satisfy the inequality
$$b_{2}(M)\geq \frac{11}{8}|\operatorname{sign}(M)|,$$
where $b_2(M)$ and $\operatorname{sign}(M)$ are the second Betti number and the signature of $M$, respectively.
\end{conj}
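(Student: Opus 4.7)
The plan is to attack Conjecture \ref{11/8 version 2} by pushing the Bauer--Furuta / Pin(2)-equivariant stable homotopy framework developed in this paper to its sharpest conclusion. After reversing orientation if necessary, Hasse--Minkowski lets us write $Q_M \cong 2k(-E_8)\oplus qH$ with $k\ge 0$ and $H$ the hyperbolic form, so that $b_2(M)=16k+2q$ and $|\operatorname{sign}(M)|=16k$; the desired inequality $b_2\ge (11/8)|\operatorname{sign}|$ is then the clean arithmetic statement $q\ge 3k$. The entire argument aims to certify this inequality at the level of equivariant stable maps.

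The first step is to feed $M$ into the Bauer--Furuta construction. This produces a $\Pin$-equivariant stable map between representation spheres whose source representation is built out of $k$ copies of the relevant spinor/quaternionic piece (encoding the $-E_8$ summands) and whose target is built out of $q$ copies of the sign representation (encoding the hyperbolic summands). Concretely, if $q<3k$, one must show that no such $\Pin$-equivariant stable map exists. The second step is the translation into equivariant homotopy, exactly as in the rest of the paper: existence of the Bauer--Furuta map of the prescribed bidegree is governed by whether a specific power of the Euler class of the underlying Pin(2)-representation is hit by a suitable Pin(2)-equivariant Mahowald invariant of the sphere. In this language, the conjecture becomes a precise nonvanishing / nonrealizability statement for these Mahowald invariants in the full range up to $q=3k-1$.

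The third step is then the actual computation: one proceeds via cell diagrams of the finite Thom spectra attached to $B\Pin$, Atiyah--Hirzebruch spectral sequence arguments over $j$, and the known structure of the classical stable stems, and pushes these tools from the "$10/8+4$" window all the way into the "$11/8$" window. A natural subplan is to stratify by residue classes of $k\bmod 8$ and to propagate the obstruction inductively in $k$, using periodicity of $ko$/$j$ together with a detection theorem asserting that the relevant Mahowald invariants remain nonzero modulo all of the potential Toda-bracket cancellations that occur as $k$ grows.

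The main obstacle lies precisely in this third step, and it is genuinely serious: at the purely $\Pin$-equivariant stable-homotopical level there exist maps between the representation spheres in question well inside the "$q<3k$" range, so the obstruction cannot be produced from the equivariant sphere spectrum alone. To turn the plan into a proof one will have to enrich the input beyond what the current framework sees — for instance by working with a refined Bauer--Furuta invariant that carries extra structure (a families version, an equivariance group strictly larger than $\Pin$, or a $\mathrm{Spin}^c$-type decoration), or by importing a Floer-theoretic obstruction coming from the $\Pin$-equivariant Seiberg--Witten Floer homotopy type of a separating 3-manifold, and then re-running the Mahowald-invariant and cell-diagram analysis relative to that richer spectrum. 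Carrying this enrichment through while retaining a tractable cell-diagram structure is where I expect the decisive difficulty to be.
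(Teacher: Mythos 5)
The statement you are trying to prove is Conjecture~\ref{11/8 version 2}, which is an \emph{open conjecture}; the paper does not prove it, and no proof of it is known. Your proposal therefore cannot be compared to ``the paper's proof'' --- there is none --- and must be judged on its own terms, where it has a fatal gap that you yourself half-identify in your final paragraph but do not resolve.

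The gap is that Step 3 of your plan is not merely difficult: the main theorem of this very paper (Theorem~\ref{main theorem}) proves it is \emph{impossible}. The ``if'' direction of Theorem~\ref{main theorem} shows that a level-$(p,q)$ Furuta--Mahowald class \emph{exists} whenever $q \geq 2p+4$ (and often already for $q \geq 2p+2$), which is well inside the range $q < 3p$ once $p \geq 5$ (Jones's counterexample at $p=5$ is the first instance). Consequently the $\Pin$-equivariant obstruction you propose to compute genuinely vanishes throughout most of the regime where the $\frac{11}{8}$-inequality would need to be certified, and no amount of cell-diagram or $j$-based Atiyah--Hirzebruch analysis can recover a nonexistent obstruction. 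This is precisely the content of Remark~\ref{rem: limit}: within the framework where the Seiberg--Witten map is treated only as a $\Pin$-equivariant continuous (hence stable) map, the best attainable bound is $b_2 \geq \frac{10}{8}|\operatorname{sign}| + 4$, not $\frac{11}{8}|\operatorname{sign}|$. Your suggested remedies in the last paragraph (families versions, larger symmetry groups, Floer-theoretic input) are directions one might explore, but as written they are placeholders rather than arguments, so the proposal does not constitute a proof.
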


\begin{df}\rm
An even symmetric unimodular bilinear form is \textit{spin realizable} if it can be realized as the intersection form of a closed smooth spin 4-manifold.  
\end{df}

By studying anti-self-dual Yang--Mills equations, Donaldson proved Conjecture~\ref{11/8 version 1} in the case when $p=1$, under the additional assumption that $H_{1}(M;\mathbb{Z})$ has no $2$-torsions \cite{Donnaldso1986,Donaldson1987}.  The condition on $H_{1}(M;\mathbb{Z})$ was later removed by Kronheimer \cite{Kronheimer1994}, who made use of the $\Pin$-symmetries in Seiberg--Witten theory.  Later, Furuta combined Kronheimer's approach with a technique called the ``finite dimensional approximation'' and proved the following significant result:

\begin{thm}[Furuta's $\frac{10}{8}$-Theorem \cite{Furuta2001}]\label{10/8+2} 
For $p\geq 1$, the bilinear form $$2pE_{8}\oplus q\begin{pmatrix}0&1\\1&0\end{pmatrix}$$ is spin realizable only if $q\geq 2p+1$.
\end{thm}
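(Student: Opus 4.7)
The plan is to follow Furuta's strategy: use Seiberg--Witten theory with its $\Pin$-symmetry to produce a $\Pin$-equivariant stable map between representation spheres, and then obstruct the existence of such a map when $q\leq 2p$ by an equivariant $K$-theoretic argument. Setting $H=\bigl(\begin{smallmatrix}0&1\\1&0\end{smallmatrix}\bigr)$, suppose for contradiction that $M$ is a closed smooth spin $4$-manifold with intersection form $2pE_{8}\oplus qH$ and $q\leq 2p$; then $\sigma(M)=-16p$ and $b_{2}^{+}(M)=q$. Because $M$ is spin, the positive spinor bundle carries a quaternionic structure, so the Dirac operator $D^{+}$ is quaternion-linear with quaternionic index $p$. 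Following Kronheimer, the configuration space of the Seiberg--Witten equations then carries a natural $\Pin$-action (with $S^{1}\subset\Pin$ acting by constant gauge transformations and $j\in\Pin$ acting via the quaternionic structure on spinors combined with charge conjugation on connections), and the Seiberg--Witten map is $\Pin$-equivariant for this action.

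Next I would apply Furuta's finite-dimensional approximation. The linearization $D^{+}\oplus d^{+}$ is Fredholm and $\Pin$-equivariant; replacing the infinite-dimensional domain and target by the spans of eigenvectors with eigenvalues in $[-\lambda,\lambda]$ and letting $\lambda\to\infty$ yields, after one-point compactification and stable desuspension by shared summands, a $\Pin$-equivariant stable map
$$\mu:S^{p\mathbb{H}}\longrightarrow S^{q\widetilde{\mathbb{R}}},$$
where $\mathbb{H}$ is the canonical quaternionic representation of $\Pin$ and $\widetilde{\mathbb{R}}$ is the real sign representation inflated along $\Pin\to\Pin/S^{1}\cong\mathbb{Z}/2$. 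The reducible solution at the trivial connection forces the restriction of $\mu$ to $S^{1}$-fixed points to be $\mathbb{Z}/2$-equivariantly nonzero, giving the normalization that any ``Bauer--Furuta invariant'' built from $M$ must satisfy.

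The hardest step is to show that no such $\mu$ exists when $q\leq 2p$, and this is where I would spend the bulk of the effort. Applying $\Pin$-equivariant complex $K$-theory together with its Thom isomorphism, $\mu$ produces an element of a module over $R(\Pin)$ satisfying a divisibility relation controlled by the equivariant $K$-theoretic Euler classes of $p\mathbb{H}$ and $q\widetilde{\mathbb{R}}$. Restricting characters to the maximal torus $S^{1}\subset\Pin$ and tracking the eigenvalues of the Adams operations $\psi^{k}$, one obtains arithmetic relations among these Euler classes that are incompatible with the fixed-point normalization unless $q\geq 2p+1$. The main obstacle is precisely this last $K$-theoretic computation: one must set up the $R(\Pin)$-module structure so that the fixed-point normalization translates into usable algebraic input, and then extract the sharp numerical bound from the interplay between the Bott classes for $\mathbb{H}$ and $\widetilde{\mathbb{R}}$ and the Adams operations acting on them. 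The preceding analytic ingredients---the $\Pin$-action on the configuration space and the finite-dimensional approximation---are by now well established from Kronheimer's and Furuta's work already cited in the introduction.
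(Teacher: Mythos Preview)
Your proposal is correct and follows essentially the same approach the paper describes: the paper does not give its own detailed proof of this theorem but summarizes Furuta's argument via Theorem~\ref{lem: SW map is Furuta} (Seiberg--Witten plus finite-dimensional approximation yields a level-$(p,q)$ Furuta--Mahowald class) and Theorem~\ref{Furuta: necessary condition} (equivariant $K$-theory forces $q\geq 2p+1$), which is exactly the outline you give. The only minor difference is that the paper's later computations (Section~\ref{sec:AddedNewStep2}) use $\Pin$-equivariant $KO$-theory rather than complex $K$-theory with Adams operations, but for the bound $q\geq 2p+1$ either route works, as noted in Remark~\ref{rem:Universe}.
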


As we will explain in Section~\ref{subsec:SeibergWitten}, Furuta proved Theorem~\ref{10/8+2} by studying a problem in equivariant stable homotopy theory (Question \ref{main question}), which concerns the existence of certain stable $\Pin$-equivariant maps between representation spheres.  The main purpose of this paper is to provide a complete answer to this $\Pin$-equivariant problem.  A consequence of our main theorem (Theorem~\ref{main theorem}) is the following:
\begin{thm}\label{theorem: 10/8+4}
For $p\geq 2$, the bilinear form $$2pE_{8}\oplus q\begin{pmatrix}0&1\\1&0\end{pmatrix}$$ is spin realizable only if
$$ q \geq  \begin{cases}
    2p+2        \quad p\equiv 1,2,5,6 &\pmod 8 \\
    2p+3       \quad p\equiv 3,4,7 &\pmod 8 \\
    2p+4    \quad p\equiv 0&\pmod 8.
  \end{cases}$$
\end{thm}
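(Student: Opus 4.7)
The plan is to reduce Theorem~\ref{theorem: 10/8+4} to a non-existence statement in $\Pin$-equivariant stable homotopy theory and then to resolve that statement by a Mahowald-invariant computation feeding into the $j$-based Atiyah--Hirzebruch spectral sequence.

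\emph{Geometric reduction.} Suppose $M$ is a closed smooth spin $4$-manifold whose intersection form is $2pE_{8}\oplus qH$, so that $\operatorname{sign}(M)=-16p$ and $b_{2}^{+}(M)=q$. The first step is to invoke the Bauer--Furuta finite-dimensional approximation of the Seiberg--Witten equations, which produces a stable $\Pin$-equivariant map
\[
\mu \,:\, S^{p\,\mathbb{H}} \longrightarrow S^{q\,\tilde{\mathbb{R}}}
\]
whose restriction to $S^{1}$-fixed points is nontrivial. Here $\mathbb{H}$ denotes the quaternionic representation of $\Pin$ and $\tilde{\mathbb{R}}$ the one-dimensional real sign representation through which $\Pin$ factors to $\mathbb{Z}/2$. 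This is precisely the equivariant existence problem (Question~\ref{main question}) whose solution by Furuta yielded Theorem~\ref{10/8+2}; it therefore suffices to rule out the existence of such $\mu$ for all pairs $(p,q)$ with $q$ strictly less than the bound claimed by Theorem~\ref{theorem: 10/8+4}.

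\emph{Stable-homotopy obstruction.} Next I would repackage the existence of $\mu$ as a lifting problem whose obstruction is a $\Pin$-equivariant Mahowald invariant of a power of the Euler class $e(\tilde{\mathbb{R}})$: the $S^{1}$-fixed part of $\mu$ extends equivariantly over a tubular neighbourhood of the $S^{1}$-fixed locus essentially for free, and what remains is to extend across the $\Pin$-free complement. Quotienting out the free action converts this into a question about maps between finite spectra built from truncations of $B\Pin$ and from Thom spectra of multiples of $\mathbb{H}$ over $B\Pin$. Using cell diagrams and input from the low-dimensional stable stems $\pi_{*}S^{0}$, one can identify a single candidate obstruction class whose non-vanishing would prevent $\mu$ from existing.

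\emph{$j$-based spectral sequence and the mod $8$ trichotomy.} The heart of the argument is to prove that this candidate obstruction survives the Atiyah--Hirzebruch spectral sequence for the connective image-of-$J$ spectrum $j$. The dependence on $p\bmod 8$ enters through the $KO$-theoretic Thom class of $p\,\mathbb{H}$ over $B\Pin$, which is $8$-periodic in $p$; this shifts the Atiyah--Hirzebruch filtration in which the leading obstruction sits, producing the three regimes $p\equiv 1,2,5,6$, $p\equiv 3,4,7$, and $p\equiv 0\pmod 8$ with lower bounds $2p+2$, $2p+3$, $2p+4$ respectively. The main obstacle is the spectral-sequence bookkeeping itself, uniformly in $p$: one must enumerate every possible $d_{r}$-target for the obstruction class and check that no hidden $2$-extension in $j$-cohomology can annihilate it, in each of the three residue classes of $p \pmod 8$. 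Once that analysis is in place, Bauer--Furuta converts the equivariant non-existence back into the geometric lower bound on $q$, proving Theorem~\ref{theorem: 10/8+4}.
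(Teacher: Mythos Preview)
Your overall architecture is right and matches the paper: Bauer--Furuta reduces the geometric statement to non-existence of a level-$(p,q)$ Furuta--Mahowald class, one passes to a non-equivariant problem about maps from skeleta of the Thom spectra $X(m)=\Thom(B\Pin,-m\lambda)$ to $S^0$, and the $j$-based Atiyah--Hirzebruch spectral sequence is the main tool.

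The gap is in your account of where the $8$-periodicity comes from. Detection by $KO$ (or by a $KO$-Thom class of $p\mathbb{H}$) is only $4$-periodic and in the paper only settles the easy columns: the $KO$ upper bound together with a cell-diagram lower bound pins down the Mahowald line everywhere \emph{except} at the columns $X(8k+3)$, i.e.\ precisely where $p\equiv 0\pmod 4$ and one must distinguish $p\equiv 4$ from $p\equiv 0\pmod 8$. That distinction is not a filtration shift in $j$-cohomology and is not ``bookkeeping''. The paper first runs a delicate induction (with Toda-bracket control, not available from $KO$) to identify the obstruction on the ``first lock'' cell of $X(8k+3)$ as the specific class $\{P^{k-1}h_1^3\}$. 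Whether this class is killed by a differential from the $(-1)$-cell is then decided by constructing an auxiliary spectrum $Z(k)$, mapping $X(8k+3)_{-1}^{8k-2}\to \Sigma^{-1}Z(k)$, and computing an $e$-invariant via Chern characters of explicit bundles over stunted projective spaces. The answer hinges on whether the two-cell subquotient $Z(k)_{8k-8}^{8k-4}$ is $S^{8k-8}\vee S^{8k-4}$ ($k$ even) or $\Sigma^{8k-8}C\eta^3$ ($k$ odd), which traces back to the attaching map $(2+n)\nu$ in $\Thom(\mathbb{H}P^\infty,V)^{4n+7}_{4n+3}$. This is the actual source of the $8$-periodicity, and it is the step your sketch does not provide.
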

\begin{cor}\label{corollary: 10/8+4} Any closed simply connected smooth spin $4$-manifold $M$ that is not homeomorphic to $S^{4}$, $S^{2}\times S^{2}$, or $K3$ must satisfy the inequality
\begin{equation}\label{equ: 10/8+4}
b_{2}(M)\geq \frac{10}{8}|\operatorname{sign}(M)|+4.	
\end{equation}
\end{cor}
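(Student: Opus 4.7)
The plan is to deduce the corollary from Theorem \ref{theorem: 10/8+4} by passing through the classification of even unimodular indefinite forms, then handling the low cases $p=0,1$ separately using Freedman's theorem (Theorem \ref{Freedman}) and an orientation flip. Since $M$ is simply connected and spin, its intersection form is even; by the Hasse--Minkowski result recalled in the introduction, after possibly reversing orientation we may write
\[
Q_{M} \;\cong\; 2pE_{8} \oplus q\begin{pmatrix}0&1\\1&0\end{pmatrix}
\]
for some integers $p\geq 0$ and $q\geq 0$, with $\operatorname{sign}(M)=-16p$ and $b_{2}(M)=16p+2q$. The target inequality \eqref{equ: 10/8+4} is symmetric in orientation, so it suffices to establish $16p+2q\geq 20p+4$, i.e.\ $q\geq 2p+2$, under the hypothesis that $M$ is not homeomorphic to $S^{4}$, $S^{2}\times S^{2}$, or $K3$.

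First I would dispose of the case $p=0$. Here the form is $q\begin{pmatrix}0&1\\1&0\end{pmatrix}$, which by Freedman's theorem is realized by a unique closed simply connected topological 4-manifold. For $q=0$ this is $S^{4}$ and for $q=1$ it is $S^{2}\times S^{2}$, both excluded by hypothesis; hence $q\geq 2$, giving $b_{2}(M)=2q\geq 4=\tfrac{10}{8}|\operatorname{sign}(M)|+4$. Next, for $p=1$ Furuta's $\tfrac{10}{8}$-Theorem (Theorem \ref{10/8+2}) already gives $q\geq 3$. If $q=3$, then $Q_{M}\cong 2E_{8}\oplus 3\begin{pmatrix}0&1\\1&0\end{pmatrix}$, which is the intersection form of $K3$; by Freedman's theorem this even form is realized by a unique simply connected closed topological 4-manifold, so $M$ is homeomorphic to $K3$, contradicting the hypothesis. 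Thus $q\geq 4$, and $b_{2}(M)=16+2q\geq 24=\tfrac{10}{8}\cdot 16+4$ as required.

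Finally, for $p\geq 2$, Theorem \ref{theorem: 10/8+4} directly yields $q\geq 2p+2$ (indeed sharper bounds in some residue classes), whence
\[
b_{2}(M)=16p+2q \;\geq\; 16p + (4p+4) \;=\; 20p+4 \;=\; \tfrac{10}{8}|\operatorname{sign}(M)|+4.
\]
There is no substantive obstacle here: the entire content of the corollary is packaged into Theorem \ref{theorem: 10/8+4}, and the deduction is purely bookkeeping, with the only subtleties being (i) the orientation reversal to arrange $p\geq 0$, and (ii) invoking Freedman's uniqueness statement in the even case to identify the borderline $K3$ example. The genuine difficulty lies entirely upstream in establishing Theorem \ref{theorem: 10/8+4} itself via the $\Pin$-equivariant Mahowald invariant machinery announced in the abstract.
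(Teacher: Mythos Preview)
Your proof is correct and follows essentially the same approach as the paper: reduce the inequality to $q\geq 2p+2$, apply Theorem~\ref{theorem: 10/8+4} for $p\geq 2$, and handle $p\in\{0,1\}$ via Furuta's theorem and Freedman's classification to identify the three exceptional homeomorphism types. The only minor omission is that the paper also cites Donaldson's diagonalizability theorem (Theorem~\ref{Donaldson theorem A}) to justify writing $Q_M$ in the form $2pE_8\oplus qH$ in the first place---Hasse--Minkowski classifies only indefinite forms, and Donaldson is needed to rule out nontrivial definite even forms for smooth $M$.
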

\begin{proof}
Recall that the rank of $E_{8}$ is 8, and that the signatures of $E_{8}$ and $\left(\begin{smallmatrix}0&1\\1&0\end{smallmatrix}\right)$ are $8$ and $0$, respectively. Therefore, (\ref{equ: 10/8+4}) is equivalent to the inequality
 $$q\geq 2p+2.$$By Theorem~\ref{theorem: 10/8+4}, this is true when $p\geq 2$. By Theorem~\ref{Donaldson theorem A} and Theorem~\ref{10/8+2}, the only exceptional cases are the following: 
 $$(p,q) = (0,0), \, (0,1), \, (1,3).$$
 These cases correspond to $S^{4},S^{2}\times S^{2}$, and $K3$ by Theorem~\ref{Freedman}.
\end{proof}

As we will see in Section~\ref{subsec:MainResult}, Corollary~\ref{corollary: 10/8+4} is the ``limit'' of current methods towards resolving the $\frac{11}{8}$-Conjecture using Bauer--Furuta invariants (see Remark~\ref{rem: limit}). 


\subsection{Finite dimensional approximation in Seiberg--Witten theory}\label{subsec:SeibergWitten}
In this subsection, we will give a brief summary of Furuta's proof of Theorem~\ref{10/8+2}.

Let $M$ be a smooth spin 4-manifold. By doing surgery along essential loops in $M$ (which does not change its intersection form), we may assume that $b_{1}(M)=0$. The Seiberg--Witten equations (a set of first order nonlinear elliptic differential equations), together with the Coulomb gauge fixing condition, can be combined to produce a nonlinear continuous map 
$$
\widetilde{SW}:H_{1}\rightarrow H_{2}
$$
between two Hilbert spaces $H_{1}$ and $H_{2}$.  Instead of describing the map $\widetilde{SW}$ explicitly, we list three of its key properties:
\begin{enumerate}[label=(\Roman*)]
\item $\widetilde{SW}$ can be decomposed into the sum $L+C$, where $L:H_{1}\rightarrow H_{2}$ is a Fredholm operator and $C$ is a nonlinear map that send any bounded subset of $H_{1}$ to a compact subset of $H_{2}$.

\item There exist constants $R_{0}, \epsilon$ such that 
\begin{equation}\label{bounded property}
0\in \widetilde{SW}^{-1}(B(H_{2},\epsilon))\subset B(H_{1},R_{0}),
\end{equation}
where $B(-,-)$ denotes the closed ball in $H_{i}$ with center $0$ and given radius.  

\item The Lie group 
$$\Pin:=\{e^{i\theta}\}\cup \{je^{i\theta}\}\subset \mathbb{H}$$
acts on both $H_1$ and $H_2$.  Under these actions, the map $\widetilde{SW}$ is a $\Pin$-equivariant map. 
\end{enumerate}
By choosing a finite dimensional subspace $V_{2}$ of $H_2$ that is transverse to the image of $L$ and invariant under the $\Pin$-action, one can define the ``approximated Seiberg--Witten map'' 
$$\widetilde{SW}_{\text{apr}}:=L+\operatorname{pr}_{V_{2}}\circ C:V_{1}\rightarrow V_{2}.$$
Here, $V_{1}:=L^{-1}(V_{2})$ and $\operatorname{pr}_{V_{2}}:H_{2}\rightarrow V_{2}$ is the orthogonal projection.  For $\epsilon>0$, consider the set
$\widetilde{SW}_{\text{apr}}^{-1}(B(V_{2},\epsilon))$.  By property (II) above and elliptic bootstrapping arguments, one can show that whenever $V_{2}$ is large enough, the following 
condition holds 
\begin{equation}\label{approximated bounded property}
\widetilde{SW}_{\text{apr}}(\partial B(V_{1},R_{0}+1))\subset \overline{V_{2}\setminus B(V_{2},\epsilon)}.
\end{equation}

Now, consider the representation spheres
$$S^{V_{1}}=B(V_{1},R_{0}+1)/\partial B(V_{1},R_{0}+1)$$
and 
$$S^{V_{2}}=V_{2}/(\overline{V_{2}\setminus B(V_{2},\epsilon)}).$$
Then by (\ref{approximated bounded property}), the map $\widetilde{SW}_{\text{apr}}$ induces a $\Pin$-equivariant map 
$$\widetilde{SW}^{\vee}_{\text{apr}}: S^{V_{1}}\rightarrow  S^{V_{2}}.$$
Applying $\Sigma^\infty(-)$, the map $\Sigma^\infty(\widetilde{SW}^{\vee}_{\text{apr}})$ represents an element in $\pi^{\Pin}_{\bigstar}(S^{0})$, the $RO(\Pin)$-graded equivariant  stable homotopy group of spheres. It was proved by Bauer and Furuta \cite{Bauer-Furuta2004} that this element is independent with respect to the choices of auxiliary data (e.g., the Riemann metric and the spaces $V_{1},\ V_{2}$) and is an invariant of the smooth structure on $M$. This invariant is called the Bauer--Furuta invariant and is denoted by $BF(M)$.		

The following theorem is due to Furuta \cite{Furuta2001}. We include a sketch proof for completeness.

\begin{thm}[Furuta \cite{Furuta2001}]\label{lem: SW map is Furuta} \hfill
\begin{enumerate}
\item Suppose $I_{M}=2pE_{8}\oplus q \left(\begin{smallmatrix}0&1\\1&0\end{smallmatrix}\right)$.  Then 
$$BF(M) \in \pi_{p\mathbb{H}- q\widetilde{\mathbb{R}}}^{\Pin}S^0.$$ 
Here, $\mathbb{H}$ is the four-dimensional representation of $\Pin$, with $\Pin$ acting on it via left multiplication, and $\widetilde{\mathbb{R}}$ is a $1$-dimensional representation such that the unit component acts as identity and the other component acts as negative identity.

\item The element $BF(M)$ fits into the commutative diagram 
$$\begin{tikzcd}
S^{p\mathbb{H}} \ar[rd, "BF(M)"] &  \\ 
S^0 \ar[u,"a_{\mathbb{H}}^p"] \ar[r, swap, "a^q_{\widetilde{\mathbb{R}}}"] & S^{q\widetilde{\mathbb{R}}}, 
\end{tikzcd}$$
where $a_\mathbb{H} \in \pi_{-\mathbb{H}}^{\Pin} S^0$ and $a_{\widetilde{\mathbb{R}}} \in \pi_{-\widetilde{\mathbb{R}}}^{\Pin} S^0$ are stable homotopy classes that represents the inclusions 
$S^0 \hookrightarrow S^\mathbb{H}$ and $S^0 \hookrightarrow S^{\widetilde{\mathbb{R}}}$ of fixed points. 
\end{enumerate}
\end{thm}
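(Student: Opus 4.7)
The plan is to explicitly identify the finite-dimensional representations $V_1$ and $V_2$ as $\Pin$-representations and then analyze the induced map on $\Pin$-fixed points. After performing surgery on essential loops, which preserves the intersection form, one may assume $b_1(M) = 0$. In this case the Hilbert space $H_1$ splits $\Pin$-equivariantly as spinors $\oplus$ coclosed 1-forms (the Coulomb slice), and $H_2$ as spinors $\oplus$ self-dual 2-forms. On the spinor summands, $U(1) \subset \Pin$ acts by scalar multiplication and $j$ acts by the quaternionic (charge conjugation) structure on chiral spinors; on the form summands, $U(1)$ acts trivially and $j$ acts as $-1$ (reflecting the symmetry $A \mapsto -A$). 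The linearization $L$ is the direct sum of the Dirac operator and $d^+$ restricted to coclosed forms.

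For part (1), I would compute the $\Pin$-equivariant index of $L$. From $I_M = 2pE_8 \oplus q\left(\begin{smallmatrix}0&1\\1&0\end{smallmatrix}\right)$ one reads off $\sigma(M) = -16p$ and $b^+(M) = q$; since $b_1(M)=0$, the kernel and cokernel of the $d^+$-piece are $0$ and $\mathcal{H}^+$ respectively, of real dimension $q$, contributing $-q\widetilde{\mathbb{R}}$ to the $\Pin$-index. By Atiyah--Singer, the Dirac operator has complex index $-\sigma/8 = 2p$; the quaternionic structure on chiral spinors combined with the compatibility between the action of $j \in \Pin$ and the $\mathbb{C}$-antilinear structure upgrades this to a quaternionic index of $p$ on which $\Pin$ acts via its defining left action on $\mathbb{H}$, so the spinor summand contributes $p\mathbb{H}$. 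Choosing $V_1$ and $V_2$ large enough and stabilizing away common trivial summands, one concludes $BF(M) \in \pi^{\Pin}_{p\mathbb{H} - q\widetilde{\mathbb{R}}} S^0$.

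For part (2), I would exploit the fact that in the $\Pin$-representations $p\mathbb{H}$ and $q\widetilde{\mathbb{R}}$ the fixed subspaces both vanish: the subgroup $U(1)$ already acts freely on $\mathbb{H}\setminus\{0\}$, while $j$ acts as $-1$ on $\widetilde{\mathbb{R}}$. Therefore $(S^{p\mathbb{H}})^{\Pin} = (S^{q\widetilde{\mathbb{R}}})^{\Pin} = S^0$, consisting only of the origin and the basepoint at infinity. The reducible Seiberg--Witten solution on a spin 4-manifold is the trivial spin connection with zero spinor, which can be placed at $0 \in H_1$, and $\widetilde{SW}(0)=0$. Consequently the $\Pin$-equivariant map $\widetilde{SW}_{\text{apr}}^\vee$ restricts on fixed points to the identity $S^0 \to S^0$. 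Since $a_{\mathbb{H}}^p$ and $a_{\widetilde{\mathbb{R}}}^q$ are by definition the inclusions of $\Pin$-fixed points into $S^{p\mathbb{H}}$ and $S^{q\widetilde{\mathbb{R}}}$, the composite $BF(M) \circ a_{\mathbb{H}}^p$ factors as $S^0 \xrightarrow{\operatorname{id}} S^0 \xrightarrow{a_{\widetilde{\mathbb{R}}}^q} S^{q\widetilde{\mathbb{R}}}$, yielding the commutative diagram.

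The main technical obstacle is the $\Pin$-equivariant identification in part (1): one must carefully verify that the complex Dirac index, with its additional $\mathbb{Z}/2$-symmetry coming from $j$, refines to a sum of copies of the standard representation of $\Pin$ on $\mathbb{H}$ rather than some other real $4p$-dimensional $\Pin$-representation. This reduces to unwinding the compatibility between $\Pin \subset \mathbb{H}^\times$ and the quaternionic structure on chiral spinors in dimension $4$, and while the identification is conceptually standard, it requires a careful check of the Clifford-module and orientation conventions; it is precisely this identification that introduces the $\mathbb{H}$-periodic structure underlying the rest of the paper.
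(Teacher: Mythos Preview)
Your approach is essentially the same as the paper's (sketch) proof: for part (1) you compute the $\Pin$-index of $L$ via Atiyah--Singer, and for part (2) you use that the fixed-point set in each representation sphere is $\{0,\infty\}$ together with $\widetilde{SW}(0)=0$ to see that the map restricts to the identity on $\Pin$-fixed points. The paper is terser---it simply cites the index theorem for (1) and appeals to the bounded-preimage conditions (\ref{bounded property}) and (\ref{approximated bounded property}) to get $0\mapsto 0$ and $\infty\mapsto\infty$ in (2)---but the content is the same; your version just fills in more of the representation-theoretic bookkeeping, and your closing caveat about verifying the $\mathbb{H}$-structure on the Dirac index is exactly the point the paper leaves implicit.
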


\begin{proof}[Sketch proof] (1) The $RO(\Pin)$-grading of  $BF(M)$ is $V_{1}-V_{2}$.  This is the index of the operator $L$ and can be computed by the Atiyah--Singer index theorem. \vspace{0.1in}

\noindent (2) By the specific definitions of $H_{i}$, $V_1$ and $V_2$ are direct sums of $\mathbb{H}$ and $\widetilde{\mathbb{R}}$.  Therefore, the $\Pin$-fixed points of $S^{V_1}$ and $S^{V_2}$ are both $0$ and $\infty$.  
By (\ref{bounded property}) and (\ref{approximated bounded property}), the map $\widetilde{SW}^{\vee}_{\text{apr}}$ sends $0$ to $0$ and $\infty$ to $\infty$.  Therefore, it induces a homotopy equivalence on the $\Pin$-fixed points.  It follows that after applying the suspension functor $\Sigma^\infty(-)$, the map $\Sigma^\infty(\widetilde{SW}^{\vee}_{\text{apr}})$ induces an identity on $\Pin$-geometric fixed points. 
\end{proof}

\begin{df}\rm
For $p \geq 1$, \textit{a Furuta--Mahowald class} of level-$(p,q)$ is a stable map 
$$\gamma: S^{p\mathbb{H}} \longrightarrow S^{q\widetilde{\mathbb{R}}} $$
that fits into the diagram 
$$\begin{tikzcd}
S^{p\mathbb{H}} \ar[rd, "\gamma"] &  \\ 
S^0 \ar[u,"a_{\mathbb{H}}^p"] \ar[r, swap, "a^q_{\widetilde{\mathbb{R}}}"] & S^{q\widetilde{\mathbb{R}}}.
\end{tikzcd}$$
\end{df}
Using equivariant $K$-theory, Furuta proved the following theorem, from which Theorem~\ref{10/8+2} directly follows.
\begin{thm}[Furuta \cite{Furuta2001}]\label{Furuta: necessary condition} A level-$(p,q)$ Furuta--Mahowald class exists \textup{\textbf{only if}} $q\geq 2p+1$.
\end{thm}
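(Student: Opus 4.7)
The plan is to apply $\Pin$-equivariant complex $K$-theory to the defining commutative triangle of a Furuta--Mahowald class, translating the existence question into a divisibility relation in the complex representation ring $R(\Pin)$, which can then be checked by a direct algebraic computation.

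The first step is to reduce to the case that $q$ is even. If $\gamma\colon S^{p\mathbb{H}} \to S^{q\widetilde{\mathbb{R}}}$ is a level-$(p,q)$ Furuta--Mahowald class, then post-composing with $a_{\widetilde{\mathbb{R}}}$ produces a level-$(p,q+1)$ class. Hence it suffices to prove the even-$q$ statement, namely that a level-$(p,2k)$ class exists only when $2k \geq 2p+2$; the claimed bound $q \geq 2p + 1$ then follows.

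For even $q = 2k$, the crucial observation is that both representations in question carry natural complex structures compatible with $\Pin$: right multiplication by $\mathbb{C}\subset\mathbb{H}$ identifies $\mathbb{H}$ with the $2$-dimensional complex representation $\rho_1 = \operatorname{Ind}_{S^1}^{\Pin}\chi$, while $2\widetilde{\mathbb{R}}$ admits a complex structure making it isomorphic to the $1$-dimensional sign representation $\sigma$ (on which $S^1$ acts trivially and $j$ acts as $-1$). Thus both $S^{p\mathbb{H}}$ and $S^{2k\widetilde{\mathbb{R}}} \cong S^{k\sigma}$ are Thom spaces of complex $\Pin$-representations, so the Thom isomorphism in $K_{\Pin}$ applies. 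Since the inclusion-of-fixed-points map $a_V$ pulls the Thom class back to the $K$-theoretic Euler class $e_K(V) \in R(\Pin)$, applying $K_{\Pin}^*(-)$ to the commutative triangle produces $x \in R(\Pin)$ with
\[
(2 - \rho_1)^{p} \cdot x \;=\; (1 - \sigma)^{k},
\]
where $e_K(\rho_1) = 1 - \rho_1 + \Lambda^2\rho_1 = 2 - \rho_1$ (using that $j$ acts on $\rho_1$ with eigenvalues $\pm i$, so $\det\rho_1$ is trivial) and $e_K(\sigma) = 1 - \sigma$.

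The final step is a short algebraic computation in $R(\Pin) = \mathbb{Z}[\sigma, \rho_1]/(\sigma^2 - 1,\, (\sigma - 1)\rho_1)$, which has $\mathbb{Z}$-basis $\{1, \sigma, \rho_1, \rho_1^2, \dots\}$. Two identities are key: $(1-\sigma)^k = 2^{k-1}(1-\sigma)$ for $k \geq 1$ (since $(1-\sigma)^2 = 2(1-\sigma)$) and $\rho_1(1-\sigma) = 0$ (since $\sigma\rho_1 = \rho_1$). Writing the unknown as $x = A + B\sigma + \rho_1 C(\rho_1)$ and expanding $(2 - \rho_1)^p = 2^p + \rho_1 D(\rho_1)$, one matches coefficients in the above equation: the constant-term comparison gives $2^p A = 2^{k-1}$, forcing $k \geq p+1$; the $\sigma$-coefficient gives $B = -A$; and the higher $\rho_1$-coefficients force $C = 0$ (using that the quotient $R(\Pin)/(\sigma - 1) = \mathbb{Z}[\rho_1]$ is an integral domain in which $(2-\rho_1)^p$ is a nonzerodivisor). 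Combined with the reduction from odd to even $q$, this gives $q \geq 2p+1$. The main subtlety, I expect, is the even/odd dichotomy itself: the representation $\widetilde{\mathbb{R}}$ is not complex-orientable, so one cannot apply the Thom isomorphism in complex $K$-theory to $S^{q\widetilde{\mathbb{R}}}$ directly, and one must instead go through the $a_{\widetilde{\mathbb{R}}}$-trick.
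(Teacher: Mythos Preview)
Your argument is correct. The paper itself does not give a proof of this theorem; it is stated as Furuta's result and cited to \cite{Furuta2001}. However, in Remark~\ref{rem:Universe} the paper explicitly mentions the ``modified proofs by Manolescu~\cite{Manolescu} and Bryan~\cite{Bryan}, using divisibilities of the $K$-theoretic Euler classes,'' and your argument is precisely this Euler-class divisibility approach: reduce to even $q$ by composing with $a_{\widetilde{\mathbb{R}}}$, apply equivariant complex $K$-theory (which requires the complex structure on $2\widetilde{\mathbb{R}}$), and read off the divisibility $(2-\rho_1)^p \mid (1-\sigma)^k$ in $R(\Pin)$.

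One minor remark: the verification that $C=0$ is not needed for the bound (only the constant-coefficient comparison $2^pA = 2^{k-1}$ matters), so you could trim that step. Also, your identification of $\det\rho_1$ as trivial via the eigenvalues of $j$ is correct, but the cleaner way to see $e_K(\mathbb{H}) \cdot (1-\sigma) = 2(1-\sigma)$ is directly: $(2-\rho_1)(1-\sigma) = 2(1-\sigma) - \rho_1 + \rho_1\sigma = 2(1-\sigma)$, using only $\sigma\rho_1 = \rho_1$. This immediately gives $(2-\rho_1)^p(1-\sigma) = 2^p(1-\sigma)$ and the equation becomes $2^p x = 2^{k-1}(1-\sigma)$ in the $(1-\sigma)$-torsion, bypassing the need to write out $D(\rho_1)$.
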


\subsection{Main theorem}\label{subsec:MainResult}
At this point, it is natural to ask the following question:
\begin{quest}\label{main question}
What is the necessary and sufficient condition for the existence of a level-$(p,q)$ Furuta--Mahowald class? 
\end{quest}

\begin{rem}\rm \label{rem:Universe}
We would now like to discuss the choice of the universe (i.e. the Pin(2)-representations that one stabilize with respect to when passing from the space level to the spectrum level).  In Furuta's original proof of Theorem 1.16 \cite{Furuta2001}, he used the universe consisting of only the representations $\mathbb{H}$ and $\widetilde{\mathbb{R}}$, because this universe is the most relevant to the geometric problem.  Modified proofs by Manolescu~\cite{Manolescu} and Bryan~\cite{Bryan}, using divisibilities of the $K$-theoretic Euler classes, show that the statement of Theorem 1.16 holds for any universe. 

For Question 1.17, the answer could potentially depend on the choice of the universe.  By works of Schmidt~\cite[Theorem~2.6, Theorem~3.2]{Schmidt2003} and Minami~\cite{Minami}, any Furuta--Maholwald class can be desuspended to the same diagram on the space level as long as $q \geq 2p+1$.  By the discussions in the previous paragraph, the bound $q \geq 2p+1$ in Theorem 1.16 holds for any universe.  Therefore, a level-$(p,q)$ Furuta--Mahowald class in one universe can be desuspended to a space-level map $S^{p\mathbb{H}} \to S^{q\widetilde{\mathbb{R}}}$, and then be further suspended to a level-$(p,q)$ Furuta--Mahowald class in any other universe.  It follows that the answer to Question 1.17 does not depend on the choice of the universe.   
 
Without loss of generality, we always work with the complete universe.   
\end{rem}

One might hope that the answer to Question~\ref{main question} is $q\geq 3p$ because this would directly imply the $\frac{11}{8}$-conjecture (Conjecture~\ref{11/8 version 1}).  Unfortunately, John Jones showed that this is false by exhibiting a counter-example for $p =5$. See \cite{Furuta-Kametani-Matsue-Minami2007} for a more conceptual explaination of why such counter-examples must exist. 

Subsequently, Jones proposed the following conjecture:

\begin{conj}[Jones \cite{Furuta-Kametani-Matsue-Minami2007}]\label{Jones} 
For $p\geq 2$, a level-$(p,q)$ Furuta--Mahowald class exists \textup{\textbf{if and only if}}
$$ q \geq  \begin{cases}
    2p+2        \quad p\equiv 1 &\pmod 4 \\
        2p+2        \quad p\equiv 2 &\pmod 4 \\
    2p+3       \quad  p\equiv 3 &\pmod 4 \\
    2p+4    \quad  p\equiv 4 &\pmod 4.
  \end{cases}
$$
\end{conj}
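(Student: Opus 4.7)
I would reduce the existence of a level-$(p,q)$ Furuta--Mahowald class to a non-equivariant obstruction problem. Using the $\Pin$-equivariant cofiber sequence
$$S(p\mathbb{H})_{+} \longrightarrow S^{0} \xrightarrow{a_{\mathbb{H}}^{p}} S^{p\mathbb{H}},$$
such a class exists if and only if the composite
$S(p\mathbb{H})_{+} \to S^{0} \xrightarrow{a_{\widetilde{\mathbb{R}}}^{q}} S^{q\widetilde{\mathbb{R}}}$
is $\Pin$-equivariantly null-homotopic. Since $\Pin$ acts freely on $S(p\mathbb{H})$, this reduces to the nullity of a map between two finite non-equivariant spectra, each a Thom spectrum over a truncation of $B\Pin$. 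The question is thus recast as a computation of the $\Pin$-equivariant Mahowald invariant of $a_{\mathbb{H}}^{p}$, which is precisely the point of view the paper's title advertises.

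\textbf{Strategy.} I would then write down explicit $2$-local cell diagrams for these finite Thom spectra, using the cell structure of $B\Pin$ coming from the fibration $\mathbb{R}P^{2} \to B\Pin \to \mathbb{H}P^{\infty}$ together with the Thom isomorphism; the resulting attaching maps are controlled by small-filtration stable stems and the image of $J$. To detect obstructions, I would deploy the $j$-based Atiyah--Hirzebruch spectral sequence: the connective image-of-$J$ spectrum $j$ is fine enough to see the $J$-theoretic phenomena that govern both the attaching maps and the differentials, while its homotopy is tractable in the necessary range (through roughly $\pi^{s}_{8p}$). For the ``if'' direction, I would construct the null-homotopy cell-by-cell, at each stage checking that the successive obstruction lies in a group known to vanish. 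For the ``only if'' direction, I would show that a specific class survives to the $E_{\infty}$-page of the $j$-based AHSS and detects a non-zero element in the relevant stable stem, ruling out the null-homotopy. Furuta's theorem (Theorem~\ref{Furuta: necessary condition}) gives the base case $q \geq 2p+1$, and the challenge is to push the obstruction analysis two to three filtrations further.

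\textbf{Main obstacle.} The hardest step will be the ``only if'' direction in the boundary cases, specifically extracting the mod-$4$ dependence. The three distinct thresholds ($2p+2$, $2p+3$, $2p+4$) arise from the interplay of $KO$-theoretic Bott periodicity in the attaching maps on $B\Pin$ with $v_{1}$-periodic Toda bracket phenomena in the stable stems; separating them requires tracking secondary (or higher) differentials and hidden multiplicative extensions in the $j$-based AHSS. In the extreme case $p \equiv 0 \pmod 4$, showing that $q = 2p+3$ is insufficient should require identifying a secondary operation detecting a divisibility that is invisible on the $E_{2}$-page and only emerges after several rounds of differentials. Conversely, for the ``if'' part in the $p \equiv 1,2 \pmod 4$ cases, the delicate point will be exhibiting an explicit null-homotopy that cancels the $\nu$- and $\eta$-attaching maps at the correct filtration, which likely forces one to compute particular Toda brackets in $\pi^{s}_{*}$ and match them with the cell structure above.
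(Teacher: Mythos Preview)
The statement you are trying to prove is a \emph{conjecture}, and the paper's main theorem (Theorem~\ref{main theorem}) shows that it is \emph{false}. The correct answer is $8$-periodic in $p$, not $4$-periodic: for $p\equiv 4\pmod 8$ a level-$(p,2p+3)$ Furuta--Mahowald class \emph{does} exist (Corollary~\ref{cor: first lock passed for k odd}), so the ``only if'' direction of Jones's conjecture fails there. Your proposed obstruction argument for the case $p\equiv 0\pmod 4$, $q=2p+3$ therefore cannot succeed uniformly; the secondary obstruction you are looking for simply is not present when $p\equiv 4\pmod 8$.

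What goes wrong concretely is visible in the construction of the auxiliary spectrum $Z(k)$ in Section~\ref{subsec:constructionofZ(k)}. The relevant two-cell subquotient $Z(k)_{8k-8}^{8k-4}$ is $\Sigma^{8k-8}C\eta^{3}$ when $k$ is odd but splits as $S^{8k-4}\vee S^{8k-8}$ when $k$ is even (Lemma~\ref{lem:Z(k)Property}); this dichotomy, coming from the attaching maps in $\Thom(\mathbb{H}P^{\infty},V)$ (Lemma~\ref{lem:HPTruncationTwist}), is exactly what makes the Chern-character/$e$-invariant computation in Proposition~\ref{prop bundle construction 1} come out differently in the two cases and forces the $8$-periodicity. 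Your plan, which treats all $p\equiv 0\pmod 4$ on the same footing via the $j$-based AHSS, would miss this splitting. Indeed, Remark~\ref{rem:SchmidtError} records that Schmidt made precisely this error for $p=4$. The upshot is that your overall reduction and toolkit are the right ones, but the target statement must be replaced by the $8$-periodic one in Theorem~\ref{main theorem}, and the analysis at the ``first lock'' must bifurcate on the parity of $k=p/4$.
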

For the necessary condition, various progress has been made by Stolz \cite{Stolz1989}, Schmidt \cite{Schmidt2003}, and Minami \cite{Minami}.  Before our paper, the best result is given by Furuta--Kametani:
\begin{thm}[Furuta--Kametani \cite{Furuta-Kametani2007}]\label{thm: Furuta-Kametani}  For $p\geq 2$, a level-$(p,q)$ Furuta--Mahowald class exists \textup{\textbf{only if}}
$$ q \geq  \begin{cases}
2p+1 \quad  p\equiv 1&\pmod 4 \\
    2p+2        \quad  p\equiv 2&\pmod 4 \\
        2p+3       \quad  p\equiv 3&\pmod 4 \\
    2p+3       \quad  p\equiv 4&\pmod 4. \\
  \end{cases}
$$
\end{thm}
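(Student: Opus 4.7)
The plan is to strengthen Furuta's $K$-theoretic obstruction (Theorem~\ref{Furuta: necessary condition}) by passing from complex $K$-theory to real $K$-theory $KO$, and to extract the $p \bmod 4$ refinement from the divisibility relation it produces in $KO^{*}(B\Pin)$.

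First I would translate the existence of a level-$(p,q)$ Furuta--Mahowald class $\gamma$ into a divisibility of Euler classes. For any $\Pin$-equivariant ring spectrum $E$, denote the $E$-theoretic Euler class of a representation $V$ by $e_{V}^{E}$. Applying Borel $\Pin$-equivariant $E$-cohomology to the commutative diagram defining $\gamma$ yields the identity
\[ (e_{\widetilde{\mathbb{R}}}^{E})^{q} \;=\; \gamma^{*} \cdot (e_{\mathbb{H}}^{E})^{p} \qquad \text{in}\;\; E^{*}(B\Pin). \]
Taking $E=K$ and computing in $K^{*}(B\Pin)$ reproduces Furuta's bound $q \geq 2p+1$, which already settles the case $p \equiv 1 \pmod{4}$.

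For the remaining cases I would specialize to $E = KO$ and compute $KO^{*}(B\Pin)$ via the Atiyah--Hirzebruch spectral sequence from $H^{*}(B\Pin; KO^{*})$. Using the classical presentation of $H^{*}(B\Pin; \mathbb{F}_{2})$ (generated by a degree-$1$ class and a degree-$4$ class) together with the $\eta, \eta^{2}, \beta$-tower structure of $KO^{*}$, I would determine $(e_{\mathbb{H}}^{KO})^{p}$ modulo higher AHSS filtration as a multiple of a power of $e_{\widetilde{\mathbb{R}}}^{KO}$. Since $\mathbb{H}$ is quaternionic, $e_{\mathbb{H}}^{KO}$ has $KSp$-type, and its $p$-th power rotates through real, complex, and quaternionic types with period $4$; combined with the $\eta$-torsion in $KO^{*}$, this yields an identity of the form
\[ (e_{\mathbb{H}}^{KO})^{p} \equiv u_{p}\cdot (e_{\widetilde{\mathbb{R}}}^{KO})^{N(p)} \pmod{\text{higher filtration}}, \]
where $u_{p}$ is a $KO^{*}$-unit and $N(p) = 2p,\, 2p+1,\, 2p+2,\, 2p+2$ for $p \equiv 1,2,3,0 \pmod{4}$ respectively. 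Substituting into the Euler class identity from the previous paragraph forces $q \geq N(p)+1$, which matches the stated inequality.

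The main obstacle is the $KO$-AHSS calculation on $B\Pin$: the relevant differentials combine Steenrod squares on both generators of $H^{*}(B\Pin; \mathbb{F}_{2})$ with $\eta$-multiplications, and the interaction of the $\eta$-tower with the integral class $e_{\mathbb{H}}^{KO}$ must be tracked carefully to pinpoint $N(p)$ and the unit $u_{p}$. The subtlest subcase is $p \equiv 0 \pmod{4}$, where $(e_{\mathbb{H}}^{KO})^{p}$ is rationally most divisible and the extra factor of $e_{\widetilde{\mathbb{R}}}^{KO}$ giving $q \geq 2p+3$ must be detected purely by $2$-torsion sitting deep in the $KO$-filtration.
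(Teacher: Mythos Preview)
This theorem is cited from Furuta--Kametani and not proved in the paper; the paper instead establishes the sharper Theorem~\ref{main theorem} by an entirely different route---the nonequivariant reduction of Proposition~\ref{prop: equivariant to nonequivariant} followed by the seven-step Mahowald-line analysis of Sections~\ref{sec:Step1}--\ref{sec:Steps5and6}. Your strategy, replacing $K$ by $KO$ in Furuta's Euler-class obstruction and extracting a $\bmod\,4$ refinement from the real/complex/quaternionic periodicity, is precisely the Furuta--Kametani method, and your target values of $N(p)$ are the correct ones. The paper does use a fragment of this idea in Section~\ref{sec:AddedNewStep2} to prove the weaker Proposition~\ref{prop: upper bound detected by ko}, so there is genuine overlap in spirit.

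Where your outline needs repair is the implementation. The congruence $(e_{\mathbb{H}}^{KO})^{p} \equiv u_{p}\,(e_{\widetilde{\mathbb{R}}}^{KO})^{N(p)}$ is not well-posed as written: the two sides live in different $RO(\Pin)$-graded pieces $KO^{0}_{\Pin}(S^{-p\mathbb{H}})$ and $KO^{0}_{\Pin}(S^{-N(p)\widetilde{\mathbb{R}}})$, and the Borel Atiyah--Hirzebruch spectral sequence on $B\Pin$ is not the tool that isolates the obstruction. The actual argument---visible in Section~\ref{sec:AddedNewStep2} for the single case $(p,q)=(4k,8k+2)$ and carried out in full in \cite{Furuta-Kametani2007}---is purely algebraic: one uses the explicit presentation $RO(\Pin)\cong\mathbb{Z}[D,A,B]/(\cdots)$, the Euler classes $\gamma(D),\gamma(\mathbb{H})$, the Bott classes $b_{2\mathbb{H}},b_{8D}$, and the relations \ref{item: euler class module structure}, \ref{item: bott times euler}, \ref{item: ko(-2D)} to rewrite $\gamma(D)^{q}$ as a concrete element of a known module, and then checks by a polynomial identity in $A$ (as in Lemma~\ref{lem: equivariant ko nontrivial}) that it cannot lie in the image of multiplication by $\gamma(\mathbb{H})^{p}$ unless $q$ meets the stated bound. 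Swapping your AHSS step for this representation-ring computation would make the argument go through.
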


Much less is known about the sufficient condition for the existence of Furuta--Mahowald classes.  So far, the best result is in Schmidt's thesis \cite{Schmidt2003}, in which Schmidt used $\Pin$-equivariant stable homotopy theory to attack Conjecture~\ref{Jones} for $p \leq 5$.  In particular, Schmidt showed the existence of a level-$(5,12)$ Furuta--Mahowald class.  This is also the first attempt to study this problem by using $\Pin$-equivariant stable homotopy theory. 

In this paper, we completely resolve Question \ref{main question}.  The following theorem is the main result of our paper:
\begin{thm}[The limit is $\frac{10}{8}+4$]\label{main theorem} For $p\geq 2$, a level-$(p,q)$ Furuta--Mahowald class exists \textup{\textbf{if and only if} }
$$ q \geq  \begin{cases}
    2p+2        \quad p\equiv 1&\pmod 8 \\
        2p+2        \quad p\equiv 2&\pmod 8 \\
    2p+3       \quad  p\equiv 3&\pmod 8 \\
        2p+3       \quad  p\equiv 4&\pmod 8 \\
                2p+2        \quad p\equiv 5&\pmod 8 \\
                        2p+2        \quad p\equiv 6&\pmod 8 \\
                                2p+3       \quad  p\equiv 7&\pmod 8 \\
    2p+4    \quad  p\equiv 8 &\pmod 8.
  \end{cases}
$$
\end{thm}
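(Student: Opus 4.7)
The plan is to translate the equivariant lifting problem into a non-equivariant question about maps between finite spectra, and then use the $j$-based Atiyah--Hirzebruch spectral sequence to obtain sharp bounds.  By $\Pin$-equivariant Spanier--Whitehead duality and the defining property of the Euler classes $a_{\mathbb{H}}$ and $a_{\widetilde{\mathbb{R}}}$, the existence of a level-$(p,q)$ Furuta--Mahowald class is equivalent to the assertion that the $\Pin$-equivariant Mahowald invariant of $a_{\widetilde{\mathbb{R}}}^{q}$ contains an element compatible with $a_{\mathbb{H}}^{p}$.  After reducing along the geometric-fixed-point/forgetful functors, this is equivalent to the existence of a map between two Thom spectra built from $B\Pin$, namely suitable skeletal quotients of $(B\Pin)^{-p\mathbb{H}}$ and $(B\Pin)^{-q\widetilde{\mathbb{R}}}$.

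Next, I would write down explicit cell diagrams for these Thom spectra, using the mod $2$ cohomology of $B\Pin$ together with the action of Steenrod operations on the Thom class to identify the attaching maps.  The attaching maps will be expressed in terms of elements of the classical stable stems $\pi_*^s$ such as $2,\eta,\nu,\sigma,\varepsilon,\bar\nu$ and various Toda brackets among them.  The obstructions to the desired lift are then detected as differentials and extensions in the $j$-based Atiyah--Hirzebruch spectral sequence computing maps between these Thom spectra.  The use of the connective image-of-$J$ spectrum $j$, rather than $KO$-theory, is the crucial technical upgrade over Furuta--Kametani (Theorem \ref{thm: Furuta-Kametani}):  $KO$-theory falls short by one to three cells, whereas $j$ captures precisely the $\eta$-, $\nu$- and $\sigma$-information needed to split the answer into eight mod-$8$ congruence classes rather than four mod-$4$ classes.

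The proof then proceeds case by case on $p \pmod 8$.  For each congruence class I would compute, on the one hand, the deepest cell in the target Thom spectrum for which a $j$-based obstruction survives, giving the necessary direction, and, on the other hand, build an explicit lift hitting that cell using concrete elements of low-dimensional stable stems, giving the sufficient direction.  In the residues already handled by Furuta--Kametani the necessary half is immediate, and only the sufficiency needs a new construction modeled on Schmidt's $p=5$ example; conversely, in the residues $p \equiv 3,4,7,0 \pmod 8$ and again in $p \equiv 5,6 \pmod 8$ one must produce either a new obstruction or a new construction beyond the previously known line.

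I expect the main obstacle to be the case $p \equiv 0 \pmod 8$, where the bound $q \geq 2p+4$ must be proved.  Detecting a $j$-obstruction four cells beyond Furuta's original line requires tracking several attaching maps simultaneously and ruling out all possible hidden extensions through a careful, multi-step analysis of the $j$-based Atiyah--Hirzebruch spectral sequence; this is substantially more delicate than the analogous step for any other residue class.  Dually, the sufficiency results in the residues $p \equiv 5,6 \pmod 8$ at the sharp bound $q = 2p+2$ will rely on Toda bracket constructions for $\nu$, $\sigma$ and $\eta\sigma$ over $B\Pin$ that have no counterpart in Schmidt's low-$p$ template, and verifying that these brackets contain zero modulo the relevant indeterminacy is where the bulk of the computational work will lie.
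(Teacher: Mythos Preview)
Your high-level strategy matches the paper's: reduce to a non-equivariant question about Thom spectra over $B\Pin$, analyze cell structure, and use an image-of-$J$ based Atiyah--Hirzebruch spectral sequence. But several of your anticipated steps do not match what actually works. The reduction (Proposition~\ref{prop: equivariant to nonequivariant}) lands on a \emph{single} map $c(q)^{4p-2-q}:X(q)^{4p-2-q}\to S^0$ to the sphere, not on maps between two Thom spectra; the problem becomes computing the Mahowald line $\mathfrak{L}(q)$, and the eight cases in $p\pmod 8$ appear as a $16$-periodic pattern in $q$.

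More substantively, the lower bound is not obtained by the Toda-bracket constructions you sketch. The classical Toda-type periodicity (``$16$ times the identity vanishes on an $8$-cell subquotient'') fails here, so one cannot simply iterate; the paper instead runs a delicate induction (Theorem~\ref{thm: inductive fk}) building maps $f_k:X(8k+4)_{8k+1}^\infty\to S^0$ together with auxiliary data $(g_k,a_k,b_k)$ controlling cells \emph{above} the line, and this control is what makes the induction close. On the upper-bound side, $KO$ (not $j$) already pins down $\mathfrak{L}$ everywhere except at the columns $q=8k+3$. The genuine crux is there: one must identify the map on the ``first lock'' cell as $\{P^{k-1}h_1^3\}$ (Step~3), and then decide via Chern-character and $e$-invariant computations on an auxiliary spectrum $Z(k)$ built from stunted $\mathbb{C}P$'s whether this lock is passed in the $j''$-based AHSS (Steps~4--7). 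It is passed exactly when $k$ is odd; this odd/even split is the source of the $16$-periodicity and of the deviation from Jones's conjecture at $p\equiv 4\pmod 8$, and it is invisible from the residue-by-residue obstruction picture you propose.
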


\begin{rem}\rm
The ``only if'' part of Theorem~\ref{main theorem} directly implies Theorem~\ref{theorem: 10/8+4} and Corollary~\ref{corollary: 10/8+4}.  
\end{rem}

\begin{rem}\label{rem: limit}\rm
The ``if'' part of Theorem~\ref{main theorem} implies that without further input from geometry or analysis, the best result one can achieve in proving Conjecture~\ref{11/8 version 2}, using the existence of Furuta--Mahowald classes, is $\frac{10}{8}+4$.  Note that by Remark~\ref{rem:Universe} this ``limit'' does not depend on the choice of the universe.  In order to break this ``limit'' and to further attack the $\frac{11}{8}$-conjecture, more delicate properties of the Seiberg--Witten map have to be studied.  In particular, the Seiberg--Witten map should not be merely treated as a continuous map. 
\end{rem} 

\begin{rem}\rm
Our answer differs from Conjecture~\ref{Jones} when $p\equiv 4\pmod 8$.  Note that in \cite{Schmidt2003}, Schmidt proved that Conjecture~\ref{Jones} is true for $p \leq 5$.  We came to a different conclusion for $p =4$ because there is a minor error in Schmidt's computation (see Remark~\ref{rem:SchmidtError} for more details).  
\end{rem}

\subsection{The Pin(2)-equivariant Mahowald invariant}
Let $G$ be a finite group or a compact Lie group and let $RO(G)$ denote its real representation ring.  One can consider $\pi^G_\bigstar S^0$, the $RO(G)$-graded stable homotopy groups of spheres.  Unlike the classical nonequivariant case, there are many non-nilpotent elements in $\pi^G_\bigstar S^0$.  Here are some examples: 

\begin{enumerate}
\item For each prime $p$, the multiplication-by-$p$ map 
$$p:S^0\longrightarrow S^0$$
between spheres with trivial $G$-actions is non-nilpotent.    
\item The geometric fix point functor induces a homomorphism 
$$\Phi^G: \pi_0^G S^0 = [S^0, S^0]^G \longrightarrow [S^0, S^0] = \mathbb{Z}$$
from the Burnside ring of $G$ to $\mathbb{Z}$.  Since $\Phi^G(-)$ preserves smash products, any preimage of the nonequivariant multiplication-by-$p$ map is also a non-nilpotent element in $\pi_0^G S^0$.
 
\item Let $V$ be a real irreducible representation of $G$.  The \emph{Euler class} $a_{V}$ is the stable class in $\pi_{-V}^G S^0$ that represents the inclusion
$$a_V: S^0 \longrightarrow S^V$$
of the fix points.  Since all the powers of $a_V$ induce nonzero maps in equivariant homology, $a_{V}$ is non-nilpotent in $\pi^G_\bigstar S^0$.
\end{enumerate}

\begin{df}\label{df:GMahowaldInvariant}\rm
Suppose that $\alpha$ and $\beta$ are elements in $\pi^G_\bigstar S^0$ with $\beta$ non-nilpotent.  The \emph{$G$-equivariant Mahowald invariant of $\alpha$ with respect to $\beta$} is the following set of elements in $\pi^G_\bigstar S^0$:
$$M_{\beta}^G(\alpha) = \{\gamma\,|\, \alpha = \gamma \beta^k, \ \alpha \ \textup{is not divisible by} \ \beta^{k+1}\}.$$
In other words, an element $\gamma$ belongs to $M_{\beta}^G(\alpha)$ if the left diagram exists and the right diagram does not exist for any class $\gamma' \in \pi_\bigstar^G S^0$.
\begin{displaymath}
	\xymatrix{
	S^{-k|\beta|} \ar[rrdd]^{\gamma} & & & & S^{-(k+1)|\beta|} \ar[rrdd]^{\gamma'} & & \\
	& & & & & & \\
	S^0 \ar[uu]^{\beta^k} \ar[rr]^{\alpha} & & S^{-|\alpha|} & & S^0 \ar[uu]^{\beta^{k+1}} \ar[rr]^{\alpha} & & S^{-|\alpha|}.
	}
\end{displaymath}
\end{df}
\begin{rem}\rm
It is clear from Definition~\ref{df:GMahowaldInvariant} that the $RO(G)$-degree of each of the elements in $M_{\beta}^G(\alpha)$ is $k |\beta| - |\alpha|$.
\end{rem}

Historically, the $G$-equivariant Mahowald invariant has been studied in many cases:

\noindent (1) Let $G = C_2$ be the cyclic group of order 2.  The real representation ring of $C_2$ is
$$RO(C_2) = \mathbb{Z} \oplus \mathbb{Z},$$
generated by the trivial representation 1 and the sign representation $\sigma$.  The classical Borsuk--Ulam theorem in the unstable category is equivalent to the following statement when phrased in terms of the $C_2$-equivariant Mahowald invariant:
\begin{thm}[Borsuk--Ulam]
For all $q \geq 0$, the $RO(C_2)$-degree of $M^{C_2}_{a_{\sigma}}(a_{\sigma}^q)$ is zero.	
\end{thm}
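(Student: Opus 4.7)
The plan is to reduce the statement to a direct calculation in the $RO(C_2)$-graded ring $\pi^{C_2}_\bigstar S^0$. By Definition~\ref{df:GMahowaldInvariant}, any $\gamma \in M^{C_2}_{a_\sigma}(a_\sigma^q)$ lies in $RO(C_2)$-degree $k|a_\sigma| - |a_\sigma^q| = (q-k)\sigma$, where $k$ is the largest integer for which $a_\sigma^k$ divides $a_\sigma^q$. The tautological factorization $a_\sigma^q = 1 \cdot a_\sigma^q$ realizes $k = q$ with $\gamma = 1$, so the theorem reduces to showing that $k=q$ is maximal, i.e., that no $\gamma' \in \pi^{C_2}_\sigma S^0$ satisfies $\gamma' \cdot a_\sigma^{q+1} = a_\sigma^q$; this non-divisibility is the algebraic incarnation of the Borsuk--Ulam theorem inside the $RO(C_2)$-graded stable stems.

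First I would identify $\pi^{C_2}_\sigma S^0$. Applying $[-,S^0]^{C_2}$ to the cofiber sequence $(C_2)_+ \xrightarrow{\epsilon} S^0 \xrightarrow{a_\sigma} S^\sigma$ (where $\epsilon$ is the augmentation), the resulting long exact sequence truncates on the left because the restriction map $\pi^{C_2}_1 S^0 \twoheadrightarrow \pi_1 S^0$ is surjective, the Hopf class $\eta$ admitting a trivial-action equivariant lift. What remains is the short exact sequence
\begin{equation*}
0 \longrightarrow \pi^{C_2}_\sigma S^0 \xrightarrow{\cdot\, a_\sigma} A(C_2) \xrightarrow{\mathrm{res}^{C_2}_e} \mathbb{Z}.
\end{equation*}
Since $\mathrm{res}^{C_2}_e$ sends $1 \mapsto 1$ and $[C_2] \mapsto 2$, its kernel is the infinite cyclic subgroup $\mathbb{Z}\{2 - [C_2]\}$. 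Hence $\pi^{C_2}_\sigma S^0 \cong \mathbb{Z}$, with a distinguished generator $\gamma_0'$ characterized by $\gamma_0' \cdot a_\sigma = 2 - [C_2]$ in $A(C_2)$.

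The Frobenius relation $[C_2] \cdot a_\sigma = 0$ follows from the same cofiber sequence: the Burnside class $[C_2] \in A(C_2)$ factors as $\epsilon \circ \mathrm{Tr}$ for the transfer $\mathrm{Tr}: S^0 \to (C_2)_+$, while $a_\sigma \circ \epsilon = 0$. Consequently $\gamma_0' \cdot a_\sigma^{q+1} = (2 - [C_2]) \cdot a_\sigma^q = 2 a_\sigma^q$, so every $\gamma' = n\gamma_0' \in \pi^{C_2}_\sigma S^0$ satisfies $\gamma' \cdot a_\sigma^{q+1} = 2n \cdot a_\sigma^q$. Since the geometric fixed point map sends $a_\sigma^q$ to $1 \in \pi_0 S^0 = \mathbb{Z}$, the class $a_\sigma^q$ has infinite order, so $2n \cdot a_\sigma^q = a_\sigma^q$ has no integer solution. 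This rules out any such $\gamma'$ and confirms that $M^{C_2}_{a_\sigma}(a_\sigma^q)$ lies entirely in $RO(C_2)$-degree zero. The main obstacle is verifying the two ring identities $\gamma_0' \cdot a_\sigma = 2 - [C_2]$ and $[C_2] \cdot a_\sigma = 0$; together they transplant the classical Borsuk--Ulam theorem into the $RO(C_2)$-graded stable homotopy ring.
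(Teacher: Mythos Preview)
The paper does not give a proof of this statement; it merely records it as the $C_2$-equivariant reformulation of the classical Borsuk--Ulam theorem, with no argument supplied. Your proposal therefore cannot be compared against a proof in the paper, but it stands on its own as a correct, self-contained stable-homotopy-theoretic argument.

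Your reduction is right: the degree of any $\gamma \in M^{C_2}_{a_\sigma}(a_\sigma^q)$ is $(q-k)\sigma$, and the factorization $a_\sigma^q = 1 \cdot a_\sigma^q$ exhibits $k=q$, so the content is the non-divisibility of $a_\sigma^q$ by $a_\sigma^{q+1}$. Your computation of $\pi^{C_2}_\sigma S^0$ via the cofiber sequence $(C_2)_+ \to S^0 \to S^\sigma$ is correct: the restriction $\pi^{C_2}_1 S^0 \to \pi_1 S^0$ is surjective (the trivial-action lift of $\eta$ suffices), so $\pi^{C_2}_\sigma S^0$ embeds in $A(C_2)$ as $\ker(\mathrm{res}^{C_2}_e) = \mathbb{Z}\{2-[C_2]\}$. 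The Frobenius relation $[C_2]\cdot a_\sigma = 0$ follows exactly as you say, since $[C_2]$ factors through $(C_2)_+$ and $a_\sigma$ annihilates $(C_2)_+$ by the cofiber sequence. Hence any $\gamma' \in \pi^{C_2}_\sigma S^0$ satisfies $\gamma' a_\sigma^{q+1} = 2n\,a_\sigma^q$ for some integer $n$, and since $\Phi^{C_2}(a_\sigma^q)=1$ the class $a_\sigma^q$ has infinite order, ruling out $2n=1$.

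In short, where the paper simply invokes the classical Borsuk--Ulam theorem, you have supplied an internal proof via the structure of the $RO(C_2)$-graded Burnside ring. The argument is complete and the two key identities you flag as the crux are indeed the whole story.
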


\noindent (2) Let $G = C_2$.  Consider the homomorphism 
$$\Phi^{C_2}: \pi_{n}^{C_2} S^0 = [S^{n}, S^0]^{C_2} \longrightarrow [S^n, S^0] = \pi_n S^0$$
that is induced by the geometric fix point functor.  For any non-equivariant class $\alpha \in \pi_n S^0$, consider all of its preimages under the map $\Phi^{C_2}$ and their corresponding $C_2$-equivariant Mahowald invariants with respect to the Euler class $a_\sigma$.  

Among all the elements in $M^{C_2}_{a_\sigma}\big((\Phi^{C_2})^{-1} \alpha\big)$, pick the element that has the highest degree in its $\sigma$-component.  Then, apply the forgetful functor to the nonequivariant world.  Bruner and Greenlees \cite{BrunerGreenlees} proved that this construction produces the classical Mahowald invariant $M(\alpha)$ of $\alpha$, which has been studied extensively by Mahowald, Ravenel, and Behrens \cite{MahowaldRavenel, Behrens2, Behrens1}.

$$\begin{tikzcd}
S^{n+k\sigma} \ar[rrdd, ""{name=1, above = 0.1em, right = 0.2em}] & & S^{n+k} \ar[rrdd, "M(\alpha)", ""{name=2,below = 0.1em, left = 0.5em}] & & \\
&  & & & \\
S^n \ar[uu, "a_\sigma^k"] \ar[rr, "(\Phi^{C_2})^{-1} \alpha", ""{name=3, below}] &  & S^{0} & & S^0 \\
& & & &\\
S^n  \ar[rr, "\alpha", ""{name=4, above = 0.3em}] & & S^{0} & & 
\ar[rr, Rightarrow, "\text{forget}", from = 1, to = 2]
\ar[dd, Rightarrow, "\Phi^{C_2}", from = 3, to = 4]
\end{tikzcd}$$

In particular, when $n=0$ and $\alpha$ is a power of 2, Bredon \cite{BredonStableStems, BredonEquivariant} made conjectures about the degrees of the elements in $M^{C_2}_{a_\sigma}\big((\Phi^{C_2})^{-1} 2^q\big)$ for $q \geq 1$.  His conjecture was proved by Landweber \cite{Landweber}, who used equivariant K-theory.  Later, Bruner and Greenlees \cite{BrunerGreenlees} translated Mahowald and Ravenel's work \cite{MahowaldRavenel} and obtained an independent proof of Bredon's conjecture.

\begin{thm}[Landweber \cite{Landweber}, Mahowald--Ravenel \cite{MahowaldRavenel}]
For $q \geq 1$, the set $M(2^q)$ contains the first nonzero element of Adams filtration $q$.  Moreover, the following 4-periodic result holds:
\begin{equation*}
 |M^{C_2}_{a_\sigma}\big((\Phi^{C_2})^{-1} 2^q\big)| = \left\{
 \begin{aligned}
    (8k+1) \sigma & \ \ \ \textup{if} \ q = 4k+1\\
    (8k+2) \sigma & \ \ \ \textup{if} \ q = 4k+2\\
    (8k+3) \sigma & \ \ \ \textup{if} \ q = 4k+3\\
    (8k+7) \sigma & \ \ \ \textup{if} \ q = 4k+4.
      \end{aligned}
 \right.
\end{equation*}
\end{thm}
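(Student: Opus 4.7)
The plan is to prove this theorem by combining a detection argument in $C_2$-equivariant real $K$-theory with an explicit construction via the image of $J$. In view of the Bruner--Greenlees correspondence just recalled, finding $M^{C_2}_{a_\sigma}\big((\Phi^{C_2})^{-1}2^q\big)$ amounts to determining the maximal $k$ such that $2^q$ admits a $C_2$-equivariant lift $\gamma: S^{k\sigma}\to S^0$ with $\gamma\cdot a_\sigma^k = 2^q$; the classical statement about Adams filtration then follows by applying the forgetful functor and tracking the $a_\sigma$-filtration against the classical Adams filtration.

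For the upper bound on $k$, I would apply Atiyah's $C_2$-equivariant real $K$-theory $KR$. The coefficient ring $KR^\bigstar_{C_2}(S^0)$ carries both a Bott element (with mod-$8$ Bott periodicity compatible with $KO_*$) and the Euler class $a_\sigma$, with explicit torsion relations governed by $KO_*$. If such a $\gamma$ exists, then $KR^0_{C_2}(\gamma)$ produces an element of order $2^q$ inside a cyclic subgroup of $KR^0_{C_2}(S^{k\sigma})$ whose order is controlled by $k\bmod 8$; case analysis on the residue yields the claimed four-periodic upper bound. For the construction of lifts achieving these bounds, I would use Adams' image of $J$: via the cofiber sequence $S^0\xrightarrow{a_\sigma^k} S^{k\sigma}\to \Sigma\mathbb{R}P^\infty_{-k}$, the existence of $\gamma$ reduces by obstruction theory to finding a class in the stable homotopy of a suitable stunted projective space with the correct $d$- and $e$-invariants. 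The image-of-$J$ generators $\mu_{8m+1}$, $\eta\mu_{8m+1}$, $\eta^2\mu_{8m+1}$, and $\zeta_{8m+7}$, of orders $2^{4m+1}, 2^{4m+2}, 2^{4m+3}, 2^{4m+4}$ respectively, provide exactly these classes and are the first nonzero elements in their respective Adams filtrations.

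The main obstacle is the case $q = 4k+4$, where the $\sigma$-degree jumps from $8k+3$ to $8k+7$; this reflects the vanishing of $\pi_\ell KO$ for $\ell \equiv 4, 5, 6 \pmod 8$ in positive degrees, so no new $2$-torsion of the correct order appears in the intervening dimensions. Matching the $KR$-detection with the image-of-$J$ construction precisely at this boundary is the core of Landweber's original computation, and carefully tracking $a_\sigma$-filtration versus classical Adams filtration through the Bruner--Greenlees correspondence then yields the identification of $M(2^q)$ with an image-of-$J$ class of Adams filtration exactly $q$. A secondary subtlety is verifying that the classes in the image of $J$ genuinely lift to $C_2$-equivariant $KR$-classes; this follows from the identification of the $C_2$-fixed points of $KR$ with $KO$ and the fact that the $J$-homomorphism factors through $KO$.
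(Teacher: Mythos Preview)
The paper does not prove this theorem; it is quoted as background from \cite{Landweber} and \cite{MahowaldRavenel} (via \cite{BrunerGreenlees}), so there is no in-paper argument to compare against. Your overall architecture---an equivariant $K$-theoretic obstruction for the upper bound and an explicit image-of-$J$ construction for the lower bound---is indeed the shape of the literature proofs: Landweber's argument is a $KO_{C_2}$ computation, and the Mahowald--Ravenel side (as reinterpreted by Bruner--Greenlees) produces the lifts via $v_1$-periodic classes in stunted projective spectra.

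There is, however, a genuine error in your construction step. You assert that $\mu_{8m+1}$, $\eta\mu_{8m+1}$, $\eta^2\mu_{8m+1}$, and $\zeta_{8m+7}$ have orders $2^{4m+1}$, $2^{4m+2}$, $2^{4m+3}$, $2^{4m+4}$. This is false: the $\mu$-family elements and their $\eta$-multiples all have order~$2$ in $\pi_*S^0$. You may be conflating the order of the element with its Adams filtration (which \emph{is} $4m+1,\ldots,4m+4$), or with the order of the image-of-$J$ summand in $\pi_{8m+7}$ (which is $\mathbb{Z}/2^{4m+4}$, but only in that one degree out of the four). The construction does not work by exhibiting an element of order $2^q$; rather, one shows that the attaching map of the relevant cell in $P^\infty_{-k}$ is detected by an Adams-filtration-$q$ class, so that $2^q$ on the bottom cell factors through the required skeleton. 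Your cofiber sequence is also not quite right as written: the nonequivariant translation goes through $S(k\sigma)_+ \to S^0 \to S^{k\sigma}$ and the identification of $(S(k\sigma)_+)_{hC_2}$ with a stunted projective space, not a direct cofiber $S^{k\sigma}\to\Sigma\mathbb{R}P^\infty_{-k}$. Fixing the role of ``order'' versus ``Adams filtration'' and stating the obstruction-theoretic step in terms of the Atiyah--Hirzebruch or Adams filtration on $P^\infty_{-k}$ would repair the argument.
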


We would like to mention that Bredon--L\"{o}ffler \cite{BredonEquivariant, BredonStableStems} and Mahowald--Ravenel \cite{MahowaldRavenel} have independently made the following conjecture:
\begin{conj}[Bredon--L\"{o}ffler, Mahowald--Ravenel]
For any non-equivariant class $\alpha$ that is of positive degree, we have the inequality
$$|M(\alpha)| \leq 3|\alpha|.$$
\end{conj}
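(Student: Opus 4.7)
My plan is to recast the statement via Lin's theorem and the $C_2$-equivariant formulation given in the paper, then to attack it through a combination of Adams spectral sequence and chromatic or equivariant methods. By Lin's theorem, the $2$-complete sphere is equivalent to the homotopy limit $\operatorname{holim}_k \Sigma RP^\infty_{-k}$, so for a positive-degree class $\alpha \in \pi_n S^0$ one can lift $(\Phi^{C_2})^{-1}\alpha$ up the $C_2$-equivariant tower $a_\sigma^k \cdot (\Phi^{C_2})^{-1}\alpha$ to a maximal power $k$, and $M(\alpha) \in \pi_{n+k} S^0$ is the resulting class of degree $n+k$. In this formulation, the conjecture $|M(\alpha)| \leq 3|\alpha|$ is equivalent to the bound $k \leq 2n$ on the maximal surviving Euler-class power. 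The first step is to prove the algebraic analogue, replacing the geometric Lin tower by the Singer construction $R_+ \mathbb{F}_2$ and defining $M^{\operatorname{alg}}(\alpha)$ on Ext classes.

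Second, I would bound $|M^{\operatorname{alg}}(\alpha)|$ using the Adams vanishing line. For an Ext class $\bar\alpha \in \operatorname{Ext}_{\mathcal{A}}^{s, s+n}(\mathbb{F}_2, \mathbb{F}_2)$ detecting $\alpha$, a lift of its $C_2$-preimage up the algebraic Lin tower to $a_\sigma$-power $k$ corresponds, via the Singer construction, to an Ext class whose bi-degree is pinned by $(s, n, k)$. The classical Adams vanishing edge at $p=2$, combined with structural features of $\operatorname{Ext}$ of the Singer construction, should force all such candidate classes to vanish once $k > 2n$ up to bounded corrections, yielding the algebraic version of the conjecture. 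For specific families such as $\alpha = 2^q$ or image-of-$J$ elements, this is essentially the method underlying Mahowald--Ravenel's computations, and it recovers the known exact values listed in the preceding theorem.

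The main obstacle is the passage from the algebraic to the geometric Mahowald invariant. The results of Bruner--Greenlees ensure that $M^{\operatorname{alg}}(\alpha)$ \emph{detects} $M(\alpha)$ at the $E_\infty$-page, but Adams differentials in the Lin tower can in principle shift the true $M(\alpha)$ to a strictly higher topological degree than the algebraic prediction. Controlling this shift uniformly in $\alpha$ would require either a chromatic argument exploiting the Mahowald--Ravenel--Shick principle that the root invariant sends $v_n$-periodic families to $v_{n+1}$-periodic ones (so chromatic convergence prevents unbounded iterated growth of topological degree), or a $C_2$-equivariant obstruction argument modeled on the $\Pin$-equivariant techniques developed elsewhere in this paper, where one replaces the Euler classes $a_\mathbb{H}$ and $a_{\widetilde{\mathbb{R}}}$ by $a_\sigma$ and tracks Euler-class divisibilities systematically across the full $RO(C_2)$-graded homotopy. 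This final comparison step is precisely why the conjecture has remained open for three decades, and it is what I expect to be the hardest component of any complete proof.
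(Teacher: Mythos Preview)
The paper does not prove this statement; it is recorded as an open conjecture (Bredon--L\"{o}ffler, Mahowald--Ravenel) and no proof is offered. So there is no paper proof to compare your proposal against, and your task was to prove something the paper explicitly leaves open.

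Your proposal is not a proof but a strategy, and you yourself identify the fatal gap in the final paragraph. Let me make it sharper. The relationship between the algebraic and geometric Mahowald invariants goes the wrong way for an upper-bound conjecture: the algebraic invariant $M^{\operatorname{alg}}(\alpha)$ gives a \emph{lower} bound for $|M(\alpha)|$, because Adams differentials in the Lin tower can only push the geometric lift \emph{further}, increasing the degree. Thus even a complete proof that $|M^{\operatorname{alg}}(\alpha)| \leq 3|\alpha|$ would say nothing about $|M(\alpha)| \leq 3|\alpha|$. Your second step, the Adams vanishing-line argument for the algebraic bound, is also not routine: the vanishing line has slope $1/2$ above the Adams edge, and translating that into a factor-of-$3$ bound on $|M^{\operatorname{alg}}(\alpha)|$ requires more than a bidegree count, since the Singer construction shifts filtrations in a way that does not automatically cap $k$ at $2n$. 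The chromatic heuristic you cite (root invariants raise chromatic height by one) is suggestive but far from a bound on topological degree; height-$(n+1)$ classes can have arbitrarily large stems. In short, you have correctly located why the conjecture is hard, but nothing in the proposal closes that gap.
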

Jones \cite{Jones} proved that $|M(\alpha)| \geq 2|\alpha|$ for all non-equivariant classes $\alpha$ of positive degrees.  The $C_2$-equivariant formulation of the classical Mahowald invariant gives a simpler proof of Jones's result (see \cite{BrunerGreenlees, Bruner}, for example). 
\vspace{0.1in}

\noindent (3) Let $G = C_4$, the cyclic group of order 4.  The real representation ring of $C_4$ is
$$RO(C_4) = \mathbb{Z} \oplus \mathbb{Z} \oplus \mathbb{Z},$$
generated by the trivial representation 1, the sign representation $\sigma_4$, and the two-dimensional representation $\lambda$ that corresponds to rotation by 90 degrees.  The $C_4$-equivariant Mahowald invariant of powers of $a_{\sigma_4}$ with respect to $a_{2\lambda}$ has been studied by Crabb \cite{Crabb}, Schmidt \cite{Schmidt2003}, and Stolz \cite{Stolz1989}.

\begin{thm}[Crabb \cite{Crabb}, Schmidt \cite{Schmidt2003}, Stolz \cite{Stolz1989}] \label{C4 Minv}
For $q \geq 1$, the following 8-periodic result holds:
\begin{equation*}
 |M^{C_4}_{a_{2\lambda}}(a_{\sigma_4}^q)| + q \sigma_4 = \left\{
 \begin{aligned}
    &8k \lambda & \ \ \ \textup{if} \ q = 8k+1 \ \\
    &8k \lambda & \ \ \ \textup{if} \ q = 8k+2 \ \\
    &(8k+2) \lambda & \ \ \ \textup{if} \ q = 8k+3 \ \\
    &(8k+2) \lambda & \ \ \ \textup{if} \ q = 8k+4 \ \\
    &(8k+2) \lambda & \ \ \ \textup{if} \ q = 8k+5\ \\
    &(8k+4) \lambda & \ \ \ \textup{if} \ q = 8k+6\ \\
    &(8k+4) \lambda & \ \ \ \textup{if} \ q = 8k+7\ \\
    &(8k+4) \lambda & \ \ \ \textup{if} \ q = 8k+8.
      \end{aligned}
 \right.
\end{equation*}
	\end{thm}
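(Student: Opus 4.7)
The plan is to unpack Definition~\ref{df:GMahowaldInvariant} and restate the assertion as a sharp divisibility claim: for each $q$, one must prove that $a_{\sigma_4}^q \in \pi^{C_4}_{-q\sigma_4}S^0$ is divisible by $a_{2\lambda}^{k(q)}$ but not by $a_{2\lambda}^{k(q)+1}$, where the integer $k(q)$ is read off from the table ($k=4m$ for $q\in\{8m{+}1,8m{+}2\}$, $k=4m{+}1$ for $q\in\{8m{+}3,8m{+}4,8m{+}5\}$, and $k=4m{+}2$ for $q\in\{8m{+}6,8m{+}7,8m{+}8\}$). I would prove the upper and lower bounds on $k(q)$ separately and then check that they agree.

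For the upper bound (non-divisibility), my plan is to detect the Euler classes in equivariant $KO$-theory. The complexifications $\sigma_4^{\mathbb{C}}$ and $\lambda^{\mathbb{C}}$ have explicit characters in $R(C_4)=\mathbb{Z}[L]/(L^4-1)$, and the corresponding $K$-theoretic Euler classes are $(1-L^2)^q$ and $\bigl((1-L)(1-L^{-1})\bigr)^{k}$. After completion at the augmentation ideal, the maximal divisibility of $(1-L^2)^q$ by the $\lambda$-Euler class becomes a purely algebraic question, and the real Bott periodicity accounts for the $8$-fold periodicity of the answer. This is essentially Stolz's argument and should recover the upper bound uniformly in all eight residue classes.

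For the lower bound (explicit divisibility), my plan is to construct a dividing class $\gamma \in \pi^{C_4}_{2k(q)\lambda - q\sigma_4}S^0$ with $\gamma\cdot a_{2\lambda}^{k(q)} = a_{\sigma_4}^q$ by lifting through a cell diagram of $BC_4$. Restricting along $C_2\hookrightarrow C_4$ makes $\sigma_4$ trivial and $\lambda$ the reduced regular representation of $C_2$, so the classical Mahowald invariants of the powers $2^q$ (Landweber, Mahowald--Ravenel, quoted above) are available as input. The equivariant refinement is then extracted with cell-diagram techniques and the $j$-based Atiyah--Hirzebruch spectral sequence, exactly in the spirit of the methods the main body of this paper will develop for $\Pin$.

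I expect the main obstacle to lie in the boundary residues $q\equiv 3,6\pmod 8$, where $k(q)$ jumps by one and the existence of the dividing class cannot be read off from the $K$-theory pattern alone. In those cases $\gamma$ must be built from a specific Toda-bracket representative in the non-equivariant stable stems, and correctly identifying which bracket is forced, together with verifying that it survives to a genuine equivariant class, is the delicate step: it requires simultaneous control of the $a_{\sigma_4}$-tower and the $a_{2\lambda}$-tower and of their hidden extensions on the $j$-based $E_\infty$-page.
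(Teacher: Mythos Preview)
This theorem is not proved in the paper; it is cited as a result from the literature (Crabb~\cite{Crabb}, Schmidt~\cite{Schmidt2003}, Stolz~\cite{Stolz1989}) and serves only as historical background for the $\Pin$-equivariant computation that is the paper's actual subject. There is therefore no ``paper's own proof'' to compare your proposal against.

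That said, your outline is broadly in the spirit of how the cited references proceed: the upper bound via $KO$-theoretic Euler classes is indeed what Stolz does, and Schmidt's thesis contains the kind of equivariant cell-diagram analysis you describe for the lower bound. One caution: your plan to ``restrict along $C_2\hookrightarrow C_4$'' and import the classical Mahowald invariants of $2^q$ does not directly produce the dividing classes you need, because restriction of $a_{\sigma_4}$ to $C_2$ is trivial (the representation $\sigma_4$ restricts to the trivial representation), so the restricted problem collapses. The actual constructions in Crabb and Schmidt work more directly with the $C_4$-equivariant cell structure of $S(\infty\lambda)$ and its quotient by $C_4$, rather than factoring through the classical $C_2$ Mahowald invariant. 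If you want to supply a proof, you should consult those references rather than attempt to reconstruct one from the $\Pin$ machinery developed later in this paper.
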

Since $C_4$ is a subgroup of $\Pin$, Theorem~\ref{C4 Minv} was used by Minami \cite{Minami} and Schmidt \cite{Schmidt2003} to deduce the existence of Furuta--Mahowald classes.  Crabb \cite{Crabb} also studied the $C_4$-equivariant Mahowald invariant of powers of $a_{\sigma_4}$ with respect to $a_{\lambda}$.

For our case, we are interested in the group $G = \textup{Pin(2)}$ and its irreducible representations $\mathbb{H}$ and $\widetilde{\mathbb{R}}$ (defined in Theorem~\ref{lem: SW map is Furuta}).  By definition, it is clear that a level-$(p,q)$ Furuta--Mahowald class exists if and only if the $\mathbb{H}$-degree of 
$$|M^{\textup{Pin(2)}}_{a_{\mathbb{H}}}(a_{\widetilde{\mathbb{R}}}^q)| + q \widetilde{\mathbb{R}}$$
is greater than or equal to $p$. 

To prove our main theorem (Theorem~\ref{main theorem}), we translate it into a problem of analyzing the Pin(2)-equivariant Mahowald invariants of powers of $a_{\widetilde{\mathbb{R}}}$ with respect to $a_{\mathbb{H}}$.  After this translation, our main theorem is equivalent to the following theorem: 
\begin{thm}\label{thm:mainTheoremMInvariant}
For $q \geq 4$, the following 16-periodic result holds:
\begin{equation*}
 |M^{\textup{Pin(2)}}_{a_{\mathbb{H}}}(a_{\widetilde{\mathbb{R}}}^q)| + q \widetilde{\mathbb{R}} = \left\{
 \begin{array}{ll  |  ll}
    (8k-1) \mathbb{H} &  \textup{if} \ q = 16k+1   \hspace{0.1in}&\hspace{0.1in}    (8k+3) \mathbb{H} & \textup{if} \ q = 16k+9\\
    (8k-1) \mathbb{H} &  \textup{if} \ q = 16k+2    \hspace{0.1in}&\hspace{0.1in}   (8k+3) \mathbb{H} &  \textup{if} \ q = 16k+10\\
    (8k-1) \mathbb{H} &  \textup{if} \ q = 16k+3  \hspace{0.1in}&\hspace{0.1in}  (8k+4) \mathbb{H} &  \textup{if} \ q = 16k+11\\
    (8k+1) \mathbb{H} & \textup{if} \ q = 16k+4   \hspace{0.1in}&\hspace{0.1in} (8k+5) \mathbb{H} &  \textup{if} \ q = 16k+12\\
    (8k+1) \mathbb{H} &  \textup{if} \ q = 16k+5   \hspace{0.1in}&\hspace{0.1in}  (8k+5) \mathbb{H} &  \textup{if} \ q = 16k+13\\
    (8k+2) \mathbb{H} &  \textup{if} \ q = 16k+6   \hspace{0.1in}&\hspace{0.1in} (8k+6) \mathbb{H} &  \textup{if} \ q = 16k+14\\
    (8k+2) \mathbb{H} &  \textup{if} \ q = 16k+7  \hspace{0.1in}&\hspace{0.1in}  (8k+6) \mathbb{H} &  \textup{if} \ q = 16k+15\\
    (8k+2) \mathbb{H} &  \textup{if} \ q = 16k+8  \hspace{0.1in}&\hspace{0.1in}  (8k+6) \mathbb{H} &  \textup{if} \ q = 16k+16.
      \end{array}
 \right.
\end{equation*}
\end{thm}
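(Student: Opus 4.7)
The plan is to translate the equivariant statement into a non-equivariant problem about maps between finite spectra built from $B\Pin$ and associated Thom spectra, following the framework sketched in the introduction. The $a_{\mathbb{H}}$-tower gives cofiber sequences $S^{-(p+1)\mathbb{H}} \to S^{-p\mathbb{H}} \to X_p$, whose fixed-point/Borel analysis produces finite Thom spectra whose underlying CW-structure is controlled by $H^*(B\Pin)$. The divisibility of $a_{\widetilde{\mathbb{R}}}^q$ by $a_{\mathbb{H}}^p$ then translates into the question of whether a specific universal class in $\pi_*(X_p)$ is nonzero.

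First I would set up the cell-diagram dictionary. Since $H^*(B\Pin;\F)$ is known and the $a_{\widetilde{\mathbb{R}}}$-multiplication acts as an explicit degree-shift on cells coming from the sign representation tower, each $X_p$ admits a concrete CW presentation. This lets one replace the equivariant problem with the non-equivariant question: does a specified class in the stable homotopy of a small explicit CW spectrum survive? The $16$-periodicity in the statement arises as $2$ (the real dimension of the non-trivial irreducible times $8$) combined with the $8$-fold Bott periodicity of $KO$, which suggests that the natural detecting theory is $j$, the connective image-of-$J$ spectrum.

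Second, I would compute the relevant homotopy using the $j$-based Atiyah--Hirzebruch spectral sequence on the spectra $X_p$. The $E_2$-page is $H_*(X_p;j_*)$, and the differentials are governed by attaching maps that are built from $\eta$, $\nu$, and image-of-$J$ classes in the stable stems, all known through the range needed. For the existence (``if'') direction, I would produce the Furuta--Mahowald classes as explicit permanent cycles in this spectral sequence, exhibited by pairing known $\alpha$- and $\beta$-family generators with the appropriate cells. For the non-existence (``only if'') direction, I would locate a nonzero $j$-based differential (or hidden extension) that kills the candidate class one step beyond the stated bound, using compatibility with classical Adams/Adams--Novikov differentials to verify the targets are indeed nonzero.

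The main obstacle I expect is the case analysis at the transitional residues, especially $q \equiv 11, 12 \pmod{16}$, where the $\mathbb{H}$-degree jumps and the $q \equiv 4 \pmod 8$ case disagrees with Jones's conjecture. These are the stems where $\eta$- and $\nu$-multiples of image-of-$J$ classes interact with the Thom-spectrum cell structure in ways that can either enable or block a factorization; ruling out spurious factorizations and producing the sharp lifts here requires delicate Toda-bracket bookkeeping in $\pi_*^s$ together with a careful treatment of hidden extensions in the $j$-based AHSS. This is also the location where Schmidt's earlier $p=4$ computation contains the sign/differential error alluded to in Remark~\ref{rem:SchmidtError}, so verifying the revised answer will require re-doing that piece of the calculation from scratch and cross-checking with the $C_4$-equivariant Mahowald invariant computation of Crabb--Schmidt--Stolz (Theorem~\ref{C4 Minv}) via restriction along $C_4 \hookrightarrow \Pin$.
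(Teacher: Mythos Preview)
Your plan has the right overall shape---nonequivariant reduction via Thom spectra over $B\Pin$, cell diagrams, and a $j$-based Atiyah--Hirzebruch spectral sequence---and you correctly flag the $q\equiv 11,12\pmod{16}$ transitions as the crux. But the plan treats existence and non-existence as two independent AHSS computations (permanent cycles versus differentials), and this is precisely where the approach breaks down. The paper is explicit that, unlike the classical Mahowald--Ravenel argument for $M(2^q)$, the lower and upper bounds here are \emph{not} independent: a direct $ko$-detection argument fails at the columns $X(8k+3)$, and no analogue of Toda's ``$16\cdot\mathrm{id}=0$ on an $8$-cell subquotient'' is available. What replaces it is an intertwined seven-step argument in which a rough lower bound (built by a delicate inductive construction of maps $f_k:X(8k+4)_{8k+1}^\infty\to S^0$ satisfying four technical properties, including a Toda-bracket recursion $\phi_k-\phi_{k-2}\chi_k\in\langle\phi_{k-1},2,\tau_k\rangle$) and a rough $KO$-upper bound are first established, then the information squeezed from their overlap is used to identify the ``first lock'' map as $\{P^{k-1}h_1^3\}$, and only then can the remaining cases be resolved.

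The second missing ingredient is the specific machinery for the $8k+3$ columns. The paper does not settle these by AHSS bookkeeping alone: it constructs auxiliary spectra $Z(k)$ as fibers of maps out of truncated stunted projective spaces $\Sigma^{-8k-3}\CP^{8k-1}_{4k+1}$, proves $ko$-injectivity of their skeleta, and then uses explicit Chern-character and $e$-invariant computations (with combinatorial control over the coefficients of $(\ln(1+z)/z)^{4k+1}$) to force or rule out the relevant $j''$-based differentials. The parity of $k$ enters through whether $Z(k)^{8k-4}_{8k-8}$ splits as $S^{8k-8}\vee S^{8k-4}$ or is $\Sigma^{8k-8}C\eta^3$; this is what distinguishes $p\equiv 4$ from $p\equiv 0\pmod 8$ and is the actual content behind the correction to Schmidt. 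Your proposal to cross-check against the $C_4$ computation via restriction would not supply this, since the $C_4$ result is $8$-periodic and cannot by itself see the $16$-periodic break.
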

Note that when $q = 16k+11$, 
$$|M^{\textup{Pin(2)}}_{a_{\mathbb{H}}}(a_{\widetilde{\mathbb{R}}}^q)| + q \widetilde{\mathbb{R}} = (8k+4) \mathbb{H}.$$
If the answer had been $(8k+3) \mathbb{H}$ instead, then Theorem~\ref{thm:mainTheoremMInvariant} would be an 8-periodic result and Jones's conjecture (Conjecture~\ref{Jones}) would be true.  This deviation from Jones conjecture is explained in details in Step 6 of our proof (See Sections~\ref{sec:OutlineofProof} and \ref{sec:Steps5and6}).

\subsection{Summary of techniques}
To resolve Question~\ref{main question}, which is a problem in $\Pin$-equivariant stable homotopy theory, we first translate it into a problem in non-equivariant stable homotopy theory.  More specifically, we consider the sequence of maps
$$X(m) \longrightarrow X(m-1) \longrightarrow \cdots \longrightarrow S^0, $$
which are maps between certain Thom spectra over $\textup{BPin}(2)$ that are induced by inclusions of (virtual) subbundles.  Given this sequence of maps, our Pin(2)-equivariant problem is equivalent to asking what is the maximal skeleton of each $X(m)$ that maps trivially to $S^0$.  We call the ``vanishing" line that connects these skeletons the \textit{Mahowald line}.  Intuitively, by drawing the cell diagrams for each $X(m)$, we can visualize the Mahowald line in Figure~\ref{fig:OutlineAccurateMahowaldLine}.  See Section~\ref{subsec:equivToNonEquiv} for more details.   

One can also form a Mahowald line for the computation of the classical Mahowald invariants for powers of 2.  The analogous diagram to Figure~\ref{fig:OutlineAccurateMahowaldLine} in the classical case has the cell diagram for $\Sigma RP^{\infty}_{-\infty}$ in each column.  Maps between the columns are the multiplication by 2 maps.  The classical Mahowald line in this case is established by Mahowald--Ravenel by proving a lower bound and an upper bound for the line, and observing that they coincide.  Our proof in the $\Pin$-equivariant case is in the same spirit as Mahowald--Ravenel.  However, as we point out below, it is significantly more complicated and delicate than the classical arguments:

\begin{enumerate} [leftmargin=*]
\item Classically, the lower bound is proved by using a theorem of Toda \cite{Toda}, which states that 16 times the identity maps on certain 8-cell subquotients of $RP^{\infty}$ are zero.  This implies that the Mahowald line rises by at least 8 dimensions every time we move by four columns.  In our situation, the analogue of Toda's result does not hold.  Therefore, our situation requires a more delicate inductive argument that gives us control over several cells above the Mahowald line (this control is not needed in the classical case). 

\item Classically, the upper bound is proved via detection by the real connective $K$-theory $ko$.  In our case, this techniques does not work at $X(8k+3)$, $k \geq 1$, which is the crux of the geometric application of our main theorem (Theorem~\ref{theorem: 10/8+4} and Corollary~\ref{corollary: 10/8+4}). To handle this case, we need a careful study of both the $j$-based and the sphere-based Atiyah--Hirzebruch spectral sequence of $X(8k+3)$. 

\item Classically, the lower bound and the upper bound are proven independently, and they happen to coincide.  In our case, the proofs for the lower bound and the upper bound are not independent.  More precisely, we first establish a rough lower bound in Step 1 (Section~\ref{subsec:Step1}) and a rough upper bound in Step 2 (Section~\ref{subsec:Section2Step2}).  These rough bounds do not coincide, but they do give us some information on the cells that are located in between them (Step 3, Section~\ref{subsec:Section2Step3}).  Using this information, we refine the lower bound and the upper bound step-by-step, while updating information about the undetermined cells until the two bounds finally match each other (Steps 4--7, Sections~\ref{subsec:Section2Step4}--\ref{subsec:Step6}). 
\end{enumerate}

\subsection{Summary of contents}
We now turn to give a summary of the paper.  In Section~\ref{sec:OutlineofProof}, we provide an outline-of-proof for our main theorem (Theorem~\ref{main theorem}).  We first reduce the $\Pin$-equivariant statement regarding the existence of a level-$(p,q)$ Furuta--Mahowald class into a non-equivariant statement (Proposition~\ref{prop: equivariant to nonequivariant}).  The non-equivariant statement is determined by the location of the Mahowald line.  Theorem~\ref{thm:exactMahowaldLine} proves the exact location of the Mahowald line, from which our main theorem directly follows.  Our proof of Theorem~\ref{thm:exactMahowaldLine} consists of seven steps, described in Sections~\ref{subsec:Step1}--\ref{subsec:Step6}.  The readers should regard Section~\ref{sec:OutlineofProof} as a roadmap to the rest of the paper, as it contains all the main statements needed to prove Theorem~\ref{thm:exactMahowaldLine}.


In Section~\ref{sec:Preparations}, we define maps between certain subquotients of $X(m)$ that will be useful in the later sections.  In Section~\ref{sec:AttachingMaps}, we prove certain attaching maps in $X(m)$.  Sections~\ref{sec:Step1}--\ref{sec:Steps5and6} prove all the statements that are listed in Sections~\ref{subsec:Step1}--\ref{subsec:Step6}.

This paper has two appendices.  Appendix~\ref{sec:AppendixA} proves the combinatorial statements that are needed for the arguments in Sections~\ref{sec:secondlock} and \ref{sec:Steps5and6}.  Appendix~\ref{sec:AppendixB} recalls the definition of cell diagrams, a tool that we use for illustration purposes throughout the paper.

\subsection{Acknowledgements} 
The authors would like to thank Mark Behrens, Rob Bruner, Simon Donaldson, Houhong Fan, Dan Isaksen, Achim Krause, Peter Kronheimer, Ciprian Manolescu, Haynes Miller, Tom Mrowka, Doug Ravenel, and Guozhen Wang for helpful conversions.  The authors would also like to thank Zilin Jiang and Yufei Zhao for writing a program in the early stages of the project to check our combinatorial results in Appendix A.  The first author was supported by NSF grant DMS-1810917; the second author was supported by NSF grant DMS-1707857; and the fourth author was supported by NSF grant DMS-1810638.

\newpage
\section{Outline of Proof for Main Theorem}\label{sec:OutlineofProof}

In this section, we give an outline of our proof for Theorem~\ref{main theorem}.
\subsection{Equivariant to nonequivariant reduction}\label{subsec:equivToNonEquiv}
Consider the classifying space $B\Pin=S(\infty\mathbb{H})/\Pin$. There is a universal bundle 
$$
\begin{tikzcd} \Pin  \ar[r,hookrightarrow]&  E\Pin \ar[r]& B\Pin. \end{tikzcd}
$$
We let $\lambda$ be the line bundle associated to the representation $\tilde{\mathbb{R}}$ 
and set 
$$X(m):=\Thom(\textup{BPin}(2),-m\lambda).$$

Alternatively, there is a $C_{2}$-action on the space $BS^{1}=\mathbb{C}P^{\infty}$, given by:
\begin{equation}\label{C2 action}
(z_1, z_2, z_3, z_4, \ldots, z_{2n-1}, z_{2n} ) \longmapsto (-\bar{z}_2, \bar{z}_1, -\bar{z}_4, \bar{z}_3, \ldots, -\bar{z}_{2n}, \bar{z}_{2n-1}).
\end{equation}
The quotient space of $BS^{1}$ with respect to this $C_2$-action is the classifying space $\textup{BPin}(2)$.  
Given this, $\lambda$ can also be defined as the line bundle that is associated to the principal bundle 
$$\begin{tikzcd} C_2 \ar[r,hookrightarrow]&  BS^{1} \ar[r] &\textup{BPin}(2).\end{tikzcd}$$

Note that there is a fiber bundle 
\begin{equation}\label{BpinoverBSU}
\begin{tikzcd}
\mathbb{R}\textup{P}^{2}\ar[r,hookrightarrow] & \textup{BPin}(2) \ar[r]&  \mathbb{H}P^{\infty}.
\end{tikzcd}
\end{equation}
The cellular structure on $\mathbb{H}P^{\infty}$ (one cell in dimension $4k$ for each $k \geq 0$) and $\mathbb{R}\textup{P}^{2}$ (one cell in dimensions $0$,$1$,$2$) induces a cellular structure on $\textup{BPin}(2)$, and hence on $X(m)$.  Given this cellular structure, we use $X(m)^{a}_{b}$ to denote the subquotient of $X(m)$ that contains all cells of dimensions between $a$ and $b$. 

For $m\geq n$, the inclusion $n \lambda \hookrightarrow m \lambda$ of subbundles induces a map
$$i(m,n):X(m)\longrightarrow X(n).$$
Let 
$$c(0):X(0)=\Sigma^{\infty}\textup{BPin}(2)_{+} \longrightarrow S^{0}$$
be the stabilization of the base-point preserving map that sends all of $\textup{BPin}(2)$ to the point in $S^0$ that is not the base-point.  For $m>0$, define the map $c(m)$ to be the composition 
$$X(m)\xrightarrow{i(m,n)} X(0)\xrightarrow{c(0)}S^{0}.$$ 
We will also define the map $c(m)^{k}$ to be the restriction of $c(m)$ to the subcomplex $X(m)^{k}$:
$$c(m)^{k}:X(m)^{k}\longrightarrow S^{0}.$$

\begin{prop}\label{prop: equivariant to nonequivariant}
A level-$(p,q)$ Furuta--Mahowald class exists if and only if the map
$$
c(q)^{4p-2-q}:X(q)^{4p-2-q}\longrightarrow S^{0}
$$
is zero.
\end{prop}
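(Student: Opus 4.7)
The plan is to convert the Pin(2)-equivariant existence question into a non-equivariant vanishing question by combining the cofiber sequence of $a_{\mathbb{H}}^p$ with passage to $\Pin$-orbits. First I would unfold the Furuta--Mahowald diagram: by definition, a level-$(p,q)$ class is a filler of the triangle, so its existence is equivalent to $a_{\widetilde{\mathbb{R}}}^q \colon S^0 \to S^{q\widetilde{\mathbb{R}}}$ extending along $a_\mathbb{H}^p \colon S^0 \to S^{p\mathbb{H}}$. Applying the $\Pin$-equivariant cofiber sequence
$$ S(p\mathbb{H})_+ \xrightarrow{\alpha} S^0 \xrightarrow{a_\mathbb{H}^p} S^{p\mathbb{H}}, $$
this extension exists if and only if the composite $a_{\widetilde{\mathbb{R}}}^q \circ \alpha$ vanishes equivariantly. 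Smashing with $S^{-q\widetilde{\mathbb{R}}}$, this is in turn equivalent to the nullity of the $\Pin$-equivariant map
$$ f \colon S(p\mathbb{H})_+ \wedge S^{-q\widetilde{\mathbb{R}}} \xrightarrow{\alpha \wedge 1} S^{-q\widetilde{\mathbb{R}}} \xrightarrow{a_{\widetilde{\mathbb{R}}}^q} S^0. $$

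Next I would take $\Pin$-homotopy orbits. Since $\Pin$ acts freely on $S(p\mathbb{H})$, the source of $f$ is a free $\Pin$-spectrum. For any free $\Pin$-spectrum $A$ and the trivial spectrum $S^0$, an equivariant map $A \to S^0$ factors uniquely through the orbit spectrum $A/\Pin \simeq A_{h\Pin}$, yielding the natural isomorphism
$$ [A, S^0]^{\Pin} \;\cong\; [A_{h\Pin}, S^0]. $$
Applied to $f$, this reduces nullity of $f$ to nullity of a non-equivariant map $f_{h\Pin}$, which is computed by taking Borel constructions levelwise and post-composing with the augmentation $c(0) \colon \Pin_+ \text{-classifying-space} \to S^0$.

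The final step is to identify $f_{h\Pin}$ with $c(q)^{4p-2-q}$. The inclusion $S(p\mathbb{H}) \hookrightarrow S(\infty \mathbb{H})$ is a $\Pin$-equivariant map whose cofiber is $\Sigma S(p\mathbb{H}) = S^{4p}$, so on $\Pin$-orbits it identifies $S(p\mathbb{H})/\Pin$ with the $(4p-2)$-skeleton of $\textup{BPin}(2)$ in the CW structure coming from (\ref{BpinoverBSU}). Thomifying by $-q\lambda$ shifts cells down by $q$ and yields
$$(S(p\mathbb{H})_+ \wedge S^{-q\widetilde{\mathbb{R}}})_{h\Pin} \;=\; \Thom(S(p\mathbb{H})/\Pin,\, -q\lambda) \;\simeq\; X(q)^{4p-2-q}.$$
Under this identification, $(\alpha \wedge 1)_{h\Pin}$ is the canonical inclusion $X(q)^{4p-2-q} \hookrightarrow X(q)$, and the Borel construction of $a_{\widetilde{\mathbb{R}}}^q \colon S^{-q\widetilde{\mathbb{R}}} \to S^0$ is the Euler-class map $i(q,0) \colon X(q) \to X(0)$. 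Composing with $c(0)$ gives $f_{h\Pin} = c(0) \circ i(q,0) \circ (\text{incl}) = c(q)^{4p-2-q}$, completing the equivalence.

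The main obstacle will be to carry out the identifications carefully, especially verifying that the passage from the equivariant composite $f$ to the non-equivariant map $c(q)^{4p-2-q}$ is exactly the one dictated by the free-to-trivial adjunction (i.e.\ the $c(0)$ factor must appear with the correct sign/direction), and that $S(p\mathbb{H})/\Pin$ genuinely matches $X(q)^{4p-2-q}$ at the cellular level rather than merely up to some uncontrolled shift. Once these are in place, Proposition~\ref{prop: equivariant to nonequivariant} follows from stringing together the three equivalences above.
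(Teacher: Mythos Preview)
Your proposal is correct and follows essentially the same route as the paper: the cofiber sequence $S(p\mathbb{H})_+\to S^0\to S^{p\mathbb{H}}$, the passage through $S^{-q\widetilde{\mathbb{R}}}$ (the paper phrases this as Spanier--Whitehead dualizing $a_{\widetilde{\mathbb{R}}}^q$, you phrase it as smashing), the free-to-trivial adjunction $[A,S^0]^{\Pin}\cong[A_{h\Pin},S^0]$, and the Thom-spectrum identification of the orbit with $X(q)^{4p-2-q}$. The only cosmetic difference is the factorization of the resulting non-equivariant composite: the paper routes it as $X(q)^{4p-2-q}\to X(0)^{4p-2}\xrightarrow{c(0)} S^0$, whereas you route it as $X(q)^{4p-2-q}\hookrightarrow X(q)\xrightarrow{i(q,0)} X(0)\xrightarrow{c(0)} S^0$; both give $c(q)^{4p-2-q}$ by definition. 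For the skeleton identification you flag as an obstacle, the paper handles it exactly as you suggest, by restricting the bundle $\mathbb{R}P^2\hookrightarrow B\Pin\to\mathbb{H}P^\infty$ over $\mathbb{H}P^{p-1}$.
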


\begin{figure}
\begin{center}
\makebox[\textwidth]{\includegraphics[trim={0cm 4.2cm 0.6cm 7.8cm}, clip, page = 1, scale = 0.7]{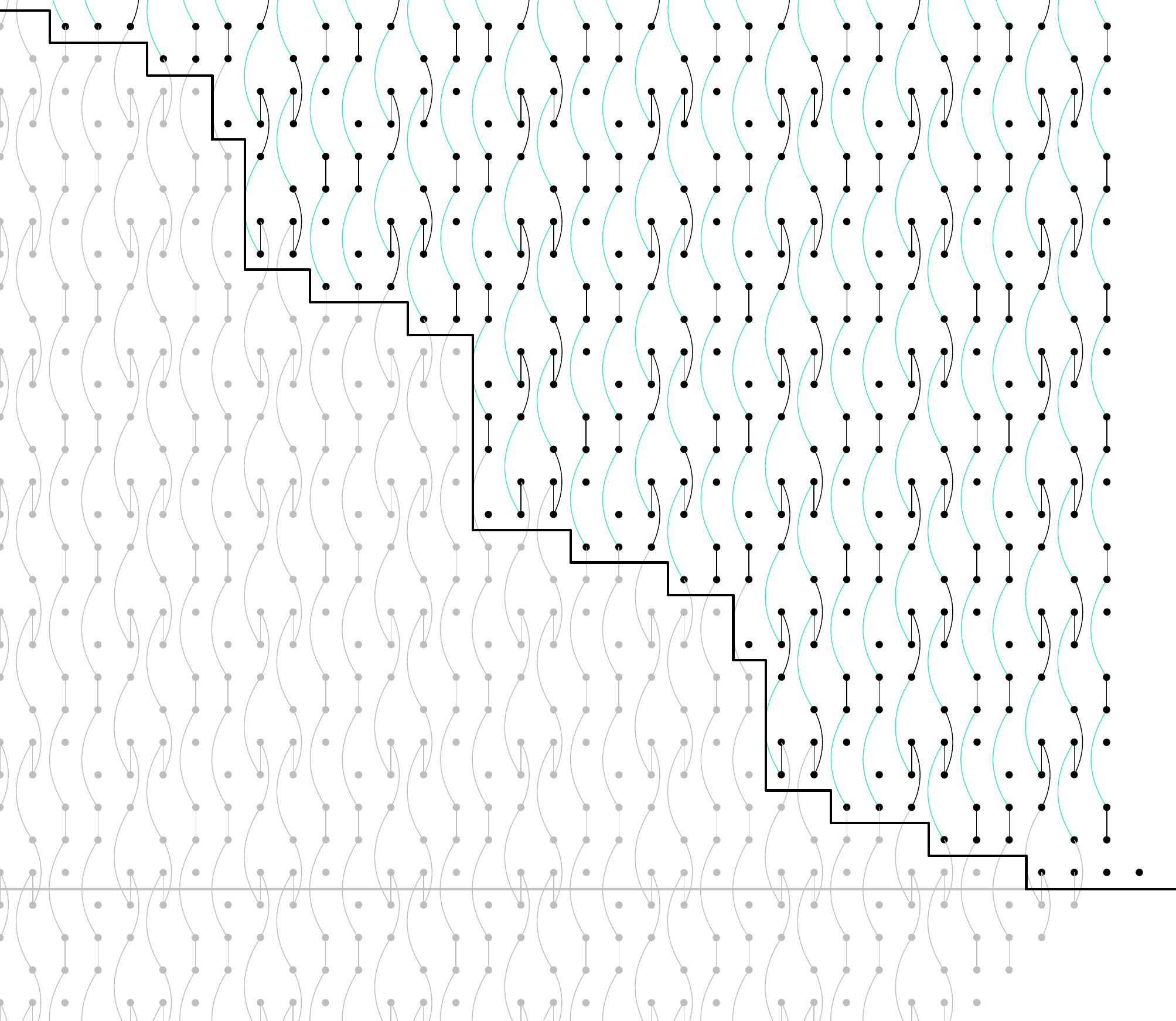}}
\end{center}
\begin{center}
\caption{The Mahowald line.}
\hfill
\label{fig:OutlineAccurateMahowaldLine}
\end{center}
\end{figure}

Motivated by Proposition~\ref{prop: equivariant to nonequivariant}, we make the following definition: 
\begin{df}\rm \label{df:MahowaldLine}
The function $\mathfrak{L}:\mathbb{N}\rightarrow \mathbb{N}$ is defined by setting $\mathfrak{L}(k)$ to be the largest integer such that the map 
$$c(k)^{\mathfrak{L}(k)}:X(k)^{\mathfrak{L}(k)}\longrightarrow S^{0}$$
is null-homotopic.
\end{df}


\begin{df}\rm \label{df:MahowaldLine2}
The function $\mathfrak{L}(k)$ can be visualized by drawing a line over the $\mathfrak{L}(k)$-cell in the cell-diagram of $X(k)$.  When we connect these lines for all $k \geq 0$, the resulting ``staircase'' pattern is called the \textit{Mahowald line}.
\end{df}

In light of Proposition~\ref{prop: equivariant to nonequivariant}, our goal is to find the exact location of the Mahowald line.

\begin{thm}\label{thm:exactMahowaldLine}
The function $\mathfrak{L}(m)$ takes values as follows: 
\begin{eqnarray*}
\mathfrak{L}(0) = \mathfrak{L}(1) = \mathfrak{L}(2) &=& -1, \\
\mathfrak{L}(3) &=& 0,
\end{eqnarray*}
and for all $k \geq 1$, 
\begin{eqnarray*}
\mathfrak{L}(16k+4) &=& 16k, \\
\mathfrak{L}(16k+5) &=& 16k, 
\end{eqnarray*}
\begin{eqnarray*}
\mathfrak{L}(16k+6) &=& 16k+1,\\
\mathfrak{L}(16k+7) &=& 16k+1,\\
\mathfrak{L}(16k+8) &=& 16k+1,\\
\mathfrak{L}(16k+9) &=& 16k+2,\\
\mathfrak{L}(16k+10) &=& 16k+2,\\ 
\mathfrak{L}(16k+11) &=& 16k+6, \\
\mathfrak{L}(16k+12) &=& 16k+8, \\
\mathfrak{L}(16k+13) &=& 16k+8, \\
\mathfrak{L}(16k+14) &=& 16k+9, \\
\mathfrak{L}(16k+15) &=& 16k+9, \\
\mathfrak{L}(16k+16) &=& 16k+9, \\
\mathfrak{L}(16k+17) &=& 16k+10,\\
\mathfrak{L}(16k+18) &=& 16k+10, \\
\mathfrak{L}(16k+19) &=& 16k+10.
\end{eqnarray*}
\end{thm}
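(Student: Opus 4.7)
The plan is to establish the function $\mathfrak{L}(m)$ by proving matching upper and lower bounds on the location of the Mahowald line, but in a delicately interleaved fashion rather than proving each bound independently. By Proposition~\ref{prop: equivariant to nonequivariant} and Definition~\ref{df:MahowaldLine}, showing $\mathfrak{L}(k) \geq L$ amounts to constructing a null-homotopy of $c(k)^L$ (the \emph{lower bound}, a null-homotopy statement), while showing $\mathfrak{L}(k) \leq L$ amounts to detecting essentiality of $c(k)^{L+1}$ (the \emph{upper bound}, a non-vanishing statement). Both sides are provided by the filtered tower $\{X(m)\}$ and the factorization $c(m) = c(n) \circ i(m,n)$ for $m \geq n$.

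First I would establish a rough lower bound (Step~1, Section~\ref{subsec:Step1}). The idea is that once $c(n)^L$ factors through a small enough piece of $X(n)$, the null-homotopy propagates up the tower via the maps $i(m,n)$. The classical analogue uses Toda's theorem that $16 \cdot \mathrm{id}$ is null on certain $8$-cell subquotients of $RP^\infty$, but no such clean divisibility holds for $X(m)$. To compensate, I must keep track of the essentialness or nullity of maps on \emph{several cells above} the conjectured Mahowald line (not only on the line itself), using the attaching-map data assembled in Section~\ref{sec:AttachingMaps}. Next I would establish a rough upper bound (Step~2, Section~\ref{subsec:Section2Step2}) by detecting the non-vanishing of $c(k)^{L+1}$ in real connective $K$-theory $ko$: the composition $X(k)^{L+1} \to S^0 \to ko$ can be read off from the $ko$-homology of $\textup{BPin}(2)$, and nonzero invariants force essentiality.

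These two rough bounds will not match. I would then identify the \emph{undetermined cells} between them (Step~3, Section~\ref{subsec:Section2Step3}) and analyze them one-by-one using the $j$-based Atiyah--Hirzebruch spectral sequence of $X(m)$, supplemented where necessary by the sphere-based AHSS. The $j$-based AHSS is especially important at columns $X(8k+3)$, $k \geq 1$, where $ko$-detection fails and where the geometric application (Theorem~\ref{theorem: 10/8+4}) is sharp; here differentials in the $j$-AHSS, read against known stable stems, distinguish the correct answer. Each cell so classified refines both bounds, and the refined bounds feed back into the arguments of Steps~1 and~2. After four further rounds of bootstrapping (Steps~4--7, Sections~\ref{subsec:Section2Step4}--\ref{subsec:Step6}), the two bounds meet and yield the $16$-periodic values in the theorem.

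The main obstacle is the aberrant jump at $m \equiv 11 \pmod{16}$: the value $\mathfrak{L}(16k+11) = 16k+6$ rather than $16k+5$ is what breaks $8$-periodicity and is the precise reason Theorem~\ref{thm:mainTheoremMInvariant} deviates from Jones's conjecture at $p \equiv 4 \pmod 8$. Here neither $ko$-detection nor the straightforward lower-bound propagation from neighboring columns suffices. Both the achievability of the jump (producing an unexpected null-homotopy via an extension problem along a newly-attached cell) and its unavoidability (detection by a secondary invariant living in the $j$-based AHSS) must be established, and these rely on the combinatorial identities about attaching maps in $\textup{BPin}(2)$ collected in Appendix~\ref{sec:AppendixA}. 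Verifying simultaneously that this single irregular jump occurs and that no further irregular jumps occur within the $16$-periodic pattern is the technical crux of the argument, and is why the proof must be organized as iterated refinement rather than a single one-shot bound match.
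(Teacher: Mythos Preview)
Your outline tracks the paper's seven-step architecture, but several of the concrete claims you make are off in ways that matter for actually carrying out the argument.

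First, the undetermined locus after Steps~1--2 is not just $m\equiv 11\pmod{16}$. The rough bounds match everywhere \emph{except} the column $\mathfrak{L}(8k+3)$ for all $k\ge 1$, i.e.\ at both residues $3$ and $11$ mod~$16$ (Corollary~\ref{corollary: exact value except 8k+3}). The resolution is organized around two specific cells of $X(8k+3)$ --- the ``first lock'' in dimension $8k-5$ and the ``second lock'' in dimension $8k-1$. Step~3 identifies the map on the first lock as $\{P^{k-1}h_1^3\}$ via Toda-bracket recursion (not via the $j$-AHSS); Steps~4--5 show the second lock is never passed, so $\mathfrak{L}(8k+3)\le 8k-2$; Step~6 shows the first lock \emph{is} passed when $k$ is odd; Step~7 shows it is \emph{not} passed when $k$ is even. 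The $16$-periodicity is exactly this parity split, not a single exceptional jump plus verification that nothing else is irregular.

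Second, two technical mischaracterizations: Step~2 is not ``reading off the $ko$-homology of $\textup{BPin}(2)$'' --- it is a $\Pin$-equivariant $KO$ computation with the representation ring $RO(\Pin)$ and Euler/Bott classes (Section~\ref{sec:AddedNewStep2}). And Appendix~\ref{sec:AppendixA} has nothing to do with attaching maps; it computes $2$-adic valuations of the coefficients of $\bigl(\ln(1+z)/z\bigr)^{4k+1}$, which enter through Chern-character and $e$-invariant calculations in Steps~5--7. The attaching-map data lives in Section~\ref{sec:AttachingMaps}.
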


Theorem~\ref{thm:exactMahowaldLine} directly implies Theorem~\ref{main theorem}.  Our proof of Theorem~\ref{thm:exactMahowaldLine} consists of seven steps, each giving a new bound on $\mathfrak{L}(k)$ (see Figure~\ref{fig:OutlineStep0}):
\begin{enumerate}
    \item Step 1 proves a lower bound for $\mathfrak{L}(k)$.
    \item Step 2 proves an upper bound for $\mathfrak{L}(k)$.  This upper bound agrees with the lower bound in Step 1 except at $\mathfrak{L}(8k+3)$, $k \geq 1$.
    \item Steps 3--5 prove that $\mathfrak{L}(8k+3)\leq 8k-2$ for all $k \geq1$.
    \item Step 6 proves that $\mathfrak{L}(8k+3)\geq 8k-2$ when $k$ is odd; 
    \item  Step 7 proves that $\mathfrak{L}(8k+3)=8k-6$ when $k$ is even.
\end{enumerate}

\begin{figure}
\begin{center}
\makebox[\textwidth]{\includegraphics[trim={2cm 4.2cm 0.6cm 7.8cm}, clip, page = 1, scale = 0.7]{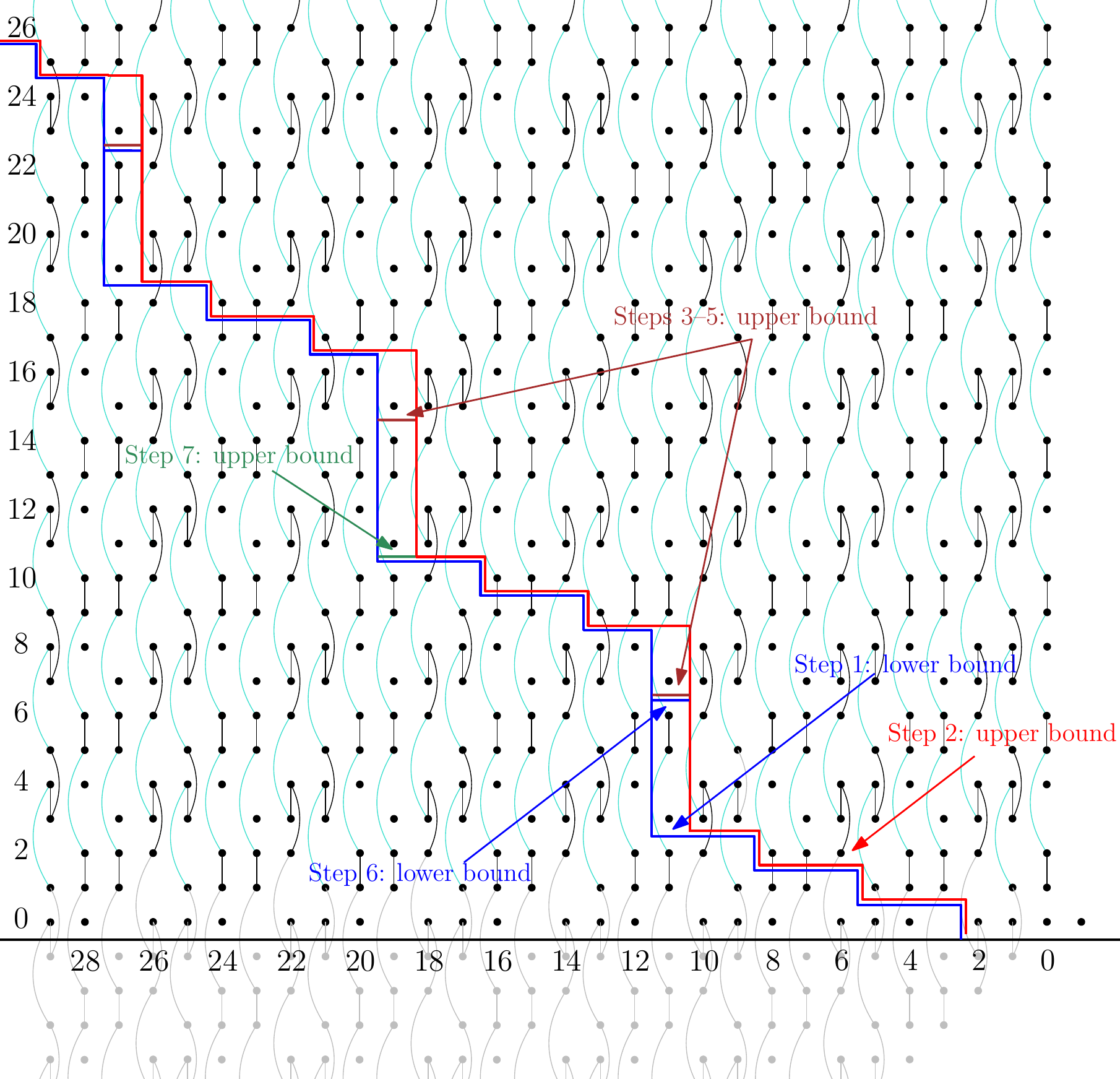}}
\caption{Various bounds for the Mahowald line.} \hfill
\label{fig:OutlineStep0}
\end{center}
\end{figure}

\begin{proof}[Proof of Proposition~\ref{prop: equivariant to nonequivariant}]
Consider the diagram
\begin{equation}\label{diagram Furuta--Mahowald class}
\begin{tikzcd}
S^{p\mathbb{H}} \ar[rd, dashed, "g"] &  \\ 
S^0 \ar[u,"\mathbf{1}"] \ar[r,"\mathbf{2}"] & S^{q\widetilde{\mathbb{R}}} \\ 
S(p \mathbb{H})_+ \ar[u,"\mathbf{3}"] \ar[ru, swap, "\mathbf{4}"]&
\end{tikzcd}
\end{equation}
In the diagram above, $\mathbf{1} = a_\mathbb{H}^p$ and $\mathbf{2}= a_{\widetilde{\mathbb{R}}}^q$.  The left column is the cofiber sequence
$$S(p \mathbb{H})_+ \longrightarrow S^0 \longrightarrow S^{p\mathbb{H}},$$
where $S(p \mathbb{H})$ is the unit sphere of the representation $p \mathbb{H}$.  By our discussion in Section~\ref{subsec:SeibergWitten}, a level-$(p,q)$ Furuta--Mahowald class exists if and only if there exists a map $g$ that makes diagram (\ref{diagram Furuta--Mahowald class}) commute.

Since the first column is a cofiber sequence, $g$ exists if and only if the composition $\mathbf{4}=\mathbf{2}\circ \mathbf{3}$ is null-homotopic.  The Spanier--Whitehead dual of map $\mathbf{2}$ is the map 
$$D\mathbf{2}:S^{-q\widetilde{\mathbb{R}}}\longrightarrow S^{0}.$$
Map $\mathbf{4}$ is null-homotopic if and only if the map 
$$\mathbf{5}:=D\mathbf{2}\wedge \mathbf{3}: S^{-q\widetilde{\mathbb{R}}} \wedge S(p\mathbb{H})_+ \longrightarrow S^0$$ 
is null-homotopic. 

Map $\mathbf{5}$ can be written as the composition 
$$\mathbf{5}: S^{-q\widetilde{\mathbb{R}}}\wedge S(p \mathbb{H})_+\xrightarrow{D\mathbf{2}\wedge \operatorname{id}_{S(p \mathbb{H})_+ }}  S(p \mathbb{H})_+\stackrel{\mathbf{3}}{\longrightarrow} S^{0}.$$ 
Note that $S^{-q\widetilde{\mathbb{R}}}\wedge S(p \mathbb{H})_+$ is $\Pin$-free for all $q \geq 0$ and $\Pin$ acts trivially on $S^{0}$.  Therefore, $\mathbf{5}$ is null-homotopic if and only if the nonequivariant map 
$$(S^{-q\widetilde{\mathbb{R}}}\wedge S(p \mathbb{H})_+)_{h\Pin}\stackrel{\mathbf{7}}{\longrightarrow}  (S(p \mathbb{H})_+)_{h\Pin} \stackrel{\mathbf{8}}{\longrightarrow} S^{0}$$
is null-homotopic (see Theorem~4.5 in \cite{LewisMaySteinberger}).  Here, 
$${(-)_{h \Pin} = ((-) \wedge E\Pin_+) / \Pin}$$
is the homotopy orbit.  The maps $\mathbf{7}$ and $\mathbf{8}$ are induced by $D\mathbf{2}\wedge \operatorname{id}_{S(p \mathbb{H})_+ }$ and $\mathbf{3}$, respectively. 

Note that the restriction of the fiber bundle (\ref{BpinoverBSU}) to $\mathbb{H}P^{p-1}$ gives the bundle 
$$\begin{tikzcd}
\mathbb{R}\textup{P}^{2} \ar[r,hookrightarrow]&  S(p\mathbb{H})/\Pin \ar[r]& \mathbb{H}P^{p-1}.
\end{tikzcd}
$$	
Therefore, the inclusion 
$$
\begin{tikzcd}
 S(p\mathbb{H})/\Pin \ar[r, hookrightarrow]&   S(\infty\mathbb{H})/\Pin=B\Pin
 \end{tikzcd}
$$
is the inclusion of the $(4p-2)$-skeleton. This implies that 
$$(S^{-q\widetilde{\mathbb{R}}}\wedge S(p \mathbb{H})_+)_{h\Pin}=\Thom(B\Pin^{4p-2},-q\lambda)=X(m)^{4p-2-q}.$$
Under this identification, maps $\mathbf{7}$ and $\mathbf{8}$ are equal to $i(m,0)$ and $c(0)$ respectively.  The map $c(q)^{4p-2-q}$ is exactly the composition map $\mathbf{8} \circ \mathbf{7}$, which is null-homotopic if and only if a level-$(p,q)$ Furuta--Mahowald class exists. 
\end{proof}

\subsection{The Mahowald line at odd primes}
For each prime $p$, we can localize the map $c(m)^k: X(m)^k \to S^0$ at $p$ to obtain a map 
$$c(m)^k_{(p)}: X(m)^k_{(p)} \longrightarrow S^0_{(p)}.$$
Similar to Definition~\ref{df:MahowaldLine}, we define the function $\mathfrak{L}_{(p)}: \mathbb{N} \to \mathbb{N}$ as follows: $\mathfrak{L}_{(p)}(k)$ is the largest integer such that the map 
$$c(k)^{\mathcal{L}_{(p)}(k)}: X(k)^{\mathcal{L}_{(p)}(k)}_{(p)} \longrightarrow S^0_{(p)}$$
null-homotopic.  It is clear from this definition that for all $k \in \mathbb{N}$, 
$$\mathcal{L}(k) = \min_{p \text{ prime}} \mathcal{L}_{(p)}(k)$$
The line determined by the function $\mathfrak{L}_{(p)}$ called the \textit{$p$-local Mahowald line}.

We show that, at any odd prime $p$, the $p$-local Mahowald line is above the 2-local Mahowald line (see Figures~\ref{fig:OutlineAccurateMahowaldLine} and ~\ref{fig:OddPrimeMahowaldLine}). This will reduce our problem to a 2-primary problem.  After this subsection, we will focus on the case when we localize at the prime $p=2$ for the rest of the paper. 

Recall the fiber bundle
$$\begin{tikzcd}
\mathbb{R}\textup{P}^2 \ar[r,hookrightarrow]& \textup{BPin}(2) \ar[r]& \mathbb{H}\textup{P}^\infty.
\end{tikzcd}$$
As discussed in Section~\ref{subsec:equivToNonEquiv}, the cell structure for $\mathbb{R}\textup{P}^2$ and $\mathbb{H}\textup{P}^\infty$ induce a cell structure for $\textup{BPin}(2)$.  

The standard cell structures for $\mathbb{R}\textup{P}^2$ has one cell in dimensions $0$, $1$, and $2$.  The 2-cell is attached to the 1-cell by $2$, which is invertible when localized at $p$.  Therefore, 
$$H_* (\mathbb{R}\textup{P}^2; \mathbb{Z}_{(p)}) = \left\{\begin{array}{ll} \mathbb{Z}_{(p)} & \text{when } *= 0, \\
0 & \text{otherwise.}\end{array} \right.$$
This implies that when we localize at $p$, there is a cellular structure for $\mathbb{R}\textup{P}^2$ with only one cell in dimension 0, and no cells in other dimensions.  Since the cell structure for $\mathbb{H}\textup{P}^\infty$ has one cell in dimension $4n$ for all $n \geq 0$, the induced cell structure for $\textup{BPin}(2)$ from the fiber bundle above also has one cell in dimension $4n$ for all $n \geq 0$.  

The bundle $2\lambda$ is orientable because its first Stiefel--Whitney class is 0.  There is a Thom-isomorphism 
$$H^{*}(X(2m); \mathbb{Z}_{(p)}) = H^*(\Thom(\textup{BPin}(2), -2m\lambda); \mathbb{Z}_{(p)}) \cong H^{*+2m}(\textup{BPin}(2);\mathbb{Z}_{(p)}).$$
This Thom-isomorphism implies that
$$H_*(X(2m);\mathbb{Z}_{(p)}) = \left\{\begin{array}{ll}\mathbb{Z}_{(p)} & \text{when } * = -2m+4n, \, n \geq0, \\ 
0 & \text{otherwise.} \end{array}\right.$$
It follows that there is a cell structure for $X(2m)_{(p)}$ with one cell in dimension ${(-2m +4n)}$ for all $n \geq 0$.  Note that by the cellular approximation theorem, Proposition~\ref{prop: equivariant to nonequivariant} and Definition~\ref{df:MahowaldLine} do not depend on the cellular structure of $X(m)_{(p)}$.  Therefore, we can use this specific cell structure to deduce a lower bound for the $p$-local Mahowald line (see Figure~\ref{fig:OddPrimeMahowaldLine}).  This lower bound is above the 2-local Mahowald line (shown in {\color{gray} gray}).

\begin{figure}[!h]
\begin{center}
\makebox[\textwidth]{\includegraphics[trim={0.8cm 3.5cm 0.3cm 7.8cm}, clip, page = 1, scale = 0.7]{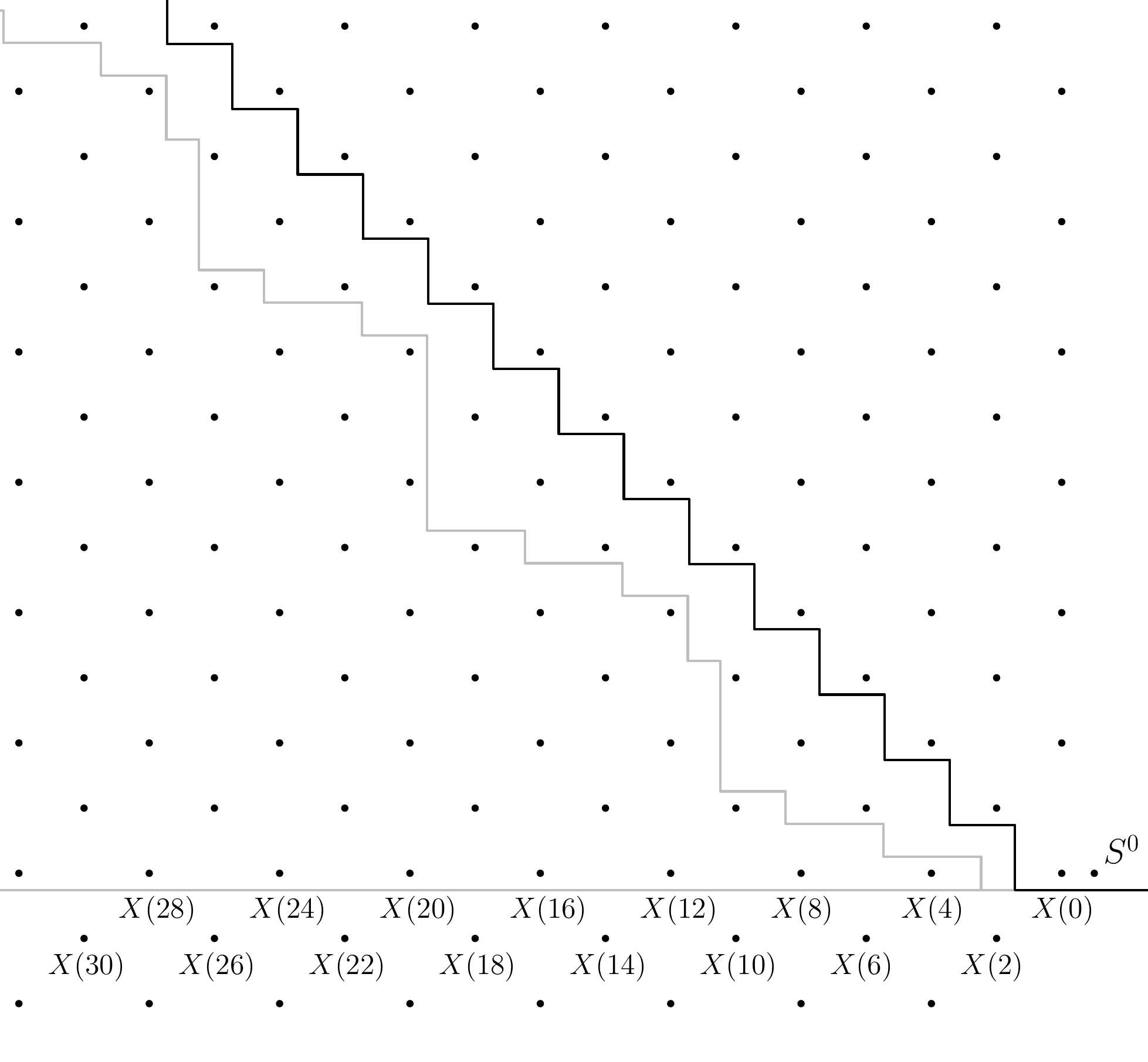}}
\end{center}
\begin{center}
\caption{The lower bound of the $p$-local Mahowald line at $p>2$ (black) is above the 2-local Mahowald line ({\color{gray} gray}).}
\hfill
\label{fig:OddPrimeMahowaldLine}
\end{center}
\end{figure}

\subsection{Step 1: lower bound} \label{subsec:Step1}
From now on, we localize at the prime $p =2$.  In the discussions below, the arrow $\hookrightarrow$ denotes a map that induces an injection on $\textup{H}\mathbb{F}_2$-homology, and the arrow $\twoheadrightarrow$ denotes a map that induces a surjection on $\textup{H}\mathbb{F}_2$-homology (see Defintion~\ref{hf2}). 

\begin{thm}\label{thm: inductive fk}
For every $k\geq 0$, there exist maps
\begin{itemize}
\item $f_{k}: X(8k+4)_{8k+1}^{\infty} \longrightarrow S^{0}$
\item $g_{k}:S^{8k+4} \lhook\joinrel\longrightarrow X(8k+4)_{8k+1}^{\infty}$
\item $a_{k}: S^{8k+4} \longrightarrow X(8k-4)^{8k-4}_{8k-7}$
\item $ b_{k}:X(8k-4)^{8k-4}_{8k-7}\longrightarrow S^{0}$
\end{itemize}
with the following properties (see Figure~\ref{fig:OutlineStep1}):
\begin{enumerate}[label=(\roman*)]
\item The diagram 
\begin{equation}\label{f is quotient}
    \xymatrix{
        X(8k+4) \ar[r]\ar@{->>}[d] & S^{0}  \\
        X(8k+4)_{8k+1}^{\infty} \ar[ur]_{f_{k}} }
\end{equation}
commutes.

\item The map $g_{k}$ induces an isomorphism on $H_{8k+4}(-;\mathbb{F}_{2})$.  In other words, $S^{8k+4}$ is a $\textup{H}\mathbb{F}_2$-subcomplex of $X(8k+4)_{8k+1}^{\infty}$ via the map $g_k$ (see Definition~\ref{hf2}).

\item The following diagram is commutative:
\begin{equation}\label{gf factors throught W}
    \xymatrix{
        S^{8k+4} \ar[d]^{a_{k}} \ar@{^{(}->}[r]^-{g_{k}} & X(8k+4)_{8k+1}^{\infty} \ar[d]^{f_{k}}\\
    X(8k-4)^{8k-4}_{8k-7}\ar[r]^-{b_{k}} & S^{0}.}
\end{equation}
\item  Let $\phi_{k}:S^{8k+1}\rightarrow S^{0}$ be the restriction of $f_{k}$ to the bottom cell of ${X(8k+4)^{\infty}_{8k+1}}$.  Then for $k\geq 1$, the map $\phi_{k}$ satisfies the inductive relation 
$$\phi_{k}- \phi_{k-2} \cdot \chi_{k}\in \langle \phi_{k-1}, 2, \tau_{k} \rangle,$$
where $\tau_{k}\in \{0,8\sigma\}$ in $\pi_7$ and $\chi_{k}$ is some element in $\pi_{16}$. We will show in Lemma~\ref{eta attaching map between columns} that $\phi_0 = \eta$ and we set $\phi_{-1}=0$.
\end{enumerate}
\end{thm}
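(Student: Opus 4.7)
The plan is induction on $k$, constructing the four maps and verifying their properties in parallel.

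\textit{Base case} $(k=0)$. I would first examine the cell structure of $X(4) = \Thom(\textup{BPin}(2), -4\lambda)$ using the fibration $\mathbb{R}\textup{P}^{2} \hookrightarrow \textup{BPin}(2) \to \mathbb{H}P^{\infty}$, then show that the composite $c(4) \colon X(4) \to S^0$ vanishes on the $0$-skeleton (by checking the relevant stable stems are zero and using low-dimensional attaching data). This factorization produces $f_0 \colon X(4)_1^{\infty} \to S^0$ satisfying (i). The map $g_0$ is the cellular inclusion of $S^4$. I would then compute $\phi_0 = f_0|_{S^1}$ directly from the attaching structure of $\textup{BPin}(2)$ near the bottom, as promised by Lemma~\ref{eta attaching map between columns}; the answer is $\eta$. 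Finally, $a_0$ and $b_0$ are read off from the subquotient $X(-4)^{-4}_{-7}$.

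\textit{Inductive step.} Assume the maps and relations exist for all indices below $k$. To construct $f_k$, I would verify the vanishing of $c(8k+4)$ on $X(8k+4)^{8k}$ via a cellular filtration: the relevant subquotients are built from attaching maps inherited from $\textup{BPin}(2)$, and the inductive hypothesis yields vanishing information on lower-dimensional portions that can be propagated upward through these attaching maps. Once $f_k$ exists, $g_k$ is the inclusion of the $(8k+4)$-cell. For (iii), the composite $f_k \circ g_k \in \pi_{8k+4}(S^0)$ should be detected in the subquotient $X(8k-4)^{8k-4}_{8k-7}$ by a skeletal comparison with the $(k-1)$-th stage, yielding $a_k$ and $b_k$.

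\textit{Main obstacle.} Property (iv). The plan is to derive the Toda-bracket relation from the attaching structure among the cells of $X(8k+4)$ near dimension $8k+1$. Specifically, I would write down the attaching maps among the cells in dimensions $8k+1$ through $8k+4$ (controlled by the structure of $\mathbb{R}\textup{P}^{2}$ and the $\nu$-family attaching maps in $\mathbb{H}P^{\infty}$), and then use a Massey-product / cofiber-sequence argument to express $\phi_k$ as a sum of a ``linear'' term $\phi_{k-2} \cdot \chi_k$ (coming from the $16$-periodic contribution) and a ``quadratic'' term given by the Toda bracket $\langle \phi_{k-1}, 2, \tau_k \rangle$. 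The dichotomy $\tau_k \in \{0, 8\sigma\}$ should reflect an $8$-fold parity in the Hopf attaching data in $\mathbb{H}P^{\infty}$, and is ultimately responsible for the $16$-periodicity visible in Theorem~\ref{thm:exactMahowaldLine}. Identifying $\tau_k$ and $\chi_k$ precisely, and controlling the indeterminacy of the bracket, is where I expect most of the technical work to lie.
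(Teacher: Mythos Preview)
Your outline has the right overall shape (induction on $k$, base case via low-dimensional vanishing), but there is a genuine gap in the inductive step, and the roles of properties (iii) and (iv) are inverted relative to what the argument actually requires.

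The critical issue is in your plan to ``verify the vanishing of $c(8k+4)$ on $X(8k+4)^{8k}$ via a cellular filtration'' by propagating the inductive hypothesis upward. This does not work directly: the map
\[
X(8k+4)^{8k}_{8k-4} \longrightarrow X(8k-4)^\infty_{8k-7}
\]
is \emph{not} zero. The $(8k-2)$-cell maps nontrivially to the $(8k-4)$-cell by $\eta^2$. So knowing that $f_{k-1}$ exists is not enough to conclude that $c(8k+4)$ vanishes on the $8k$-skeleton. The fix the paper uses is precisely property (iii) for $k-1$: the factorization $f_{k-1}\circ g_{k-1} = b_{k-1}\circ a_{k-1}$ through the small complex $X(8k-12)^{8k-12}_{8k-15}$ forces the offending $\eta^2$-composite to land in $(\pi_8 \oplus \pi_{11}C2)\cdot\eta^2 = 0$. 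In other words, (iii) is not a byproduct to be read off after $f_k$ is built; it is the technical hypothesis that makes the induction close. Consequently the maps $a_k$ (and the auxiliary $4$-cell complex $E(k)$ with cells in dimensions $8k-4,8k-3,8k,8k+4$, and the quotient $G(k)$) must be constructed \emph{before} the induction, via a separate multi-column cell-diagram chase, and $g_k$ is then produced from $a_k$ by a homotopy extension argument.

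Your account of (iv) also misidentifies the source of the bracket. The relation does not arise from attaching maps among the cells in dimensions $8k+1$ through $8k+4$; it comes from the splitting $X(8k-4)^{8k-4}_{8k-7}\simeq \Sigma^{8k-7}C2 \vee S^{8k-4}$. The composite $G(k)^{8k+1}\to X(8k-4)^\infty_{8k-7}\xrightarrow{f_{k-1}} S^0$ decomposes along this wedge: the $\Sigma^{8k-7}C2$ summand contributes an element of $\langle\phi_{k-1},2,\tau_k\rangle$ with $\tau_k\in\pi_7$ annihilated by $2$ (hence $\tau_k\in\{0,8\sigma\}$, a fact about $\pi_7$ rather than about Hopf attaching data in $\mathbb{H}P^\infty$), and the $S^{8k-4}$ summand, via (iii) for $k-1$, routes through $X(8k-12)^{8k-12}_{8k-15}$ and produces the $\phi_{k-2}\cdot\chi_k$ term. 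A further homotopy extension is then needed to realize this as the restriction of an actual choice of $f_k$.
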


\begin{figure}
\begin{center}
\makebox[\textwidth]{\includegraphics[trim={2.5cm 4.2cm 0.4cm 6.7cm}, clip, page = 1, scale = 0.7]{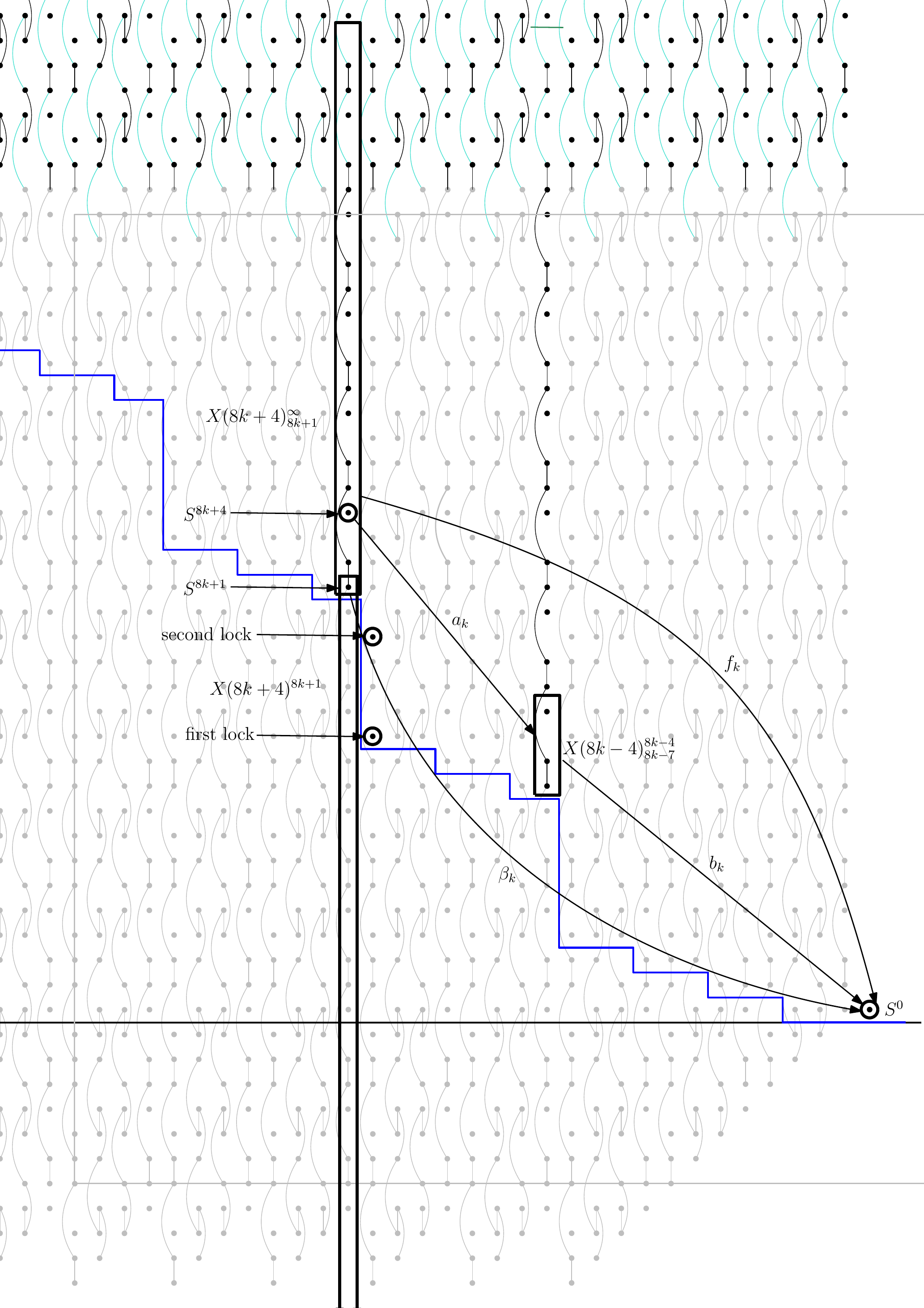}}
\end{center}
\begin{center}
\caption{Constructing $f_k$ and proving a lower bound for the Mahowald line.}
\hfill
\label{fig:OutlineStep1}
\end{center}
\end{figure}

We prove Theorem~\ref{thm: inductive fk} by using cell diagram chasing arguments. 

\begin{rem}\rm
Property \textit{(i)} immediately implies that the map 
$$c(8k+4)^{8k}: X(8k+4)^{8k} \longrightarrow S^0$$ 
is null homotopic, and therefore it is the main property that we desire for $f_k$.  Properties \textit{(ii)} and \textit{(iii)} are added so that we can construct $f_{k}$ inductively from $f_{k-1}$.  Property (iv) is an additional requirement on $f_{k}$ that will be useful in the Step 3.
\end{rem}

\begin{cor}\label{corollary: lower bound} For any $k\geq 0$ and $0\leq m\leq 7$, we have the inequality 
$$\mathfrak{L}(8k+m+4)\geq 8k+\tau(m),$$
where
\begin{equation*}
\tau(m) = \begin{cases}
0 & m=0,1\\
1 & m=2,3,4\\
2 & m=5,6,7.\\
\end{cases}
\end{equation*}
This line is shown in {\color{blue} blue} in Figure~\ref{fig:OutlineStep1}.
\end{cor}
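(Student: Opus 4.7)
My plan is to combine the null-homotopy produced by Theorem~\ref{thm: inductive fk}(i) with a monotonicity statement for $\mathfrak{L}$ and two easy gaps in the cell structure of $X(m)$. For the base case, diagram~(\ref{f is quotient}) factors $c(8k+4)$ as $f_{k}\circ q$, where $q\colon X(8k+4)\twoheadrightarrow X(8k+4)^{\infty}_{8k+1}$ is the quotient onto the $[8k+1,\infty)$-subquotient. Since $q$ annihilates every cell of dimension $\leq 8k$, the restriction $c(8k+4)^{8k}$ is null-homotopic, so $\mathfrak{L}(8k+4)\geq 8k$, which handles $m=0$.

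Next I would establish the monotonicity $\mathfrak{L}(m')\geq \mathfrak{L}(m)$ for $m'\geq m$. Via the identity $c(m')=c(m)\circ i(m',m)$, this reduces to showing that $i(m',m)$ can be taken to restrict to a map $X(m')^{j}\to X(m)^{j}$ for every $j$. Both $X(m')$ and $X(m)$ are CW-spectra whose cell structures descend from the cell structure on $\textup{BPin}(2)$ induced by the fibration $\mathbb{R}\textup{P}^{2}\hookrightarrow \textup{BPin}(2)\to \mathbb{H}P^{\infty}$, and $i(m',m)$ is the Thomification of a zero-section inclusion of virtual subbundles; the required skeletal factorization then follows from cellular approximation in the stable category. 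Specializing at $m=8k+4$ already yields $\mathfrak{L}(8k+4+m)\geq 8k$ for every $m\geq 0$, covering $m=1$.

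To get the ``$+1$'' and ``$+2$'' upgrades, I would exploit cell-dimension gaps in $X(m)$. The fibration above equips $\textup{BPin}(2)$ with exactly one cell in each dimension $d$ with $d\not\equiv 3\pmod 4$, so $X(m)$ has exactly one cell in each dimension $e$ with $e+m\not\equiv 3\pmod 4$ and none otherwise. A direct check produces two crucial gaps: $X(8k+6)$ has no cell at dimension $8k+1$ (since $16k+7\equiv 3\pmod 4$) and $X(8k+9)$ has no cell at dimension $8k+2$ (since $16k+11\equiv 3\pmod 4$). Whenever $c(m)^{j}$ is null-homotopic and $X(m)$ has no cell in dimension $j+1$, the equality $X(m)^{j+1}=X(m)^{j}$ upgrades the bound to $\mathfrak{L}(m)\geq j+1$. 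Applying the gap at $m=8k+6$ gives $\mathfrak{L}(8k+6)\geq 8k+1$, and monotonicity propagates this to $\mathfrak{L}(8k+7),\mathfrak{L}(8k+8)\geq 8k+1$, covering $m=2,3,4$. Applying the gap at $m=8k+9$ gives $\mathfrak{L}(8k+9)\geq 8k+2$, and monotonicity propagates to $\mathfrak{L}(8k+10),\mathfrak{L}(8k+11)\geq 8k+2$, covering $m=5,6,7$.

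The only mildly subtle input is the cellular approximation for the Thom-spectrum maps $i(m',m)$; everything else is bookkeeping of cell dimensions modulo $4$. Once compatible CW-structures on the $X(m)$'s have been fixed through the common base $\textup{BPin}(2)$, this reduces to a standard skeletal-approximation argument in the stable category, so I do not expect a genuine obstacle at that step.
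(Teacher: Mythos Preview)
Your proposal is correct and is essentially the same argument as the paper's. The paper packages the monotonicity-plus-gap argument into a single commutative ladder $X(8k+4+m)^{8k+\tau(m)}\to X(8k+4+(m-1))^{8k+\tau(m-1)}\to\cdots\to X(8k+4)^{8k}$, where each step exists by cellular approximation precisely because of the mod-$4$ cell gaps you identified; your decomposition into ``monotonicity of $\mathfrak{L}$'' plus ``no cell at the relevant dimension'' just makes the two ingredients explicit.
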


\begin{proof} 
When $m=0$, the claim directly follows from diagram (\ref{f is quotient}).  When ${1 \leq m \leq 7}$, the claim follows from the case when $m=0$ and the following commutative diagram:
\begin{displaymath}
  \hspace{-0.5in} \xymatrix{X(8k+4+m)\ar[r]& X(8k+4+(m-1))\ar[r]& \cdots\ar[r] & X(8k+4) \ar[r]^-{c(8k+4)} & S^{0}\\
    X(8k+4+m)^{8k+\tau(m)}\ar[r]\ar@{^{(}->}[u] & X(8k+4+(m-1))^{8k+\tau(m-1)}\ar@{^{(}->}[u] \ar[r]& \cdots\ar[r]  &X(8k+4)^{8k+4} \ar@{^{(}->}[u] }
\end{displaymath}
\end{proof}

\subsection{Step 2: upper bound detected by $KO$}\label{subsec:Section2Step2}
Using $\Pin$-equivariant $KO$ theory, we prove the following proposition:
\begin{prop}\label{prop: upper bound detected by ko} For any $k\geq 1$, the composition 
$$X(8k+2)^{8k-4}\xrightarrow{c(8k+2)^{8k-4}}S^{0}\longrightarrow KO$$
is nonzero. 
\end{prop}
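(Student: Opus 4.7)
My plan to prove Proposition~\ref{prop: upper bound detected by ko} is to reverse the equivariant-to-nonequivariant translation from the proof of Proposition~\ref{prop: equivariant to nonequivariant}, reinterpret $c(8k+2)^{8k-4}$ as the homotopy orbit of a concrete $\Pin$-equivariant stable map, and then detect its image under the unit $S^{0} \to KO$ via $\Pin$-equivariant real $K$-theory. Setting $p = 4k$ and $q = 8k+2$ (so that $4p-2-q = 8k-4$), the identification in the proof of Proposition~\ref{prop: equivariant to nonequivariant} gives
\[
X(8k+2)^{8k-4} \simeq \bigl(S^{-(8k+2)\widetilde{\mathbb{R}}} \wedge S(4k\mathbb{H})_{+}\bigr)_{h\Pin},
\]
under which $c(8k+2)^{8k-4}$ becomes the homotopy orbit of the equivariant composition obtained by Spanier--Whitehead dualizing $a_{\widetilde{\mathbb{R}}}^{\,8k+2}$ and smashing with the collapse $S(4k\mathbb{H})_{+} \to S^{0}$. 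Since $S(4k\mathbb{H})_{+}$ is $\Pin$-free, non-vanishing of the $KO$-image nonequivariantly is equivalent to non-vanishing of the corresponding $KO^{\Pin}$-class, so I may work entirely in the equivariant $KO$-theoretic setting.

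The second step is the $KO^{\Pin}$-detection itself. Via the Atiyah--Segal completion theorem, I would identify the $\Pin$-equivariant $KO$-cohomology of the relevant spectra with an appropriate truncation of the $I$-adic completion of the real representation ring $RO(\Pin)$, and rewrite the class to be detected as a product of the $KO$-theoretic Euler classes of $\widetilde{\mathbb{R}}$ and $\mathbb{H}$. Following the strategy of Furuta \cite{Furuta2001}, in the spirit of the refinements of Bryan \cite{Bryan}, Manolescu \cite{Manolescu}, and Schmidt \cite{Schmidt2003}, I would then compute the 2-adic valuation of this image in $RO(\Pin)^{\wedge}_{I}$ and show that it is strictly smaller than the valuation required for vanishing. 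The improved bound $8k-4$ (one better than Furuta's $8k-2$) should emerge precisely from the extra 2-adic divisibility that real $K$-theory provides over complex $K$-theory when $p \equiv 0 \pmod 4$, and which is unavailable in Furuta's original complex computation.

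The main obstacle will be the delicate 2-adic bookkeeping in $KO^{\Pin}$: $KO$ has $8$-fold Bott periodicity, the coefficient ring carries nontrivial 2-torsion generators $\eta \in KO_{1}$ and $\eta^{2} \in KO_{2}$, and Adams operations must be tracked carefully against both the $4$-dimensional representation $\mathbb{H}$ and the $1$-dimensional representation $\widetilde{\mathbb{R}}$, whose Euler classes behave quite differently 2-locally. Once this bookkeeping is carried out, the resulting Euler-class computation will show that $a_{\widetilde{\mathbb{R}}}^{\,8k+2}$ does not become null after restriction along $S(4k\mathbb{H})_{+} \to S^{0}$, which in turn yields the desired non-vanishing of the composition $X(8k+2)^{8k-4} \to S^{0} \to KO$.
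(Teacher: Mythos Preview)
Your overall strategy is correct and coincides with the paper's: translate $c(8k+2)^{8k-4}$ back to the $\Pin$-equivariant setting via the identification $X(8k+2)^{8k-4}\simeq\bigl(S^{-(8k+2)\widetilde{\mathbb{R}}}\wedge S(4k\mathbb{H})_{+}\bigr)_{h\Pin}$, use freeness to pass to genuine $\Pin$-equivariant $KO$, and show that the pullback of $1\in KO^{0}_{\Pin}(S^{0})$ is nonzero. A few of your stated tools are unnecessary, however. You do not need the Atiyah--Segal completion theorem: because $S(4k\mathbb{H})_{+}$ is $\Pin$-free, Borel and genuine equivariant $KO$ already agree on the nose, so no $I$-adic completion enters. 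You also do not need Adams operations or the torsion classes $\eta,\eta^{2}\in KO_{*}$. The paper's computation (following Schmidt \cite{Schmidt2003}) is purely representation-theoretic: using the Euler classes $\gamma(D),\gamma(\mathbb{H})$ and Bott classes $b_{8D},b_{2\mathbb{H}}$ one computes $\gamma(D)^{8k+2}=2^{4k}(b_{-8D})^{k}\gamma(D)^{2}$ in $KO^{0}_{\Pin}(S^{-(8k+2)\widetilde{\mathbb{R}}})\cong\mathbb{Z}\oplus\bigoplus_{n\ge 1}\mathbb{Z}/2$, and the obstruction to this lying in the image of $\gamma(\mathbb{H})^{4k}$ is the impossibility of the congruence $2^{4k}\equiv (A+4)^{2k}P(A)\pmod{2A}$ in $\mathbb{Z}[A]$, seen by comparing the $A^{0}$ and $A^{2k}$ coefficients. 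So the key step is a polynomial-coefficient argument in $RO(\Pin)$, not a bare $2$-adic valuation count; your framing in terms of ``extra $2$-adic divisibility of real over complex $K$-theory when $p\equiv 0\pmod 4$'' does not quite capture the mechanism.
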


Proposition~\ref{prop: upper bound detected by ko} has the following corollary:
\begin{cor}\label{cor: upper bound in (8k+2)}
The map $c(8k+2)^{8k-5}:X(8k+2)^{8k-5}\longrightarrow S^{0}$ is nontrivial.
\end{cor}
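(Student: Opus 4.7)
My plan is to derive Corollary~\ref{cor: upper bound in (8k+2)} from Proposition~\ref{prop: upper bound detected by ko} by a contradiction argument, using the skeletal cofiber sequence
$$X(8k+2)^{8k-5} \hookrightarrow X(8k+2)^{8k-4} \longrightarrow Q,$$
combined with a torsion-versus-$\mathbb{Z}$ comparison inside $\pi_{\ast}(KO)$.

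The first step is to identify the cofiber as $Q \simeq S^{8k-4}$. From the fiber bundle $\mathbb{R}P^{2} \hookrightarrow \textup{BPin}(2) \to \mathbb{H}P^{\infty}$, the base $\textup{BPin}(2)$ has cells in dimensions $4n, 4n+1, 4n+2$ for every $n \geq 0$. Since $8k+2$ is even, the bundle $(8k+2)\lambda$ is orientable and the integral Thom isomorphism shifts these dimensions by $-(8k+2)$. A short arithmetic check shows that there is exactly one cell of $X(8k+2)$ in each of the dimensions $8k-5$ and $8k-4$, both arising from the $\mathbb{R}P^{2}$ fiber sitting over the cell of $\mathbb{H}P^{\infty}$ in dimension $4(4k-1)$. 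Consequently the quotient $X(8k+2)^{8k-4}/X(8k+2)^{8k-5}$ is equivalent to $S^{8k-4}$.

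Now I would suppose, for contradiction, that $c(8k+2)^{8k-5}$ is null-homotopic. Applying $[-,S^{0}]$ to the cofiber sequence, the long exact sequence forces $c(8k+2)^{8k-4}$ to factor as
$$X(8k+2)^{8k-4} \twoheadrightarrow S^{8k-4} \xrightarrow{\phi} S^{0}$$
for some $\phi \in \pi^{S}_{8k-4}$. Post-composing with the unit $u \colon S^{0} \to KO$, the $KO$-detection map of Proposition~\ref{prop: upper bound detected by ko} factors through the element $u_{\ast}(\phi) \in \pi_{8k-4}(KO)$. For $k \geq 1$ we have $8k-4 \equiv 4 \pmod 8$, so $\pi_{8k-4}(KO) \cong \mathbb{Z}$ is torsion-free, whereas $\pi^{S}_{8k-4}$ is a finite group since it is a positive-degree stable stem. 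Thus $u_{\ast}(\phi) = 0$, so the composite $X(8k+2)^{8k-4} \to S^{0} \to KO$ would be null---contradicting Proposition~\ref{prop: upper bound detected by ko}.

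The conceptually delicate point in this plan is the cell count that identifies $Q$ as a single sphere; the parity condition $m \equiv 2 \pmod 8$ matters here, because for $m$ odd the bundle $m\lambda$ is non-orientable and the integral cell structure is more subtle. Once $Q \simeq S^{8k-4}$ is in hand, the rest of the argument is the elementary observation that a torsion stable-homotopy class is annihilated by the unit map into the torsion-free component of $\pi_{\ast}(KO)$ occurring in degrees $\equiv 4 \pmod 8$.
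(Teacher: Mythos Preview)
Your proof is correct and follows essentially the same route as the paper's: assume $c(8k+2)^{8k-5}$ is null, factor $c(8k+2)^{8k-4}$ through the single top cell $S^{8k-4}$, and observe that nothing in $\pi^{S}_{8k-4}$ survives to $KO$, contradicting Proposition~\ref{prop: upper bound detected by ko}. Your torsion-versus-torsion-free justification for the last step is a nice explicit version of what the paper states more tersely as ``no element in $\pi_{8k-4}S^0$ is detected by $KO$.'' One small remark: you do not actually need orientability or the integral Thom isomorphism to count cells---the mod~$2$ Thom isomorphism (valid for any bundle) already gives the cell structure of $X(m)$ for all $m$, so the parity aside is unnecessary, though harmless.
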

\begin{proof}
For the sake of contradiction, suppose that the map $c(8k+2)^{8k-5}$ is trivial.  Then the map 
$$c(8k+2)^{8k-4}: X(8k+2)^{8k-4}\longrightarrow S^{0}$$
will factor through the quotient map ${X(8k+2)^{8k-4}\twoheadrightarrow S^{8k-4}}$ via some map \\ ${f:S^{8k-4}\rightarrow S^{0}}$.  Since no element in $\pi_{8k-4}S^0$ is detected by $KO$, the composition 
$$\begin{tikzcd}
X(8k+2)^{8k-4} \ar[r, twoheadrightarrow]&  S^{8k-4} \ar[r, "f"] & S^{0} \ar[r]& KO
\end{tikzcd}
$$
is trivial.  This is a contradiction to Proposition~\ref{prop: upper bound detected by ko}.  
\end{proof}

\begin{cor}\label{corollary: exact value except 8k+3} The equality 
$$\mathfrak{L}(8k+m+4)= 8k+\tau(m)$$
holds for all $k\geq 0$ and $0\leq m\leq 6$.  Here, $\tau(m)$ is defined as in Corollary~\ref{corollary: lower bound}.
\end{cor}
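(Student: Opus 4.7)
The lower bound $\mathfrak{L}(8k+m+4) \geq 8k+\tau(m)$ is Corollary~\ref{corollary: lower bound}, so the task reduces to producing the matching upper bound $\mathfrak{L}(8k+m+4) \leq 8k + \tau(m)$ for each $m \in \{0,1,\dots,6\}$.

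First, I would establish that $\mathfrak{L}$ is monotone non-decreasing: $\mathfrak{L}(N) \leq \mathfrak{L}(N+1)$ for every $N \geq 0$. On $\textup{H}\mathbb{F}_{2}$-homology, the structure map $i(N+1,N): X(N+1)\to X(N)$ is, via the Thom isomorphism, given by cap product with $w_{1}(\lambda) \in H^{1}(\textup{BPin}(2); \mathbb{F}_{2})$. A direct check on the cells $\tilde{a}_{n}, \tilde{b}_{n}, \tilde{c}_{n}$ (originating from the $\mathbb{R}\textup{P}^{2}$-fiber in (\ref{BpinoverBSU})) shows that $i(N+1,N)$ either kills a cell or preserves its cellular dimension. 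Hence $i(N+1,N)$ carries $X(N+1)^{j}$ into $X(N)^{j}$ for every $j$, and the identity $c(N+1)^{j} = c(N)^{j} \circ i(N+1,N)^{j}$ propagates null-homotopy of $c(N)^{j}$ to $c(N+1)^{j}$, giving the claimed monotonicity.

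Applying Corollary~\ref{cor: upper bound in (8k+2)} with $k$ replaced by $k+1$ yields the nontriviality of $c(8k+10)^{8k+3}$, so $\mathfrak{L}(8k+10) \leq 8k + 2$. Together with monotonicity this forces $\mathfrak{L}(N) \leq 8k + 2$ for every $N \leq 8k+10$, which combined with the lower bound immediately pins down the desired equality for $m=5$ and $m=6$.

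For $m \in \{0,1,2,3,4\}$ the bound $\leq 8k+2$ is too loose, and I would sharpen it by propagating the $KO$-detection of Proposition~\ref{prop: upper bound detected by ko} through the factorization $c(8k+m+4) = c(8k+2) \circ i(8k+m+4, 8k+2)$. The plan is to isolate, for each such $m$, a subcomplex $Y \subseteq X(8k+m+4)^{8k+\tau(m)+1}$ whose image under $i(8k+m+4, 8k+2)$ still detects a nontrivial $KO$-class of $X(8k+2)^{8k-4}$, forcing $c(8k+m+4)^{8k+\tau(m)+1} \to S^{0} \to KO$ to be nonzero and hence $\mathfrak{L}(8k+m+4) \leq 8k+\tau(m)$. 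The hard part will be exactly this step: each iteration of $i(N+1,N)$ acts by cap with $w_{1}(\lambda)$, and three iterations annihilate the $\mathbb{R}\textup{P}^{2}$-part of $\mathbb{F}_{2}$-homology, so the tracking cannot be carried out on $\mathbb{F}_{2}$-homology alone. One must exploit the finer $KO$-theoretic attaching-map information (or an auxiliary refinement of Proposition~\ref{prop: upper bound detected by ko} sensitive to skeleta below $8k-4$) to confirm the survival of the detecting class through the composite of $i$'s.
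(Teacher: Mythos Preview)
Your lower bound and the monotonicity of $\mathfrak{L}$ are fine, and they do settle $m=5,6$. The genuine gap is your treatment of $m\in\{0,1,2,3,4\}$: you correctly observe that monotonicity alone only gives $\mathfrak{L}(8k+m+4)\leq 8k+2$, which is too weak, but your proposed fix via propagating $KO$-detection through the composite $i(8k+m+4,8k+2)$ is both incomplete and unnecessary. The ``hard part'' you anticipate does not exist.

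What you are missing is that the same cell-structure bookkeeping used in the proof of Corollary~\ref{corollary: lower bound} works here verbatim, just shifted up by one. The nontriviality of $c(8k+10)^{8k+3}$ (Corollary~\ref{cor: upper bound in (8k+2)} with $k\mapsto k+1$) factors through the chain
\[
X(8k+10)^{8k+3}\to X(8k+9)^{8k+3}\to X(8k+8)^{8k+2}\to X(8k+7)^{8k+2}\to X(8k+6)^{8k+2}\to X(8k+5)^{8k+1}\to X(8k+4)^{8k+1}\xrightarrow{c} S^0,
\]
where at each step the skeleton index drops by one precisely when the target $X(N)$ has no cell in that dimension (check against Proposition~\ref{xmhomology}: $X(8k+8)$ has no cell in dimension $8k+3$, and $X(8k+5)$ has no cell in dimension $8k+2$). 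Since the composite is nontrivial, every intermediate $c(8k+m+4)^{8k+\tau(m)+1}$ is nontrivial, giving $\mathfrak{L}(8k+m+4)\leq 8k+\tau(m)$ for all $0\leq m\leq 6$ at once. This is exactly the paper's argument: no further $KO$-theory is needed beyond the single input from Corollary~\ref{cor: upper bound in (8k+2)}.
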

\begin{proof}
Corollary~\ref{cor: upper bound in (8k+2)} implies that 
$$\mathfrak{L}(8k+6+4)\leq 8k+\tau(6).$$ 
This directly implies that 
$$\mathfrak{L}(8k+m+4)\leq 8k+\tau(m)$$
for all $0\leq m\leq 6$.  The claim follows by combining this inequality with the inequality in Corollary~\ref{corollary: lower bound}.
\end{proof}

\subsection{Step 3: identifying the map on the first lock as $\{P^{k-1}h_1^{3}\}$} \label{subsec:Section2Step3}
After establishing the lower bound for $\mathfrak{L}(k)$, the $(8k-5)$-cell and the $(8k-1)$-cell in $X(8k+3)$ will play significant roles for the rest of our argument.  We call them the \textbf{``first lock''} and the \textbf{``second lock''}, respectively (see Figure~\ref{fig:OutlineStep1}). 

In this step, we will focus on the first lock.  Combining Theorem~\ref{thm: inductive fk} (iv) with an inductive Toda bracket computation, we prove the following proposition, which will essential in the proof of Proposition \ref{pro: order two between 2 locks} and Proposition \ref{prop:Step6CommDiagram}.
\begin{prop}\label{pro: beta-eta-square}
For all $k, m\geq 0$, we have the relations
$$\phi_{k}\cdot \{P^{m}h_{1}^{2}\}=\{P^{m+k}h_{1}^3\}.$$
\end{prop}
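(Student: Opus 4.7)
The natural approach is induction on $k$, treating $m$ as a parameter. For the base case $k=0$, we have $\phi_0 = \eta$, which is detected by $h_1$ in the Adams spectral sequence; hence $\eta \cdot \{P^m h_1^2\}$ is detected by the permanent cycle $h_1 \cdot P^m h_1^2 = P^m h_1^3$, yielding the identity for all $m \geq 0$.

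For the inductive step, assume the identity at $k-1$ and $k-2$ for every $m$. Right-multiplying the inductive relation of Theorem~\ref{thm: inductive fk}(iv) by $\{P^m h_1^2\}$ gives
$$
\phi_k \cdot \{P^m h_1^2\} \equiv \phi_{k-2}\cdot \chi_k \cdot \{P^m h_1^2\} \pmod{\langle \phi_{k-1}, 2, \tau_k\rangle \cdot \{P^m h_1^2\}}.
$$
Two sub-claims suffice: (a) $\chi_k \cdot \{P^m h_1^2\} \in \{P^{m+2}h_1^2\}$, so that the first summand equals $\phi_{k-2}\cdot \{P^{m+2} h_1^2\} = \{P^{m+k}h_1^3\}$ by the inductive hypothesis; and (b) the Toda bracket contribution $\langle \phi_{k-1}, 2, \tau_k\rangle \cdot \{P^m h_1^2\}$ lies inside the natural indeterminacy of $\{P^{m+k}h_1^3\}$. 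For (a), the element $\chi_k \in \pi_{16}$ appearing in Theorem~\ref{thm: inductive fk} is constructed to represent the Adams $v_1^4$-periodicity operator $P^2$ on the $h_1^2$-multiples of the $E_\infty$-page, which gives the claim directly. For (b), I shuffle the Toda bracket to
$$
\langle \phi_{k-1}, 2, \tau_k\rangle \cdot \{P^m h_1^2\} \subseteq \pm\, \phi_{k-1}\cdot \langle 2, \tau_k, \{P^m h_1^2\}\rangle,
$$
which is legitimate because $2\cdot \{P^m h_1^2\} = 0$. When $\tau_k = 0$, the inner bracket reduces to its indeterminacy $\pi_8 \cdot \{P^m h_1^2\}$, which vanishes in $E_\infty$ by the Adams relation $h_1 c_0 = 0$ (killing $\epsilon$-multiples) and filtration comparison (for $\bar{\nu}$-multiples). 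When $\tau_k = 8\sigma$, Moss's convergence theorem applied to the Massey product $\langle h_0, h_0^3 h_3, P^m h_1^2\rangle$ in $\mathrm{Ext}$ identifies the inner Toda bracket with $\{P^{m+1} h_1^2\}$; its product with $\phi_{k-1}$ is then governed by the inductive hypothesis at $(k-1, m+1)$ and lands in Adams filtration strictly higher than $P^{m+k} h_1^3$, hence inside the target indeterminacy.

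The main obstacle is the careful bookkeeping of Adams filtrations and Toda bracket indeterminacies required to guarantee that neither the $\chi_k$-term nor the Toda bracket term produces an element outside the coset $\{P^{m+k}h_1^3\}$. This ultimately rests on the parallel fact that $\phi_k$ itself is Adams-detected by $P^k h_1$, provable simultaneously with the target identity by exploiting the classical seed $\{Ph_1\} \in \langle \eta, 2, 8\sigma\rangle$ together with the inductive relation in Theorem~\ref{thm: inductive fk}(iv). Once this detection statement is in place, the filtration comparisons for both (a) and (b) are tight enough to close the induction.
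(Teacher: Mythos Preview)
Your inductive scaffold is the right one, but the roles you assign to the two contributions are reversed, and claim (a) is not true as stated.

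First, claim (a). Nothing in the construction of $\chi_k$ in Theorem~\ref{thm: inductive fk} forces it to act as a $P^2$-operator; $\chi_k$ is simply \emph{some} element of $\pi_{16}$ produced by a cell-diagram chase, and it could in principle be zero. So the assertion $\chi_k\cdot\{P^m h_1^2\}\in\{P^{m+2}h_1^2\}$ is unsupported. The correct way to handle this term is to use the inductive hypothesis at $k-2$ first: $\phi_{k-2}\cdot\chi_k\cdot\{P^m h_1^2\}=\{P^{m+k-2}h_1^3\}\cdot\chi_k$, and then observe that $\{P^{m+k-2}h_1^3\}$ is divisible by $2$ while $2\cdot\pi_{16}=0$. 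Thus the $\chi_k$-term \emph{vanishes}; it does not produce the answer.

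Second, claim (b) in the case $\tau_k=8\sigma$ contradicts your own inductive hypothesis. You correctly shuffle and identify $\langle 2,8\sigma,\{P^m h_1^2\}\rangle\ni\{P^{m+1}h_1^2\}$ via Moss, but then $\phi_{k-1}\cdot\{P^{m+1}h_1^2\}=\{P^{m+k}h_1^3\}$ by induction at $(k-1,m+1)$: this is \emph{exactly} the target, not something of strictly higher filtration. So the Toda-bracket term is the one carrying all the weight, not the indeterminacy.

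Finally, the case split on $\tau_k$ is unnecessary: the paper uses the $KO$-detection of $\phi_k$ (Corollary~\ref{cor: phi detected by ko}) to show $\tau_k=8\sigma$ for every $k\geq 1$, since $\tau_k=0$ would force $\phi_k\in\phi_{k-2}\cdot\pi_{16}+\phi_{k-1}\cdot\pi_8$, which maps to zero in $\pi_*KO$. With this in hand, the paper's clean formulation is to set $\varphi_k=\phi_k-\{P^k h_1\}$ and prove $\varphi_k\cdot\{P^m h_1^2\}=0$ by induction, where the bracket shuffle gives $\varphi_{k-1}\cdot\{P^{m+1}h_1^2\}=0$ and the $\chi_k$- and $\pi_8$-terms vanish by the divisibility-by-$2$ argument above.
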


The following corollary is a consequence of Proposition~\ref{pro: beta-eta-square} and Theorem~\ref{thm: inductive fk}~(i):

\begin{cor}\label{cor: ph13 is Mohowald invariant}
For all $k\geq 0$, the diagram
\begin{equation}\label{eq: ph13 is Mohowald invariant}
\begin{tikzcd}
X(8k+3)^{8k-5} \ar[r, twoheadrightarrow] \ar[rd, swap, "c(8k+3)^{8k-5}"] & S^{8k-5}\ar[d, "\{P^{k-1}h_1^{3}\}"]  \\
& S^{0} 
\end{tikzcd}
\end{equation}
commutes.
\end{cor}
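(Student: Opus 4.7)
The plan is to combine Corollary~\ref{corollary: lower bound}, Theorem~\ref{thm: inductive fk}~(i) applied at level $k-1$, and Proposition~\ref{pro: beta-eta-square} to identify the top-cell class of $c(8k+3)^{8k-5}$. The case $k=0$ is vacuous since $X(3)^{-5}$ has no cells, so we may assume $k \geq 1$. First, Corollary~\ref{corollary: lower bound} with $k\mapsto k-1$ and $m=7$ gives $\mathfrak{L}(8k+3) \geq 8k-6$, so $c(8k+3)^{8k-6}$ is null-homotopic. The cofiber sequence $X(8k+3)^{8k-6} \hookrightarrow X(8k+3)^{8k-5} \twoheadrightarrow S^{8k-5}$ then shows that $c(8k+3)^{8k-5}$ factors through the quotient to the top cell as $\alpha_k \circ q$ for some $\alpha_k \in \pi_{8k-5}S^0$. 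The task is to show $\alpha_k \in \{P^{k-1}h_1^3\}$.

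Next, I would combine the identity $c(8k+3) = c(8k-4) \circ i(8k+3, 8k-4)$ with Theorem~\ref{thm: inductive fk}~(i) applied to $k-1$, which gives $c(8k-4) = f_{k-1} \circ q_{k-1}$ via $q_{k-1} \colon X(8k-4) \twoheadrightarrow X(8k-4)^\infty_{8k-7}$. Restricting to $X(8k+3)^{8k-5}$, the composite $q_{k-1} \circ i(8k+3, 8k-4)$ lands in the two-cell subquotient $X(8k-4)^{8k-6}_{8k-7} \hookrightarrow X(8k-4)^\infty_{8k-7}$, using that $X(8k-4)$ has no cell in dimension $8k-5$. Further restricting to the top cell $S^{8k-5}$ yields a class $\bar\beta_k \colon S^{8k-5} \to X(8k-4)^{8k-6}_{8k-7}$ satisfying $\alpha_k = f_{k-1} \circ \iota \circ \bar\beta_k$, where $\iota$ denotes the inclusion.

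The crucial step is to identify $\bar\beta_k = i_{\mathrm{bot}} \circ \eta^2$, where $i_{\mathrm{bot}} \colon S^{8k-7} \hookrightarrow X(8k-4)^{8k-6}_{8k-7}$ is the bottom-cell inclusion, with the $\eta^2$ factor arising from the attaching data between dimensions $8k-5$ and $8k-7$ in $X(8k+3)$ transported into $X(8k-4)$. Given this identification, together with $\phi_{k-1} = f_{k-1} \circ \iota \circ i_{\mathrm{bot}}$ by definition, we obtain $\alpha_k = \phi_{k-1} \cdot \eta^2$. Finally, Proposition~\ref{pro: beta-eta-square} applied with $k \mapsto k-1$ and $m=0$ yields $\phi_{k-1} \cdot \{h_1^2\} = \{P^{k-1}h_1^3\}$, so $\alpha_k \in \{P^{k-1}h_1^3\}$ and the diagram (\ref{eq: ph13 is Mohowald invariant}) commutes. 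The main obstacle is the attaching-map identification of $\bar\beta_k$, which demands a careful analysis of the cellular behavior of $i(8k+3, 8k-4)$ near the bottom of $X(8k-4)^\infty_{8k-7}$; this is precisely the content developed in Section~\ref{sec:AttachingMaps}.
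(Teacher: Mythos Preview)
Your overall strategy is correct and matches the paper's indicated proof, but your execution has two gaps that the route through the two-cell complex $\Sigma^{8k-7}C2$ makes unnecessarily awkward.

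First, a logical gap: to obtain $\bar\beta_k \colon S^{8k-5} \to X(8k-4)^{8k-6}_{8k-7}$ from the map $g\colon X(8k+3)^{8k-5} \to X(8k-4)^{8k-6}_{8k-7}$, you need $g|_{X(8k+3)^{8k-6}} = 0$. You only established that the composite all the way to $S^0$ vanishes (your step~1), which does not imply the intermediate map vanishes. This is fixable: the diagram in the proof of Corollary~\ref{corollary: lower bound} in fact shows that $X(8k+3)^{8k-6} \to X(8k-4)$ already lands in $X(8k-4)^{8k-8}$, so the composite to $X(8k-4)^\infty_{8k-7}$ is zero.

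Second, and more seriously, your ``crucial step'' $\bar\beta_k = i_{\mathrm{bot}}\circ\eta^2$ is harder to pin down than your reference to Section~\ref{sec:AttachingMaps} suggests: the target group $\pi_{8k-5}(\Sigma^{8k-7}C2) \cong \pi_2(C2) \cong \mathbb{Z}/4$ has four elements, and $\bar\beta_k$ carries indeterminacy from the choice of null-homotopy above. Both issues dissolve if you bypass the two-cell complex and use the subquotient maps $i(m,n,l,j)$ of Section~\ref{subsec:Maps between subquoteints} directly. By Property~(1) there, $c(8k+3)$ factors as
\[
X(8k+3) \twoheadrightarrow X(8k+3)^\infty_{8k-5} \xrightarrow{\ i(8k+3,8k-4,\infty,\infty)\ } X(8k-4)^\infty_{8k-7} \xrightarrow{\ f_{k-1}\ } S^0,
\]
and restriction to $X(8k+3)^{8k-5}$ is then automatically the quotient $q$ onto the single cell $S^{8k-5} = X(8k+3)^{8k-5}_{8k-5}$. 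By Property~(2), the composite $S^{8k-5}\hookrightarrow X(8k+3)^\infty_{8k-5}\to X(8k-4)^\infty_{8k-7}$ factors through the bottom cell via $i(8k+3,8k-4,8k-5,8k-7)\colon S^{8k-5}\to S^{8k-7}$. By Property~(3) this decomposes into seven single-column steps; five are identities on a single cell, while the two steps $X(8k+1)\to X(8k)$ and $X(8k-2)\to X(8k-3)$ are each $\eta$ by Lemma~\ref{eta attaching map between columns}. Hence the composite is $\eta^2$, giving $c(8k+3)^{8k-5} = \phi_{k-1}\cdot\eta^2\circ q$, and Proposition~\ref{pro: beta-eta-square} with $m=0$ finishes the argument exactly as you say.
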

Corollary~\ref{cor: ph13 is Mohowald invariant} identifies the map on the first lock as $\{P^{k-1}h_1^{3}\}$.

\subsection{Step 4: A technical lemma for the upper bound} \label{subsec:Section2Step4}
To prove an upper bound for $\mathfrak{L}(k)$, we make use of the spectrum $j''$, which is defined as the fiber of the map
$$ko\xrightarrow{\psi^{3}-1} ko\langle 2\rangle.$$
Here, $ko\langle 2\rangle$ is the 1-connected cover of $ko$.  The following proposition is proved by analyzing the interactions between $j''$ and the spectrum $ko_{\mathbb{Q}/\mathbb{Z}}$. 
    
\begin{prop}\label{pro: only -1 cell matters}
For any $k,m\geq 0$, the map
\begin{equation}\label{eq outline: j'' injective without -1 cell}
j''^{0}(S^{4m+3})\longrightarrow j''^{0}(X(8k+3)_{0}^{4m+3})
\end{equation}
induced by the quotient map $X(8k+3)_{0}^{4m+3}\twoheadrightarrow S^{4m+3}$ is injective.
\end{prop}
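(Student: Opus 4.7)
The plan is to use the cofiber sequence obtained by collapsing the top cell of $X(8k+3)_{0}^{4m+3}$,
\begin{equation*}
X(8k+3)_{0}^{4m+2} \xrightarrow{\,i\,} X(8k+3)_{0}^{4m+3} \xrightarrow{\,q\,} S^{4m+3}.
\end{equation*}
Applying $j''^{*}(-)$ reduces the injectivity of $q^{*}$ to the vanishing of the connecting homomorphism
\begin{equation*}
\partial\colon j''^{-1}\bigl(X(8k+3)_{0}^{4m+2}\bigr) \longrightarrow j''^{0}(S^{4m+3}) = \pi_{4m+3}(j''),
\end{equation*}
so the goal becomes showing $\partial = 0$.

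First I would compute $\pi_{4m+3}(j'')$ directly from the defining fiber sequence $j'' \to ko \xrightarrow{\psi^{3}-1} ko\langle 2\rangle$. Since $\pi_{4m+3}(ko) = 0$, the long exact sequence identifies $\pi_{4m+3}(j'')$ with the cokernel of $\psi^{3}-1$ acting on $\pi_{4m+4}(ko) = \mathbb{Z}$, giving the cyclic group $\mathbb{Z}/(3^{2m+2}-1)$, whose $2$-part records the image of the $J$-homomorphism in degree $4m+3$.

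Next I would bring in $ko_{\mathbb{Q}/\mathbb{Z}}$. Composing $j'' \to ko$ with the natural map $ko \to ko_{\mathbb{Q}} \to ko_{\mathbb{Q}/\mathbb{Z}}$ produces a map $j'' \to \Sigma^{-1} ko_{\mathbb{Q}/\mathbb{Z}}$, essentially the $e$-invariant, that is injective on $\pi_{4m+3}$. By naturality of long exact sequences, the boundary $\partial$ factors through the analogous connecting map in $ko_{\mathbb{Q}/\mathbb{Z}}$-cohomology for the same cofiber sequence, so it suffices to prove that this $ko_{\mathbb{Q}/\mathbb{Z}}$-boundary vanishes.

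Finally I would execute the $ko_{\mathbb{Q}/\mathbb{Z}}$-computation, combining three inputs: (a) $X(8k+3)$ is a Thom spectrum over $\textup{BPin}(2)$ that admits a $KO_{\mathbb{Q}}$-Thom isomorphism; (b) all cells of $X(8k+3)_{0}^{4m+2}$ lie in dimensions $\equiv 1,2,3 \pmod{4}$, so none of them sits in the critical Bott-periodicity degrees; and (c) the $KO$-theoretic Euler class of $\lambda$ on $\textup{BPin}(2)$ has the precise divisibility established by Furuta, Manolescu, and Bryan. I expect the main obstacle to be the denominator bookkeeping in this last step: one must carefully match the rational denominators in the Thom class against the order $3^{2m+2}-1$ of $\pi_{4m+3}(j'')$. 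The cleanest execution will likely be an Atiyah--Hirzebruch argument for $j''^{*}\bigl(X(8k+3)_{0}^{4m+2}\bigr)$ that rules out, cell by cell, any source-class whose image under the $e$-invariant could be nonzero in $\pi_{4m+3}(j'')$.
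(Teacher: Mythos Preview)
Your overall architecture matches the paper's: reduce from $j''$ to $ko_{\mathbb{Q}/\mathbb{Z}}$ via an $e$-invariant map, then prove the corresponding injectivity statement for $ko_{\mathbb{Q}/\mathbb{Z}}$. But two steps have genuine gaps.

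First, your construction of the comparison map is not right as written. The composite $j'' \to ko \to ko_{\mathbb{Q}} \to ko_{\mathbb{Q}/\mathbb{Z}}$ is null (two consecutive maps in a cofiber sequence), so it does not ``produce a map $j'' \to \Sigma^{-1}ko_{\mathbb{Q}/\mathbb{Z}}$''. What you need is a nullhomotopy of $j'' \to ko \to ko_{\mathbb{Q}}$, but this map is nonzero on $\pi_{0}$. The paper fixes this by passing to the connected cover: it constructs $\iota\colon j'\langle 1\rangle \to \Sigma^{-1}ko_{\mathbb{Q}/\mathbb{Z}}$ via a $3\times 3$-lemma argument and checks that $\iota$ is injective on $\pi_{4m-1}$. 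Since $X(8k+3)_{0}^{4m+3}$ has no cells in dimensions $\leq 0$, one can then freely pass between $j''$, $j'$, and $j'\langle 1\rangle$ on this spectrum.

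Second, and more seriously, your plan for the $ko_{\mathbb{Q}/\mathbb{Z}}$ step does not contain the idea that makes it work. Observation (b) is not enough: cells of $X(8k+3)$ in dimensions $\equiv 1,2 \pmod 4$ do carry $ko$-classes (the $\eta$- and $\eta^{2}$-multiples), and there is no a priori reason they cannot feed a nonzero boundary. Input (c), the equivariant $KO$-Euler-class divisibility, is used elsewhere in the paper (Section~6) but plays no role here. What the paper actually does is: use that $(8k+4)\lambda$ is spin to get a $ko_{\mathbb{Q}/\mathbb{Z}}$-Thom isomorphism identifying the problem over $X(8k+3)_{1}^{4m+3}$ with the same problem over $X(-1)_{8k+5}^{8k+4m+7}$; then invoke the transfer map $\Thom(\mathbb{H}P^{\infty},V) \to X(-1)$, which is an $H\mathbb{F}_{2}$-isomorphism in degrees $\equiv 3 \pmod 4$. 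The source $\Thom(\mathbb{H}P^{\infty},V)$ has cells \emph{only} in dimensions $\equiv 3 \pmod 4$, so its $ko_{\mathbb{Q}/\mathbb{Z}}$-Atiyah--Hirzebruch spectral sequence collapses for algebraic reasons, and the top-cell pinch induces an injection on $ko_{\mathbb{Q}/\mathbb{Z}}^{*}$. Pulling this back through the transfer and the Thom isomorphism gives the required injectivity for $X(8k+3)$. This transfer-to-$\mathbb{H}P^{\infty}$ step is the substantive content you are missing; without it, the ``cell by cell'' argument you sketch has no obvious mechanism for ruling out the contributions from the $\equiv 1,2 \pmod 4$ cells.
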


\begin{terminology} \label{AHSS}\rm
Let $X$ be a CW spectrum that has at most one cell in each dimension.  Recall that the cohomological $E$-based Atiyah--Hirzebruch spectral sequence for $X$ has the following form: 
$$E_1^{s,t} = \bigoplus_{s \in I } \pi_t E[s] \Longrightarrow E^{s-t}X.$$
Here, $I$ is the indexing set containing the dimensions of the cells of $X$, $s$ is the cellular filtration of $X$.  The degrees for the $d_r$-differentials are as follows:
$$d_r: E_r^{s,t} \longrightarrow E_r^{s+r, t+r-1}. $$

Similarly, the homological $E$-based Atiyah--Hirzebruch spectral sequence for $X$ has the following form: 
$$E_1^{s,t} = \bigoplus_{s \in I } \pi_t E[s] \Longrightarrow E_{s+t}X.$$
Here, $I$ is the indexing set containing the dimensions of the cells of $X$, $s$ is the cellular filtration of $X$.  The degrees for the $d_r$-differentials are as follows:
$$d_r: E_r^{s,t} \longrightarrow E_r^{s-r, t-r+1}. $$
\end{terminology}

Proposition~\ref{pro: only -1 cell matters} can be interpreted as follows: in the $j''$-based cohomological Atiyah--Hirzebruch spectral sequence of $X(8k+3)_{0}^{4m+3}$, any nonzero class of the form 
$$a[4m+3], \,\,\, a\in \pi_{4m+3}j''$$
survives.  Using this, we can further show that in the $j''$-based Atiyah--Hirzebruch spectral sequence of $X(8k+3)^{4m+3}$, a nonzero class 
$$a[4m+3]$$
with $a\in \pi_{4m+3}j''$ can only be killed by a differential of the form 
$$b[-1]\longrightarrow a[4m+3],$$
where $b\in \pi_0 j''=\mathbb{Z}_{(2)}$.  Note that $\pi_{m}j''=0$ for $m\leq -1$, so this implies that a cell of dimension $\leq -2$ cannot support a differential with target $a[4m+3]$.

\subsection{Step 5: the second lock is not passed}

\begin{prop}\label{pro: order two between 2 locks}
There exists a map 
$$t_{k}: X(8k+3)^{8k-1}_{8k-5}\longrightarrow S^{0}$$
 with the following properties (see Figure~\ref{fig:OutlineStep4}):
\begin{enumerate}[label=(\roman*)]
\item The map 
$$c(8k+3)^{8k-1}: X(8k+3)^{8k-1}\longrightarrow S^{0}$$
factors through the quotient map 
$$\begin{tikzcd}
X(8k+3)^{8k-1} \ar[r, twoheadrightarrow]& X(8k+3)^{8k-1}_{8k-5}
\end{tikzcd}$$
via $t_{k}$:
\begin{equation}\label{diag: tau is quotient}
\begin{tikzcd}
X(8k+3)^{8k-1}\ar[rr,"c(8k+3)^{8k-1}"] \ar[d, twoheadrightarrow] && S^{0} \\
X(8k+3)^{8k-1}_{8k-5} \ar[rru, swap, "t_k"] 
\end{tikzcd}
\end{equation}
\item The map $t_{k}$ factors through a quotient map 
$$\begin{tikzcd}
 X(8k+3)^{8k-1}_{8k-5} \ar[r, twoheadrightarrow]& \Sigma^{8k-5}C\nu
 \end{tikzcd}
$$
via a map 
$$
t'_{k}:  \Sigma^{8k-5}C\nu\longrightarrow S^{0}.
$$

\item The restriction of $t'_{k}$ to its bottom cell is the map
$$\{P^{k-1}h_1^{3}\}:S^{8k-5}\longrightarrow S^{0}.$$ 

\item The map $t_{k}$ has order 2 in $j''$.  In other words, the following composition is zero: 
$$\Sigma^{8k-5}C\nu\stackrel{2t'_{k}}{\longrightarrow}S^{0}\longrightarrow j''.$$
\end{enumerate}
\end{prop}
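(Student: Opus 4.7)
The plan is to establish parts (i)--(iv) in order; (iv) is the main technical obstacle.

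For (i), I will invoke the Step~1 lower bound. Substituting $(k,m) \mapsto (k-1,7)$ in Corollary~\ref{corollary: lower bound} yields $\mathfrak{L}(8k+3) \ge 8k-6$, so $c(8k+3)^{8k-6}$ is null-homotopic. The cofiber sequence
$$X(8k+3)^{8k-6} \hookrightarrow X(8k+3)^{8k-1} \xrightarrow{q} X(8k+3)^{8k-1}_{8k-5}$$
then supplies a map $t_k$ with $c(8k+3)^{8k-1} = t_k \circ q$, which is exactly diagram~(\ref{diag: tau is quotient}).

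For (ii), I will read off the cell structure of $X(8k+3)^{8k-1}_{8k-5}$ from the bundle $\mathbb{R}\textup{P}^{2} \hookrightarrow \textup{BPin}(2) \to \mathbb{H}\textup{P}^{\infty}$. Its cells lie in dimensions $8k-5, 8k-3, 8k-2, 8k-1$: the bottom is the top of the $\mathbb{R}\textup{P}^{2}$-fiber over the $(4k-1)$-th $\mathbb{H}\textup{P}$-cell, and the top three form the $\mathbb{R}\textup{P}^{2}$-fiber over the $(4k)$-th $\mathbb{H}\textup{P}$-cell. Using the attaching-map computations of Section~\ref{sec:AttachingMaps}, together with the classical fact that the attaching map between consecutive top cells of $\mathbb{H}\textup{P}^{\infty}$ is $\nu$, I will exhibit a quotient $X(8k+3)^{8k-1}_{8k-5} \twoheadrightarrow \Sigma^{8k-5}C\nu$ collapsing the two middle cells; using the indeterminacy in the choice of $t_k$ from (i), I will then arrange that $t_k$ factors through this quotient as $t'_k$.

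Part (iii) is a diagram chase: restricting $c(8k+3)^{8k-1} = t_k \circ q$ to $X(8k+3)^{8k-5}$ and applying Corollary~\ref{cor: ph13 is Mohowald invariant} gives that this restriction factors as $X(8k+3)^{8k-5} \twoheadrightarrow S^{8k-5} \xrightarrow{\{P^{k-1}h_{1}^{3}\}} S^{0}$. Identifying $S^{8k-5}$ with the bottom cell of $\Sigma^{8k-5}C\nu$ along the chain of quotients then yields $t'_k|_{S^{8k-5}} = \{P^{k-1}h_{1}^{3}\}$.

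Part (iv) is the main obstacle. Since $h_{1}^{3} = \eta^{3}$ is $2$-torsion, $2\{P^{k-1}h_{1}^{3}\}=0$, so $2t'_{k}$ restricts to the null map on the bottom cell of $\Sigma^{8k-5}C\nu$. The cofiber sequence $S^{8k-5} \to \Sigma^{8k-5}C\nu \xrightarrow{p} S^{8k-1}$ then gives $2t'_k = u \circ p$ for some $u \in \pi_{8k-1}S^{0}$, unique modulo $\nu \cdot \pi_{8k-4}S^{0}$; the claim reduces to showing that the image of $u$ in $\pi_{8k-1}j''$ lies in $\nu \cdot \pi_{8k-4}j''$. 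My strategy is to lift the question to $X(8k+3)^{8k-1}$ via $q$ and study the $j''$-based cohomological Atiyah--Hirzebruch spectral sequence there. By Proposition~\ref{pro: only -1 cell matters} and the interpretation recorded in Step~4, the only differential capable of affecting a class on the $(8k-1)$-cell that represents $2c(8k+3)^{8k-1} \to j''$ is a single $d_{8k}$ originating from the $(-1)$-cell. An explicit computation of $d_{8k}$, using the $j''$-theoretic attaching maps derived in Section~\ref{sec:AttachingMaps}, will show that the top-cell contribution of $2t'_k \to j''$ already lies in the image of multiplication by $\nu$. The principal difficulty is pinning down $d_{8k}$ and tracking the Adams periodicity class $Ph_{1}$ through the image-of-$J$ structure of $\pi_{\ast}j''$.
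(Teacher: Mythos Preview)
Your treatment of parts (i) and (iii) is essentially in line with the paper. The paper does not, however, take an arbitrary lift $t_k$ through the cofiber sequence; it \emph{defines} $t_k$ as the composite
\[
X(8k+3)^{8k-1}_{8k-5} \rightharpoonup X(8k-4)^{\infty}_{8k-7} \xrightarrow{\,f_{k-1}\,} S^{0},
\]
where $f_{k-1}$ is the map produced in Theorem~\ref{thm: inductive fk}. This specific choice is what makes (ii) and (iv) go through. For (ii), the paper uses Lemma~\ref{the 4 cell complex bw 12 locks} to split $X(8k+3)^{8k-1}_{8k-5}\simeq \Sigma^{8k-5}C\nu\vee\Sigma^{8k-3}C2$, writes $t_k=t'_k\vee t''_k$, and then proves $t''_k=0$ by chasing the $\Sigma^{8k-3}C2$ summand through the intermediate columns $X(8k-3)$ and $X(8k-4)$, using the factorization of $f_{k-1}\circ g_{k-1}$ through $X(8k-12)^{8k-12}_{8k-15}$ from Theorem~\ref{thm: inductive fk}(iii). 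Your indeterminacy argument for (ii) might be made to work, but you would need to verify that the connecting map $\Sigma^{8k-3}C2\to \Sigma X(8k+3)^{8k-6}$ lets you adjust $t_k|_{\Sigma^{8k-3}C2}$ to zero; this is not automatic.

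The genuine gap is in (iv). Your plan is to control the class $u\in\pi_{8k-1}$ with $2t'_k=u\circ p$ by studying $2c(8k+3)^{8k-1}$ in the $j''$-based AHSS of $X(8k+3)^{8k-1}$ and invoking Proposition~\ref{pro: only -1 cell matters}. But that proposition, together with the $d_{8k}$ differential from the $(-1)$-cell, governs whether top-cell classes in $j''^0(X(8k+3)^{8k-1})$ \emph{survive}; it does not compute the specific element $u$, which is a property of the map $t'_k$ on $C\nu$, not of $c(8k+3)^{8k-1}$ on the whole skeleton. Since $t'_k\circ\pi\circ q=c(8k+3)^{8k-1}$, knowing that $2c(8k+3)^{8k-1}$ has a certain AHSS filtration still leaves $u$ ambiguous by anything in the image of $j''^{-1}$ of the fiber of $\pi\circ q$. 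The paper's argument avoids this entirely: because $t'_k$ factors through the column maps, the composite $S^{8k-5}\hookrightarrow\Sigma^{8k-5}C\nu\xrightarrow{2}\Sigma^{8k-5}C\nu\to X(8k+2)^{\infty}_{8k-5}$ is killed by the $2$-attaching map between the $(8k-5)$ and $(8k-4)$ cells in $X(8k+2)$. This forces $2t'_k$ (through $X(8k+2)$) to factor through the top cell $S^{8k-1}$, and then the inductive factorization of $f_{k-1}$ through $X(8k-4)^{8k-4}_{8k-7}$ and further through $X(8k-12)^{8k-12}_{8k-15}$ (Theorem~\ref{thm: inductive fk}(iii)) reduces everything to concrete low-dimensional facts: $\pi_6 C2=\mathbb{Z}/2\{\nu^2\}$ with $\nu^2$ not detected by $j'$, and $\pi_8\cdot\pi_3=0$, $\pi_{11}C2\cdot\pi_3=0$. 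Without the specific definition of $t_k$ via $f_{k-1}$ and the inductive data, you have no handle on $u$, and the AHSS argument you sketch does not supply one.
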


\begin{figure}
\begin{center}
\makebox[\textwidth]{\includegraphics[trim={2.7cm 4.2cm 0.6cm 13cm}, clip, page = 1, scale = 0.7]{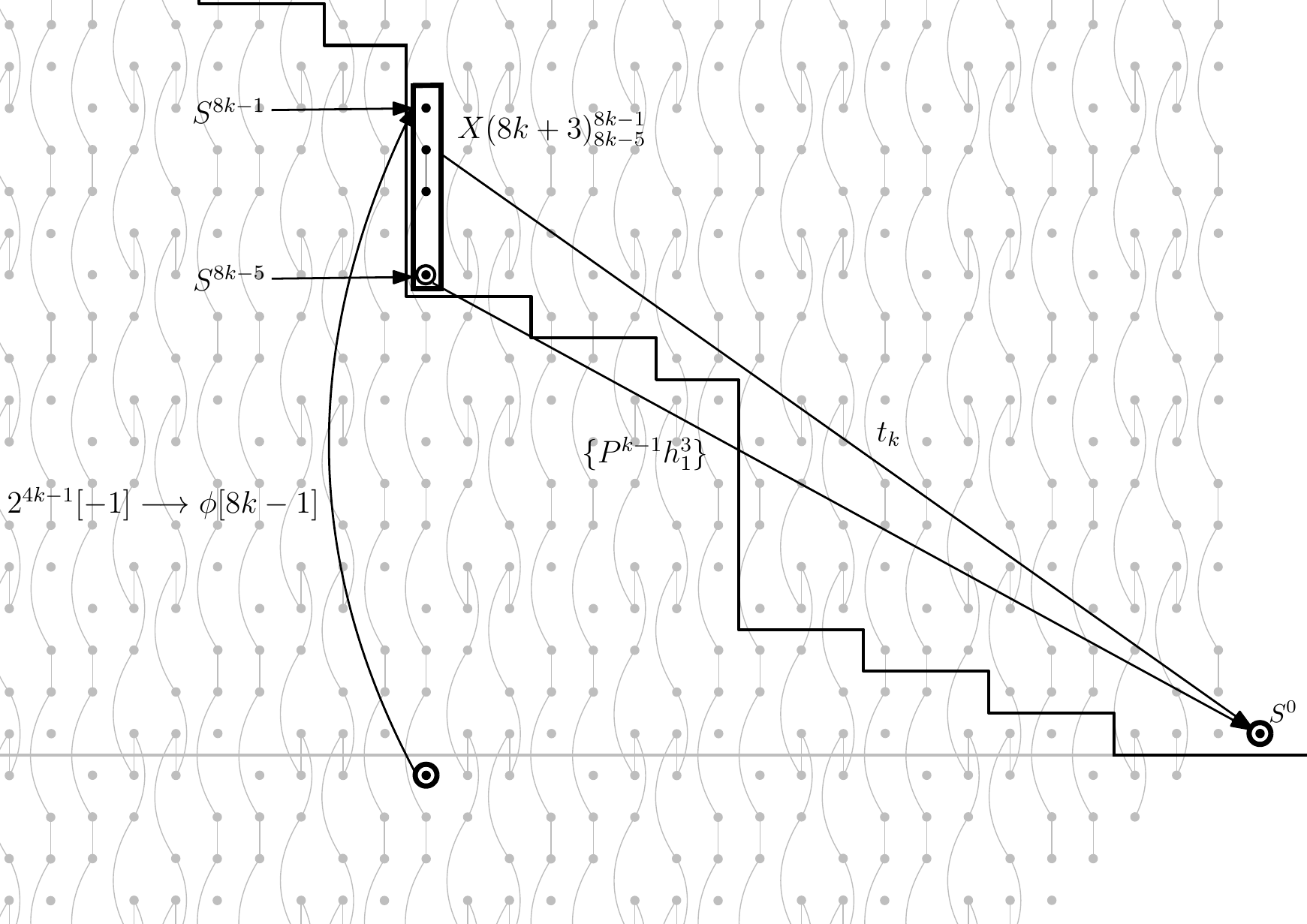}}
\end{center}
\begin{center}
\caption{Proposition~\ref{pro: order two between 2 locks}.}
\hfill
\label{fig:OutlineStep4}
\end{center}
\end{figure}

Properties (i) and (iii) in Proposition~\ref{pro: order two between 2 locks} are direct consequences of diagram (\ref{eq: ph13 is Mohowald invariant}).  Property (ii) and (iv) is established by a local cell diagram chasing argument.

\begin{lem}\label{lem: differential to second lock}
In the $j''$-based Atiyah--Hirzebruch spectral sequence of ${X(8k+3)^{8k-1}}$, there is a differential
\begin{equation}\label{eq: differential to second lock}
2^{4k-1}[-1]\longrightarrow \phi [8k-1],
\end{equation}
where $\phi$ is a nonzero element in $\pi_{8k-1}j''$.
\end{lem}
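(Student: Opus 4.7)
The plan is to combine Proposition~\ref{pro: only -1 cell matters} (which restricts the possible source of the killing differential) with an explicit cascade analysis of the earlier $d_r$-differentials ($r < 8k$) emanating from the $(-1)$-cell in the $j''$-based cohomology Atiyah--Hirzebruch spectral sequence of $X(8k+3)^{8k-1}$. First, by Proposition~\ref{pro: only -1 cell matters} and the discussion immediately following it, any nonzero class $\phi[8k-1]$ with $\phi \in \pi_{8k-1} j''$ can only be killed by a differential whose source lies on the $(-1)$-cell: all the intermediate cells at dimensions $0$ through $8k-2$ belong to subquotients of the form $X(8k+3)_{0}^{4m+3}$ in which top-cell classes are forced to survive, so the search for the source reduces to analyzing $E_{r}^{-1, 0}$.

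Second, I would determine the subgroup of $E_{r}^{-1, 0}$ surviving into $E_{8k}^{-1, 0}$ by computing the outgoing $d_{r}$-differentials for $r < 8k$. These differentials are controlled by the attaching maps between the $(-1)$-cell and the intermediate cells, which originate from the $\mathbb{H}P^\infty$-strata at levels $n = 2k+1, 2k+2, \ldots, 4k-1$ in the fibration $\mathbb{R}\textup{P}^{2} \to \textup{BPin}(2) \to \mathbb{H}P^\infty$. The $j''$-detection of each such attaching map is governed by the Adams operation $\psi^{3} - 1$ acting on $\pi_{4n}ko$ as multiplication by $3^{2n} - 1$, whose $2$-adic valuation is $2 + \nu_{2}(2n)$ via the lifting-the-exponent principle. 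Cumulating these contributions across the intermediate range, the composite kernel in $\pi_{0}j'' = \mathbb{Z}_{(2)}$ equals $2^{4k-1}\mathbb{Z}_{(2)}$, so the surviving class on $E_{8k}^{-1, 0}$ is exactly $2^{4k-1}[-1]$.

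Finally, I would verify that the residual $d_{8k}$-differential hits a nonzero class $\phi \in \pi_{8k-1} j''$. This reduces to tracing the long-range attaching map in $X(8k+3)^{8k-1}$ between the $(-1)$-cell and the $(8k-1)$-cell through the unit map $S^{0} \to j''$ and confirming that its image lands nontrivially in the image-of-$J$ summand of $\pi_{8k-1} j''$. The heart of the argument is the cascade bookkeeping in the second step; this is where the bulk of the technical difficulty lies, since one must precisely track the attaching structure in $\textup{BPin}(2)$ interacted with the Thom twist by $-(8k+3)\lambda$, and correctly combine the successive $d_r$-contributions without overcounting or missing hidden extensions. I expect to rely on the $j$-based Atiyah--Hirzebruch spectral sequence framework introduced earlier in the paper, supplemented with small-$k$ consistency checks to confirm the cumulative valuation $4k-1$ is neither too large nor too small.
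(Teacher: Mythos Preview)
Your approach is genuinely different from the paper's, and the cascade step has a real gap.

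The paper does not attempt to track the individual $d_r$-differentials emanating from the $(-1)$-cell.  Instead, it uses the map $s_{8k+3}\colon X(8k+3)\to \Sigma^{-8k-3}\mathbb{C}P^{\infty}$ from the cofiber sequence of Proposition~\ref{prop:TransferMaps} and truncates to obtain a map
\[
(s_{8k+3})^{8k-1}_{-1}\colon X(8k+3)^{8k-1}_{-1}\longrightarrow \Sigma^{-1}Z,\qquad Z=\Sigma^{-8k-2}\mathbb{C}P^{8k+1}_{4k+1}.
\]
On the stunted projective space $Z$ one can compute $k^{0}$ explicitly via the Thom isomorphism, and the paper constructs (Lemma~\ref{lem: bundle over Z with simple Chern character}) an element $\phi\in k^{0}(Z)$ with $ch(\phi)=2^{4k-2}+d\cdot U_{H}x^{4k}$ where $\nu(d)=-2$.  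Taking $\alpha=r(\phi)$ and applying Theorem~\ref{prop: detect AHSS differential} (which packages the Chern-character criterion for whether $2^{l}[-1]$ is a permanent cycle) yields the differential directly in $\Sigma^{-1}Z$; naturality along $(s_{8k+3})^{8k-1}_{-1}$ then pulls it back to $X(8k+3)^{8k-1}_{-1}$.  The entire cascade is thus replaced by a single $e$-invariant computation.

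Your proposed bookkeeping does not appear to close.  You claim that the cumulative kernel on $E_{8k}^{-1,0}$ is $2^{4k-1}\mathbb{Z}_{(2)}$ by summing the valuations $2+\nu_{2}(2n)=3+\nu_{2}(n)$ over the $\mathbb{H}P^{\infty}$-strata $n=2k+1,\ldots,4k-1$.  For $k=2$ this sum is $(3+0)+(3+1)+(3+0)=10$, not $4k-1=7$, so either the range of strata, the identification of the relevant attaching maps after the Thom twist by $-(8k+3)\lambda$, or the additivity assumption is wrong.  More fundamentally, the outgoing differentials from the $(-1)$-cell are not simply the $\mathbb{H}P^{\infty}$ attaching maps: the cell structure of $X(8k+3)$ involves the $\mathbb{R}P^{2}$ fiber, and the relevant targets in $\pi_{*}j''$ sit at cells with $j\equiv 1,2,3\pmod 4$, not just the $4n$-cells your strata indexing suggests.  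Even granting the correct kernel, your third step (showing the residual $d_{8k}$ is nonzero) still requires identifying the long attaching map in $j''$, which is precisely what the Chern-character argument of Theorem~\ref{prop: detect AHSS differential} supplies cleanly.  I recommend abandoning the cascade in favor of the $\mathbb{C}P$-comparison.
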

To prove Lemma~\ref{lem: differential to second lock}, we first construct a map 
$$X(8k+3)_{-1}^{8k-1}\longrightarrow \Sigma^{-8k-3}\mathbb{C}P^{8k+1}_{4k+1}$$ 
that is of degree one on both the top and the bottom cell.  Then, we prove a differential in $\Sigma^{-8k-3}\mathbb{C}P^{8k+1}_{4k+1}$ by computing certain $e$-invariants using the Chern character.  Pulling back this differential to $X(8k+3)_{-1}^{8k-1}$ proves the desired differential.  

\begin{thm}\label{thm: second lock not passed}
The composition map
$$f: X(8k+3)^{8k-1}\xrightarrow{c(8k+3)^{8k-1}}S^{0}\longrightarrow j''$$
is not zero.
\end{thm}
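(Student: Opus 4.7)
The plan is to argue by contradiction: assume $f\simeq 0$ and derive a contradiction using Corollary~\ref{cor: ph13 is Mohowald invariant}, Proposition~\ref{pro: only -1 cell matters}, and Lemma~\ref{lem: differential to second lock}. First I would pass to the subcomplex $X(8k+3)^{8k-5}\hookrightarrow X(8k+3)^{8k-1}$. Since the factorization of $f$ coming from Proposition~\ref{pro: order two between 2 locks} sends every cell of $X(8k+3)^{8k-1}$ of dimension less than $8k-5$ to the basepoint of $\Sigma^{8k-5}C\nu$, the restriction $f|_{X(8k+3)^{8k-5}}$ equals the composition
\[
X(8k+3)^{8k-5}\twoheadrightarrow S^{8k-5}\xrightarrow{\{P^{k-1}h_1^{3}\}}S^{0}\longrightarrow j'',
\]
which is also the content of Corollary~\ref{cor: ph13 is Mohowald invariant} composed with the unit. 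So $f\simeq 0$ would force this composite to be null in $j''^{0}(X(8k+3)^{8k-5})$, and it suffices to show that it is not.

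Second, I would run the $j''$-cohomological Atiyah--Hirzebruch spectral sequence of $X(8k+3)^{8k-5}$. The top cell is at dimension $8k-5=4(2k-2)+3$, and the element $\{P^{k-1}h_1^{3}\}$ lands on it as the image-of-$J$ class of order $2$ in Adams filtration $4k-1$, which is nonzero in $\pi_{8k-5}j''$. By the interpretation of Proposition~\ref{pro: only -1 cell matters} applied with $m=2k-2$, the \emph{only} way the class $\{P^{k-1}h_1^{3}\}[8k-5]$ can die is through a differential of the form $b[-1]\to\{P^{k-1}h_1^{3}\}[8k-5]$ with $b\in\pi_{0}j''=\mathbb{Z}_{(2)}$. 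Such a differential is tantamount to an equation $b\cdot\alpha=\{P^{k-1}h_1^{3}\}$ in $\pi_{8k-5}j''$, where $\alpha$ is the effective attaching class from $[-1]$ to $[8k-5]$ in the AHSS of $X(8k+3)^{8k-5}$.

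Third, to rule out such a differential I would use naturality with respect to $X(8k+3)^{8k-5}\hookrightarrow X(8k+3)^{8k-1}$ together with Lemma~\ref{lem: differential to second lock}. The $[8k-5]$-cell is attached to the $[8k-1]$-cell inside $X(8k+3)^{8k-1}$ by the map $\nu\in\pi_{3}S^{0}$, as is visible from the $\Sigma^{8k-5}C\nu$ subquotient of Proposition~\ref{pro: order two between 2 locks}. Any primary differential $b[-1]\to\{P^{k-1}h_1^{3}\}[8k-5]$ in $X(8k+3)^{8k-5}$ would persist in $X(8k+3)^{8k-1}$, and the standard juggling formula would then equate the secondary differential from $[-1]$ to $[8k-1]$ with $\nu\cdot\{P^{k-1}h_1^{3}\}$, which lies in $\pi_{8k-2}j''=0$. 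But Lemma~\ref{lem: differential to second lock} asserts that this secondary differential is precisely the nonzero map $2^{4k-1}[-1]\to\phi[8k-1]$ with $\phi\neq 0$, contradicting the juggling computation. Hence no differential $b[-1]\to\{P^{k-1}h_1^{3}\}[8k-5]$ can exist, $\{P^{k-1}h_1^{3}\}[8k-5]$ survives to $E_{\infty}$, and $f$ must be nonzero.

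The main obstacle in this plan is the third step: carefully pinning down the page indexing and Toda-bracket indeterminacy needed to identify the secondary differential with $\nu\cdot\{P^{k-1}h_1^{3}\}$, and showing that the identification is compatible with the class $\phi$ produced by the Chern character / $e$-invariant computation in the proof of Lemma~\ref{lem: differential to second lock}. Concretely, this amounts to extending that comparison map into $\Sigma^{-8k-3}\mathbb{C}P^{8k+1}_{4k+1}$ to track the $[8k-5]$-cell as well, so that the vanishing of $\nu\cdot\{P^{k-1}h_1^{3}\}$ in $j''$ translates unambiguously into the vanishing of the $2^{4k-1}$-multiple of $\alpha$ in $\pi_{8k-1}j''$.
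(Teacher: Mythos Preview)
Your first two steps are sound and parallel the paper's own setup: reducing to the nonvanishing of the pullback of $\{P^{k-1}h_1^3\}$ in $j''^{0}(X(8k+3)^{8k-5})$, and then invoking Proposition~\ref{pro: only -1 cell matters} to conclude that any killing differential must come from a class $b[-1]$ on the $(-1)$--cell. This is exactly how the paper frames the problem.

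The genuine gap is in your third step. There is no ``standard juggling formula'' that identifies the $d_{8k}$--differential out of $[-1]$ with $\nu\cdot\{P^{k-1}h_1^{3}\}$. What you can extract from the $\nu$--attaching map is only the $d_4$--differential $\{P^{k-1}h_1^{3}\}[8k-5]\mapsto \nu\cdot\{P^{k-1}h_1^{3}\}[8k-1]$, and this is a map into the $j''^{1}$ column, not $j''^{0}$; it says nothing about $d_{8k}(2b[-1])$. After $d_{8k-4}(b[-1])=\{P^{k-1}h_1^{3}\}[8k-5]$ one has $d_{8k-4}(2b[-1])=0$, but the later differential $d_{8k}(2b[-1])\in\pi_{8k-1}j''$ is \emph{not} determined by $\nu$ and $\{P^{k-1}h_1^{3}\}$ alone --- it depends on the full attaching structure of $X(8k+3)^{8k-1}$ between the $(-1)$--cell and the $(8k-1)$--cell. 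The Chern-character computation behind Lemma~\ref{lem: differential to second lock} does track the $[8k-5]$--cell (the map $s_{8k+3}$ is degree one there as well), but this still does not give you the vanishing of $d_{8k}(2b[-1])$; in the target $\Sigma^{-8k-3}\mathbb{C}P^{8k+1}_{4k+1}$ the attaching map between those cells is $\eta$, not $\nu$, so no parallel juggling is available.

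The missing ingredient is precisely Proposition~\ref{pro: order two between 2 locks}(iv): the map $t_k$ has order~$2$ in $j''$. In the paper's argument this becomes the identity $2\beta=0$ in $j''^{0}(X(8k+3)_0^{8k-1})$, which forces $\partial(2a)=0$ for the element $a$ with $\partial(a)=\beta$; since $\nu(a)<4k-1$ (from comparing $\partial$ and $\partial'$), one gets $2a\mid 2^{4k-1}$ and hence $\gamma=\partial(2^{4k-1})=0$, contradicting Lemma~\ref{lem: differential to second lock}. The order--$2$ statement is not a formal consequence of the AHSS structure you invoke; its proof is a substantial cell-diagram chase using the specific maps between the columns $X(8k+3),\ldots,X(8k-4)$. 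Without it, your plan does not close.
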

\begin{proof}
For the sake of contradiction, suppose that $f$ is zero.  Consider the composition
$$\begin{tikzcd}
g: X(8k+3)_{-1}^{8k-1} \ar[r, twoheadrightarrow] & X(8k+3)_{8k-5}^{8k-1} \ar[r, "t_k"]& S^0 \ar[r] & j''.
\end{tikzcd} $$
By Proposition~\ref{pro: order two between 2 locks}(i), the map $f$ is the composition in the top row of the following diagram: 
$$\begin{tikzcd}X(8k+3)^{8k-1} \ar[r,twoheadrightarrow] & X(8k+3)_{-1}^{8k-1} \ar[d] \ar[r,"g"] & j''\\
& \Sigma X(8k+3)^{-2} \ar[ru, dashed]. \end{tikzcd} $$
Since the sequence 
$$\begin{tikzcd} X(8k+3)^{8k-1} \ar[r,twoheadrightarrow] & X(8k+3)_{-1}^{8k-1} \ar[r] & \Sigma X(8k+3)^{-2} \end{tikzcd}$$
is a cofiber sequence and $[\Sigma X(8k+3)^{-2}, j''] = 0$ ($j''$ has no negative homotopy groups), the map $g$ is zero.  

Let $\beta \in j''^0(X(8k+3)_0^{8k-1})$ be the pullback of $1 \in j''^0(S^0) = \mathbb{Z}$ under the composition 
$$\begin{tikzcd}X(8k+3)_{0}^{8k-1} \ar[r,twoheadrightarrow]& X(8k+3)_{8k-5}^{8k-1} \ar[r, "t_k"]& S^0. \end{tikzcd} $$
Let $\alpha \in j''^0(X(8k+3)_0^{8k-5})$ be the pullback of $\beta$ under the inclusion 
$$\begin{tikzcd}X(8k+3)_0^{8k-5} \ar[r,hookrightarrow] & X(8k+3)_{0}^{8k-1}. \end{tikzcd}$$
Then the following three facts hold: 
\begin{enumerate}[label=(\roman*)]
\item $2\beta = 0$. 
\item $\beta$ pulls back to $0 \in j''^0(X(8k+3)_{-1}^{8k-1})$ under the map 
$$\begin{tikzcd}X(8k+3)_{-1}^{8k-1} \ar[r,twoheadrightarrow]& X(8k+3)_{0}^{8k-1}. \end{tikzcd}$$
\item $\alpha \neq 0$. 
\end{enumerate}
Fact (i) is true by Proposition~\ref{pro: order two between 2 locks}(iv).  Fact (ii) is true because the map $g$ is zero.  To see that fact (iii) is true, note that by Proposition~\ref{pro: order two between 2 locks}(iii), $\alpha$ can be represented as the map 
$$\begin{tikzcd} X(8k+3)_0^{8k-5} \ar[r,twoheadrightarrow] &S^{8k-5} \ar[rr, "\{P^{k-1}h_1^3 \}"] &&S^0 \ar[r] &j'' \end{tikzcd}$$
Since $\{P^{k-1}h_1^3\}$ is detected by $j''$, the composition 
$$\begin{tikzcd} S^{8k-5} \ar[rr, "\{P^{k-1}h_1^3 \}"] &&S^0 \ar[r] &j'' \end{tikzcd}$$
is nonzero.  Proposition~\ref{pro: only -1 cell matters} then implies that $\alpha \neq 0$.

Consider the following commutative diagram, where the rows are induced from cofiber sequences: 
$$\begin{tikzcd}
a \ar[r,mapsto] & \beta \ar[r,mapsto] & 0 \\
j''^0(S^0)  \ar[d, equal] \ar[r, "\partial"] & j''^0(X(8k+3)_0^{8k-1}) \ar[r] \ar[d] & j''^0(X(8k+3)_{-1}^{8k-1}) \ar[d] \\ 
j''^0(S^0)\ar[r, "\partial'"] & j''^0(X(8k+3)_0^{8k-5}) \ar[r] & j''^0(X(8k+3)_{-1}^{8k-5}). \\
a \ar[r,mapsto] & \alpha \neq 0
\end{tikzcd}$$
By fact (ii), $\beta = \partial(a)$ for some $a \in j''^0(S^0) = \mathbb{Z}_{(2)}$.  By the definition of $\alpha$ and fact (iii), $\partial'(a) = \alpha \neq 0$. 

By Lemma~\ref{lem: differential to second lock}, $\partial (2^{4k-1}) = \gamma$, where $\gamma\in j''^0(X(8k+3)_0^{8k-1})$ is the pullback of a nonzero element $\phi \in j''^0(S^{8k-1})$ under the map 
$$\begin{tikzcd}X(8k+3)_{0}^{8k-1} \ar[r,twoheadrightarrow] & S^{8k-1}. \end{tikzcd}$$
Since $\gamma$ pulls pack to $0 \in j''^0(8k+3)_{0}^{8k-5}$, $\partial'(2^{4k-1}) = 0$.  This implies that 
$$\nu(a) < \nu(2^{4k-1}) = 4k-1$$
(here $\nu(-)$ denotes the 2-adic valuation).  Therefore, 
\begin{eqnarray*}
\gamma &=& \partial (2^{4k-1})  \\
&=& \left(\frac{2^{4k-1}}{2a}\right) \partial (2a) \\ 
&=& \left(\frac{2^{4k-1}}{2a}\right) 2 \beta \\
&=& 0 \hspace{1in} \text{(by fact (i))}. 
\end{eqnarray*}
This is a contradiction because $\gamma \neq 0$ by Proposition~\ref{pro: only -1 cell matters}.  


\end{proof}

\begin{cor}\label{cor: second lock not passed}
We have the inequality 
$$\mathfrak{L}(8k+3)\leq 8k-2$$ 
for all $k\geq 0$.
\end{cor}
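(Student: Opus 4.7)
The plan is to deduce this corollary directly from Theorem~\ref{thm: second lock not passed} together with the definition of the Mahowald line, since all of the substantial work is already encoded in that theorem.

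More precisely, I would argue as follows. Theorem~\ref{thm: second lock not passed} states that the composite
\[ X(8k+3)^{8k-1} \xrightarrow{c(8k+3)^{8k-1}} S^0 \longrightarrow j'' \]
is nonzero. Consequently $c(8k+3)^{8k-1}$ itself cannot be null-homotopic, for otherwise the entire composite would vanish. Unwinding Definition~\ref{df:MahowaldLine}, $\mathfrak{L}(8k+3)$ is the largest integer $N$ such that $c(8k+3)^{N}$ is null-homotopic. Since $c(8k+3)^{8k-1}$ is \emph{not} null-homotopic, we must have $\mathfrak{L}(8k+3)<8k-1$, i.e.\ $\mathfrak{L}(8k+3)\le 8k-2$.

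There is no genuine obstacle at this final step; the entire difficulty has already been absorbed into Theorem~\ref{thm: second lock not passed}, whose proof in turn rests on Lemma~\ref{lem: differential to second lock} (a $j''$-based Atiyah--Hirzebruch differential hitting the second lock), Proposition~\ref{pro: order two between 2 locks} (the factorization through $\Sigma^{8k-5}C\nu$ together with the order-$2$ statement in $j''$), and Proposition~\ref{pro: only -1 cell matters} (survival of top-cell classes in the AHSS). The deduction of the corollary itself is purely formal, so I would simply write the three lines above.
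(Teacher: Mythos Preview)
Your proposal is correct and matches the paper's approach exactly: the paper states Corollary~\ref{cor: second lock not passed} immediately after Theorem~\ref{thm: second lock not passed} without a separate proof, because the deduction is precisely the formal three-line argument you give. There is nothing to add.
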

 
\subsection{Step 6: the first lock is passed when $k$ is odd}\label{subsec:Step5}
In this step, we will show that when $k$ is odd, $\mathfrak{L}(8k+3) \geq 8k-2$.  To prove this, we first construct a spectrum $\Sigma^{-1}Z(k)$ for any $k$.  This spectrum is defined as the homotopy fiber of a certain map
$$\begin{tikzcd}\Sigma^{-8k-3}\mathbb{C}P^{8k-1}_{4k+1}\ar[r,twoheadrightarrow]& S^{8k-7}.\end{tikzcd}$$
The spectrum $\Sigma^{-1}Z(k)$ has bottom cell in dimension $(-1)$ and top cell in dimension $(8k-5)$.  
\begin{prop}\label{prop:Step6CommDiagram}
There exists a map 
$$\rho: X(8k+3)_{-1}^{8k-2} \longrightarrow \Sigma^{-1}Z(k) $$
such that the following diagram commutes: 
\begin{equation} \label{diagram:Z(k)diagram}
\begin{tikzcd}
X(8k+3)^{8k-2} \ar[d, twoheadrightarrow] \ar[rrrdd,"c(8k+3)^{8k-2}"]\\ 
X(8k+3)_{-1}^{8k-2} \ar[d, "\rho"] && \\ 
\Sigma^{-1}Z(k) \ar[r, twoheadrightarrow] &S^{8k-5} \ar[rr,swap, "\{P^{k-1}h_1^3\}"] && S^0 
\end{tikzcd}
\end{equation}
\end{prop}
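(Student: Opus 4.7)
The plan is to construct $\rho$ in two stages---first on the subquotient $X(8k+3)_{-1}^{8k-5}$ below the first lock, then extend across the two higher cells in dimensions $8k-3$ and $8k-2$---and finally to verify the commutativity of the outer diagram cell by cell.

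For the first stage, I would build an auxiliary map $\sigma: X(8k+3)_{-1}^{8k-5} \to \Sigma^{-8k-3}\mathbb{C}P^{8k-1}_{4k+1}$ that is degree one on the bottom cell (dim $-1$) and on the top cell (dim $8k-5$). The existence of such a map is motivated by the double cover $BS^{1}\to B\Pin$ coming from the inclusion $S^{1}\subset \Pin$, along which $\lambda$ pulls back trivially, so that the Thom spectrum of $-m\lambda$ upstairs is a suspension of $\mathbb{C}P^{\infty}_{+}$; cell-diagram chasing controls the intermediate obstructions. I would then lift $\sigma$ through the defining fiber sequence $\Sigma^{-1}Z(k)\to\Sigma^{-8k-3}\mathbb{C}P^{8k-1}_{4k+1}\twoheadrightarrow S^{8k-7}$ to obtain $\tilde{\rho}: X(8k+3)_{-1}^{8k-5}\to \Sigma^{-1}Z(k)$. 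The obstruction to the lift is the composite of $\sigma$ with the projection to $S^{8k-7}$, and I would kill it by exploiting the freedom in choosing $\sigma$ (modifying by maps trivial on the top and bottom cells). The choices can be made so that $\tilde{\rho}$ followed by the top-cell quotient $\Sigma^{-1}Z(k)\twoheadrightarrow S^{8k-5}$ agrees with the canonical quotient $X(8k+3)_{-1}^{8k-5}\twoheadrightarrow S^{8k-5}$.

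For the second stage, using the cofiber sequence $X(8k+3)_{-1}^{8k-5}\hookrightarrow X(8k+3)_{-1}^{8k-2}\twoheadrightarrow X(8k+3)_{8k-3}^{8k-2}$, extending $\tilde{\rho}$ to $\rho$ is equivalent to the vanishing of $\tilde{\rho}\circ\delta$, where $\delta:\Sigma^{-1}X(8k+3)_{8k-3}^{8k-2}\to X(8k+3)_{-1}^{8k-5}$ is the connecting map. I would control this obstruction by analyzing the attaching maps of the $(8k-3)$- and $(8k-2)$-cells established in Section~\ref{sec:AttachingMaps}, reducing its vanishing to a multiplicative identity supplied by Proposition~\ref{pro: beta-eta-square}. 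Having constructed $\rho$, commutativity of the outer diagram on the $(8k-5)$-skeleton is Corollary~\ref{cor: ph13 is Mohowald invariant} combined with the top-cell normalization from the first stage, while over the extra $(8k-3)$- and $(8k-2)$-cells the difference between $c(8k+3)^{8k-2}$ and the proposed composite factors through $X(8k+3)_{8k-3}^{8k-2}\to S^{0}$ and can be shown to be null by a direct cell-level comparison. The main technical obstacle is the obstruction computation in the second stage: since the top cell of $\Sigma^{-1}Z(k)$ sits exactly at dimension $8k-5$---one dimension below where we extend---the vanishing of $\tilde{\rho}\circ\delta$ is not a dimensional triviality and requires the $\{P^{k-1}h_{1}^{3}\}$-multiplicative input from Proposition~\ref{pro: beta-eta-square}.
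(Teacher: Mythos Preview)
Your approach differs substantially from the paper's, and the extension step (your Stage 2) contains a genuine gap.

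In the paper, $\rho$ is not built on a subskeleton and then extended. Instead, $Z(k)$ and $\rho$ are constructed \emph{simultaneously} via a $3\times 3$-Lemma argument (Section~\ref{subsec:constructionofZ(k)}). Concretely, the truncated transfer map $s_{8k+3}$ from Proposition~\ref{prop:TransferMaps} yields a cofiber sequence
\[
X(8k+4)^{8k-2}_{-1}\longrightarrow X(8k+3)^{8k-2}_{-1}\longrightarrow \Sigma^{-(8k+3)}\mathbb{C}P^{8k}_{4k+1},
\]
and one maps the first term onto the two-cell complex $Y(k)$ of Proposition~\ref{prop:DefinitionY(k)}. The $3\times 3$-Lemma then produces both $\Sigma^{-1}Z(k)$ (as the fiber of $\Sigma^{-(8k+3)}\mathbb{C}P^{8k}_{4k+1}\twoheadrightarrow \Sigma Y(k)$) and the map $\rho$ in one stroke. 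The lifting obstruction you worry about in Stage~1 is zero \emph{by construction}: the ``certain map'' defining $Z(k)$ is chosen exactly so that its composite with $s_{8k+3}$ factors through the zero object. You do not need to ``exploit freedom in choosing $\sigma$''; the map $\sigma$ is the canonical truncation of $s_{8k+3}$, and the paper never descends to the $(8k-5)$-skeleton first.

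Because you pass to the $(8k-5)$-skeleton before lifting, you create an extension obstruction in Stage~2 that the paper simply never encounters. That obstruction lives in $[\Sigma^{8k-4}C2,\Sigma^{-1}Z(k)]$, and since the top cell of $\Sigma^{-1}Z(k)$ sits in dimension $8k-5$, there are genuine candidates (e.g.\ $\eta$ and $\eta^{2}$ into that top cell). Your claim that this reduces to Proposition~\ref{pro: beta-eta-square} is not justified: that proposition computes products $\phi_{k}\cdot\{P^{m}h_{1}^{2}\}$ in the stable stems and says nothing about lifting maps into $\Sigma^{-1}Z(k)$.

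For the commutativity of the diagram, the paper does not rely on a normalization of $\rho$ at the top cell. It instead feeds in Proposition~\ref{pro: order two between 2 locks} (parts (ii)--(iii)), which already factors $c(8k+3)^{8k-2}$ through $S^{8k-5}\xrightarrow{\{P^{k-1}h_{1}^{3}\}}S^{0}$ via the quotient to $X(8k+3)^{8k-2}_{8k-5}$, and then proves Lemma~\ref{lem:Prop2.18Lemma}: the two quotient maps $X(8k+3)^{8k-2}_{-1}\to S^{8k-5}$ (one through $X(8k+3)^{8k-2}_{8k-5}$, the other through $\Sigma^{-1}Z(k)$) differ by something factoring through $\Sigma^{8k-3}C2$, and any such map becomes null after composing with $\{P^{k-1}h_{1}^{3}\}$ since $\eta^{2}\mid\{P^{k-1}h_{1}^{3}\}$ together with $\pi_{2}\cdot\eta^{2}=0$ and $\pi_{5}=0$. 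Your final paragraph is close in spirit to this, but it rests on your Stage~2 construction, which is where the argument is incomplete.
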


\begin{prop}\label{pro: first lock for odd}
When $k$ is odd, the composition
$$\begin{tikzcd}\Sigma^{-1}Z(k) \ar[r, twoheadrightarrow]& S^{8k-5} \ar[rr,"\{P^{k-1}h_{1}^{3}\}"]&& S^0\end{tikzcd}$$
is zero. 
\end{prop}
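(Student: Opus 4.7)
The plan is to exhibit a differential in the $S^{0}$-based cohomological Atiyah--Hirzebruch spectral sequence of $\Sigma^{-1}Z(k)$ originating at the bottom cell $[-1]$ with coefficient $2^{4k-4}$ and landing on $\{P^{k-1}h_{1}^{3}\}[8k-5]$.  Since the composition in question corresponds to the class $\{P^{k-1}h_{1}^{3}\}$ placed at cellular filtration $8k-5$, such a differential expresses it as a coboundary in the AHSS and hence proves it null-homotopic.

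The first step is to reduce the attaching-map analysis in $\Sigma^{-1}Z(k)$ to one in $\Sigma^{-8k-3}\mathbb{C}P^{8k-1}_{4k+1}$ via the defining cofiber sequence
\[
\Sigma^{-1}Z(k) \longrightarrow \Sigma^{-8k-3}\mathbb{C}P^{8k-1}_{4k+1} \longrightarrow S^{8k-7}.
\]
On the stable $\mathbb{C}P^{n}$ side, adjacent cells are attached by $\eta$, so the iterated attaching from the top cell at dimension $8k-5$ down to the bottom cell at dimension $-1$ is explicit; passing to the fiber $\Sigma^{-1}Z(k)$ only alters the cells near dimension $8k-7$, leaving the top and bottom ends of the complex intact.

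The second step, which is the technical heart of the argument, is to identify the target of the differential.  Writing $T_{4k-3}$ for the $(4k-3)$-th layer of the Adams tower for $S^{0}$, its $0$-connected cover is sufficiently connected that multiplication by $2^{4k-4}$ on the $[-1]$-cell, propagated through the iterated attaching to the top cell, lands on a well-defined class $a \in \pi_{8k-5}S^{0}$ detected by $j''$.  Evaluating the Chern character of the canonical $KU^{0}$-class on $\Sigma^{-8k-3}\mathbb{C}P^{8k-1}_{4k+1}$ (the one coming from the Bott/Hopf generator) then identifies $a$ as $\{P^{k-1}h_{1}^{3}\}$.  The parity hypothesis enters precisely here: the Bernoulli-number denominator that appears in this $e$-invariant calculation has $2$-adic valuation one lower for even $k$ than for odd $k$, so for even $k$ the differential only reaches $2\{P^{k-1}h_{1}^{3}\}$, while for odd $k$ it reaches $\{P^{k-1}h_{1}^{3}\}$ on the nose.

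The main obstacle is the Chern character / $e$-invariant bookkeeping: keeping track of $2$-adic valuations of Bernoulli denominators and matching them against the Adams filtration so that the identification $a = \{P^{k-1}h_{1}^{3}\}$ holds exactly for odd $k$.  Once the differential is in hand, the proposition follows immediately, since the AHSS class $\{P^{k-1}h_{1}^{3}\}[8k-5]$ being a coboundary is equivalent to the composition $\Sigma^{-1}Z(k) \twoheadrightarrow S^{8k-5} \to S^{0}$ being null-homotopic.
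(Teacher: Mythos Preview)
Your overall architecture is right and matches the paper: one shows that the boundary map $f\colon \Sigma^{-1}Z(k)_2^\infty \to S^0$, after multiplication by $2^{4k-4}$, has Adams filtration $\geq 4k-3$ and hence lifts to $T_{4k-3}$; connectivity of $T_{4k-3}$ forces $2^{4k-4}f$ to factor through the top cell via some $\mu\in\pi_{8k-5}$ of Adams filtration $\geq 4k-3$; and an $e$-invariant computation pins down $\mu=\{P^{k-1}h_1^3\}$.

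The gap is in your account of where the parity of $k$ enters. It is \emph{not} a Bernoulli-denominator phenomenon in the $e$-invariant of the image-of-$J$ element: $\nu(e(\{P^{k-1}h_1^3\}))=0$ for every $k$, and the power-series estimates in Lemma~\ref{lem: bundle over CP} give the same conclusion $\nu(c_{8k-4})\geq 0$ regardless of parity. The genuine parity input is the cell structure of $Z(k)$ near the top (Lemma~\ref{lem:Z(k)Property}): $Z(k)_{8k-8}^{8k-4}\simeq \Sigma^{8k-8}C\eta^3$ for $k$ odd but splits as $S^{8k-8}\vee S^{8k-4}$ for $k$ even, ultimately because $4k\nu=\eta^3$ in $\pi_3$ when $k$ is odd and $4k\nu=0$ when $k$ is even. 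Your remark that passing to the fiber ``only alters cells near dimension $8k-7$, leaving the top and bottom ends intact'' is therefore precisely backwards: removing that cell is what changes the attaching map between the $(8k-9)$- and $(8k-5)$-cells and creates the dichotomy.

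This structural difference controls the Chern-character step. The paper does not evaluate a canonical class on the stunted projective space; it constructs a specific $\alpha_k\in ko^0(Z(k))$ (Proposition~\ref{prop bundle construction 1}). For $k$ odd the $\eta^3$-attachment means one can kill the $x^{4k-4}$-term only at the cost of a residual top term $dx^{4k-2}$ with $\nu(d)=0$; comparing $\alpha_k$ with the pullback of a $ko$-class from $C\mu$ then forces $\nu(e(\mu))=0$, hence $\mu=\{P^{k-1}h_1^3\}$. For $k$ even the splitting lets one kill both top terms, so $ch(c(\alpha_k))=2^{4k-4-\nu(k)}$ exactly and one obtains a permanent cycle rather than a differential. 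Without the $\eta^3$-attachment input, the $e$-invariant calculation alone cannot distinguish odd from even $k$.
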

Proposition~\ref{pro: first lock for odd} is proven by considering $T_{4k-3}$, the $(4k-3)$-layer of the Adams tower for $S^{0}$.  Using the connectivity of the 0-connected cover of $T_{4k-3}$, we prove that there exists a differential of the form
$$2^{4k-4}[-1]\longrightarrow a[8k-5], \,\,\,a\in \pi_{8k-5} $$
in the $S^{0}$-based Atiyah--Hirzebruch spectral sequence of $\Sigma^{-1}Z(k)$.  Moreover, $a$ is in the image of $j$.  By computing the $e$-invariant of the element $a$ using Chern character, we show that $a=\{P^{k-1}h_1^{3}\}$.  

It follows from Proposition~\ref{pro: first lock for odd} that the map
$$\begin{tikzcd}X(8k+3)^{8k-2} \ar[rr,"c(8k+3)^{8k-2}"]&&S^0\end{tikzcd}$$
is also zero by the commutativity of diagram~(\ref{diagram:Z(k)diagram}).  

\begin{cor}\label{cor: first lock passed for k odd}
When $k$ is odd, we have the inequality 
$$\mathfrak{L}(8k+3)\geq 8k-2.$$ 
\end{cor}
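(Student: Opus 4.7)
The plan is to observe that this corollary follows essentially formally from the two propositions immediately preceding it. First I would invoke Proposition~\ref{prop:Step6CommDiagram} to write the map $c(8k+3)^{8k-2} : X(8k+3)^{8k-2} \to S^0$ as the composition
$$X(8k+3)^{8k-2} \twoheadrightarrow X(8k+3)_{-1}^{8k-2} \xrightarrow{\rho} \Sigma^{-1}Z(k) \twoheadrightarrow S^{8k-5} \xrightarrow{\{P^{k-1}h_1^3\}} S^0.$$
This uses only the commutativity of diagram (\ref{diagram:Z(k)diagram}) and does not require any hypothesis on the parity of $k$.

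Next, I would use the hypothesis that $k$ is odd together with Proposition~\ref{pro: first lock for odd}, which asserts precisely that the composition of the last two arrows,
$$\Sigma^{-1}Z(k) \twoheadrightarrow S^{8k-5} \xrightarrow{\{P^{k-1}h_1^3\}} S^0,$$
is null-homotopic for odd $k$. Since this composition appears as a factor in the factorization above, the entire map $c(8k+3)^{8k-2}$ is null-homotopic. By the defining property of $\mathfrak{L}$ in Definition~\ref{df:MahowaldLine}, the null-homotopy of $c(8k+3)^{8k-2}$ forces $\mathfrak{L}(8k+3) \geq 8k-2$, which is the desired inequality.

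There is no real obstacle at this stage: all the genuine work of the argument is hidden inside Propositions~\ref{prop:Step6CommDiagram} and~\ref{pro: first lock for odd}. The former requires constructing the spectrum $\Sigma^{-1}Z(k)$ and the map $\rho$ together with verifying that the bottom-cell behavior matches the identification of the first-lock map as $\{P^{k-1}h_1^3\}$ (Corollary~\ref{cor: ph13 is Mohowald invariant}), while the latter relies on an $e$-invariant/Chern character computation inside the Atiyah--Hirzebruch spectral sequence of $\Sigma^{-1}Z(k)$ and on the connectivity of the $0$-connected cover of $T_{4k-3}$. Once those inputs are available, the corollary is purely a matter of chasing the diagram and reading off the definition of the Mahowald line.
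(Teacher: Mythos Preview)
Your proof is correct and matches the paper's approach exactly: the paper also deduces that $c(8k+3)^{8k-2}$ is null by combining the commutative diagram~(\ref{diagram:Z(k)diagram}) from Proposition~\ref{prop:Step6CommDiagram} with Proposition~\ref{pro: first lock for odd}, and then reads off the inequality from Definition~\ref{df:MahowaldLine}.
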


\subsection{Step 7: the first lock is not passed when $k$ is even} \label{subsec:Step6}

\begin{prop}\label{pro: first lock for even}
When $k$ is even, the class 
$$2^{4k-4-\nu(k)}[-1]$$
is a permanent cycle in the $j''$-based Atiyah--Hirzebruch spectral sequence of ${X(8k+3)^{8k-5}}$.
\end{prop}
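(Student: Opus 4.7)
The plan is to follow the same scheme as the proof of Proposition~\ref{pro: first lock for odd}, but to extract an \emph{obstruction} — the permanence of a specific $[-1]$-column class — rather than construct a killing differential. The extra factor $2^{\nu(k)}$ should arise from an additional 2-adic divisibility of the Bernoulli-type denominator controlling the $j''$-$e$-invariant of $\{P^{k-1}h_1^3\}$ when $k$ is even, as compared with the odd case.

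First I would localize the obstruction to the $(-1)$-cell. By Proposition~\ref{pro: only -1 cell matters} applied with $m=2k-2$ (so that $4m+3=8k-5$), any nontrivial differential in the $j''$-based AHSS of $X(8k+3)^{8k-5}$ hitting a class $a[8k-5]$ with $a\in\pi_{8k-5}j''$ must have source in filtration $-1$, since the contribution of every intermediate cell is killed by the injectivity of $j''^0(S^{4m+3})\to j''^0(X(8k+3)_0^{4m+3})$. The question therefore reduces to: what is the least $v$ such that $2^v[-1]$ supports a nontrivial differential onto the top column?

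Next I would compute $v$ using the same Chern-character tools as in the odd case. Pulling back via the map $\rho$ of Proposition~\ref{prop:Step6CommDiagram} and using Corollary~\ref{cor: ph13 is Mohowald invariant} to identify the $(-1)\to(8k-5)$ attaching structure as $\{P^{k-1}h_1^3\}$, the source-length $v$ equals the $2$-adic order of the $j''$-$e$-invariant of $\{P^{k-1}h_1^3\}$, computed via the Chern character on the two-cell subquotient $\Sigma^{-8k-3}\mathbb{C}P^{8k-1}_{4k+1}$ (and then on $\Sigma^{-1}Z(k)$). A standard Bernoulli/von Staudt–Clausen valuation count applied to $B_{4k-2}/(8k-4)$ yields exactly $2^{4k-4}$ for the denominator when $k$ is odd, recovering Proposition~\ref{pro: first lock for odd}; when $k$ is even an additional factor of $2^{\nu(k)}$ is absorbed into the numerator from the factorization $8k-4=4(2k-1)$, so the valuation of the denominator drops to $4k-4-\nu(k)$. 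Hence $v\geq 4k-3-\nu(k)$, and the class $2^{4k-4-\nu(k)}[-1]$ can support no nonzero differential.

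Combined with Proposition~\ref{pro: only -1 cell matters}, which already forbids $[-1]$-based differentials onto any intermediate cell, this shows that $2^{4k-4-\nu(k)}[-1]$ is a permanent cycle. The main obstacle is the precise 2-adic bookkeeping in the Chern-character computation: one must verify that the extra $\nu(k)$ really enters at the same 2-cell subquotient that governed the odd case, and is not shifted away into a higher AHSS differential or into an Adams-filtration correction. This is a parity-sensitive refinement of the computation sketched after Proposition~\ref{pro: first lock for odd}, and the rest of the argument is a direct dualization of the odd-case strategy.
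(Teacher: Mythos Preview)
Your proposal has two genuine gaps.

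\textbf{First, you misread Proposition~\ref{pro: only -1 cell matters}.} That proposition says that a class $a[4m+3]$ on the \emph{top} cell can only be killed by something from the $(-1)$-cell; it says nothing about whether a class on the $(-1)$-cell can support differentials onto intermediate cells. Your sentence ``Proposition~\ref{pro: only -1 cell matters}, which already forbids $[-1]$-based differentials onto any intermediate cell'' reverses the direction. So even if your $e$-invariant computation were right, you would only have shown that $2^{4k-4-\nu(k)}[-1]$ supports no differential to the top cell $[8k-5]$; you would not have shown it is a permanent cycle.

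\textbf{Second, the source of the exponent $4k-4-\nu(k)$ is not the $e$-invariant of $\{P^{k-1}h_1^3\}$.} That $e$-invariant has $2$-adic valuation $0$ independent of the parity of $k$ (this is exactly what is computed in the odd case). The factor $\nu(k)$ enters instead through the $2$-adic valuations of the power-series coefficients of $\bigl(\ln(1+z)/z\bigr)^{4k+1}$ (Appendix~\ref{sec:AppendixA}), which control how small a power of $2$ one can realize as $ch(c(\alpha))$ for a class $\alpha\in ko^0(Z(k))$.

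The paper's argument is different in kind. One constructs (Proposition~\ref{prop bundle construction 1}) an element $\alpha_k\in ko^0(Z(k))$ with $ch(c(\alpha_k))=2^{4k-4-\nu(k)}$ exactly, i.e.\ with \emph{no} higher-degree term. This is possible precisely when $k$ is even, because then $Z(k)^{8k-4}_{8k-8}\simeq S^{8k-8}\vee S^{8k-4}$ splits (Lemma~\ref{lem:Z(k)Property}), letting one cancel both remaining Chern-character terms; when $k$ is odd that subquotient is $\Sigma^{8k-8}C\eta^3$ and a residual term of valuation $0$ survives. With $d=0$ one applies Theorem~\ref{prop: detect AHSS differential}(I) to conclude that $2^{4k-4-\nu(k)}[-1]$ is a permanent cycle in the $j''$-based AHSS of $\Sigma^{-1}Z(k)$, and then pushes this forward along $\rho$ to $X(8k+3)^{8k-5}_{-1}$. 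The permanence is thus established directly via the $ko$-class, not by separately ruling out differentials to each cell.
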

The proof of Proposition~\ref{pro: first lock for even} is sketched as follows: first, by restricting the map $\rho$ in Proposition~\ref{prop:Step6CommDiagram} to the $(8k-5)$-skeleton, we obtain a map 
$$X(8k+3)_{-1}^{8k-5} \longrightarrow \Sigma^{-1}Z(k),$$ 
where $Z(k)$ is constructed in Section~\ref{subsec:Step5}.  Then, we establish a permanent cycle 
$$2^{4k-4-\nu(k)}[-1]$$
in the $j''$-based Atiyah--Hirzebruch spectral sequence for $\Sigma^{-1}Z(k)$ when $k$ is even via Chern character computations.  This permanent cycle is then used to prove the desired permanent cycle.

\begin{thm} 
When $k$ is even, the composition map
\begin{equation}\label{eq: first lock in j}
X(8k+3)^{8k-5}\xrightarrow{c(8k+3)^{8k-5}}S^{0}\longrightarrow j''
\end{equation}
is not null.
\end{thm}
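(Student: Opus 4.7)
The plan is to argue by contradiction, mirroring the strategy used in the proof of Theorem~\ref{thm: second lock not passed}. Assume for contradiction that the composition $\gamma: X(8k+3)^{8k-5} \xrightarrow{c(8k+3)^{8k-5}} S^0 \to j''$ is null-homotopic; the goal is to contradict the permanent cycle guaranteed by Proposition~\ref{pro: first lock for even}.

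First, I would observe that since $\pi_{-1}^s S^0 = 0$, the restriction of $c(8k+3)^{8k-5}$ to the bottom $(-1)$-cell $S^{-1}\hookrightarrow X(8k+3)^{8k-5}$ is automatically null. Hence $c(8k+3)^{8k-5}$ factors as $c_0 \circ q$ through the quotient $X(8k+3)_0^{8k-5}$, and $\gamma$ factors as $\tilde\gamma: X(8k+3)_0^{8k-5}\to j''$. Applying $[-,j'']$ to the cofiber sequence
$$S^{-1}\hookrightarrow X(8k+3)^{8k-5}\twoheadrightarrow X(8k+3)_0^{8k-5}$$
yields a long exact sequence, and the vanishing of $\gamma = q^*(\tilde\gamma)$ forces $\tilde\gamma = n \cdot \delta(1)$ for some $n\in \mathbb{Z}_{(2)}$, where $\delta: \pi_0 j'' = j''^{-1}(S^{-1}) \to j''^0(X(8k+3)_0^{8k-5})$ is the connecting homomorphism.

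Next, Proposition~\ref{pro: first lock for even} provides a permanent cycle $2^{4k-4-\nu(k)}[-1]$ representing a nonzero class $\xi \in j''^{-1}(X(8k+3)^{8k-5})$ whose bottom-cell restriction is $2^{4k-4-\nu(k)} \in \pi_0 j''$. Exactness of the long exact sequence then gives $\delta(2^{4k-4-\nu(k)}) = 2^{4k-4-\nu(k)}\,\delta(1)=0$ in $j''^0(X(8k+3)_0^{8k-5})$, so $\delta(1)$ is $2^{4k-4-\nu(k)}$-torsion. Meanwhile, Corollary~\ref{cor: ph13 is Mohowald invariant} identifies the restriction of $c_0$ to the top cell $S^{8k-5}$ with $\{P^{k-1} h_1^3\}$, which is detected nontrivially by $j''$; this prescribes a specific nonzero top-cell contribution of $\tilde\gamma$ in the Atiyah--Hirzebruch spectral sequence for $j''^0(X(8k+3)_0^{8k-5})$.

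The final step is to match these constraints. Combining $\tilde\gamma = n\,\delta(1)$, the annihilation $2^{4k-4-\nu(k)}\,\delta(1) = 0$, and the nontrivial $j''$-detection of $\{P^{k-1}h_1^3\}$ on the top cell should yield a numerical contradiction via a careful comparison of $2$-adic valuations and AHSS filtrations. The hard part will be exactly this bookkeeping: the exponent $4k-4-\nu(k)$ from Proposition~\ref{pro: first lock for even} is calibrated so that the even-$k$ hypothesis forces the mismatch, in contrast with the odd-$k$ case (Corollary~\ref{cor: first lock passed for k odd}) where the analogous divisibility relations are consistent and the first lock is passed.
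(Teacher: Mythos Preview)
Your setup through step 5 is correct and matches how the paper begins: under the contradiction hypothesis, the class $\tilde\gamma\in j''^0(X(8k+3)_0^{8k-5})$ is the image of $\{P^{k-1}h_1^3\}$ under an injection (Proposition~\ref{pro: only -1 cell matters}), so it is nonzero of order exactly $2$; and $\tilde\gamma=\delta(n)$ for some $n$, with $\delta(2^{4k-4-\nu(k)})=0$ by Proposition~\ref{pro: first lock for even}. But these constraints are \emph{mutually consistent}: nothing prevents $\delta(1)$ from having order $2$ (so that $n$ is odd and $\tilde\gamma=\delta(1)$), in which case $2^{4k-4-\nu(k)}\delta(1)=0$ is automatic since $4k-4-\nu(k)\ge 1$. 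So the ``numerical mismatch'' you anticipate in step 6 never materialises from these ingredients alone; no amount of bookkeeping on valuations and filtrations in $X(8k+3)^{8k-5}$ will close the argument.

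The missing idea is to bring in the \emph{second lock}. The paper enlarges to $X(8k+3)^{8k-1}$ and uses two extra inputs you never invoke: Lemma~\ref{lem: differential to second lock}, which gives a differential $2^{4k-1}[-1]\to\phi[8k-1]$ with $\phi\neq 0$, and Theorem~\ref{thm: second lock not passed}. From your differential $b[-1]\to\{P^{k-1}h_1^3\}[8k-5]$ (i.e.\ your $\delta(n)=\tilde\gamma$) one gets $2b[-1]\to\gamma[8k-1]$ in the larger complex, and comparing with $2^{4k-1}[-1]\to\phi[8k-1]$ gives $\phi=\gamma\cdot\tfrac{2^{4k-1}}{2b}$. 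Now Proposition~\ref{pro: first lock for even} is used only to bound $\nu(b)\le 4k-5-\nu(k)$, forcing $\nu\bigl(\tfrac{2^{4k-1}}{2b}\bigr)\ge 3+\nu(k)$; since $\pi_{8k-1}j''\cong\mathbb{Z}/2^{4+\nu(k)}$ and $\phi\neq 0$, this pins $\gamma$ down as a generator. That kills the entire map $j''^0(S^{8k-1})\to j''^0(X(8k+3)^{8k-1})$, making the restriction $j''^0(X(8k+3)^{8k-1})\to j''^0(X(8k+3)^{8k-5})$ injective, and then your contradiction hypothesis forces $c(8k+3)^{8k-1}$ to vanish in $j''$, contradicting Theorem~\ref{thm: second lock not passed}. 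The even-$k$ hypothesis enters precisely to make Proposition~\ref{pro: first lock for even} available; for odd $k$ this step fails and indeed the first lock is passed (Corollary~\ref{cor: first lock passed for k odd}).
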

\begin{proof}
By Corollary~\ref{cor: ph13 is Mohowald invariant}, one can rewrite (\ref{eq: first lock in j}) as the composition
\begin{equation}\label{eq: first lock in j 2} 
\begin{tikzcd}
X(8k+3)^{8k-5} \ar[r, twoheadrightarrow]&  S^{8k-5} \ar[rr, "\{P^{k-1}h_1^{3}\}"] && j''.
\end{tikzcd}
\end{equation}
For the sake of contradiction, suppose that (\ref{eq: first lock in j 2}) is null-homotopic.  By Proposition~\ref{pro: only -1 cell matters}, there must exist a differential of the form
\begin{equation} \label{equation:b[-1]differential}
b[-1]\longrightarrow \{P^{k-1}h_1^{3}\}[8k-5]
\end{equation}
for some $b\in \mathbb{Z}_{(2)}$.  

Recall that in Lemma~\ref{lem: differential to second lock}, we established the differential
$$2^{4k-1}[-1]\longrightarrow \phi[8k-1]$$
for some nonzero element $\phi \in \pi_{8k-1}j''$.  
This, combined with differential~(\ref{equation:b[-1]differential}), shows that there exists a differential 
\begin{equation}\label{eq: diff kills generator}
2b[-1]\longrightarrow \gamma [8k-1].
\end{equation}
Furthermore, the elements $\phi$ and $\gamma\cdot \frac{2^{4k-1}}{2b}$, when considered as elements in \\${j''^{0}(X(8k+3)^{8k-1}_{0})}$, are equal.  Since 
$$\nu\left(\frac{2^{4k-1}}{2b}\right)\geq 4k-1-(1+4k-5-\nu(k))=3+\nu(k)$$
and $\pi_{8k-1}j''=\mathbb{Z}/(2^{4+\nu(k)})$, $\gamma$ must be the generator of  $\pi_{8k-1}j''$.  

Consider the exact sequence 
$$j''^0(S^{8k-1}) = j''^0(X(8k+3)_{8k-4}^{8k-1})\longrightarrow j''^{0}(X(8k+3)^{8k-1})\longrightarrow j''^{0}(X(8k+3)^{8k-5})$$
that is induced from the cofiber sequence 
$$X(8k+3)^{8k-5} \longrightarrow X(8k+3)^{8k-1} \longrightarrow X(8k+3)_{8k-4}^{8k-1}.$$
Differential~(\ref{eq: diff kills generator}) implies that the map 
$$j''^0(S^{8k-1}) = j''^0(X(8k+3)_{8k-4}^{8k-1})\longrightarrow j''^{0}(X(8k+3)^{8k-1})$$ 
is zero.  Therefore, the map 
$$j''^{0}(X(8k+3)^{8k-1})\longrightarrow j''^{0}(X(8k+3)^{8k-5})$$
is injective.  However, our induction hypothesis states that the composition map 
$$\begin{tikzcd}X(8k+3)^{8k-5} \ar[r,hookrightarrow]& X(8k+3)^{8k-1} \ar[rr, "c(8k+3)^{8k-1}"] && S^{0} \ar[r] & j''
 \end{tikzcd}$$
is zero.  The injection above will imply that the composition map 
$$X(8k+3)^{8k-1}\xrightarrow{c(8k+3)^{8k-1}}S^{0}\longrightarrow j''$$
is also zero.  This contradicts Theorem~\ref{thm: second lock not passed}.\end{proof}

\begin{cor}\label{cor: first lock not passed when $n$ even}
When $k$ is even, we have the equality 
$$\mathfrak{L}(8k+3)=8k-5.$$
\end{cor}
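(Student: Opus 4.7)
The plan is to close out the corollary by sandwiching $\mathfrak{L}(8k+3)$ between matching upper and lower bounds that have already been established in the preceding steps, so the real content here is bookkeeping rather than new mathematics.

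For the upper bound, I would invoke the theorem stated immediately above the corollary. When $k$ is even, that theorem produces an explicit non-triviality: the composition
\[
X(8k+3)^{8k-5}\xrightarrow{\,c(8k+3)^{8k-5}\,}S^{0}\longrightarrow j''
\]
is not null-homotopic. Post-composition with the unit $S^{0}\to j''$ cannot convert a null-homotopic map into a non-zero one, so $c(8k+3)^{8k-5}$ itself must be essential. Applying Definition~\ref{df:MahowaldLine} then bounds the height of the Mahowald line at column $8k+3$ from above, since by that definition $\mathfrak{L}(8k+3)$ cannot be as large as the filtration at which $c(8k+3)^{8k-5}$ is already nontrivial. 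This is precisely the ``first lock is not passed'' half of the argument, and it relies only on the $j''$-detection statement whose proof was carried out using Proposition~\ref{pro: only -1 cell matters}, Lemma~\ref{lem: differential to second lock}, and the permanent-cycle Proposition~\ref{pro: first lock for even}.

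For the matching lower bound, I would appeal to Corollary~\ref{corollary: lower bound} from Step 1, which rests on the inductive family $f_k$ of Theorem~\ref{thm: inductive fk}. Writing $8k+3=8(k-1)+4+7$ and taking $m=7$ so that $\tau(7)=2$, that corollary yields
\[
\mathfrak{L}(8k+3)=\mathfrak{L}\!\bigl(8(k-1)+4+7\bigr)\ \geq\ 8(k-1)+2,
\]
which provides the floor that matches the upper bound read off from the theorem above. Both bounds together pin down the value of $\mathfrak{L}(8k+3)$ as claimed by the corollary.

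The main obstacle is not in this deduction at all—both inputs are already fully in place—but in the preceding $j''$-detection theorem, whose proof is the technical crux of Step 7. The subtlety there is that one must rule out, via a careful inspection of the $j''$-based Atiyah--Hirzebruch spectral sequence of $X(8k+3)^{8k-5}$, the possibility that the class $\{P^{k-1}h_1^3\}[8k-5]$ is hit by a differential from the $(-1)$-cell; ruling this out uses the order of $\pi_{8k-1}j''=\mathbb{Z}/2^{4+\nu(k)}$ and the pairing with the Lemma~\ref{lem: differential to second lock} differential $2^{4k-1}[-1]\to\phi[8k-1]$. Once that contradiction is granted, the assembly described above is immediate, and the corollary follows without further work.
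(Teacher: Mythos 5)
Your overall strategy is exactly the paper's: the corollary has no separate proof there either, and is meant to follow by combining the detection theorem immediately above it (upper bound) with the Step~1 lower bound of Corollary~\ref{corollary: lower bound}. Both halves of your deduction are sound as mathematics: non-nullity of the composite into $j''$ forces $c(8k+3)^{8k-5}$ to be essential, and the specialization $m=7$, $\tau(7)=2$ of Corollary~\ref{corollary: lower bound} gives $\mathfrak{L}(8k+3)\geq 8(k-1)+2$.

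The problem is the final assertion that these bounds ``pin down the value \ldots as claimed by the corollary.'' They do not: they pin it down as $8k-6$. Since $c(8k+3)^{8k-5}$ is essential, Definition~\ref{df:MahowaldLine} gives $\mathfrak{L}(8k+3)\leq 8k-6$ (the $(8k-5)$-cell is the ``first lock,'' and the next skeleton down is $X(8k+3)^{8k-6}$), while your lower bound is $8(k-1)+2=8k-6$. So your argument proves $\mathfrak{L}(8k+3)=8k-6$, whereas the corollary as printed asserts $8k-5$ --- a value that is in fact \emph{inconsistent} with the theorem directly above it, since $\mathfrak{L}(8k+3)=8k-5$ would mean $c(8k+3)^{8k-5}$ is null. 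The value $8k-6$ is the one that agrees with the rest of the paper: the outline states ``Step 7 proves that $\mathfrak{L}(8k+3)=8k-6$ when $k$ is even,'' and Theorem~\ref{thm:exactMahowaldLine} gives $\mathfrak{L}(16k+19)=16k+10=8(2k+2)-6$. In short, the corollary's ``$8k-5$'' is an off-by-one slip in the paper; you should state explicitly that your bounds yield $8k-6$ and flag the discrepancy, rather than declaring a match with the printed statement.
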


In light of Proposition~\ref{prop: equivariant to nonequivariant}, our main theorem (Theorem~\ref{main theorem}) follows directly from the various bounds that we have established for the Mahowald line (see Figure~\ref{fig:OutlineStep0}).

\newpage
\section{Preliminaries}\label{sec:Preparations}
In this section, we set up some preliminaries that will be useful in the later sections.  In Section~\ref{subsec:Maps between subquoteints}, we define maps between certain subquotients of $X(m)$.  In Section~\ref{subsec:TrasnferMaps}, we discuss the transfer map. 
 
\subsection{Maps between subquoteints} \label{subsec:Maps between subquoteints}
\begin{df}\rm
Let $m$, $n$, and $l$ be integers with $m>n\geq 0$.  The function $h(n,m,l)\in \mathbb{Z}$ is inductively defined as follows (see Figure~\ref{fig:MapsBetweenQuotients}):
\begin{itemize}
    \item $h(n,n-1,l)=\begin{cases}
    l-1       \quad &\text{if }l+n\equiv 0,3 \pmod 4,  \\
  l     \quad &\text{otherwise. }
  \end{cases}$
  \item $h(m,n, l)=h(m-1,n, h(n,n-1,l))$ when $m-n\geq 2$.
\end{itemize}
We also set $h(m,n,\infty)=\infty$.  
\end{df}

Intuitively, the integer $h(m, n,l)$ can be described as follows: start with the $l$-cell in $X(m)$ and walk to the right (towards $X(n)$), moving down one cell every time we encounter an empty cell.  The cell we reach at $X(n)$ is $h(m, n, l)$.  

\begin{figure}
\begin{center}
\makebox[\textwidth]{\includegraphics[trim={2.5cm 4.2cm 0.4cm 6.7cm}, clip, page = 1, scale = 0.7]{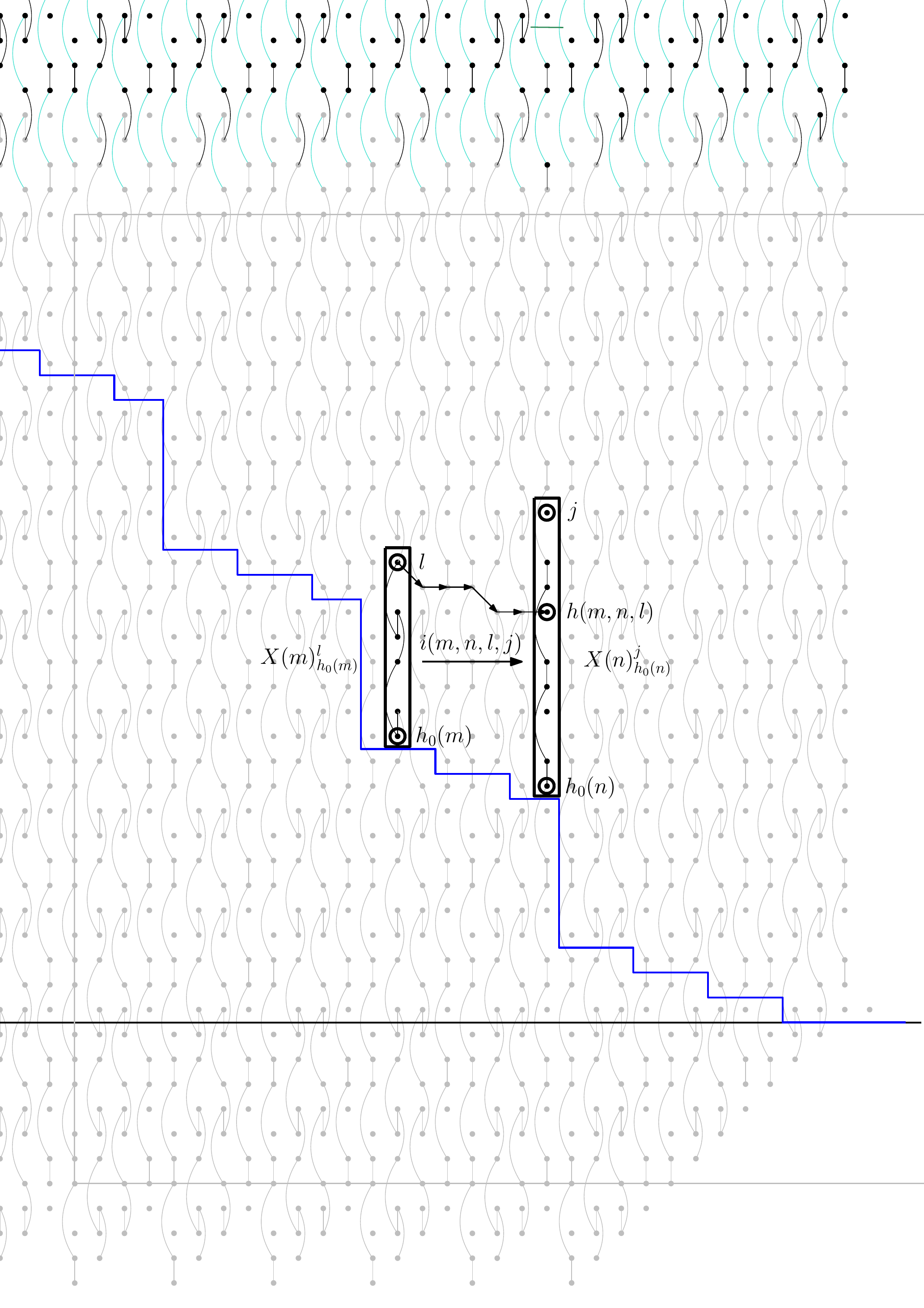}}
\captionof{figure}{Maps between subquotients.}
\hfill
\label{fig:MapsBetweenQuotients}
\end{center}
\end{figure}

\begin{df}\rm
For $k\geq 0$ and $0\leq m\leq 7$, define 
$$h_{0}(4+8k+m)=8k+\tau(m) +1,$$ 
where the function $\tau(m)$ is defined as in Corollary $\ref{corollary: lower bound}$.  In other words, the \\${h_0(4+8k+m)}$-cell of $X(4+8k+m)$ is the first cell that is above the lower bound line proved in Section~\ref{subsec:Step1} (the {\color{blue} blue} line in Figure~\ref{fig:MapsBetweenQuotients}). 
\end{df}

\begin{prop}
Let $m$, $n$, $l$, $j$ be integers such that the following conditions hold: 
\begin{enumerate}[label=(\alph*)]
    \item $m=8k+4+a$ and $n=8k+4+b$, where $k\geq 0$ and $a,b \in \{0,\ldots,7\}$;
    \item $m > n$;
    \item $l \geq h_{0}(m)$;
    \item  $j\geq h(m,n,l)$.
\end{enumerate}
Then there exists a map
$$
i(m,n,l,j):X(m)^{l}_{h_{0}(m)}\longrightarrow X(n)^{j}_{h_{0}(n)}.
$$
Furthermore, the maps $i(m, n, l, j)$ are compatible with each other in the sense that the following three properties hold: 
\begin{enumerate}
\item (Compatibility with respect to quotient).  The following diagram commutes for all $m > n$:
\begin{equation*}
\xymatrix{X(m)^{\infty}_{h_{0}(m)}\ar[rr]^{i(m,n,\infty,\infty)}&& X(n)^{\infty}_{h_{0}(n)}\\
X(m)\ar@{->>}[u] \ar[rr]^{i(m,n)}& &X(n).\ar@{->>}[u] }
\end{equation*}
\item (Compatibility with respect to inclusion).  If $(m, n, l', j')$ is another tuple satisfying the conditions above with $l'\leq l$ and $j'\leq j$, then the following diagram commutes:
\begin{equation}\label{diagram:subquotient compatible2}
\xymatrix{X(m)^{l}_{h_{0}(m)}\ar[rr]^{i(m,n,l,j)}&& X(n)^{j}_{h_{0}(n)}\\
X(m)^{l'}_{h_{0}(m)}\ar@{^{(}->}[u] \ar[rr]^{i(m,n,l',j')}& &X(n)^{j'}_{h_{0}(n)}.\ar@{^{(}->}[u] }    
\end{equation}
\item (Compatibility with respect to composition).  If $(m,n,l,j)$ and $(n,p,j,q)$ are two tuples satisfying the conditions of the proposition, then
$$
i(m,p,l,q)= i(n,p,j,q)\circ i(m,n,l,j).
$$ 
\end{enumerate}
\end{prop}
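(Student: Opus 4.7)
The plan is to construct the maps $i(m, n, l, j)$ inductively on $m - n$ by composing one-step maps $i(m, m-1, l, j)$, and then to read off the three compatibility properties from this construction. The key input is a cellular analysis of $i(m, m-1): X(m) \to X(m-1)$. The fibration $\mathbb{R}\textup{P}^2 \to \textup{BPin}(2) \to \mathbb{H}\textup{P}^\infty$ equips $\textup{BPin}(2)$ with cells in every nonnegative dimension except those $\equiv 3 \pmod 4$. After the Thom shift, $X(m)$ has a cell in dimension $d \geq -m$ precisely when $d + m \not\equiv 3 \pmod 4$. The two branches in the definition of $h(n, n-1, l)$ correspond to the two ways a cell can be ``missing'' in a single step: either $X(n)$ already has no cell at dimension $l$ (when $l + n \equiv 3 \pmod 4$, so $X(n)^l = X(n)^{l-1}$), or $X(n-1)$ has no cell at dimension $l$ (when $l + n \equiv 0 \pmod 4$, forcing the image of the top cell to land strictly below $l$).

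I would first build the one-step maps. Since $i(m, m-1)$ is cellular with respect to these cell structures, cellular approximation gives $i(m, m-1)(X(m)^l) \subseteq X(m-1)^{h(m, m-1, l)}$. For the induced map on subquotients to exist, one also needs $i(m, m-1)(X(m)^{h_0(m)-1}) \subseteq X(m-1)^{h_0(m-1) - 1}$, i.e., the numerical inequality
\begin{equation*}
h(m, m-1, h_0(m) - 1) \leq h_0(m-1) - 1
\end{equation*}
within each window $[8k+4, 8k+11]$. This is verified by a direct case check using the explicit values of $h_0$ and $\tau$ from Corollary~\ref{corollary: lower bound}. Granted these two facts, $i(m, m-1, l, j)$ is defined as the map induced on subquotients from the restriction of $i(m, m-1)$ to $X(m)^l$ followed by the two quotients.

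For $m - n \geq 2$, define
\begin{equation*}
i(m, n, l, j) := i(m-1, n, h(m, m-1, l), j) \circ i(m, m-1, l, h(m, m-1, l)),
\end{equation*}
by induction on $m - n$; the recursive structure of $h$ ensures this composition is well-typed, with the intermediate skeletal bound matching between the two factors. The three compatibility properties then follow formally. Property (1) is immediate because each one-step map was defined as the induced map on quotients, compatible by construction with the projection $X(m) \twoheadrightarrow X(m)^\infty_{h_0(m)}$. Property (2) follows from naturality: every $i(m, n, l, j)$ is obtained by restriction and quotient of a single underlying map $i(m, n)$, so shrinking $l, j$ to $l', j'$ just restricts the domain and codomain. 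Property (3) follows from the associativity of composition together with the matching recursion for $h$.

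The hard part will be the numerical compatibility $h(m, m-1, h_0(m) - 1) \leq h_0(m-1) - 1$ across each window. This is not automatic from the definitions: it reflects the precise choice of $h_0$ as the lower-bound function of Corollary~\ref{corollary: lower bound}, and must be verified by walking through the eight cases $m \in \{8k+5, \ldots, 8k+11\}$ one at a time. Without this compatibility, the quotient by $X(m)^{h_0(m)-1}$ would not descend to a map of subquotients, and no such family of maps could exist.
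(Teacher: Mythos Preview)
Your overall strategy---build the one-step maps via cellular approximation and the cofiber construction, then compose---matches the paper's proof, and you correctly isolate the numerical check $h(m,m-1,h_0(m)-1)\le h_0(m-1)-1$ that the paper uses implicitly in its Step~1.

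The gap is in your justification of property~(2). Saying it ``follows from naturality'' because the maps are ``restriction and quotient of a single underlying map $i(m,n)$'' is not enough in the homotopy category: the induced map on a cofiber is not canonical, and the factorization of a map through a skeleton (obtained from cellular approximation) is a choice, not a functorial construction. Two such choices made separately for $(l,j)$ and $(l',j')$ need not be compatible. The paper addresses exactly this point in its Step~3: it first observes that after post-composition with $X(n)^{j}_{h_0(n)}\hookrightarrow X(n)^{\infty}_{h_0(n)}$ the two routes around the square agree (by construction from the $l=\infty$ case), so their difference factors through $\Sigma^{-1}X(n)^{\infty}_{j+1}$; it then checks that the top cell of $X(n+1)^{l'}_{h_0(n+1)}$ lies strictly below the bottom cell of $\Sigma^{-1}X(n)^{\infty}_{j+1}$, so this factorization is forced to be zero. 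That dimension argument is the actual content behind your ``naturality'', and you should supply it. Your argument for property~(3) inherits the same gap: your recursive definition uses the minimal intermediate value $h(m,m-1,l)$, whereas property~(3) allows an arbitrary intermediate $j$, and showing these agree is precisely an instance of property~(2). The paper notes this explicitly in its Step~4 (``this composition does not depend on the choice of the sequence'').
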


To avoid clustering the notations in the later sections, we will simply use the special arrow
$$X(m)^{l}_{h_{0}(m)}\rightharpoonup X(n)^{j}_{h_{0}(n)}$$ 
to denote the map $i(m,n,l,j)$ when the context is clear.  

\begin{proof}
We will construct the maps $i(m, n, l, j)$ in four steps, increasing the level of generality at each step.  \\

\noindent \textbf{Step 1:} {\boldmath$m=n+1,\, l=j=\infty.$}  By our definition of $h_0(-)$ and the cellular approximation theorem, there is always a map
$$X(n+1)^{h_{0}(n+1)-1} \longrightarrow X(n)^{h_{0}(n)-1}.$$ 
Furthermore, this map makes the bottom square of the diagram
\begin{equation*}
\begin{tikzcd}
X(n+1)_{h_{0}(n+1)}\ar[rr,dashed,"i(n+1{,} n{,}\infty{,}\infty)"] &&X(n)_{h_{0}(n)}\\
X(n+1)\ar[rr,"i(n+1{,}n)"]\ar[u,two heads] &&X(n)\ar[u,two heads]\\
X(n+1)^{h_{0}(n+1)-1}\ar[rr]\ar[u,hook] &&X(n)^{h_{0}(n)-1}\ar[u,hook].
\end{tikzcd}
\end{equation*}
commute.  Since both columns are cofiber sequences, there is an induced map 
$$i(n+1,n,\infty,\infty): X(n+1)_{h_{0}(n+1)}\rightarrow X(n)_{h_{0}(n)}$$ 
between the cofibers making the whole diagram commute.  The top square of the commutative diagram above implies that property (1) holds for $m=n+1$. \\

\noindent \textbf{Step 2:} {\boldmath$m=n+1,\ j=h(n+1,n,l)$.}  Note that by the definition of $h(n+1,n,l)$, 
$$X(n)_{h_{0}(n)}^{l}=X(n)_{h_{0}(n)}^{h(n+1,n,l)}.$$ 
We define the map $i(n+1,n,l,h(n+1,n,l))$ to be the map
$$X(n+1)_{h_0(n+1)}^l \longrightarrow X(n)_{h_0(n)}^l = X(n)_{h_0(n)}^{h(n+1,n,l)}.$$
The map $i(n+1,n,l,h(n+1,n,l))$ fits into the following commutative diagram:
\begin{equation}\label{diagram:subquotient compatible 1}
\begin{tikzcd}
X(n+1)^{\infty}_{h_{0}(n+1)}\ar[rrr,"i(n+1{,}n{,}\infty{,}\infty)"]&&&X(n)^{\infty}_{h_{0}(n)}\\
X(n+1)^{l}_{h_{0}(n+1)}\ar[rrr,"i(n+1{,}n{,}l{,}h(n+1{,}n{,}l))"]\ar[u,hook] &&& X(n)^{h(n+1,n,l)}_{h_{0}(n)}\ar[u,hook]. 
\end{tikzcd}
\end{equation}

\noindent \textbf{Step 3:} {\boldmath$m=n+1$.}  We define the map $i(n+1,n,l,j)$ to be the composition
$$
X(n+1)_{h_{0}(n+1)}^{l}\xrightarrow{i(n+1,n,l,h(n+1,n,l))}X(n)_{h_{0}(n)}^{h(n+1,n,l)}\hookrightarrow X(n)_{h_{0}(n)}^{j}.
$$

We now prove that property (2) holds when $m=n+1$.  The case when $l=\infty$ is directly implied by diagram~(\ref{diagram:subquotient compatible 1}).  

Suppose that $l<\infty$.  Consider the two compositions 
$$
\mathbf{1}: X(n+1)^{l'}_{h_{0}(m)}\hookrightarrow X(n+1)^{l}_{h_{0}(m)}\xrightarrow{i(n+1,n,l,j)}X(n)^{j}_{h_{0}(n)} 
$$
and
$$
\mathbf{2}: X(n+1)^{l'}_{h_{0}(n+1)}\xrightarrow{i(n+1,n,l',j')}X(n)^{j'}_{h_{0}(n)} \hookrightarrow X(n)^{j}_{h_{0}(n)}
$$
in diagram~(\ref{diagram:subquotient compatible2}).  We want to show that these two compositions are equal.  After post-composing with the inclusion map
$$
X(n)^{j}_{h_{0}(n)}\hookrightarrow X(n)^{\infty}_{h_{0}(n)},
$$
the maps $\mathbf{1}$ and $\mathbf{2}$ are homotopic to each other (this is because we have already verified Property (2) when $\ell = \infty$). 

Consider the cofiber sequence 
$$
\Sigma^{-1} X(n)^{\infty}_{j+1}\longrightarrow X(n)^{j}_{h_{0}(n)}\hookrightarrow X(n)^{\infty}_{h_{0}(n)},
$$
Since the difference $\mathbf{1}- \mathbf{2}$ is null after post-composing with the map 
$${X(n)^{j}_{h_{0}(n)}\hookrightarrow X(n)^{\infty}_{h_{0}(n)}},$$
it factors through the fiber via a certain map 
$${\mathbf{3}: X(n+1)^{l'}_{h_{0}(n+1)}\rightarrow \Sigma^{-1} X(n)^{\infty}_{j+1}}:$$ 
$$\begin{tikzcd}
X(n+1)^{l'}_{h_{0}(n+1)} \ar[rd, "\mathbf{1}- \mathbf{2}"] \ar[d, dashed, "\mathbf{3}"]&&\\ 
\Sigma^{-1} X(n)^{\infty}_{j+1}\ar[r] &X(n)^{j}_{h_{0}(n)} \ar[r, hookrightarrow] &X(n)^{\infty}_{h_{0}(n)}.
\end{tikzcd}$$

If the left vertical arrow in diagram~(\ref{diagram:subquotient compatible2}) is the identity map, then diagram~(\ref{diagram:subquotient compatible2}) commutes by definition.  Otherwise, it is straightforward to check that the dimension of the top cell of $X(n+1)^{l'}_{h_{0}(n+1)}$ is less than the dimension of the bottom cell in $\Sigma^{-1} X(n)^{\infty}_{j+1}$.  Therefore, the map $\mathbf{3}$ is zero by the cellular approximation theorem.  This implies $\mathbf{1}=\mathbf{2}$ and that property (2) holds when $m = n+1$.  \\

\noindent \textbf{Step 4: General {\boldmath $m,n,l,j$}.} Choose a sequence $l_m, l_{m-1}, \ldots, l_n$ such that 
\begin{enumerate}
\item $l_{m}=l,\ l_{n}=j$.
\item $l_{s}\geq h(s+1,s,l_{s+1})$ for all $m-1 \geq s \geq n$.
\end{enumerate}
We define the map $i(m,n,l,j)$ to be the composition
$$\prod^{m}_{r=n+1}i(r,r-1,l_{r},l_{r-1}) = i(n+1, n, l_{n+1}, l_n)\circ \cdots \circ i(m, m-1, l_m, l_{m-1}). $$
Note that by our discussion in step 3, this composition does not depend on the choice of the sequence $(l_m, l_{m-1}, \ldots, l_n)$.  Property (3) holds immediately by definition.  Properties (1) and (2) hold by our discussions in steps 1 and 3, respectively. 
\end{proof}

\subsection{Transfer maps} \label{subsec:TrasnferMaps}
\begin{prop}\label{prop:TransferMaps}
There is a cofiber sequence 
\begin{equation}\label{cofiber of adjacent columns}
    X(m+1)\xrightarrow{i(m+1,m)}X(m)\xrightarrow{s_m} \Sigma^{-m}\mathbb{C}P^{\infty}_{+}
\end{equation}
\end{prop}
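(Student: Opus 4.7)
The plan is to derive the cofiber sequence by Thomifying, along the virtual bundle $-(m+1)\lambda$, the standard sphere-disk-Thom cofiber sequence for the line bundle $\lambda$ over $B\textup{Pin}(2)$.

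Recall that $\lambda$ is the line bundle associated to the principal $C_2$-bundle $\pi\colon \mathbb{C}P^\infty = BS^1 \to B\textup{Pin}(2)$, so its sphere bundle is $S(\lambda) = \mathbb{C}P^\infty$ and its disk bundle $D(\lambda)$ is homotopy equivalent to $B\textup{Pin}(2)$ via the zero section. The cofiber sequence $S(\lambda)_+ \to D(\lambda)_+ \to D(\lambda)/S(\lambda)$ thus reads
$$\mathbb{C}P^\infty_+ \longrightarrow B\textup{Pin}(2)_+ \longrightarrow \textup{Thom}(B\textup{Pin}(2),\lambda).$$
The key observation enabling the Thomification along $-(m+1)\lambda$ is that $\pi^{\ast}\lambda$ is canonically trivial, since pulling the line bundle associated to a principal $C_2$-bundle back along its own double cover produces the trivial bundle. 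Consequently $\pi^{\ast}(-(m+1)\lambda)$ is a trivial virtual bundle of rank $-(m+1)$, and Thomifying term by term yields
$$\Sigma^{-(m+1)}\mathbb{C}P^\infty_+ \longrightarrow X(m+1) \longrightarrow \textup{Thom}(B\textup{Pin}(2),\lambda-(m+1)\lambda) = X(m).$$
Rotating produces the desired
$$X(m+1) \longrightarrow X(m) \longrightarrow \Sigma^{-m}\mathbb{C}P^\infty_+,$$
and I would define $s_m$ to be the second map in this rotated sequence.

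It remains to identify the first map with $i(m+1,m)$. Unwinding the construction, the Thomified zero-section map $B\textup{Pin}(2)_+ \to \textup{Thom}(B\textup{Pin}(2),\lambda)$, after twisting by $-(m+1)\lambda$, is precisely the map of virtual Thom spectra induced by the inclusion of subbundles $m\lambda \hookrightarrow (m+1)\lambda$, which is the definition of $i(m+1,m)$ from Section~\ref{subsec:equivToNonEquiv}.

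The main obstacle is the technical matter of making Thomification along a virtual bundle rigorous at the level of a full cofiber sequence. I would handle this by choosing, for each finite skeleton of $B\textup{Pin}(2)$, a complementary bundle $\eta$ with $(m+1)\lambda \oplus \eta \cong N\epsilon$ trivial, working at the level of honest Thom spaces, and then passing to colimits; alternatively, the parameterized-spectra formalism over $B\textup{Pin}(2)$ makes the whole construction a formal consequence of base-change along $\pi$ together with the triviality of $\pi^{\ast}\lambda$.
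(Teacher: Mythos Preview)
Your proof is correct. It is, at heart, the same argument as the paper's, but phrased in non-equivariant bundle-theoretic language rather than equivariant language. The paper works with homotopy orbits: it identifies $i(m+1,m)$ with the map induced by $a_{\widetilde{\mathbb{R}}}\colon S^0\to S^{\widetilde{\mathbb{R}}}$ after smashing with $S(\infty\mathbb{H})_+\wedge S^{-(m+1)\widetilde{\mathbb{R}}}$ and taking $(-)_{h\Pin}$, then uses the equivariant cofiber sequence $S^0\to S^{\widetilde{\mathbb{R}}}\to \Sigma(C_2)_+$ and computes the third term explicitly by first taking $(-)_{hS^1}$ and then $(-)_{hC_2}$. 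Your sphere--disk--Thom sequence for $\lambda$ is exactly what that equivariant cofiber sequence becomes after passing to homotopy orbits, and your observation that $\pi^*\lambda$ is trivial is the bundle-theoretic shadow of the paper's observation that the $C_2$-action on $S^{\widetilde{\mathbb{R}}}$ becomes trivial once one has already accounted for the free $(C_2)_+$ factor. Your route is slightly more elementary in that it sidesteps the equivariant formalism and the iterated homotopy-orbit computation; the paper's route has the advantage that the identification of $i(m+1,m)$ is immediate from the definitions in Section~\ref{subsec:SeibergWitten}, with no need to unwind the zero-section map.
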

\begin{proof}
The map $i(m+1,m)$ can be rewritten as the map 
$$
\left(S(\infty \mathbb{H})_{+}\wedge S^{-(m+1)\widetilde{\mathbb{R}}}\wedge S^{0}\right)_{h\Pin}\longrightarrow 
\left(S(\infty \mathbb{H})_{+}\wedge S^{-(m+1)\widetilde{\mathbb{R}}}\wedge S^{\widetilde{\mathbb{R}}}\right)_{h\Pin},
$$
which is induced by the map $a_{\widetilde{\mathbb{R}}}:S^{0}\rightarrow S^{\widetilde{\mathbb{R}}}$.  The cofiber sequence 
$$
S^{0}\xrightarrow{a_{\widetilde{\mathbb{R}}}} S^{\widetilde{\mathbb{R}}}\longrightarrow \Sigma ({C_2}_+)
$$
produces the cofiber sequence
$$
 X(m+1)\xrightarrow{i(m+1,m)}X(m)\xrightarrow{s_m}\left(S(\infty \mathbb{H})_{+}\wedge S^{-(m+1)\widetilde{\mathbb{R}}}\wedge\Sigma (C_{2})_{+} \right)_{h\Pin}.
$$
Note that
\begin{eqnarray*}
\left(S(\infty \mathbb{H})_{+}\wedge S^{-(m+1)\widetilde{\mathbb{R}}}\wedge\Sigma (C_{2})_{+} \right)_{h\Pin}&=&\left(\left(S(\infty \mathbb{H})_{+}\wedge S^{-(m+1)\widetilde{\mathbb{R}}}\wedge\Sigma (C_{2})_{+} \right)_{hS^{1}}\right)_{hC_{2}}\\
&=&\left(\mathbb{C}P^{\infty}_{+}\wedge S^{-(m+1)\sigma}\wedge \Sigma(C_{2})_{+} \right)_{hC_{2}}\\
&=&\mathbb{C}P^{\infty}_{+}\wedge S^{-(m+1)}\wedge S^{1}\\
&=&\Sigma^{-m}\mathbb{C}P^{\infty}_{+}.
\end{eqnarray*}
This establishes the cofiber sequence~(\ref{cofiber of adjacent columns}).
\end{proof}

Let $V$ denote the rank-3 bundle over $BSU(2)=\mathbb{H}P^{\infty}$ that is associated to the adjoint representation of $SU(2)$ on its Lie algebra $\mathfrak{su}(2)$. 

Given a Lie group $G$ with a closed subgroup $H$, there is a fiber bundle 
$$\begin{tikzcd} G/H \ar[r, hookrightarrow]&  BH \ar[r, "p"]& BG. \end{tikzcd}$$ 
Let $V_{H}$ (resp. $V_{G}$) be the vector bundle over $BH$ (resp. $BG$) associated to the adjoint representation on the Lie algebra.  There is a well-known transfer map 
$$\text{Tr}:\Thom(BG,V_{G})\rightarrow \Thom(BH,V_{H})$$
that has been studied by Becker--Gottlieb~\cite{BeckerGottlieb}, Becker--Schultz~\cite{BeckerSchultz}, and Bauer~\cite{BauerFramedManifolds}.  
Now, set 
\begin{eqnarray*}
G&=&SU(2), \\
H&=&\Pin,\\
V_{G}&=&V,\\
V_{H}&=&\lambda.
\end{eqnarray*} 
(Recall that $\lambda$, as defined in Section~\ref{subsec:equivToNonEquiv}, is the line bundle that is associated to the principal bundle $C_2 \hookrightarrow BS^1 \to \textup{BPin(2)}$.)  We obtain a transfer map  
\begin{equation*}
\textup{Tr}:\Thom(\mathbb{H}P^{\infty},V)\longrightarrow X(-1).
\end{equation*}

\begin{prop}\label{transfer}
The transfer map 
\begin{equation}\label{eq: transfer}
\textup{Tr}:\Thom(\mathbb{H}P^{\infty},V)\longrightarrow X(-1)
\end{equation}
induces an isomorphism on $(H\mathbb{F}_{2})_{4n+3}$ for all $n$.
\end{prop}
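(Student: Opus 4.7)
The plan is to dualize: I will show that the cohomological transfer
\[
\textup{Tr}^*: H^{4n+3}(X(-1);\F) \longrightarrow H^{4n+3}(\Thom(\mathbb{H}P^\infty, V);\F)
\]
is an isomorphism for every $n \geq 0$. The claim in homology then follows by universal coefficients, since both sides are finite-dimensional $\F$-vector spaces in each degree.

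First I would compute both cohomology rings using the Serre spectral sequence of the fiber bundle $\mathbb{R}P^2 \to B\Pin \xrightarrow{\pi} \mathbb{H}P^\infty$. With $\F$ coefficients, the base $\F[u]$ sits in degrees divisible by $4$ while the fiber $\F[x]/(x^3)$ sits in degrees $\leq 2$; so every possible differential lands in a trivial group, and the spectral sequence collapses at $E_2$. This gives $H^*(B\Pin; \F) = \F[u, x]/(x^3)$ with $|u|=4,\,|x|=1$, free of rank three over $H^*(\mathbb{H}P^\infty;\F)$ on $\{1, x, x^2\}$. Applying the Thom isomorphism produces
\[
H^*(X(-1); \F) = U_\lambda \cdot \F[u, x]/(x^3), \qquad H^*(\Thom(\mathbb{H}P^\infty, V); \F) = U_V \cdot \F[u],
\]
with $|U_\lambda|=1$ and $|U_V|=3$; in particular, both are one-dimensional in degree $4n+3$, generated respectively by $x^2 u^n U_\lambda$ and $u^n U_V$ (the source has no other contribution in degree $4n+3$ since $x^3=0$).

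Next I would invoke the cohomological description of the Becker--Gottlieb transfer. Restricting the adjoint representation $\mathfrak{su}(2)$ along $\Pin \subset SU(2)$ splits it as $\tilde{\mathbb{R}} \oplus W$, where $\tilde{\mathbb{R}} = \mathfrak{pin}(2) = i\mathbb{R}$ and $W = \mathrm{span}_{\mathbb{R}}(j,k)$; on the level of bundles this becomes $\pi^*V \cong \lambda \oplus T_\pi$, where $T_\pi$ is the vertical tangent bundle of $\pi$. For the associated Thom-spectrum Becker--Gottlieb transfer, the standard identity
\[
\textup{Tr}^*(U_\lambda \cdot y) = U_V \cdot \pi_!(y), \qquad y \in H^*(B\Pin; \F),
\]
holds, where $\pi_!: H^*(B\Pin;\F) \to H^{*-2}(\mathbb{H}P^\infty;\F)$ is the Gysin pushforward along $\pi$. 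Mod $2$, fiber integration gives $\pi_!(1) = \pi_!(x) = 0$ and $\pi_!(x^2) = 1$ (the last because $x^2$ is the mod $2$ fundamental class of $\mathbb{R}P^2$), so the projection formula $\pi_!(\pi^*u^n \cdot y) = u^n \pi_!(y)$ yields $\textup{Tr}^*(x^2 u^n U_\lambda) = u^n U_V$, the generator of $H^{4n+3}(\Thom(\mathbb{H}P^\infty, V);\F)$.

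The main obstacle lies in the previous paragraph: rigorously justifying the transfer formula $\textup{Tr}^*(U_\lambda y) = U_V \pi_!(y)$. This identity is classical for the Becker--Schultz/Bauer transfer of adjoint bundles associated with a compact Lie group pair $H \subset G$, but does require careful bookkeeping of the Thom class identifications and of the fiber-integration correction by $\pi_!$; fortunately, working mod $2$ conveniently removes all orientation subtleties.
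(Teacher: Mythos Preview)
Your proof is correct and rests on the same two ingredients as the paper's argument—the collapse of the Serre spectral sequence for $\mathbb{R}P^2 \to B\Pin \to \mathbb{H}P^\infty$, and the $H^*(\mathbb{H}P^\infty;\F)$-module structure—but packages them differently. You invoke the global identity $\textup{Tr}^*(U_\lambda\, y)=U_V\,\pi_!(y)$ for the twisted Becker--Schultz transfer and then evaluate $\pi_!(x^2 u^n)=u^n$ via fiber integration and the projection formula. The paper instead avoids quoting this formula: it restricts the transfer over the basepoint of $\mathbb{H}P^\infty$, where it becomes the Pontryagin--Thom collapse $S^3 \to \Thom(\mathbb{R}P^2,\lambda|_{\mathbb{R}P^2})$, which is visibly an $(H\F)_3$-isomorphism; this handles $n=0$, and then the $H^*(\mathbb{H}P^\infty;\F)$-linearity of $\textup{Tr}_*$ propagates the result to all $n$. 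In effect, the paper's ``restrict to a point'' step is your computation $\pi_!(x^2)=1$, and its module-structure step is your projection formula—so the two arguments are equivalent once unwound. The paper's route has the advantage of sidestepping the justification you flag as the main obstacle (the transfer/Gysin identity in the twisted setting), replacing it with a naturality square and an elementary observation about the fiberwise collapse map.
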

\begin{proof}
Consider the pull back of $\text{Tr}$ under the inclusion map $\text{pt}\hookrightarrow \mathbb{H}P^{\infty}$. We obtain the following commutative diagram:
$$\begin{tikzcd}
\Thom(\mathbb{H}P^{\infty},V) \ar[r, "\text{Tr}"]& X(-1)\\ 
S^{3}\ar[u,"\mathbf{1}"] \ar[r,"\mathbf{2}"] & \Thom(\mathbb{R}P^{2},\lambda|_{\mathbb{R}P^{2}})\ar[u,"\mathbf{3}"].
\end{tikzcd}$$
Note that $(H\mathbb{F}_{2})_{3}$ of all the spectra in the diagram above are $\mathbb{F}_{2}$. 

Since map $\mathbf{3}$ is induced by the inclusion of fiber of the bundle 
$$\begin{tikzcd} \mathbb{R}P^{2} \ar[r,hookrightarrow]& B\Pin \ar[r ]& \mathbb{H}P^{\infty} \end{tikzcd}$$
and the Serre spectral sequence for this bundle collapses, map $\mathbf{3}$ induces an isomorphism on $(H\mathbb{F}_{2})_{3}$.  Moreover, map $\mathbf{2}$ is the Pontryagin--Thom collapsing map, and it induces an isomorphism on $(H\mathbb{F}_{2})_{3}$.  It follows from this that $\text{Tr}$ must induce an isomorphism on $(H\mathbb{F}_{2})_{3}$. 
  
To prove that $\text{Tr}$ induces an isomorphism on $(H\mathbb{F}_{2})_{4n+3}$ for any $n$, note that both $H_{*}(\Thom(\mathbb{H}P^{\infty},V);\mathbb{F}_{2})$ and $H_{*}(X(1);\mathbb{F}_{2})$ are modules over $H^{*}(\mathbb{H}P^{\infty};\mathbb{F}_{2})$. Moreover, the induced map $\text{Tr}_*$ on $\mathbb{F}_2$-homology preserves this module structure.  Therefore, the statement is reduced to proving an isomorphism for the case $n=0$, which we have just proved.
\end{proof}

We equip $\Thom(\mathbb{H}P^{\infty},V)$ with the cell structure that has one cell in dimension $4n+3$ for each $n\geq 0$.

\begin{lem}\label{lem:HPTruncationTwist}
$\Thom(\mathbb{H}P^{\infty},V)^{4n+7}_{4n+3}$ is homotopy equivalent to $\Sigma^{4n+3}C(2+n) \nu$.
\end{lem}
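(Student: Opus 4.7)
The strategy is to verify that the subquotient is a two-cell spectrum and then identify its attaching map as $(2+n)\nu$. First, by Proposition~\ref{transfer} and the cellular structure of $\mathbb{H}P^{\infty}$, the Thom spectrum $\Thom(\mathbb{H}P^{\infty}, V)$ has exactly one cell in each dimension $4k+3$, $k \geq 0$. Hence the subquotient $\Thom(\mathbb{H}P^{\infty}, V)^{4n+7}_{4n+3}$ has only two cells, in dimensions $4n+3$ and $4n+7$, and must be of the form $\Sigma^{4n+3}C(m\nu)$ for a unique $m \in \mathbb{Z}/24 = \pi_{3}S^{0}/\textup{(torsion-free part)}$. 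The task reduces to proving $m = 2+n$.

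Next, I identify $V$ in complex $K$-theory. Writing $E$ for the tautological complex 2-plane bundle over $\mathbb{H}P^{\infty}$ (so that $\Lambda^{2}E$ is trivial), the decomposition $\operatorname{End}(E) \cong V_{\mathbb{C}} \oplus \mathbb{C}$ gives $V_{\mathbb{C}} = E \otimes E - 1 = \operatorname{Sym}^{2}E = \psi^{2}E - 1$. Setting $e = [E] - 2$, this becomes $V_{\mathbb{C}} = \psi^{2}e = e^{2} + 4e$ in $\widetilde{K}(\mathbb{H}P^{\infty}) = \mathbb{Z}[[e]]$.

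Finally, I compute the $K$-theoretic $e$-invariant of the Thom subquotient. Using the Chern character expansion $\operatorname{ch}(e) = x + \tfrac{x^{2}}{12} + \cdots$, where $x$ generates $H^{4}(\mathbb{H}P^{\infty};\mathbb{Z})$, together with the Thom isomorphism for $V$ and the classical attaching maps of $\mathbb{H}P^{n+1}/\mathbb{H}P^{n-1}$ (which equal $n\nu$ for $n \geq 1$), one extracts an $e$-invariant of $\tfrac{2+n}{24} \pmod{\mathbb{Z}}$, forcing $m = 2 + n$. The main obstacle is this $e$-invariant computation: one must carefully track the contributions of both terms in $V_{\mathbb{C}} = e^{2} + 4e$ through the Chern character and the Thom isomorphism, taking into account how the Bott generators in successive dimensions interact with the cellular filtration of $\mathbb{H}P^{n+1}$. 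As a sanity check, for $n = 0$ the subquotient is simply $\Thom(S^{4}, V|_{S^{4}})$, and $V_{\mathbb{C}}|_{S^{4}} = 4 \in \widetilde{K}(S^{4})$ implies that $V|_{S^{4}}$ equals twice the Bott generator of $\widetilde{KO}(S^{4})$ (since complexification doubles on $\widetilde{KO}(S^{4})$), giving $J(V|_{S^{4}}) = 2\nu$ and confirming $m = 2$.
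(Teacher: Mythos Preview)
Your proposal has the right shape but leaves the central computation undone. You correctly reduce to identifying an element of $\pi_3 S^0$, and your $n=0$ sanity check is valid. However, the general $e$-invariant computation you describe as ``the main obstacle'' is never actually carried out; the paragraph beginning ``one extracts an $e$-invariant of $\tfrac{2+n}{24}$'' is an assertion, not an argument. Tracking Chern characters through the Thom isomorphism for $V$ over $\mathbb{H}P^{\infty}$ and then passing to the subquotient in dimensions $4n+3$ through $4n+7$ is doable but genuinely delicate, and you have not shown how the $n$-dependence enters.

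The paper's proof bypasses this difficulty with a key geometric identification you are missing: it observes that $U^{4n+7}_{4n+3} \simeq \Thom(\mathbb{H}P^{1}, nH\oplus V)$, where $H$ is the tautological quaternionic line bundle. This follows from $\mathbb{H}P^{n+1}_{n}\simeq \Thom(\mathbb{H}P^{1}, nH)$ together with the behavior of Thom spectra under restriction. Once you know the subquotient is itself a Thom spectrum over $S^{4}$, the attaching map is simply $\tfrac{p_{1}(nH\oplus V)}{2}\cdot\nu$, and a short Pontryagin class computation (pulling back along $BS^{1}\to BSU(2)$) gives $p_{1}(H)=2$ and $p_{1}(V)=4$, hence $p_{1}(nH\oplus V)=2n+4$. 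This is both more elementary than an $e$-invariant calculation and makes the additive structure in $n$ transparent. Your $K$-theoretic approach would become equally clean if you first made this Thom-spectrum identification: then the attaching map is $J$ of the stable class of $nH\oplus V$ in $\widetilde{KO}(S^{4})$, and additivity of $J$ immediately gives $n\nu + 2\nu$ from the $\mathbb{H}P^{n+1}_{n}$ and $n=0$ cases you already know.
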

\begin{proof}
Let $U$ denote $\Thom(\mathbb{H}P^{\infty},V)$.  We have the following equivalences: 
\begin{eqnarray*}
U^{4n-1}&=&\Thom(\mathbb{H}P^{n-1},V|_{\mathbb{H}P^{n-1}}),\\
 U^{4n+7}&=&\Thom(\mathbb{H}P^{n+1},V|_{\mathbb{H}P^{n+1}}).
 \end{eqnarray*}
Also, 
$$\mathbb{H}P^{n+1}_{n}=\Thom(\mathbb{H}P^{1},nH),$$
where $H$ is the tautological bundle over $\mathbb{H}P^{1}$.  These equivalences imply that 
$$U^{4n+7}_{4n+3}=\Thom(\mathbb{H}P^{1},nH\oplus V).$$ 

Note the following general fact: given a vector bundle $E$ over $S^{4}$, the attaching map in $\Thom(S^{4},E)$ is given by $\frac{p_{1}(E)}{2}\cdot \nu\in \pi_{3}$.  This fact can be proven by analyzing $\Thom(\mathbb{H}P^{1},H)$, which corresponds to the generator $\nu$ of $\pi_{3}$.

We will now compute $p_{1}(nH\oplus V)$.  By restricting the representations of $SU(2)$ to the subgroup $S^{1}$, we deduce that under the map 
$BS^{1}\rightarrow BSU(2)$, the bundle $V$ pulls back to $r(L^2)+1$ and the bundle $H$ pulls back to $r(L  + L^{-1})$ ($L$ is the tautological bundle over $\mathbb{C}P^\infty$).  Therefore, 
$$
p_{1}(V)=p_{1}(r(L^2))=c^{2}_{1}(L^2)-2c_{2}(L^2)=4
$$
and 
$$
p_{1}(H)=p_{1}(r(L  + L^{-1}))=c^{2}_{1}(L  + L^{-1})-2c_{2}(L  + L^{-1})=2.
$$
It follows that $p_{1}(nH\oplus V)=4+2n$.  This completes the proof.
\end{proof}

\newpage
\section{Attaching maps in $X(m)$}\label{sec:AttachingMaps}

\subsection{$\textup{H}\mathbb{F}_2$-subquotients}

We recall the following definition and lemma from \cite{WangXu}:

\begin{df} \rm \label{hf2}
Let $A$, $B$, $C$ and $D$ be CW spectra, $i$ and $q$ be maps
\begin{displaymath}
    \xymatrix{
A \ar@{^{(}->}[r]^-i & B, & B \ar@{->>}[r]^-q & C.
    }
\end{displaymath}
We say that $(A, i)$ is an \textit{$\textup{H}\mathbb{F}_2$-subcomplex} of $B$ if the map $i$ induces an injection on mod 2 homology.  An $\textup{H}\mathbb{F}_2$-subcomplex is denoted by a hooked arrow as above.  Similarly, we say that $(C, q)$ is an \textit{$\textup{H}\mathbb{F}_2$-quotient complex} of $B$ if the map $q$ induces a surjection on mod 2 homology.  An $\textup{H}\mathbb{F}_2$-quotient complex is denoted by a double-headed arrow as above.  When the maps involved are clear in the context, we may ignore the maps $i$ and $q$ and just say that $A$ is an $\textup{H}\mathbb{F}_2$-subcomplex of $B$, and $C$ is an $\textup{H}\mathbb{F}_2$-quotient complex of $B$.

Furthermore, $D$ is an \textit{$\textup{H}\mathbb{F}_2$-subquotient} of $B$ if $D$ is either an $\textup{H}\mathbb{F}_2$-subcomplex of an $\textup{H}\mathbb{F}_2$-quotient complex of $B$ or an $\textup{H}\mathbb{F}_2$-quotient complex of an $\textup{H}\mathbb{F}_2$-subcomplex of $B$.
\end{df}

Note that from Definition~\ref{hf2}, $\textup{H}\mathbb{F}_2$-subcomplexes and $\textup{H}\mathbb{F}_2$-quotient complexes are \emph{not} necessarily subcomplexes and quotient complexes on the point-set level.  Our definitions should be thought of as in the homological or homotopical sense.  A motivating example to illustrate this is the following: the top cell of the spectrum $\mathbb{R}\textup{P}_1^3$ splits off, so there is a map from $S^3$ to $\mathbb{R}\textup{P}_1^3$ that induces an injection on mod 2 homology.  Therefore $S^3$ is an $\textup{H}\mathbb{F}_2$-subcomplex of $\mathbb{R}\textup{P}_1^3$ in our sense.  However, on the point-set level, the image of the attaching map is not a point and so $S^3$ is not a subcomplex of $\mathbb{R}\textup{P}_1^3$ in the classical sense. 

It follows directly from Definition~\ref{hf2} that if $(A, i)$ is an $\textup{H}\mathbb{F}_2$-subcomplex of $B$, then the cofiber of $i$ is an $\textup{H}\mathbb{F}_2$-quotient complex of $B$.  We will often denote this quotient complex as $B/A$.  Dually, if $(C, q)$ is an $\textup{H}\mathbb{F}_2$-quotient complex of $B$, then the fiber of $q$ is an $\textup{H}\mathbb{F}_2$-subcomplex of $B$.


The following lemma is useful in constructing $\textup{H}\mathbb{F}_2$-subquotients.

\begin{lem} \label{pbpfhf2}
Suppose that $(A, i)$ is an $\textup{H}\mathbb{F}_2$-subcomplex of $B$.  Let $C$ be the cofiber of $i$ and let $(D, j)$ be an $\textup{H}\mathbb{F}_2$-subcomplex of $C$. Define $E$ to be the homotopy pullback of $D$ along $B\rightarrow C$. Then $E$ is an $\textup{H}\mathbb{F}_2$-subcomplex of $B$. Moreover, $A$ is an $\textup{H}\mathbb{F}_2$-subcomplex of $E$ with quotient $D$.

Dually, suppose $(C, q)$ is an $\textup{H}\mathbb{F}_2$-quotient complex of $B$. Let $A$ be the fiber of $q$. let $(F, p)$ be an $\textup{H}\mathbb{F}_2$-quotient complex of $A$. Define $G$ to be the homotopy pushout of $F$ along $A \rightarrow B$. We have that $G$ is an $\textup{H}\mathbb{F}_2$-quotient complex of $B$. Moreover, $C$ is an $\textup{H}\mathbb{F}_2$-quotient complex of $G$ with fiber $F$.
\end{lem}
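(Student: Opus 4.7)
The plan is to exploit the self-duality of the stable homotopy category: a homotopy pullback square is automatically a homotopy pushout square. Applied to the defining square
$$
\begin{tikzcd}
E \ar[r] \ar[d] & B \ar[d] \\
D \ar[r, hook, "j"] & C,
\end{tikzcd}
$$
this immediately produces two cofiber sequences. The fibers of the two horizontal maps agree, yielding $A \to E \to D$; and the cofibers of the two vertical maps agree, yielding $E \to B \to K$, where $K$ denotes the cofiber of $j$. Once these cofiber sequences are in hand, the rest of the lemma is a diagram chase on mod $2$ homology.

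First I would show that $A$ is an $\textup{H}\mathbb{F}_2$-subcomplex of $E$ with quotient $D$. The hypothesis that $A \hookrightarrow B$ is injective on $H_*(-;\mathbb{F}_2)$ forces the long exact sequence of $A \to B \to C$ to split into short exact sequences, so the connecting map $H_*(C;\mathbb{F}_2) \to H_{*-1}(A;\mathbb{F}_2)$ vanishes. Comparing the long exact sequences of $A \to E \to D$ and $A \to B \to C$ via the map of cofiber sequences, the connecting map $H_*(D;\mathbb{F}_2) \to H_{*-1}(A;\mathbb{F}_2)$ factors through the injection $H_*(D;\mathbb{F}_2) \hookrightarrow H_*(C;\mathbb{F}_2)$ followed by a zero map, so it is zero. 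Hence the upper long exact sequence also breaks into short exact sequences, giving both that $A \hookrightarrow E$ is an $\textup{H}\mathbb{F}_2$-inclusion and that its cofiber is $D$.

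Next I would show $E \hookrightarrow B$ is an $\textup{H}\mathbb{F}_2$-subcomplex by analyzing the cofiber sequence $E \to B \to K$. Both $B \twoheadrightarrow C$ (coming from the cofiber of the $\textup{H}\mathbb{F}_2$-inclusion $A \hookrightarrow B$) and $C \twoheadrightarrow K$ (coming from the cofiber of the $\textup{H}\mathbb{F}_2$-inclusion $D \hookrightarrow C$) are surjective on $H_*(-;\mathbb{F}_2)$, so their composition $B \to K$ is surjective. Using the long exact sequence of $E \to B \to K$, surjectivity of $H_*(B;\mathbb{F}_2) \to H_*(K;\mathbb{F}_2)$ is equivalent to injectivity of $H_*(E;\mathbb{F}_2) \to H_*(B;\mathbb{F}_2)$, which is what we need.

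The dual statement is proved by reversing all arrows: in the stable category the homotopy pushout defining $G$ is also a homotopy pullback, producing cofiber sequences $F \to G \to C$ and $\Sigma^{-1} F' \to G \to B$ where $F'$ is the fiber of $A \twoheadrightarrow F$, and the analogous diagram chase in $H_*(-;\mathbb{F}_2)$ now uses that the relevant boundary maps vanish because we are given surjections on $\mathbb{F}_2$-homology. No single step is a genuine obstacle; the only mild point of care is bookkeeping the direction of each cofiber/fiber sequence, which is why recording the stable pullback$=$pushout identification explicitly at the outset is the cleanest way to organize the proof.
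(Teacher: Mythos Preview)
Your proof is correct and follows essentially the same approach as the paper: both exploit the commutative diagram of cofiber sequences
\[
\begin{tikzcd}
A \ar[r] \ar[d, equal] & E \ar[r] \ar[d] & D \ar[d, "j"] \\
A \ar[r, "i"] & B \ar[r] & C
\end{tikzcd}
\]
and then diagram-chase on $H_*(-;\mathbb{F}_2)$. The only minor difference is organizational: the paper extracts both conclusions directly from this single diagram (the short exact sequences on homology plus the five lemma immediately give that $E\to B$ is injective, since the outer vertical maps are injective), whereas you introduce the auxiliary cofiber sequence $E\to B\to K$ to handle the injectivity of $E\to B$ separately. Both routes are equally valid and equally short.
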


Lemma~\ref{pbpfhf2} follows from the short exact sequences of homology induced by the following commutative diagrams of cofiber sequences and diagram chasing.
$$\begin{tikzcd}
A \ar[r, hookrightarrow] \ar[d, equal] & E \ar[r, twoheadrightarrow] \ar[d, hookrightarrow]  & D \ar[d, hookrightarrow, "j"] \\
A \ar[r, hookrightarrow, "i"] & B \ar[r, twoheadrightarrow] & C
\end{tikzcd}$$

$$\begin{tikzcd}
A \ar[r, hookrightarrow] \ar[d, twoheadrightarrow, "p"] & B \ar[r, twoheadrightarrow, "q"] \ar[d, twoheadrightarrow] & C \ar[d, equal] \\ 
F \ar[r, hookrightarrow] & G \ar[r, twoheadrightarrow] & C
\end{tikzcd}$$

\begin{df} \rm
For any element $\alpha$ in the stable homotopy groups of spheres, we say that there is an \textit{$\alpha$-attaching map} from dimension $n$ to dimension $n+|\alpha|+1$ in a CW spectrum $Z$ if $\Sigma^{n}C\alpha$ is an $\textup{H}\mathbb{F}_2$-subquotient of $Z$.  Here, $|\alpha|$ is the degree of $\alpha$ and $C\alpha$ is the cofiber of $\alpha$.
\end{df}

\begin{lem} \label{2eta}
	Suppose that $Z$ is a CW spectrum, with only one cell in dimension $n$. Then the following claims hold: 
	\begin{enumerate}
		\item There is a 2-attaching map from dimension $n$ to dimension $n+1$ in $Z$ if and only if the map
		$$Sq^1: H^n(Z; \mathbb{F}_2) \longrightarrow H^{n+1}(Z; \mathbb{F}_2)$$
		is nonzero.
		\item There is an $\eta$-attaching map from dimension $n$ to dimension $n+2$ in $Z$ if and only if the map
		$$Sq^2: H^n(Z; \mathbb{F}_2) \longrightarrow H^{n+2}(Z; \mathbb{F}_2)$$
		is nonzero.
	\end{enumerate} 
\end{lem}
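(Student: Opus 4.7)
The plan is to prove both parts via a common strategy: the forward direction follows from naturality of $Sq^k$, while the backward direction requires the explicit construction of an $\textup{H}\mathbb{F}_2$-subquotient. The key obstacle will be the backward direction of part~(2), where I must extract an $\eta$-attaching map from the nontriviality of $Sq^2$.

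For the forward directions, write the given $\textup{H}\mathbb{F}_2$-subquotient $\Sigma^n C\alpha$ (with $\alpha \in \{2,\eta\}$) as a composition of an $\textup{H}\mathbb{F}_2$-subcomplex and an $\textup{H}\mathbb{F}_2$-quotient of $Z$; on mod-$2$ cohomology this composition induces a surjection followed by an injection. Since $Z$ has a unique cell in dimension $n$, the class $\iota_n \in H^n(Z;\mathbb{F}_2)$ must map to the bottom-cell class of $\Sigma^n C\alpha$. Naturality of $Sq^k$ together with the standard computations $Sq^1 \iota \ne 0$ in $H^*(\Sigma^n C2;\mathbb{F}_2)$ and $Sq^2 \iota \ne 0$ in $H^*(\Sigma^n C\eta;\mathbb{F}_2)$ then yield the forward implications.

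For the backward direction of~(1), I will form the $\textup{H}\mathbb{F}_2$-subquotient $Y := Z^{n+1}/Z^{n-1}$, which has one $n$-cell $S^n$ and finitely many $(n+1)$-cells attached by integers $k_1,\dots,k_a$. The long exact sequences associated to $Z^{n+1} \hookrightarrow Z$ and $Z^{n-1} \hookrightarrow Z^{n+1}$ show that the class $Sq^1(\iota_n)$ remains nonzero in $H^{n+1}(Y;\mathbb{F}_2)$. Interpreting $Sq^1$ as the mod-$2$ Bockstein, at least one $k_j$ is congruent to $2$ modulo $4$. Applying Lemma~\ref{pbpfhf2} twice --- first to the bottom-cell inclusion $S^n \hookrightarrow Y$ with cofiber $\bigvee_\ell S^{n+1}$, then to the selected $(n+1)$-sphere summand --- produces an $\textup{H}\mathbb{F}_2$-subcomplex $E \subset Y$ of the form $S^n \cup_{k_j} e^{n+1}$. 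Working $2$-locally, $k_j$ equals $2$ times a unit, so $E \simeq \Sigma^n C2$.

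The backward direction of~(2) is the main obstacle. Take $Y := Z^{n+2}/Z^{n-1}$; the same LES argument gives $Sq^2(\iota_n) \ne 0$ in $H^{n+2}(Y;\mathbb{F}_2)$. Rather than seek a classical subcomplex, I will construct a map $\Sigma^n C\eta \to Y$ inducing an injection on mod-$2$ homology. Since $\Sigma^n C\eta$ is the cofiber of $\eta \colon S^{n+1} \to S^n$, such an extension of the bottom-cell inclusion $\iota \colon S^n \hookrightarrow Y$ exists exactly when the composite $\iota \circ \eta \in \pi_{n+1}(Y)$ is null. The hypothesis $Sq^2(\iota_n) \ne 0$ is equivalent to this nullhomotopy: cellular computation shows $Sq^2(\iota_n)$ is detected precisely by the $\eta$-component in the attaching map of some $(n+2)$-cell of $Y$ to the bottom $S^n$, which is in turn equivalent to the vanishing of $\iota \circ \eta$ in $\pi_{n+1}$ of the $(n+1)$-skeleton. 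One then chooses the resulting extension $\Sigma^n C\eta \to Y$ so that its $(n+2)$-cell hits a nonzero $\mathbb{F}_2$-homology class --- this is possible after $2$-localization by modifying the choice of nullhomotopy via elements of $\pi_{n+2}(Y)$. The principal subtlety, which I expect to be the main technical step, is the identification of $Sq^2$ with the $\eta$-component of the attaching map for spectra with cells in three consecutive dimensions; this is a secondary-operation argument analogous to, but more delicate than, the Bockstein description of $Sq^1$ used in part~(1).
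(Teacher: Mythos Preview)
Your forward directions are correct and coincide with the paper's argument; the paper's own proof is a single sentence (``naturality and the fact that $Sq^1\neq 0$ in $H^*(C2;\mathbb{F}_2)$ and $Sq^2\neq 0$ in $H^*(C\eta;\mathbb{F}_2)$'') and does not spell out the backward direction at all. Your treatment of the backward direction of part~(1) is also fine.

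Your argument for the backward direction of part~(2), however, has a real gap. You assert that $Sq^2(\iota_n)\neq 0$ forces $\iota\circ\eta=0$ in $\pi_{n+1}(Y)$, but this fails already in a small example: take $Y^{n+1}=\Sigma^n C2\vee S^{n+1}$ and attach a single $(n+2)$-cell along $f=(i_*\eta,\,2)\in\pi_{n+1}(Y^{n+1})\cong\mathbb{Z}/2\oplus\mathbb{Z}$. One checks that $Sq^2(\iota_n)\neq 0$ in $H^{n+2}(Y;\mathbb{F}_2)$, yet $\pi_{n+1}(Y)\cong\mathbb{Z}/4$ with $i_*\eta$ mapping to the nonzero class $2$, so no extension $\Sigma^nC\eta\to Y$ of $\iota$ exists. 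The lemma still holds here because $S^{n+1}$ is an $\textup{H}\mathbb{F}_2$-subcomplex of $Y$ and the quotient $Y/S^{n+1}=\Sigma^nC2\cup_{i_*\eta}e^{n+2}$ does admit $\Sigma^nC\eta$ as an $\textup{H}\mathbb{F}_2$-subcomplex. In general one must first pass to a suitable $\textup{H}\mathbb{F}_2$-quotient killing such obstructing $(n+1)$-cells before running your extension argument. Finally, the identification of $Sq^2$ with the $\eta$-component of an attaching map is a \emph{primary}-operation fact (Hopf invariant one), not a secondary-operation argument.
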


\begin{proof}
	This follows from naturality and the fact that $Sq^1\neq 0$ in $H^*(C2; \mathbb{F}_2)$ and $Sq^2\neq 0$ in $H^*(C\eta; \mathbb{F}_2)$. 
\end{proof}

\subsection{The $2$ and $\eta$-attaching maps in $X(m)$}
Recall that 
$$X(m) = \textup{Thom}(\textup{BPin}(2), -m\lambda).$$

\begin{prop} \label{xmhomology}
The mod 2 homology of $X(m)$ is as follows: 
\begin{itemize}
	\item For $m \equiv 0 \pmod{4}$, 
	\begin{equation*}
 H_j X(m) = \left\{
 \begin{array}{ll}
    \mathbb{F}_2 &j\equiv 0,1,2 \pmod{4},\\
    0 &j\equiv 3 \pmod{4}. 
      \end{array}
 \right.
\end{equation*}

\item For $m \equiv 1\pmod{4}$,
	\begin{equation*}
 H_j X(m) = \left\{
 \begin{array}{ll}
    \mathbb{F}_2 &j\equiv 0,1,3 \pmod{4},\\
    0 &j\equiv 2 \pmod{4}. 
      \end{array}
 \right.
\end{equation*}

\item For $m \equiv 2\pmod{4}$,
	\begin{equation*}
 H_j X(m) = \left\{
 \begin{array}{ll}
    \mathbb{F}_2 &j\equiv 0,2,3 \pmod{4},\\
    0 &j\equiv 1 \pmod{4}. 
      \end{array}
 \right.
\end{equation*}

\item For $m \equiv 3\pmod{4}$,
	\begin{equation*}
 H_j X(m) = \left\{
 \begin{array}{ll}
    \mathbb{F}_2 &j\equiv 1,2,3 \pmod{4},\\
    0 &j\equiv 0 \pmod{4}. 
      \end{array}
 \right.
\end{equation*}
\end{itemize}
\end{prop}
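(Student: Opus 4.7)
The plan is to reduce the computation to $H_*(\textup{BPin}(2); \F)$ via the Thom isomorphism, and then compute the latter using the fiber bundle (\ref{BpinoverBSU}).

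First I would establish
\begin{equation*}
H_j(\textup{BPin}(2); \F) \cong \begin{cases} \F & \text{if } j \equiv 0, 1, 2 \pmod 4, \\ 0 & \text{if } j \equiv 3 \pmod 4. \end{cases}
\end{equation*}
For this, I would run the mod-$2$ Serre spectral sequence of
\begin{equation*}
\mathbb{R}\textup{P}^2 \hookrightarrow \textup{BPin}(2) \to \mathbb{H}P^\infty.
\end{equation*}
The $E_2$-page is $H^*(\mathbb{H}P^\infty; \F) \otimes H^*(\mathbb{R}\textup{P}^2; \F) \cong \F[u] \otimes \F[x]/(x^3)$ with $|u|=4$, $|x|=1$; the simple connectedness of $\mathbb{H}P^\infty$ makes the local system trivial. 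This page is supported in columns $p \equiv 0 \pmod 4$ and rows $0 \le q \le 2$. For a differential $d_r$ emanating from a non-zero bidegree $(p, q)$, the target $(p+r, q-r+1)$ must satisfy both $p+r \equiv 0 \pmod 4$ and $0 \le q-r+1 \le 2$. For $r \in \{2, 3\}$ the column condition fails, and for $r \ge 4$ the row condition fails since $q \le 2$. Hence every differential vanishes, the spectral sequence collapses at $E_2$, and the displayed groups follow (by universal coefficients if one first computes cohomology).

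Second, I would invoke the mod-$2$ Thom isomorphism for the virtual bundle $-m\lambda$ over $\textup{BPin}(2)$, which gives
\begin{equation*}
H_j(X(m); \F) \cong H_{j+m}(\textup{BPin}(2); \F).
\end{equation*}
Working mod $2$, this requires no orientability hypothesis. The four cases in the proposition then follow by reducing $j+m$ modulo $4$ and reading off from the first step which residue classes of $j$ yield a non-vanishing group.

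I do not expect a substantive obstacle. The only content lies in the collapse of the Serre spectral sequence, and this is forced by the gap $H^{4k+3}(\mathbb{H}P^\infty; \F) = 0$ together with the concentration of $H^*(\mathbb{R}\textup{P}^2; \F)$ in degrees $\le 2$; the Thom-isomorphism step is then a purely formal reindexing.
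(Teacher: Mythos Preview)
Your proof is correct and follows essentially the same approach as the paper: compute $H_*(\textup{BPin}(2);\F)$ via the collapsing Serre spectral sequence of the bundle $\mathbb{R}\textup{P}^2 \hookrightarrow \textup{BPin}(2) \to \mathbb{H}P^\infty$, then shift by the mod-$2$ Thom isomorphism for $-m\lambda$. Your argument for the collapse is in fact more explicit than the paper's, which merely asserts it.
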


\begin{proof}
When $m=0$, $X(0) = \textup{BPin}(2)$, which is a bundle over $\mathbb{H}\textup{P}^\infty$ with fiber $\mathbb{R}\textup{P}^2$.  The corresponding Serre spectral sequence collapses at the $E_2$-page, from which we obtain a computation for $H_*X(0)$.  

The homologies for all the other $X(m)$'s follow from the homology of $X(0)$ and the Thom isomorphism. 
\end{proof}

Recall from Proposition~\ref{prop:TransferMaps} that there is a cofiber sequence
\begin{equation} \label{diagram:cofiberSequenceTransfer}
	\xymatrix{
	X(m+1) \ar[rr]^-{i(m+1,m)} & & X(m) \ar[rr]^-{s_m} & & \Sigma^{-m}\mathbb{C}P^\infty
	}
\end{equation}
for every $m \geq 0$.
\begin{lem} \label{cpx}
	The induced homomorphisms ${i(m+1,m)}_*$ and ${s_m}_*$ on mod 2 homologies can be described as follows:
	\begin{enumerate}
		\item The map 
		$${i(m+1,m)}_*:H_j X(m+1) \longrightarrow H_j X(m)$$ 
		is an isomorphism if and only if
		\begin{itemize}
		\item $m \equiv 0 \pmod{4}$ and $j \equiv 0,1\pmod{4}$;
		\item $m \equiv 1\pmod{4}$ and $j \equiv 0,3\pmod{4}$;
		\item $m \equiv 2\pmod{4}$ and $j \equiv 2,3\pmod{4}$;
		\item $m \equiv 3\pmod{4}$ and $j \equiv 1,2\pmod{4}$.
	\end{itemize}
	In other words, ${i(m+1,m)}_*$ is an isomorphism when both the domain and the codomain are nonzero.
	\item The map 
		$${s_m}_*:H_j X(m) \longrightarrow H_j (\Sigma^{-m}\mathbb{C}P^\infty)$$ 
		is an isomorphism if and only if 
		\begin{itemize}
		\item $m \equiv 0\pmod{4}$ and $j \equiv 2\pmod{4}$;
		\item $m \equiv 1\pmod{4}$ and $j \equiv 1\pmod{4}$;
		\item $m \equiv 2\pmod{4}$ and $j \equiv 0\pmod{4}$;
		\item $m \equiv 3\pmod{4}$ and $j \equiv 3\pmod{4}$.
	\end{itemize}
	\end{enumerate}
\end{lem}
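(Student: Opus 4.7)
The plan is to combine the long exact sequence in mod 2 homology coming from the cofiber sequence~\ref{diagram:cofiberSequenceTransfer} with an explicit cohomological description of the map $i(m+1,m)$. First I will settle part~(1) via the mod 2 cohomology ring of $\textup{BPin}(2)$, and then deduce part~(2) from part~(1) together with exactness.

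For part~(1), the Serre spectral sequence of the fibration $\mathbb{R}\textup{P}^2 \hookrightarrow \textup{BPin}(2) \to \mathbb{H}\textup{P}^\infty$ collapses at the $E_2$-page for degree reasons (the same observation used in the proof of Proposition~\ref{xmhomology}), and each total degree on $E_\infty$ contains at most one nonzero bigraded piece so there are no hidden extensions. This yields
\[
H^*(\textup{BPin}(2); \mathbb{F}_2) \cong \mathbb{F}_2[w, e]/(w^3), \qquad |w|=1,\ |e|=4,
\]
with $w = w_1(\lambda)$. Under the Thom isomorphism $H^j X(m) \cong H^{j+m}(\textup{BPin}(2); \mathbb{F}_2)$, the map $i(m+1,m)^*$ identifies with multiplication by $w$---this is the cohomological avatar of the fact that $i(m+1,m)$ is induced by the Euler class $a_{\tilde{\mathbb{R}}}$, as recorded in the proof of Proposition~\ref{prop:TransferMaps}. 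A direct inspection of the ring $\mathbb{F}_2[w,e]/(w^3)$ shows that multiplication by $w$ is an isomorphism $H^d \to H^{d+1}$ precisely when $d \equiv 0, 1 \pmod 4$ (sending $e^k$ to $we^k$ and $we^k$ to $w^2 e^k$), and vanishes when $d \equiv 2, 3 \pmod 4$. Setting $d = j + m$ and dualizing to homology produces the mod-$4$ criterion stated in part~(1).

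For part~(2), consider the mod 2 homology long exact sequence of~\ref{diagram:cofiberSequenceTransfer}. Note that $H_j(\Sigma^{-m}\mathbb{C}P^\infty_+) \cong \mathbb{F}_2$ exactly when $j \geq -m$ and $j \equiv m \pmod 2$, and vanishes otherwise. By exactness, $s_{m*}$ is an isomorphism at $j$ if and only if both its source and target are nonzero, $i(m+1,m)_*$ at $j$ is zero, and $i(m+1,m)_*$ at $j - 1$ is injective. Part~(1) supplies the crucial dichotomy: whenever both $H_j X(m+1)$ and $H_j X(m)$ equal $\mathbb{F}_2$, the map $i(m+1,m)_*$ is either an isomorphism (when $j \equiv -m, -m+1 \pmod 4$) or zero (in the remaining two residues). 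A direct enumeration over the four residue classes of $m \pmod 4$, using Proposition~\ref{xmhomology} to read off which groups are nonzero, then matches the four-case list in part~(2).

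The main obstacle is purely bookkeeping: once the identification of $i(m+1,m)^*$ with multiplication by $w$ is in hand, the rest is mechanical, but it requires tracking the four residue classes of $m$ and $j$ modulo $4$ in parallel. No further input is needed beyond Proposition~\ref{xmhomology} and the ring structure of $H^*(\textup{BPin}(2); \mathbb{F}_2)$.
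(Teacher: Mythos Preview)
Your proof is correct but takes a somewhat different route from the paper's. The paper dispatches both parts in a single sentence: the long exact sequence in mod~2 homology attached to the cofiber sequence~(\ref{diagram:cofiberSequenceTransfer}), together with the homology groups of $X(m)$, $X(m+1)$, and $\Sigma^{-m}\mathbb{C}P^\infty_+$ already computed in Proposition~\ref{xmhomology}, forces every map uniquely. The point is that in each period of four degrees there are enough zeros among these groups that exactness alone determines which $\mathbb{F}_2 \to \mathbb{F}_2$ maps are isomorphisms and which are zero, with no additional input needed.

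Your approach to part~(1) is more explicit: you identify $i(m+1,m)^*$ with cup product by $w_1(\lambda)$ via the Thom isomorphism, then read off the answer directly from the ring $\mathbb{F}_2[w,e]/(w^3)$. This gives a structural description of the map rather than an indirect determination via exactness, and indeed the same ring (in the paper's notation $\mathbb{F}_2[q,v]/(q^3)$) reappears in the proof of Proposition~\ref{xmhomologysq} immediately afterward. For part~(2) you then fall back on the long exact sequence, just as the paper does. Either argument suffices; yours carries a bit more setup but yields more information along the way.
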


Intuitively, part $(2)$ of Lemma~\ref{cpx} is saying that for the cells in $\Sigma^{-m}\mathbb{C}P^\infty$, the ones in dimensions $4k+2-m$ come from $X(m)$, and the ones in dimensions $4k-m$ come from $\Sigma X(m+1)$.

\begin{proof}
	The proofs for both part $(1)$ and $(2)$ follow from the associated long exact sequences on mod 2 homology groups from the cofiber sequence~(\ref{diagram:cofiberSequenceTransfer}). 
\end{proof}

\begin{prop} \label{xmhomologysq}
In the mod 2 homology of $X(m)$,
\begin{enumerate}[leftmargin = *]
	\item $$Sq^1:H^j X(m)\longrightarrow H^{j+1} X(m)$$
	is nonzero if and only if 
	\begin{itemize}
		\item $m \equiv 0 \pmod{4}$ and $j \equiv 1 \pmod{4}$;
		\item $m \equiv 1 \pmod{4}$ and $j \equiv 3\pmod{4}$;
		\item $m \equiv 2\pmod{4}$ and $j \equiv 3\pmod{4}$;
		\item $m \equiv 3\pmod{4}$ and $j \equiv 1\pmod{4}$.
	\end{itemize}
	\item $$Sq^2:H^j X(m)\longrightarrow H^{j+2} X(m)$$
	is nonzero if and only if 
	\begin{itemize}
		\item $m \equiv 1\pmod{4}$ and $j \equiv 3\pmod{4}$;
		\item $m \equiv 2\pmod{4}$ and $j \equiv 2\pmod{4}$.
	\end{itemize}
\end{enumerate}
\end{prop}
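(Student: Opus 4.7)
The plan is to transfer the computation to $H^*(\textup{BPin}(2); \mathbb{F}_2)$ via the Thom isomorphism and then apply Wu's formula. Let $u_m \in H^{-m}(X(m); \mathbb{F}_2)$ denote the Thom class and let $c = w_1(\lambda) \in H^1(\textup{BPin}(2); \mathbb{F}_2)$. The Thom isomorphism realizes $H^*(X(m); \mathbb{F}_2)$ as a free $H^*(\textup{BPin}(2); \mathbb{F}_2)$-module on $u_m$, and the Wu formula
$$Sq^i(u_m \cdot x) = u_m \cdot \sum_{j=0}^{i} w_j(-m\lambda) \cdot Sq^{i-j}(x)$$
expresses $Sq^i$ on $X(m)$ in terms of Steenrod operations on $\textup{BPin}(2)$ together with the Stiefel--Whitney classes of $-m\lambda$.

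First I would identify $H^*(\textup{BPin}(2); \mathbb{F}_2)$. The Serre spectral sequence of $\mathbb{R}\textup{P}^2 \to \textup{BPin}(2) \to \mathbb{H}\textup{P}^\infty$ collapses at $E_2$ (as is invoked in the proof of Proposition~\ref{xmhomology}), giving
$$H^*(\textup{BPin}(2); \mathbb{F}_2) \cong \mathbb{F}_2[u, c]/(c^3), \qquad |u| = 4,\ |c| = 1,$$
with basis $\{u^k c^s : k \geq 0,\ s \in \{0,1,2\}\}$. From $Sq^1 c = c^2$, $Sq^2 u = 0$, and the Cartan formula one then records that $Sq^1(u^k c^s)$ is nonzero precisely for $s = 1$ (where it equals $u^k c^2$), while $Sq^2$ vanishes on every monomial $u^k c^s$ (using $Sq^2(c^2) = (Sq^1 c)^2 = c^4 = 0$ in the quotient by $c^3 = 0$).

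Next I would compute the relevant Stiefel--Whitney classes via the binomial expansion
$$w(-m\lambda) = (1+c)^{-m} = \sum_{i \geq 0} \binom{m+i-1}{i}\, c^i \pmod{2},$$
which, together with $c^3 = 0$, reduces to $w_1(-m\lambda) = mc$ and $w_2(-m\lambda) = \binom{m+1}{2}\,c^2$ modulo $2$. Substituting these into the Wu formula and plugging in the values of $Sq^1$ and $Sq^2$ on $u^k c^s$ produces closed expressions for $Sq^1(u_m \cdot u^k c^s)$ and $Sq^2(u_m \cdot u^k c^s)$ that depend only on $s \in \{0,1,2\}$ and on $m \bmod 4$. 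Translating the exponent $s$ back to the degree of the class in $X(m)$ via $j = 4k + s - m$ (with $s$ pinned down modulo $4$ by Proposition~\ref{xmhomology}) then reproduces the eight cases listed in the proposition.

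The main obstacle is purely bookkeeping: matching the three values of $s$ against the residue of $j$ modulo $4$ for each residue class of $m$. Once that dictionary is in place, $Sq^1$ is controlled by the parity of $m$, which separates the four residues of $m \bmod 4$ into the two pairs appearing in part $(1)$; and $Sq^2$ is controlled by the parity of $\binom{m+1}{2}$, which is nonzero exactly for $m \equiv 1, 2 \pmod{4}$, producing the asymmetry in part $(2)$.
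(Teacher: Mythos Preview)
Your approach is correct and essentially identical to the paper's: both compute $H^*(\textup{BPin}(2);\mathbb{F}_2)\cong\mathbb{F}_2[u,c]/(c^3)$ via the collapsing Serre spectral sequence, apply the Thom isomorphism together with the Wu/Stiefel--Whitney formula $Sq(\alpha\cdot\Phi_{-m\lambda})=Sq(\alpha)\cdot w(-m\lambda)\cdot\Phi_{-m\lambda}$, and read off the answer case by case. The only cosmetic difference is that the paper packages $w(-m\lambda)$ as $(1+q+q^2)^m$ while you expand it via negative binomial coefficients; these agree in $\mathbb{F}_2[c]/(c^3)$.

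One small point worth making explicit: you assert $Sq^2 u=0$ (and implicitly $Sq^1 u=0$) without justification. The clean reason is that $u$ is pulled back from the generator of $H^4(\mathbb{H}\textup{P}^\infty;\mathbb{F}_2)$ via the edge homomorphism of the collapsed Serre spectral sequence, and there $Sq^1$ and $Sq^2$ vanish for degree reasons since $H^5=H^6=0$. The paper glosses over this at about the same level you do, but since your whole $Sq^2$ computation rests on $Sq^2(u^k c^s)=0$, it would strengthen the write-up to say this one sentence.
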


\begin{proof}
Recall that $\textup{BPin}(2)$ is a bundle over $\mathbb{H}\textup{P}^\infty$ with fiber $\mathbb{R}\textup{P}^2$.  The existence of the $Sq^1$'s and the $Sq^2$'s in $H^*X(0) = H^*\textup{BPin}(2)$ follows from the collapse of the Serre spectral sequence.  More precisely, 
$$H^*\textup{BPin}(2) = \mathbb{F}_2[q, v]/ (q^3 = 0)$$
where $|q| = 1$ and $|v| = 4$.  If we denote $Sq = \sum_{i \geq 0} Sq^i$ to be the total Steenrod squaring operation, then 
\begin{eqnarray*}
Sq(1) &=& 1 + \sum_{i \geq 3} Sq^i(1), \\ 
Sq(q) &=& q + q^2 + \sum_{i \geq 3} Sq^i(q), \\
Sq(q^2) &=& q^2 + \sum_{i \geq 3} Sq^i(q^2), \\
Sq(v) &=& v + \sum_{i \geq 3} Sq^i(v). \\
\end{eqnarray*}

To deduce the $Sq^1$'s and $Sq^2$'s in $X(m)$ when $m \geq 1$, note that by the Thom isomorphism, 
$$H^*X(m) = H^{*+m}X(0) \cdot \Phi_{-m\lambda}.$$
Here, $\Phi_{-m\lambda} \in H^{-m}X(m)$ is the Thom class associated with the virtual bundle $-m \lambda$.  For any $\alpha \in H^{*+m}X(0)$, 
\begin{eqnarray}
Sq(\alpha \cdot \Phi_{-m\lambda}) &=& Sq(\alpha) \cdot Sq(\Phi_{-m\lambda}) \nonumber \\
&=& Sq(\alpha) \cdot w(-m\lambda) \cdot \Phi_{-m\lambda}, \label{eq:StiefelWhitneySq1Sq2}
\end{eqnarray}
where $w(-)$ denotes the total Stiefel--Whitney class.  Since 
$$1 = w(0) = w(\lambda \oplus -\lambda) = w(\lambda) w(-\lambda)$$
and $w(\lambda) = 1 + q$, we have that
$$w(-m\lambda) = w(-\lambda)^m = \frac{1}{(1+q)^m} = (1 + q + q^2)^m.$$
Substituting this into equation~(\ref{eq:StiefelWhitneySq1Sq2}) and letting $\alpha$ take values from elements in $H^*X(0)$ produce all the $Sq^1$'s and $Sq^2$'s in $X(m)$.

\end{proof}

\begin{figure}
\begin{center}
\makebox[\textwidth]{\includegraphics[trim={0.8cm 2.6cm 0.6cm 10.2cm}, clip, page = 1, scale = 0.7]{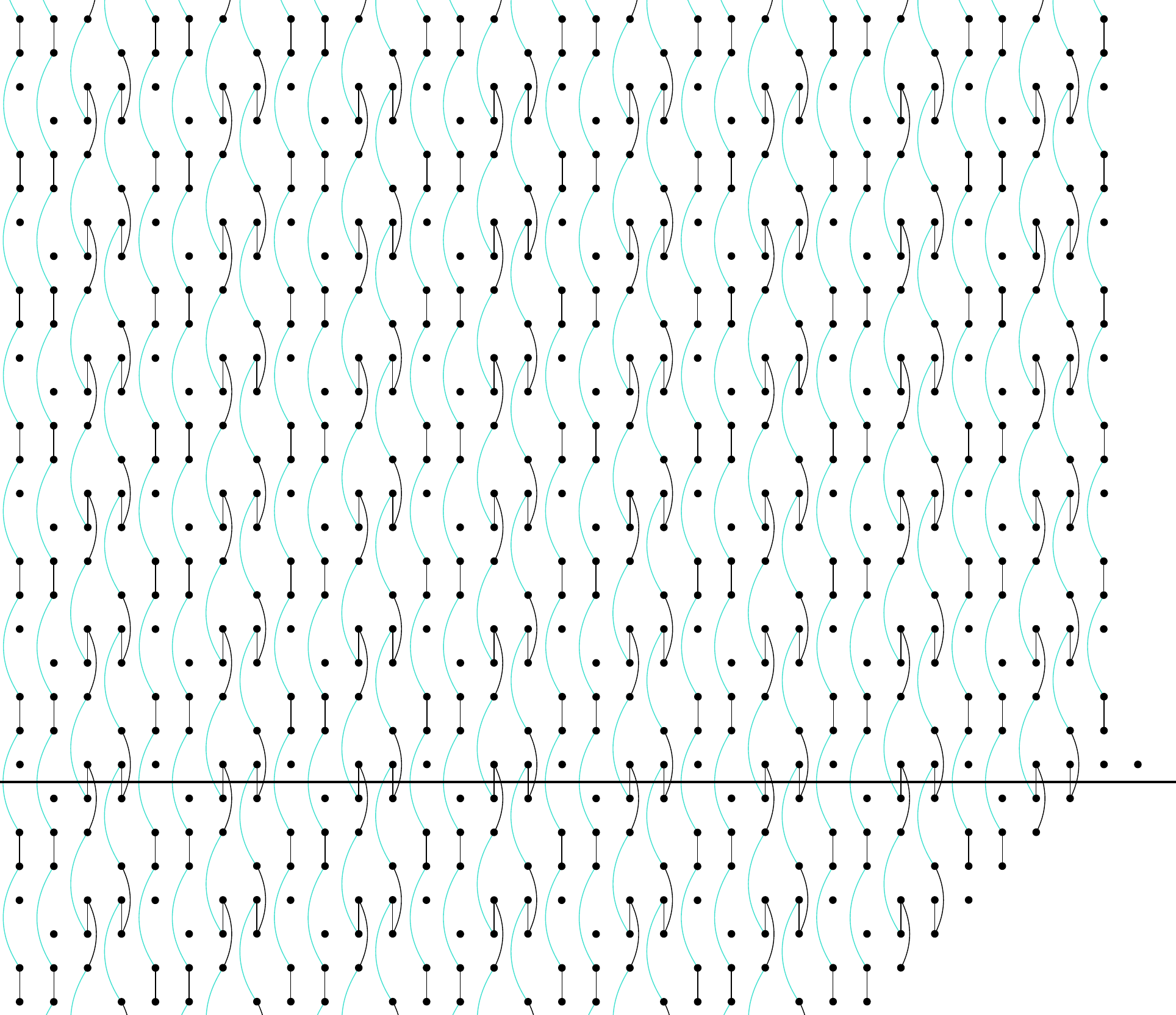}}
\end{center}
\begin{center}
\caption{Some attaching maps in $X(m)$.}
\hfill
\label{fig:Section4Diagram1}
\end{center}
\end{figure}

\begin{cor} \label{2 and eta attaching maps within columns}
There are 2 and $\eta$-attaching maps in $X(m)$ if and only if they are marked in Figure~\ref{fig:Section4Diagram1}.
\end{cor}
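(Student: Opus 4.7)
The plan is to apply Lemma \ref{2eta} to two-cell $\textup{H}\mathbb{F}_2$-subquotients of $X(m)$, reading the attaching maps off the Steenrod operation data in Proposition \ref{xmhomologysq}. Since Proposition \ref{xmhomology} shows that $X(m)$ has at most one cell in each dimension, for any pair of nonzero dimensions $n<n'$ of $X(m)$ the subquotient $X(m)^{n'}_n$ is an $\textup{H}\mathbb{F}_2$-subquotient with a single cell in each of the nonzero dimensions lying between $n$ and $n'$.

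For the forward direction, I would take each nonzero $Sq^1$ between consecutive nonzero dimensions (respectively each nonzero $Sq^2$ between nonzero dimensions separated by a dimension that carries no cell) in Proposition \ref{xmhomologysq}, and restrict by naturality to the two-cell subquotient consisting of the source and target cells. Lemma \ref{2eta} then identifies this subquotient $2$-locally as $\Sigma^n C2$ or $\Sigma^n C\eta$ (any $\Sigma^n C(2k)$ with $k$ odd being $2$-locally $\Sigma^n C2$), producing the claimed attaching map. Conversely, a hypothetical 2- or $\eta$-attaching map between dimensions not marked in Figure \ref{fig:Section4Diagram1} would provide an $\textup{H}\mathbb{F}_2$-subquotient equivalent to $\Sigma^n C2$ or $\Sigma^n C\eta$, whose defining structure maps induce, again by naturality, a nonzero $Sq^1$ or $Sq^2$ between the corresponding classes in $H^*X(m)$, contradicting the exhaustive list in Proposition \ref{xmhomologysq}.

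The main obstacle is the $\eta$-attaching case when the intermediate dimension $n+1$ carries a cell of $X(m)$; this occurs for each $Sq^2$ listed in Proposition \ref{xmhomologysq}(2), since for $m\equiv 1\pmod 4$ the $Sq^2$ from dimension $4k+3$ to $4k+5$ passes over the cell in dimension $4k+4$, and likewise for $m\equiv 2\pmod 4$. Here I would work inside the three-cell subquotient $X(m)^{n+2}_n$ and invoke Lemma \ref{pbpfhf2} to extract a two-cell $\textup{H}\mathbb{F}_2$-subquotient with cells only in dimensions $n$ and $n+2$. The input that makes this work comes from the $Sq^1$ computations of Proposition \ref{xmhomologysq}: they determine the attaching map between the bottom two cells of the three-cell subquotient and constrain the attaching of the top cell, allowing one to exhibit $S^{n+1}$ as either an $\textup{H}\mathbb{F}_2$-subcomplex or an $\textup{H}\mathbb{F}_2$-quotient within the three-cell subquotient. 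A case-check across the four residue classes $m\pmod 4$ then matches every attaching map in Figure \ref{fig:Section4Diagram1} with its Steenrod square origin and shows there are no extraneous attaching maps.
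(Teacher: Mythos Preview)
Your approach is correct and matches the paper's one-line proof, which simply cites Lemma~\ref{2eta} and Proposition~\ref{xmhomologysq}. The only difference is that you unnecessarily pass to two-cell subquotients before invoking Lemma~\ref{2eta}: note that the lemma is stated for any CW spectrum $Z$ with a single cell in dimension $n$, not just for two-cell complexes, so it applies directly to $Z=X(m)$ itself (Proposition~\ref{xmhomology} gives at most one cell per dimension). Your ``main obstacle'' about the intermediate cell is therefore already absorbed into the statement of Lemma~\ref{2eta}; the extraction of a two-cell $\textup{H}\mathbb{F}_2$-subquotient in the presence of a cell in dimension $n+1$ is part of what that lemma asserts, and you do not need to redo it here via Lemma~\ref{pbpfhf2} and a case analysis.
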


\begin{proof}
	The 2 and $\eta$-attaching maps follow from Lemma~\ref{2eta} and Proposition~\ref{xmhomologysq}.
\end{proof}

\begin{lem} \label{eta attaching map between columns}
Suppose that $m$ and $j$ satisfy one of following conditions:
\begin{itemize}
		\item $m \equiv 0 \pmod{4}$ and $j \equiv 2\pmod{4}$;
		\item $m \equiv 1\pmod{4}$ and $j \equiv 1\pmod{4}$;
		\item $m \equiv 2\pmod{4}$ and $j \equiv 0\pmod{4}$;
		\item $m \equiv 3\pmod{4}$ and $j \equiv 3\pmod{4}$.
	\end{itemize}
	Then the map 
	\begin{displaymath}
		\xymatrix{
		S^{j+1} \ar@{=}[r] & X(m+1)^{j+1}_j \ar[rr] & & X(m)^{j+1}_j \ar@{=}[r] & S^j
		}
	\end{displaymath}
	is $\eta$.
	
\end{lem}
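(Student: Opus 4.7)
The strategy is to identify the cofiber of $f \colon S^{j+1} \to S^{j}$ with the subquotient $(\Sigma^{-m}\mathbb{C}P^\infty)^{j+2}_{j} \simeq \Sigma^{j}C\eta$, using the cofiber sequence from Proposition~\ref{prop:TransferMaps} as a bridge, and thereby pin down $f$ as $\eta$. In each of the four cases listed in the lemma, the conditions on $(m,j)$ are exactly those in which $(s_m)_*$ is an isomorphism on $H_j$ by Lemma~\ref{cpx}(2), and in which both $(\Sigma^{-m}\mathbb{C}P^\infty)^{j+2}_j$ and $C_f$ have their cells concentrated in dimensions $j$ and $j+2$; this allows the same argument to be carried out uniformly.

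I would first restrict the cofiber sequence $X(m+1) \to X(m) \to \Sigma^{-m}\mathbb{C}P^\infty$ to $(j+1)$-skeleta, obtaining a cofiber sequence $X(m+1)^{j+1} \to X(m)^{j+1} \to C$. Naturality of the cofiber yields two comparison maps out of $C$: one to $\Sigma^{-m}\mathbb{C}P^\infty$ coming from the inclusion of $(j+1)$-skeleta into the full spectra, and one to $C_f := \mathrm{cofib}(f)$ coming from the quotient onto the $(-)^{j+1}_j$-subquotient. A routine long-exact-sequence calculation, combining Proposition~\ref{xmhomology} with Lemma~\ref{cpx}, shows that $H_{j}C = H_{j+2}C = \mathbb{F}_{2}$ with all other homology in the range $[j-1,j+3]$ vanishing, and that both comparison maps are isomorphisms on $H_{j}$ and $H_{j+2}$ (for $H_{j+2}$, one chases the connecting maps back to the isomorphism between $H_{j+1}X(m+1)$ and $H_{j+2}\Sigma^{-m}\mathbb{C}P^\infty$).

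I would then restrict to $[j,j+2]$-subquotients. The target $(\Sigma^{-m}\mathbb{C}P^\infty)^{j+2}_{j}$ is $\Sigma^{j} C\eta$ by the classical fact that consecutive cells of $\mathbb{C}P^\infty$ are attached by $\eta$, and in particular carries a nonzero $Sq^{2}$. Naturality of $Sq^{2}$ along the map $C^{j+2}_{j} \to \Sigma^{j} C\eta$ (an $\mathbb{F}_2$-homology isomorphism) then forces $Sq^{2}$ to be nonzero in $C^{j+2}_{j}$, so Lemma~\ref{2eta} gives $C^{j+2}_{j} \simeq \Sigma^{j} C\eta$. The companion map $C^{j+2}_{j} \to C_f$ is likewise an $\mathbb{F}_2$-homology equivalence between 2-cell spectra with cells at $j$ and $j+2$, hence an equivalence; thus $C_f \simeq \Sigma^{j}C\eta$ via an equivalence matching bottom and top cells. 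Rotating the Puppe sequence of $f$ identifies $\Sigma f$ with the attaching map of the top cell of $\Sigma^j C\eta$, namely $\Sigma \eta$, so $f = \eta$.

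The main obstacle is the homological bookkeeping across the four parity cases: one must verify case by case, using Lemma~\ref{cpx} and Proposition~\ref{xmhomology}, that the cells of $X(m+1)$, $X(m)$, and $\Sigma^{-m}\mathbb{C}P^\infty$ line up as required and that both comparison maps out of $C$ are isomorphisms on $H_j$ and $H_{j+2}$. Once this bookkeeping is done the remainder of the argument is a uniform diagram chase, and the naturality of $Sq^{2}$ closes the loop.
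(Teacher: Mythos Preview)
Your proposal is correct and follows essentially the same approach as the paper: identify the cofiber of the map with the subquotient $(\Sigma^{-m}\mathbb{C}P^\infty)^{j+2}_{j}$ via the cofiber sequence of Proposition~\ref{prop:TransferMaps}, then observe that the nonzero $Sq^2$ forces this cofiber to be $\Sigma^j C\eta$, so the map is $\eta$. The paper's proof is two sentences, asserting the cofiber identification directly from Lemma~\ref{cpx}; your version unpacks the homological bookkeeping more explicitly, but the strategy is identical.
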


\begin{proof}
	By Lemma~\ref{cpx}, the cofiber the map is 
	$$(\Sigma^{-m}\mathbb{C}P)_j^{j+2}.$$
	Since there is a nonzero $Sq^2$ in its cohomology, this cofiber is indeed $\Sigma^j C\eta$.
\end{proof}

\subsection{$\eta^2$-attaching maps in $X(m)$}

\begin{prop} \label{eta square attaching maps within columns}
	There is an $\eta^2$-attaching map in $X(m)$ from dimension $j$ to dimension $(j+3)$ if and only if it is one of the following four cases (see Figure~\ref{fig:Section4Diagram1}): 
	\begin{itemize}
		\item $m \equiv 0 \pmod{4}$ and $j \equiv 2\pmod{4}$;
		\item $m \equiv 1\pmod{4}$ and $j \equiv 1\pmod{4}$;
		\item $m \equiv 2\pmod{4}$ and $j \equiv 3\pmod{4}$;
		\item $m \equiv 3\pmod{4}$ and $j \equiv 2\pmod{4}$.
	\end{itemize} 
\end{prop}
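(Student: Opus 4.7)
My plan is to reduce the statement to a cell-by-cell verification using three ingredients: the cell-level data from Proposition~\ref{xmhomology}, the primary Steenrod action from Proposition~\ref{xmhomologysq}, and the cofiber sequence
$$
X(m+1)\xrightarrow{i(m+1,m)} X(m)\xrightarrow{s_m}\Sigma^{-m}\mathbb{C}P^{\infty}
$$
of Proposition~\ref{prop:TransferMaps}, together with the well-known fact that every pair of consecutive cells of $\mathbb{C}P^{\infty}$ is joined by an $\eta$-attaching map.

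First, I would dispose of all pairs $(m,j)\pmod 4$ not appearing in the four listed cases. For each excluded pair, one of the following obstructions applies and can be read off mechanically: either the dimension $j$ or $j+3$ contains no cell of $X(m)$ (by Proposition~\ref{xmhomology}), in which case there is nothing to attach; or Proposition~\ref{xmhomologysq} produces a nontrivial $Sq^{1}$ or $Sq^{2}$ linking one endpoint to an intermediate cell, forcing the primary attaching structure to be $2$ or $\eta$, which is incompatible with any subquotient of the form $\Sigma^{j}C\eta^{2}$.

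For each of the four listed cases, my plan is to exhibit $\Sigma^{j}C\eta^{2}$ as an $\textup{H}\mathbb{F}_{2}$-subquotient of $X(m)_{j}^{j+3}$. By Proposition~\ref{xmhomology}, this subcomplex has either two or three cells; when there is an intermediate cell in dimension $j+1$ or $j+2$, Proposition~\ref{xmhomologysq} ensures that neither $Sq^{1}$ nor $Sq^{2}$ connects it to the bottom or top cell. Lemma~\ref{pbpfhf2} then lets me split this intermediate cell off as an $\textup{H}\mathbb{F}_{2}$-sub- or quotient-complex, leaving a two-cell complex with cells in dimensions $j$ and $j+3$. Since the attaching map of such a complex lies in $\pi_{2}S^{0}_{(2)}=\mathbb{Z}/2\cdot\eta^{2}$, there are only two possibilities and the only remaining task is to rule out the trivial one.

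The heart of the proof is therefore to show the attaching map is nonzero. I plan to do this via the secondary cohomology operation $\Phi$ associated to the Adem relation $Sq^{2}Sq^{2}=Sq^{3}Sq^{1}$, which detects $\eta^{2}$-attaching maps on classes killed by both $Sq^{1}$ and $Sq^{2}$. On $X(m)$, the operation $\Phi$ acting on the relevant generator can be computed explicitly from the Thom class formula
$$Sq(\alpha\cdot\Phi_{-m\lambda})=Sq(\alpha)\cdot(1+q+q^{2})^{m}\cdot\Phi_{-m\lambda}$$
used in the proof of Proposition~\ref{xmhomologysq}; the pattern mod $4$ in the exponent $(1+q+q^{2})^{m}$ is exactly what distinguishes the four listed cases from the rest. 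As a cross-check I could instead chase the composed $\eta\cdot\eta$ through the long exact sequence of the cofiber sequence of Proposition~\ref{prop:TransferMaps}, using that $\Sigma^{-m}\mathbb{C}P^{\infty}$ has $\eta$ between every consecutive pair of cells and that an $\eta^{2}$-attachment between cells $4$ dimensions apart in $\Sigma^{-m}\mathbb{C}P^{\infty}$ descends to an $\eta^{2}$-attachment between cells $3$ dimensions apart in $X(m)$ after composing with $i(m+1,m)$. The main obstacle is to keep track of which $\eta^{2}$-connections survive the passage back to $X(m)$, and the $(m\bmod 4,j\bmod 4)$ case analysis in the statement is precisely what encodes this bookkeeping.
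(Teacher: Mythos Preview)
Your outline correctly identifies that in each of the four affirmative cases the intermediate cell splits off and one is left with a two-cell complex $\Sigma^{j}C\alpha$ with $\alpha\in\pi_{2}=\{0,\eta^{2}\}$; the paper does exactly this. The gap is in the step where you decide $\alpha=\eta^{2}$ rather than $0$.

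Your primary plan is to compute the secondary operation $\Phi$ associated to $Sq^{2}Sq^{2}=Sq^{3}Sq^{1}$ ``explicitly from the Thom class formula'' $Sq(\alpha\cdot\Phi_{-m\lambda})=Sq(\alpha)\cdot(1+q+q^{2})^{m}\cdot\Phi_{-m\lambda}$. That formula only determines the \emph{primary} Steenrod action on $H^{*}X(m)$; it says nothing about secondary operations. Computing $\Phi$ on a Thom spectrum requires either knowing $\Phi$ on the base together with a compatibility statement for the Thom isomorphism at the secondary level, or some independent input---neither of which you have set up. So as written, the pattern $(1+q+q^{2})^{m}\bmod 4$ does not distinguish the four listed cases from the four excluded ones at the level of the argument you have given.

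The paper avoids secondary operations entirely. It compares $X(m)_{j}^{j+3}$ with an adjacent column $X(m\pm 1)_{j}^{j+3}$ via the map $i(m+1,m)$, quotients out the common intermediate cell, and uses Lemma~\ref{eta attaching map between columns} to identify a specific cell-to-cell component of the induced map as $\eta$. One then argues by contradiction: if $\alpha=0$ in one column, the induced map would give, for example, a map $\Sigma^{j+1}C\eta\to S^{j}$ restricting to $\eta$ on the bottom cell, which cannot exist because $\eta^{2}\neq 0$. The four negative cases are handled by the same device (and two by Spanier--Whitehead duality), not by the $Sq^{1}/Sq^{2}$ obstruction you describe---indeed, in cases like $m\equiv 3$, $j\equiv 3$, the primary operations do not by themselves preclude an $\eta^{2}$-subquotient, and the paper again uses the column comparison to force $\alpha'=0$. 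Your ``cross-check'' via the cofiber sequence with $\Sigma^{-m}\mathbb{C}P^{\infty}$ is in the right spirit, but note that the cells of $\mathbb{C}P^{\infty}$ are four apart in $\pi_{3}$, not $\pi_{2}$, so there is no $\eta^{2}$-attachment there to push down; the relevant $\eta$'s are the between-column maps of Lemma~\ref{eta attaching map between columns}, and the $\eta^{2}$ in $X(m)$ arises as a composite of two such.
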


\begin{proof}
	For dimension reasons, there are eight cases of possible $\eta^2$-attaching maps in total.  We need to show that of these eight cases, four cases have $\eta^2$-attaching maps and four cases don't. Recall that $\pi_2 = \mathbb{Z}/2$, generated by $\eta^2$.

\begin{enumerate}[leftmargin = *]
		\item	
\textbf{Case 1:} $m \equiv 1 \pmod{4}$ and $j \equiv 1\pmod{4}$. Consider the map
$$X(m+1)_j^{j+3} \longrightarrow X(m)_j^{j+3}.$$
By Corollary~\ref{2 and eta attaching maps within columns}, the cells in dimension $j+2$ are not attached to the lower skeletons of $X(m+1)_j^{j+3}$ and $X(m)_j^{j+3}$. Therefore, they are $\textup{H}\mathbb{F}_2$-subcomplexes. Taking cofibers, we have the following commutative diagram:
\begin{displaymath}
		\xymatrix{
	\Sigma^{j+1} C\eta	\ar@{=}[r] & X(m+1)^{j+3}_j/S^{j+2} \ar@{-->}[rr] & & X(m)^{j+3}_j/S^{j+2} \ar@{=}[r] & \Sigma^{j} C\alpha \\
		 & X(m+1)^{j+3}_j \ar[rr] \ar@{->>}[u] & & X(m)^{j+3}_j \ar@{->>}[u] & \\ 
		 & S^{j+2} \ar[rr]^-{id} \ar@{^{(}->}[u] & & S^{j+2} \ar@{^{(}->}[u] &
		}
	\end{displaymath}
	Since $X(m)^{j+3}_j/S^{j+2}$ is a 2 cell complex, it must be the cofiber of a class $\alpha \in \pi_2$ in the stable homotopy groups of spheres.

\begin{displaymath}
		\xymatrix{
		*+<10pt>[o][F-]{_{j+3}} \ar@{-}[d]^{2} \ar@{-}@/_1pc/[dd]_{\eta} \ar[r]^{1} & *+<10pt>[o][F-]{_{j+3}} \ar@{-}[d]_{2} \ar@{-}@/^1pc/[ddd]^{\alpha} & & & *+<10pt>[o][F-]{_{j+3}} \ar@{-}@/_1pc/[dd]_{\eta} \ar[r]^{1} & *+<10pt>[o][F-]{_{j+3}} \ar@{-}@/^1pc/[ddd]_{\alpha}\\
		*+<10pt>[o][F-]{_{j+2}} \ar[r]^{1} & *+<10pt>[o][F-]{_{j+2}} & & & & \\
		*+<10pt>[o][F-]{_{j+1}} \ar[rd]_{\eta} & & & & *+<10pt>[o][F-]{_{j+1}} \ar[rd]_{\eta} & \\
		& *+<15pt>[o][F-]{{j}} & & & & *+<15pt>[o][F-]{{j}} \\
		X(m+1)^{j+3}_j \ar[r] & X(m)^{j+3}_j & & & X(m+1)^{j+3}_j/S^{j+2} \ar@{-->}[r] & X(m)^{j+3}_j/S^{j+2}
		}
	\end{displaymath}
	It is clear that we must have $\alpha = \eta^2$.  If it is not, then $X(m)^{j+3}_j/S^{j+2}$ would split as $S^j \vee S^{j+3}$, and we would have a map $$\Sigma^{j+1} C\eta \longrightarrow S^j$$ 
	whose restriction to the bottom cell is $\eta$ by Lemma~\ref{eta attaching map between columns}.  This is not possible. 
	\item \textbf{Case 2:} $m \equiv 2 \pmod{4}$ and $j \equiv 3\pmod{4}$.  Consider the map
	$$X(m)_j^{j+3} \longrightarrow X(m-1)_j^{j+3}.$$
	From the 2 and $\eta$-attaching maps in Corollary~\ref{2 and eta attaching maps within columns}, this map is the Spanier--Whitehead dual (up to suspension) of the map
$$X(m+1)_j^{j+3} \longrightarrow X(m)_j^{j+3}$$
in the case when $m \equiv 1\pmod{4}$ and $j \equiv 1\pmod{4}$. Therefore, we must have the $\eta^2$-attaching map.
\begin{displaymath}
		\xymatrix{
		*+<10pt>[o][F-]{_{j+3}} \ar[rrd]_{\eta}   \ar@{-}@/_1pc/[ddd]_{\eta^2}  &  & \\
		 & & *+<10pt>[o][F-]{_{j+2}} \ar@{-}@/^1pc/[dd]^{\eta}\\
		*+<10pt>[o][F-]{_{j+1}} \ar@{-}[d]^{2} \ar[rr]^{1} & & *+<10pt>[o][F-]{_{j+1}} \ar@{-}[d]_{2}  \\
		*+<15pt>[o][F-]{{j}} \ar[rr]^{1} & & *+<15pt>[o][F-]{{j}}  \\
		X(m)_j^{j+3} \ar[rr] & & X(m-1)_j^{j+3} 		}
	\end{displaymath}
	\item \textbf{Case 3:} $m \equiv 3\pmod{4}$ and $j \equiv 2\pmod{4}$.  The proof is similar to the case when $m \equiv 1\pmod{4}$ and $j \equiv 1\pmod{4}$. Consider the map
	$$X(m)_j^{j+3} \longrightarrow X(m-1)_j^{j+3}.$$
	By Corollary~\ref{2 and eta attaching maps within columns}, the cells in dimension $j+1$ are not attached to the lower skeletons of $X(m)_j^{j+3}$ and $X(m-1)_j^{j+3}$.  Therefore, they are $\textup{H}\mathbb{F}_2$-subcomplexes. Taking the cofibers, we have the following commutative diagram:
\begin{displaymath}
		\xymatrix{
	\Sigma^{j} C\phi	\ar@{=}[r] & X(m)^{j+3}_j/S^{j+1} \ar@{-->}[rr] & & X(m-1)^{j+3}_j/S^{j+1} \ar@{=}[r] &  \Sigma^{j} C\eta \\
		 & X(m)^{j+3}_j \ar[rr] \ar@{->>}[u] & & X(m-1)^{j+3}_j \ar@{->>}[u] & \\ 
		 & S^{j+1} \ar[rr]^-{id} \ar@{^{(}->}[u] & & S^{j+1} \ar@{^{(}->}[u] &
		}
	\end{displaymath}
	Since $X(m)^{j+3}_j/S^{j+1}$ is a 2 cell complex, it must be the cofiber of a class $\phi \in \pi_2$ in the stable homotopy groups of spheres.
\begin{displaymath}
		\xymatrix{
		*+<10pt>[o][F-]{_{j+3}} \ar[rd]_{\eta} \ar@{-}@/_1pc/[ddd]_{\phi}    &  & & & *+<10pt>[o][F-]{_{j+3}}  \ar[rd]_{\eta} \ar@{-}@/_1pc/[ddd]_{\phi}     &  \\
		  & *+<10pt>[o][F-]{_{j+2}} \ar@{-}[d]_{2} \ar@{-}@/^1pc/[dd]^{\eta} & & & & *+<10pt>[o][F-]{_{j+2}} \ar@{-}@/^1pc/[dd]^{\eta} \\
		*+<10pt>[o][F-]{_{j+1}} \ar[r]^{1}  & *+<10pt>[o][F-]{_{j+1}} & &  &  & \\
		*+<15pt>[o][F-]{{j}} \ar[r]^{1} & *+<15pt>[o][F-]{{j}} & & & *+<15pt>[o][F-]{{j}} \ar[r]^{1} & *+<15pt>[o][F-]{{j}} \\
		X(m)^{j+3}_j \ar[r] & X(m-1)^{j+3}_j & & & X(m)^{j+3}_j/S^{j+1} \ar@{-->}[r] & X(m-1)^{j+3}_j/S^{j+1}
		}
	\end{displaymath}
	It is clear that we must have $\phi = \eta^2$.  If it is not, then $X(m)^{j+3}_j/S^{j+1}$ would split as $S^j \vee S^{j+3}$, and we would have a map $$S^{j+3}\longrightarrow  \Sigma^{j} C\eta.$$ 
	  By Lemma~\ref{eta attaching map between columns}, post-composing this map with the quotient map $\Sigma^{j} C\eta \twoheadrightarrow S^{j+2}$ would give $\eta$, which is not possible. 
\item \textbf{Case 4:} $m \equiv 0\pmod{4}$ and $j \equiv 2\pmod{4}$.  Consider the map
	$$X(m+1)_j^{j+3} \longrightarrow X(m)_j^{j+3}.$$
	From the 2 and $\eta$-attaching maps in Corollary~\ref{2 and eta attaching maps within columns}, this is the Spanier--Whitehead dual (up to suspension) of the map
$$X(m)_j^{j+3} \longrightarrow X(m-1)_j^{j+3}$$
in the case when $m \equiv 3 \pmod{4}$ and $j \equiv 2\pmod{4}$. Therefore, we must have the $\eta^2$-attaching map.  Alternatively, one may also prove this $\eta^2$-attaching map by considering the map
$$X(m)_j^{j+3} \longrightarrow X(m-1)_j^{j+3}.$$
\end{enumerate}
\noindent Now, we will show that in the other four cases, there do not exist $\eta^2$-attaching maps. 
\begin{enumerate}[leftmargin=*]
\item	\textbf{Case 1:} $m \equiv 3\pmod{4}$ and $j \equiv 3\pmod{4}$. Consider the map
$$X(m+1)_j^{j+3} \longrightarrow X(m)_j^{j+3}.$$
By Corollary~\ref{2 and eta attaching maps within columns}, the cells in dimension $j+2$ are not attached to the lower skeletons of $X(m+1)_j^{j+3}$ and $X(m)_j^{j+3}$.  Therefore, they are $\textup{H}\mathbb{F}_2$-subcomplexes. Taking the cofibers, we have the following commutative diagram:
\begin{displaymath}
		\xymatrix{
	S^{j+1} \vee S^{j+3}	\ar@{=}[r] & X(m+1)^{j+3}_j/S^{j+2} \ar@{-->}[rr] & & X(m)^{j+3}_j/S^{j+2} \ar@{=}[r] & \Sigma^{j} C\alpha' \\
		 & X(m+1)^{j+3}_j \ar[rr] \ar@{->>}[u] & & X(m)^{j+3}_j \ar@{->>}[u] & \\ 
		 & S^{j+2} \ar[rr]^-{id} \ar@{^{(}->}[u] & & S^{j+2} \ar@{^{(}->}[u] &
		}
	\end{displaymath}
	Since $X(m)^{j+3}_j/S^{j+2}$ is a 2 cell complex, it must be the cofiber of a class $\alpha' \in \pi_2$ in the stable homotopy groups of spheres.
\begin{displaymath}
		\xymatrix{
		*+<10pt>[o][F-]{_{j+3}} \ar@{-}[d]^{2}  \ar[r]^{1} & *+<10pt>[o][F-]{_{j+3}} \ar@{-}[d]_{2} \ar@{-}@/^1pc/[ddd]^{\alpha'} & & & *+<10pt>[o][F-]{_{j+3}} \ar[r]^{1} & *+<10pt>[o][F-]{_{j+3}} \ar@{-}@/^1pc/[ddd]_{\alpha'}\\
		*+<10pt>[o][F-]{_{j+2}} \ar[r]^{1} & *+<10pt>[o][F-]{_{j+2}} & & & & \\
		*+<10pt>[o][F-]{_{j+1}} \ar[rd]_{\eta} & & & & *+<10pt>[o][F-]{_{j+1}} \ar[rd]_{\eta} & \\
		& *+<15pt>[o][F-]{{j}} & & & & *+<15pt>[o][F-]{{j}} \\
		X(m+1)^{j+3}_j \ar[r] & X(m)^{j+3}_j & & & X(m+1)^{j+3}_j/S^{j+2} \ar@{-->}[r] & X(m)^{j+3}_j/S^{j+2}
		}
	\end{displaymath}
	It is clear that we must have $\alpha' = 0$.   Otherwise, we would have $\alpha' =\eta^2$ and there would be a map $$S^{j+3} \longrightarrow \Sigma^{j} C\eta^2.$$ 
	Post-composing this map with the quotient map $\Sigma^{j} C\eta^2 \twoheadrightarrow S^{j+3}$ gives us the identity map. This is not possible. 
	\item \textbf{Case 2:} $m \equiv 0\pmod{4}$ and $j \equiv 1\pmod{4}$.  Consider the map
	$$X(m)_j^{j+3} \longrightarrow X(m-1)_j^{j+3}.$$
	From the 2 and $\eta$-attaching maps in Corollary~\ref{2 and eta attaching maps within columns}, this is the Spanier--Whitehead dual (up to suspension) of the map
$$X(m+1)_j^{j+3} \longrightarrow X(m)_j^{j+3}$$
in the case $m \equiv 3 \pmod{4}$ and $j \equiv 3\pmod{4}$. Therefore, there cannot be an $\eta^2$-attaching map.

	\item \textbf{Case 3:} $m \equiv 1\pmod{4}$ and $j \equiv 0\pmod{4}$. Consider the map
$$X(m)_j^{j+3} \longrightarrow X(m-1)_j^{j+3}.$$
By Corollary~\ref{2 and eta attaching maps within columns}, the cells in dimension $j+1$ are not attached to the lower skeletons of $X(m)_j^{j+3}$ and $X(m-1)_j^{j+3}$.  Therefore, they are $\textup{H}\mathbb{F}_2$-subcomplexes. Taking cofibers, we have the following commutative diagram: 
\begin{displaymath}
		\xymatrix{
	\Sigma^{j} C\phi'	\ar@{=}[r] & X(m)^{j+3}_j/S^{j+1} \ar@{-->}[rr] & & X(m-1)^{j+3}_j/S^{j+1} \ar@{=}[r] & S^j \vee S^{j+1}  \\
		 & X(m)^{j+3}_j \ar[rr] \ar@{->>}[u] & & X(m-1)^{j+3}_j \ar@{->>}[u] & \\ 
		 & S^{j+1} \ar[rr]^-{id} \ar@{^{(}->}[u] & & S^{j+1} \ar@{^{(}->}[u] &
		}
	\end{displaymath}
	Since $X(m)^{j+3}_j/S^{j+1}$ is a 2 cell complex, it must be the cofiber of a class $\phi' \in \pi_2$ in the stable homotopy groups of spheres.
\begin{displaymath}
		\xymatrix{
		*+<10pt>[o][F-]{_{j+3}} \ar[rd]_{\eta} \ar@{-}@/_1pc/[ddd]_{\phi'}    &  & & & *+<10pt>[o][F-]{_{j+3}}  \ar[rd]_{\eta} \ar@{-}@/_1pc/[ddd]_{\phi'}     &  \\
		  & *+<10pt>[o][F-]{_{j+2}} \ar@{-}[d]_{2}  & & & & *+<10pt>[o][F-]{_{j+2}}  \\
		*+<10pt>[o][F-]{_{j+1}} \ar[r]^{1}  & *+<10pt>[o][F-]{_{j+1}} & &  &  & \\
		*+<15pt>[o][F-]{{j}} \ar[r]^{1} & *+<15pt>[o][F-]{{j}} & & & *+<15pt>[o][F-]{{j}} \ar[r]^{1} & *+<15pt>[o][F-]{{j}} \\
		X(m)^{j+3}_j \ar[r] & X(m-1)^{j+3}_j & & & X(m)^{j+3}_j/S^{j+1} \ar@{-->}[r] & X(m-1)^{j+3}_j/S^{j+1}
		}
	\end{displaymath}
	It is clear that we must have $\phi' = 0$. Otherwise, if $\phi' = \eta^2$, we would have a map $$ \Sigma^{j} C\eta^2 \longrightarrow S^j$$ 
	  whose restriction on the bottom cell is the identity. This is not possible. 
\item \textbf{Case 4:} $m \equiv 2\pmod{4}$ and $j \equiv 0\pmod{4}$. Consider the map
	$$X(m+1)_j^{j+3} \longrightarrow X(m)_j^{j+3}.$$
	From the 2 and $\eta$-attaching maps in Corollary~\ref{2 and eta attaching maps within columns}, this is the Spanier--Whitehead dual (up to suspension) of the map
$$X(m)_j^{j+3} \longrightarrow X(m-1)_j^{j+3}$$
in the case when $m \equiv 1\pmod{4}$ and $j \equiv 0\pmod{4}$.  Therefore, there cannot be an $\eta^2$-attaching map.
\end{enumerate}
\end{proof}

\subsection{Periodicity in $X(m)$}
\begin{prop}\label{prop:periodicityX(m)}
For any $m,n,k\geq 0$, there is an equivalence
$$X(m)^{4n+6-m}_{4n-m}\simeq\Sigma^{4k}X(m+4k)^{4n+6-m-4k}_{4n-m-4k}.$$
\end{prop}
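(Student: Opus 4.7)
The plan is to realise both sides of the equivalence as Thom spectra over a common base and then reduce the claim to a triviality statement in real $K$-theory.

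Let $Z_n := \textup{BPin}(2)^{4n+6}/\textup{BPin}(2)^{4n-1}$ denote the six-cell subquotient of $\textup{BPin}(2)$ whose cells lie in dimensions $4n, 4n+1, 4n+2, 4n+4, 4n+5, 4n+6$. By the Thom isomorphism a cell of $X(m)$ in dimension $j$ corresponds to a $\textup{BPin}(2)$-cell in dimension $j+m$, so
$$X(m)^{4n+6-m}_{4n-m} \;\simeq\; \Thom(Z_n,\, -m\lambda|_{Z_n}),$$
while the Thom suspension isomorphism yields
$$\Sigma^{4k}X(m+4k)^{4n+6-m-4k}_{4n-m-4k} \;\simeq\; \Thom\bigl(Z_n,\, -(m+4k)\lambda|_{Z_n} + 4k\,\underline{\mathbb{R}}\bigr).$$
Producing an equivalence between these two Thom spectra is tantamount to exhibiting the virtual bundle identity
\begin{equation}\label{eq:planKey}
4k(\lambda - 1) \;=\; 0 \qquad \text{in } \widetilde{KO}(Z_n).
\end{equation}
Since $\lambda^{\otimes 2} \cong \underline{\mathbb{R}}$, we have $(\lambda-1)^2 = -2(\lambda-1)$ in $KO$, so \eqref{eq:planKey} for general $k$ follows from the case $k=1$.

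To verify $4(\lambda-1)=0$ on $Z_n$, I would exploit the fibration $\mathbb{R}\textup{P}^2 \hookrightarrow \textup{BPin}(2) \to \mathbb{H}\textup{P}^\infty$, which presents $Z_n$ as the middle term of a cofibre sequence
$$\Sigma^{4n}\mathbb{R}\textup{P}^2 \;\longrightarrow\; Z_n \;\longrightarrow\; \Sigma^{4n+4}\mathbb{R}\textup{P}^2,$$
the two outer pieces being the $\mathbb{R}\textup{P}^2$-blocks at two consecutive $\mathbb{H}\textup{P}^\infty$-base levels. On each shifted $\mathbb{R}\textup{P}^2$, the restriction of $\lambda$ is the tautological real line bundle $\gamma$, and $2(\gamma-1)=0$ in $\widetilde{KO}(\mathbb{R}\textup{P}^2)$ by Adams' computation. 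Restricting along $\Sigma^{4n}\mathbb{R}\textup{P}^2 \hookrightarrow Z_n$ sends $2(\lambda-1)$ to zero, so $2(\lambda-1)$ is the pullback of some $\beta \in \widetilde{KO}(\Sigma^{4n+4}\mathbb{R}\textup{P}^2)$ along $Z_n \twoheadrightarrow \Sigma^{4n+4}\mathbb{R}\textup{P}^2$; a second application of $2(\gamma-1)=0$ then bounds the order of $\beta$ by $2$, whence $4(\lambda-1) = 2\beta = 0$ on $Z_n$.

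The main technical obstacle in this plan is controlling the potential ambiguity in the lift $\beta$: a priori $\beta$ could have a torsion-free or higher-order 2-torsion component that would invalidate the diagram chase. This is handled by combining the calculation $w(4\lambda) = (1+q)^4 = 1$ from Proposition~\ref{xmhomologysq} with $p_1(\lambda) = -c_2(\lambda_{\mathbb{C}}) = 0$ to eliminate any torsion-free contribution, pinning $\beta$ down to an element of the cyclic-of-order-2 torsion subgroup of $\widetilde{KO}(\Sigma^{4n+4}\mathbb{R}\textup{P}^2)$. Once \eqref{eq:planKey} is secured, the Thom-spectrum reformulation of the first step delivers the claimed equivalence immediately.
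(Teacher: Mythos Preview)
Your reduction has a real gap at the first step. The quotient $Z_n = \textup{BPin}(2)^{4n+6}/\textup{BPin}(2)^{4n-1}$ is not a subspace of $\textup{BPin}(2)$, so $\lambda$ does not restrict to a bundle on $Z_n$; there is no element ``$\lambda|_{Z_n}$'' in $\widetilde{KO}(Z_n)$, and the relative Thom spectrum over the pair $(\textup{BPin}(2)^{4n+6},\textup{BPin}(2)^{4n-1})$ is not a Thom spectrum of a bundle over $Z_n$. To compare the two relative Thom spectra you would actually need $4(\lambda-1)=0$ in $\widetilde{KO}(\textup{BPin}(2)^{4n+6})$, which your cofibre-sequence argument does not address and which does not obviously hold once $4n+6>7$. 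A secondary error: the claimed input ``$2(\gamma-1)=0$ in $\widetilde{KO}(\mathbb{R}\textup{P}^2)$'' is false, since $\widetilde{KO}(\mathbb{R}\textup{P}^2)\cong\mathbb{Z}/4$ with $\gamma-1$ a generator.

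The paper sidesteps all of this with one extra idea you are missing: rather than viewing $X(m)^{4n+6-m}_{4n-m}$ as a relative Thom spectrum over a high skeleton, it realises it as an \emph{absolute} Thom spectrum over the fixed base $\textup{BPin}(2)^6$. Using the cofibre sequence $S(n\mathbb{H})_+\to S^0\to S^{n\mathbb{H}}$ one identifies $X(m)_{4n-m}\simeq\Thom(\textup{BPin}(2),\,nH-m\lambda)$, whence
\[
X(m)^{4n+6-m}_{4n-m}\;\simeq\;\Thom\bigl(\textup{BPin}(2)^6,\,(nH-m\lambda)|_{\textup{BPin}(2)^6}\bigr).
\]
The quaternionic bundle $H$ absorbs the dependence on $n$, and the periodicity reduces to the stable triviality of $4\lambda$ on the $6$-skeleton alone. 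Since $w_1(4\lambda)=w_2(4\lambda)=0$ and $p_1(4\lambda)=4p_1(\lambda)=0$, the classifying map lifts to $B\text{String}$, and the $7$-connectivity of $B\text{String}$ forces it to be null on $\textup{BPin}(2)^6$.
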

\begin{proof}
Given any two $G$-representations $U$ and $V$, there is a cofiber sequence 
$$S(U)_+ \longrightarrow S(U\oplus V)_+ \longrightarrow S(V)_+ \wedge S^U.$$
Let $U = n\mathbb{H}$ and $V = \infty \mathbb{H}$.  The cofiber sequence 
$$
S(n\mathbb{H})_{+}\longrightarrow S(\infty \mathbb{H})_{+}\longrightarrow S(\infty \mathbb{H})_+\wedge S^{n\mathbb{H}}
$$
produces the cofiber sequence
$$
\left(S(n\mathbb{H})_{+}\wedge S^{-m\widetilde{\mathbb{R}}}\right)_{h\Pin}\longrightarrow \left(S(\infty \mathbb{H})_{+}\wedge S^{-m\widetilde{\mathbb{R}}}\right)_{h\Pin} \longrightarrow \left(S(\infty \mathbb{H})_+\wedge S^{n\mathbb{H}-m\widetilde{\mathbb{R}}}\right)_{h\Pin}.
$$
This cofiber sequence can be rewritten as 
$$\begin{tikzcd}
X(m)^{4n-m-1} \ar[r, hookrightarrow]&  X(m) \ar[r]& \Thom(B\Pin,nH-m\lambda).
\end{tikzcd}
$$
Here, $H$ and $\lambda$ denote the bundles over $B\Pin$ that are associated to the representations $\mathbb{H}$ and $\widetilde{\mathbb{R}}$, respectively.  From this, we deduce that
$$
X(m)_{4n-m}=\Thom(B\Pin,nH-m\lambda).
$$

Let $B\Pin^{6}$ be the  $6$-skeleton of $B\Pin$.  We have the equality
$$
X(m)_{4n-m}^{4n-m+6}=\Thom(B\Pin^{6},(nH-m\lambda)|_{B\Pin^{6}}).
$$
To finish the proof, it suffices to show that the bundle $4\lambda|_{B\Pin^{6}}$ is stably trivial.  Note that since $\omega_{1}(4\lambda)=\omega_{2}(4\lambda)=0$, this bundle is spin and can be classified by a stable map 
$$f:B\Pin^{6}\rightarrow B\text{Spin}.$$ 
Moreover, since $p_{1}(4\lambda)=4p_{1}(\lambda)=0$, $f$ can be further be lifted to $B\text{String}$.  It follows that $f = 0$ because $B\text{String}$ is $7$-connected.
\end{proof}

\subsection{Some $\textup{H}\mathbb{F}_2$-subquotients of $X(m)$}
In this subsection, we define and discuss some $\textup{H}\mathbb{F}_2$-subquotients of $X(m)$.

We start with the 3 cell complex $X(8k+4)_{8k+1}^{8k+4}$ and the 4 cell complex ${X(8k+3)_{8k-5}^{8k-1}}$.

\begin{lem} \label{the 3 cell complex wk}
	The 3 cell complex $X(8k+4)_{8k+1}^{8k+4}$ splits:
	$$X(8k+4)_{8k+1}^{8k+4} \simeq S^{8k+4} \vee \Sigma^{8k+1} C2.$$
\end{lem}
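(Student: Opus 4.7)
The plan is to combine the homology computation of Proposition~\ref{xmhomology} with the attaching-map analyses in Corollary~\ref{2 and eta attaching maps within columns} and Proposition~\ref{eta square attaching maps within columns} to pin down the cellular structure of $X(8k+4)_{8k+1}^{8k+4}$ completely. Writing $m = 8k+4 \equiv 0 \pmod 4$ and running through dimensions $8k+1, 8k+2, 8k+3, 8k+4$, Proposition~\ref{xmhomology} says $H_j X(m) = \mathbb{F}_2$ for $j \equiv 0,1,2 \pmod 4$ and vanishes for $j \equiv 3 \pmod 4$. Hence $X(8k+4)_{8k+1}^{8k+4}$ has exactly three cells, located in dimensions $8k+1$, $8k+2$, and $8k+4$.

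Next I would identify the attaching map between the bottom two cells. By Proposition~\ref{xmhomologysq} applied with $m \equiv 0 \pmod 4$ and $j = 8k+1 \equiv 1 \pmod 4$, the operation $Sq^1 \colon H^{8k+1} \to H^{8k+2}$ is nonzero, so Lemma~\ref{2eta} gives a $2$-attaching map from dimension $8k+1$ to dimension $8k+2$. Thus the subcomplex spanned by these two cells is $\Sigma^{8k+1} C2$.

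The core of the argument is then to show that the top cell $S^{8k+4}$ splits off. Its attaching map lives in $\pi_1(\Sigma^{8k+1} C2)$, so it suffices to rule out both an $\eta$-attachment onto the $(8k+2)$-cell and an $\eta^2$-attachment onto the $(8k+1)$-cell. For the $\eta$-attachment, Proposition~\ref{xmhomologysq} lists the nonzero $Sq^2$'s: only the cases $(m,j) \equiv (1,3)$ or $(2,2) \pmod 4$ contribute, while here $(m,j) = (0, 8k+2) \equiv (0, 2)$, so $Sq^2 \colon H^{8k+2} \to H^{8k+4}$ vanishes and Lemma~\ref{2eta} rules this out. For the $\eta^2$-attachment, Proposition~\ref{eta square attaching maps within columns} enumerates exactly the four $(m,j)$-patterns producing such a map; the relevant case $(m,j) = (8k+4, 8k+1) \equiv (0,1) \pmod 4$ is not on the list, so no $\eta^2$-attachment exists.

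With both possible attachments of the top cell excluded, the cofiber sequence
\[
\Sigma^{8k+1}C2 \longrightarrow X(8k+4)_{8k+1}^{8k+4} \longrightarrow S^{8k+4}
\]
admits a splitting, yielding the desired equivalence $X(8k+4)_{8k+1}^{8k+4} \simeq S^{8k+4} \vee \Sigma^{8k+1} C2$. I do not anticipate a genuine obstacle here, since all of the inputs are purely algebraic (Steenrod operations on $H^*\textup{BPin}(2)$ twisted by the Thom class, plus the classification of three-cell complexes with gap three in the allowed dimensions); the only mild subtlety is making sure that the $\pi_1$ group in which the top-cell attaching map lives is indeed killed by both vanishing criteria, which the two-step $Sq^2$/$\eta^2$ analysis handles.
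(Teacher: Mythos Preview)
Your proof is correct and follows the paper's approach exactly: both invoke Corollary~\ref{2 and eta attaching maps within columns} and Proposition~\ref{eta square attaching maps within columns} to rule out $\eta$- and $\eta^2$-attachments of the top cell, then conclude from $\pi_1=\mathbb{Z}/2\{\eta\}$ and $\pi_2=\mathbb{Z}/2\{\eta^2\}$ that the attaching map is null. (One small slip: the attaching map of the $(8k+4)$-cell lives in $\pi_{8k+3}(\Sigma^{8k+1}C2)\cong\pi_2(C2)$, not $\pi_1$, but your subsequent analysis is unaffected.)
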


\begin{proof}
	By Corollary~\ref{2 and eta attaching maps within columns} and Proposition~\ref{eta square attaching maps within columns}, there are no $\eta$ and $\eta^2$-attaching maps in $X(8k+4)_{8k+1}^{8k+4}$.  The claim then follows from the fact that $\pi_1 = \mathbb{Z}/2$ and $\pi_2 = \mathbb{Z}/2$ are generated by $\eta$ and $\eta^2$ respectively.
\end{proof}

\begin{lem} \label{the 4 cell complex bw 12 locks}
	The $4$-cell complex $X(8k+3)_{8k-5}^{8k-1}$ splits: 
	$$X(8k+3)_{8k-5}^{8k-1} \simeq \Sigma^{8k-5} C\nu \vee \Sigma^{8k-3} C2.$$
\end{lem}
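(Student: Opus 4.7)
The cells of $X(8k+3)_{8k-5}^{8k-1}$ are, by Proposition~\ref{xmhomology}, in dimensions $8k-5$, $8k-3$, $8k-2$, and $8k-1$. My plan is to build the splitting by extending one skeleton at a time, using Corollary~\ref{2 and eta attaching maps within columns} together with Propositions~\ref{xmhomologysq} and \ref{eta square attaching maps within columns} to control every attaching map of span at most three dimensions; the remaining ``long'' $\nu$-type attaching across four dimensions, which is not seen by $Sq^{1}$ or $Sq^{2}$, will then be identified via a $Sq^{4}$ comparison with $\Sigma^{-(8k+3)}\mathbb{C}P^{\infty}$.

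First, the attaching of the $(8k-3)$-cell to $S^{8k-5}$ lies in $\pi_{1}=\mathbb{Z}/2\cdot\eta$, and Proposition~\ref{xmhomologysq} gives $Sq^{2}=0$ for $m\equiv 3$, $j\equiv 3\pmod 4$, so this attaching vanishes and $X(8k+3)_{8k-5}^{8k-3}\simeq S^{8k-5}\vee S^{8k-3}$. Next, the attaching $S^{8k-3}\to S^{8k-5}\vee S^{8k-3}$ of the $(8k-2)$-cell has a $\pi_{2}$-component corresponding to an $\eta^{2}$-attaching from $8k-5$ to $8k-2$, which is ruled out by Proposition~\ref{eta square attaching maps within columns} for $m\equiv 3$, $j\equiv 3\pmod 4$, and a $\pi_{0}=\mathbb{Z}$-component which equals $2$ up to a $2$-local unit because $Sq^{1}$ is nonzero in this position; hence $X(8k+3)_{8k-5}^{8k-2}\simeq S^{8k-5}\vee\Sigma^{8k-3}C2$. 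Finally, the attaching $S^{8k-2}\to S^{8k-5}\vee\Sigma^{8k-3}C2$ of the top cell has two components: a class $\alpha\in\pi_{3}\otimes\mathbb{Z}_{(2)}=\mathbb{Z}/8$ landing in $S^{8k-5}$, and a class in $\pi_{1}(C2)=\mathbb{Z}/2$ landing in $\Sigma^{8k-3}C2$; the latter, if nonzero, projects to an $\eta$-attaching from $8k-3$ to $8k-1$, which is forbidden by Proposition~\ref{xmhomologysq} for $m\equiv 3$, $j\equiv 1\pmod 4$. Thus $X(8k+3)_{8k-5}^{8k-1}\simeq\Sigma^{8k-5}C\alpha\vee\Sigma^{8k-3}C2$ for a single unknown class $\alpha$.

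The main obstacle is identifying $\alpha$ as a unit multiple of $\nu$, since the low Steenrod operations cannot see attaching maps across four dimensions. For this I would use the cofiber map $s_{8k+3}\colon X(8k+3)\to\Sigma^{-(8k+3)}\mathbb{C}P^{\infty}$ of Proposition~\ref{prop:TransferMaps}; by Lemma~\ref{cpx}, $s_{8k+3}$ is an isomorphism on $H_{j}$ for $j\equiv 3\pmod 4$, so after cellular approximation and passing to the appropriate subquotients it induces a map $\Sigma^{8k-5}C\alpha\to\Sigma^{-(8k+3)}\mathbb{C}P^{8k+1}_{8k-1}$ that is an isomorphism on $H_{8k-5}$ and $H_{8k-1}$. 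In $H^{*}\mathbb{C}P^{\infty}=\mathbb{F}_{2}[x]$ with $|x|=2$, the computation
\[
Sq^{4}(x^{8k-1})=\tbinom{8k-1}{2}\,x^{8k+1}=(8k-1)(4k-1)\,x^{8k+1}\equiv x^{8k+1}\pmod 2
\]
shows $Sq^{4}$ is nonzero between the relevant cells of the target, and by naturality of Steenrod operations it is also nonzero on $\Sigma^{8k-5}C\alpha$. This forces $\alpha$ to be detected by $h_{2}$ in the Adams spectral sequence, i.e., to be an odd multiple of $\nu$ in $\mathbb{Z}/8$; since $C\alpha$ depends only on $\alpha$ up to multiplication by a unit, we conclude $\Sigma^{8k-5}C\alpha\simeq\Sigma^{8k-5}C\nu$, finishing the splitting.
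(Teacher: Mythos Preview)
Your proof is correct and follows essentially the same approach as the paper: both use the $Sq^1$, $Sq^2$, and $\eta^2$ information from Corollary~\ref{2 and eta attaching maps within columns} and Proposition~\ref{eta square attaching maps within columns} to pin down the short attaching maps, and both identify the $\nu$-attachment by pulling back the nontrivial $Sq^4$ from $\Sigma^{-(8k+3)}\mathbb{C}P^\infty$ via $s_{8k+3}$. The only organizational difference is that the paper constructs $\Sigma^{8k-3}C2$ and $\Sigma^{8k-5}C\nu$ separately as $\textup{H}\mathbb{F}_2$-subcomplexes (the latter via the pullback Lemma~\ref{pbpfhf2}) and then observes their wedge is everything, whereas you build skeleton by skeleton and read off the splitting at the end; the content is the same.
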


\begin{displaymath}
    \xymatrix{
 *+<10pt>[o][F-]{_{8k-1}}  \ar@{-}@/_2pc/[dddd]_{\nu} \\
    *+<10pt>[o][F-]{_{8k-2}} \ar@{-}[d]^{2} \\
    *+<10pt>[o][F-]{_{8k-3}} \\
    \\
    *+<10pt>[o][F-]{_{8k-5}} }
\end{displaymath}

\begin{proof}
Consider the $(8k-2)$-skeleton of $X(8k+3)_{8k-5}^{8k-1}$, which is the 3 cell complex $X(8k+3)_{8k-5}^{8k-2}$. By Corollary~\ref{2 and eta attaching maps within columns} and Proposition~\ref{eta square attaching maps within columns}, there are no $\eta$ and $\eta^2$-attaching maps in $X(8k+3)_{8k-5}^{8k-2}$. Since $\pi_1= \mathbb{Z}/2$ and $\pi_2 = \mathbb{Z}/2$ are generated by $\eta$ and $\eta^2$ respectively, we have the following equivalence: 
$$X(8k+3)_{8k-5}^{8k-2} \simeq S^{8k-5} \vee \Sigma^{8k-3}C2.$$
This gives $\Sigma^{8k-3}C2$ as an $\textup{H}\mathbb{F}_2$-subcomplex of $X(8k+3)_{8k-5}^{8k-2}$, and, therefore, as an $\textup{H}\mathbb{F}_2$-subcomplex of $X(8k+3)_{8k-5}^{8k-1}$.

Now consider the attaching map 
$$S^{8k-2} \longrightarrow X(8k+3)_{8k-3}^{8k-2}$$
whose cofiber is $X(8k+3)_{8k-3}^{8k-1}$. By Corollary~\ref{2 and eta attaching maps within columns}, the cell in dimension $8k-1$ is not attached to the cell in dimension $8k-2$ by $2$.  It is also not attached to the cell in dimension $8k-3$ by $\eta$.  Therefore, it is null homotopic and we have the following homotopy equivalence:
$$X(8k+3)_{8k-3}^{8k-1} \simeq  \Sigma^{8k-3}C2  \vee S^{8k-1}.$$ 
This gives $S^{8k-1}$ as an $\textup{H}\mathbb{F}_2$-subcomplex of $X(8k+3)_{8k-3}^{8k-1}$. 

By Lemma~\ref{pbpfhf2}, we can pullback $S^{8k-1}$ along the quotient map 
$$\xymatrix{X(8k+3)_{8k-5}^{8k-1} \ar@{-->}[r] & X(8k+3)_{8k-3}^{8k-1} }$$ 
and obtain a 2 cell complex as an $\textup{H}\mathbb{F}_2$-subcomplex of $X(8k+3)_{8k-5}^{8k-1}$.
\begin{displaymath}
    \xymatrix{
S^{8k-5} \ar@{^{(}->}[r] \ar@{=}[d] & \Sigma^{8k-5} C\nu \ar@{->>}[r] \ar@{^{(}->}[d] & S^{8k-1} \ar@{^{(}->}[d]\\
S^{8k-5} \ar@{^{(}->}[r] & X(8k+3)_{8k-5}^{8k-1} \ar@{->>}[r] & X(8k+3)_{8k-3}^{8k-1}
    }
\end{displaymath}

We claim that this 2 cell complex must be $\Sigma^{8k-5} C\nu$. In fact, consider the map
$$X(8k+3)_{8k-5}^{8k-1} \longrightarrow (\Sigma^{-8k-3}\mathbb{C}P)_{8k-5}^{8k-1}$$
induced by the map $X(8k+3) \rightarrow \Sigma^{-8k-3}\mathbb{C}P^\infty$. Since there is a nontrivial $Sq^4$ on $H^{8k-5} (\Sigma^{-8k-3}\mathbb{C}P)_{8k-5}^{8k-1}$, we must have a nontrivial $Sq^4$ on $H^{8k-5} X(8k+3)_{8k-5}^{8k-1}$ and the 2 cell complex. This produces the $\nu$-attaching map. Therefore, $\Sigma^{8k-5} C\nu$ is an $\textup{H}\mathbb{F}_2$-subcomplex of $X(8k+3)_{8k-5}^{8k-1}$.

In summary, we have shown that both $\Sigma^{8k-3}C2$ and $\Sigma^{8k-5} C\nu$ are $\textup{H}\mathbb{F}_2$-subcomplexes of ${X(8k+3)_{8k-5}^{8k-1}}$. Their wedge gives an isomorphism on mod 2 homology and is therefore a homotopy equivalence. This completes the proof of the lemma. 
\end{proof}

\begin{prop} \label{prop ek}
There exists a 4 cell complex $E(k)$ that is an $\textup{H}\mathbb{F}_2$-subcomplex of $X(8k+4)_{8k-4}^{8k+4}$.  It has cells in dimensions $8k-4$, $8k-3$, $8k$ and $8k+4$.  
\end{prop}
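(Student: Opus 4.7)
The plan is to apply Lemma~\ref{pbpfhf2} twice, peeling off the unwanted cells in dimensions $8k-2$, $8k+1$, $8k+2$ from $X(8k+4)_{8k-4}^{8k+4}$ in two stages. By Proposition~\ref{xmhomology}, this subquotient has exactly seven cells, in the three dimensions just listed together with $8k-4$, $8k-3$, $8k$, $8k+4$. The crucial input, available because $m = 8k+4 \equiv 0 \pmod{4}$, is that Proposition~\ref{xmhomologysq} shows that $Sq^2$ vanishes on $H^*X(m)$ and Proposition~\ref{eta square attaching maps within columns} confines $\eta^2$-attachings to start at $j \equiv 2 \pmod{4}$. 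Consequently no $\eta$- or $\eta^2$-attaching connects either of the cells in dimensions $8k$ or $8k+4$ downward into the cluster $\{8k-2, 8k+1, 8k+2\}$; the only nontrivial ``low-stem'' attaching maps are $2$ from $(8k-3)$ to $(8k-2)$, $2$ from $(8k+1)$ to $(8k+2)$, and $\eta^2$ from $(8k-2)$ to $(8k+1)$, all internal to the two groups.

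\textbf{Step 1.} The cofiber sequence
$$X(8k+4)_{8k-4}^{8k-3} \hookrightarrow X(8k+4)_{8k-4}^{8k} \twoheadrightarrow X(8k+4)_{8k-2}^{8k}$$
expresses $X(8k+4)_{8k-4}^{8k}$ as an extension of two $2$-cell complexes. The quotient $X(8k+4)_{8k-2}^{8k}$ has vanishing $\eta$-attaching by the $Sq^2$-vanishing above and therefore splits as $S^{8k-2} \vee S^{8k}$, so $S^{8k}$ is an $\textup{H}\mathbb{F}_2$-subcomplex of it. Applying Lemma~\ref{pbpfhf2} produces a $3$-cell $\textup{H}\mathbb{F}_2$-subcomplex $E_1 \subset X(8k+4)_{8k-4}^{8k}$ with cells in dimensions $8k-4$, $8k-3$, $8k$.

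\textbf{Step 2.} Let $C_1 = X(8k+4)_{8k-4}^{8k+4}/E_1$; it has four cells, in dimensions $8k-2$, $8k+1$, $8k+2$, $8k+4$, whose $(8k+2)$-skeleton is a $3$-cell complex $Q_1$ with the three internal attaching maps inherited from $X(8k+4)$. The attaching map of the top $(8k+4)$-cell lies in $\pi_{8k+3}(Q_1)$. The long exact sequence of $S^{8k-2} \hookrightarrow Q_1 \twoheadrightarrow \Sigma^{8k+1}C2$ collapses (using $\pi_4 = \pi_5 = 0$) to give an isomorphism $\pi_{8k+3}(Q_1) \cong \pi_2(C2)$, a group of order $4$ whose two $\mathbb{Z}/2$-layers detect respectively the $\eta$-attaching from $(8k+2)$ to $(8k+4)$ and the $\eta^2$-attaching from $(8k+1)$ to $(8k+4)$ in $C_1$. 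Both vanish by the observations of paragraph one, so $C_1 \simeq Q_1 \vee S^{8k+4}$. A second application of Lemma~\ref{pbpfhf2} to $E_1 \hookrightarrow X(8k+4)_{8k-4}^{8k+4} \twoheadrightarrow C_1$ and the summand $S^{8k+4} \subset C_1$ yields the desired $\textup{H}\mathbb{F}_2$-subcomplex $E(k)$, containing $E_1$ as an $\textup{H}\mathbb{F}_2$-subcomplex with quotient $S^{8k+4}$, hence with exactly the four cells in dimensions $8k-4$, $8k-3$, $8k$, $8k+4$.

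\textbf{Main obstacle.} The only nonroutine checks are the two splittings $X(8k+4)_{8k-2}^{8k} \simeq S^{8k-2} \vee S^{8k}$ and $C_1 \simeq Q_1 \vee S^{8k+4}$. Both reduce to verifying that all $\eta$- and $\eta^2$-components of the relevant attaching maps vanish, and both are immediate from Propositions~\ref{xmhomologysq} and~\ref{eta square attaching maps within columns} for $m \equiv 0 \pmod{4}$. Because the relevant obstruction groups sit in the very low stable stems $\pi_1$, $\pi_2$, $\pi_4$, $\pi_5$, no higher-order attaching data (such as $\nu$- or $\nu^2$-attachings) can intervene.
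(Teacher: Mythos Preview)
Your proof is correct and follows essentially the same two-stage pullback strategy as the paper: your $E_1$ is the paper's $F(k)$, and both then argue that the top cell of the quotient $X(8k+4)_{8k-4}^{8k+4}/E_1$ splits off via the same vanishing of $\eta$- and $\eta^2$-attachments, before applying Lemma~\ref{pbpfhf2} a second time. The only cosmetic difference is that you compute $\pi_{8k+3}(Q_1)$ via the long exact sequence of $S^{8k-2}\hookrightarrow Q_1\twoheadrightarrow \Sigma^{8k+1}C2$ (reducing to $\pi_2(C2)$, where the image of the attaching map is literally the attaching map in $X(8k+4)_{8k+1}^{8k+4}$) whereas the paper reads this off the Atiyah--Hirzebruch $E_1$-page.
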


\begin{proof}
First, by Corollary~\ref{2 and eta attaching maps within columns}, the cells in dimensions $8k-2$ and $8k$ are not attached by $\eta$ in $X(8k+4)$.  Therefore, there is an equivalence
	$$X(8k+4)_{8k-2}^{8k} \simeq S^{8k} \vee S^{8k-2}.$$
	In particular, we have $S^{8k-2}$ as an $\textup{H}\mathbb{F}_2$-quotient complex of $X(8k+4)_{8k-2}^{8k}$ and $X(8k+4)_{8k-4}^{8k}$, and $S^{8k}$ as an $\textup{H}\mathbb{F}_2$-subcomplex of $X(8k+4)_{8k-2}^{8k}$ and ${X(8k+4)_{8k-2}^{8k+2}}$. 
	
	Define $F(k)$ to be the fiber of the following composition: 
	\begin{displaymath}
		\xymatrix{
		X(8k+4)_{8k-4}^{8k} \ar@{->>}[r] & X(8k+4)_{8k-2}^{8k}  \ar@{->>}[r] & S^{8k-2}.
		}
	\end{displaymath}
Then $F(k)$ is a 3 cell complex with cells in dimensions $8k-4, \ 8k-3$ and $8k$.  This 3 cell complex is an $\textup{H}\mathbb{F}_2$-subcomplex of $X(8k+4)_{8k-4}^{8k}$ and $X(8k+4)_{8k-4}^{8k+4}$. It is clear that we have the following commutative diagram in the homotopy category:
\begin{displaymath}
		\xymatrix{
		X(8k+4)_{8k-4}^{8k-3} \ar@{=}[r] \ar@{^{(}->}[d] & X(8k+4)_{8k-4}^{8k-3} \ar@{^{(}->}[d] & \\
		F(k) \ar@{->>}[d] \ar@{^{(}->}[r] & X(8k+4)_{8k-4}^{8k+4} \ar@{->>}[r] \ar@{->>}[d] & X(8k+4)_{8k-4}^{8k+4}/F(k) \ar@{=}[d] \\
		S^{8k} \ar@{^{(}->}[r] &  X(8k+4)_{8k-2}^{8k+4} \ar@{->>}[r] & X(8k+4)_{8k-2}^{8k+4}/S^{8k}
		}
	\end{displaymath}	
Therefore, we can identify the 4 cell complex
$$X(8k+4)_{8k-4}^{8k+4}/F(k) = {X(8k+4)_{8k-2}^{8k+4}/S^{8k}}.$$

Now, we claim that the top cell of $X(8k+4)_{8k-4}^{8k+4}/F(k)$ splits off. In fact, consider the attaching map 
	$$S^{8k+3} \longrightarrow X(8k+4)_{8k-2}^{8k+2}/S^{8k},$$
	whose cofiber is $X(8k+4)_{8k-2}^{8k+4}/S^{8k}$. We will show that this attaching map is null-homotopic. Consider the $E_1$-page of the Atiyah--Hirzebruch spectral sequence of the 3 cell complex $X(8k+4)_{8k-2}^{8k+2}/S^{8k}$ that converges to its $(8k+3)$-homotopy groups:
	$$\pi_{8k+3}S^{8k+2} \oplus \pi_{8k+3}S^{8k+1} \oplus \pi_{8k+3}S^{8k-2} = \pi_1 \oplus \pi_2 \oplus \pi_5 = \mathbb{Z}/2 \oplus \mathbb{Z}/2.$$
The right hand side is generated by 
$$\eta[8k+2] \in \pi_{8k+3}S^{8k+2} \text{ and } \eta^2[8k+1] \in \pi_{8k+3}S^{8k+1}.$$ 
By Corollary~\ref{2 and eta attaching maps within columns} and Proposition~\ref{eta square attaching maps within columns}, there are no $\eta$ and $\eta^2$-attaching maps in $X(8k+4)_{8k-2}^{8k+4}/S^{8k}$.  This proves our claim. 

Therefore, we have a splitting
	$$X(8k+4)_{8k-2}^{8k+4}/S^{8k} \simeq S^{8k+4} \vee X(8k+4)_{8k-2}^{8k+2}/S^{8k}.$$
	In particular, this splitting exhibits $S^{8k+4}$ as an $\textup{H}\mathbb{F}_2$-subcomplex of 
	$$X(8k+4)_{8k-2}^{8k+4}/S^{8k} = X(8k+4)_{8k-4}^{8k+4}/F(k).$$
	
Lastly, we pullback $S^{8k+4}$ along the quotient map $$\xymatrix{X(8k+4)_{8k-4}^{8k+4} \ar@{->>}[r] & X(8k+4)_{8k-4}^{8k+4}/F(k):}$$
	 \begin{displaymath}
		\xymatrix{
		F(k) \ar@{=}[d] \ar@{^{(}->}[r] & E(k) \ar@{->>}[r] \ar@{->>}[d] & S^{8k+4}  \ar@{^{(}->}[d] \\
		F(k) \ar@{^{(}->}[r] &  X(8k+4)_{8k-4}^{8k+4} \ar@{->>}[r] & X(8k+4)_{8k-4}^{8k+4}/F(k).
		}
	\end{displaymath}
	By Lemma~\ref{pbpfhf2}, $E(k)$ is an $\textup{H}\mathbb{F}_2$-subcomplex of $X(8k+4)_{8k-4}^{8k+4}$ with cells in dimensions $8k-4, \ 8k-3, \ 8k$ and $8k+4$.  This concludes the proof of the proposition. 
\end{proof}


\begin{df}\rm
	Define $E(k)$ to be the 4 cell complex in Proposition~\ref{prop ek}.  Define $F(k)$ to be the $8k$-skeleton of $E(k)$. Define
	$$G(k) := X(8k+4)_{8k-4}^{\infty}/F(k)$$
	and $G(k)^{8k+1}$ to be its $(8k+1)$-skeleton. 
\end{df}

It is clear from Proposition~\ref{eta square attaching maps within columns} that 
$$G(k)^{8k+1} = \Sigma^{8k-2} C\eta^2.$$

\begin{prop} \label{prop:DefinitionY(k)}
There is a 2 cell complex $Y(k)$ with cells in dimensions $8k-4$ and $8k-8$, such that it is an $\textup{H}\mathbb{F}_2$-quotient complex of ${X(8k+4)^{8k-2}_{8k-8}}$.	
\end{prop}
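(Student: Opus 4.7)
The plan is to construct $Y(k)$ as an iterated $\textup{H}\mathbb{F}_2$-quotient of $X(8k+4)^{8k-2}_{8k-8}$: first killing the two cells in dimensions $8k-7,8k-6$, then killing the two cells in dimensions $8k-3,8k-2$, leaving a 2-cell complex with cells only in dimensions $8k-8$ and $8k-4$.

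First I will analyze the 3-cell $(8k-6)$-skeleton $X(8k+4)^{8k-6}_{8k-8}$. Since $m=8k+4\equiv 0\pmod 4$, Corollary~\ref{2 and eta attaching maps within columns} gives the $2$-attaching from $(8k-7)$ to $(8k-6)$, Proposition~\ref{xmhomologysq} rules out any $\eta$-attaching in this range, and Proposition~\ref{eta square attaching maps within columns} rules out any $\eta^2$-attaching from $(8k-8)$. The integral cellular chain complex of $X(8k+4)$, read off from the Thom isomorphism and the computation of $H_*\textup{BPin}(2)$ via the Serre spectral sequence of $\mathbb{R}\textup{P}^2\hookrightarrow\textup{BPin}(2)\to\mathbb{H}\textup{P}^\infty$, forces the integral boundary of the $(8k-7)$-cell to vanish, so that the attaching of the $(8k-7)$-cell to $S^{8k-8}$ in $X(8k+4)^{8k-7}_{8k-8}$ is null. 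Assembling these facts yields a $2$-local splitting
$$X(8k+4)^{8k-6}_{8k-8}\;\simeq\;S^{8k-8}\vee\Sigma^{8k-7}C2.$$
In particular $\Sigma^{8k-7}C2$ is an $\textup{H}\mathbb{F}_2$-subcomplex of $X(8k+4)^{8k-2}_{8k-8}$, so that
$$Y'(k)\;:=\;X(8k+4)^{8k-2}_{8k-8}\big/\Sigma^{8k-7}C2$$
is a 4-cell $\textup{H}\mathbb{F}_2$-quotient with cells in dimensions $8k-8,8k-4,8k-3,8k-2$.

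Next I claim $Y'(k)$ further splits as $Y(k)\vee\Sigma^{8k-3}C2$, where $Y(k)$ denotes its $(8k-4)$-skeleton. The attaching map of the $(8k-3)$-cell in $X(8k+4)^{8k-2}_{8k-8}$ has its component to $S^{8k-4}$ vanishing (no $Sq^1$, by Proposition~\ref{xmhomologysq}), its component to $S^{8k-8}$ vanishing since $\pi_4=0$, and its only nonzero classical component the $\eta^2$-attaching to $S^{8k-6}$ supplied by Proposition~\ref{eta square attaching maps within columns}. After collapsing $\Sigma^{8k-7}C2$ the $\eta^2$-component is killed, and a short long-exact-sequence argument using that $\pi_{8k-4}(Y(k))$ injects into $\pi_0(S^0)=\mathbb{Z}$ shows the residual attaching is null. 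The $(8k-2)$-cell still attaches to $(8k-3)$ by $2$ via Corollary~\ref{2 and eta attaching maps within columns}, and its only other possible nonzero component (a $\nu$-attaching to $(8k-6)$) has been killed in $Y'(k)$. Hence $Y'(k)\simeq Y(k)\vee\Sigma^{8k-3}C2$, and $Y(k)=Y'(k)/\Sigma^{8k-3}C2$ is the desired 2-cell $\textup{H}\mathbb{F}_2$-quotient of $X(8k+4)^{8k-2}_{8k-8}$ with cells in dimensions $8k-8$ and $8k-4$.

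The main technical obstacle is controlling attaching components in $\pi_n$ for $n\geq 3$, which are not directly detected by the $Sq^1$ and $Sq^2$ computations of Propositions~\ref{xmhomologysq} and \ref{eta square attaching maps within columns}. I expect to handle these through the long-exact-sequence analysis sketched above together with, where necessary, the projection $X(8k+4)\twoheadrightarrow\Sigma^{-(8k+4)}\mathbb{C}\textup{P}^\infty$ from Proposition~\ref{prop:TransferMaps} to constrain the remaining $\nu$-type attachings.
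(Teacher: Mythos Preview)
Your approach is essentially the paper's: both quotient out $\Sigma^{8k-7}C2$ first and then $\Sigma^{8k-3}C2$, with the paper simply packaging the two steps into a single four-cell $\textup{H}\mathbb{F}_2$-subcomplex $W$ via the pullback Lemma~\ref{pbpfhf2} rather than proving the full splitting $Y'(k)\simeq Y(k)\vee\Sigma^{8k-3}C2$ as you do.

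One imprecision should be fixed. For the component of the $(8k-3)$-cell to $S^{8k-4}$ you cite ``no $Sq^1$'', but vanishing of $Sq^1$ only shows the cellular boundary is \emph{even}; your subsequent injection $\pi_{8k-4}(Y(k))\hookrightarrow\mathbb{Z}$ then only tells you the residual attaching is even, not null. What is actually needed is the same integral chain-complex argument you already invoked for the $(8k-7)$-cell: since $H_{8k-4}(X(8k+4);\mathbb{Z})=\mathbb{Z}$, the integral cellular differential $\partial_{8k-3}$ vanishes, so the image in $\mathbb{Z}$ is zero and the injection finishes the job. A smaller point: for the $(8k-2)$-cell you should also explicitly rule out an $\eta$-component to $(8k-4)$ in $Y'(k)$; this follows from $Sq^2=0$ on $H^{8k-4}X(8k+4)$ (Proposition~\ref{xmhomologysq}) and naturality of Steenrod operations under the quotient map, but quotienting by $\Sigma^{8k-7}C2$ does not by itself preclude such a component, so it deserves mention.
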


\begin{proof}
It suffices to show that ${X(8k+4)^{8k-2}_{8k-8}}$ has an $\textup{H}\mathbb{F}_2$-subcomplex $W$ with cells in dimensions $8k-7, 8k-6, 8k-3$ and $8k-2$.

Firstly, by Corollary~\ref{2 and eta attaching maps within columns}, we know that $\Sigma^{8k-7}C2$ is an $\textup{H}\mathbb{F}_2$-subcomplex of ${X(8k+4)^{8k-2}_{8k-8}}$. Secondly, 	by Corollary~\ref{2 and eta attaching maps within columns} and the fact that $\pi_4 =0$ and $\pi_5=0$, we know that $\Sigma^{8k-3}C2$ is an $\textup{H}\mathbb{F}_2$-subcomplex of ${{X(8k+4)^{8k-2}_{8k-8}}/\Sigma^{8k-7}C2}$. Therefore, by Lemma~\ref{pbpfhf2}, we have the following diagram and in particular we may define $W$.
$$\xymatrix{
\Sigma^{8k-7}C2 \ar@{^{(}->}[r] \ar@{=}[d] & W \ar@{->>}[r] \ar@{^{(}->}[d] & \Sigma^{8k-3}C2 \ar@{^{(}->}[d] \\
\Sigma^{8k-7}C2 \ar@{^{(}->}[r] & X(8k+4)^{8k-2}_{8k-8} \ar@{->>}[r] \ar@{->>}[d] & X(8k+4)^{8k-2}_{8k-8}/\Sigma^{8k-7}C2 \ar@{->>}[d] \\
& Y(k) \ar@{=}[r] & Y(k).
}$$
We then complete the proof by defining $Y(k)$ to be the cofiber of the map
$$\xymatrix{W \ar@{^{(}->}[r] & X(8k+4)^{8k-2}_{8k-8}.}$$ 
\end{proof}

\newpage
\section{Step 1: Proof of Theorem~\ref{thm: inductive fk}}\label{sec:Step1}

In this section, we present the proof of Theorem~\ref{thm: inductive fk}, which states that:
For every $k\geq 0$, there exist maps
\begin{itemize}
\item $f_{k}: X(8k+4)_{8k+1}^{\infty} \longrightarrow S^{0}$
\item $g_{k}:S^{8k+4} \lhook\joinrel\longrightarrow X(8k+4)_{8k+1}^{\infty}$
\item $a_{k}: S^{8k+4} \longrightarrow X(8k-4)^{8k-4}_{8k-7}$
\item $ b_{k}:X(8k-4)^{8k-4}_{8k-7}\longrightarrow S^{0}$
\end{itemize}
with the following properties:
\begin{enumerate}[label=(\roman*)]
\item The diagram 
\begin{equation}
    \xymatrix{
        X(8k+4) \ar[r]\ar@{->>}[d] & S^{0}  \\
        X(8k+4)_{8k+1}^{\infty} \ar[ur]_{f_{k}} }
\end{equation}
commutes.

\item The map $g_{k}$ induces an isomorphism on $H_{8k+4}(-;\mathbb{F}_{2})$.  In other words, $S^{8k+4}$ is a $\textup{H}\mathbb{F}_2$-subcomplex of $X(8k+4)_{8k+1}^{\infty}$ via the map $g_k$. 

\item The following diagram is commutative:
\begin{equation}
    \xymatrix{
        S^{8k+4} \ar[d]^{a_{k}} \ar@{^{(}->}[r]^-{g_{k}} & X(8k+4)_{8k+1}^{\infty} \ar[d]^{f_{k}}\\
    X(8k-4)^{8k-4}_{8k-7}\ar[r]^-{b_{k}} & S^{0}.}
\end{equation}
\item  Let $\phi_{k}:S^{8k+1}\rightarrow S^{0}$ be the restriction of $f_{k}$ to the bottom cell of ${X(8k+4)^{\infty}_{8k+1}}$.  Then for $k\geq 1$, the map $\phi_{k}$ satisfies the inductive relation 
$$\phi_{k}- \phi_{k-2}\cdot  \chi_{k}\in \langle \phi_{k-1},2,\tau_{k}\rangle,$$
where $\tau_{k}\in \{0,8\sigma\}$ in $\pi_7$ and $\chi_{k}$ is some element in $\pi_{16}$. Note that by Lemma~\ref{eta attaching map between columns} that $\phi_0 = \eta$ and we set $\phi_{-1}=0$. 
\end{enumerate}

\subsection{An outline of the proof}

\begin{figure}
\begin{center}
\makebox[\textwidth]{\includegraphics[trim={2.5cm 4.2cm 0.3cm 7.8cm}, clip, page = 1, scale = 0.7]{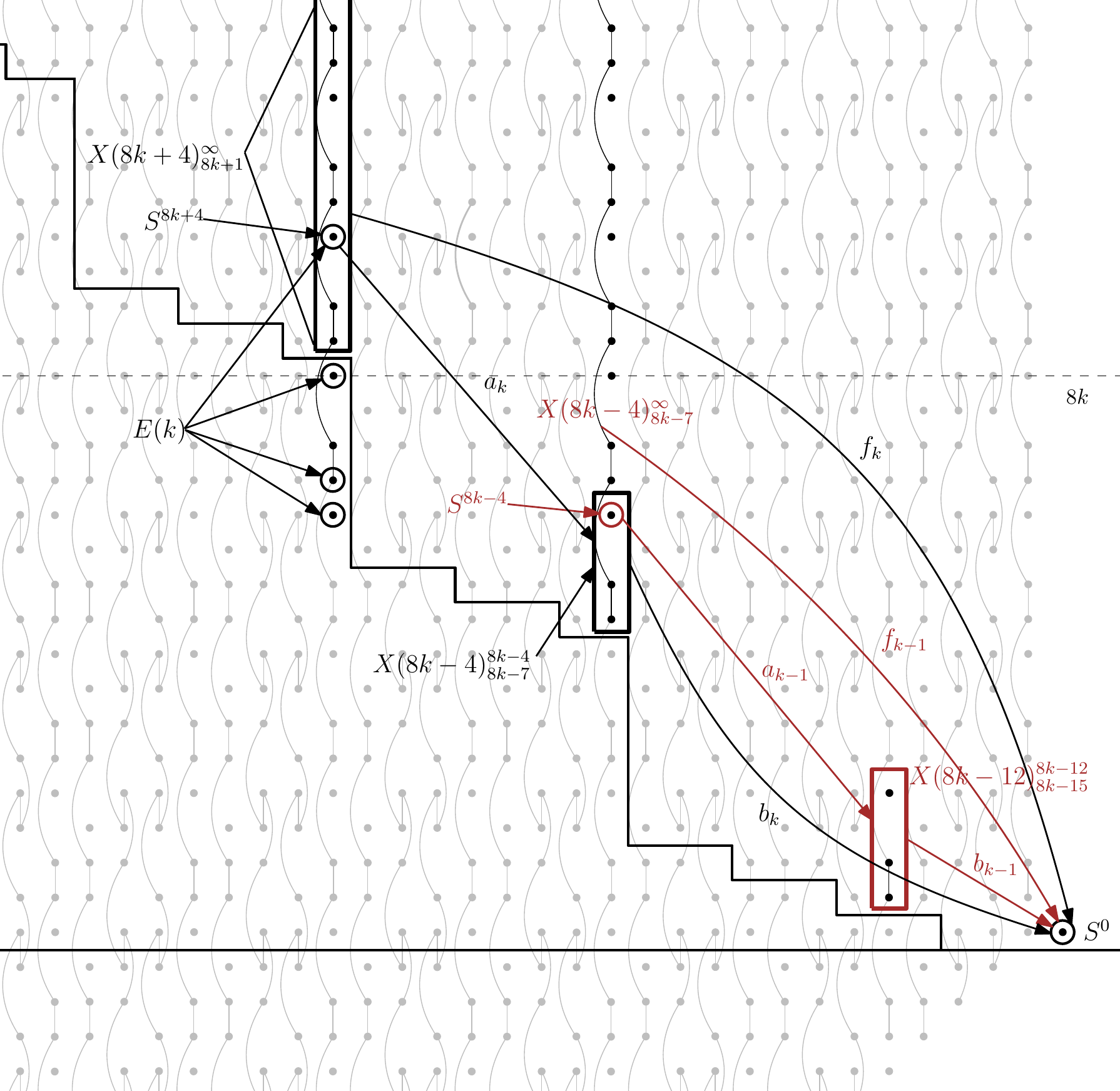}}
\end{center}
\begin{center}
\caption{Step 1 main picture.}
\hfill
\label{fig:Step1MainPic}
\end{center}
\end{figure}

In this subsection, we list the main steps of our proof of Theorem~\ref{thm: inductive fk}. The intuition is explained later in Remark~\ref{rem intuition}. 

We need to show the existence of 4 families of maps 
$$f_k, \ g_k, \ a_k, \ \text{and} \ b_k $$
for all $k \geq 0$, that satisfy two commutative diagrams, namely the ones in (i) and (iii) of Theorem~\ref{thm: inductive fk}, a property for $g_k$, namely (ii) of Theorem~\ref{thm: inductive fk} and a property for $f_{k}$, namely (iv) of Theorem~\ref{thm: inductive fk}.

The strategy of our proof can be summarized as the following. We first prove the existence of the maps $a_k$ for all $k \geq 0$, and then construct the maps $g_k$ for all $k \geq 0$. We check that $g_k$ satisfies property (ii) in Theorem~\ref{thm: inductive fk}. This is \textbf{Step~1.1} and \textbf{Step~1.2} of our proof. 

In the rest of the proof, we show inductively the existence of the maps $f_k$ and $b_k$, and that the two diagrams in (i) and (iii) of Theorem~\ref{thm: inductive fk} commute. 

We first define $b_0$ to be the zero map and show the existence of $f_0$. We check that the two diagrams in (i) and (ii) of Theorem~\ref{thm: inductive fk} commute. This is \textbf{Step~1.3} that gives the starting case $k=0$.

Next, we assume the maps $f_{k-1}$ and $b_{k-1}$ exist and the two diagrams in (i) and (ii) of Theorem~\ref{thm: inductive fk} commute for the 4 maps $(f_{k-1}, \ g_{k-1}, \ a_{k-1}, b_{k-1})$. We define the map $b_k$ and show the existence of $f_k$, using information in the induction. Note that there are choices for $f_k$. This is \textbf{Step~1.4}.

Then, we check that the two diagrams in (i) and (ii) of Theorem~\ref{thm: inductive fk} commute for the 4 maps $(f_{k}, \ g_{k}, \ a_{k}, \ b_{k})$, for all choices of $f_k$. This is \textbf{Step~1.5}.

Finally, in \textbf{Step~1.6}, we prove that there exists one choice of $f_k$, such that it satisfies an inductive relation between the restriction of $f_{k},\ f_{k-1},\ f_{k-2}$ to the bottom cell of their domains. For this choice of $f_k$, this establishes property (iv) and finishes the proof.

More precisely, the details of Steps~1.1-1.6 are stated as the following.
\begin{enumerate}

\item \textbf{\underline{Step 1.1}}: We establish the existence of the maps $a_k$ for all $k \geq 0$.

\begin{prop}\label{proposition: construct a} 
For every $k\geq 0$, there exists a map $c_{k}$ that fits into the following commutative diagram
\begin{equation}\label{diagram: construct a}
    \xymatrix{
        E(k) \ar@{^{(}->}[r] \ar@{->>}[d] & X(8k+4)_{8k-4}^{\infty} \ar@^{->}[dr]& \\
S^{8k+4} \ar[drr]_{c_{k}}   &     & X(8k-3)_{8k-7}^{\infty}   \\
& & X(8k-3)_{8k-7}^{8k-4} \ar@{^{(}->}[u]
}
\end{equation}
\end{prop}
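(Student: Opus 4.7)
The plan is a two-step construction. First I will show that the composition $E(k) \hookrightarrow X(8k+4)_{8k-4}^{\infty} \to X(8k-3)_{8k-7}^{\infty}$ kills the subcomplex $F(k)$, so that it descends to a map $\tilde{c}_k \colon S^{8k+4} = E(k)/F(k) \to X(8k-3)_{8k-7}^{\infty}$; then I will show that $\tilde{c}_k$ lifts through the inclusion $X(8k-3)_{8k-7}^{8k-4} \hookrightarrow X(8k-3)_{8k-7}^{\infty}$, producing the desired $c_k$.

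For the first step, the attaching-map analysis of Section~\ref{sec:AttachingMaps}, in particular Corollary~\ref{2 and eta attaching maps within columns} and Proposition~\ref{eta square attaching maps within columns} applied to $X(8k+4)$, shows that there are no nontrivial $2$, $\eta$, or $\eta^{2}$ attachments among the cells of $F(k)$ in dimensions $8k-4$, $8k-3$, and $8k$; since $\pi_{4}=\pi_{5}=0$, this rules out any higher attachment as well, yielding $F(k) \simeq S^{8k-4} \vee S^{8k-3} \vee S^{8k}$. The restricted composition therefore splits into three pieces, and the $h$-function computation of Section~\ref{subsec:Maps between subquoteints} together with cellular approximation shows that each piece factors through the $4$-cell complex $X(8k-3)_{8k-7}^{8k-3}$, whose attaching maps ($\eta^{2}$ from $8k-7$ to $8k-4$, $2$ from $8k-5$ to $8k-4$, and $\eta$ from $8k-5$ to $8k-3$) are again extracted from Section~\ref{sec:AttachingMaps}. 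Combining Lemma~\ref{cpx} (which shows that the column shift $X(8k+4) \to X(8k-3)$ is trivial on mod $2$ homology in each of the three relevant degrees) with a small Atiyah--Hirzebruch computation in low stems then forces each of the three pieces to vanish.

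For the second step, the obstruction to lifting $\tilde{c}_{k}$ through $X(8k-3)_{8k-7}^{8k-4}$ is the image of $\tilde{c}_{k}$ in $\pi_{8k+4} X(8k-3)_{8k-3}^{\infty}$. The map $E(k) \to X(8k-3)_{8k-3}^{\infty}$ obtained by post-composing with the quotient can be analyzed cell by cell: the three lower cells of $E(k)$ are sent below the bottom dimension of $X(8k-3)_{8k-3}^{\infty}$, while the top cell $S^{8k+4}$ contributes a class in the low stems $\pi_{0},\dots,\pi_{7}$ of $X(8k-3)_{8k-3}^{\infty}$, where the cellular attaching data of Section~\ref{sec:AttachingMaps} together with the vanishing established in the first step is enough to kill the obstruction.

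The main obstacle will be the first step. The splitting of $F(k)$ is abstract, whereas the map in question is inherited from the natural column shift of Thom spectra, so one has to track precisely which stable class each wedge summand is sent to. This bookkeeping is handled by inducting along the cofiber sequences $X(m+1) \to X(m) \to \Sigma^{-m}\CP^{\infty}_{+}$ of Proposition~\ref{prop:TransferMaps} across the seven intermediate columns, and ruling out the last surviving classes requires an $e$-invariant argument to separate them from the $e$-nontrivial generators of $\pi_{3}$ and $\pi_{7}$.
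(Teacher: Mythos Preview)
Your two–step plan is the right shape, but the very first move contains a gap that undermines the rest. You argue that $F(k)\simeq S^{8k-4}\vee S^{8k-3}\vee S^{8k}$ because there are no $2$, $\eta$, or $\eta^{2}$ attachments and ``$\pi_{4}=\pi_{5}=0$ rules out any higher attachment.'' But the attaching map from the $(8k)$-cell to the $(8k-4)$-cell lives in $\pi_{3}=\mathbb{Z}/8$, not in $\pi_{4}$ or $\pi_{5}$; nothing you cite from Section~\ref{sec:AttachingMaps} excludes a $\nu$- (or $2\nu$-, or $\eta^{3}$-) attachment here. The paper never needs $F(k)$ to split: it removes $S^{8k-4}$ first (Step~1.1.1) and then only uses the harmless splitting $E(k)^{8k}_{8k-3}\simeq S^{8k}\vee S^{8k-3}$, which involves $\pi_{2}$ alone.

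More importantly, the paper does \emph{not} separate your two steps. It interleaves ``kill the low cells'' with ``restrict to a small skeleton of the target'' by walking through two specific intermediate columns, $X(8k+1)$ and $X(8k-2)$, using the concrete $\eta$- and $\eta^{2}$-attaching maps there (Lemma~\ref{eta attaching map between columns}, Corollary~\ref{2 and eta attaching maps within columns}, Proposition~\ref{eta square attaching maps within columns}). This produces a map $\mathbf{3}\colon S^{8k+4}\to X(8k-3)^{8k}_{8k-7}$ already landing in the $8k$-skeleton, so the residual lifting obstruction lies in a three–cell complex and is disposed of by elementary relations such as $\eta\cdot\pi_{3}=\eta\cdot\pi_{6}=0$ and $\pi_{2}\cdot\pi_{3}=0$ (Steps~1.1.3--1.1.5). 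In your scheme, by contrast, $\tilde{c}_{k}$ lands in the infinite complex $X(8k-3)^{\infty}_{8k-7}$, the obstruction to lifting sits in $\pi_{8k+4}X(8k-3)^{\infty}_{8k-3}$, and you have no control over which $\tilde{c}_{k}$ you obtained (the choice of null-homotopy of $F(k)$ feeds indeterminacy from $[\Sigma F(k),X(8k-3)^{\infty}_{8k-7}]$ into the problem). Your remark that ``the three lower cells of $E(k)$ are sent below the bottom dimension of $X(8k-3)^{\infty}_{8k-3}$'' is not correct for the cells in dimensions $8k-3$ and $8k$, and in any case knowing that $F(k)$ maps to zero does not by itself pin down $p\circ\tilde{c}_{k}$.

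Finally, the promised $e$-invariant argument is a red herring: the paper never uses $e$-invariants here. The potential $\nu$- and $\sigma$-contributions you are worried about are eliminated in the paper by the column-by-column factorizations, which force everything to pass through a single $\eta$ on the bottom cell of the intermediate column and then vanish by $\eta\cdot\pi_{3}=\eta\cdot\pi_{6}=0$.
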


\begin{figure}
\begin{center}
\makebox[\textwidth]{\includegraphics[trim={2.5cm 4.2cm 0.3cm 7.8cm}, clip, page = 1, scale = 0.7]{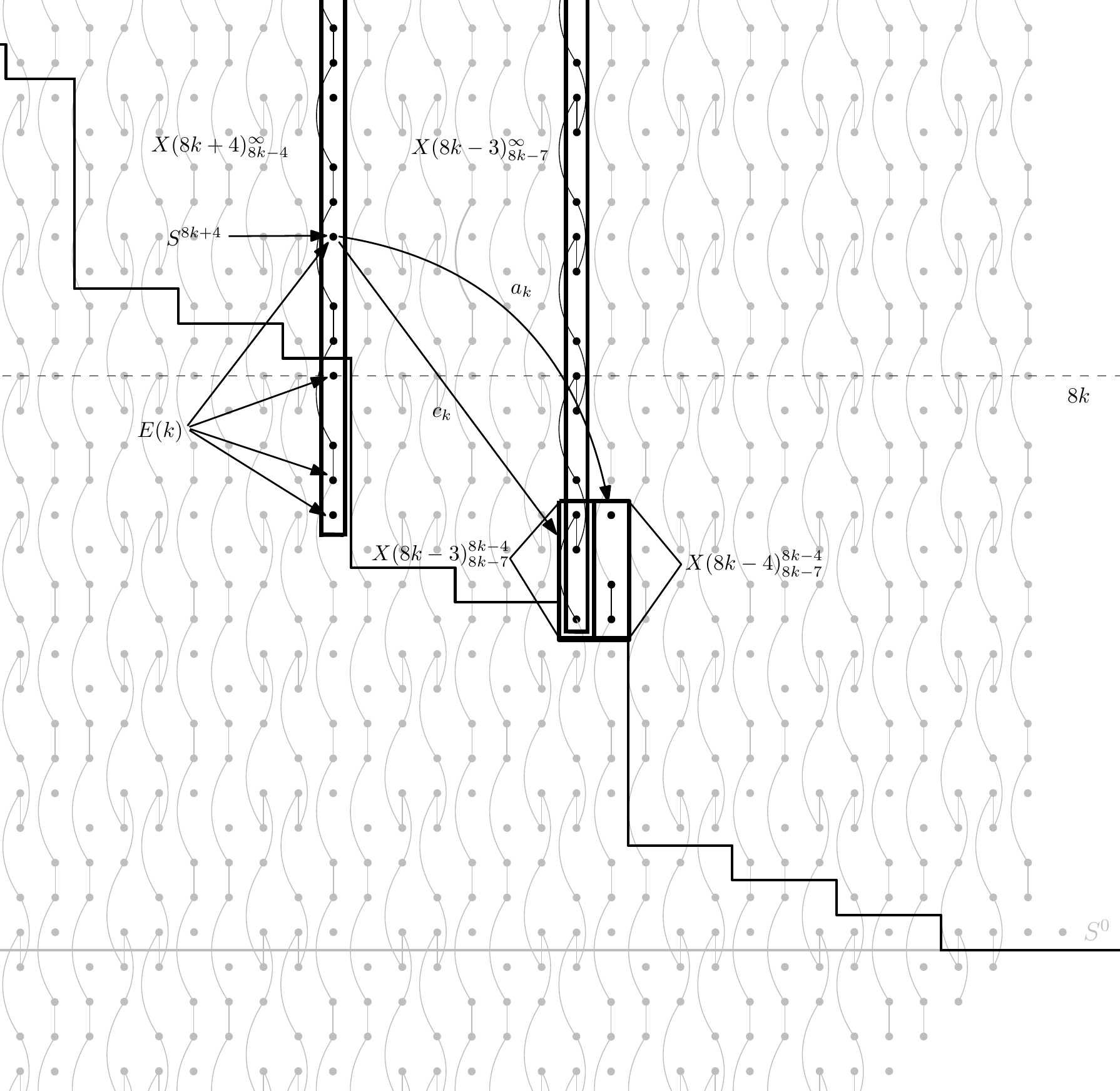}}
\end{center}
\begin{center}
\caption{Step 1.1 picture.}
\hfill
\label{fig:Step1point1Pic}
\end{center}
\end{figure}

The proof of Proposition~\ref{proposition: construct a} is an extensive and careful study of the cell structures of the columns between $8k+4$ and $8k-3$ and in dimensions between $8k+4$ and $8k-7$. It involves the computation of stable stems $\pi_s$ in the range $s \leq 11$. We define $a_{k}$ as the composition
$$
\xymatrix{
 S^{8k+4} \ar[r]^-{c_{k}} & X(8k-3)^{8k-4}_{8k-7} \ar@^{->}[r] & X(8k-4)^{8k-4}_{8k-7}.
 }
$$

\item \textbf{\underline{Step 1.2}}: Using Proposition~\ref{proposition: construct a} and the homotopy extension property, which is stated as Lemma~\ref{homotopy extension} in Subsection \ref{subsec: hep}, we show the existence of two maps $u_k$ and $v_k$ in the following Proposition~\ref{proposition: construction of u,v}.
 
\begin{prop}\label{proposition: construction of u,v} For every $k\geq 0$, there exist maps $u_{k}, \ v_{k}$ that fit into the following commutative diagram:
\begin{equation} \label{diagram: construction of u,v}
\xymatrix{
 E(k) \ar@{^{(}->}[r] \ar@{->>}[d]& X(8k+4)_{8k-4}^\infty \ar@^{->}[dr] \ar@{->>}[d]& \\ 
S^{8k+4}\ar@{^{(}->}[r]^{u_{k}}
\ar[drr]_{c_{k}} & G(k) \ar[r]^-{v_{k}} & X(8k-3)_{8k-7}^\infty \\
& & X(8k-3)^{8k-4}_{8k-7}\ar@{^{(}->}[u]
}
\end{equation}

Moreover, the map $u_k$ induces an isomorphism on $H_{8k+4}(-;\mathbb{F}_{2})$. In other words, $(S^{8k+4}, u_{k})$ is an $\textup{H}\mathbb{F}_2$-subcomplex of $G(k)$.
\end{prop}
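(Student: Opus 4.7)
The plan is to exploit the cofibration $F(k) \hookrightarrow X(8k+4)_{8k-4}^\infty \twoheadrightarrow G(k)$ together with the factorization supplied by Proposition~\ref{proposition: construct a}. I will first construct $u_k$ as a map of cofibers, and then extend the natural map $\iota : X(8k+4)_{8k-4}^\infty \to X(8k-3)_{8k-7}^\infty$ to $G(k)$ via the homotopy extension property, using a preferred null-homotopy on $F(k)$ so that the outer diagram commutes.

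Since $F(k)$ is the $8k$-skeleton of $E(k)$, we have $E(k)/F(k) = S^{8k+4}$, so the inclusion $E(k) \hookrightarrow X(8k+4)_{8k-4}^\infty$ descends on cofibers to
\[
u_k : S^{8k+4} \longrightarrow G(k),
\]
making the left part of diagram~(\ref{diagram: construction of u,v}) strictly commute. For the homology claim, Lemma~\ref{pbpfhf2} applied to Proposition~\ref{prop ek} shows that $F(k) \hookrightarrow E(k) \hookrightarrow X(8k+4)_{8k-4}^\infty$ are successive $\textup{H}\mathbb{F}_2$-subcomplexes; the associated mod-$2$ homology exact sequences then identify $H_{8k+4}(S^{8k+4};\mathbb{F}_2)$, $H_{8k+4}(E(k);\mathbb{F}_2)$, and $H_{8k+4}(G(k);\mathbb{F}_2)$ all with the same $\mathbb{F}_2$, carried by the top cell of $E(k)$, and $(u_k)_*$ realizes the identity on this copy.

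To construct $v_k$, observe that by Proposition~\ref{proposition: construct a}, $\iota|_{E(k)}$ factors as
\[
E(k) \twoheadrightarrow S^{8k+4} \xrightarrow{c_k} X(8k-3)_{8k-7}^{8k-4} \hookrightarrow X(8k-3)_{8k-7}^\infty,
\]
and precomposing with $F(k) \hookrightarrow E(k)$ produces a canonical null-homotopy $H_0$ of $\iota|_{F(k)}$, since $F(k)$ is collapsed to the basepoint of $S^{8k+4}$. Applying Lemma~\ref{homotopy extension} to the cofibration $F(k) \hookrightarrow X(8k+4)_{8k-4}^\infty$, I extend $H_0$ to a homotopy from $\iota$ to a map $\iota'$ whose restriction to $F(k)$ is strictly constant; then $\iota'$ factors through $G(k)$, producing the desired $v_k$, and the right-hand trapezoid of diagram~(\ref{diagram: construction of u,v}) commutes up to this extending homotopy.

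The main technical point is the outer commutativity $v_k \circ u_k \simeq c_k$ (after the inclusion $X(8k-3)_{8k-7}^{8k-4} \hookrightarrow X(8k-3)_{8k-7}^\infty$). Both composites, after precomposing with $E(k) \twoheadrightarrow S^{8k+4}$, agree with $\iota|_{E(k)}$, so a priori their difference lies in the image of the connecting map $[\Sigma F(k), X(8k-3)_{8k-7}^\infty] \to [S^{8k+4}, X(8k-3)_{8k-7}^\infty]$ arising from the cofiber sequence $F(k) \to E(k) \to S^{8k+4}$, which could contribute an indeterminate term. This is precisely the role of the homotopy extension lemma: extending the \emph{specific} null-homotopy $H_0$ coming from Proposition~\ref{proposition: construct a}, rather than an arbitrary one, ensures that the factorization of $\iota|_{E(k)}$ produced by $v_k \circ u_k$ coincides with $c_k$ on $S^{8k+4}$, closing the outer triangle without indeterminacy. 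The remaining verifications are formal consequences of the cofibration structure.
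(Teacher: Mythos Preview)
Your overall strategy is the paper's: feed the diagram from Proposition~\ref{proposition: construct a} into Lemma~\ref{homotopy extension} with $A=F(k)$, $B=E(k)$, $C=X(8k+4)_{8k-4}^\infty$, $G=X(8k-3)_{8k-7}^\infty$, $F=X(8k-3)_{8k-7}^{8k-4}$; the lemma then outputs $u_k$ and $v_k$ together with all the commutativity, and the $H\mathbb{F}_2$-subcomplex claim is immediate. But your write-up mis-describes what Lemma~\ref{homotopy extension} actually does, and this leads to a real gap in your final paragraph.

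Lemma~\ref{homotopy extension} is a statement in the stable homotopy category: it produces the dashed maps $B/A\to C/A$ and $C/A\to G$ in one stroke, and its proof works by first choosing any $v_k$ (map $\mathbf{12}$) and then \emph{correcting} it by a term coming from $[\Sigma A,G]$ (map $\mathbf{14}$) to force the outer triangle to close. It does not ``extend a null-homotopy $H_0$ to a homotopy $H$ on $X(8k+4)_{8k-4}^\infty$''; that is the classical point-set HEP, which is a different statement. Your last paragraph asserts that choosing the specific $H_0$ coming from Proposition~\ref{proposition: construct a} kills the indeterminacy, but extending $H_0$ only from $F(k)$ gives you no control over $H|_{E(k)}$. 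Concretely, after your construction the composite $v_k\circ u_k$ and $\mathrm{incl}\circ c_k$ are both factorizations through $q_1:E(k)\to S^{8k+4}$ of maps that are merely homotopic on $E(k)$; the homotopy restricted to $F(k)$ is $H_0 * \overline{H_0}$, a loop which is contractible but not constant, so you still owe a homotopy-of-homotopies argument (or the correction term from the lemma's proof) to conclude.

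The cleanest fix, if you want to stay point-set, is to extend the full homotopy $K:\iota|_{E(k)}\simeq(\mathrm{incl}\circ c_k)\circ q_1$ from $E(k)$ (not just from $F(k)$) using the HEP for $E(k)\hookrightarrow X(8k+4)_{8k-4}^\infty$; then $\iota'|_{E(k)}=(\mathrm{incl}\circ c_k)\circ q_1$ on the nose and the outer triangle closes strictly. Alternatively, just invoke Lemma~\ref{homotopy extension} as a black box with the inputs above, which is what the paper does.
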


We define the map $g_k$ as the following composite
\begin{displaymath}
	\xymatrix{
	S^{8k+4} \ar@{^{(}->}[r]^{u_{k}} & G(k) \ar@{->>}[r] & G(k)_{8k+1}^\infty = X(8k+4)_{8k+1}^\infty
	}
\end{displaymath}
Note here we use the octahedron axiom to identify $G(k)_{8k+1}^\infty$ with ${X(8k+4)_{8k+1}^\infty}$. It then follows from Proposition~\ref{proposition: construction of u,v} that the map $g_k$ induces an isomorphism on $H_{8k+4}(-;\mathbb{F}_{2})$, which establishes  property (ii) in Theorem~\ref{thm: inductive fk}.\\

\item \textbf{\underline{Step 1.3}}:
We define 
$$b_0: X(-4)^{-4}_{-7} \longrightarrow S^0$$
to be the zero map. Note that the 3 cells of $X(-4)^{-4}_{-7}$ are in dimensions $-4, -6, -7$, so this is the only choice. Since $\pi_4 = 0$, the following diagram (iii) in Theorem~\ref{thm: inductive fk} for $k=0$ commutes regardless of the construction of $f_0$. 
\begin{displaymath}
    \xymatrix{
        S^{4} \ar[d]^{a_{0}} \ar@{^{(}->}[r]^-{g_{0}} & X(4)_{1}^{\infty} \ar[d]^{f_{0}}\\
    X(-4)^{-4}_{-7}\ar[r]^-{b_{0}} & S^{0} 
    }
\end{displaymath}
For the existence of the map $f_0$, if suffices to show the following composite is zero.
\begin{displaymath}
    \xymatrix{
        X(4)^{0} \ar@{^{(}->}[r] & X(4) \ar[r] & S^{0} 
    }
\end{displaymath}
This is a special case. This gives the following commutative diagram (i) in Theorem~\ref{thm: inductive fk} for $k=0$.
\begin{displaymath}
    \xymatrix{
    X(4)^{0} \ar@{^{(}->}[d] \ar[rd]^{=0} & \\
        X(4) \ar[r] \ar@{->>}[d] & S^{0}  \\
        X(4)_{1}^{\infty} \ar@{-->}[ur]_{f_{0}} & 
        }
\end{displaymath}
This gives the starting case $k=0$ of our inductive argument.\\

\item \textbf{\underline{Step 1.4}}:
For $k\geq 1$, we assume the maps $f_{k-1}$ and $b_{k-1}$ exist, the two diagrams in (i) and (iii) of Theorem~\ref{thm: inductive fk} commute for the 4 maps $(f_{k-1}, \ g_{k-1}, \ a_{k-1}, b_{k-1})$, and $f_{k-1}$ satisfies property (iv) in Theorem~\ref{thm: inductive fk}.

We define the map $b_k$ to be the composite 
\begin{displaymath}
    \xymatrix{
X(8k-4)^{8k-4}_{8k-7} \ar@{^{(}->}[r] & X(8k-4)_{8k-7}^\infty \ar[r]^-{f_{k-1}} & S^{0}.
}
\end{displaymath}
Using the commutative diagram (\ref{gf factors throught W}) in (iii) of Theorem~\ref{thm: inductive fk} for the case $k-1$, we have the following proposition:

\begin{prop}\label{proposition: construct f} 
The following composite is zero. 
\begin{equation}\label{bottom in G killed}
\xymatrix{
S^{8k-2} \ar@{^{(}->}[r] & G(k) \ar[r]^-{v_{k}} & X(8k-3)_{8k-7}^\infty \ar@^{->}[r]& X(8k-4)_{8k-7}^\infty \ar[r]^-{f_{k-1}} & S^{0} 
}
\end{equation} 
\end{prop}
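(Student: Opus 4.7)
The plan is to reduce the vanishing to the inductive hypothesis~(\ref{gf factors throught W}) at level $k-1$, namely $f_{k-1}\circ g_{k-1}=b_{k-1}\circ a_{k-1}$, via careful cell-tracking.  By the defining commutativity of $v_k$ in diagram~(\ref{diagram: construction of u,v}), $v_k\circ q$ equals the subquotient map $X(8k+4)_{8k-4}^\infty\rightharpoonup X(8k-3)_{8k-7}^\infty$, where $q\colon X(8k+4)_{8k-4}^\infty\twoheadrightarrow G(k)$ is the quotient by $F(k)$.  The bottom cell $S^{8k-2}\hookrightarrow G(k)$ is the $q$-image of the $(8k-2)$-cell of $X(8k+4)_{8k-4}^\infty$, so the composite in the statement can be computed by tracing this cell through $X(8k+4)_{8k-4}^\infty\rightharpoonup X(8k-4)_{8k-7}^\infty$ and then applying $f_{k-1}$.

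Applying the function $h$ of Section~\ref{subsec:Maps between subquoteints} iteratively shows that the $(8k-2)$-cell of $X(8k+4)$ slides down at the column $X(8k+1)$ (where this dimension is empty) and eventually lands on the $(8k-4)$-cell of $X(8k-4)$.  Crucially, at the $\mathbb{F}_2$-homology level the composition is already zero, since it factors through $H_{8k-2}X(8k+1)=0$ by Proposition~\ref{xmhomology}.  Cellular approximation therefore forces the composite $S^{8k-2}\to X(8k-4)_{8k-7}^\infty$ to factor through the $(8k-3)$-skeleton $X(8k-4)_{8k-7}^{8k-3}$.  Using the $Sq^1$, $Sq^2$ and $\eta^2$-attaching data (Corollary~\ref{2 and eta attaching maps within columns} and Proposition~\ref{eta square attaching maps within columns}) together with the $\eta^2$-attachment in $G(k)^{8k+1}=\Sigma^{8k-2}C\eta^2$, I expect to identify this map (modulo $f_{k-1}$-null corrections) as the composite $S^{8k-2}\xrightarrow{\eta^2}S^{8k-4}\xrightarrow{\bar g_{k-1}}X(8k-4)_{8k-7}^{8k-3}$, where $\bar g_{k-1}$ is a skeletal truncation of $g_{k-1}$.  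Invoking the inductive diagram~(\ref{gf factors throught W}) at level $k-1$, the composite in the statement then becomes $\eta^2\cdot(b_{k-1}\circ a_{k-1})\in\pi_{8k-2}S^0$, which I verify vanishes by analyzing the specific form of $a_{k-1}$ in its target $X(8k-12)_{8k-15}^{8k-12}$ coming from Proposition~\ref{proposition: construct a}.

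The main obstacle is the identification that $S^{8k-2}\to X(8k-4)_{8k-7}^{8k-3}$ factors as $\bar g_{k-1}\circ\eta^2$ modulo terms that become null after $f_{k-1}$.  This requires careful cell-chasing through the intermediate columns $X(8k+3),\ldots,X(8k-4)$, compatible with the quotient by $F(k)$, and control over the correction terms landing on the $(8k-7)$-, $(8k-6)$- and $(8k-3)$-cells of $X(8k-4)_{8k-7}^{8k-3}$ (the first two are in the image of $g_{k-1}$'s bottom portion, while the last is controlled by Lemma~\ref{eta attaching map between columns}).  The final verification that $\eta^2\cdot(b_{k-1}\circ a_{k-1})=0$ may also require exploiting the specific attaching-map data inside $X(8k-12)_{8k-15}^{8k-12}$ that governs $a_{k-1}$, together with the low-dimensional structure of the stable stems.
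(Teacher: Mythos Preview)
Your overall strategy matches the paper's: factor the map through a small skeleton of $X(8k-4)$, reduce to a map $S^{8k-2}\to S^{8k-4}$, invoke the inductive hypothesis~(\ref{gf factors throught W}) at level $k-1$ to replace $f_{k-1}\circ g_{k-1}$ by $b_{k-1}\circ a_{k-1}$, and then kill the composite using the structure of $X(8k-12)_{8k-15}^{8k-12}$.  However, two points deserve comment.

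First, there is a genuine gap at the skeleton-factoring step.  Your cellular/homology argument only gets the composite $S^{8k-2}\to X(8k-4)_{8k-7}^\infty$ into the $(8k-3)$-skeleton, and you acknowledge that handling the $(8k-3)$-cell is ``the main obstacle.''  The paper resolves this with a separate lemma (Lemma~\ref{lem:step4vkfactorthrough}): it shows that $v_k$ restricted to $G(k)^{8k+1}=\Sigma^{8k-2}C\eta^2$ already factors through $X(8k-3)_{8k-7}^{8k-4}$ via a map $w_k$.  The proof of that lemma is a nontrivial Atiyah--Hirzebruch analysis in $X(8k-3)_{8k-3}^{8k+1}$ using the $\eta$, $\eta^2$ and $2$-attaching maps; it is not a consequence of cellular approximation.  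Without this, your proposed identification modulo ``$f_{k-1}$-null corrections'' on the $(8k-3)$-cell is not justified.

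Second, once one has landed in $X(8k-4)_{8k-7}^{8k-4}$, the paper's route is cleaner than the $\eta^2\circ\bar g_{k-1}$ identification you propose.  By Lemma~\ref{the 3 cell complex wk} this complex splits as $S^{8k-4}\vee\Sigma^{8k-7}C2$, and since $\pi_5 C2=0$ the map from $S^{8k-2}$ automatically factors through the summand $S^{8k-4}$; no explicit identification of the class in $\pi_2$ is needed, and $g_{k-1}$ never enters.  The endgame is then exactly what you suggest: the composite $S^{8k-2}\to S^{8k-4}\xrightarrow{a_{k-1}}X(8k-12)_{8k-15}^{8k-12}$ lies in $(\pi_8\oplus\pi_{11}C2)\cdot\pi_2$, which vanishes because $\pi_8\cdot\pi_2=0$ and $\pi_{12}=\pi_{13}=0$.
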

Note that the first map is the inclusion of the bottom cell of $G(k)$, and that the map $v_k$ is established in Step~1.2 before the induction.

As a result, there exist maps 
$$
f_{k}:X(8k+4)_{8k+1}^\infty = G(k)_{8k+1}^\infty = G(k)/S^{8k-2}  \longrightarrow S^{0}
$$ 
that fit into the following commutative diagram:
\begin{equation}\label{diagram: construction of f}
\xymatrix{
 S^{8k-2} \ar@{^{(}->}[d] \ar@/^1pc/[drrrr]^{=0} & & & \\
 G(k) \ar[r]^-{v_{k}} \ar@{->>}[d]& X(8k-3)_{8k-7}^\infty \ar@^{->}[r] & X(8k-4)_{8k-7}^\infty\ar[rr]_-{f_{k-1}} & & S^{0}\\
  G(k)/S^{8k-2} \ar@{=}[d] & & & \\ 
X(8k+4)_{8k+1}^\infty \ar@{-->}@/_1pc/[uurrrr]_-{f_{k}} & & & }
\end{equation}
Note that there are many choices of $f_k$ that makes the above diagram (\ref{diagram: construction of f}) commute.\\

\item \textbf{\underline{Step 1.5}}: In this step, we prove the following proposition.

\begin{prop}\label{f g a b satisfy requirements}
For \textbf{any choice of $f_{k}$ in Step~1.4}, the two diagrams (\ref{f is quotient}) and (\ref{gf factors throught W}) in (i) and (iii) of Theorem~\ref{thm: inductive fk} commute for the 4 maps $(f_{k}, \ g_{k}, \ a_{k}, \ b_{k})$.
\end{prop}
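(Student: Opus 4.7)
The plan is a formal diagram chase that combines the defining properties of the four maps: $f_k$ from diagram~(\ref{diagram: construction of f}), $g_k$ from the quotient of $u_k$ in Step~1.2, and $a_k$, $b_k$ from the constructions in Steps~1.1 and~1.4. No new homotopical input is required; the substance of the argument lies in aligning the various quotient maps, transition maps $i(m,n)$, and subcomplex inclusions via the compatibility results of Section~\ref{subsec:Maps between subquoteints}.

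I would verify diagram~(\ref{gf factors throught W}) first. Since $g_k$ is defined as $S^{8k+4}\xrightarrow{u_k} G(k)\twoheadrightarrow G(k)/S^{8k-2}$, the defining identity of $f_k$ from diagram~(\ref{diagram: construction of f}) gives
\begin{equation*}
f_k\circ g_k \;=\; f_{k-1}\circ\bigl(X(8k-3)_{8k-7}^\infty\to X(8k-4)_{8k-7}^\infty\bigr)\circ v_k\circ u_k.
\end{equation*}
The lower-left triangle of diagram~(\ref{diagram: construction of u,v}) identifies $v_k\circ u_k$ with the composite $S^{8k+4}\xrightarrow{c_k}X(8k-3)^{8k-4}_{8k-7}\hookrightarrow X(8k-3)_{8k-7}^\infty$. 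Substituting this in and using the naturality square that relates the transition map between the finite subquotients $X(8k-3)^{8k-4}_{8k-7}\to X(8k-4)^{8k-4}_{8k-7}$ with the transition map between the infinite ones, the expression collapses to $b_k\circ a_k$.

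For diagram~(\ref{f is quotient}), I factor the canonical quotient as
\begin{equation*}
X(8k+4)\twoheadrightarrow X(8k+4)_{8k-4}^\infty\twoheadrightarrow G(k)\twoheadrightarrow G(k)/S^{8k-2}=X(8k+4)_{8k+1}^\infty,
\end{equation*}
compose with $f_k$, and apply the defining identity for $f_k$ once more. The upper-right triangle of diagram~(\ref{diagram: construction of u,v}) replaces $v_k$ composed with the quotient $X(8k+4)_{8k-4}^\infty\twoheadrightarrow G(k)$ by the transition $X(8k+4)_{8k-4}^\infty\to X(8k-3)_{8k-7}^\infty$. Using compatibility of transitions with quotients, the full composite collapses to $f_{k-1}\circ\bigl(X(8k-4)\twoheadrightarrow X(8k-4)_{8k-7}^\infty\bigr)\circ i(8k+4,8k-4)$, and the inductive hypothesis (property (i) at level $k-1$) rewrites this as $c(8k-4)\circ i(8k+4,8k-4)=c(8k+4)$, as required.

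The only obstacle is bookkeeping: every time a transition map, a quotient, or a subcomplex inclusion is exchanged for another, one must invoke the appropriate compatibility statement from Section~\ref{subsec:Maps between subquoteints}. In particular, the proof genuinely uses the fact that the construction of $f_k$ is permissive enough that \emph{any} choice satisfying diagram~(\ref{diagram: construction of f}) will do, because the two diagrams being checked are themselves consequences of that defining identity together with the established commutativity of diagram~(\ref{diagram: construction of u,v}). There is no new homotopy-theoretic content beyond what has already been built in Steps~1.1--1.4.
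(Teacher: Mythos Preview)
Your proposal is correct and follows essentially the same route as the paper. Both arguments verify diagram~(\ref{gf factors throught W}) by combining the defining square~(\ref{diagram: construction of f}) for $f_k$ with the lower half of diagram~(\ref{diagram: construction of u,v}), and verify diagram~(\ref{f is quotient}) by combining~(\ref{diagram: construction of f}) with the upper-right portion of~(\ref{diagram: construction of u,v}) together with the inductive hypothesis at level $k-1$; the paper simply draws the resulting composite diagrams whereas you spell out the chase in prose.
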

The proof is a straightforward cell diagram chasing argument.\\

\item \textbf{\underline{Step 1.6}}: In this step, we prove the following proposition.

\begin{prop}\label{prop induct beta} 
Let $\phi_{m}:S^{8k+1}\rightarrow S^{0}$ be the restriction of $f_{m}$ to the bottom cell of $X(8k+4)_{8k+1}^{\infty}$. Then there exists \textbf{one choice of $f_{k}$ in Step~1.4} such that the following property is satisfied: 
\begin{equation}\label{equation: inductive on beta}
\phi_{k}-\phi_{k-2} \cdot  \chi_{k}\in \langle \phi_{k-1}, 2, \tau_{k}\rangle 
\end{equation}
where $\tau_{k}\in \{0,8\sigma\}$ and $\chi_{k}\in \pi_{16}(S^{0})$. Note that by Lemma~\ref{eta attaching map between columns} that $\phi_0 = \eta$ and we set $\phi_{-1}=0$. 
\end{prop}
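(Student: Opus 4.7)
The plan is to trace through the construction of $f_k$ from Step~1.4 (diagram~(\ref{diagram: construction of f})) and to exploit the freedom in the lift to absorb unwanted terms into the indeterminacy of a three-fold Toda bracket. First I would unwind the definition of $\phi_k$: by construction $f_k$ is a lift of $f_{k-1} \circ (\text{collapse}) \circ v_k$ across the quotient $G(k) \twoheadrightarrow G(k)/S^{8k-2} = X(8k+4)^{\infty}_{8k+1}$, and the bottom cell $S^{8k+1}$ of this quotient is precisely the top cell of the $\textup{H}\mathbb{F}_2$-subcomplex $G(k)^{8k+1} = \Sigma^{8k-2}C\eta^2 \subset G(k)$ modulo its bottom cell. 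Therefore $\phi_k$ is the element of $\pi_{8k+1}$ produced by choosing a null-homotopy of the composite $S^{8k-2} \hookrightarrow G(k) \xrightarrow{v_k} X(8k-4)^{\infty}_{8k-7} \xrightarrow{f_{k-1}} S^{0}$ and extending this null-homotopy across the $\eta^2$-cone. Different choices of $f_k$ differ by elements of $\pi_{8k-1}$, so after restriction to $S^{8k+1}$ the indeterminacy in $\phi_k$ is the image of $\eta^2 \colon \pi_{8k-1} \to \pi_{8k+1}$.

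Next, I would identify the restriction of this composite to $\Sigma^{8k-2}C\eta^2$ as a Toda bracket. Using Corollary~\ref{2 and eta attaching maps within columns} and Proposition~\ref{eta square attaching maps within columns} to pin down the relevant $2$- and $\eta$-attaching maps in the low cells of $X(8k-4)^{\infty}_{8k-7}$, together with the inductive identification of $f_{k-1}$ on its bottom cell as $\phi_{k-1}$ (property (iv) for $k-1$), a direct cell-diagram chase presents the bracketed contribution to $\phi_k$ as a three-cell realization of $\langle \phi_{k-1}, 2, \tau_k\rangle$. Here $\tau_k \in \pi_7$ records the Toda data connecting the cells in dimensions $8k-7$, $8k-6$, and $8k-2$ of the target, and the Steenrod-algebra constraints from Proposition~\ref{xmhomologysq} together with the fact that $\pi_7 = \mathbb{Z}/240$ and only $0, 8\sigma$ are $2$-torsion of the appropriate Hopf-invariant type force $\tau_k \in \{0, 8\sigma\}$.

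The residual term $\phi_{k-2}\cdot\chi_k$ comes from iterating the construction twice: after subtracting the bracketed contribution, the remaining part of the bottom-cell composite factors, via two applications of the $\Sigma^{4}$-periodicity of Proposition~\ref{prop:periodicityX(m)}, through the analogous map $v_{k-2}$ at the $(k-2)$ stage. The induction hypothesis for $\phi_{k-2}$ then extracts a factor of $\phi_{k-2}$ and leaves a universal element $\chi_k \in \pi_{16}$ encoding the two-column periodicity shift. The freedom in $f_k$ is exactly large enough to accommodate the indeterminacy of $\langle \phi_{k-1}, 2, \tau_k\rangle$, so a suitable choice yields~(\ref{equation: inductive on beta}). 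I expect the main obstacle to be the compatibility check that this $\chi_k$ is well-defined modulo the bracket indeterminacy and is independent of the null-homotopy choices made in the intermediate inductive steps; this is where the combinatorial control provided by the maps $i(m,n,l,j)$ constructed in Section~\ref{subsec:Maps between subquoteints} is essential, reducing the verification to a finite cellular diagram chase.
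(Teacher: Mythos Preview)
Your overall architecture is right: the restriction $\phi_k$ is obtained by tracing the composite $G(k)^{8k+1}\to X(8k-4)^{\infty}_{8k-7}\xrightarrow{f_{k-1}}S^0$ (via the map $w_k$ of Lemma~\ref{lem:step4vkfactorthrough}), and the splitting $X(8k-4)^{8k-4}_{8k-7}\simeq \Sigma^{8k-7}C2\vee S^{8k-4}$ from Lemma~\ref{the 3 cell complex wk} is what separates the bracketed contribution from the residual one. The Toda bracket $\langle\phi_{k-1},2,\tau_k\rangle$ arises exactly as you say, from the $\Sigma^{8k-7}C2$ summand; and the constraint $\tau_k\in\{0,8\sigma\}$ is simply that $\tau_k\in\pi_7$ must satisfy $2\tau_k=0$ for the bracket to be defined---no Steenrod-algebra input is needed beyond what is already encoded in the $2$-attaching map.

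There is, however, a genuine gap in how you propose to extract the $\phi_{k-2}\cdot\chi_k$ term. Proposition~\ref{prop:periodicityX(m)} only provides equivalences between $7$-cell subquotients of different columns; it gives no relation between the maps $f_{k-1}$ and $f_{k-2}$, and hence no way to ``iterate the construction twice'' as you suggest. In the paper the passage to $\phi_{k-2}$ goes instead through property~(iii) of Theorem~\ref{thm: inductive fk} for the case $k-1$: the composite through the $S^{8k-4}$ summand is $f_{k-1}\circ g_{k-1}$, which by the commutative square~(\ref{gf factors throught W}) equals $b_{k-1}\circ a_{k-1}$, and $b_{k-1}$ by definition factors through $f_{k-2}$. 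After a further splitting of $X(8k-12)^{8k-12}_{8k-15}$ and a nontrivial vanishing check (Lemma~\ref{prop55 lemma}, using a Massey product computation to show a certain Toda bracket in $\pi_{15}$ is zero), one factors through $S^{8k-15}$ and obtains $\chi_k\in\pi_{16}$. You should replace the periodicity argument with this use of the inductive diagram. Finally, the actual construction of the particular $f_k$ satisfying~(\ref{equation: inductive on beta}) is not just a matter of absorbing indeterminacy: one applies the homotopy extension property (Lemma~\ref{homotopy extension}) to the pair $G(k)^{8k+1}\hookrightarrow G(k)$ to produce $f_k$ compatibly with the identified map $\xi_k+\phi_{k-2}\cdot\chi_k$ on $S^{8k+1}$.
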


This proves that this choice of $f_k$ satisfies the relation in (iv) of Theorem~\ref{thm: inductive fk} and therefore completes the induction.

\end{enumerate}

\begin{rem} \label{rem intuition} \rm
The critical part of Theorem~\ref{thm: inductive fk} is the existence of the map $f_k$. We want to prove it inductively. Namely, we assume that $f_{k-1}$ exists and want to show that $f_k$ exists. This induction would follow easily if the following map were zero:
\begin{equation}\label{equ intui}
\xymatrix{
X(8k+4)^{8k}_{8k-4} \ar@^{->}[r] & X(8k-4)_{8k-7}^\infty.
}\end{equation} 
However, this is not true. Intuitively, the $(8k-2)$-cell in $X(8k+4)^{8k}_{8k-4}$ maps nontrivially to the $(8k-4)$-cell in $X(8k-4)_{8k-7}^\infty$ by $\eta^2$. More precisely, one can show that the above map (\ref{equ intui}) factors through $S^{8k-2}$ as an $\textup{H}\mathbb{F}_2$-quotient, and the latter map in the following composite
$$
\xymatrix{
X(8k+4)^{8k}_{8k-4} \ar@{->>}[r] & S^{8k-2} \ar[r] & X(8k-4)_{8k-7}^\infty.
}
$$
is detected by $\eta^2[8k-4]$ in the Atiyah--Hirzebruch spectral sequence of ${X(8k-4)_{8k-7}^\infty}$. Therefore, we have to show the composite
\begin{equation}\label{equ intui 2}
\xymatrix{
X(8k+4)^{8k}_{8k-4} \ar@{->>}[r] & S^{8k-2} \ar[r] & X(8k-4)_{8k-7}^\infty \ar[r]^-{f_{k-1}} & S^0.
}
\end{equation}
is zero. It turns out that we can show the composite of the latter two maps in $(\ref{equ intui 2})$ is zero. This follows from a technical condition that $f_{k-1}$ can be chosen to satisfy:
\begin{itemize}
\item $f_{k-1}|_{S^{8k-4}}$ factors through $X(8k-12)^{8k-12}_{8k-15}$.
\end{itemize}
Here note that $S^{8k-4}$ is an $\textup{H}\mathbb{F}_2$-subcomplex of $X(8k-4)_{8k-7}^\infty$. In fact, this is due to the composite 
$$\xymatrix{
S^{8k-2}\ar[r]^{\eta^{2}} & S^{8k-4}\ar[r] & X(8k-12)^{8k-12}_{8k-15}= S^{8k-12}\vee \Sigma^{8k-15}C2
}$$
corresponds to an element in the group $(\pi_{8} + \pi_{11}C2 )\cdot \eta^2 =0$. 

Now to complete the induction, we need to show that $f_{k}$ can be chosen to satisfy:
\begin{itemize}
\item $f_{k}|_{S^{8k+4}}$ factors through $X(8k-4)^{8k-4}_{8k-7}$.
\end{itemize}

Firstly, in $X(8k+4)_{8k-4}^\infty$, the $(8k+4)$-cell is only attached to the cells in dimensions $8k-4,\ 8k-3$ and $8k$, all of which map trivially to $X(8k-4)_{8k-7}^\infty$. As a result, we can choose $f_{k}$ such that the restriction $f_{k}|_{S^{8k+4}}$ factors through $X(8k-4)_{8k-7}^\infty$. 

Secondly, by some local arguments that involve attaching maps in $X(8k+4-m)$ for $m=0,\cdots,7$, we can show that $f_{k}$ can be chosen such that $f_{k}|_{S^{8k+4}}$ factors through $X(8k-4)^{8k-4}_{8k-7}$.

This allows us to complete the induction and to prove Theorem~\ref{thm: inductive fk}.  See Figure~\ref{fig:IntuitionForStep1} for an illustration of the discussion above.  

We'd like to comment that our actual argument is a little different from our discussion above. We actually analyze $X(8k-3)^{8k-4}_{8k-7}$ instead of $X(8k-4)^{8k-4}_{8k-7}$. This is used to deduce the inductive relation $(\ref{equation: inductive on beta})$, based on which we identify the first lock. 
\end{rem}

\begin{figure}
\begin{center}
\includegraphics[scale = 0.32,trim={1.8cm 5.5cm 0cm 5.2cm}, clip]{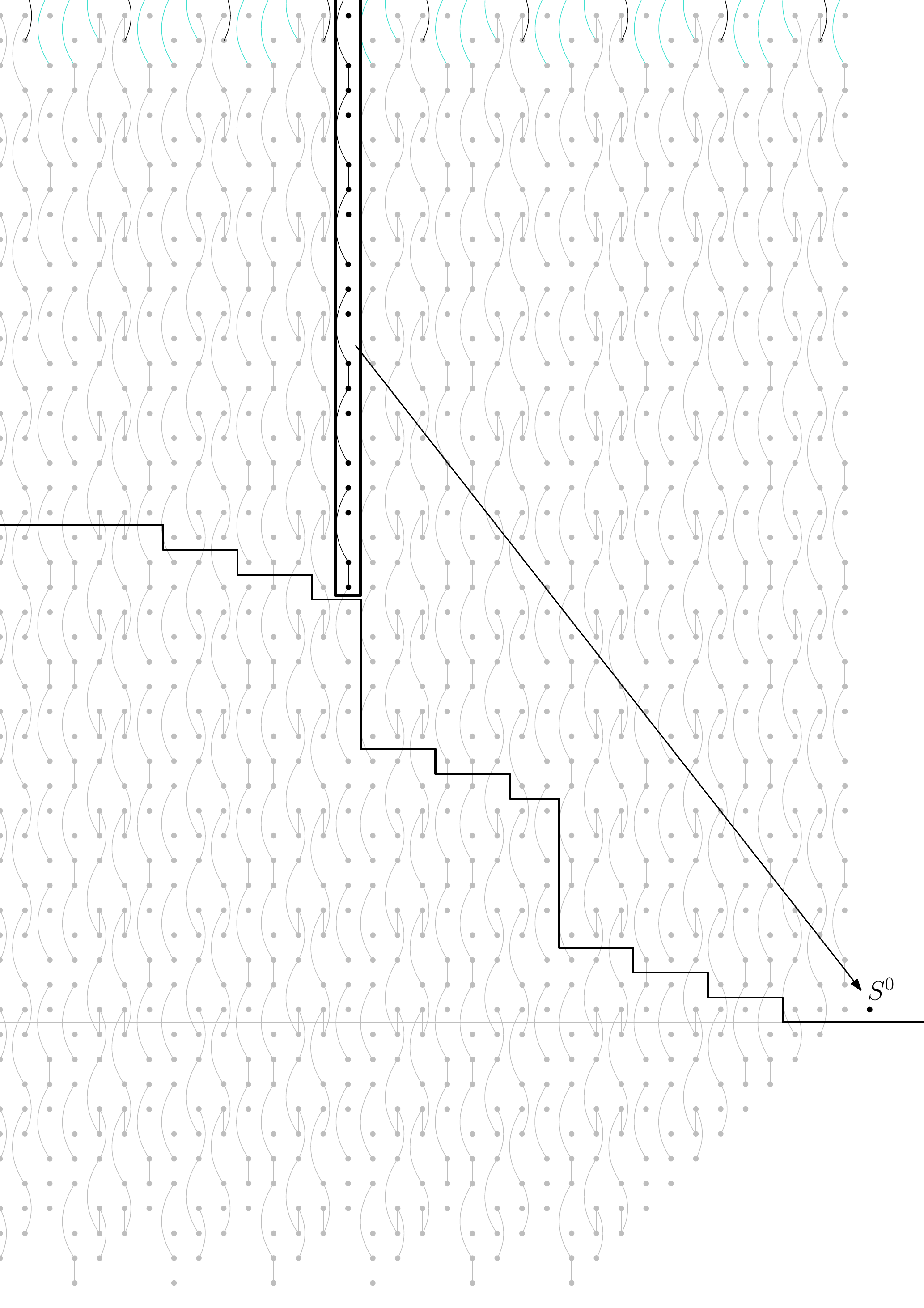}\hspace{0.1in}
\includegraphics[scale = 0.32,trim={1.8cm 5.5cm 0cm 5.2cm}, clip]{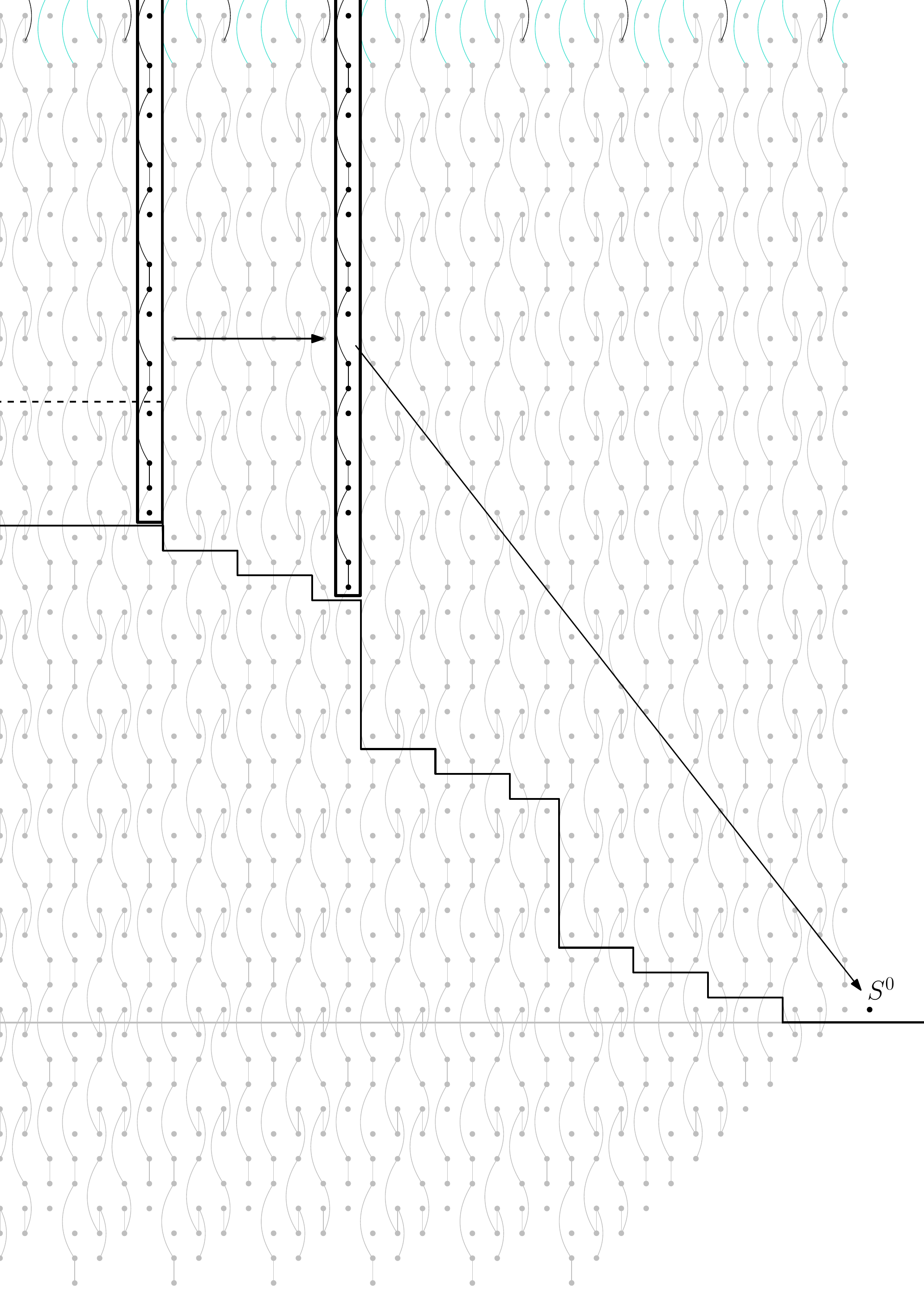} \vspace{0.2in} \\
\includegraphics[scale = 0.32,trim={1.8cm 5.5cm 0cm 5.2cm}, clip]{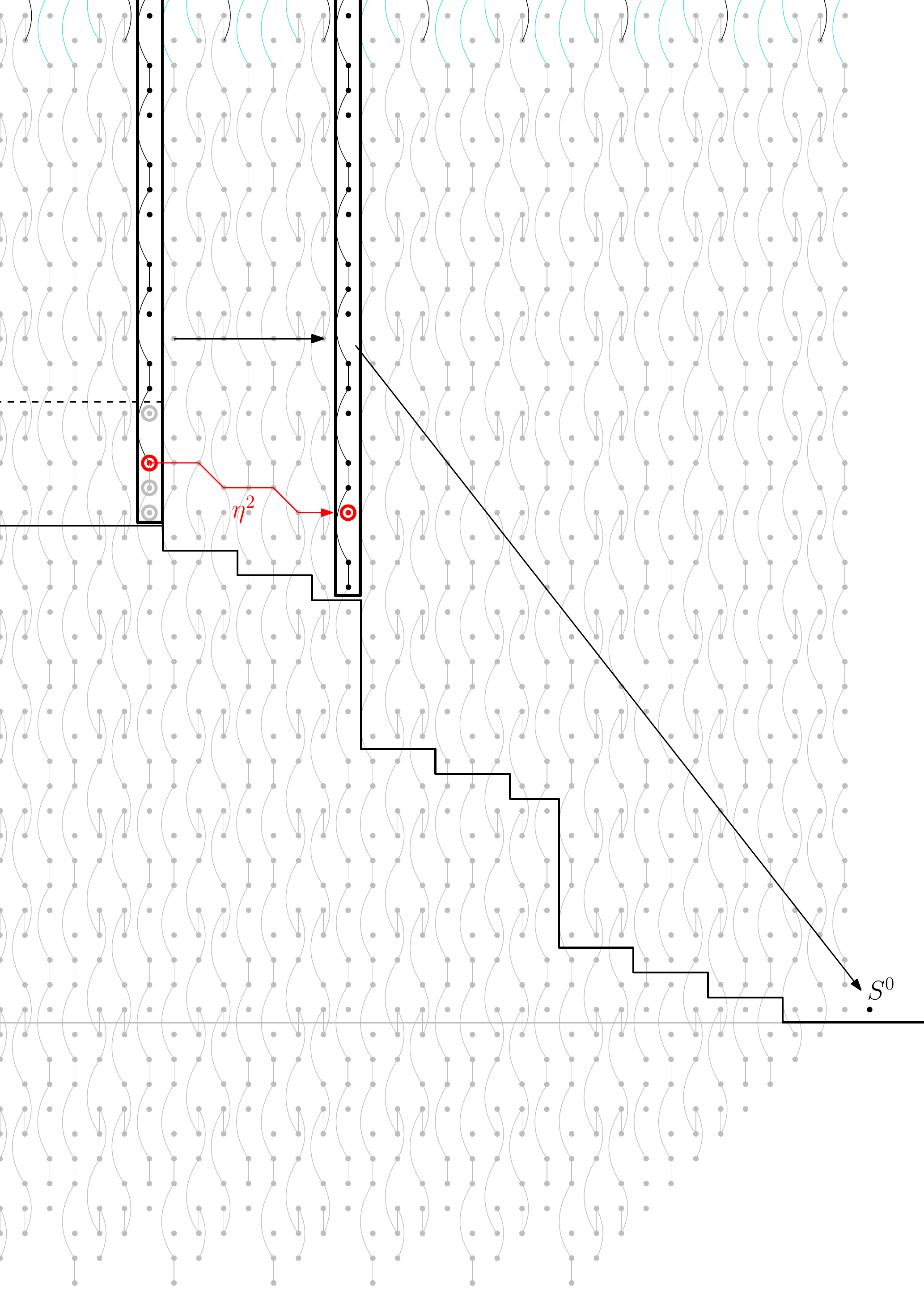}\hspace{0.1in}
\includegraphics[scale = 0.32,trim={1.8cm 5.5cm 0cm 5.2cm}, clip]{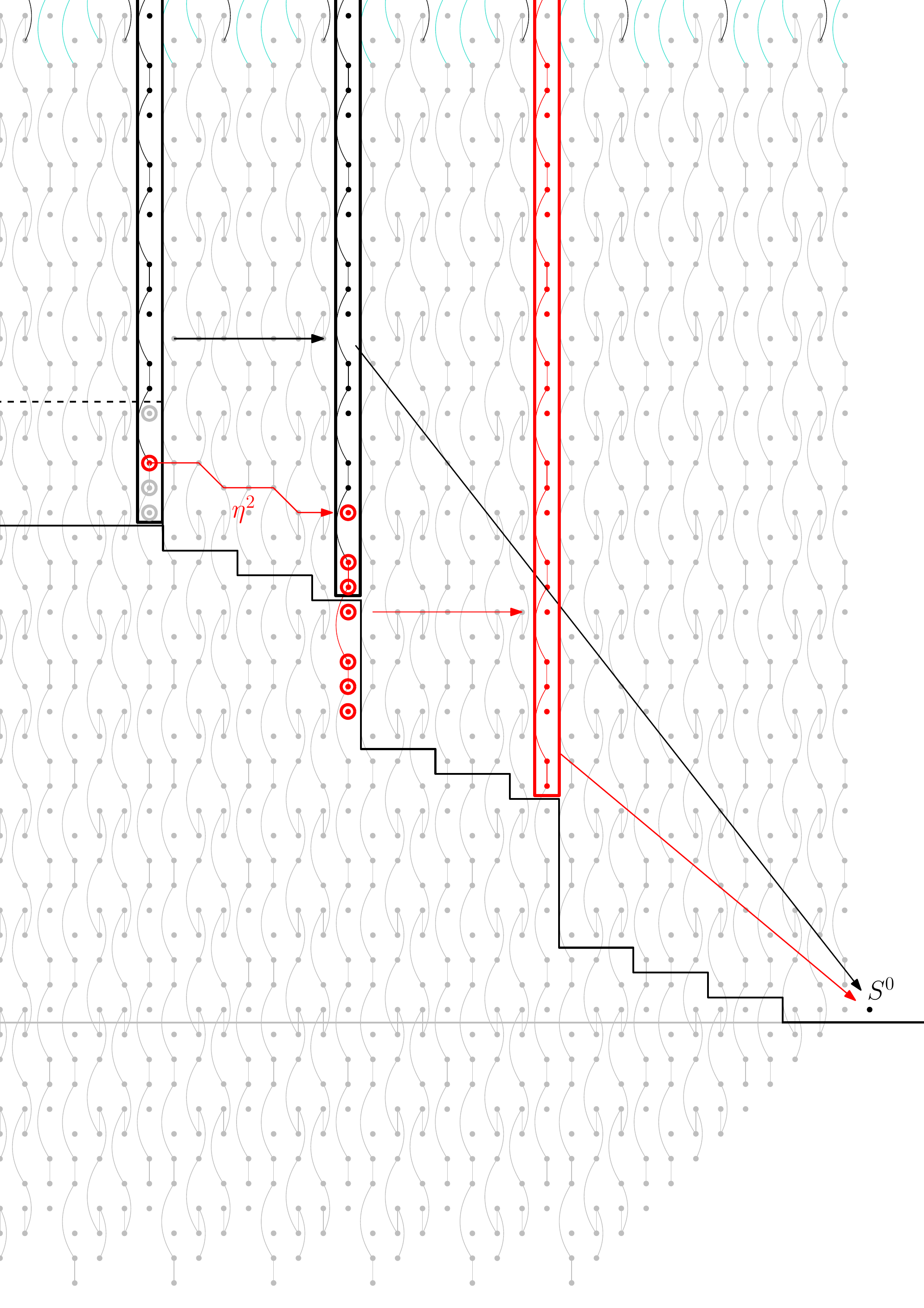} \vspace{0.2in}\\
\includegraphics[scale = 0.32,trim={1.8cm 5.5cm 0cm 5.2cm}, clip]{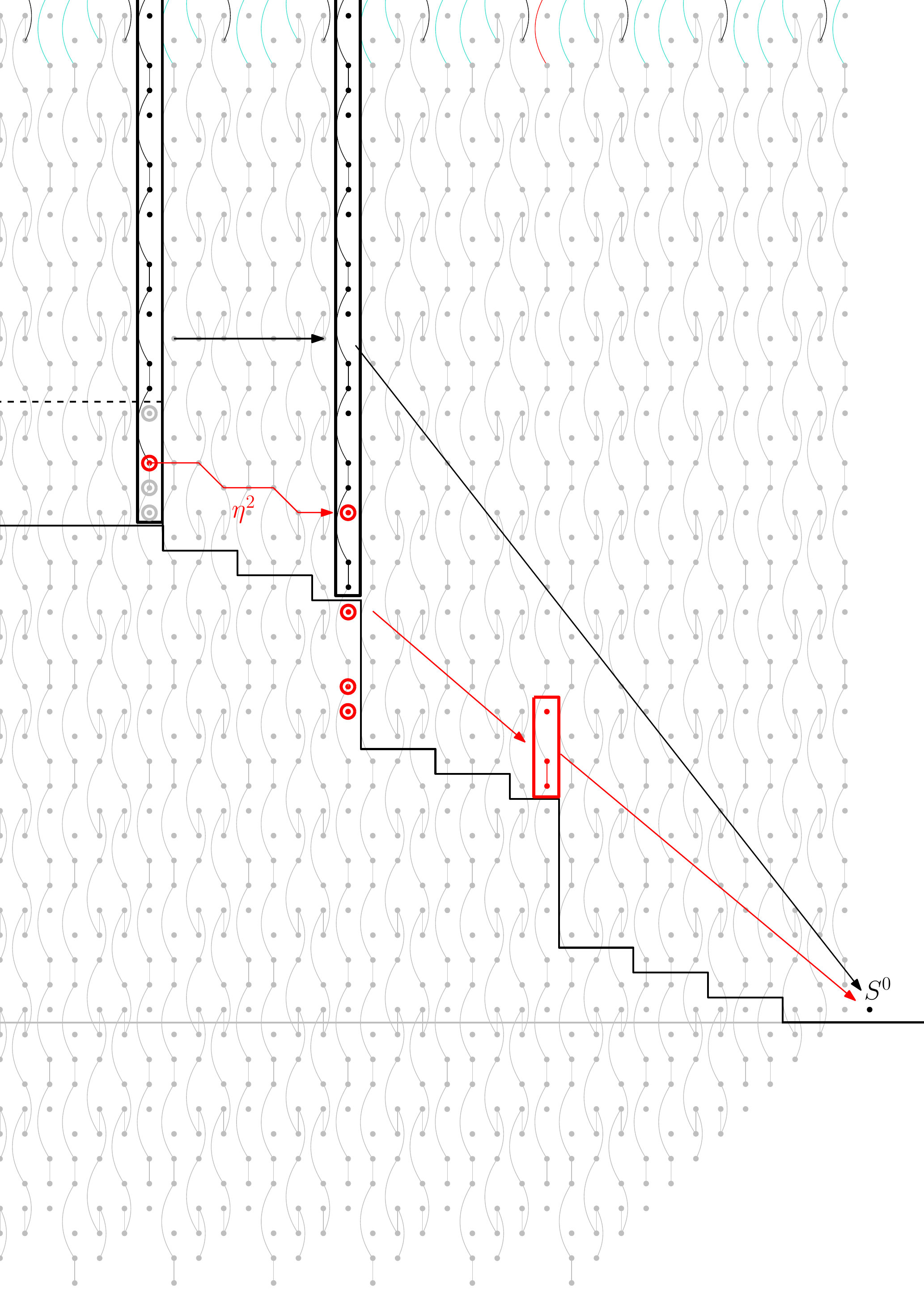}\hspace{0.1in}
\includegraphics[scale = 0.32,trim={1.8cm 5.5cm 0cm 5.2cm}, clip]{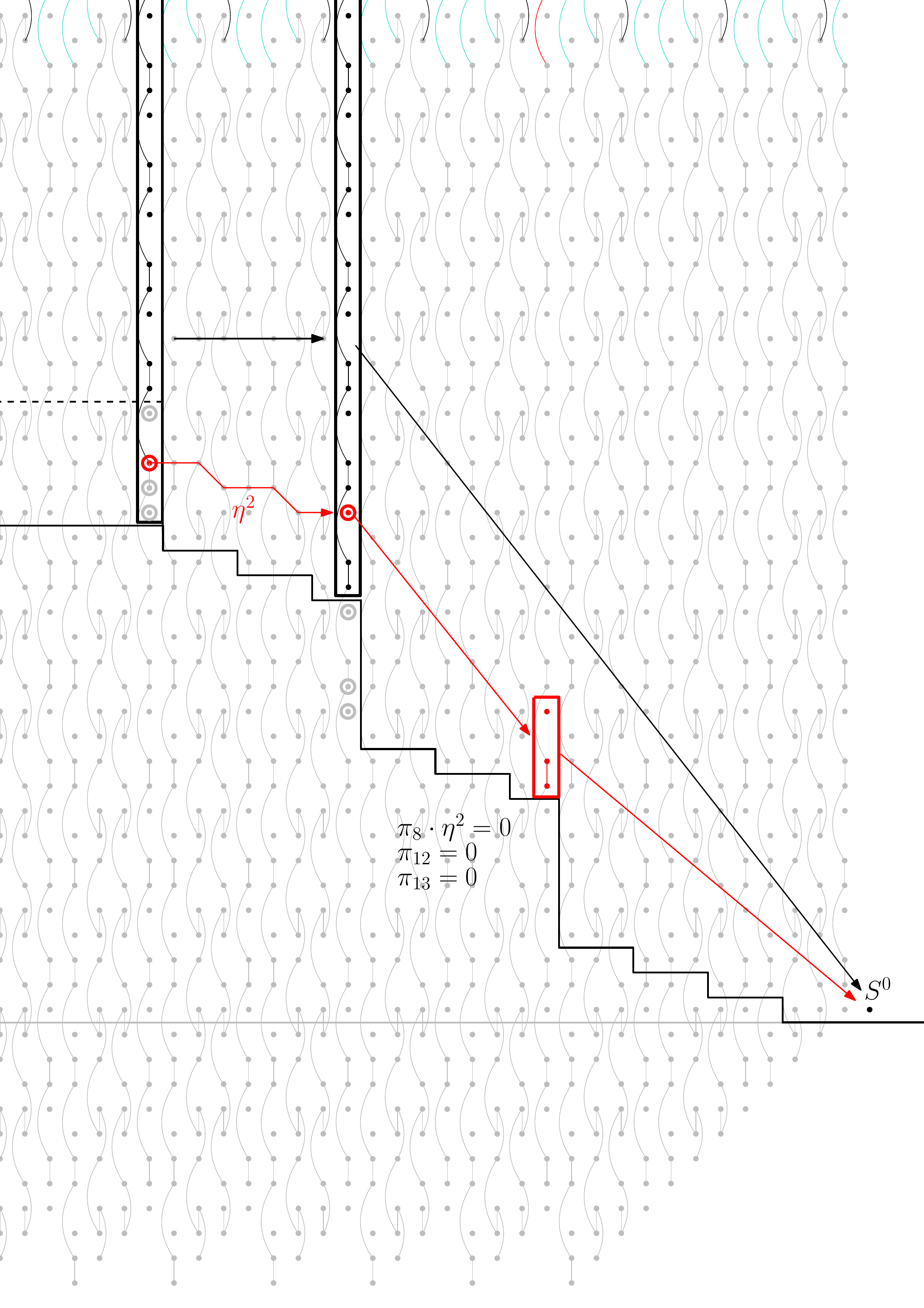}
\caption{Intuition for Step 1.}
\hfill
\label{fig:IntuitionForStep1}
\end{center}
\end{figure}

In the remaining subsections of this section, we will prove Propositions~\ref{proposition: construct a}-\ref{f g a b satisfy requirements} one by one.

\subsection{Proof of Proposition~\ref{proposition: construct a}}

\begin{figure}
\begin{center}
\makebox[\textwidth]{\includegraphics[trim={2.5cm 5.5cm 0.3cm 7.8cm}, clip, page = 1, scale = 0.8]{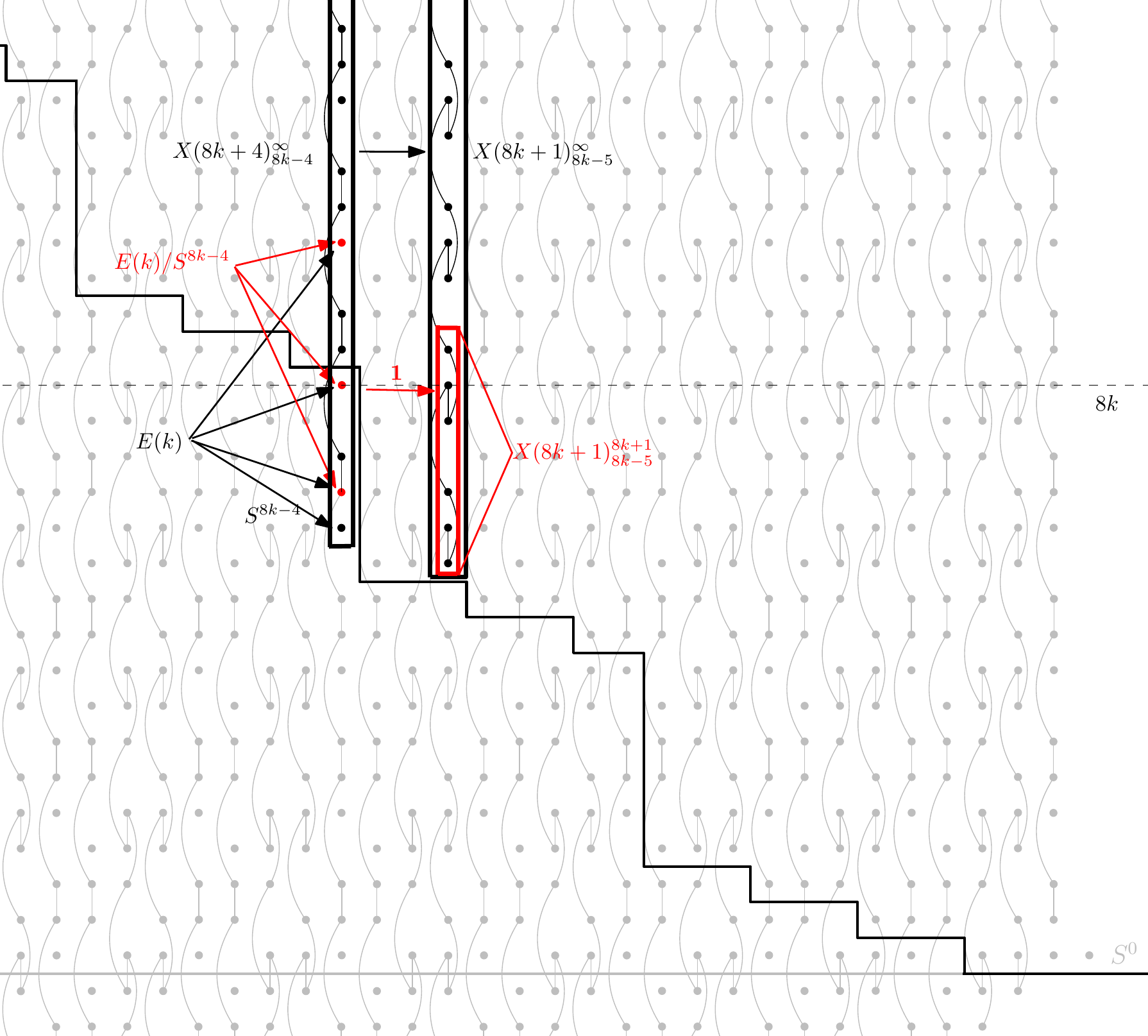}}
\end{center}
\begin{center}
\caption{Step 1.1.1 picture.}
\hfill
\label{fig:Step1point1point1Pic}
\end{center}
\end{figure}

The proof of Proposition~\ref{proposition: construct a} consists of many steps. The goal is to construct a map
$$c_k: S^{8k+4} \longrightarrow X(8k-3)^{8k-4}_{8k-7},$$
such that it is compatible with the map 
$$E(k) \hookrightarrow X(8k+4)^\infty_{8k-4} \rightharpoonup X(8k-3)^\infty_{8k-7}.$$
Since the top cell of $E(k)$ is in dimension $8k+4$, we have the maps
$$E(k) \hookrightarrow X(8k+4)^{8k+4}_{8k-4} \rightharpoonup X(8k-3)^{8k+1}_{8k-7}.$$
So roughly speaking, we want to show that the bottom 3 cells of $E(k)$ maps trivially to $X(8k-3)^{8k+1}_{8k-7}$, and the image of $E(k)$ does not involve the cells in $X(8k-3)^{8k+1}_{8k-3}$. Our strategy is to carefully study the cell structures of the intermediate columns of finite complexes, and to get rid of certain cells gradually.

\underline{\textbf{Step 1.1.1}}: In this step, we focus on column $8k+1$. We use the $\eta$-attaching maps in column $8k+1$ between cells in dimensions $8k-5$ and $8k-3$, $8k+3$ and $8k+5$, to get rid of the cell in dimension $8k-4$ of $E(k)$, and to lower the upper bound of the image to dimension $8k+1$ in column $8k+1$. More precisely, we prove the following lemma. 
\begin{lem} \label{lemma kill first cell in E}
	There exits the following commutative diagram:
\begin{equation}\label{kill first cell in E}
    \xymatrix{
        E(k) \ar@{^{(}->}[r] \ar@{->>}[d] & X(8k+4)_{8k-4}^\infty\ar[r]& X(8k+1)_{8k-5}^\infty \\
    E(k)/S^{8k-4}\ar[rr]^{\mathbf{1}} && X(8k+1)_{8k-5}^{8k+1}\ar@{^{(}->}[u]  }
\end{equation}
\end{lem}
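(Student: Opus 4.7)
The plan is to establish the commutative diagram (\ref{kill first cell in E}) in two stages, in each of which an $\eta$-attaching map inside the column $X(8k+1)$ will do the critical work. By Proposition~\ref{xmhomologysq} applied with $m = 8k+1 \equiv 1 \pmod{4}$ at $j = 8k-5$ and $j = 8k+3$ (both $\equiv 3 \pmod{4}$), the Steenrod square $Sq^2$ is nonzero on $H^j X(8k+1)$, and Lemma~\ref{2eta} then supplies $\eta$-attaching maps from dimension $8k-5$ to $8k-3$ and from dimension $8k+3$ to $8k+5$. These are the two $\eta$'s that will kill the two obstructions to the factorization.

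For the first stage, I would show the composite $S^{8k-4} \hookrightarrow E(k) \hookrightarrow X(8k+4)^{\infty}_{8k-4} \rightharpoonup X(8k+1)^{\infty}_{8k-5}$ is null-homotopic, which allows the factorization through $E(k)/S^{8k-4}$. Tracking dimensions via Proposition~\ref{xmhomology}, the $(8k-4)$-cell is present in $X(8k+4)$ but $H_{8k-4}X(8k+3) = 0$, so this cell drops to the $(8k-5)$-cell between $X(8k+4)$ and $X(8k+3)$ and then remains there through $X(8k+2)$ and $X(8k+1)$. Lemma~\ref{eta attaching map between columns} applied at the drop identifies the composition with $S^{8k-4} \xrightarrow{\eta} S^{8k-5} \hookrightarrow X(8k+1)^{\infty}_{8k-5}$. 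A localized Atiyah--Hirzebruch computation on the subquotient $X(8k+1)^{8k-3}_{8k-5}$, whose three cells carry a 2-attaching from $8k-5$ to $8k-4$ together with the relevant $\eta$-attaching from $8k-5$ to $8k-3$, gives $\pi_{8k-4} X(8k+1)^{8k-3}_{8k-5} = 0$: the class $1[8k-4]$ is killed by a $d_1$ and $\eta[8k-5]$ is killed by a $d_2$. Since the composite factors through this subquotient by cellular approximation, it is null.

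For the second stage, I would lift the resulting map $E(k)/S^{8k-4} \to X(8k+1)^{\infty}_{8k-5}$ across the inclusion $X(8k+1)^{8k+1}_{8k-5} \hookrightarrow X(8k+1)^{\infty}_{8k-5}$. The obstruction lives in the composite into the cofiber $X(8k+1)^{\infty}_{8k+2}$, whose bottom cell is in dimension $8k+3$. Since $E(k)/S^{8k-4}$ has cells only in dimensions $8k-3, 8k, 8k+4$, cellular approximation reduces the obstruction to the contribution from the top cell $S^{8k+4}$. Tracing that top cell across the columns as in stage one produces the composition $S^{8k+4} \xrightarrow{\eta} S^{8k+3} \hookrightarrow X(8k+1)^{\infty}_{8k+2}$, and a second AHSS computation on $X(8k+1)^{8k+5}_{8k+2}$ kills the class $\eta[8k+3]$ by a $d_2$ arising from the $\eta$-attaching between dimensions $8k+3$ and $8k+5$.

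The main difficulty is purely bookkeeping: identifying which cells of $X(m)$ persist as $m$ decreases from $8k+4$ to $8k+1$, reading off the intercolumn attaching maps using Lemma~\ref{eta attaching map between columns}, and confining the two AHSS arguments to the small subquotients $X(8k+1)^{8k-3}_{8k-5}$ and $X(8k+1)^{8k+5}_{8k+2}$, where only a handful of classes contribute. Once this combinatorial setup is in place, both stages of the factorization follow from short cofiber-sequence arguments.
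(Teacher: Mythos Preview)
Your proposal is correct and uses essentially the same approach as the paper: both arguments exploit the two $\eta$-attaching maps in $X(8k+1)$ (between dimensions $8k-5,\,8k-3$ and between $8k+3,\,8k+5$) together with the intercolumn $\eta$'s from Lemma~\ref{eta attaching map between columns}. The only difference is the order---the paper first lifts $E(k)\to X(8k+1)^\infty_{8k-5}$ to the $(8k+1)$-skeleton and then quotients out $S^{8k-4}$, whereas you quotient first and lift second; both orderings work and yield the same map~$\mathbf{1}$.
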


\begin{proof}
	Firstly, we have the following commutative diagram:
\begin{equation} \label{kill first cell in E step 1}
\xymatrix{
S^{8k+4} \ar@{=}[r] & S^{8k+4} \ar[r]^{\eta} & S^{8k+3} \ar@{^{(}->}[r] & X(8k+1)^{8k+5}_{8k+3} \\
E(k) \ar@{->>}[u] \ar@{^{(}->}[r] & X(8k+4)^{8k+4}_{8k-4} \ar@{->>}[u] \ar@^{->}[r] & X(8k+1)^{8k+3}_{8k-5} \ar@{->>}[u] \ar@{^{(}->}[r] & X(8k+1)^{8k+5}_{8k-5} \ar@{->>}[u] \\
E(k)^{8k}_{8k-4} \ar@{^{(}->}[r]  \ar@{^{(}->}[u] & X(8k+4)^{8k+2}_{8k-4} \ar@^{->}[r] \ar@{^{(}->}[u] & X(8k+1)^{8k+1}_{8k-5} \ar@{^{(}->}[u] \ar@{=}[r] & X(8k+1)^{8k+1}_{8k-5} \ar@{^{(}->}[u]
}
\end{equation}
By Lemma~\ref{eta attaching map between columns}, we have that the map in middle of the top row of diagram (\ref{kill first cell in E step 1}) is $\eta$. By Corollary~\ref{2 and eta attaching maps within columns}, we have an $\eta$-attaching map in $X(8k+1)^{8k+5}_{8k+3}$ between the cells in dimensions $8k+3$ and $8k+5$. This corresponds to an Atiyah--Hirzebruch differential
$$1[8k+5] \rightarrow \eta[8k+3].$$
Therefore, the composition of the maps in the top row of diagram (\ref{kill first cell in E step 1}) is zero. In particular, pre-composing with the map
$$\xymatrix{ E(k) \ar@{->>}[r] & S^{8k+4}}$$
is also zero. By the cofiber sequence of the right most column, we know that the map from $E(k)$ to $X(8k+1)^{8k+5}_{8k-5}$ maps through $X(8k+1)^{8k+1}_{8k-5}$.

Secondly, we have the following commutative diagram:
\begin{equation} \label{kill first cell in E step 2}
\xymatrix{
E(k)/S^{8k-4} \ar@{^{(}->}[r] & X(8k+4)^{8k+4}_{8k-3} \ar@^{->}[r] & X(8k+1)^{8k+1}_{8k-4} \ar@{->>}[r] & X(8k+1)^{8k+1}_{8k-1} \\
E(k) \ar@{->>}[u] \ar@{^{(}->}[r] & X(8k+4)^{8k+4}_{8k-4} \ar@{->>}[u] \ar@^{->}[r] & X(8k+1)^{8k+1}_{8k-5} \ar@{->>}[u] \ar@{=}[r] & X(8k+1)^{8k+1}_{8k-5} \ar@{->>}[u] \\
S^{8k-4} \ar@{=}[r]  \ar@{^{(}->}[u] & S^{8k-4} \ar[r]^{\eta} \ar@{^{(}->}[u] & S^{8k-5} \ar@{^{(}->}[u] \ar@{^{(}->}[r] & X(8k+1)^{8k-3}_{8k-5} \ar@{^{(}->}[u]
}
\end{equation}
By Lemma~\ref{eta attaching map between columns}, we have that the map in middle of the bottom row of diagram (\ref{kill first cell in E step 2}) is $\eta$.	By Corollary~\ref{2 and eta attaching maps within columns}, we have an $\eta$-attaching map in $X(8k+1)^{8k-3}_{8k-5}$ between the cells in dimensions $8k-5$ and $8k-3$. This corresponds to an Atiyah--Hirzebruch differential
$$1[8k-3] \rightarrow \eta[8k-5].$$
Therefore, the composition of the maps in the bottom row of diagram (\ref{kill first cell in E step 2}) is zero. In particular, post-composing with the map
$$\xymatrix{ X(8k+1)^{8k-3}_{8k-5} \ar@{^{(}->}[r] & X(8k+1)^{8k+1}_{8k-5}}$$
is also zero. By the cofiber sequence of the left most column, we know that the map from $E(k)$ to $X(8k+1)^{8k+1}_{8k-5}$ factors through $E(k)/S^{8k-4}$.

This gives the required map
$$\mathbf{1}: E(k)/S^{8k-4} \longrightarrow X(8k+1)^{8k+1}_{8k-5}.$$
\end{proof}

\begin{rem}
	We will use arguments similar to the ones in the proof of Lemma~\ref{lemma kill first cell in E} many times in the rest of this paper. Instead of presenting all details in terms of commutative diagrams, we will simply refer them as ``similar arguments as in the proof of Lemma~\ref{kill first cell in E}" or ``cell diagram chasing arguments" due to certain attaching maps.
\end{rem}

\underline{\textbf{Step 1.1.2}}: In this step, we focus on column $8k-2$. We show that in $E(k)/S^{8k-4}$, the cells in dimensions $8k$ and $8k-3$ maps through $S^{8k-6}$ in column $8k-2$. More precisely, we have the following lemma.

\begin{lem}
	There exits the following commutative diagram:
\begin{equation}\label{column 8k2}
    \xymatrix{
       & E(k)/S^{8k-4} \ar[r]^{\mathbf{1}} & X(8k+1)^{8k+1}_{8k-5}\ar@^{->}[r]& X(8k-2)^{8k}_{8k-6} \\
    S^{8k} \vee S^{8k-3} \ar@{=}[r] & E(k)^{8k}_{8k-3}\ar[r]^{\mathbf{2}}\ar@{^{(}->}[u]&S^{8k-6}\ar@{^{(}->}[r] & X(8k-2)_{8k-6}^{8k-4}\ar@{^{(}->}[u] \\}
\end{equation}
\end{lem}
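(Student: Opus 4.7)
The plan is to construct the map $\mathbf{2}\colon E(k)^{8k}_{8k-3}\to S^{8k-6}$ making~(\ref{column 8k2}) commute, proceeding in three stages: first identify $E(k)^{8k}_{8k-3}$, then factor the top path through $X(8k-2)^{8k-4}_{8k-6}$, and finally further factor through $S^{8k-6}$.

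For the identification, $E(k)^{8k}_{8k-3}$ is a two-cell subquotient of $E(k)$ with cells in dimensions $8k-3$ and $8k$ by Proposition~\ref{prop ek}. These cells come from $X(8k+4)$, and for $m=8k+4\equiv 0\pmod 4$ and $j=8k-3\equiv 1\pmod 4$ none of the $2$-, $\eta$-, or $\eta^2$-attaching cases of Corollary~\ref{2 and eta attaching maps within columns} and Proposition~\ref{eta square attaching maps within columns} apply. Hence $E(k)^{8k}_{8k-3}\simeq S^{8k-3}\vee S^{8k}$, giving the equality in the lower-left corner of the diagram.

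For the factorization through $X(8k-2)^{8k-4}_{8k-6}$, I treat the two wedge summands separately. On $S^{8k-3}$, cellular approximation forces the image into the $(8k-3)$-skeleton of $X(8k-2)^{8k}_{8k-6}$; since $X(8k-2)$ has no cell in dimension $8k-3$ by Proposition~\ref{xmhomology}, this skeleton equals $X(8k-2)^{8k-4}_{8k-6}$. On $S^{8k}$ I plan a cell-diagram chase analogous to the proof of Lemma~\ref{lemma kill first cell in E}: the $(8k)$-cell of $E(k)$ is attached only downward to the $(8k-4)$-cell, and using the $\eta$-attaching map in $X(8k-2)$ from $8k-6$ to $8k-4$ and the $2$-attaching from $8k-5$ to $8k-4$ (Corollary~\ref{2 and eta attaching maps within columns}), together with the analogous attaching maps in the intermediate columns $X(8k+3),\dots,X(8k-1)$, one eliminates the contributions in dimensions $8k-2$, $8k-1$, and $8k$ of $X(8k-2)^{8k}_{8k-6}$, so that the composite factors through $X(8k-2)^{8k-4}_{8k-6}$.

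For the final reduction to $S^{8k-6}$, consider the cofibre sequence $S^{8k-6}\hookrightarrow X(8k-2)^{8k-4}_{8k-6}\twoheadrightarrow \Sigma^{8k-5}C2$, where the quotient is $\Sigma^{8k-5}C2$ because the $(8k-5)$- and $(8k-4)$-cells are connected by the $2$-attaching. It suffices to show that each restricted composite $S^{8k-3},S^{8k}\to X(8k-2)^{8k-4}_{8k-6}\to\Sigma^{8k-5}C2$ is null. For the $S^{8k}$ summand this is immediate from $\pi_{5}(C2)=0$, which follows from $\pi_4=\pi_5=0$ in the long exact sequence associated with $S^0\xrightarrow{2}S^0\to C2$. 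For the $S^{8k-3}$ summand, the a priori possible contribution lives in $\pi_2(C2)$; this is excluded by pulling the composite back one more step to $X(8k+1)^{8k+1}_{8k-5}$ and using the explicit attaching-map structure in column $8k+1$ to verify that no such class arises. Assembling these produces $\mathbf{2}$ and the commutativity of~(\ref{column 8k2}). The main obstacle is the $S^{8k}$ part of the second stage, where tracking the image across six intermediate columns requires careful bookkeeping; once organized along the lines of Lemma~\ref{lemma kill first cell in E}, however, it becomes a mechanical, if tedious, cell-diagram computation.
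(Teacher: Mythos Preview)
Your overall strategy—split $E(k)^{8k}_{8k-3}$ as $S^{8k-3}\vee S^{8k}$ and treat each summand separately—matches the paper exactly, as does your final step of using $\pi_5 C2=0$ for the $S^{8k}$ part. However, the specific mechanisms you propose for the intermediate factorizations diverge from the paper, and in one place your argument is misdirected.

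For the $S^{8k}$ summand, the paper does \emph{not} chase across multiple columns or use the $\eta$- and $2$-attachings you cite in $X(8k-2)$ between dimensions $8k-6,\,8k-5,\,8k-4$; those attaching maps live entirely inside the range you want to land in and cannot help you exclude the cells at $8k-2,\,8k-1,\,8k$. Instead, the paper works in column $8k+1$: the $\eta$-attaching map in $X(8k+1)^{8k+1}_{8k-5}$ between dimensions $8k-1$ and $8k+1$ (via an argument parallel to Lemma~\ref{lemma kill first cell in E}) forces $S^{8k}$ to factor through the $(8k-3)$-skeleton $X(8k+1)^{8k-3}_{8k-5}$. From there the map to $X(8k-2)$ lands automatically in the $(8k-4)$-skeleton, and then $\pi_4=\pi_5=0$ finishes. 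This is a single clean step rather than a multi-column chase; your proposal also misidentifies the relevant intermediate columns (we are already in column $8k+1$ via $\mathbf{1}$, so $X(8k+3),\dots$ are not in play).

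For the $S^{8k-3}$ summand, the paper again avoids the $\pi_2 C2$ obstruction you describe. It uses the $\eta^{2}$-attaching map in $X(8k-2)$ (Proposition~\ref{eta square attaching maps within columns}) directly: the composite $S^{8k-3}\to X(8k-2)^{8k}_{8k-6}$ is detected by $\eta^{2}$ on a cell that is killed by this attachment, so the map factors through $S^{8k-6}$ immediately via the same style of argument as Lemma~\ref{lemma kill first cell in E}. Your proposed ``pull back to $X(8k+1)$'' would need to be made precise, and in any case is more roundabout than the paper's direct use of the $\eta^{2}$-structure in column $8k-2$.
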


\begin{proof}
	By Proposition~\ref{eta square attaching maps within columns}, there is no $\eta^2$-attaching map in $E(k)^{8k}_{8k-3}$. This shows that
	$$E(k)^{8k}_{8k-3} \simeq S^{8k} \vee S^{8k-3}.$$
	We may therefore consider the cells in dimensions $8k$ and $8k-3$ separately. 
	
	For $S^{8k-3}$, it maps naturally through the $(8k-1)$-skeleton in column $8k-2$. By Proposition~\ref{eta square attaching maps within columns}, there is an $\eta^2$-attaching map in $X(8k-2)^{8k}_{8k-5}$ between the cells in dimensions $8k-4$ and $8k-1$. A similar argument as in the proof of Lemma~\ref{kill first cell in E} shows that $S^{8k-3}$ maps through $S^{8k-6}$ in column $8k-2$.
	
	For $S^{8k}$, firstly note that by Corollary~\ref{2 and eta attaching maps within columns}, there is an $\eta$-attaching map in ${X(8k+1)^{8k+1}_{8k-5}}$ between the cells in dimensions $8k-1$ and $8k+1$. A similar argument as in the proof of Lemma~\ref{kill first cell in E} shows that $S^{8k}$ maps through the $(8k-3)$-skeleton in column $8k+1$. Then it maps naturally through the $(8k-4)$-skeleton in column $8k-2$. To see that it actually maps through $S^{8k-6}$, we only need to show the following composite is zero.
	$$\xymatrix{S^{8k} \ar[r] & X(8k-2)_{8k-6}^{8k-4} \ar@{->>}[r] & X(8k-2)_{8k-5}^{8k-4} = S^{8k-4} \vee S^{8k-5} }$$
	This is in fact true, since $\pi_4 = \pi_5 = 0$.
	
	Combining both parts, this gives the required map
$$\mathbf{2}: S^{8k} \vee S^{8k-3} = E(k)^{8k}_{8k-3} \longrightarrow S^{8k-6}.$$
\end{proof}

We enlarge the above Diagram (\ref{column 8k2}) to the following Diagram (\ref{enlarge diagram}). We will establish the maps $\mathbf{3}, \mathbf{4}$ and $\mathbf{5}$ in \textbf{Steps~1.1.3, 1.1.4 and 1.1.5}:

\begin{equation} \label{enlarge diagram}
    \xymatrix{ S^{8k+1}\vee S^{8k-2} \ar@{-->}[rrrd]^{\mathbf{4}} \\    
     S^{8k+4} \ar@{-->}[rrrdd]_(.6){\mathbf{5}} \ar@{-->}[rrrd]^{\mathbf{3}} \ar[u]^{\partial} \ar[rr] & & X(8k-2)^{8k}_{8k-2} \ar@^{->}[r] & X(8k-3)^{8k}_{8k-3} \\ 
     E(k)/S^{8k-4} \ar@{->>}[u] \ar[r]^{\mathbf{1}} & X(8k+1)^{8k+1}_{8k-5} \ar@^{->}[r] & X(8k-2)^{8k}_{8k-6} \ar@^{->}[r] \ar@{->>}[u] & X(8k-3)^{8k}_{8k-7} \ar@{->>}[u]\\ 
    S^{8k}\vee S^{8k-3}  \ar[r]^{\mathbf{2}} \ar@{^{(}->}[u] & S^{8k-6} \ar@{^{(}->}[r] & X(8k-2)_{8k-6}^{8k-4} \ar@{^{(}->}[u] \ar@^{->}[r] & X(8k-3)^{8k-4}_{8k-7} \ar@{^{(}->}[u]\\}
\end{equation}

\underline{\textbf{Step 1.1.3}}: In this step, we establish the map $\mathbf{3}$, making the triangle under $\mathbf{3}$ in Diagram (\ref{enlarge diagram}) commute.

By Lemma~\ref{eta attaching map between columns}, we have that the map 
$$\xymatrix{S^{8k-6}\ar@{^{(}->}[r] & X(8k-2)^{8k}_{8k-6}\ar@^{->}[r]& X(8k-3)^{8k}_{8k-7}}$$
is $\eta$ mapping into the bottom cell of $X(8k-3)^{8k}_{8k-7}$. Since 
$$\eta \cdot \pi_3 = 0, \ \eta \cdot \pi_6 = 0,$$
the composition of maps in the bottom row of Diagram (\ref{enlarge diagram}) is zero. In particular, post-composing with the map
$$\xymatrix{ X(8k-3)^{8k-4}_{8k-7} \ar@{^{(}->}[r] & X(8k-3)^{8k}_{8k-7}}$$
is also zero. By the cofiber sequence of the left most column, we know that the map from $E(k)/S^{8k-4}$ to $X(8k-3)^{8k}_{8k-7}$ factors through $S^{8k+4}$, which gives the desired map $\mathbf{3}$, making the triangle under $\mathbf{3}$ commute.

Note that we haven't shown the triangle above $\mathbf{3}$ commutes. We will show it later in \textbf{Step 1.1.5}.\\

\underline{\textbf{Step 1.1.4}}: In this step, we establish the map $\mathbf{4}$, making the parallelogram below $\mathbf{4}$ in Diagram (\ref{enlarge diagram}) commute.

By the cofiber sequence in the left most column, it suffices to show the following composite is zero.
$$
\xymatrix{
E(k)/S^{8k-4} \ar@{->>}[r] & S^{8k+4} \ar@{^{(}->}[r]^-{\mathbf{3}} & X(8k-3)^{8k}_{8k-7} \ar@{->>}[r] & X(8k-3)^{8k}_{8k-3}
}
$$
Since both the triangle under $\mathbf{3}$ and the upper rectangent in Diagram (\ref{enlarge diagram}) commute, it is equivalent to show that the following composite is zero.
$$
\xymatrix{
E(k)/S^{8k-4} \ar@{->>}[r] & S^{8k+4} \ar[r] & X(8k-2)^{8k}_{8k-2} \ar@^{->}[r] & X(8k-3)^{8k}_{8k-3}
}
$$
This is in fact true, since the composition of the latter two maps are already zero.
\begin{lem} \label{lemma step 4 enlarge diagram}
	The following composite in Diagram (\ref{enlarge diagram}) is zero.
	$$
\xymatrix{
S^{8k+4} \ar[r] & X(8k-2)^{8k}_{8k-2} \ar@^{->}[r] & X(8k-3)^{8k}_{8k-3}
}
$$
\end{lem}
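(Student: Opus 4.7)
The plan is to show the composite vanishes via the Toda relation $\eta\nu^{2}=0$. Specifically, I would argue that the map $S^{8k+4}\to X(8k-2)^{8k}_{8k-2}$ factors through the inclusion of the bottom cell $S^{8k-2}$ and is represented there by $\nu^{2}$, and that the induced bottom-cell-to-bottom-cell map $S^{8k-2}\to S^{8k-3}\hookrightarrow X(8k-3)^{8k}_{8k-3}$ is $\eta$. The composite then becomes $\eta\nu^{2}=0$ in $\pi_{7}$.

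For the first reduction, $X(8k-2)^{8k}_{8k-2}$ is a $3$-cell complex with cells only in dimensions $8k-2$, $8k-1$, $8k$. Since $\pi_{4}=\pi_{5}=0$, the cellular filtration long exact sequence (or equivalently the Atiyah--Hirzebruch spectral sequence for $\pi_{8k+4}$) shows that the inclusion $S^{8k-2}\hookrightarrow X(8k-2)^{8k}_{8k-2}$ of the bottom cell induces a surjection on $\pi_{8k+4}$. As $\pi_{8k+4}(S^{8k-2})=\pi_{6}=\mathbb{Z}/2$ is generated by $\nu^{2}$, the given map lifts to $\nu^{2}$ (or $0$) along the bottom-cell inclusion.

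For the second reduction, since $8k-3\equiv 1\pmod{4}$, the column $X(8k-3)$ has no cell in dimension $8k-2$ and dually the column $X(8k-2)$ has no cell in dimension $8k-3$, so
$$X(8k-2)^{8k-2}_{8k-2}=S^{8k-2}\quad\text{and}\quad X(8k-3)^{8k-2}_{8k-3}=S^{8k-3}.$$
By Lemma~\ref{eta attaching map between columns} applied with $m=8k-3$ and $j=8k-3$ (both $\equiv 1\pmod{4}$), the natural map between these subquotients is $\eta$. Chasing the obvious naturality square, the composite in the statement factors as
$$S^{8k+4}\xrightarrow{\nu^{2}}S^{8k-2}\xrightarrow{\eta}S^{8k-3}\lhook\joinrel\longrightarrow X(8k-3)^{8k}_{8k-3},$$
which is null because $\eta\nu=0$ in $\pi_{4}=0$ forces $\eta\nu^{2}=0$. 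I do not anticipate a genuine obstacle: the argument is purely a combination of dimensional vanishing ($\pi_{4}=\pi_{5}=0$), the Toda relation $\eta\nu=0$, and the already-established Lemma~\ref{eta attaching map between columns}.
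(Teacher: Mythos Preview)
Your proof is correct and follows essentially the same route as the paper's: factor the first map through the bottom cell $S^{8k-2}$ using $\pi_{4}=\pi_{5}=0$ (the paper phrases this as $\pi_{5}C2=0$ for the quotient $X(8k-2)^{8k}_{8k-1}=\Sigma^{8k-1}C2$), then identify the induced bottom-cell map as $\eta$ via Lemma~\ref{eta attaching map between columns}, and conclude. The only cosmetic difference is that the paper does not name the lift as $\nu^{2}$ but simply invokes $\eta\cdot\pi_{6}=0$; your identification is harmless since $\pi_{6}=\mathbb{Z}/2\{\nu^{2}\}$.
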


\begin{proof}
	We first show that the left map factors through the bottom cell $S^{8k-2}$ of the codomain. In fact, the composite
	$$
\xymatrix{
S^{8k+4} \ar[r] & X(8k-2)^{8k}_{8k-2} \ar@{->>}[r] & X(8k-2)^{8k}_{8k-1} = \Sigma^{8k-1} C2
}
$$
corresponds to an element in $\pi_5 C2$. Since $\pi_{4}=\pi_{5}=0$, the group $\pi_5 C2 = 0$. Therefore, it must factor through the bottom cell $S^{8k-2}$. We have the following commutative diagram.
\begin{equation} \label{equation step 4 factor through}
\xymatrix{
S^{8k+4} \ar[r] \ar[rd] & X(8k-2)^{8k}_{8k-2} \ar@^{->}[r] & X(8k-3)^{8k}_{8k-3}\\
& S^{8k-2} \ar@{^{(}->}[u] \ar[r]^{\eta} & S^{8k-3} \ar@{^{(}->}[u]
}
\end{equation}
By Lemma~\ref{eta attaching map between columns}, the map in the bottom row of Diagram (\ref{equation step 4 factor through}) is $\eta$. Since 
$$\eta \cdot \pi_6 = 0,$$
this completes the proof. 
\end{proof}

\underline{\textbf{Step 1.1.5}}: In this step, we establish the map $\mathbf{5}$, making all parts of Diagram (\ref{enlarge diagram}) commute. 

It suffices to show the following lemma.

\begin{lem} \label{lemma step 5 enlarge diagram}
The following composite is zero.
$$\xymatrix{S^{8k+4} \ar[r]^-{\partial} & S^{8k+1}\vee S^{8k-2} \ar[r]^{\mathbf{4}} & X(8k-3)^{8k}_{8k-3}}$$	
\end{lem}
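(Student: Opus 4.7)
The plan is to read off the identity $\mathbf{4}\circ\partial=0$ directly from the way $\mathbf{4}$ and $\mathbf{3}$ were built in Steps~1.1.3 and 1.1.4, without making any new computation in stable stems. Concretely, by the construction of $\mathbf{4}$ in Step~1.1.4, $\mathbf{4}$ is characterized by the commutativity of the parallelogram
$$\begin{tikzcd}
S^{8k+1}\vee S^{8k-2} \ar[rr,"\mathbf{4}"] & & X(8k-3)^{8k}_{8k-3}\\
S^{8k+4} \ar[u,"\partial"] \ar[rr,"\mathbf{3}"] & & X(8k-3)^{8k}_{8k-7}\ar[u,twoheadrightarrow,"q"]
\end{tikzcd}$$
in Diagram~(\ref{enlarge diagram}), where $q$ is the quotient collapsing the $(8k-4)$-skeleton. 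This immediately gives $\mathbf{4}\circ\partial = q\circ\mathbf{3}$, so Lemma~\ref{lemma step 5 enlarge diagram} reduces to showing that $q\circ\mathbf{3}:S^{8k+4}\to X(8k-3)^{8k}_{8k-3}$ is null-homotopic.

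To verify that $q\circ\mathbf{3}=0$, I would invoke the commutativity of the upper rectangle of Diagram~(\ref{enlarge diagram}), which by the definition of $\mathbf{3}$ expresses it as the composite
$$q\circ\mathbf{3} \;=\;\bigl[S^{8k+4}\longrightarrow X(8k-2)^{8k}_{8k-2}\longrightarrow X(8k-3)^{8k}_{8k-3}\bigr].$$
This is precisely the composite whose vanishing was proven in Lemma~\ref{lemma step 4 enlarge diagram}, so $q\circ \mathbf{3}=0$ and hence $\mathbf{4}\circ\partial=0$.

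The part that requires the most care is checking that the defining parallelogram for $\mathbf{4}$ really does commute on the nose in $[S^{8k+4},X(8k-3)^{8k}_{8k-3}]$, and not merely after precomposition with the quotient $E(k)/S^{8k-4}\twoheadrightarrow S^{8k+4}$. This is where the Puppe sequence for the cofiber sequence $E(k)^{8k}_{8k-3}\to E(k)/S^{8k-4}\to S^{8k+4}$ is used: Step~1.1.4 shows that $q\circ\mathbf{3}$ lies in the kernel of precomposition by the surjection $E(k)/S^{8k-4}\twoheadrightarrow S^{8k+4}$, hence in the image of $\partial^{*}$, and $\mathbf{4}$ is then selected as a preimage under $\partial^{*}$. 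Once this subtlety is unwound, the argument above is simply a two-step diagram chase and the proof of the lemma is complete.
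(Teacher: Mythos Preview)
Your argument is circular. You correctly observe that, by the construction of $\mathbf{4}$ in Step~1.1.4 via the Puppe sequence, one has $\mathbf{4}\circ\partial = q\circ\mathbf{3}$ on the nose. The problem is the next step, where you claim that $q\circ\mathbf{3}$ equals the composite $S^{8k+4}\to X(8k-2)^{8k}_{8k-2}\to X(8k-3)^{8k}_{8k-3}$ of Lemma~\ref{lemma step 4 enlarge diagram}. That equality is precisely the ``triangle above $\mathbf{3}$'' in Diagram~(\ref{enlarge diagram}), and the paper explicitly warns at the end of Step~1.1.3 that this triangle has \emph{not} yet been shown to commute; it is established only in Step~1.1.5, \emph{using} Lemma~\ref{lemma step 5 enlarge diagram} together with Lemma~\ref{lemma step 4 enlarge diagram} (both composites being zero, hence equal).

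The reason the triangle does not commute automatically is the same indeterminacy you identified for $\mathbf{4}$: both $\mathbf{3}$ and the map $S^{8k+4}\to X(8k-2)^{8k}_{8k-2}$ are obtained as factorizations through the quotient $E(k)/S^{8k-4}\twoheadrightarrow S^{8k+4}$, and each is therefore only well-defined up to a map that factors through $\partial$. The upper rectangle and the defining triangle \emph{under} $\mathbf{3}$ only pin down $q\circ\mathbf{3}$ and the Lemma~\ref{lemma step 4 enlarge diagram} composite after precomposition with $E(k)/S^{8k-4}\twoheadrightarrow S^{8k+4}$; their difference is some $\psi\circ\partial$, and showing $\psi\circ\partial=0$ is exactly the content of the lemma you are trying to prove. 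The paper's proof avoids this loop by computing $\mathbf{4}\circ\partial$ directly: it splits along the wedge $S^{8k+1}\vee S^{8k-2}$ and kills each summand with the relations $\pi_2\cdot\pi_3=0$, $\pi_4=0$, and $\pi_1\cdot\pi_6=0$.
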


In fact, by Lemma~\ref{lemma step 5 enlarge diagram} and \textbf{Step 4}, the following composite is zero.
$$\xymatrix{S^{8k+4} \ar[r]^-{\mathbf{3}} & X(8k-3)^{8k}_{8k-7} \ar@{->>}[r] & X(8k-3)^{8k}_{8k-3}}$$	
Then by the cofiber sequence in the right most column of Diagram (\ref{enlarge diagram}), the map $\mathbf{3}$ must map through $X(8k-3)^{8k-4}_{8k-7}$, establishing the desired map $\mathbf{5}$. 

To see that all parts of Diagram (\ref{enlarge diagram}) commute, first note that by Lemma~\ref{lemma step 5 enlarge diagram} and Lemma~\ref{lemma step 4 enlarge diagram}, both the triangles above the map $\mathbf{3}$ and under the map $\mathbf{4}$ commute. Next, by the construction of the map $\mathbf{5}$, the triangles above it commute. Finally, by \textbf{Step~1.1.3} and the cofiber sequence of the left most column in Diagram (\ref{enlarge diagram}), the triangle under the map $\mathbf{5}$ commutes. Therefore, all parts of Diagram (\ref{enlarge diagram}) commute.

Now, let's prove Lemma~\ref{lemma step 5 enlarge diagram}.

\begin{proof}[Proof of Lemma~\ref{lemma step 5 enlarge diagram}]
	The composite in the statement splits into the following two composites.
	\begin{equation} \label{step 5 splitting equation 1}
	\xymatrix{S^{8k+4} \ar[r] & S^{8k+1} \ar[r]^-{\mathbf{6}} & X(8k-3)^{8k}_{8k-3}}	
	\end{equation}
	\begin{equation} \label{step 5 splitting equation 2}
	\xymatrix{S^{8k+4} \ar[r] & S^{8k-2} \ar[r]^-{\mathbf{7}} & X(8k-3)^{8k}_{8k-3}}	
	\end{equation}
	
For the first composite (\ref{step 5 splitting equation 1}), let's study the second map $\mathbf{6}$. By Proposition~\ref{eta square attaching maps within columns} and Corollary~\ref{2 and eta attaching maps within columns}, $X(8k-3)^{8k}_{8k-3}$ is a 3 cell complex, with cells in dimensions $8k, \ {8k-1}, \ 8k-3$, and with a 2 and $\eta^2$-attaching map. Since $\eta^3 \neq 0$, there is a nonzero differential
$$\eta[8k] \rightarrow \eta^3[8k-3]$$
in the Atiyah--Hirzebruch spectral sequence of $X(8k-3)^{8k}_{8k-3}$. It follows that the second map $\mathbf{6}$ must map through its $(8k-1)$-skeleton: $S^{8k-1} \vee S^{8k-3}$. Since $\pi_4 = 0$, the map $\mathbf{6}$ must further map through $S^{8k-1}$ and the composite (\ref{step 5 splitting equation 1}) can be decomposed as 
$$\xymatrix{S^{8k+4} \ar[r] & S^{8k+1} \ar[r] & S^{8k-1} \ar@{^{(}->}[r] &  X(8k-3)^{8k}_{8k-3}}.$$
Therefore, due to the relation
$$\pi_2 \cdot \pi_3 = 0,$$
the first composite (\ref{step 5 splitting equation 1}) is zero.

For the second composite (\ref{step 5 splitting equation 2}), the second map $\mathbf{7}$ must map through the ${(8k-2)}$-skeleton of $X(8k-3)^{8k}_{8k-3}$, which is $S^{8k-3}$. Then it follows from the relation 
$$\pi_1 \cdot \pi_6 = 0$$
that the second composite (\ref{step 5 splitting equation 2}) is zero. This completes the proof.
\end{proof}

\begin{figure}[h]
\begin{center}
\makebox[\textwidth]{\includegraphics[trim={2.5cm 5.5cm 0.3cm 7.8cm}, clip, page = 1, scale = 0.8]{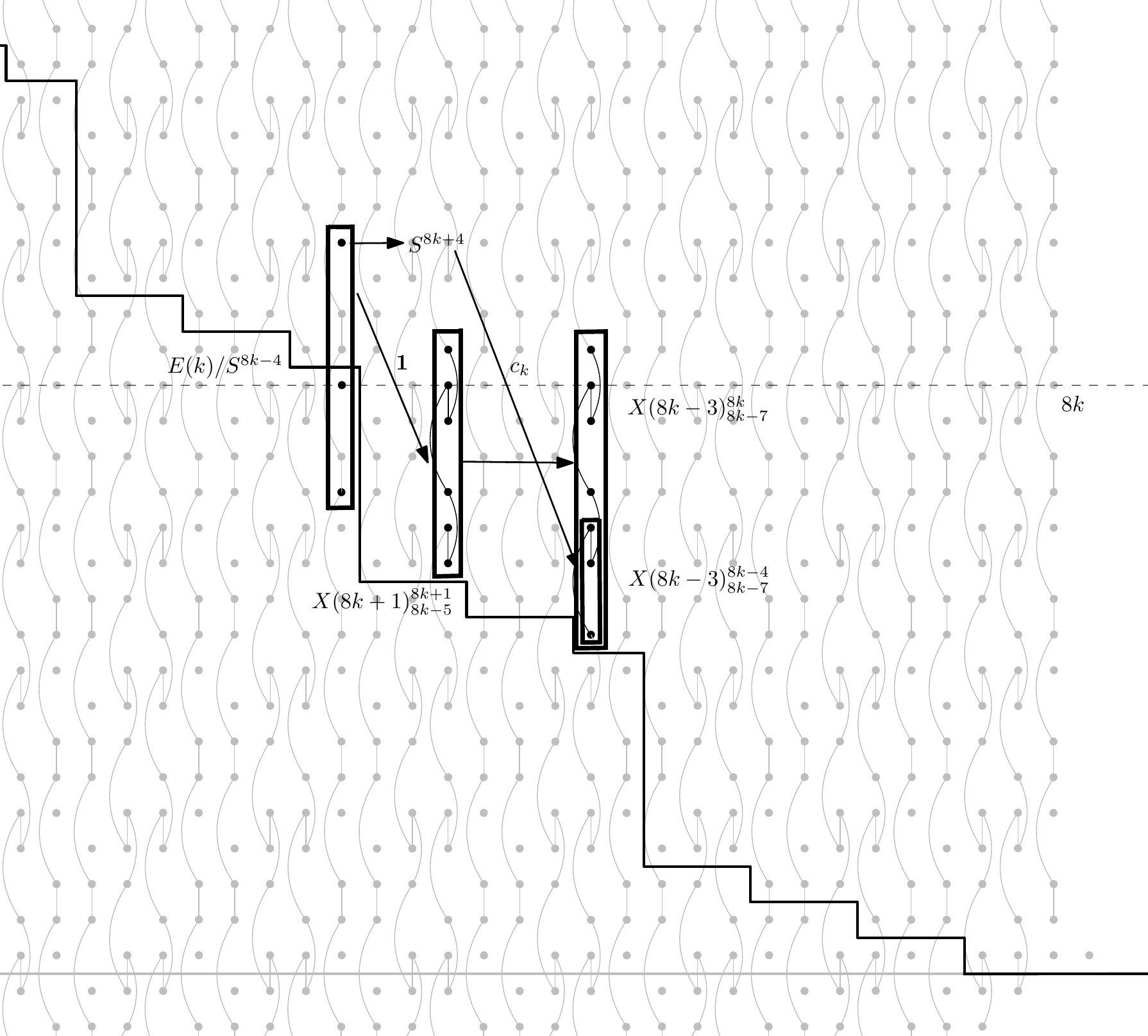}}
\end{center}
\begin{center}
\caption{Step 1.1.5 picture.}
\hfill
\label{fig:Step1point1point5Pic}
\end{center}
\end{figure}

Now we claim that the map $\mathbf{5}$ is our desired map $c_k$ in Proposition~\ref{proposition: construct a}. In fact, part of Diagram (\ref{enlarge diagram}) gives us the following commutative diagram.

\begin{equation}\label{cd3}
\xymatrix{     
   E(k)/S^{8k-4} \ar@{->>}[d] \ar[r]^-{\mathbf{1}}  &    X(8k+1)^{8k+1}_{8k-5} \ar@^{->}[r] & X(8k-3)_{8k-7}^{8k} \\
   S^{8k+4} \ar[rr]^{c_{k}} & & X(8k-3)^{8k-4}_{8k-7} \ar@{^{(}->}[u]}
\end{equation}
Putting Diagrams (\ref{kill first cell in E}) and (\ref{cd3}) together, we have the following commutative diagram.
\begin{displaymath}
    \xymatrix{
      E(k) \ar@{->>}[d] \ar@{^{(}->}[r] & X(8k+4)_{8k-4}^\infty \ar@^{->}[r] & X(8k+1)_{8k-5}^\infty \ar@^{->}[r] & X(8k-3)_{8k-7}^\infty \\ 
   E(k)/S^{8k-4} \ar@{->>}[d] \ar[rr]^{\mathbf{1}} & &    X(8k+1)^{8k+1}_{8k-5} \ar@^{->}[r] \ar@{^{(}->}[u] & X(8k-3)_{8k-7}^{8k}  \ar@{^{(}->}[u] \\
    S^{8k+4} \ar[rrr]^{c_{k}} & & & X(8k-3)^{8k-4}_{8k-7} \ar@{^{(}->}[u]
   } 
\end{displaymath}

Forgetting some terms in this diagram, we obtain Diagram (\ref{diagram: construct a}) in Proposition~\ref{proposition: construct a}.

\subsection{Proof of Proposition~\ref{proposition: construction of u,v}} \label{subsec: hep}
The following Lemma~\ref{homotopy extension} is essentially the homotopy extension property.

\begin{lem}\label{homotopy extension} 
Suppose that we have the following commutative diagram in the stable homotopy category
\begin{equation}\label{diagram before extension}
\xymatrix{
A \ar@{=}[r] \ar[d]_{\mathbf{1}} & A \ar[d]^{\mathbf{2}}\\
B \ar[r] \ar[d] & C \ar[dr] \ar[d] & \\
B/A \ar[drr] & C/A & G  \\
& & F \ar[u]
}
\end{equation}
where $B/A$ and $C/A$ are the cofibers of the maps ${\mathbf{1}}:A\rightarrow B$ and ${\mathbf{2}}:A\rightarrow C$ respectively. Then it can be extended into the following commutative diagram:
\begin{displaymath}
\xymatrix{
A \ar@{=}[r] \ar[d]_{\mathbf{1}} & A \ar[d]_{\mathbf{2}}\\
B \ar[r] \ar[d] & C \ar[dr] \ar[d] & \\
B/A \ar@{-->}[r] \ar[drr] & C/A \ar@{-->}[r] & G \\
& & F \ar[u]
}
\end{displaymath}
\end{lem}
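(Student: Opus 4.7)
The plan is to construct the two dashed arrows in sequence and then adjust one of them so the final compatibility holds. First, since the top square of~(\ref{diagram before extension}) commutes, the composite $A \to B \to C \to C/A$ equals $A \to C \to C/A$, which is null by the definition of the cofiber $C/A$. Hence $B \to C \to C/A$ factors through $B \to B/A$, producing a map $B/A \to C/A$ fitting into a morphism of cofiber triangles. This is just the functoriality of cones applied to the morphism of pairs $(A,B) \to (A,C)$.

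Second, I would extend $C \to G$ over the cofiber $C/A$. The composite $A \to C \to G$ is null: by the commutativity of the original diagram it equals $A \to B \to B/A \to F \to G$, and $A \to B \to B/A = 0$. Thus there exists $\phi : C/A \to G$ whose restriction to $C$ is the given map $C \to G$.

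The nontrivial point is to arrange the final compatibility $\phi \circ (B/A \to C/A) = (F \to G) \circ (B/A \to F)$. Denote the difference of these two maps by $\delta : B/A \to G$. The commutativities already established imply that $\delta$ vanishes after pre-composition with $B \to B/A$. The cofiber long exact sequence $[\Sigma A, G] \to [B/A, G] \to [B, G]$ then gives $\delta = \gamma \circ \partial_B$ for some $\gamma : \Sigma A \to G$, where $\partial_B : B/A \to \Sigma A$ is the connecting map. Since $\phi$ is only determined up to the image of $[\Sigma A, G]$ inside $[C/A, G]$ via $\partial_C : C/A \to \Sigma A$, one may replace $\phi$ by $\phi - \gamma \circ \partial_C$. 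The naturality of connecting maps in the morphism of cofiber triangles gives $\partial_C \circ (B/A \to C/A) = \partial_B$, so this adjustment changes the new $\delta$ by $-\gamma \circ \partial_B$ and precisely kills it.

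The main obstacle is exactly this last step: the two dashed maps exist individually for formal reasons, but their composite $B/A \to C/A \to G$ need not a priori equal $B/A \to F \to G$. The resolution relies on two key facts, namely the cofiber long exact sequence to identify the obstruction as coming from $[\Sigma A, G]$, and the compatibility of connecting maps in the induced morphism of cofiber triangles, which converts the freedom in choosing $\phi$ into precisely the freedom required to cancel the obstruction.
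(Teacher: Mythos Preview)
Your proof is correct and follows essentially the same approach as the paper's: construct $B/A \to C/A$ from the morphism of cofiber triangles, extend $C \to G$ over $C/A$ using that $A \to C \to G$ is null, then correct the resulting extension $\phi$ by an element of $[\Sigma A, G]$ via the connecting map to kill the discrepancy $\delta$, using $\partial_C \circ (B/A \to C/A) = \partial_B$. One small point you leave implicit but the paper spells out: the adjustment $\phi \mapsto \phi - \gamma \circ \partial_C$ does not disturb the commutativity with $C \to C/A$, since $\partial_C \circ (C \to C/A) = 0$ as consecutive maps in a cofiber triangle.
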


\begin{proof} We can first extend the commutative diagram (\ref{diagram before extension}) to the following commutative diagram:
\begin{equation*}
\xymatrix{
A \ar@{=}[r] \ar_{\mathbf{1}}[d] & A \ar^{\mathbf{2}}[d] \\
B \ar^{\mathbf{3}}[r] \ar_{\mathbf{4}}[d] & C \ar^{\mathbf{5}}[dr] \ar^{\mathbf{6}}[d] & \\
B/A \ar[drr]_<<<<<<{\mathbf{7}} \ar@{-->}[r]^{\mathbf{9}} \ar[d]_{\mathbf{10}} & C/A \ar[d]^<<{\mathbf{11}}& G  \\ 
\Sigma A \ar@{=}[r] & \Sigma A & F \ar_{\mathbf{8}}[u]
}
\end{equation*}
Note that the map $\mathbf{9}: B/A \rightarrow C/A$ is not unique in general. We choose one and stick with our choice. Since the composite 
$$\mathbf{5}\circ \mathbf{2}=\mathbf{5}\circ \mathbf{3}\circ \mathbf{1}=\mathbf{8}\circ \mathbf{7}\circ \mathbf{4}\circ \mathbf{1}:A \longrightarrow G$$
is the zero map, there exists a map 
$$\mathbf{12}:C/A \longrightarrow G, $$ 
making the diagram commute.
\begin{displaymath}
\xymatrix{
C \ar^{\mathbf{5}}[dr] \ar^{\mathbf{6}}[d] & \\
C/A \ar[r]^{\mathbf{12}} & G
}
\end{displaymath}

Now consider the map 
$$
\mathbf{13}=\mathbf{12}\circ \mathbf{9}-\mathbf{8}\circ \mathbf{7}:B/A\longrightarrow G
$$
The map $\mathbf{13}$ is not zero in general. If it were zero, we then have the commutative diagram as requested. 

The fix is to modify the map $\mathbf{12}$. Note that the composite
$$
\mathbf{13} \circ \mathbf{4}=\mathbf{12}\circ \mathbf{9}\circ \mathbf{4}-\mathbf{8}\circ \mathbf{7}\circ \mathbf{4}=\mathbf{12}\circ \mathbf{6}\circ \mathbf{3}-\mathbf{5}\circ \mathbf{3}: B \longrightarrow G
$$
is the zero map. Therefore, by the cofiber sequence
\begin{displaymath}
\xymatrix{
B \ar^{\mathbf{4}}[r]  & B/A \ar[r]^{\mathbf{10}} & \Sigma A,
}
\end{displaymath}
there exists a map 
$$\mathbf{14}: \Sigma A \longrightarrow G$$ 
such that $\mathbf{14} \circ \mathbf{10}=\mathbf{13}$. We define the map 
$$\mathbf{12}':=\mathbf{12}-\mathbf{14}\circ \mathbf{11}: C/A\longrightarrow G.$$
Then the following diagram commutes as requested.
\begin{equation*}
\xymatrix{A\ar@{=}[r]\ar^{\mathbf{1}}[d]&A\ar^{\mathbf{2}}[d]\\
B \ar^{\mathbf{3}}[r]\ar^{\mathbf{4}}[d]& C\ar^{\mathbf{5}}[dr] \ar^{\mathbf{6}}[d]& \\
B/A\ar[drr]_{\mathbf{7}}\ar^{\mathbf{9}}[r] & C/A\ar^{\mathbf{12'}}[r]& G\\
& & F \ar^{\mathbf{8}}[u]
}
\end{equation*}
In fact, we have that
$$\mathbf{12}'\circ \mathbf{6} = \mathbf{12} \circ \mathbf{6}-\mathbf{14}\circ \mathbf{11} \circ \mathbf{6} =  \mathbf{12} \circ \mathbf{6} = \mathbf{5},$$
$$\mathbf{12}'\circ \mathbf{9} =\mathbf{12} \circ \mathbf{9}-\mathbf{14}\circ \mathbf{11} \circ \mathbf{9} = \mathbf{12} \circ \mathbf{9}-\mathbf{14}\circ \mathbf{10} = \mathbf{12} \circ \mathbf{9}-\mathbf{13}= \mathbf{8}\circ \mathbf{7}.$$
\end{proof}
\begin{figure}[h]
\begin{center}
\makebox[\textwidth]{\includegraphics[trim={2.5cm 5.5cm 0.3cm 7.8cm}, clip, page = 1, scale = 0.8]{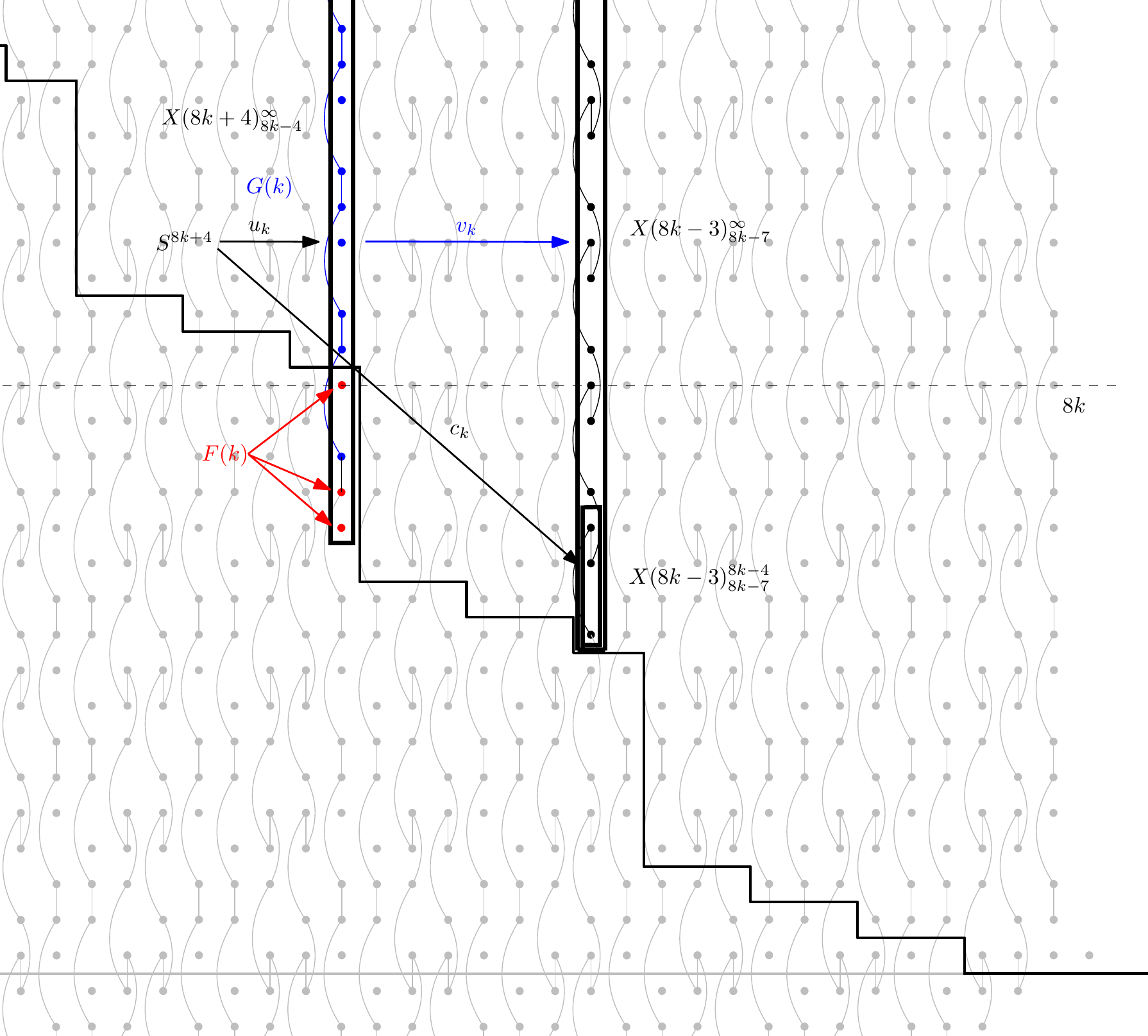}}
\end{center}
\begin{center}
\caption{Step 1.2 picture.}
\hfill
\label{fig:Step1point2Pic}
\end{center}
\end{figure}

From the commutative diagram (\ref{diagram: construct a}) in Proposition~\ref{proposition: construct a} and the definitions of $F(k)$ and $G(k)$, we have the following commutative diagram 
\begin{displaymath}
\xymatrix{F(k)\ar@{=}[r]\ar@{^{(}->}[d]&F(k)\ar@{^{(}->}[d]\\
E(k) \ar@{^{(}->}[r]\ar@{->>}[d]& X(8k+4)_{8k-4}^\infty \ar@^{->}[dr] \ar@{->>}[d] & \\ 
S^{8k+4}
\ar[drr]_{c_{k}} & G(k) & X(8k-3)_{8k-7}^\infty \\
& & X(8k-3)_{8k-7}^{8k-4}\ar@{^{(}->}[u]
}
\end{displaymath}

By Lemma~\ref{homotopy extension}, we can extend it to the following commutative diagram
\begin{equation*}
\xymatrix{F(k)\ar@{=}[r]\ar@{^{(}->}[d]&F(k)\ar@{^{(}->}[d]\\
E(k) \ar@{^{(}->}[r]\ar@{->>}[d]& X(8k+4)_{8k-4}^\infty\ar@^{->}[dr] \ar@{->>}[d]& \\ 
S^{8k+4}\ar[r]^{u_{k}}
\ar[drr]_{c_{k}} & G(k) \ar[r]^-{v_{k}} & X(8k-3)_{8k-7}^\infty \\
& & X(8k-3)_{8k-7}^{8k-4}\ar@{^{(}->}[u]
}
\end{equation*}

Removing the terms $F(k)$, we have the commutative diagram (\ref{diagram: construction of u,v}) in Proposition~\ref{proposition: construction of u,v}. It is clear that The map $u_{k}$ induces an isomorphism on $H_{8k+4}(-;\mathbb{F}_{2})$. In other words, $(S^{8k+4}, u_{k})$ is an $\textup{H}\mathbb{F}_2$-subcomplex of $G(k)$. The completes the proof of Proposition~\ref{proposition: construction of u,v}.

\subsection{Proof of Proposition~\ref{proposition: construct f}} 
In this subsection, we prove Proposition~\ref{proposition: construct f} that for $k\geq 1$, the following composite is zero. 
\begin{equation*}
\xymatrix{
S^{8k-2} \ar@{^{(}->}[r] & G(k) \ar[r]^-{v_{k}} & X(8k-3)_{8k-7}^\infty \ar@^{->}[r]& X(8k-4)_{8k-7}^\infty \ar[r]^-{f_{k-1}} & S^{0} 
}
\end{equation*}

We start with the commutative diagram (\ref{gf factors throught W}) for the case $k-1$ in (iii) of Theorem~\ref{thm: inductive fk}. We enlarge the commutative diagram (\ref{gf factors throught W}) for the case $k-1$ in the following way
\begin{equation} \label{prop53 enlarge}
\xymatrix{
X(8k-3)_{8k-7}^{\infty} \ar@^{->}[r]& X(8k-4)_{8k-7}^\infty \ar[rr]^-{f_{k-1}} & & S^{0}\\
 X(8k-3)_{8k-7}^{8k-4} \ar@{^{(}->}[u] \ar@^{->}[r] & X(8k-4)^{8k-4}_{8k-7} \ar@{^{(}->}[u] & & \\
& S^{8k-4} \ar@{^{(}->}[u] \ar[rr]^{a_{k-1}} & & X(8k-12)^{8k-12}_{8k-15} \ar[uu]^{b_{k-1}}
}
\end{equation}
We next state a lemma about the map $v_{k}$, whose proof we postpone until the end of this subsection. This Lemma~\ref{lem:step4vkfactorthrough} will also be used in Subsection 5.6.

\begin{lem}\label{lem:step4vkfactorthrough}
There exists a map 
$$w_{k}:G(k)^{8k+1}\longrightarrow X(8k-3)_{8k-7}^{8k-4}$$ 
that fits into the following commutative diagram
\begin{equation} \label{vk property}
\xymatrix{G(k)\ar[r]^-{v_{k}}& X(8k-3)^{\infty}_{8k-7}\\
G(k)^{8k+1}\ar@{^{(}->}[u]\ar[r]^-{w_{k}}& X(8k-3)^{8k-4}_{8k-7}\ar@{^{(}->}[u]}.
\end{equation}
\end{lem}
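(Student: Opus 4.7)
By the cofiber sequence $X(8k-3)^{8k-4}_{8k-7}\hookrightarrow X(8k-3)^\infty_{8k-7}\twoheadrightarrow X(8k-3)^\infty_{8k-3}$, constructing the desired $w_k$ is equivalent to showing that the composite
$$\phi: G(k)^{8k+1} \hookrightarrow G(k) \xrightarrow{v_k} X(8k-3)^\infty_{8k-7} \twoheadrightarrow X(8k-3)^\infty_{8k-3}$$
is null-homotopic. Recall that $G(k)^{8k+1}=\Sigma^{8k-2}C\eta^2$ has cells only in dimensions $8k-2$ and $8k+1$, and that $X(8k-3)^\infty_{8k-3}$ has no cell in dimension $8k-2$ (since $8k-2\equiv 2\pmod 4$, whereas cells of $X(8k-3)$ live in dimensions $\equiv 0,1,3\pmod 4$).

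My first step will be to show $\phi|_{S^{8k-2}}=0$. Using the identity $v_k\circ(\twoheadrightarrow G(k))=\rightharpoonup$ guaranteed by Proposition~\ref{proposition: construction of u,v}, it suffices to track the $(8k-2)$-cell of $X(8k+4)$ through the sequence of adjacent-column maps down to $X(8k-3)$, using Lemma~\ref{eta attaching map between columns} together with Lemma~\ref{cpx}. A direct computation will show the cell is carried by $\eta$ at the step $X(8k+2)\to X(8k+1)$, landing on the $(8k-3)$-cell, and by another $\eta$ at $X(8k-1)\to X(8k-2)$, landing on the $(8k-4)$-cell; in particular the image in $X(8k-3)^\infty_{8k-7}$ has Atiyah--Hirzebruch filtration $\leq 8k-4$, hence is killed by the projection to $X(8k-3)^\infty_{8k-3}$.

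With the bottom cell killed, the Puppe sequence $S^{8k-2}\to G(k)^{8k+1}\to S^{8k+1}\xrightarrow{\eta^2}S^{8k-1}$ produces a factorization $\phi=\bar\phi\circ\beta$ for some $\bar\phi\in\pi_{8k+1}X(8k-3)^\infty_{8k-3}$, well-defined modulo the image of $(\cdot\eta^2)\colon\pi_{8k-1}X(8k-3)^\infty_{8k-3}\to\pi_{8k+1}X(8k-3)^\infty_{8k-3}$. I will then apply the same column-by-column tracking to the $(8k+1)$-cell of $X(8k+4)$: it is carried by $\eta$ at $X(8k+3)\to X(8k+2)$ and again by $\eta$ at $X(8k)\to X(8k-1)$, landing at the $(8k-1)$-cell of $X(8k-3)$ by $\eta^2$. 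Since Proposition~\ref{xmhomologysq} gives $Sq^2=0$ between the $(8k-3)$- and $(8k-1)$-cells of $X(8k-3)$, we have $X(8k-3)^{8k-1}_{8k-3}\simeq S^{8k-3}\vee S^{8k-1}$, so the wedge-summand inclusion yields a class $g_0\in\pi_{8k-1}X(8k-3)^\infty_{8k-3}$ with $\bar\phi=\eta^2\cdot g_0$ up to higher-filtration corrections; hence $\bar\phi$ lies in the image of $(\cdot\eta^2)$ and $\phi=0$.

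The main obstacle will be the analysis of the higher-filtration corrections to $\bar\phi$ at the $(8k)$- and $(8k+1)$-cells of $X(8k-3)$, where a surviving degree-$\eta$ or degree-$1$ contribution would not lie in the image of $(\cdot\eta^2)$; ruling these out requires a careful cell-diagram analysis of $X(8k-3)^{8k+1}_{8k-3}$ using the $Sq^1$-attaching from $8k-1$ to $8k$ and the $Sq^2$-attaching from $8k-1$ to $8k+1$ identified in Section~\ref{sec:AttachingMaps}, together with the corresponding Atiyah--Hirzebruch differentials in that range. Once $\phi=0$ is established, $w_k$ is defined as the resulting lift to $X(8k-3)^{8k-4}_{8k-7}$, and the commutativity of diagram~(\ref{vk property}) follows from the construction.
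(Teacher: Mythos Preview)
Your reduction to showing the vanishing of
\[
\phi:\ G(k)^{8k+1}\hookrightarrow G(k)\xrightarrow{v_k}X(8k-3)^{\infty}_{8k-7}\twoheadrightarrow X(8k-3)^{\infty}_{8k-3}
\]
is correct and is exactly how the paper begins. Your treatment of the bottom cell $S^{8k-2}$ is also fine in spirit; the paper argues slightly differently (cellular approximation forces the image into $S^{8k-3}$, and the possibility $\eta$ is excluded by an $\eta$-attaching), but either route works.

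The gap is in your handling of the top cell. You correctly observe that $\phi=0$ is equivalent to $\bar\phi\in\mathrm{im}\bigl(\eta^{2}\colon\pi_{8k-1}\to\pi_{8k+1}\bigr)$ in $X(8k-3)^{\infty}_{8k-3}$. However, this image is actually \emph{zero}: the class $\eta^{2}[8k-1]$ is killed by the $d_{2}$ coming from the $\eta$-attaching between the $(8k{-}1)$- and $(8k{+}1)$-cells, and nothing survives below. So you must show $\bar\phi=0$ outright. Your column-tracking of the $(8k{+}1)$-cell of $X(8k+4)$ does \emph{not} compute $\bar\phi$: it computes $\bar\phi\circ p$ where $p\colon X(8k+4)^{8k+1}_{8k-4}\twoheadrightarrow S^{8k+1}$ is the pinch, and $p$ has no section (the $(8k{+}1)$-cell is attached to the $(8k{-}2)$-cell by $\eta^{2}$). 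The Atiyah--Hirzebruch analysis you propose in $X(8k-3)^{8k+1}_{8k-3}$ does kill $\eta[8k]$ (via $d_{3}$ to $\eta^{3}[8k-3]$) and forces $1[8k+1]$ to support a $d_{2}$, but it leaves $2\mathbb{Z}[8k+1]$ alive; nothing in your outline rules out a contribution there. This is precisely the ``main obstacle'' you flag but do not resolve.

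The paper supplies the missing idea: rather than working only with $G(k)^{8k+1}$, it passes to $G(k)^{8k+2}$ and uses that $G(k)^{8k+2}_{8k+1}=\Sigma^{8k+1}C2$. After killing the bottom cell, the relevant map factors as
\[
S^{8k+1}\hookrightarrow \Sigma^{8k+1}C2\longrightarrow X(8k-3)^{8k+1}_{8k-3}.
\]
Since any composite $S^{8k+1}\hookrightarrow\Sigma^{8k+1}C2\to S^{8k+1}$ is zero, the map lands in $X(8k-3)^{8k}_{8k-3}$, eliminating the $[8k+1]$-filtration contribution for free. The remaining possibilities ($\eta[8k]$, $\eta^{2}[8k-1]$, $\pi_{4}[8k-3]$) are then dispatched exactly by the differentials you identified. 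Incorporating this one-cell extension is what closes your argument.
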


\begin{figure}[h]
\begin{center}
\makebox[\textwidth]{\includegraphics[trim={2.5cm 5.5cm 0.3cm 7.8cm}, clip, page = 1, scale = 0.8]{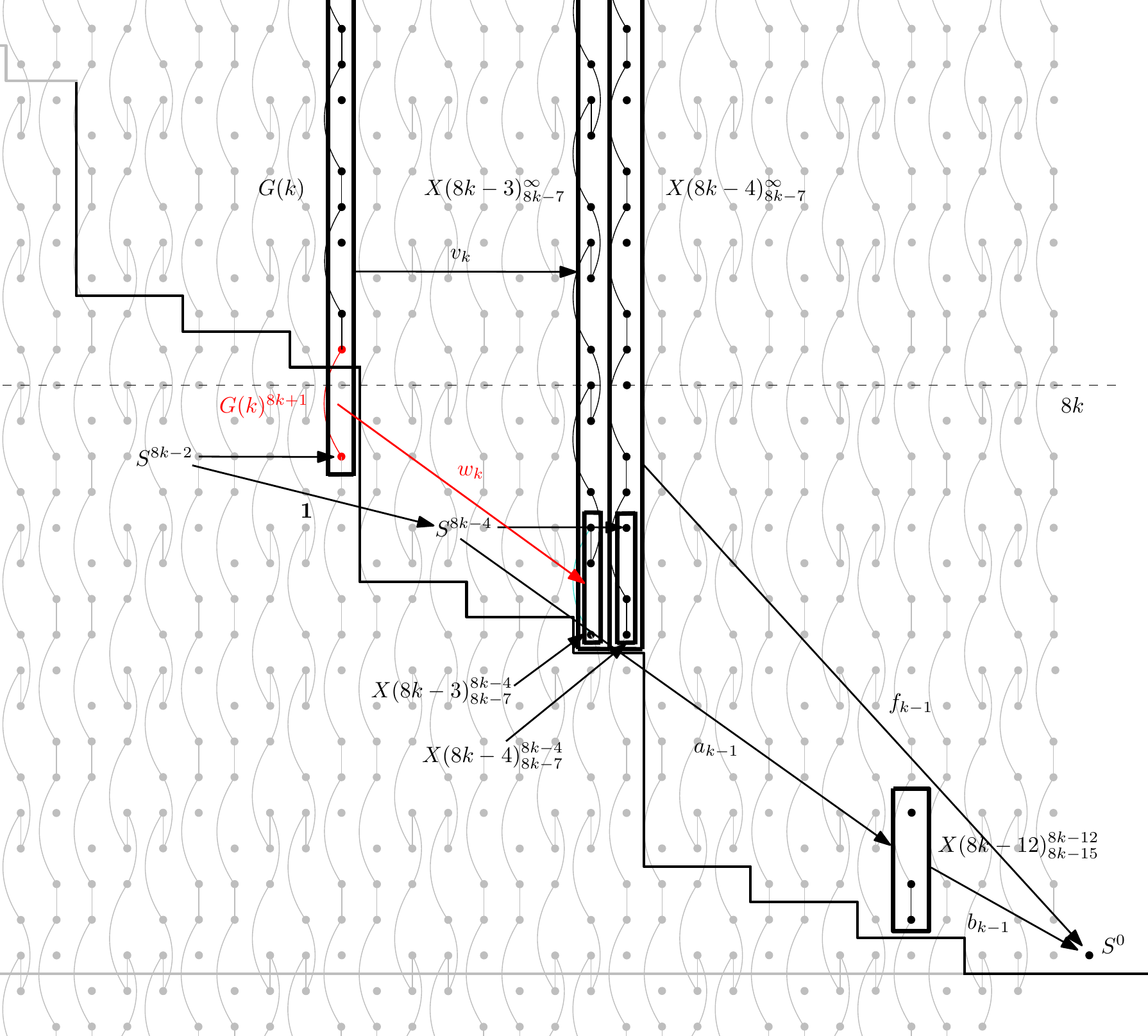}}
\end{center}
\begin{center}
\caption{Step 1.4 picture.}
\hfill
\label{fig:Step1point4Pic}
\end{center}
\end{figure}
Putting these above two diagrams (\ref{prop53 enlarge}) and (\ref{vk property}) together, we obtain the following commutative diagram
\begin{equation} \label{prop53 large diagram}
\xymatrix{
S^{8k-2}\ar@{^{(}->}[r]\ar@{^{(}->}[dr] \ar@{=}[dd] &G(k)\ar[r]^-{v_{k}}& X(8k-3)^{\infty}_{8k-7}\ar@^{->}[r]& X(8k-4)_{8k-7}^\infty \ar[rr]^-{f_{k-1}} & & S^{0}\\&
G(k)^{8k+1}\ar@{^{(}->}[u]\ar[r]^-{w_{k}}& X(8k-3)_{8k-7}^{8k-4} \ar@{^{(}->}[u]\ar@^{->}[r] & X(8k-4)^{8k-4}_{8k-7} \ar@{^{(}->}[u] & & \\S^{8k-2} \ar@{-->}[rrr]^{\mathbf{1}}& &
& S^{8k-4} \ar@{^{(}->}[u] \ar[rr]^{a_{k-1}} & & X(8k-12)^{8k-12}_{8k-15} \ar[uu]^{b_{k-1}}
}
\end{equation}

It is clear that Proposition~\ref{proposition: construct f} follows from the following Lemma~\ref{lem:step4lift2to1}, Lemma~\ref{lem:step43a0} and the above commutative diagram.

\begin{lem}\label{lem:step4lift2to1}
The following composite
$$\xymatrix{ S^{8k-2} \ar@{^{(}->}[r] & G(k)^{8k+1} \ar[r]^-{w_{k}} & X(8k-3)_{8k-7}^{8k-4} \ar@^{->}[r] & X(8k-4)^{8k-4}_{8k-7} }$$ 
factors through $S^{8k-4}$, giving the map $\mathbf{1}$ in the diagram (\ref{prop53 large diagram}).
\end{lem}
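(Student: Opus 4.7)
The plan is to reduce the lemma to a direct calculation of $\pi_{8k-2}(X(8k-4)^{8k-4}_{8k-7})$, and to show that this group is small enough that every element automatically factors through the inclusion of $S^{8k-4}$. In particular, one does not need detailed information about the map $w_k$ itself.

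First I would pin down the cell structure and attaching maps of $X(8k-4)^{8k-4}_{8k-7}$. Since $8k-4 \equiv 0 \pmod 4$, Proposition~\ref{xmhomology} gives exactly three cells, in dimensions $8k-7$, $8k-6$, and $8k-4$. Corollary~\ref{2 and eta attaching maps within columns} produces a $2$-attaching map from $8k-7$ to $8k-6$ (coming from the nonzero $Sq^1$) and no $\eta$-attaching in this range. Proposition~\ref{eta square attaching maps within columns} rules out an $\eta^2$-attaching from $8k-7$ to $8k-4$ in this column, since for $m \equiv 0 \pmod 4$ the only $\eta^2$-attaching requires the source in $j \equiv 2 \pmod 4$. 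The attaching class for the top cell therefore lies in $\pi_{8k-5}(\Sigma^{8k-7}C2) = \pi_2 C2 \cong \mathbb{Z}/2 \oplus \mathbb{Z}/2$, and its two components detect respectively the (nonexistent) $\eta^2$- and $\eta$-attaching data; both vanish, giving a wedge splitting
\[
X(8k-4)^{8k-4}_{8k-7} \;\simeq\; \Sigma^{8k-7} C2 \;\vee\; S^{8k-4}.
\]

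Next I would use the cofiber sequence $S^0 \xrightarrow{2} S^0 \to C2$ and the vanishing $\pi_4 = \pi_5 = 0$ to conclude that $\pi_5 C2 = 0$, so that $\pi_{8k-2}(\Sigma^{8k-7}C2) = 0$. Combined with the wedge splitting,
\[
\pi_{8k-2}\bigl(X(8k-4)^{8k-4}_{8k-7}\bigr) \;=\; \pi_2(S^{8k-4}) \;=\; \mathbb{Z}/2\{\eta^2\},
\]
and every element of this group factors through the inclusion of the $S^{8k-4}$ summand. The composite in the statement of the lemma is itself a class in this group, so it factors through $S^{8k-4}$ via a map $\mathbf{1}\in \{0, \eta^2\}$, as required.

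The main obstacle is the verification of the two attaching-class vanishings that produce the wedge splitting of $X(8k-4)^{8k-4}_{8k-7}$; once these are in hand, the computation of $\pi_{8k-2}$ and the desired factorization are essentially formal. This vanishing is a standard AHSS dichotomy: the two components of an element of $\pi_2 C2$ are detected by composing the attaching map with the inclusion $S^{8k-7} \hookrightarrow \Sigma^{8k-7}C2$ and with the quotient $\Sigma^{8k-7}C2 \twoheadrightarrow S^{8k-6}$, and these correspond precisely to the $\eta^2$- and $\eta$-attaching data already ruled out above.
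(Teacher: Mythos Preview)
Your proposal is correct and follows essentially the same approach as the paper: the paper cites Lemma~\ref{the 3 cell complex wk} (applied with $k$ replaced by $k-1$) for the wedge splitting $X(8k-4)^{8k-4}_{8k-7}\simeq S^{8k-4}\vee\Sigma^{8k-7}C2$, and then observes that the component into $\Sigma^{8k-7}C2$ lies in $\pi_5 C2=0$ (since $\pi_4=\pi_5=0$), forcing the factorization through $S^{8k-4}$. You have simply reproved that splitting inline rather than citing the lemma.
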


\begin{lem} \label{lem:step43a0}
	The following composite is zero.
	\begin{displaymath}
\xymatrix{
S^{8k-2} \ar[r]^{\mathbf{1}} & S^{8k-4} \ar[r]^-{a_{k-1}} & X(8k-12)^{8k-12}_{8k-15}
}
\end{displaymath}
\end{lem}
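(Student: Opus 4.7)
The plan is to reduce the claim to a direct computation in the stable stems. First, I would observe that $[S^{8k-2}, S^{8k-4}] \cong \pi_2 \cong \mathbb{Z}/2$ is generated by $\eta^2$, so $\mathbf{1}$ is either $0$ or $\eta^2$. The case $\mathbf{1} = 0$ is immediate; otherwise, it suffices to show that $a_{k-1} \circ \eta^2 = 0$.

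Next, invoking Lemma~\ref{the 3 cell complex wk} with $k$ replaced by $k-2$ (using Proposition~\ref{prop:periodicityX(m)} to handle the boundary case $k=1$), I would split the target as
$$X(8k-12)^{8k-12}_{8k-15} \simeq S^{8k-12} \vee \Sigma^{8k-15} C2.$$
Under this splitting, $a_{k-1}$ corresponds to a pair $(\alpha, \beta) \in \pi_8 \oplus \pi_{11}(C2)$, and the composite $a_{k-1} \circ \eta^2$ lies in $(\pi_8 \cdot \eta^2) \oplus (\pi_{11}(C2) \cdot \eta^2) \subseteq \pi_{10} \oplus \pi_{13}(C2)$, matching the ambient group identified in Remark~\ref{rem intuition}.

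The vanishing of the $C2$-component is then immediate from a torsion count. Applying $\pi_*(-)$ to the cofiber sequence $S^0 \xrightarrow{2} S^0 \to C2$, together with $\pi_{13} \cong \mathbb{Z}/3$ and $\pi_{12} = 0$, yields $\pi_{13}(C2) \cong \mathbb{Z}/3$. Since $\beta \cdot \eta^2$ is $2$-torsion, it must vanish in this odd-torsion group.

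The main obstacle is showing that $\alpha \cdot \eta^2 = 0$ in $\pi_{10}$ as $\alpha$ runs over the generators $\epsilon$ and $\bar{\nu}$ of $\pi_8 \cong (\mathbb{Z}/2)^2$. Since $\pi_{10}$ is itself $2$-torsion, the torsion argument above does not apply; instead one must invoke the classical identities $\eta^2 \epsilon = 0$ and $\eta^2 \bar{\nu} = 0$ from Toda's tables of the stable stems. Combining this with the $C2$-component vanishing completes the proof.
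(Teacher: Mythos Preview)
Your argument is correct and follows essentially the same route as the paper: split the target via Lemma~\ref{the 3 cell complex wk} and verify that both components of $a_{k-1}\circ\eta^2$ vanish, using $\pi_8\cdot\pi_2=0$ for the sphere summand and the structure of $\pi_{13}(C2)$ for the Moore summand. Two small remarks: first, the paper is working $2$-locally throughout (as declared at the start of Section~\ref{subsec:Step1}), so $\pi_{12}=\pi_{13}=0$ and hence $\pi_{13}(C2)=0$ immediately, with no torsion argument needed; second, even integrally your long exact sequence actually yields $\pi_{13}(C2)=0$ rather than $\mathbb{Z}/3$, since multiplication by $2$ is an isomorphism on $\pi_{13}\cong\mathbb{Z}/3$. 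Neither point affects the validity of your conclusion.
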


We first prove Lemma~\ref{lem:step4lift2to1} and Lemma~\ref{lem:step43a0}, and then prove Lemma~\ref{lem:step4vkfactorthrough}.

\begin{proof}[Proof of Lemma~\ref{lem:step4lift2to1}]
By Lemma~\ref{the 3 cell complex wk}, the 3 cell complex $X(8k-4)^{8k-4}_{8k-7}$ splits as
$$S^{8k-4} \vee \Sigma^{8k-7} C2.$$
To show that the map 
$$S^{8k-2} \longrightarrow X(8k-4)^{8k-4}_{8k-7} \simeq S^{8k-4} \vee \Sigma^{8k-7} C2 $$ 
maps through $S^{8k-4}$, we need to check the following composite is zero.
\begin{displaymath}
\xymatrix{
S^{8k-2} \ar[r] & X(8k-4)^{8k-4}_{8k-7} \ar@{->>}[r] & \Sigma^{8k-7} C2
}
\end{displaymath}
This composite corresponds to an element in the group
$$\pi_{8k-2}(\Sigma^{8k-7} C2) = \pi_5 C2 = 0.$$
The last equation follows from the fact that $\pi_4 =\pi_5 =0$. This completes the proof.	
\end{proof}

\begin{proof}[Proof of Lemma~\ref{lem:step43a0}]
By Lemma~\ref{the 3 cell complex wk}, the 3 cell complex $X(8k-12)^{8k-12}_{8k-15}$ splits as
$$S^{8k-12} \vee \Sigma^{8k-15} C2.$$	
Therefore, the composite
\begin{displaymath}
\xymatrix{
S^{8k-2} \ar[r]^{\mathbf{1}} & S^{8k-4} \ar[r]^-{a_{k-1}} & X(8k-12)^{8k-12}_{8k-15} = S^{8k-12} \vee \Sigma^{8k-15} C2
}
\end{displaymath}
corresponds to an element in the group
\begin{align*}
\big(\pi_{8k-4}S^{8k-12} \oplus \pi_{8k-4}(\Sigma^{8k-15} C2)\big) \cdot \pi_{8k-2}S^{8k-4} &  =(\pi_8 \oplus \pi_{11} C2) \cdot \pi_2 \\& \subseteq \pi_8 \cdot \pi_2 \oplus \pi_{13}C2 =0.
\end{align*}
The last equation follows from the facts that 
$$\pi_8 \cdot \pi_2 =0, \ \pi_{12} =\pi_{13} =0.$$ 
This completes the proof.
\end{proof}

Now we present the proof of Lemma~\ref{lem:step4vkfactorthrough}.

\begin{proof}[Proof of Lemma~\ref{lem:step4vkfactorthrough}]
From the cofiber sequence 
$$
\xymatrix{
X(8k-3)_{8k-7}^{8k-4} \ar@{^{(}->}[r] & X(8k-3)_{8k-7}^{\infty} \ar@{->>}[r] & X(8k-3)_{8k-3}^{\infty} },$$
we need to show that the composite
\begin{equation} \label{wkvk compo}
\xymatrix{
G(k)^{8k+1} \ar@{^{(}->}[r] & G(k) \ar[r]^-{v_{k}} & X(8k-3)_{8k-7}^{\infty}\ar@{->>}[r] & X(8k-3)_{8k-3}^{\infty} }\end{equation}
is zero. By Proposition~\ref{eta square attaching maps within columns}, $G(k)^{8k+1}$ is a 2 cell complex with an $\eta^2$-attaching map:
$$G(k)^{8k+1}=\Sigma^{8k-2}C\eta^{2}.$$
Our strategy to show the composite (\ref{wkvk compo}) being zero is to first deal with the bottom cell and then the top cell. 

By the cellular approximation theorem, the restriction of the composite (\ref{wkvk compo}) to the bottom cell $S^{8k-2}$ of $G(k)^{8k+1}$ maps through the bottom cell $S^{8k-3}$ of ${X(8k-3)_{8k-3}^{\infty}}$, by either $\eta$ or $0$. The possibility of $\eta$ is ruled out by a cell diagram chasing argument due to the $\eta$-attaching map between the cells in dimensions $8k-3$ and $8k-5$ in $X(8k-3)_{8k-3}^{\infty}$.

Therefore, the composite (\ref{wkvk compo}) factors through the top cell $S^{8k+1}$ of $G(k)^{8k+1}$. We can further require it factor through the top 2 cells of $G(k)^{8k+2}$, namely 
$$G(k)^{8k+2}_{8k+1} = \Sigma^{8k+1} C2.$$
By the cellular approximation theorem, it maps through the $(8k+2)$-skeleton of 
$X(8k-3)_{8k-3}^{\infty}$. Note that there is no cell in dimension $8k+2$ in $X(8k-3)_{8k-3}^{\infty}$, so it maps through the 4 cell complex $X(8k-3)_{8k-3}^{8k+1}$. We have the following commutative diagram.
\begin{equation*}
	\xymatrix{
	S^{8k-2} \ar@{^{(}->}[d] \ar@/^1pc/[rrrd]^-{=0} & & &\\
	G(k)^{8k+1} \ar@{^{(}->}[r] \ar@{->>}[dd] & G(k)^{8k+2} \ar[r] \ar@{->>}[d] & X(8k-3)_{8k-3}^{8k+1} \ar@{^{(}->}[r] & X(8k-3)_{8k-3}^{\infty} \\
	 & \Sigma^{8k+1} C2 \ar@{-->}[ru] & X(8k-3)_{8k-3}^{8k} \ar@{^{(}->}[u] &\\
	S^{8k+1} \ar@{^{(}->}[ru] \ar@/_1pc/@{-->}[rru] & & &
	}
\end{equation*}
To prove this lemma, it suffices to show the following composite is zero.
\begin{equation} \label{lem513 last step}
\xymatrix{
S^{8k+1} \ar@{^{(}->}[r] & \Sigma^{8k+1} C2 \ar[r] & X(8k-3)_{8k-3}^{8k+1}.
}	
\end{equation}
Firstly, post-composing with the quotient map 
$$\xymatrix{X(8k-3)_{8k-3}^{8k+1} \ar@{->>}[r] & S^{8k+1}}$$
must be zero. This is due to the fact that it maps through the mod 2 Moore spectrum. Therefore, the composite (\ref{lem513 last step}) must map through the $8k$-skeleton of ${X(8k-3)_{8k-3}^{8k+1}}$, namely the 3 cell complex ${X(8k-3)_{8k-3}^{8k}}$:
\begin{equation} \label{lem513 a map}
	S^{8k+1} \longrightarrow X(8k-3)_{8k-3}^{8k}.
\end{equation}
Now let's consider the Atiyah--Hirzebruch filtration of this map (\ref{lem513 a map}). It cannot be detected in filtration $8k$, since there is a nontrivial differential in the Atiyah--Hirzebruch spectral sequence of ${X(8k-3)_{8k-3}^{8k}}$:
$$\eta[8k] \rightarrow \eta^3[8k-3],$$
which is due to the $\eta^2$-attaching map by Proposition~\ref{eta square attaching maps within columns}. If it is detected in filtration $8k-3$, then it must be zero since $\pi_4=0$. Therefore, if it is nonzero, then it must be detected by $\eta^2[8k-1]$. In this case, post-composing with the inclusion to ${X(8k-3)_{8k-3}^{8k+1}}$ is zero, due to the $\eta$-attaching map between the cells in dimensions $8k-1$ and $8k+1$, and therefore the Atiyah--Hirzebruch differential
$$\eta[8k+1] \rightarrow \eta^2[8k-1].$$
In sum, regardless of the actual Atiyah--Hirzebruch filtration of the map (\ref{lem513 a map}), the following composite is always zero.
$$\xymatrix{S^{8k+1} \ar[r]^-{(\ref{lem513 a map})} & X(8k-3)_{8k-3}^{8k} \ar@{^{(}->}[r] &  X(8k-3)_{8k-3}^{8k+1}.  }$$
This completes the proof of the lemma.
\end{proof}

\subsection{Proof of Proposition~\ref{f g a b satisfy requirements}}
We check that the two diagrams (\ref{f is quotient}) and (\ref{gf factors throught W}) in (i) and (iii) of Theorem~\ref{thm: inductive fk} commute for the 4 maps $(f_{k}, \ g_{k}, \ a_{k}, \ b_{k})$.

For the diagram (\ref{f is quotient}) in (i) of Theorem~\ref{thm: inductive fk} for the case $k$, we put together the following commutative diagrams 
\begin{itemize}
	\item diagram (\ref{diagram: construction of f}) in Step 1.4,
	\item diagram (\ref{f is quotient}) in (i) of Theorem~\ref{thm: inductive fk} for the case $k-1$,
	\item the upper right corner of diagram (\ref{diagram: construction of u,v}) in Proposition~\ref{proposition: construction of u,v}.
\end{itemize}


\begin{displaymath}
\xymatrix{X(8k+4) \ar@^{->}[r] \ar@{->>}[d] & X(8k-3)\ar@^{->}[r] \ar@{->>}[d] & X(8k-4) \ar[rr] \ar@{->>}[d] & & S^{0}\\
X(8k+4)_{8k-4}^\infty \ar@{->>}[d] \ar@^{->}[r] &  X(8k-3)_{8k-7}^{\infty}\ar@^{->}[r]  & X(8k-4)_{8k-7}^\infty \ar[rru]^{f_{k-1}} & \\
G(k) \ar[ru]_{v_{k}} \ar@{->>}[d] && &  & \\
X(8k+4)_{8k+1}^\infty \ar@/_2pc/[uuurrrr]_{f_{k}} & & &
} 
\end{displaymath}
The commutativity of the upper left corner of this diagram is due to the compatibility of each columns. 

For the diagram (\ref{gf factors throught W}) in (iii) of Theorem~\ref{thm: inductive fk} for the case $k$, we put together the following commutative diagrams 
\begin{itemize}
    \item diagram (\ref{diagram: construction of f}) in Step 1.4,
	\item the lower half of diagram (\ref{diagram: construction of u,v}) in Proposition~\ref{proposition: construction of u,v}.
	\end{itemize}

\begin{displaymath}
\xymatrix{ 
S^{8k+4} \ar[r]^-{c_{k}} \ar[d]^{u_{k}}& X(8k-3)_{8k-7}^{8k-4}\ar@^{->}[r]\ar@{->>}[d] & X(8k-4)^{8k-4}_{8k-7} \ar@{^{(}->}[d] \\
G(k)\ar@{->>}[d] \ar[r]^-{v_{k}} &X(8k-3)_{8k-7}^\infty\ar@^{->}[r]  & X(8k-4)_{8k-7}^\infty\ar[d]^{f_{k-1}} \\ X(8k+4)_{8k+1}^{\infty}\ar[rr]^{f_{k}}&&S^{0}
}
\end{displaymath}

By the definitions of $g_k$ in Step 1.2 and $b_k$ in Step~1.4, the composites in the left and right columns give us $g_k$ and $b_k$ respectively.

Therefore, we have the diagram (\ref{gf factors throught W}) in (iii) of Theorem~\ref{thm: inductive fk} for the case $k$. This completes the proof.

\subsection{Proof of Proposition~\ref{prop induct beta}}
In this subsection, we prove Proposition~\ref{prop induct beta}: There exists one choice of $f_{k}$ in Step~1.4 such that 
\begin{equation}\label{prop55 inductive on beta}
\phi_{k}-\phi_{k-2} \cdot  \chi_{k}\in \langle \phi_{k-1}, 2, \tau_{k}\rangle 
\end{equation}
\begin{figure}
\begin{center}
\makebox[\textwidth]{\includegraphics[trim={2cm 5.5cm 0.3cm 7cm}, clip, page = 1, scale = 0.8]{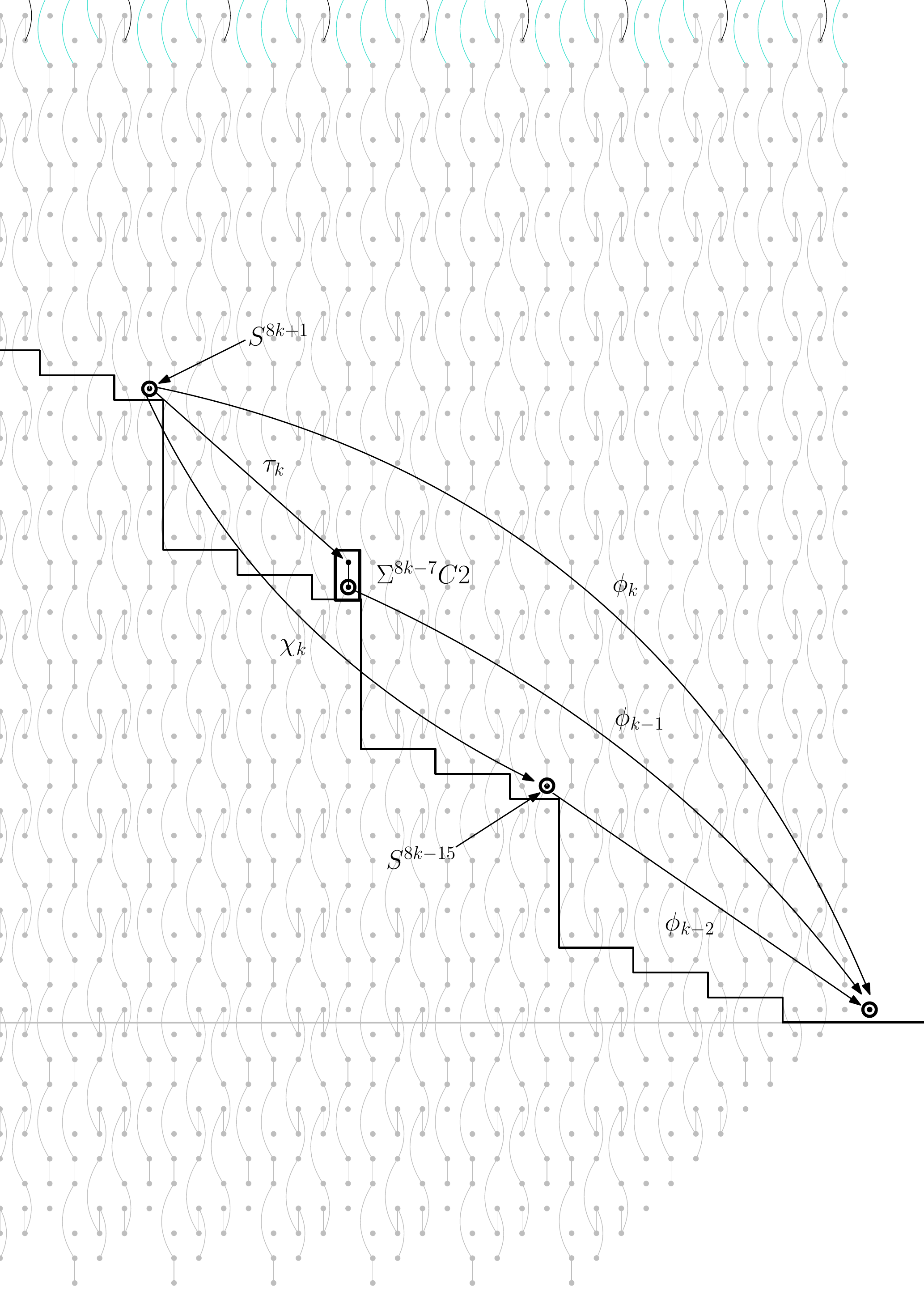}}
\end{center}
\begin{center}
\caption{Step 1.6 picture.}
\hfill
\label{fig:Step1point6Pic}
\end{center}
\end{figure}

where $\phi_{m} \in \pi_{8m+1}$ is the restriction of $f_{m}$ to the bottom cell of $X(8k+4)_{8k+1}^{\infty}$, $\tau_{k}\in \{0,8\sigma\}$ and $\chi_{k}\in \pi_{16}(S^{0})$. Note that by Lemma~\ref{eta attaching map between columns}, $\phi_0 = \eta$ and we set $\phi_{-1}=0$. 

Consider the following composite 
\begin{equation} \label{prop55 general}
\xymatrix{
G(k)^{8k+1} \ar[r]^-{w_{k}} & X(8k-3)^{8k-4}_{8k-7} \ar@^{->}[r] & X(8k-4)^{8k-4}_{8k-7} \ar@{^{(}->}[r] & X(8k-4)_{8k-7}^{\infty}\ar[r]^-{f_{k-1}} & S^{0}
}
\end{equation}
By Lemma~\ref{the 3 cell complex wk}, the 3 cell complex $X(8k-4)^{8k-4}_{8k-7}$ splits:  
$$X(8k-4)^{8k-4}_{8k-7} \simeq \Sigma^{8k-7} C2\vee S^{8k-4}.$$ 
Therefore, the composite (\ref{prop55 general}) can be written as the sum of the following two composites (\ref{step6decomposition 3}) and (\ref{prop55 summand 1}).
\begin{equation}\label{step6decomposition 3}
\xymatrix{G(k)^{8k+1} \ar[r]^-{\mathbf{1}} & \Sigma^{8k-7} C2 \ar@{^{(}->}[r] & X(8k-4)^{\infty}_{8k-7} \ar[r]^-{f_{k-1}} & S^{0}.} 
\end{equation}
\begin{equation} \label{prop55 summand 1}
\xymatrix{G(k)^{8k+1} \ar[r]^-{\mathbf{2}} & S^{8k-4} \ar[r]^-{g_{k}} &  X(8k-4)_{8k-7}^{\infty} \ar[r]^-{f_{k-1}} & S^{0}.}
\end{equation} 
For the composite (\ref{step6decomposition 3}), first note that the map $\mathbf{1}$ equals zero when restrict to bottom cell $S^{8k-2}$ of $G(k)^{8k+1}$. In fact, it corresponds to an element in 
$$\pi_{8k-2}\Sigma^{8k-7}C2 = \pi_5 C2 = 0,$$
which follows from the fact that $\pi_4 = \pi_5 = 0$. 
$$\xymatrix{
S^{8k-2} \ar@{^{(}->}[d] \ar[rd]^{=0} & & & \\
G(k)^{8k+1} \ar@{->>}[d] \ar[r]^-{\mathbf{1}} & \Sigma^{8k-7} C2 \ar@{^{(}->}[r] & X(8k-4)^{\infty}_{8k-7} \ar[r]^-{f_{k-1}} & S^{0}\\
S^{8k+1} \ar@{-->}[ru] & & & 
}$$
Next note that the composite
$$\xymatrix{
\Sigma^{8k-7} C2 = X(8k-4)^{8k-6}_{8k-7} \ar@{^{(}->}[r] & X(8k-4)^{\infty}_{8k-7} \ar[r]^-{f_{k-1}} & S^{0} 
}$$
restricts to $\phi_{k-1}$ on the bottom cell $S^{8k-7}$ of $\Sigma^{8k-7} C2$. Therefore, we have the following commutative diagram:
\begin{equation}  \label{diagram: pi8path}
\xymatrix{ G(k)^{8k+1} \ar@{->>}[d] \ar[rr]^-{(\ref{step6decomposition 3})} & & S^{0}\\ 
S^{8k+1} \ar[rru]_-{\xi_{k}} & &   }
\end{equation}
where $\xi_{k}\in \langle \phi_{k-1},2, \tau_{k}\rangle$ with $\tau_{k}$ an element in $\pi_7$ that is annihilated by multiplication by 2, namely 0 or $8\sigma$.\\

For the composite (\ref{prop55 summand 1}), by the diagram (\ref{gf factors throught W}) for the case $k-1$, we can rewrite it as
\begin{equation} \label{prop55 summand 2}
\xymatrix{
G(k)^{8k+1} \ar[r]^-{\mathbf{2}} & S^{8k-4} \ar[r]^-{a_{k}} & X(8k-12)^{8k-12}_{8k-15} \ar[r]^-{b_{k-1}} & S^{0}.
}	
\end{equation}
Using the splitting 
$$X(8k-12)^{8k-12}_{8k-15} \simeq S^{8k-12} \vee \Sigma^{8k-15} C2,$$ 
we can rewrite the composite (\ref{prop55 summand 2}) as the sum of the following two composites (\ref{prop55 summand 21}) and (\ref{prop55 summand 22}).
\begin{equation} \label{prop55 summand 21}
\xymatrix{
G(k)^{8k+1} \ar[r]^-{\mathbf{2}} & S^{8k-4} \ar[r] & S^{8k-12}\ar[r] & S^{0}
}	
\end{equation}
\begin{equation} \label{prop55 summand 22}
\xymatrix{
G(k)^{8k+1} \ar[r]^-{\mathbf{2}} & S^{8k-4}\ar[r] & \Sigma^{8k-15} C2 \ar@{^{(}->}[r] & X(8k-12)^{\infty}_{8k-15}  \ar[r]^-{f_{k-2}} & S^{0}
}	
\end{equation}
The composite (\ref{prop55 summand 21}) is zero. In fact, since $G(k)^{8k+1} = \Sigma^{8k-2} C \eta^2$ and
$$\pi_{2} \cdot \pi_{8}=0, \ \pi_{13}=0,$$ 
the composition of the first two maps in (\ref{prop55 summand 21}) is already zero. Therefore, the composite (\ref{prop55 summand 2}) can be identified as (\ref{prop55 summand 22}).

For the composite (\ref{prop55 summand 22}), we have the following lemma.

\begin{lem} \label{prop55 lemma}
The following composite is zero: 
\begin{equation}\label{lemma516}
\xymatrix{
G(k)^{8k+1} \ar[r]^-{\mathbf{2}} & S^{8k-4} \ar[r] & \Sigma^{8k-15} C2 \ar@{->>}[r] & S^{8k-14}.
}
\end{equation}
\end{lem}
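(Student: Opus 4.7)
The plan is to exploit the smallness of the relevant stable stems. Since $G(k)^{8k+1} = \Sigma^{8k-2}C\eta^2$ has cells only in dimensions $8k-2$ and $8k+1$, applying $[-, S^{8k-14}]$ to the Puppe cofiber sequence $S^{8k-2} \xrightarrow{i} G(k)^{8k+1} \xrightarrow{q} S^{8k+1} \xrightarrow{\eta^2} S^{8k-1}$ yields the exact sequence
\[
\pi_{13} \xrightarrow{\eta^2} \pi_{15} \xrightarrow{q^*} [G(k)^{8k+1}, S^{8k-14}] \xrightarrow{i^*} \pi_{12} \xrightarrow{\eta^2} \pi_{14}.
\]
Because $\pi_{12} = \pi_{13} = 0$, $q^*$ is an isomorphism, so every map $G(k)^{8k+1} \to S^{8k-14}$ is uniquely determined by a class in $\pi_{15}$ pulled back along the top-cell quotient. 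The first step is to record this identification, together with the analogous one for maps into $S^{8k-4}$ which gives $[G(k)^{8k+1}, S^{8k-4}] \cong \pi_2$ via the bottom-cell restriction (using $\pi_5 = 0$).

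Let $\zeta \in \pi_{10}$ denote the composite $S^{8k-4} \to \Sigma^{8k-15}C2 \twoheadrightarrow S^{8k-14}$ and $\mathbf{2}_b \in \pi_2$ the restriction of $\mathbf{2}$ to $S^{8k-2}$. Since $\pi_2 \cdot \pi_{10} \subseteq \pi_{12} = 0$, the bottom-cell restriction of (\ref{lemma516}) vanishes, so the composite factors through $q$ and corresponds to some class $\alpha \in \pi_{15}$. I would then case-split on $\mathbf{2}_b$. If $\mathbf{2}_b = 0$, the second identification above forces $\mathbf{2}$ to pull back from $\pi_{8k+1}S^{8k-4} = \pi_5 = 0$, so $\mathbf{2} = 0$ and $\alpha = 0$ trivially. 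Otherwise $\mathbf{2}_b = \eta^2$, so $\mathbf{2}$ is the unique extension of $\eta^2$ across the attaching map $\eta^2$ of $G(k)^{8k+1}$, and the standard interpretation of top-cell classes as Toda brackets identifies $\alpha$ with $\langle \zeta, \eta^2, \eta^2 \rangle \subseteq \pi_{15}$. The indeterminacy $\zeta \pi_5 + \pi_{13}\eta^2$ vanishes since $\pi_5 = \pi_{13} = 0$, so $\alpha$ is a well-defined element.

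It remains to show $\langle \zeta, \eta^2, \eta^2 \rangle = 0$ in $\pi_{15}$. Since $\pi_{10} = \mathbb{Z}/2$, the only nontrivial case is $\zeta$ equal to the generator $\eta\mu$. I would handle this via the shuffling identity $\langle \zeta, \eta \cdot \eta, \eta^2 \rangle = \langle \zeta, \eta, \eta^3 \rangle$ modulo indeterminacy (valid once one checks that both brackets are defined, using $\zeta\eta^2 = 0$ and $\eta \cdot \eta^3 = 0$), reducing the computation to a bracket involving $\eta^3 = 4\nu$ and the action of $\zeta$ on $\pi_*$. The principal obstacle is completing this final Toda bracket calculation cleanly: one must verify $\zeta \eta = 0$ (or else treat the residual term), track indeterminacy through each juggling step, and invoke standard facts about the 2-primary stable stems below dimension 16 — in particular the multiplicative structure on $\pi_*$ and the vanishing of appropriate secondary products — to conclude that the bracket collapses to zero and hence $\alpha = 0$.
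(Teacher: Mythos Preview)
Your reduction to the Toda bracket $\langle \zeta, \eta^2, \eta^2 \rangle \subseteq \pi_{15}$ with $\zeta \in \pi_{10}$ is correct and coincides with the paper's argument (the paper phrases it as $\langle \alpha, \beta, \eta^2 \rangle$ with $\alpha \in \pi_{10}$, $\beta \in \pi_2$, and notes that the only potentially nonzero case is $\alpha = \{Ph_1^2\}$, $\beta = \eta^2$). The gap is in your final step. Your proposed juggle to $\langle \zeta, \eta, \eta^3 \rangle$ requires that bracket to be defined, i.e.\ $\zeta\eta = 0$ in $\pi_{11}$; but the generator of $\pi_{10}$ is $\{Ph_1^2\}$, and $\eta\cdot\{Ph_1^2\} = \{Ph_1^3\} = 4\{Ph_2\}$ is \emph{nonzero} in $\pi_{11} \cong \mathbb{Z}/8$. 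So the right-hand bracket is undefined, and no elementary multiplicative shuffle will close the computation. (Shifting the other way runs into $\eta\cdot\eta^2 = \eta^3 \neq 0$.)

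The paper finishes by passing to the Adams spectral sequence: the Massey product $\langle Ph_1^2, h_1^2, h_1^2 \rangle$ lies in filtration~$9$ of stem~$15$ on the $E_2$-page and vanishes there. Since filtration~$9$ is strictly above every nonzero class on the $E_\infty$-page in stem~$15$, the Toda bracket contains~$0$; together with the vanishing indeterminacy you already verified, this forces $\langle \{Ph_1^2\}, \eta^2, \eta^2 \rangle = \{0\}$.
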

\begin{proof}
Consider the following diagram.
\begin{displaymath}
\xymatrix{S^{8k-2}\ar@{^{(}->}[d]\ar[rrrd]^{=0}& & &\\
G(k)^{8k+1}\ar@{->>}[d]\ar[r]^{\mathbf{4}}& S^{8k-4}\ar[r]& \Sigma^{8k-15} C2 \ar@{->>}[r] & S^{8k-14}\\
S^{8k+1}\ar@{-->}[rrru]}
\end{displaymath}
Pre-composing the composite (\ref{lemma516}) with the inclusion of the bottom cell $S^{8k-2}$ of $G(k)^{8k+1}$ gives us the zero map. This is due to the fact that $\pi_{12} = 0$. 

The map from $S^{8k+1}$ to $S^{8k-14}$ can be written as a Toda bracket of the form 
$$\langle \alpha, \beta, \eta^2 \rangle \subseteq \pi_{15},$$
where $\beta \in \pi_{2} = \mathbb{Z}/2$ generated by $\eta^{2}$, and $\alpha \in \pi_{10}= \mathbb{Z}/2$ generated by $\{Ph^{2}_{1}\}$. For a precise argument of this fact, we refer to Lemma 5.3 of \cite{WangXu51Stem}.

The indeterminacy of this Toda bracket is
$$\alpha \cdot \pi_5 + \pi_{13} \cdot \eta^2 = 0,$$
since $\pi_5 = 0, \ \pi_{13} = 0$. We claim that this Toda bracket contains zero, therefore it is zero as a set. This completes the proof of the lemma.

In fact, the only potential nonzero element that this Toda bracket contains is
$$\langle \{Ph^{2}_{1}\}, \eta^2, \eta^2 \rangle.$$
The corresponding Massey product
$$\langle Ph^{2}_{1}, h_1^2, h_1^2 \rangle = 0$$
in filtration 9 of the Adams $E_2$-page, which is higher than all nonzero elements in the Adams $E_\infty$-page. Therefore, this potential nonzero element is also zero.
\end{proof}

By Lemma~\ref{prop55 lemma}, the composite (\ref{prop55 summand 22}) maps through the bottom cell $S^{8k-15}$ of $\Sigma^{8k-15} C2$, and we have the following commutative diagram:
\begin{equation}\label{diagram: pi16path}
\xymatrix{
S^{8k-2} \ar@{^{(}->}[d]  & &  S^{8k-14} & \\
G(k)^{8k+1} \ar[rru]^{(\ref{lemma516})=0}  \ar@{-->}[rrd] \ar@{->>}[d] \ar[r]^-{\mathbf{2}} & S^{8k-4}  \ar[r] & \Sigma^{8k-15} C2 \ar@{->>}[u] \ar@{^{(}->}[r] & X(8k-12)^{\infty}_{8k-15} \ar[r]^-{f_{k-2}} & S^{0}\\
S^{8k+1} \ar@{-->}^{\chi_{k}}[rr] & & S^{8k-15} \ar[rru]_-{\phi_{k-2}} \ar@{^{(}->}[u]
}
\end{equation}
Since $\pi_{13} = 0$, the following composite is zero.
$$
\xymatrix{
S^{8k-2} \ar@{^{(}->}[r] & G(k)^{8k+1} \ar[r] & S^{8k-15}.
}
$$
Therefore, the composite (\ref{prop55 summand 22}) further factors through the top cell $S^{8k+1}$ of $G(k)^{8k+1}$. We denote by $\chi_k$ the corresponding element in $\pi_{16}$. 

Removing some of the terms in (\ref{diagram: pi16path}), we obtain the following diagram:
\begin{equation} \label{diagram: pi16pathshort}
\xymatrix{ G(k)^{8k+1} \ar@{->>}[d] \ar[rr]^-{(\ref{prop55 summand 1})} & & S^{0}\\ 
S^{8k+1} \ar[rru]_-{\phi_{k-2} \cdot \chi_{k}} & &   }
\end{equation}
Adding the diagrams (\ref{diagram: pi16pathshort}) and (\ref{diagram: pi8path}) together, we have the following commutative diagram 
\begin{displaymath}
\xymatrix{ G(k)^{8k+1} \ar@{->>}[d] \ar[rr]^-{(\ref{prop55 general})} & & S^{0}\\ 
S^{8k+1} \ar[rru]_-{\xi_k + \phi_{k-2} \cdot \chi_{k}} & &   }
\end{displaymath}
which can be enlarged into the following commutative diagram:
\begin{displaymath}
\xymatrix{
S^{8k-2} \ar@{=}[r] \ar@{^{(}->}[d] & S^{8k-2} \ar@{^{(}->}[d]\\
G(k)^{8k+1} \ar@{->>}[d] \ar@{^{(}->}[r] & G(k) \ar[dr]^{v_{k}} \ar@{->>}[d] & \\
S^{8k+1} \ar[drr]_-{\xi_k + \phi_{k-2} \cdot \chi_{k}} & X(8k+4)_{8k+1}^{\infty} & S^{0}  \\
& & S^{0} \ar@{=}[u]
}
\end{displaymath}
Using the homotopy extension property that we proved, namely Lemma~\ref{homotopy extension}, we have the following commutative diagram.
\begin{displaymath}
\xymatrix{
S^{8k-2} \ar@{=}[r] \ar@{^{(}->}[d] & S^{8k-2} \ar@{^{(}->}[d]\\
G(k)^{8k+1} \ar@{->>}[d] \ar@{^{(}->}[r] & G(k) \ar[dr]^{v_{k}} \ar@{->>}[d] & \\
S^{8k+1}\ar@{^{(}->}[r]^-{l_{k}} \ar[drr]_-{\xi_k + \phi_{k-2} \cdot \chi_{k}} & X(8k+4)_{8k+1}^{\infty}\ar[r]_-{f_{k}} & S^{0}  \\
& & S^{0} \ar@{=}[u]
}
\end{displaymath}
Note that the map $l_{k}$ induces an isomorphism on $H_{8k+1}(-,\mathbb{F}_{2})$ and therefore is an H$\mathbb{F}_2$-subcomplex. In sum, we have constructed a choice of the map $f_{k}$ that satisfies the condition (\ref{equation: inductive on beta}) in Proposition~\ref{prop induct beta}. This completes the proof of Proposition~\ref{prop induct beta}. 


\newpage
\section{Step 2: Upper bound detected by $KO$} \label{sec:AddedNewStep2}
In this section, we prove Proposition~\ref{prop: upper bound detected by ko}: 
\begin{prop}[Proposition~\ref{prop: upper bound detected by ko}]For any $k\geq 1$, the composition 
$$X(8k+2)^{8k-4}\xrightarrow{c(8k+2)^{8k-4}}S^{0}\longrightarrow KO$$
is nonzero. 
\end{prop}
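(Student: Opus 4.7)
The plan is to extend the $\Pin$-equivariant $KO$-theoretic obstruction of Furuta--Kametani to the $KO$-level question, arguing that ruling out a stable-homotopy-level Furuta--Mahowald class actually rules out a $KO$-level one. First, by the same equivariant-to-nonequivariant reduction as in the proof of Proposition~\ref{prop: equivariant to nonequivariant}, the skeletal dimension $4p - 2 - q = 8k-4$ with $q = 8k+2$ picks out $p = 4k$, and
$$X(8k+2)^{8k-4} \simeq \bigl( S(4k\,\mathbb{H})_+ \wedge S^{-(8k+2)\widetilde{\mathbb{R}}} \bigr)_{h\Pin}.$$
After smashing the Furuta--Mahowald diagram with $KO$, the vanishing of the composition $X(8k+2)^{8k-4} \to S^{0} \to KO$ is equivalent to the existence of a $\Pin$-equivariant $KO$-module map $\gamma_{KO}\colon S^{4k\mathbb{H}}\wedge KO \to S^{(8k+2)\widetilde{\mathbb{R}}}\wedge KO$ factoring $a_{\mathbb{H}}^{4k}\wedge KO$ through $a_{\widetilde{\mathbb{R}}}^{8k+2}\wedge KO$, i.e., to a level-$(4k, 8k+2)$ Furuta--Mahowald class valued in $KO$.

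Next, invoke the Furuta--Kametani bound (Theorem~\ref{thm: Furuta-Kametani}). In the case $p \equiv 0 \pmod 4$, their argument computes the $\Pin$-equivariant $KO$-theoretic Euler class of $p\mathbb{H} - q\widetilde{\mathbb{R}}$ via Adams operations $\psi^{\ell}$ on $KO^{*}(\textup{BPin}(2))$, extracting a $2$-adic divisibility obstruction that lives natively in $KO$. Consequently the obstruction rules out not only stable-homotopy Furuta--Mahowald classes but also their $KO$-module analogues. For $(p,q) = (4k, 8k+2)$, the required bound $q \geq 2p + 3 = 8k+3$ fails by one, so no such $\gamma_{KO}$ can exist, and therefore $X(8k+2)^{8k-4} \to S^{0} \to KO$ must be nonzero.

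The main thing to verify is that Furuta--Kametani's obstruction is genuinely $KO$-valued, as opposed to being a stable-homotopy obstruction derived by $KO$-detection. Concretely, one must trace through their computation and check that the class on which the $2$-adic divisibility is imposed is the $KO$-Euler class (or equivalently the $\pi_{*}KO$-valued character) of $4k\mathbb{H} - (8k+2)\widetilde{\mathbb{R}}$, computed inside $KO^{*}(\textup{BPin}(2))$ without passing back to $\pi_{*}S^{0}$. Since the Adams operations $\psi^{\ell}$ and the Pontryagin characters used by Furuta--Kametani act on $KO^{*}(\textup{BPin}(2))$ itself, this verification should be essentially a bookkeeping exercise and is the only real technical point.
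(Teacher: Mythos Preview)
Your overall strategy matches the paper's: both use $\Pin$-equivariant $KO$-theory and the identification $KO^{0}(X(8k+2)^{8k-4})\cong KO^{0}_{\Pin}(S^{-(8k+2)\widetilde{\mathbb{R}}}\wedge S(4k\mathbb{H})_+)$ to show that $1\in KO^{0}(S^{0})$ does not pull back to zero. The difference is that you outsource the actual computation to Furuta--Kametani and declare the remaining verification a ``bookkeeping exercise,'' whereas the paper carries it out directly.

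That remaining verification is, however, the entire content of the proof. Theorem~\ref{thm: Furuta-Kametani} as stated is about stable-homotopy Furuta--Mahowald classes, and you correctly note that you must check their obstruction lives in $KO$. But rather than tracing through another paper, the authors compute the obstruction from scratch: using Schmidt's description of $KO^{0}_{\Pin}(S^{a\mathbb{H}+b\widetilde{\mathbb{R}}})$ in terms of $RO(\Pin)\cong\mathbb{Z}[D,A,B]/(\cdots)$, Euler classes $\gamma(D),\gamma(\mathbb{H})$, and Bott classes $b_{8D},b_{2\mathbb{H}}$, they show $i^{*}(1)=\gamma(D)^{8k+2}=2^{4k}(b_{-8D})^{k}\gamma(D)^{2}$, and then argue that if this were in the image of $\cdot\,\gamma(\mathbb{H})^{4k}$ one would get a congruence $2^{4k}\equiv (A+4)^{2k}P(A)\pmod{2A}$ in $\mathbb{Z}[A]$, which is impossible by comparing the coefficients of $A^{0}$ and $A^{2k}$. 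This explicit algebra is short, self-contained, and replaces your appeal to \cite{Furuta-Kametani2007}. If you want a complete proof, you should either reproduce this computation or actually open Furuta--Kametani and extract the relevant $KO$-level statement; asserting that it ``should be'' $KO$-valued is not sufficient.
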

Recall that $X(8k+2)^{8k-4}$ is the homotopy orbit of the free $\Pin$-action on 
$$
 S^{-(8k+2)\widetilde{\mathbb{R}}} \wedge S(4k\mathbb{H})_{+}.
$$
Therefore, we have the following isomorphism: 
$$
KO^{0}(X(8k+2)^{8k-4})=KO^{0}_{\Pin}(S^{-(8k+2)\widetilde{\mathbb{R}}} \wedge S(4k\mathbb{H})_{+}).
$$

\subsection{Some results about the $\Pin$-equivariant $KO$-theory} 
In this subsection, we list some results about the group $KO^{0}(S^{a\mathbb{H}+b\widetilde{\mathbb{R}}})$ for various $a,b\in \mathbb{Z}$. These results are established in \cite[Section 5]{Schmidt2003} (see also \cite{LinKO}).
\begin{enumerate}[label=(\Roman*)]
\item\label{item: multiplication} There is a commutative and associative multiplication map (given by tensor product of virtual bundles)
$$
KO^{0}_{\Pin}(S^{a\mathbb{H}+b\widetilde{\mathbb{R}}})\otimes KO^{0}_{\Pin}(S^{c\mathbb{H}+d\widetilde{\mathbb{R}}})\rightarrow KO^{0}_{\Pin}(S^{(a+c)\mathbb{H}+(b+d)\widetilde{\mathbb{R}}}).
$$
\item\label{item: representation ring} There is a ring isomorphism 
\begin{eqnarray*}
KO^{0}_{\Pin}(S^{0})&\cong&RO(\Pin)\\
&\cong&\mathbb{Z}[D,A,B]/(D^{2}-1,DA-A,DB-B,B^{2}-4(A-2B)) \\
&&\text{(note that there is a slight typo here in \cite{Schmidt2003})}.
\end{eqnarray*}
The generators are defined as follows:
\begin{enumerate}
\item $D=[\mathbb{R}]$.
\item $A=K-(1+D)$, where $K$ is a $2$-dimensional real representation. The representation space of $K$ is $\mathbb{C}=\mathbb{R}\oplus i\mathbb{R}$, with the unit component $S^{1}=\{e^{i\theta}\}$ of $\Pin$ acting via left multiplication and $j$ acting as reflection along the diagonal.
\item $B=[\mathbb{H}]-2(1+D)$. 
\end{enumerate} 

\item\label{item: euler class} There are elements (called Euler classes)
$$\gamma(D)\in KO^{0}_{\Pin}(S^{-\widetilde{\mathbb{R}}}),$$
$$\gamma(\mathbb{H})\in KO^{0}_{\Pin}(S^{-\mathbb{H}}).$$
They satisfy the following property: for any $a<b$ and $c<d$, the map
$$KO^{0}_{\Pin}(S^{b\mathbb{H}+d\widetilde{\mathbb{R}}})\xrightarrow{ \cdot \gamma(D)^{d-c} \gamma(\mathbb{H})^{b-a}} KO^{0}_{\Pin}(S^{a\mathbb{H}+c\widetilde{\mathbb{R}}})
$$
equals the map on $KO^0_{\Pin}(-)$ that is induced by the inclusion 
$$S^{a\mathbb{H}+c\widetilde{\mathbb{R}}}\hookrightarrow S^{b\mathbb{H}+d\widetilde{\mathbb{R}}}.$$

\item\label{item: bott class} There are elements (called Bott classes) 
$$b_{2\mathbb{H}}\in KO^{0}_{\Pin}(S^{2\mathbb{H}}),$$ 
$$b_{8D}\in KO^{0}_{\Pin}(S^{8\widetilde{\mathbb{R}}}),$$ 
such that the following maps are isomorphism for all $a$ and $b$: 
$$
KO^{0}_{\Pin}(S^{a\mathbb{H}+b\widetilde{\mathbb{R}}})\xrightarrow{\cdot b_{2\mathbb{H}}}KO^{0}_{\Pin}(S^{(a+2)\mathbb{H}+b\widetilde{\mathbb{R}}}),
$$
$$
KO^{0}_{\Pin}(S^{a\mathbb{H}+b\widetilde{\mathbb{R}}})\xrightarrow{\cdot b_{8D}}KO^{0}_{\Pin}(S^{a\mathbb{H}+(b+8)\widetilde{\mathbb{R}}}).
$$

\item\label{item: euler class module structure} The relation
$$(D+1)\gamma(D)=2A\gamma(D)=B\gamma(D)=0$$
holds.

\item\label{item: bott times euler} The following relations hold: 
\begin{eqnarray*}
\gamma(D)^{8}b_{8D}&=&8(1-D), \\
\gamma(\mathbb{H})^{2}b_{2\mathbb{H}}&=&A-2B-2D+2.
\end{eqnarray*}

\item\label{item: ko(-2D)} There is an isomorphism 
$$KO^{0}_{\Pin}(S^{-2\widetilde{\mathbb{R}}}) \cong \mathbb{Z}\oplus \bigoplus_{n\geq 1} \mathbb{Z}/2,$$ generated by the elements $\gamma(D)^{2}$ and $A^{n}\gamma(D)^{2}$, $n \geq 1$.
\end{enumerate}

\subsection{Proof of Proposition~\ref{prop: upper bound detected by ko}} Let 
$$c^{k}_{\Pin}:S(4k\mathbb{H})_{+}\rightarrow S^{0}$$
be the base-point preserving map that sends the entire $S(4k\mathbb{H})$ to the point in $S^{0}$ that is not the base-point.  Consider the composition 
$$
c_{\Pin}(8k+2)^{8k-4}: S^{-(8k+2)\widetilde{\mathbb{R}}} \wedge S(4k\mathbb{H})_{+}\xrightarrow{\operatorname{id} \wedge c^{k}_{\Pin}} S^{-(8k+2)\widetilde{\mathbb{R}}} \stackrel{i}{\longrightarrow} S^{0},
$$
where $i$ is induced by the the inclusion 
$$S^0 \hookrightarrow S^{(8k+2)\widetilde{\mathbb{R}}}.$$
\begin{lem}\label{lem: equivariant ko nontrivial}
The map 
$$(c_{\Pin}(8k+2)^{8k-4})^{*}: RO(\Pin)=KO^{0}_{\Pin}(S^{0})\longrightarrow 
KO^{0}_{\Pin}(S^{-(8k+2)\widetilde{\mathbb{R}}} \wedge S(4k\mathbb{H})_{+})$$
sends $1\in RO(\Pin)$ to a nonzero element. 
\end{lem}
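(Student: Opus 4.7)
I plan to prove the lemma by a direct calculation in $\Pin$-equivariant $KO$-theory, using the ring-theoretic presentation recalled in items \ref{item: multiplication}--\ref{item: ko(-2D)}. The strategy is to identify $(c_{\Pin}(8k+2)^{8k-4})^{*}(1)$ as the class represented by the Euler class $\gamma(D)^{8k+2}$ modulo the ideal generated by $\gamma(\mathbb{H})^{4k}$, and then to show non-vanishing by an explicit computation in $KO^{0}_{\Pin}(S^{-2\widetilde{\mathbb{R}}})$ after applying Bott periodicity.

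First, I would decompose $c_{\Pin}(8k+2)^{8k-4} = i \circ (\operatorname{id} \wedge c_{\Pin}^{k})$. By item \ref{item: euler class}, the map $i$ pulls back $1 \in KO^{0}_{\Pin}(S^{0})$ to $\gamma(D)^{8k+2}$. For $(\operatorname{id} \wedge c_{\Pin}^{k})^{*}$, I would use the cofiber sequence
$$S^{-(8k+2)\widetilde{\mathbb{R}}} \wedge S(4k\mathbb{H})_{+} \longrightarrow S^{-(8k+2)\widetilde{\mathbb{R}}} \longrightarrow S^{-(8k+2)\widetilde{\mathbb{R}} + 4k\mathbb{H}},$$
whose associated $KO^{0}_{\Pin}$ long exact sequence has connecting map equal to multiplication by $\gamma(\mathbb{H})^{4k}$ (again by item \ref{item: euler class}). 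Thus the lemma reduces to showing that $\gamma(D)^{8k+2}$ does not lie in the ideal $\gamma(\mathbb{H})^{4k} \cdot KO^{0}_{\Pin}(S^{4k\mathbb{H} - (8k+2)\widetilde{\mathbb{R}}})$ inside $KO^{0}_{\Pin}(S^{-(8k+2)\widetilde{\mathbb{R}}})$.

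Next, I apply the Bott periodicity isomorphisms $\cdot b_{8D}^{k}$ and $\cdot b_{2\mathbb{H}}^{2k}$ from item \ref{item: bott class} to translate this containment question to $KO^{0}_{\Pin}(S^{-2\widetilde{\mathbb{R}}})$, which by item \ref{item: ko(-2D)} is isomorphic to $\mathbb{Z}\cdot\gamma(D)^{2} \oplus \bigoplus_{n\geq 1} (\mathbb{Z}/2)\cdot A^{n}\gamma(D)^{2}$. Using $\gamma(D)^{8}b_{8D} = 8(1-D)$ from item \ref{item: bott times euler} together with $(D+1)\gamma(D) = 0$ from item \ref{item: euler class module structure}, which yields $D\cdot \gamma(D)^{2} = -\gamma(D)^{2}$ and hence $(1-D)\gamma(D)^{2} = 2\gamma(D)^{2}$, I compute
$$\gamma(D)^{8k+2} \cdot b_{8D}^{k} \;=\; 8^{k}(1-D)^{k}\gamma(D)^{2} \;=\; 2^{4k}\gamma(D)^{2}.$$
Similarly, using $\gamma(\mathbb{H})^{2}b_{2\mathbb{H}} = A - 2B - 2D + 2$, $B\gamma(D) = 0$, and $2A\gamma(D) = 0$, one verifies $(A-2B-2D+2)^{2k}\cdot\gamma(D)^{2} = 2^{4k}\gamma(D)^{2} + A^{2k}\gamma(D)^{2}$ and $(A-2B-2D+2)^{2k}\cdot A^{n}\gamma(D)^{2} = A^{2k+n}\gamma(D)^{2}$ for $n \geq 1$, since the intermediate binomial coefficients $\binom{2k}{i}4^{2k-i}$ for $1 \leq i \leq 2k-1$ are all even.

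Consequently the image ideal in $KO^{0}_{\Pin}(S^{-2\widetilde{\mathbb{R}}})$ consists exactly of elements $c\cdot 2^{4k}\gamma(D)^{2} + (c \bmod 2)\cdot A^{2k}\gamma(D)^{2} + \sum_{n\geq 1}\epsilon_{n}A^{2k+n}\gamma(D)^{2}$ with $c \in \mathbb{Z}$ and $\epsilon_{n}\in\{0,1\}$. To express $2^{4k}\gamma(D)^{2}$ in this form forces $c = 1$, which in turn produces a nonzero $A^{2k}\gamma(D)^{2}$-term that cannot be cancelled by any combination of $A^{2k+n}\gamma(D)^{2}$ with $n \geq 1$. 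Hence $2^{4k}\gamma(D)^{2}$ is not in the ideal, proving the lemma. The main point requiring care is this cancellation analysis: one must track precisely how $B\gamma(D) = 0$ and $2A\gamma(D) = 0$ cause most cross-terms of $(A-2B-2D+2)^{2k}$ applied to $\gamma(D)^{2}$ to vanish, leaving the single obstruction class $A^{2k}\gamma(D)^{2}$ that witnesses non-triviality.
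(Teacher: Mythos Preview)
Your proof is correct and follows essentially the same approach as the paper. Both reduce, via the cofiber sequence and Bott periodicity, to showing that $2^{4k}\gamma(D)^2$ is not of the form $(A+4)^{2k}P(A)\cdot\gamma(D)^2$ in $KO^{0}_{\Pin}(S^{-2\widetilde{\mathbb{R}}})$, and both observe that $(A+4)^{2k}\equiv 2^{4k}+A^{2k}\pmod{2A}$ so the $A^{2k}\gamma(D)^2$ term is the obstruction; the only cosmetic difference is that the paper phrases this as a contradiction from the congruence $2^{4k}\equiv (A+4)^{2k}P(A)\pmod{2A}$, while you compute the image set explicitly.
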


\begin{proof} Consider the map $$i^{*}:KO^{0}_{\Pin}(S^{0})\longrightarrow KO^{0}_{\Pin}(S^{-(8k+2)\widetilde{\mathbb{R}}}) $$
that is induced by $i$.  By \ref{item: euler class}, $i^{*}(1)=\gamma(D)^{8k+2}$.  By \ref{item: bott class} and \ref{item: ko(-2D)}, we have an isomorphism
$$
KO^{0}_{\Pin}(S^{-(8k+2)\widetilde{\mathbb{R}}}) \cong \mathbb{Z}\oplus \bigoplus_{n\geq 1} \mathbb{Z}/2, 
$$
generated by the elements $(b_{-8D})^{k}\cdot \gamma(D)^{2}$ and $(b_{-8D})^{k}\cdot A^{n}\gamma(D)^{2}$, $n \geq 1$.  Here, $b_{-8D}$ is the unique element in $KO^{0}_{\Pin}(S^{-8\widetilde{\mathbb{R}}})$ such that $b_{8D}\cdot b_{-8D}=1$.  By \ref{item: bott times euler} and \ref{item: euler class module structure}, we have 
\begin{eqnarray*}
\gamma(D)^{8k+2}&=&\gamma(D)^{8k} \cdot \gamma(D)^{2}\\
&=&\gamma(D)^{8k}\cdot(b_{8D})^{k}\cdot(b_{-8D})^{k}\cdot\gamma(D)^{2}\\
&=&8^{k}\cdot(1-D)^{k}\cdot(b_{-8D})^{k}\cdot\gamma(D)^{2} \,\,\,\,\,\,\,(\text{by \ref{item: bott times euler}})\\
&=&2^{3k}\cdot(1-D)^{k}\cdot\gamma(D)^{2}\cdot (b_{-8D})^{k} \\
&=&2^{3k}\cdot 2^{k}\cdot\gamma(D)^{2}\cdot (b_{-8D})^{k} \,\,\,\,\,\,\,(\text{by \ref{item: euler class module structure}}) \\
&=&2^{4k}(b_{-8D})^{k}\gamma(D)^{2}.
\end{eqnarray*}
To finish the proof, it suffices to show that 
\begin{equation}\label{equ: nonzero in equivariant ko}
(c^{k}_{\Pin}\wedge \operatorname{id})^{*}\left(2^{4k}(b_{-8D})^{k}\gamma(D)^{2}\right)\neq 0.
\end{equation}
We will prove this by contradiction.  Suppose (\ref{equ: nonzero in equivariant ko}) is not true.  Consider the cofiber sequence
$$ S^{-(8k+2)\widetilde{\mathbb{R}}} \wedge S(4k\mathbb{H})_{+} \xrightarrow{\operatorname{id} \wedge c^{k}_{\Pin}} S^{-(8k+2)\widetilde{\mathbb{R}}}\longrightarrow S^{4k\mathbb{H}-(8k+2)\widetilde{\mathbb{R}}}$$ 
that is obtained from $S(4k\mathbb{H})_{+} \longrightarrow S^0 \longrightarrow S^{4k\mathbb{H}}$ by taking $S^{-(8k+2)\widetilde{\mathbb{R}}}\wedge (-)$.  This cofiber sequence induces the sequence
$$
KO^{0}_{\Pin}(S^{4k\mathbb{H}-(8k+2)\widetilde{\mathbb{R}}})\xrightarrow{\gamma(\mathbb{H})^{4k}} KO^{0}_{\Pin}(S^{-(8k+2)\widetilde{\mathbb{R}}})\xrightarrow{(\operatorname{id} \wedge c^{k}_{\Pin})^{*}} KO^{0}_{\Pin}(S^{-(8k+2)\widetilde{\mathbb{R}}} \wedge S(4k\mathbb{H})_{+})
$$
which is exact in the middle.  Since $$(c^{k}_{\Pin}\wedge \operatorname{id})^{*}\left(2^{4k}(b_{-8D})^{k}\gamma(D)^{2}\right)= 0,$$
there exists an element $\alpha\in KO^{0}_{\Pin}(S^{4k\mathbb{H}-(8k+2)\widetilde{\mathbb{R}}})$ such that 
\begin{equation}\label{equ: alpha maps to euler}
2^{4k}(b_{-8D})^{k}\gamma(D)^{2} = \gamma(\mathbb{H})^{4k}\cdot \alpha.
\end{equation}
By \ref{item: bott class} and \ref{item: ko(-2D)}, $\alpha$ can be written as 
$$
(b_{2\mathbb{H}})^{2k}(b_{-8D})^{k}\gamma(D)^{2}\cdot P(A)
$$
for some polynomial $P(A)$.  By \ref{item: bott times euler} and \ref{item: euler class module structure}, equation (\ref{equ: alpha maps to euler}) can be rewritten as 
\begin{eqnarray*}
2^{4k}\cdot (b_{-8D})^{k}\gamma(D)^{2} &=& \left( \gamma(\mathbb{H})^{4k} (b_{2\mathbb{H}})^{2k}\right) \cdot (b_{-8D})^{k}\cdot \gamma(D)^{2}\cdot P(A) \\ 
&=&(A-2B-2D+2)^{2k}\cdot (b_{-8D})^{k}\cdot \gamma(D)^{2}\cdot P(A) \,\,\,\,\,\,\, \text{(by \ref{item: bott times euler})}\\
&=&(A-2B-2D+2)^{2k}\cdot \gamma(D)^{2}\cdot (b_{-8D})^{k}\cdot P(A) \\
&=&(A+4)^{2k}\cdot \gamma(D)^{2}\cdot (b_{-8D})^{k} \cdot P(A) \,\,\,\,\,\,\, \text{(by \ref{item: euler class module structure})}\\
&=&(A+4)^{2k} P(A) \cdot  (b_{-8D})^{k}\gamma(D)^{2}
\end{eqnarray*}
This implies that 
$$2^{4k} \equiv (A+4)^{2k} P(A) \pmod{2A}. $$
By comparing the coefficients of $A^0$ and $A^{2k}$, we see that this is impossible.
\end{proof}

By definition, under the isomorphism 
$$
[S^{-(8k+2)\widetilde{\mathbb{R}}}\wedge S(4k \mathbb{H})_+,S^{0}]_{\Pin} \cong [X(8k+2)^{8k-4},S^{0}],
$$
the element $c(8k+2)^{8k-4}$ corresponds to the element $c_{\Pin}(8k+2)^{8k-4}$.  Therefore, we have the following commutative diagram:
$$
\begin{tikzcd}
KO^{0}(S^{0})\ar[rrrr,"(c(8k+2)^{8k-4})^{*}"]\ar[d] &&&& KO^{0}(X(8k+2)^{8k-4})\ar[d,equal]\\
KO^{0}_{\Pin}(S^{0})\ar[rrrr,"(c_{\Pin}(8k+2)^{8k-4})^{*}"]&&&& KO^{0}_{\Pin}(S^{-(8k+2)\widetilde{\mathbb{R}}}\wedge S(4k \mathbb{H})_+).
\end{tikzcd}
$$
In the commutative diagram above, the left vertical map sends $1$ to $1$.  Therefore, Lemma~\ref{lem: equivariant ko nontrivial} implies that the map 
$$
(c(8k+2)^{8k-4})^{*}:KO^{0}(S^{0})\longrightarrow KO^{0}(X(8k+2)^{8k-4})
$$
is nontrivial.  This finishes the proof of Proposition~\ref{prop: upper bound detected by ko}.

Recall that the restriction of the map $f_{k}:X(8k+4)^{\infty}_{8k+1}\rightarrow S^{0}$ to the bottom cell of its domain is denoted
$$\phi_{k}:S^{8k+1}\rightarrow S^{0}$$ 
(see Theorem~\ref{thm: inductive fk}).  The following corollary will be used in the next section:

\begin{cor}\label{cor: phi detected by ko} For $k\geq 0$, the map $\phi_{k}$ is detected by $KO$.
\end{cor}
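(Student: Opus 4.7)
The plan is to deduce the corollary from Proposition~\ref{prop: upper bound detected by ko} by comparing the columns $X(8k+4)$ and $X(8k+2)$. The case $k=0$ is immediate since $\phi_0 = \eta$ is detected by $KO$.

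For $k \geq 1$, the first step is to identify $\phi_k$ as the factorization of a restricted Mahowald map. The lower bound $\mathfrak{L}(8k+4) \geq 8k$ from Corollary~\ref{corollary: lower bound} implies that $c(8k+4)^{8k}$ is null-homotopic. Combined with property~(i) of Theorem~\ref{thm: inductive fk}, this forces $c(8k+4)^{8k+1}: X(8k+4)^{8k+1} \to S^0$ to factor through the quotient
\begin{equation*}
X(8k+4)^{8k+1} \twoheadrightarrow X(8k+4)^{8k+1}/X(8k+4)^{8k} \simeq S^{8k+1}
\end{equation*}
followed by $\phi_k$. Consequently, $\phi_k$ is detected by $KO$ if and only if $c(8k+4)^{8k+1}$ is detected by $KO$.

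Second, I would exploit the factorization $c(8k+4) = c(8k+2) \circ i(8k+4,8k+2)$ provided by the comparison morphism between columns. Since $h(8k+4,8k+2,8k+1) = 8k$, this restricts to a map $i^{8k+1}: X(8k+4)^{8k+1} \to X(8k+2)^{8k}$, giving the commutative diagram
\begin{equation*}
\begin{tikzcd}
X(8k+4)^{8k+1} \ar[r, "i^{8k+1}"] \ar[rd, "c(8k+4)^{8k+1}"'] & X(8k+2)^{8k} \ar[d, "c(8k+2)^{8k}"] \\
& S^0.
\end{tikzcd}
\end{equation*}
Composing with the unit $S^0 \to KO$, this identifies $c(8k+4)^{8k+1}|_{KO}$ with the pullback $(i^{8k+1})^*\bigl(c(8k+2)^{8k}|_{KO}\bigr)$. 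By Proposition~\ref{prop: upper bound detected by ko}, the restriction of $c(8k+2)^{8k}|_{KO}$ to the subcomplex $X(8k+2)^{8k-4}$ is nonzero, so $c(8k+2)^{8k}|_{KO}$ is a nontrivial class in $KO^0(X(8k+2)^{8k})$.

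The main obstacle is to show that this nontrivial class does not vanish under the pullback $(i^{8k+1})^*$, which by the identification above would force $c(8k+4)^{8k+1}|_{KO}$ to be nontrivial and hence establish the corollary. A natural approach is to lift the computation from the proof of Proposition~\ref{prop: upper bound detected by ko} to $\Pin$-equivariant $KO$-theory: the detection class of $c(8k+2)^{8k-4}|_{KO}$ is represented by the equivariant Euler class $\gamma(D)^{8k+2}$, and the pullback along $i^{8k+1}$ corresponds to further restriction along an inclusion of representation spheres. The compatibility of Euler classes under such restrictions, together with the explicit equivariant $KO$-computation in Lemma~\ref{lem: equivariant ko nontrivial}, should yield the required nontriviality; if necessary, one can supplement the argument with an analysis of the cofiber of $i^{8k+1}$ via the $KO$-based Atiyah--Hirzebruch spectral sequence to rule out cancellation from the additional cells in dimensions $8k-2, 8k-1, 8k$ of $X(8k+2)^{8k}$ not in the image of $i^{8k+1}$.
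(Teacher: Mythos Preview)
Your approach has a genuine gap in the direction of the factorization argument. You correctly write $c(8k+4)^{8k+1} = c(8k+2)^{8k}\circ i^{8k+1}$, which in $KO$-cohomology becomes $c(8k+4)^{8k+1}|_{KO} = (i^{8k+1})^{*}\bigl(c(8k+2)^{8k}|_{KO}\bigr)$. Knowing the class $c(8k+2)^{8k}|_{KO}$ is nonzero says nothing about its image under $(i^{8k+1})^{*}$; a nonzero class can certainly pull back to zero. You acknowledge this as ``the main obstacle'' but then only gesture at an equivariant lift or an Atiyah--Hirzebruch analysis. Neither of these is straightforward: $X(8k+4)^{8k+1}$ is not of the form $X(8k+4)^{4p-2-(8k+4)}$ for any integer $p$, so it is not directly an equivariant homotopy orbit and the argument of Lemma~\ref{lem: equivariant ko nontrivial} does not apply to it as stated. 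Filling this in would amount to proving a new detection statement rather than deducing the corollary from Proposition~\ref{prop: upper bound detected by ko}.

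The paper's proof avoids this by running the factorization in the opposite direction and applying Proposition~\ref{prop: upper bound detected by ko} at the \emph{next} index. Assume $\phi_k$ is not detected by $KO$; then $c(8k+4)^{8k+1}|_{KO}=0$. Since $c(8k+7)^{8k+2}$ factors through $c(8k+4)^{8k+1}$, it is also zero in $KO$; and since $\pi_{8k+3}KO=0$, this extends to $c(8k+7)^{8k+3}|_{KO}=0$. But $c(8(k+1)+2)^{8(k+1)-4}=c(8k+10)^{8k+4}$ factors through $c(8k+7)^{8k+3}$, forcing $c(8k+10)^{8k+4}|_{KO}=0$, which contradicts Proposition~\ref{prop: upper bound detected by ko} applied with $k$ replaced by $k+1$. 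The point is that when the \emph{detected} map factors through the hypothetically undetected one, the implication goes the right way.
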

\begin{proof}
For the sake of contradiction, suppose that $\phi_{k}$ is not detected by $KO$.  Then the composition
$$
X(8k+4)^{8k+1}\xrightarrow{c(8k+4)^{8k+1}} S^{0}\longrightarrow KO
$$
is zero.  Since the map $c(8k+7)^{8k+2}:X(8k+7)^{8k+2}\rightarrow S^{0}$ factors through ${c(8k+4)^{8k+1}}$, the composition 
$$
X(8k+7)^{8k+2}\xrightarrow{c(8k+7)^{8k+2}} S^{0}\longrightarrow KO
$$
is zero.  Moreover, since $\pi_{8k+3}(KO)=0$, the composition 
$$
X(8k+7)^{8k+3}\xrightarrow{c(8k+7)^{8k+3}} S^{0}\longrightarrow KO
$$
is also zero. 

By Proposition~\ref{prop: upper bound detected by ko}, the map $c(8k+10)^{8k+4}$ is detected by $KO$.  This maps factors through the map $c(8k+7)^{8k+3}$, which, as we have just shown, is not detected by $KO$.  This is a contradiction.  
\end{proof}

\section{Step 3:  Identifying the map on the first lock as $\{P^{k-1}h_1^{3}\}$} \label{sec:Step3}
In this section, we prove Proposition~\ref{pro: beta-eta-square}: For all $k, m\geq 0$, we have the relations
$$\phi_{k}\cdot \{P^{m}h_{1}^{2}\}=\{P^{m+k}h_{1}^3\}.$$ 
Combining Corollary~\ref{cor: phi detected by ko} and part (iv) of Theorem~\ref{thm: inductive fk}, we have shown that the family 
$$\{\phi_{k}: S^{8k+1}\rightarrow S^{0}\mid k\geq -1 \}$$ 
satisfies the following two properties:
\begin{enumerate}
    \item For $k\geq 0$, $\phi_{k}$ can be detected by $KO$;
    \item For $k\geq 1$, we have that 
    \begin{equation}\label{equation: inductive on beta 2}
        \phi_{k}- \phi_{k-2}\cdot \chi_{k}\in \langle \phi_{k-1}, 2, \tau_{k} \rangle,
    \end{equation}
    for some $\tau_{k}\in \{0,8\sigma\}$ in $\pi_7$ and $\chi_{k}\in \pi_{16}$. Here $\phi_0 = \eta, \ \phi_{-1} = 0$.
\end{enumerate}
Since $\pi_{8k+1} ko = \mathbb{Z}/2$, generated by the Hurewicz image of the element $\{P^kh_1\}$ in $\pi_{8k+1}$ of the sphere spectrum, we make the following definition due to property $(1)$ of the family $\phi_k$ above.

\begin{df} \label{def:varphi}
Define 
$$\varphi_{-1} = 0, \ \varphi_{0} = 0,$$
and for $k \geq 1$,
$$\varphi_{k}=\phi_{k}-\{P^{k}h_{1}\}.$$
\end{df}

It is clear that the Hurewicz image of $\varphi_{k}$ in $\pi_{8k+1}ko$ is zero for all $k$.

Then Proposition~\ref{pro: beta-eta-square} follows from the following lemma for the elements $\varphi_k$ in $\pi_{8k+1}$.

\begin{lem} \label{lem:induct ph1}
	For all $k \geq -1, \ m \geq 0$, the following relations hold:
	$$\varphi_{k} \cdot \{P^m h^2_1\}    = 0.$$
\end{lem}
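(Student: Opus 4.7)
The plan is to induct on $k$. The cases $k \in \{-1, 0\}$ are immediate from $\varphi_{-1} = \varphi_0 = 0$ in Definition~\ref{def:varphi}. For the inductive step at $k \geq 1$, I would substitute $\phi_j = \varphi_j + \{P^j h_1\}$ (with $\phi_0 = \eta$ and $\phi_{-1} = 0$) into the recurrence (\ref{equation: inductive on beta 2}) and rearrange to obtain
\begin{equation*}
\varphi_k \;=\; \varphi_{k-2} \cdot \chi_k \;+\; \bigl(\{P^{k-2} h_1\} \cdot \chi_k - \{P^k h_1\}\bigr) \;+\; \xi,
\end{equation*}
for some $\xi \in \langle \phi_{k-1},\, 2,\, \tau_k \rangle$. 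Right-multiplying both sides by $\{P^m h_1^2\}$ reduces the lemma to three vanishing claims: (i) $\varphi_{k-2} \cdot \chi_k \cdot \{P^m h_1^2\} = 0$; (ii) $\bigl(\{P^{k-2} h_1\} \cdot \chi_k - \{P^k h_1\}\bigr) \cdot \{P^m h_1^2\} = 0$; and (iii) $\xi \cdot \{P^m h_1^2\} = 0$.

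For (iii) I would shuffle the Toda bracket past $\{P^m h_1^2\}$. The three hypotheses needed are $2\phi_{k-1} = 0$ (built into the bracket $\langle \phi_{k-1}, 2, \tau_k \rangle$, and also visible from the fact that $\phi_{k-1}$ is detected by $P^{k-1} h_1$, an element of order two), $2\tau_k = 0$ (clear since $\tau_k \in \{0, 8\sigma\}$ and $16\sigma = 0$), and $\tau_k \cdot \{P^m h_1^2\} = 0$ (using $\{P^m h_1^2\} = \eta \cdot \{P^m h_1\}$ together with $8\sigma \cdot \eta = 0$ in $\pi_8$). The standard shuffle then yields $\xi \cdot \{P^m h_1^2\} \in \phi_{k-1} \cdot \langle 2,\, \tau_k,\, \{P^m h_1^2\} \rangle$ modulo controlled indeterminacy. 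Decomposing $\phi_{k-1} = \varphi_{k-1} + \{P^{k-1} h_1\}$, the $\varphi_{k-1}$-part is annihilated by the inductive hypothesis applied to the representatives of $\langle 2, \tau_k, \{P^m h_1^2\} \rangle$ (each of which, for a suitable shifted index $m'$, is $\{P^{m'} h_1^2\}$ plus $\varphi_{k-1}$-annihilated noise), while the $\{P^{k-1} h_1\}$-part is handled by an explicit image-of-$J$ computation.

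For (i), I would write $\chi_k \cdot \{P^m h_1^2\}$ as its image-of-$J$ component, a multiple of $\{P^{m+2} h_1^2\}$, plus a remainder lying in the kernel of the $ko$-Hurewicz map. The inductive hypothesis at $(k-2,\, m+2)$ kills the first piece; the product of $\varphi_{k-2}$ (itself $ko$-Hurewicz null) with the remainder is killed by a filtration/sparseness argument in the relevant $8(m+k)$-stem. Claim (ii) reduces to a clean multiplicative identity in the image of $J$, reflecting how $\chi_k$ acts on classes of the form $\{P^j h_1\}$.

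\textbf{The main obstacle.} The thorniest piece is (iii): because $\tau_k$ is specified only up to $\{0, 8\sigma\}$ and $\chi_k$ is similarly abstract, one must identify $\langle 2, \tau_k, \{P^m h_1^2\} \rangle$ with enough precision—and control its indeterminacy uniformly in $m$—to conclude that its product with $\phi_{k-1}$ vanishes. Running the shuffle in tandem with the decomposition $\phi_{k-1} = \varphi_{k-1} + \{P^{k-1} h_1\}$ and invoking the inductive hypothesis at index $k-1$ with a shifted $m$-parameter is where the bulk of the technical work will live; (i) is comparatively routine once one accepts the image-of-$J$ multiplicative identities that also feed (ii).
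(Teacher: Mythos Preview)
Your decomposition into (i), (ii), (iii) and the claim that each vanishes separately contains a genuine error: claim (ii) is false. Compute directly that
\[
\{P^{k-2} h_1\}\cdot \chi_k \cdot \{P^m h_1^2\} \;=\; \{P^{k+m-2} h_1^3\}\cdot \chi_k \;=\; 0,
\]
because $\{P^{k+m-2} h_1^3\}=4\{P^{k+m-2}h_2\}$ is $2$-divisible and $2\cdot\pi_{16}=0$. On the other hand $\{P^k h_1\}\cdot\{P^m h_1^2\}=\{P^{k+m} h_1^3\}\neq 0$. So your term (ii) equals $-\{P^{k+m} h_1^3\}$, not zero. There is no ``multiplicative identity in the image of $J$'' repairing this: any $\chi_k\in\pi_{16}$ has trivial $ko$-Hurewicz image, so $\{P^{k-2}h_1\}\cdot\chi_k$ can never recover $\{P^k h_1\}$. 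If you carry out the ``explicit image-of-$J$ computation'' promised for the $\{P^{k-1}h_1\}$-part of (iii), you will likewise find $+\{P^{k+m}h_1^3\}$, not zero; the two errors cancel, but neither (ii) nor (iii) vanishes on its own.

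The paper's approach avoids this by absorbing $\{P^k h_1\}$ into the bracket rather than into the $\chi_k$-term. Two preliminary steps make this work. First, the $KO$-detection of $\phi_k$ forces $\tau_k=8\sigma$: if $\tau_k=0$ the bracket collapses to $\phi_{k-1}\cdot\pi_8$, and since nothing in $\pi_8$ or $\pi_{16}$ is $KO$-detected, $\phi_k$ would fail to be. Second, Moss's theorem applied to the Massey product $P^k h_1=\langle P^{k-1}h_1,\,h_0,\,h_0^3h_3\rangle$ yields $\{P^k h_1\}\in\langle\{P^{k-1}h_1\},\,2,\,8\sigma\rangle$. Splitting $\langle\phi_{k-1},2,8\sigma\rangle$ along $\phi_{k-1}=\varphi_{k-1}+\{P^{k-1}h_1\}$ and subtracting this element leaves
\[
\varphi_k\in \varphi_{k-2}\chi_k+\{P^{k-2}h_1\}\chi_k+\langle\varphi_{k-1},2,8\sigma\rangle+\{P^{k-1}h_1\}\cdot\pi_8+\pi_{8k-6}\cdot 8\sigma.
\]
Now multiply by $\{P^m h_1^2\}$. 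The first term dies by induction (just commute $\chi_k$ past $\{P^m h_1^2\}$---your elaborate splitting into image-of-$J$ components is unnecessary). The second and fourth die because $\{P^j h_1^3\}$ is $2$-divisible while $2\cdot\pi_{16}=2\cdot\pi_8=0$; the last because $8\sigma\cdot\eta=0$. For the bracket, write $\{P^m h_1^2\}=\eta\{P^m h_1\}$ and shuffle:
\[
\langle\varphi_{k-1},2,8\sigma\rangle\cdot\eta\{P^m h_1\}=\varphi_{k-1}\cdot\langle 2,8\sigma,\eta\rangle\cdot\{P^m h_1\}\ni\varphi_{k-1}\cdot\{Ph_1\}\cdot\{P^m h_1\}=\varphi_{k-1}\cdot\{P^{m+1}h_1^2\}=0,
\]
with indeterminacy $\varphi_{k-1}\cdot(2\pi_9+\pi_8\eta)\cdot\{P^m h_1\}=0$ by $2\pi_9=0$ and induction.
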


\begin{proof}[Proof of Proposition~\ref{pro: beta-eta-square}]
By Definition~\ref{def:varphi} and Lemma~\ref{lem:induct ph1}, we have
$$\phi_k \cdot \{P^{m}h_{1}^{2}\} = (\varphi_{k}+\{P^{k}h_{1}\}) \cdot \{P^{m}h_{1}^{2}\} = \{P^{m+k}h_{1}^3\}.$$
\end{proof}


Now we prove Lemma \ref{lem:induct ph1}.

\begin{proof}[Proof of Lemma \ref{lem:induct ph1}]

We first show that the elements $\tau_k$ are $8\sigma$ for all $k \geq 1$.

Suppose that for some $k$, we have $\tau_k = 0$. Then we would have
\begin{equation} \label{tauk is not 0}
\phi_{k} -   \phi_{k-2} \cdot \chi_{k} \in \langle \phi_{k-1}, 2, 0\rangle =  \phi_{k-1} \cdot  \pi_{8},
\end{equation}
where $\chi_k \in \pi_{16}$. Since no elements in $\pi_8$ and $\pi_{16}$ can be detected by the ring spectrum $KO$, mapping the above relation (\ref{tauk is not 0}) to $\pi_*KO$ gives us $\phi_{k} = 0$ in $\pi_{8k+1}KO$. This contradicts property $(1)$ that $\phi_k$ is detected by $KO$. Therefore, we must have 
$$\tau_k = 8 \sigma$$
for all $k \geq 1$.

Substituting $\phi_{k} = \varphi_{k}+\{P^{k}h_{1}\}$, the relation (\ref{equation: inductive on beta 2}) becomes 
\begin{align*}
   \varphi_{k} + \{P^{k}h_{1}\} &  \in  \varphi_{k-2} \cdot \chi_k +   \{P^{k-2}h_{1}\} \cdot \chi_k \\
   & \ \ \ + \langle \varphi_{k-1} , 2, 8\sigma \rangle + \langle \{P^{k-1}h_{1}\} , 2, 8\sigma \rangle.
\end{align*}
Here we set $\{P^{-1}h_{1}\}=0$ to unify the notation.

We have the Massey product
	$$P^{k}h_1 = \langle P^{k-1}h_1, h_0, h_0^3h_3 \rangle $$
	on the Adams $E_2$-page with zero indeterminacy. Then by Moss's theorem \cite[Theorem 1.2]{Moss}, we have the Toda bracket
	$$\{P^{k}h_1\} \in \langle \{P^{k-1}h_1\}, 2, 8\sigma \rangle.$$ 	
Therefore, we have 
\begin{align*}
   \varphi_{k} & \in \varphi_{k-2} \cdot \chi_k +\{P^{k-2}h_{1}\} \cdot \chi_k + \langle \varphi_{k-1}, 2, 8\sigma \rangle  \\
   & \ \ \ + \{P^{k-1}h_1\} \cdot \pi_{8} +   \pi_{8k-6} \cdot 8\sigma
\end{align*}	
for all $k\geq 1$. 

Using this relation, we complete the proof of Lemma~\ref{lem:induct ph1} by induction on $k$, which states that for all $k \geq -1, \ m \geq 0$
$$\varphi_{k} \cdot \{P^m h^2_1\} = 0.$$

The cases $k= 0, \ -1$ are trivial, since both $\varphi_{-1}$ and $\varphi_{0}$ are zero. 

For $k \geq 1$, suppose the lemma holds for $\varphi_{k-1}$ and $\varphi_{k-2}$. 

Multiplying $\{P^{m}h^{2}_{1}\}$, We have 
\begin{align*}
\varphi_{k} \cdot \{P^{m}h^{2}_{1}\} & \in  \{P^{k+m-2}h^{3}_{1}\} \cdot \chi_k +  \{P^{k+m-1}h^{3}_{1}\} \cdot \pi_{8} \\
& \ \ \ + \langle \varphi_{k-1}, 2, 8\sigma \rangle \cdot \{P^{m}h^{2}_{1}\}. 
\end{align*}
Note that both $\{P^{k+m-2}h^{3}_{1}\}$ and $\{P^{k+m-1}h^{3}_{1}\}$ are divisible by $2$. Since 
$$2 \cdot \pi_{8} = 0, \ 2 \cdot \pi_{16}=0,$$
we have
\begin{align*}
	\varphi_{k} \cdot\{P^m h^2_1\} & \in \langle \varphi_{k-1}, 2, 8\sigma \rangle   \cdot \eta \{P^m h_1\} \\
	& = \varphi_{k-1} \cdot \langle 2, 8\sigma, \eta \rangle \cdot \{P^m h_1\} \\
	& \ni \varphi_{k-1} \cdot \{Ph_1\} \cdot \{P^m h_1\}\\  
	& = \varphi_{k-1} \cdot \{P^{m+1} h_1^2\}\\
	& = 0.
\end{align*}
The indeterminacy 
$$\varphi_{k-1} \cdot \pi_8 \cdot \eta \cdot \{P^mh_1\} + \varphi_{k-1} \cdot 2 \cdot \pi_9 \cdot \{P^mh_1\}$$
is zero, since $2 \cdot \pi_9 = 0$ and that
$$\varphi_{k-1} \cdot \eta \cdot \{P^m h_1\} = \varphi_{k-1} \cdot \{P^m h_1^2\} = 0$$
by induction. Therefore, we have that 
$$\varphi_{k} \cdot \{P^m h_1^2\} = 0$$	
for all $m \geq 0$. This completes the induction and therefore the proof of the lemma.
\end{proof}


\newpage
\section{Step 4: A technical lemma for the upper bound} \label{sec:Step4}
In this section, we prove will prove the follow proposition, which is Proposition~\ref{pro: only -1 cell matters} in Section~\ref{sec:OutlineofProof}.
\begin{prop}
For any $k,m\geq 0$, the map
\begin{equation}\label{eq outline: j'' injective without -1 cell}
j''^{0}(S^{4m+3})\longrightarrow j''^{0}(X(8k+3)_{0}^{4m+3})
\end{equation}
induced by the quotient map $X(8k+3)_{0}^{4m+3}\twoheadrightarrow S^{4m+3}$ is injective.
\end{prop}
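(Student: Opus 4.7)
The plan is to rephrase injectivity of $q^{\ast}: j''^{0}(S^{4m+3}) \to j''^{0}(X(8k+3)_{0}^{4m+3})$ as the vanishing of a boundary map. Applying $j''^{0}(-)$ to the cofiber sequence
$$X(8k+3)_{0}^{4m+2} \xrightarrow{i} X(8k+3)_{0}^{4m+3} \xrightarrow{q} S^{4m+3}$$
produces the exact sequence
$$j''^{-1}(X(8k+3)_{0}^{4m+2}) \xrightarrow{\partial} j''^{0}(S^{4m+3}) \xrightarrow{q^{\ast}} j''^{0}(X(8k+3)_{0}^{4m+3}),$$
so injectivity of $q^{\ast}$ is equivalent to $\partial = 0$. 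Concretely, a class $y \in j''^{-1}(X(8k+3)_{0}^{4m+2})$ corresponds to a map $y: X(8k+3)_{0}^{4m+2} \to \Sigma^{-1} j''$, and $\partial(y) = y \circ \alpha \in \pi_{4m+3}(j'')$, where $\alpha: S^{4m+2} \to X(8k+3)_{0}^{4m+2}$ is the attaching map of the top cell of $X(8k+3)_{0}^{4m+3}$. So the goal is to prove that every such $y$ vanishes when precomposed with $\alpha$.

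The first step would be to pass from $j''$ to the more tractable $ko_{\mathbb{Q}/\mathbb{Z}}$. The defining fiber sequence $j'' \to ko \xrightarrow{\psi^{3}-1} ko\langle 2 \rangle$ together with the rational fiber sequence $ko \to ko_{\mathbb{Q}} \to ko_{\mathbb{Q}/\mathbb{Z}}$ produces a natural map $j'' \to \Sigma^{-1} ko_{\mathbb{Q}/\mathbb{Z}}$ which identifies $\pi_{n}(j'')$, for $n>0$, with the kernel of $\psi^{3}-1$ on $\pi_{n+1}(ko_{\mathbb{Q}/\mathbb{Z}})$, and in particular is injective onto $\pi_{n}(\Sigma^{-1}ko_{\mathbb{Q}/\mathbb{Z}})$. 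A diagram chase then shows that $\partial(y) = 0$ in $\pi_{4m+3}(j'')$ if and only if the image of $\partial(y)$ in $\pi_{4m+2}(\Sigma^{-1}ko_{\mathbb{Q}/\mathbb{Z}})$ vanishes, because $4m+3 > 0$. This reduces the claim to the corresponding $ko_{\mathbb{Q}/\mathbb{Z}}$-theoretic statement about the composite $y \circ \alpha$.

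The second step would be to compute these $ko_{\mathbb{Q}/\mathbb{Z}}$-classes explicitly via the Thom-spectrum description $X(m) = \Thom(\textup{BPin}(2), -m\lambda)$. The $ko_{\mathbb{Q}/\mathbb{Z}}$-cohomology of truncations of this Thom spectrum can be expressed in terms of the Chern character of the virtual bundle $-m\lambda$ restricted to finite skeleta of $\textup{BPin}(2)$, in the same spirit as the Chern-character arguments deployed later in the proof of Lemma~\ref{lem: differential to second lock}. The critical input is that, on any fixed skeleton, the rational Chern character of $-m\lambda$ has denominators controlled by the Pontryagin-class integrality proved in the course of Proposition~\ref{prop:periodicityX(m)}, so that the $ko_{\mathbb{Q}/\mathbb{Z}}$-valued pairing with $[\alpha]$ lands in $\mathbb{Z}_{(2)}$ and therefore projects to zero in $\mathbb{Q}/\mathbb{Z}$.

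The main obstacle will be controlling this integrality uniformly in the two parameters $k$ and $m$. The plan for overcoming this is to invoke the four-cell periodicity
$$X(m)^{4n+6-m}_{4n-m} \simeq \Sigma^{4k}\,X(m+4k)^{4n+6-m-4k}_{4n-m-4k}$$
from Proposition~\ref{prop:periodicityX(m)}, which collapses the uniform $(k,m)$-family down to finitely many test cases on blocks of four consecutive cells. Once the block-level integrality is checked in those test cases, lifting from $ko_{\mathbb{Q}/\mathbb{Z}}$ back to $j''$ gives the required $\partial = 0$ and hence the injectivity of $q^{\ast}$.
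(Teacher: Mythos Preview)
Your first two steps are sound and in fact mirror the paper's strategy: reformulate injectivity as the vanishing of a boundary map, and use that $j''$ (more precisely $j'\langle 1\rangle$) injects into $\Sigma^{-1}ko_{\mathbb{Q}/\mathbb{Z}}$ on the relevant homotopy groups to reduce to a $ko_{\mathbb{Q}/\mathbb{Z}}$-theoretic statement. The paper does exactly this via Lemma~\ref{lem: connected j to ko} and Lemma~\ref{lem: j=j' when no negative cell}.

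The gap is in your Steps~3--4. Your plan to ``compute $ko_{\mathbb{Q}/\mathbb{Z}}$-classes explicitly via the Chern character'' and then invoke the $7$-cell periodicity of Proposition~\ref{prop:periodicityX(m)} to reduce to finitely many test cases does not work as stated. The periodicity is a \emph{local} equivalence on blocks of consecutive cells, but the boundary map $\partial$ you need to kill is governed by the attaching map $\alpha: S^{4m+2}\to X(8k+3)_{0}^{4m+2}$, which sees the entire skeleton; splicing together block-level integrality statements does not control $\partial$ uniformly in $m$. Moreover, the phrase ``the $ko_{\mathbb{Q}/\mathbb{Z}}$-valued pairing with $[\alpha]$ lands in $\mathbb{Z}_{(2)}$'' is not well-posed: you must control \emph{every} class $y$ in $ko^{-2}(X(8k+3)_{0}^{4m+2};\mathbb{Q}/\mathbb{Z})$, and such classes are not themselves Chern characters in any direct sense.

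The paper sidesteps these difficulties entirely with a structural trick you are missing. It first applies the $ko_{\mathbb{Q}/\mathbb{Z}}$-Thom isomorphism (Lemma~\ref{lem thom iso for  koq/z}) for the spin bundle $(8k+4)\lambda$ to transport the problem from $X(8k+3)_{0}^{4m+3}$ to a subquotient of $X(-1)$. It then uses the transfer map $\mathrm{Tr}:\Thom(\mathbb{H}P^{\infty},V)\to X(-1)$ of Proposition~\ref{transfer}. The point is that $\Thom(\mathbb{H}P^{\infty},V)$ has cells only in dimensions $\equiv 3\pmod 4$, so in its $ko_{\mathbb{Q}/\mathbb{Z}}$-based Atiyah--Hirzebruch spectral sequence every potential differential hitting the top-cell class in the relevant degree has source in $\pi_{t}(ko_{\mathbb{Q}/\mathbb{Z}})$ with $t\equiv 1\pmod 4$, which vanishes. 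This gives the required $ko_{\mathbb{Q}/\mathbb{Z}}$-injectivity for free, uniformly in $k$ and $m$, with no Chern-character bookkeeping at all.
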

The proof makes essential use of two spectra, $ko_{\mathbb{Q}/\mathbb{Z}}$ and $j'$, which we review now.

\subsection{The spectra $ko_{\mathbb{Q}/\mathbb{Z}}$ and $j'$}
By Atiyah--Bott--Shapiro \cite[Section~11]{AtiyahBottShapiro}, any spin bundle is $ko$-orientable.  In other words, the spectrum $ko$ is a module over the ring spectrum $M Spin$.  Let $ko_{\mathbb{Q}/\mathbb{Z}}$ be the cofiber of the rationalization map 
$$ko\longrightarrow ko_{\mathbb{Q}}.$$ 
Both $ko_{\mathbb{Q}}$ and $ko_{\mathbb{Q}/\mathbb{Z}}$ are modules over $M Spin$.  Therefore, for any spin bundle $F$ of dimension $n$ over a space $A$, we have the Thom isomorphism 
$$\begin{tikzcd}ko^m(A; \mathbb{Q}/\mathbb{Z}) \ar[r, "\cong"]& ko^{m+n} (\Thom(A, F);\mathbb{Q}/\mathbb{Z}) \end{tikzcd}$$
which is induced by cup product with the Thom class. 

Moreover, if $A'$ is a subspace of $A$, and $F'$ is the restriction of $F$ to $A'$, we also have the relative Thom isomorphism 
\begin{equation}\label{eq:ThomFME}
\begin{tikzcd} \widetilde{ko}^m(A/A'; \mathbb{Q}/\mathbb{Z}) \ar[r, "\cong"]& ko^{m+n}(\Thom(A, F)/\Thom(A', F'); \mathbb{Q}/\mathbb{Z}). 
\end{tikzcd}
\end{equation}

\vspace{0.3in}

\begin{lem}\label{lem thom iso for  koq/z}
Let $V$ be a virtual bundle over a space $B$, and let $E$ be a spin bundle of dimension $n$ over $B$.  Suppose that $B'$ is a subspace of $B$.  Let $V'$ and $E'$ be the restrictions of $V$ and $E$ to $B'$.  We have the Thom isomorphism 
\begin{equation}\label{relative thom iso}
ko^{m}(\left.\Thom(B,V) \middle/ \Thom(B',V')\right.;\mathbb{Q}/\mathbb{Z})
\stackrel{\cong}{\longrightarrow} ko^{m+n}(\Thom(B,V\oplus E)/\Thom(B',V'\oplus E');\mathbb{Q}/\mathbb{Z}).
\end{equation}
The isomorphism above is natural in the sense that if $B''$ is a subspace of $B'$, and $V''$, $E''$ are the restrictions of $V$ and $E$ to $B''$, then the following diagram commutes:  
\begin{equation}\label{thom natural}
\begin{tikzcd}
ko^{m}(\left.\Thom(B,V) \middle/ \Thom(B',V')\right.;\mathbb{Q}/\mathbb{Z}) \ar[r, "\cong"] \ar[d] & ko^{m+n}(\Thom(B,V\oplus E)/\Thom(B',V'\oplus E');\mathbb{Q}/\mathbb{Z}) \ar[d] \\ 
ko^{m}(\Thom(B,V)/\Thom(B'',V'');\mathbb{Q}/\mathbb{Z}) \ar[r, "\cong"] &ko^{m+n}(\Thom(B,V\oplus E)/\Thom(B'',V''\oplus E'');\mathbb{Q}/\mathbb{Z}). 
\end{tikzcd}
\end{equation}
\end{lem}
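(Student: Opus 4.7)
The plan is to reduce the $\mathbb{Q}/\mathbb{Z}$-coefficient Thom isomorphism in (\ref{relative thom iso}) to the integral and rational versions via a five-lemma argument, using the defining cofiber sequence $ko \to ko_{\mathbb{Q}} \to ko_{\mathbb{Q}/\mathbb{Z}}$.

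First, I would construct the map (\ref{relative thom iso}) explicitly. Since $E$ is a spin bundle of dimension $n$, the Atiyah--Bott--Shapiro orientation provides an $MSpin$-orientation of $E$ and hence a $ko$-Thom class $u_E \in \widetilde{ko}^{n}(\Thom(B,E)/\Thom(B',E'))$. The $MSpin$-module structure on $ko$ induces compatible $MSpin$-module structures on $ko_{\mathbb{Q}}$ and on $ko_{\mathbb{Q}/\mathbb{Z}}$ by passing to rationalization and to the cofiber of the rationalization map. Cup product with $u_E$, mediated by a Thom-diagonal map
\[
\Thom(B,V\oplus E)/\Thom(B',V'\oplus E') \longrightarrow \bigl(\Thom(B,V)/\Thom(B',V')\bigr) \wedge \bigl(\Thom(B,E)/\Thom(B',E')\bigr),
\]
then yields, for each coefficient spectrum $R \in \{ko, ko_{\mathbb{Q}}, ko_{\mathbb{Q}/\mathbb{Z}}\}$, a Thom map $\mathrm{Th}_R$ with the same source and target as (\ref{relative thom iso}). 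By construction, these maps assemble into a morphism of long exact sequences along the coefficient cofiber sequence.

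Next, I would invoke the classical relative Thom isomorphism to conclude that $\mathrm{Th}_{ko}$ and $\mathrm{Th}_{ko_{\mathbb{Q}}}$ are isomorphisms. For virtual $V$, this is handled by stabilizing $V$ with a sufficiently large trivial summand to reduce to the case of an actual bundle, where the relative Thom isomorphism is standard. Applying the five-lemma to the commutative ladder formed by the $R$-cohomology long exact sequences of the pairs on both sides of (\ref{relative thom iso}), with vertical arrows $\mathrm{Th}_R$ for $R \in \{ko, ko_{\mathbb{Q}}, ko_{\mathbb{Q}/\mathbb{Z}}\}$, then forces $\mathrm{Th}_{ko_{\mathbb{Q}/\mathbb{Z}}}$ to also be an isomorphism.

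Finally, the naturality statement (\ref{thom natural}) is essentially automatic from the construction: the isomorphism $\mathrm{Th}_{ko_{\mathbb{Q}/\mathbb{Z}}}$ is cup product with a fixed Thom class $u_E$, and restriction along the inclusion of pairs $(B,B'') \hookrightarrow (B,B')$ commutes with such cup products. The only real technical obstacle is verifying that the cup-product pairing is compatible with the passage to the $\mathbb{Q}/\mathbb{Z}$ coefficient fiber so that Thom-class multiplication is well-defined on relative $ko_{\mathbb{Q}/\mathbb{Z}}$-cohomology; once the $MSpin$-module structure on $ko_{\mathbb{Q}/\mathbb{Z}}$ is set up compatibly with the coefficient sequence (which can be done by choosing a cofiber replacement in the category of $MSpin$-modules), everything else follows from standard manipulations.
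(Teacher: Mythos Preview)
Your proposal is correct and follows essentially the same approach as the paper: both rely on the $MSpin$-module structure of $ko_{\mathbb{Q}/\mathbb{Z}}$ inherited from the cofiber sequence $ko \to ko_{\mathbb{Q}} \to ko_{\mathbb{Q}/\mathbb{Z}}$. The paper's published proof is organized slightly differently---rather than setting up a Thom diagonal and invoking the five-lemma, it reduces directly to the already-established relative Thom isomorphism~(\ref{eq:ThomFME}) by taking $A = D(V)$, $A' = D(V') \cup S(V)$, and $F$ the pullback of $E$ to $D(V)$---but the underlying content is the same (indeed, an earlier draft of the paper contained essentially your argument with the five-lemma explicit).
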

\begin{proof}
The desired isomorphism follows from the isomorphism (\ref{eq:ThomFME}) by setting 
\begin{eqnarray*}
A &=& D(V), \\
A' &=& D(V') \cup S(V), 
\end{eqnarray*}
and letting $F$ be the pull-back of $E$ to $D(V)$ (here, $D(V)$ and $S(V)$ denote the disc bundle and the sphere bundle of $V$, respectively).  Diagram~(\ref{thom natural}) follows from standard arguments on the point-set level.  
\end{proof}

Next, we introduce a slight variant of the spectrum $j''$: we define $j'$ as the fiber of the map
$$
ko\xrightarrow{\psi^{3}-1}ko.
$$
Note that $j'^{0}(S^{0})=\mathbb{Z}\oplus \mathbb{Z}/2$ while $j''^{0}(S^{0})=\mathbb{Z}$. The map $ko\langle 2\rangle \rightarrow ko$ gives a map $j''\rightarrow j'$ that induces isomorphism on $\pi_{n}(-)$ for any $n\neq -1,0$. This proves the following simple lemma:
\begin{lem}\label{lem: j=j' when no negative cell}
Let $S$ be a finite CW-spectrum with no cell of dimension $\leq 0$. Then $j'^{0}(S)=j''^{0}(S)$.
\end{lem}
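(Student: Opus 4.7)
The plan is to identify the cofiber of the natural comparison map $f : j'' \to j'$ explicitly as a two-cell spectrum with homotopy concentrated in degrees $-1$ and $0$, and then to extract the desired isomorphism from the resulting long exact sequence combined with the connectivity hypothesis on $S$.

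To construct $f$, I first note that the composition $ko \xrightarrow{\psi^3 - 1} ko$ acts trivially on $\pi_0$ and $\pi_1$ (since $\psi^3 = 1$ on $\pi_0 = \mathbb{Z}$ and $\psi^3$ acts by multiplication by $3$ on $\pi_1 = \mathbb{Z}/2$, which is the identity mod $2$), so it factors through the $1$-connected cover $ko\langle 2\rangle$. This produces a map of fiber sequences
$$
\begin{CD}
j'' @>>> ko @>{\psi^3-1}>> ko\langle 2\rangle \\
@VV{f}V @| @VV{p}V \\
j' @>>> ko @>{\psi^3-1}>> ko.
\end{CD}
$$
Applying the $3\times 3$ lemma to this diagram, the cofiber $C := \operatorname{cofib}(f)$ is equivalent to $\Omega\operatorname{cofib}(p) \simeq \Sigma^{-1}\operatorname{cofib}(p)$. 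A direct long-exact-sequence calculation for $p$ shows that $\operatorname{cofib}(p) \simeq \tau_{\leq 1}ko$ has $\pi_0 = \mathbb{Z}$ and $\pi_1 = \mathbb{Z}/2$, so $C \simeq \Sigma^{-1}\tau_{\leq 1}ko$ has $\pi_{-1}(C) = \mathbb{Z}$, $\pi_0(C) = \mathbb{Z}/2$, and all other homotopy zero.

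The cofiber sequence $j'' \to j' \to C$ now yields
$$
C^{-1}(S) \longrightarrow j''^{0}(S) \longrightarrow j'^{0}(S) \longrightarrow C^{0}(S),
$$
so it suffices to show $C^{0}(S) = C^{-1}(S) = 0$. The Postnikov tower of $C$ gives a fiber sequence $\Sigma^{-1}H\mathbb{Z}/2 \to C \to \Sigma^{-1}H\mathbb{Z}$, and similarly for $\Sigma^{-1}C$. Mapping $S$ in, the groups $C^{0}(S)$ and $C^{-1}(S)$ are sandwiched between ordinary cohomology groups $H^{k}(S;\mathbb{Z})$ and $H^{k}(S;\mathbb{Z}/2)$ for $k \in \{-2,-1,0\}$. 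Since $S$ has no cells of dimension $\leq 0$, its cellular cochain complex is concentrated in positive degrees, so $H^{k}(S;A) = 0$ for all $k \leq 0$ and all coefficient groups $A$. Therefore both outer terms vanish and the sequence collapses to the desired isomorphism $j''^{0}(S) \cong j'^{0}(S)$.

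I do not expect any serious obstacle here: once the cofiber of $f$ is identified via the $3\times 3$ lemma, the rest is a routine Postnikov-tower calculation combined with the connectivity hypothesis. The only point requiring care is the explicit identification $\operatorname{cofib}(p) \simeq \tau_{\leq 1}ko$ and the ensuing degree bookkeeping after desuspension; this is a straightforward long-exact-sequence exercise.
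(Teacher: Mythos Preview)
Your proof is correct and follows essentially the same approach as the paper, which simply observes that the map $j'' \to j'$ induces an isomorphism on $\pi_n$ for all $n \neq -1, 0$ and deduces the lemma immediately from the connectivity hypothesis on $S$. One minor slip: in your Postnikov fiber sequence for $C$ the first term should be $H\mathbb{Z}/2$ rather than $\Sigma^{-1}H\mathbb{Z}/2$, since $\pi_0(C) = \mathbb{Z}/2$; this is harmless, as the relevant ordinary cohomology groups still lie in nonpositive degrees and hence vanish.
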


These two spectra $j'$ and $ko_{\mathbb{Q}/\mathbb{Z}}$ are related via the following lemma:
\begin{lem} \label{lem: connected j to ko}
Let $j'\langle 1\rangle$ be the 0-connected cover of $j'$.  There is a map $$\iota:j'\langle1\rangle \rightarrow \Sigma^{-1}ko_{\mathbb{Q}/\mathbb{Z}}$$ that induces an injection on $\pi_{4m-1}(-)$ for any positive integer $m$.
\end{lem}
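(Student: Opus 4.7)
The plan is to realize $\iota$ as the map on fibers induced by a carefully chosen comparison of two fiber sequences, and then to read off its effect on $\pi_{4m-1}$ from the naturality of the boundary homomorphism. The key observation is that on $\pi_{4m}$ for $m\geq 1$ the operator $\psi^3-1$ acts on $\pi_{4m}(ko)=\mathbb{Z}$ by multiplication by $3^{2m}-1$, so after passing to the $0$-connected cover and rationalizing, $\psi^3-1$ becomes a self-equivalence of $ko_\mathbb{Q}\langle 1\rangle\simeq\bigoplus_{m\geq 1}\Sigma^{4m}H\mathbb{Q}$.

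First I would construct an auxiliary map $\tilde g:ko\langle 1\rangle\to ko_\mathbb{Q}\langle 1\rangle$ satisfying $\tilde g\circ(\psi^3-1)\simeq r'$, where $r'$ is the rationalization. Since $ko_\mathbb{Q}\langle 1\rangle$ is a wedge of rational Eilenberg--MacLane spectra, any map into it is determined by its rational cohomology; I would define $\tilde g$ by the class $\tfrac{1}{3^{2m}-1}$ times the $4m$-dimensional Bott generator in $H^{4m}(ko\langle 1\rangle;\mathbb{Q})=\mathbb{Q}$ for each $m\geq 1$. A direct check on $\pi_{4m}$ gives $\tilde g_*\circ(\psi^3-1)_*=\tfrac{1}{3^{2m}-1}\cdot(3^{2m}-1)=1=r'_*$, so agreement on rational homotopy forces $\tilde g\circ(\psi^3-1)\simeq r'$. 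The pair $(\mathrm{id},\tilde g)$ then produces a map of fiber sequences
$$\bigl(j'\langle 1\rangle \to ko\langle 1\rangle \xrightarrow{\psi^3-1} ko\langle 1\rangle\bigr) \;\longrightarrow\; \bigl(F\to ko\langle 1\rangle\xrightarrow{r'} ko_\mathbb{Q}\langle 1\rangle\bigr),$$
inducing $\iota':j'\langle 1\rangle\to F$. Composing with the canonical map $F\to\Sigma^{-1}ko_{\mathbb{Q}/\mathbb{Z}}$ coming from the cover inclusion $ko\langle 1\rangle\hookrightarrow ko$ yields $\iota$. The choice of homotopy producing $\iota'$ is essentially forced, because $[j'\langle 1\rangle,\Omega ko_\mathbb{Q}\langle 1\rangle]=0$: $j'\langle 1\rangle$ is rationally trivial, and $\Omega ko_\mathbb{Q}\langle 1\rangle$ is rational.

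For injectivity on $\pi_{4m-1}$ with $m\geq 1$, I would invoke naturality of connecting maps. Since $\pi_{4m-1}(ko)=0$, the first fiber sequence gives a surjection $\partial:\mathbb{Z}=\pi_{4m}(ko)\twoheadrightarrow\pi_{4m-1}(j'\langle 1\rangle)=\mathbb{Z}/(3^{2m}-1)$, while the second gives $\partial':\mathbb{Q}=\pi_{4m}(ko_\mathbb{Q})\twoheadrightarrow\pi_{4m-1}(\Sigma^{-1}ko_{\mathbb{Q}/\mathbb{Z}})=\mathbb{Q}/\mathbb{Z}$. Naturality combined with $\tilde g_*(1)=\tfrac{1}{3^{2m}-1}$ then gives
$$\iota_*\bigl(\partial(1)\bigr) \;=\; \partial'\bigl(\tilde g_*(1)\bigr) \;=\; \bigl[\tfrac{1}{3^{2m}-1}\bigr] \in \mathbb{Q}/\mathbb{Z},$$
an element of exact order $3^{2m}-1$. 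Hence $\iota_*$ identifies $\mathbb{Z}/(3^{2m}-1)$ with the cyclic subgroup of order $3^{2m}-1$ inside $\mathbb{Q}/\mathbb{Z}$, and is in particular injective.

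The main subtlety is bookkeeping with connective covers: the desired identity $\tilde g\circ(\psi^3-1)\simeq r$ cannot hold on all of $ko$ because $\psi^3-1$ vanishes on $\pi_0(ko)=\mathbb{Z}$ while $r$ restricts there to the inclusion $\mathbb{Z}\hookrightarrow\mathbb{Q}$, so the construction must take place on $ko\langle 1\rangle$. One then needs a brief compatibility check that the comparison $F\to\Sigma^{-1}ko_{\mathbb{Q}/\mathbb{Z}}$ agrees on $\pi_{4m-1}$ with the cover map $j'\langle 1\rangle\to j'$, which is automatic from the commutativity of rationalization with the cover inclusion.
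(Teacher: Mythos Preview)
Your approach is genuinely different from the paper's and, modulo one slip, it works. The paper proceeds abstractly: it applies the $3\times 3$-lemma to the square formed by $\psi^3-1$ and rationalization to produce $j'_{\mathbb{Q}/\mathbb{Z}}$, observes that $j'\langle 1\rangle \to j' \to j'_\mathbb{Q}$ is null (since $\pi_i(j'_\mathbb{Q})=0$ for $i\geq 1$), lifts to $\Sigma^{-1}j'_{\mathbb{Q}/\mathbb{Z}}$, and then passes to $\Sigma^{-1}ko_{\mathbb{Q}/\mathbb{Z}}$. Injectivity is then read off from two easy vanishing statements. Your route is more explicit: you invert $\psi^3-1$ rationally by hand via $\tilde g$, and this lets you \emph{compute} $\iota_*$ as sending the generator of $\pi_{4m-1}(j')$ to $[1/(3^{2m}-1)]\in\mathbb{Q}/\mathbb{Z}$, which is strictly more information than the paper extracts. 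The paper's argument is slicker and avoids any calculation; yours pins down the map.

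The slip: the sequence $j'\langle 1\rangle \to ko\langle 1\rangle \xrightarrow{\psi^3-1} ko\langle 1\rangle$ is \emph{not} a fiber sequence. Since $\psi^3-1$ vanishes on $\pi_1(ko)=\mathbb{Z}/2$, the long exact sequence forces the actual fiber $F'$ to have $\pi_0(F')=\mathbb{Z}/2$, whereas $\pi_0(j'\langle 1\rangle)=0$ by definition. This matters because you later invoke the connecting map $\partial:\pi_{4m}(ko)\to\pi_{4m-1}(j'\langle 1\rangle)$ of ``the first fiber sequence.'' The fix is painless: run your argument with $F'$ in place of $j'\langle 1\rangle$, and at the end precompose with the canonical map $j'\langle 1\rangle\to F'$ (which exists since the composite $j'\langle 1\rangle\to ko\langle 1\rangle\xrightarrow{\psi^3-1}ko\langle 1\rangle$ is null). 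That map is an isomorphism on $\pi_n$ for all $n\geq 1$, so your injectivity computation on $\pi_{4m-1}$ is unaffected.
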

\begin{proof}
Consider following commutative diagram
$$\xymatrix{\Sigma^{-1}ko_{\mathbb{Q}}\ar[d]\ar[r]^{\psi^{3}-1}&\Sigma^{-1}ko_{\mathbb{Q}}\ar[d]\\\Sigma^{-1}ko_{\mathbb{Q}/\mathbb{Z}}\ar[d]&\Sigma^{-1}ko_{\mathbb{Q}/\mathbb{Z}}\ar[d]
\\ko\ar[r]^{\psi^{3}-1}\ar[d]&ko\ar[d]\\
ko_{\mathbb{Q}}\ar[r]^{\psi^{3}-1}&ko_{\mathbb{Q}}.
}$$
In the commutative diagram above, the columns form cofiber sequences.  By the $3\times3$-Lemma \cite[Lemma 2.6]{MayTraces}, we can extend this diagram to the following diagram
$$\xymatrix{\Sigma^{-2}ko_{\mathbb{Q}}\ar[r]\ar[d]&\Sigma^{-1}j'_{\mathbb{Q}}\ar[r]\ar[d]&\Sigma^{-1}ko_{\mathbb{Q}}\ar[d]\ar[r]^{\psi^{3}-1}&\Sigma^{-1}ko_{\mathbb{Q}}\ar[d]\\\Sigma^{-2}ko_{\mathbb{Q}/\mathbb{Z}}\ar[d]\ar[r]&\Sigma^{-1}j'_{\mathbb{Q}/\mathbb{Z}}\ar[r]^-{f}\ar[d]^{g}&\Sigma^{-1}ko_{\mathbb{Q}/\mathbb{Z}}\ar[d]\ar[r]&\Sigma^{-1}ko_{\mathbb{Q}/\mathbb{Z}}\ar[d]
\\\Sigma^{-1}ko\ar[r]\ar[d]&j'\ar[r]\ar[d]^{h}&ko\ar[r]^{\psi^{3}-1}\ar[d]&ko\ar[d]\\\Sigma^{-1}ko_{\mathbb{Q}}\ar[r]&j'_{\mathbb{Q}}\ar[r]&
ko_{\mathbb{Q}}\ar[r]^{\psi^{3}-1}&ko_{\mathbb{Q}},
}$$
where all the rows and columns are cofiber sequences. 

Now, consider the commutative diagram
$$\xymatrix{& \Sigma^{-1}j'_{\mathbb{Q}/\mathbb{Z}}\ar[d]^{g}\\
j'\langle 1\rangle\ar[r]^{i}\ar@{.>}[ru]^{l}&j'\ar[d]^{h}\\
&j'_{\mathbb{Q}}.}$$
Since $j'\langle 1\rangle $ is $0$-connected and $\pi_{i}(j'_{\mathbb{Q}})=0$ for $i\geq 1$, the composition $h\circ i$ equals to zero. Therefore, the composition factors through the fiber of $h$, and there exists a map 
$$l:j'\langle1\rangle \longrightarrow \Sigma^{-1}j'_{\mathbb{Q}/\mathbb{Z}}$$
making the diagram above commute.  The composition
$$
\iota:j'\langle 1\rangle \stackrel{l}{\longrightarrow} \Sigma^{-1}j'_{\mathbb{Q}/\mathbb{Z}}\stackrel{f}{\longrightarrow}\Sigma^{-1}ko_{\mathbb{Q}/\mathbb{Z}}
$$
is our desired map.  

To prove that $\iota$ induces an injection on $\pi_{4m-1}(-)$, first note that $f$ induces an injection on $\pi_{4m-1}(-)$ because $\pi_{4m-1}(\Sigma^{-2}ko_{\mathbb{Q}/\mathbb{Z}})=0$.  Furthermore, since \\ $\pi_{k}(j'_{\mathbb{Q}})=0$ for all $k\geq 0$, the map $g$ induces an isomorphism on $\pi_{4m-1}(-)$ (just like the map $i$).  Therefore, $l$ induces an isomorphism on $\pi_{4m-1}(-)$.  It follows that $\iota$ induces an injection on $\pi_{4m-1}(-)$.
\end{proof}

\subsection{Proof of Proposition~\ref{pro: only -1 cell matters}}
Note that $X(m)^{a}$ is the Thom spectrum
$$
\Thom(-m\lambda|_{BPin(2)^{a+m}}, BPin(2)^{a+m}).$$
Set 
\begin{eqnarray*}
B&=&BPin(2)^{4m+8k+6},\\
B'&=&BPin(2)^{4m+8k+5}, \\
B''&=&BPin(2)^{8k+2},\\
V&=&(-8k-3)\lambda, \\
E&=&(8k+4)\lambda.
\end{eqnarray*}
Since $4\lambda$ is spin, $E$ is spin.  By Lemma~\ref{lem thom iso for  koq/z}, we obtain Thom isomorphisms that fit into the following commutative diagram:
\begin{equation}\label{diagram: koqzthom}    
\xymatrix{ko^{-1}(S^{4m+3};\mathbb{Q}/\mathbb{Z})\ar[d]
         \ar@{}[r]|*=0[@]{\cong}& ko^{8k+3}(S^{4m+8k+7};\mathbb{Q}/\mathbb{Z})\ar[d]\\ko^{-1}(X(8k+3)^{4m+3}_{1};\mathbb{Q}/\mathbb{Z})         \ar@{}[r]|*=0[@]{\cong}& ko^{8k+3}(X(-1)^{8k+4m+7}_{8k+5};\mathbb{Q}/\mathbb{Z})\\
}
\end{equation}

Set $Y=\Thom(\mathbb{H}P^{\infty},V)$ where $V$ is the bundle associated to the adjoint representation of $SU(2)$. Recall that there is a transfer map 
$$
T:Y\rightarrow X(-1)
$$
that induces isomorphism on $H_{4n+3}(-,\mathbb{F}_{2})$ (see Proposition~\ref{transfer}) for any integer $n$. Truncating this map, we obtain a commutative diagram:
\begin{equation}\label{diagram: truncated transfer}
    \xymatrix{S^{4m+8k+7}\ar[rr]^{\cong}&& S^{4m+8k+7}\\
    Y^{8k+4m+7}_{8k+5}\ar@{->>}[u]^{f}\ar[rr]^{T^{8k+4m+7}_{8k+5}}&&X(-1)^{8k+4m+7}_{8k+5}\ar@{->>}[u]^{g}}
\end{equation}
For algebraic reasons, the $ko_{\mathbb{Q}/\mathbb{Z}}$-based Atiyah--Hirzebruch spectral sequence of $Y$ collapses. Therefore, the map $f$ induces injection on $ko^{8k+3}_{\mathbb{Q}/\mathbb{Z}}$. By diagram (\ref{diagram: truncated transfer}), the pinch map $g$ also induces injection on $ko^{8k+3}(-;\mathbb{Q}/\mathbb{Z})$. By (\ref{diagram: koqzthom}), the pinch map $l:X(8k+3)^{4m+3}_{1} \rightarrow S^{4m+3}$ induces an injection
$$
l^{ko}: ko^{-1}(S^{4m+3};\mathbb{Q}/\mathbb{Z})\rightarrow  ko^{-1}(X(8k+3)^{4m+3}_{1};\mathbb{Q}/\mathbb{Z}).
$$
Now we relate $ko_{_{\mathbb{Q}/\mathbb{Z}}}$ and $j'$: the map $$\iota:j'\langle1\rangle \rightarrow \Sigma^{-1}ko_{\mathbb{Q}/\mathbb{Z}}$$ in Lemma~\ref{lem: connected j to ko} provides us with the following diagram:
\begin{equation}\xymatrix{
 j'\langle 1\rangle ^{0}(S^{4m+3})\ar[r]^{\iota_{*}}\ar[d]^{l^{j'\langle 1\rangle }}& ko^{-1}(S^{4m+3};\mathbb{Q}/\mathbb{Z})\ar[d]^{l^{ko}}   \\ 
    j'\langle 1\rangle ^{0}(X(8k+3)^{4m+3}_{1} )\ar[r]& ko^{-1}(X(8k+3)^{4m+3}_{1} ;\mathbb{Q}/\mathbb{Z})}    
\end{equation}
Since both $\iota_*$ and $l^{ko}$ are injective, the map $l^{j'\langle 1\rangle }$ is injective as well.  

Finally, since both $S^{4m+3}$ and $X(8k+3)^{4m+3}_{1}$ have no $0$ and $-1$ cells, $j'^{0}(-)$ and $j'\langle 1\rangle^{0}(-)$ are identical for them.  It follows that the map 
$$l^{j'}:j'^{0}(S^{4m+3})\rightarrow j'^{0}(X(8k+3)_{1}^{4m+3}) $$ 
is injective. By Lemma~\ref{lem: j=j' when no negative cell}, the map
$$l^{j''}:j''^{0}(S^{4m+3})\rightarrow j''^{0}(X(8k+3)_{1}^{4m+3}) $$ 
is also injective, as desired.

\newpage
\section{Step 5: Upper Bound} \label{sec:secondlock}

\subsection{Proving differentials using the Chern character} In this subsection, we introduce a useful technique for proving differentials in the $j''$-based Atiyah--Hirzebruch spectral sequence. 
\begin{df}\label{df: ko injective}\rm
A finite $CW$-spectrum $W$ is called \textit{$ko$-injective} if the map 
$$ch(c(-)):ko^{0}(W)\longrightarrow \bigoplus_{*\geq0} H^{2*}(W;\mathbb{Q})$$ 
given by  $\alpha\mapsto ch(c(\alpha))
$
is injective.  Here, $c(\alpha)$ denotes the complexification of $\alpha$.
\end{df}

\begin{thm}\label{prop: detect AHSS differential}
Let $W$ be a finite CW-spectrum that satisfies the following properties: 
\begin{enumerate}
\item $W$ has a single top cell in dimension $4m$;
\item $W$ has no cells in dimension $(4m-1)$;
\item The $(4m-2)$-skeleton $W^{4m-2}$ of $W$ is $ko$-injective;
\item The $2$-skeleton $W^{2}$ of $W$ is homotopy equivalent to $C\eta$.
\end{enumerate}
Furthermore, suppose there is an element $\alpha\in ko^{0}(W)$ that satisfies the equality 
\begin{equation}\label{equation: simple chern char}
ch(c(\alpha))=2^{l}+d, \,\,\, d\in H^{4m}(W;\mathbb{Q})=\mathbb{Q}.    
\end{equation}
Then in the $j''$-based Atiyah--Hirzebruch spectral sequence of $\Sigma^{-1}W$, the following results hold: 
\begin{enumerate}[label=(\Roman*)]
    \item If $\nu(d)\geq \iota(m)$, then the class $2^{l}[-1]$ is a permanent cycle.  Here, $\iota(m)=0$ when $m$ is even and $\iota(m)=1$ when $m$ is odd.
    \item If $\nu(d)< \iota(m)$, then there is a \textbf{nontrivial} differential 
    $$
    2^{l}[-1]\longrightarrow \gamma [4m-1]
    $$
    for some $\gamma\in \pi_{4m-1} j''$. 
\end{enumerate}
\end{thm}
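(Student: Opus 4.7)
The plan is to translate the permanent-cycle question into a lifting problem through the defining cofiber sequence $j'' \to ko \xrightarrow{\psi^3-1} ko\langle 2 \rangle$ and then resolve it with a Chern character computation. By the edge-homomorphism interpretation of the AHSS, the class $2^l[-1]$ is a permanent cycle in the $j''$-AHSS of $\Sigma^{-1}W$ if and only if $2^l \in \pi_0(j'') = \mathbb{Z}_{(2)}$ lifts through the bottom-cell pullback $j''^0(W) \to \pi_0(j'')$, which via the long exact sequence is equivalent to finding $\beta \in ko^0(W)$ with bottom-cell value $2^l$ and $(\psi^3-1)\beta = 0$ in $ko\langle 2 \rangle^0(W)$. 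The natural candidate is the given class $\alpha$, and the task is to correct $\alpha$ by a class in the kernel of bottom-cell pullback.

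First I would establish a lift over the $(4m-2)$-skeleton. Since $ch\bigl((\psi^3-1)c(\alpha)\bigr) = (3^{2m}-1)d$ is concentrated in degree $4m$, the restriction $(\psi^3-1)\alpha|_{W^{4m-2}}$ has trivial Chern character. Combined with the $ko$-injectivity hypothesis (3) and the low-filtration structure controlled by hypothesis (4) (which pins down the image of $j''^0(W^{4m-2}) \to ko^0(W^{4m-2})$), this forces $\alpha|_{W^{4m-2}}$ to lift to $j''^0(W^{4m-2})$. By naturality of the AHSS under $\Sigma^{-1}W^{4m-2} \to \Sigma^{-1}W$, the class $2^l[-1]$ is a permanent cycle in the sub-AHSS and hence supports no differential of length $\leq 4m-2$ in the full AHSS; hypothesis (2) rules out length $4m-1$ for dimension reasons (no cell at dim $4m-2$ in $\Sigma^{-1}W$), so the first potentially nontrivial differential from $2^l[-1]$ is $d_{4m}$ targeting the top cell.

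Next I compute the obstruction to extending the lift from $W^{4m-2}$ to $W$. Using hypothesis (2), the cofiber sequence $W^{4m-2} \to W \to S^{4m}$ shows $(\psi^3-1)\alpha$ comes from a class in $ko\langle 2 \rangle^0(S^{4m}) \cong \pi_{4m}(ko) \cong \mathbb{Z}_{(2)}$. Identifying this class via Chern character, and using that complexification $ko^0(S^{4m}) \to K^0(S^{4m})$ has image $c\mathbb{Z}$ with $c=1$ for $m$ even and $c=2$ for $m$ odd (reflecting the real versus quaternionic structure), the obstruction reads $\tfrac{(3^{2m}-1)d}{c}\,u_m$, where $u_m$ generates $\pi_{4m}(ko\langle 2 \rangle)$.

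Finally, to cancel this obstruction I would modify $\alpha$ by $e\cdot u_m$ pulled back along $ko^0(S^{4m}) \to ko^0(W)$; then $(\psi^3-1)(\alpha - e\,u_m) = 0$ precisely when $e = d/c$, which lies in $\mathbb{Z}_{(2)}$ iff $\nu(d) \geq \nu(c) = \iota(m)$. When this holds, the lift exists and $2^l[-1]$ is a permanent cycle, proving (I); when it fails, no lift exists, and the already-established vanishing of shorter differentials forces the differential $d_{4m}: 2^l[-1] \to \gamma[4m-1]$ for some $\gamma \in \pi_{4m-1}(j'')$, proving (II). The main technical obstacle will be verifying that $\alpha|_{W^{4m-2}}$ actually lifts to $j''^0(W^{4m-2})$: controlling the $\tau_{\leq 1}ko$-contributions to the kernel of $ko\langle 2 \rangle^0(W^{4m-2}) \to ko^0(W^{4m-2})$ requires a careful AHSS computation that leans on both the $W^2 \simeq C\eta$ hypothesis and the $ko$-injectivity of $W^{4m-2}$.
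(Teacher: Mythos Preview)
Your outline is correct and matches the paper's approach: both reduce to lifting $2^l$ along $j''^0(W)\to j''^0(S^0)$, establish the lift on $W^{4m-2}$ via $ko$-injectivity plus the $C\eta$ hypothesis, and then isolate the top-cell obstruction through the Chern character, with the factor $2^{\iota(m)}$ coming from the index of complexification on $\pi_{4m}ko$.

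Two refinements are worth noting. For your ``main technical obstacle,'' the paper routes through $j'=\mathrm{fib}(ko\xrightarrow{\psi^3-1}ko)$ rather than $j''$ directly. Since $ko$-injectivity gives $(\psi^3-1)\alpha|_{W^{4m-2}}=0$ in $ko^0$ (not $ko\langle 2\rangle^0$), one obtains a lift to $j'^0(W^{4m-2})$ for free; the residual ambiguity is the single $\mathbb Z/2$ in $\pi_0 j'=\mathbb Z\oplus\mathbb Z/2$, and this is killed by restricting to $W^2\simeq C\eta$ and using that the boundary map $j'^0(S^0)\to j'^0(S^1)$ is $\eta$-multiplication. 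One then passes to $j''$ using that $j''\to j'$ induces an isomorphism on cohomology of complexes without cells in dimension $\leq 0$. This avoids analysing $\ker(ko\langle 2\rangle^0\to ko^0)$ directly.

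Second, your argument for (II) has a small gap: asserting ``no lift exists'' after ruling out modifications of $\alpha$ by $e\cdot u_m$ does not yet exclude modifications supported on intermediate cells. The paper closes this by contradiction: if some $\xi\in ko^0(W)$ lifts to $j'^0(W)$ with bottom value $2^l$, then $(\psi^3-1)\xi=0$ in $ko^0(W)$ forces every higher Chern component of $c(\xi)$ to vanish (since $3^k-1\neq 0$ for $k\geq 1$), so $ch(c(\xi))=2^l$ exactly. Then $ch(c(\alpha-\xi))=d$ is concentrated in degree $4m$, and $ko$-injectivity of $W^{4m-2}$ forces $\alpha-\xi$ to come from the top cell after all---so every candidate modification is already of your form $e\cdot u_m$.
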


To prove Theorem~\ref{prop: detect AHSS differential}, we first introduce some lemmas.

\begin{lem}\label{lem:psi^3-1alpha_0is0}
Let $\alpha_{k} \in ko^0(W^{4m-2})$ be the pull-back of $\alpha$ under the inclusion map $W^{4m-2}\hookrightarrow W$.  Then $\alpha_{k}\in \ker(\psi^{3}-1)$.
\end{lem}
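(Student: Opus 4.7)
The plan is to use the $ko$-injectivity of $W^{4m-2}$ together with the naturality of the Chern character and the compatibility of the Adams operation with complexification.

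First I would unwind the Chern character of $\alpha_k$. Let $i: W^{4m-2} \hookrightarrow W$ denote the inclusion, so $\alpha_k = i^*\alpha$. Naturality gives
$$ch(c(\alpha_k)) = i^*\bigl(ch(c(\alpha))\bigr) = i^*(2^l + d).$$
By hypothesis (2), $W$ has no cell in dimension $4m-1$, so the $(4m-2)$-skeleton $W^{4m-2}$ already contains all cells of $W$ except the top $4m$-cell. Hence $H^{4m}(W^{4m-2};\mathbb{Q}) = 0$ and $i^*(d) = 0$, leaving
$$ch(c(\alpha_k)) = 2^l \in H^0(W^{4m-2};\mathbb{Q}).$$

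Next I would apply $\psi^3 - 1$. Since complexification $c: ko \to ku$ intertwines the Adams operations, $c((\psi^3 - 1)\alpha_k) = (\psi^3 - 1)c(\alpha_k)$. The Chern character intertwines $\psi^3$ on $ku$ with the operation $\psi^3_H$ on $H^*(-;\mathbb{Q})$ that multiplies classes in degree $2i$ by $3^i$; in particular $\psi^3_H$ acts as the identity on $H^0$. Therefore
$$ch\bigl(c((\psi^3-1)\alpha_k)\bigr) = (\psi^3_H - 1)(2^l) = 0.$$

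Finally, hypothesis (3) says $W^{4m-2}$ is $ko$-injective, i.e.\ the map $ch \circ c: ko^0(W^{4m-2}) \to \bigoplus_{*\geq 0} H^{2*}(W^{4m-2};\mathbb{Q})$ is injective. Combining this with the vanishing above yields $(\psi^3 - 1)\alpha_k = 0$, as desired. There is no real obstacle here: the argument is a direct Chern character computation, and hypotheses (1), (2), (4) are not needed for this particular lemma — they presumably enter later when promoting $\alpha_k$ to a class detected by $j''$ on $\Sigma^{-1}W$.
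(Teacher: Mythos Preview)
Your proof is correct and follows essentially the same approach as the paper: compute $ch(c(\alpha_k))=2^l$, use the standard formula $ch_{2r}(\psi^3\phi)=3^r ch_{2r}(\phi)$ to get $ch(c((\psi^3-1)\alpha_k))=0$, and conclude via $ko$-injectivity of $W^{4m-2}$. One small remark: your invocation of hypothesis (2) is not actually needed here, since $H^{4m}(W^{4m-2};\mathbb{Q})=0$ holds automatically by the definition of the $(4m-2)$-skeleton; the paper likewise uses only property (3) in this lemma.
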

\begin{proof}
Recall that we have the equality
$$ch_{2r}(\psi^{3}(\phi))=3^{r}ch_{2r}(\phi)$$
for all $\phi\in k^0(W)$.  Since $ch(c(\alpha_0)) = 2^l$,
$$ch(c((\psi^{3}-1)\alpha_{k}))=ch(\psi^{3}c(\alpha_{k}))-ch(c(\alpha_{k}))=2^{l}-2^{l}=0.$$ 
By our assumption, $W^{4m-2}$ is $ko$-injective (property (3)).  Therefore ${\alpha_0\in \ker(\psi^{3}-1)}$, as desired.
\end{proof}

\begin{lem}\label{lem:2^l is permanent cycle in the sub}
In the $j''$-based Atiyah--Hirzebruch spectral sequence for $\Sigma^{-1}W^{4m-2}$, the element $2^{l}[-1]$ is a permanent cycle.
\end{lem}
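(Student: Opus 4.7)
The strategy is to construct an honest lift $\tilde\alpha_0 \in j''^0(W^{4m-2})$ of $\alpha_0 \in ko^0(W^{4m-2})$ along the natural map $j'' \to ko$. Since $\alpha_0|_{S^0} = 2^l \in \pi_0 ko = \mathbb{Z}$ and the induced map $\pi_0 j'' \to \pi_0 ko$ is the identity on $\mathbb{Z}$, the bottom-cell restriction of such a $\tilde\alpha_0$ is forced to equal $2^l \in \pi_0 j'' = \mathbb{Z}$. Desuspending $\tilde\alpha_0$ produces a class in $j''^{-1}(\Sigma^{-1} W^{4m-2})$ whose filtration-$(-1)$ representative is $2^l[-1]$, thereby exhibiting $2^l[-1]$ as a permanent cycle in the $j''$-based Atiyah--Hirzebruch spectral sequence.

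The starting point is Lemma~\ref{lem:psi^3-1alpha_0is0}, which provides $(\psi^3 - 1)\alpha_0 = 0$ in $ko^0(W^{4m-2})$. Applying the long exact sequence of the fiber sequence $j' \to ko \xrightarrow{\psi^3-1} ko$ yields a preliminary lift $\alpha_0' \in j'^0(W^{4m-2})$, unique up to the image of $ko^{-1}(W^{4m-2})$ under the connecting homomorphism. The remaining task is to further lift $\alpha_0'$ to a class in $j''^0(W^{4m-2})$ through the natural comparison map $j'' \to j'$ induced by the $1$-connected cover $ko\langle 2\rangle \hookrightarrow ko$.

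To control this second lift, I will compare the $j'$- and $j''$-theories on the pair $(W^{4m-2}, S^0)$. Since hypothesis (4) forces $W^2 \simeq C\eta$ to have no $1$-cell, the quotient $W^{4m-2}/S^0$ has cells only in dimensions $\geq 2$, so Lemma~\ref{lem: j=j' when no negative cell} provides the identification $j''^0(W^{4m-2}/S^0) = j'^0(W^{4m-2}/S^0)$. Chasing the commutative diagram of long exact sequences of the cofiber sequence $S^0 \hookrightarrow W^{4m-2} \twoheadrightarrow W^{4m-2}/S^0$ for both cohomology theories, and using that the bottom-cell map $\pi_0 j'' = \mathbb{Z} \hookrightarrow \pi_0 j' = \mathbb{Z} \oplus \mathbb{Z}/2$ is the inclusion of the first summand, the obstruction to lifting $\alpha_0'$ to $j''^0(W^{4m-2})$ reduces to showing that some choice of $\alpha_0'$ has vanishing $\mathbb{Z}/2$-component in its bottom-cell restriction $\alpha_0'|_{S^0} \in \pi_0 j'$.

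The hardest part will be this final adjustment. The freedom in the choice of $\alpha_0'$ permits modification of the bottom-cell restriction by any element in the image of the restriction $ko^{-1}(W^{4m-2}) \to ko^{-1}(S^0) = \mathbb{Z}/2$ followed by the connecting map into the $\mathbb{Z}/2$-summand of $\pi_0 j'$. Hypothesis (4) controls this image through the $\eta$-attaching map of the $2$-cell, while the parity condition $l \geq 1$---which is forced by the extendability of $\alpha$ across the $\eta$-attachment of $W^2$, since otherwise $2^l \cdot \eta = \eta \neq 0$ in $\pi_1 ko$ would obstruct this extension---provides the arithmetic needed to carry out the adjustment, yielding the desired lift $\tilde\alpha_0$ and completing the argument.
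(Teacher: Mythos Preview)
Your overall framework is sound: producing a class $\tilde\alpha_0\in j''^0(W^{4m-2})$ that restricts to $2^l$ on the bottom cell is exactly equivalent to the lemma, and your reduction (via Lemma~\ref{lem: j=j' when no negative cell} and the comparison of long exact sequences for the pair $(W^{4m-2},S^0)$) to showing that \emph{some} $j'$-lift $\alpha_0'$ of $\alpha_0$ has vanishing $\mathbb{Z}/2$-component $b$ in $\alpha_0'|_{S^0}\in\pi_0 j'=\mathbb{Z}\oplus\mathbb{Z}/2$ is correct. The gap is in your final paragraph: the ``adjustment'' you propose cannot be carried out. The indeterminacy in the $j'$-lift $\alpha_0'$ is the image of the connecting map $\partial\colon ko^{-1}(W^{4m-2})\to j'^0(W^{4m-2})$, and its effect on the bottom-cell restriction is governed by the image of $ko^{-1}(W^{4m-2})\to ko^{-1}(S^0)$. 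But this last map factors through $ko^{-1}(W^2)=ko^{-1}(C\eta)$, and a short computation with the cofiber sequence $S^0\to C\eta\to S^2$ gives $ko^{-1}(C\eta)=0$ (because $\eta\colon\pi_1 ko\to\pi_2 ko$ is an isomorphism). So hypothesis~(4) forces the indeterminacy to restrict to zero on $S^0$: you cannot change $b$ at all by varying the lift. The observation $l\geq 1$ is correct but does not rescue the adjustment.

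What actually works---and what the paper does---is to show $b=0$ \emph{directly}, using the $\eta$-attaching map on the other side of the long exact sequence. Since $(2^l,b)$ lies in the image of $j'^0(W^{4m-2})\to j'^0(S^0)$, it lies in the kernel of the connecting map $j'^0(S^0)\to j'^0(\Sigma^{-1}W_2^{4m-2})$; restricting further along $\Sigma^{-1}W_2^2=S^1\hookrightarrow\Sigma^{-1}W_2^{4m-2}$ gives the map $j'^0(S^0)\to j'^0(S^1)$ induced by $\eta$, which on $\mathbb{Z}\oplus\mathbb{Z}/2\to\mathbb{Z}/2\oplus\mathbb{Z}/2$ sends $(a,b)\mapsto(a\bmod 2,\,b)$. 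Hence $(2^l\bmod 2,\,b)=(0,0)$, so $b=0$ (and incidentally $l\geq 1$). Once $b=0$ is established this way, either your lifting framework or the paper's direct comparison of the $j''$- and $j'$-connecting maps finishes the proof.
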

\begin{proof}
The cofiber sequences 
$$j' \longrightarrow ko \stackrel{\psi^3-1}{\longrightarrow} ko $$ 
and 
$$ S^{0}\hookrightarrow W^{4m-2}\twoheadrightarrow W^{4m-2}_{2}$$
induce the following commutative diagram: 
$$\begin{tikzcd}
\phi_{0} \in j'^0(W^{4m-2}) \ar[r, "\mathbf{1}"] \ar[d, "\mathbf{3}"] &\alpha_{k} \in ko^0(W^{4m-2}) \ar[r, "\psi^3 -1"] \ar[d, "\mathbf{2}"] & ko^0(W^{4m-2})\ar[d] \\ 
j'^0(S^0) = \mathbb{Z} \oplus \mathbb{Z}/2\ar[r, "(id{,} 0)"] \ar[d, "\mathbf{4}"] \ar[rd, "(id{,} id)"] & ko^0(S^0) = \mathbb{Z} \ar[r, , "\psi^3 -1"] &ko^0(S^0) = \mathbb{Z}\\
j'^0(\Sigma^{-1}W_2^{4m-2})\ar[r, "\mathbf{5}"] & j'^0(\Sigma^{-1}W_2^{2}) = j'^0(S^1) = \mathbb{Z}/2 \oplus \mathbb{Z}/2&
\end{tikzcd}$$
Consider the element $\alpha_0 \in ko^0(W^{4m-2})$.  By Lemma~\ref{lem:psi^3-1alpha_0is0}, $(\psi^3 -1)\alpha_0 = 0$.  This implies that there exists an element $\phi_0 \in j'^0(W^{4m-2})$ such that 
$$\mathbf{1}(\phi_0) = \alpha_0. $$
Furthermore, $\mathbf{2}(\alpha_0) = 2^{l}$ because of the commutative diagram  
$$\begin{tikzcd}
ko^0(W^{4m-2}) \ar[r, "c"] \ar[d] &k^0(W^{4m-2}) \ar[r, "ch"] \ar[d] & \bigoplus_{*\geq0}H^{2*}(W^{4m-2}; \mathbb{Q})\ar[d] \\ 
ko^0(S^0) \ar[r, "c"] & k^0(S^0)\ar[r, "ch"] & \bigoplus_{*\geq 0}H^{2*}(S^0; \mathbb{Q}).
\end{tikzcd}$$

Since the the map 
\begin{eqnarray*}
j'^0(S^0) &\longrightarrow& ko^0(S^0)  \\
\mathbb{Z} \oplus \mathbb{Z}/2 &\longrightarrow& \mathbb{Z}
\end{eqnarray*}
is $(id, 0)$, 
$$\mathbf{3}(\phi_0) = (2^{l}, b)$$ 
for some $b \in \mathbb{Z}/2$.  

We claim that $b= 0$.  To see this, consider the composition 
\begin{eqnarray*}
\mathbf{5} \circ \mathbf{4}: j'^0(S^0) &\longrightarrow& j'^0(S^1)  \\
\mathbb{Z} \oplus \mathbb{Z}/2 &\longrightarrow& \mathbb{Z}/2 \oplus \mathbb{Z}/2
\end{eqnarray*}
Since $W^{2}\simeq C\eta$ (property (4)), this map is induced by $\eta: S^1 \to S^0$ and sends $(a, b) \in \mathbb{Z} \oplus \mathbb{Z}/2$ to $(a, b) \in \mathbb{Z}/2 \oplus \mathbb{Z}/2$.  Therefore, under the composition $\mathbf{5} \circ \mathbf{4} \circ \mathbf{3}$, $\phi_0$ is sent to 
$$(0,0) = \mathbf{5} \circ \mathbf{4} \circ \mathbf{3} (\phi_0) = \mathbf{5} \circ \mathbf{4}(2^{l}, b) = (0, b).$$
Therefore $b= 0$. 

Consider the following commutative diagram: 
$$\begin{tikzcd}
j''^0(S^0) = \mathbb{Z} \ar[r, "(1{,}0)"] \ar[d] & j'^0(S^0) = \mathbb{Z} \oplus \mathbb{Z}/2\ar[d, "\mathbf{4}"] \\ 
j''^0(\Sigma^{-1}W_2^{4m-2}) \ar[r, "="] & j'^0(\Sigma^{-1}W_2^{4m-2}).
\end{tikzcd}$$
The bottom horizontal arrow is an equivalence because of Lemma~\ref{lem: j=j' when no negative cell}.  By the previous discussion, $\mathbf{4}(2^{l}, 0) = \mathbf{4} \circ \mathbf{3} (\phi_0) = 0$.  Therefore, the left vertical arrow sends the element $2^{l} \in j''^0(S^0)$ to $0$ as well. This is equivalent to saying that element $2^{l}[-1]$ is permanent cycle in the $j''$-based Atiyah--Hirzebruch spectral sequence for $\Sigma^{-1}W^{4m-2}$.
\end{proof}
\begin{lem}\label{lem:wiskoinjective}
$W$ is $ko$-injective.
\end{lem}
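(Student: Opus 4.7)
The plan is to exploit the cofiber sequence $W^{4m-2} \hookrightarrow W \twoheadrightarrow S^{4m}$, which exists thanks to properties (1) and (2). Given any $\alpha \in ko^{0}(W)$ with $ch(c(\alpha)) = 0$, I would first restrict to the $(4m-2)$-skeleton. Because the top cell of $W$ is in dimension $4m$ and $W$ has no $(4m-1)$-cell, the pullback $H^{2i}(W;\mathbb{Q}) \to H^{2i}(W^{4m-2};\mathbb{Q})$ is an isomorphism for $2i \leq 4m-2$, while the target vanishes for $2i \geq 4m$. Naturality of $ch\circ c$ then shows that the restriction $\alpha_0 \in ko^{0}(W^{4m-2})$ also satisfies $ch(c(\alpha_0)) = 0$, and property (3) forces $\alpha_0 = 0$.

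Next I would use the long exact sequence
$$ko^{0}(S^{4m}) \longrightarrow ko^{0}(W) \longrightarrow ko^{0}(W^{4m-2})$$
to lift $\alpha$ to some $\beta \in ko^{0}(S^{4m}) = \pi_{4m}(ko) \cong \mathbb{Z}$. By naturality of $ch\circ c$ applied to the collapse $W \twoheadrightarrow S^{4m}$, the degree-$4m$ component of $ch(c(\alpha))$ in $H^{4m}(W;\mathbb{Q})$ equals the image of $ch(c(\beta)) \in H^{4m}(S^{4m};\mathbb{Q})$ under the isomorphism $H^{4m}(S^{4m};\mathbb{Q}) \xrightarrow{\cong} H^{4m}(W;\mathbb{Q})$, which is indeed an isomorphism since $H^{4m-1}(W^{4m-2};\mathbb{Q}) = H^{4m}(W^{4m-2};\mathbb{Q}) = 0$.

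Finally, the composite $\pi_{4m}(ko) \xrightarrow{c} \pi_{4m}(ku) \xrightarrow{ch} \mathbb{Q}$ is injective for every $m \geq 0$: when $4m \equiv 0 \pmod{8}$ complexification is an isomorphism onto $\mathbb{Z}$, and when $4m \equiv 4 \pmod{8}$ its image is $2\mathbb{Z}$; in either case $ch$ of a generator is a nonzero rational. Hence $ch(c(\alpha)) = 0$ forces $\beta = 0$, and therefore $\alpha = 0$. I do not foresee any genuine obstacle here: properties (1) and (2) arrange the cofiber sequence to have the right shape, property (3) handles the lower skeleton, and the only real input is the standard nontriviality of $ch \circ c$ on $\pi_{4m}(ko)$. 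Property (4) is not used in this argument (it enters only in the proof of Lemma~\ref{lem:2^l is permanent cycle in the sub}).
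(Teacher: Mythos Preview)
Your proposal is correct and follows essentially the same argument as the paper: restrict to $W^{4m-2}$ to kill the class using property (3), lift to $ko^{0}(S^{4m})$ via the cofiber sequence, and use injectivity of $ch\circ c$ on $\pi_{4m}(ko)$ (which the paper records as $ch(c(b))=2^{\iota(m)}\cdot b$) to conclude. Your observation that property (4) plays no role here is also accurate.
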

\begin{proof}
Let $\phi$ be an element in $ko^{0}(W)$ with $ch(c(\phi))=0$.  Since $W^{4m-2}$ is $ko$-injective, the pulls-back of $\phi$ under the inclusion $W^{4m-2}\hookrightarrow W$ must be zero.  Therefore, $\phi$ is the pull-back of some element 
$$b\in ko^{0}(S^{4m})=\mathbb{Z}_{(2)}$$ 
under the pinch map $\pi: W\twoheadrightarrow S^{4m}$.  Since 
$$ch(c(b))=2^{\iota(m)}\cdot b = 0,$$ 
$b$ must be 0.  It follows that $\phi = 0$ and $W$ is $ko$-injective, as desired. 
\end{proof}
\begin{prop}\label{lem: condition to be permanent cycle}
The element $2^{l}[-1]$ is a permanent cycle in the $j''$-based Atiyah--Hirzebruch spectral sequence of $\Sigma^{-1}W$ if and only if $\nu(d)\geq \iota (m)$.
\end{prop}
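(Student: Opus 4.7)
The plan is to reformulate the statement as a vanishing condition for a connecting homomorphism in $j''$-cohomology, and then to evaluate that connecting homomorphism explicitly using the Chern character hypothesis on $\alpha$.

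Because $W$ has no cell in dimension $4m-1$ and $W^{4m-2} \hookrightarrow W$ differs only by the top cell, the only differential out of $2^l[-1]$ in the $j''$-based AHSS for $\Sigma^{-1}W$ that is not already present in the AHSS for $\Sigma^{-1}W^{4m-2}$ is the potential differential $d_{4m}\colon 2^l[-1] \to \gamma[4m-1]$ for some $\gamma \in \pi_{4m-1}j''$. Combined with Lemma~\ref{lem:2^l is permanent cycle in the sub}, the permanence of $2^l[-1]$ in $\Sigma^{-1}W$ is equivalent to the assertion that the class $\phi_0 \in j''^0(W^{4m-2})$ constructed there lifts along the inclusion $i\colon W^{4m-2} \hookrightarrow W$ to some class in $j''^0(W)$. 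Applying $j''^*(-)$ to the cofiber sequence $W^{4m-2} \hookrightarrow W \twoheadrightarrow S^{4m}$ yields
\[
j''^0(W) \longrightarrow j''^0(W^{4m-2}) \xrightarrow{\ \partial\ } j''^{1}(S^{4m}) = \pi_{4m-1}j'',
\]
so the problem reduces to showing that $\partial(\phi_0) = 0$ if and only if $\nu(d) \geq \iota(m)$.

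The key step is to identify $\partial(\phi_0)$ via a diagram chase along the defining fiber sequence $j'' \to ko \xrightarrow{\psi^3 - 1} ko\langle 2\rangle$. Under the forgetful map $j'' \to ko$, the class $\phi_0$ maps to $\alpha_k = \alpha|_{W^{4m-2}}$, which extends to the given $\alpha \in ko^0(W)$. The obstruction to promoting this lift to a $j''$-lift is $(\psi^3 - 1)\alpha \in ko\langle 2\rangle^0(W)$; by Lemma~\ref{lem:psi^3-1alpha_0is0}, this class restricts to zero on $W^{4m-2}$, hence equals $\pi^* e$ for some $e \in ko\langle 2\rangle^0(S^{4m}) = \pi_{4m}ko = \mathbb{Z}_{(2)}$. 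A chase across the two long exact sequences (cofiber sequence of spaces and fiber sequence of spectra) identifies $\partial(\phi_0)$ with the image of $e$ under the boundary map $\pi_{4m}ko\langle 2\rangle \to \pi_{4m-1}j''$ of the fiber sequence, which, since $\pi_{4m-1}ko = 0$ and $\psi^3 - 1$ acts on $\pi_{4m}ko$ as multiplication by $3^{2m}-1$, is precisely the quotient map $\mathbb{Z}_{(2)} \twoheadrightarrow \mathbb{Z}_{(2)}/(3^{2m}-1)$. Thus $\partial(\phi_0) = [e]$ in this cyclic group.

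To finish, I would pin down $e$ via the Chern character. Since $\psi^3$ scales the degree-$4m$ part by $3^{2m}$ and fixes the degree-$0$ part, one has $\operatorname{ch}(c((\psi^3-1)\alpha)) = (3^{2m}-1)\,d$ on the top cell; on the other hand the generator of $ko^0(S^{4m})$ has Chern character $2^{\iota(m)}$, so $\operatorname{ch}(c(\pi^* e)) = 2^{\iota(m)} e$. Equating yields $e = (3^{2m}-1)\,d\,/\,2^{\iota(m)}$ in $\mathbb{Z}_{(2)}$, and therefore $\partial(\phi_0) = 0$ if and only if $(3^{2m}-1) \mid e$, which holds if and only if $2^{\iota(m)} \mid d$, i.e. $\nu(d) \geq \iota(m)$. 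The main technical obstacle is the diagram chase in the middle paragraph: one must carefully track the ambiguity in the extension $\alpha$ (well-defined up to $\pi^* f$ for $f \in ko^0(S^{4m})$, which shifts $e$ by $(3^{2m}-1)f$ and hence leaves $[e]$ unchanged) and distinguish carefully between $j''$ and $j'$, so that the identification $\pi_{4m-1}j'' = \mathbb{Z}_{(2)}/(3^{2m}-1)$ is the right target for $\partial$.
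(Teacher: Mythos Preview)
Your approach is correct and genuinely different from the paper's. The paper argues in two cases, working with $j'$ rather than $j''$: when $\nu(d)\ge\iota(m)$ it subtracts $\pi^*b$ (with $ch(c(b))=d$) from $\alpha$ so that $ch(c(\alpha-\pi^*b))=2^{l}$ on all of $W$, and then reruns the argument of Lemma~\ref{lem:2^l is permanent cycle in the sub} using $ko$-injectivity of $W$ (Lemma~\ref{lem:wiskoinjective}); when $\nu(d)<\iota(m)$ it argues by contradiction, producing from an assumed lift $\tau\in j'^{0}(W)$ a class $\xi\in ko^{0}(W)$ with $ch(c(\xi))=2^{l}$, so that $\alpha-\xi=\pi^{*}b$ with $b=d/2^{\iota(m)}\notin\mathbb{Z}_{(2)}$. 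By contrast, you package everything into a single obstruction $\partial(\phi_{0})\in\pi_{4m-1}j''$ and evaluate it through the staircase $j''\to ko\to ko\langle 2\rangle$, obtaining directly the class $[e]$ with $e=(3^{2m}-1)d/2^{\iota(m)}$. Your route is more structural and bypasses the $j'$ detour entirely; the paper's route is more elementary and avoids a delicate diagram chase.

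Two points in your chase deserve more justification than you indicate, and both secretly rely on $ko$-injectivity. First, you need that any $\phi_{0}\in j''^{0}(W^{4m-2})$ restricting to $2^{l}$ actually maps to $\alpha_{0}$ under $j''\to ko$ (so that your chase applies to every such $\phi_{0}$, not just a hypothetical one): the image lies in $\ker(\psi^{3}-1)$, so has $ch\circ c=2^{l}$, hence equals $\alpha_{0}$ by $ko$-injectivity of $W^{4m-2}$. Second, for the equivalence ``permanence $\Leftrightarrow\partial(\phi_{0})=0$'' (rather than ``$\Leftrightarrow\partial(\phi_{0})$ lies in some indeterminacy subgroup'') you must check that all such $\phi_{0}$ share the same $\partial$-value. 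They differ by the image of $\delta\colon ko\langle 2\rangle^{-1}(W^{4m-2})\to j''^{0}(W^{4m-2})$, and $\partial\circ\delta=\delta\circ\partial_{ko\langle 2\rangle}$ lands in $\delta$ of $\ker\bigl(\pi^{*}\colon ko\langle 2\rangle^{0}(S^{4m})\to ko\langle 2\rangle^{0}(W)\bigr)$; this kernel vanishes because $ko\langle 2\rangle^{0}(S^{4m})\cong ko^{0}(S^{4m})\hookrightarrow ko^{0}(W)$ by $ko$-injectivity of $W$ (Lemma~\ref{lem:wiskoinjective}). Note that the ambiguity you do track---that of the extension $\alpha$---is a separate one. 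With these two points addressed, your argument goes through cleanly.
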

\begin{proof} If $\nu(d)\geq \iota (m)$, then we can find an element $b\in ko^{0}(S^{4m})$ such that 
$$ch(c(b))=d\in H^{4m}(S^{4m}).$$ 
Given this element $b$, we have the equality
$$ch(c(\alpha-\pi^{*}(b)))=2^{l},$$ 
where $\pi^*: ko^0(S^{4m}) \to ko^0(W)$ is induced from the pinch map $\pi: W \twoheadrightarrow S^{4m}$. 
Using Lemma~\ref{lem:wiskoinjective}, we can prove that $2^{l}[-1]$ is a permanent cycle by the exact same argument as the proof of Lemma~\ref{lem:2^l is permanent cycle in the sub}.

Now, suppose that $\nu(d)<\iota (m)$. Consider the commutative diagram 

$$\begin{tikzcd}
j''^0(S^0) = \mathbb{Z} \ar[r, "(1{,}0)"] \ar[d] & j'^0(S^0) = \mathbb{Z} \oplus \mathbb{Z}/2\ar[d] \\ 
j''^0(\Sigma^{-1}W_2^{4m}) \ar[r, "="] & j'^0(\Sigma^{-1}W_2^{4m}).
\end{tikzcd}$$

To prove that $2^{l}[-1]$ is not a permanent cycle, it suffices to show that the element $(2^{l},0) \in j'^0(S^0)$ is not sent to $0$ under the right vertical map.  

For the sake of contradiction, suppose that $(2^{l},0) \in j'^0(S^0)$ is sent to ${0 \in j'^0(\Sigma^{-1}W_2^{4m})}$.  Consider the following diagram:
$$\begin{tikzcd}
j'^0(W) \ar[d, "\mathbf{2}"] \ar[r, "\mathbf{3}"] & ko^0(W) \ar[r, "\psi^3-1"] \ar[d,"\mathbf{4}"] & ko^0(W) \ar[d] \\
(2^{l}, 0)\in j'^0(S^0) \ar[r,"\mathbf{5}"] \ar[d, "\mathbf{1}"] & ko^0(S^0) \ar[r, "\psi^3-1 = 0"] & ko^0(S^0) \\ 
j'^0(\Sigma^{-1}W_2^{4m}).
\end{tikzcd}$$
Since $\mathbf{1}(2^{l}, 0) = 0$, there exists an element $\tau \in j'^0(W)$ such that $\mathbf{2}(\tau) = (2^{l}, 0)$ by the exactness of the left column.  

Let $\xi= \mathbf{3}(\tau)$.  Since the diagram is commutative, 
$$\mathbf{4}(\xi) = \mathbf{5}(2^{l},0) = 2^{l}.$$
It follows that $ch(c(\xi)) = 2^{l}$.  

Consider the element $\alpha-\xi\in ko^{0}(W)$. We have $$ch(c(\alpha-\xi))=d\in H^{4m}(W)$$
Since $W^{4m-2}$ is $ko$-injective, the element $\alpha-\xi$ equals $\pi^{*}(b)$ for some 
$${b\in ko^{0}(S^{4m})=\mathbb{Z}_{(2)}}.$$ 
By comparing the Chern character, we obtain $b=\frac{d}{2^{\iota(m)}}$. This is impossible because $\frac{d}{2^{\iota(m)}}\notin \mathbb{Z}_{(2)}$.
\end{proof}

\begin{proof}[Proof of Theorem~\ref{prop: detect AHSS differential}]
The claim follows directly from Lemma~\ref{lem:2^l is permanent cycle in the sub} and Proposition~\ref{lem: condition to be permanent cycle}.
\end{proof}

\subsection{Proof of Proposition~\ref{pro: order two between 2 locks}}

For $k\geq 1$, we define $t_k$ to be the composite
\begin{equation} \label{equ tk}	
\xymatrix{
X(8k+3)^{8k-1}_{8k-5} \ar@^{->}[r] & X(8k-4)^{\infty}_{8k-7} \ar[r]^-{f_{k-1}} & S^{0}.
}\end{equation}

Then diagram~(\ref{diag: tau is quotient}) follows directly from diagram~(\ref{f is quotient}).

By Lemma \ref{the 4 cell complex bw 12 locks}, we have a splitting  
$$X(8k+3)^{8k-1}_{8k-5} \simeq \Sigma^{8k-5}C\nu \vee \Sigma^{8k-3}C2.$$ 
Under this splitting, we can write 
$$t_{k}=t'_{k}\vee t''_{k},$$ 
where $t'_k$ and $t''_k$ are the following two composites (\ref{equ tk 1}) and (\ref{equ tk 2}).

\begin{equation} \label{equ tk 1}
\xymatrix{
\Sigma^{8k-5}C\nu \ar@{^{(}->}[r] &  X(8k+3)^{8k-1}_{8k-5} \ar@^{->}[r] & X(8k-4)^{\infty}_{8k-7} \ar[r]^-{f_{k-1}} & S^{0}.
}
\end{equation}

\begin{equation} \label{equ tk 2}
\xymatrix{
\Sigma^{8k-3}C2 \ar@{^{(}->}[r] & X(8k+3)^{8k-1}_{8k-5} \ar@^{->}[r] & X(8k-4)^{\infty}_{8k-7} \ar[r]^-{f_{k-1}} & S^{0}.
}\end{equation}

We will show the following claims on $t'_{k}$ and $t''_{k}$.  These claims directly imply Properties (ii) through (iv).
 
\begin{itemize}
\item \textbf{Claim 1}: $t''_k = 0$.
\item \textbf{Claim 2}: $t'_k$ is of order 2 in $j'$. In other words, the following composite is zero.
\begin{equation} \label{equ tk1 order 2}
\xymatrix{
\Sigma^{8k-5}C\nu \ar[r]^-{2\cdot \textup{id}} & \Sigma^{8k-5}C\nu \ar[r]^-{t'_{k}} & S^{0} \ar[r] & j'.
}
\end{equation}

\item \textbf{Claim 3}: The restriction of $t'_{k}$ to the bottom cell $S^{8k-5}$ is 
$$\{P^{k-1}h^{3}_{1}\} = \{P^{k-1}h_{1}\}  \cdot \eta^2$$ 
in $\pi_{8k-5}$. 
\end{itemize}

It is clear that by Corollary~\ref{cor: ph13 is Mohowald invariant} in Step 2 in Subsection 2.4 that \textbf{Claim 3} is true. In the rest of this subsection, we first prove \textbf{Claim 1}, and then prove \textbf{Claim 2}.

For \textbf{Claim 1}, note that $t''_{k}$ equals to the composite
$$
\xymatrix{
\Sigma^{8k-3}C2 \ar@{^{(}->}[r] & X(8k+3)^{8k-1}_{8k-5} \ar@^{->}[r] & X(8k-3)^{\infty}_{8k-7} \ar@^{->}[r] &  X(8k-4)^{\infty}_{8k-7} \ar[r]^-{f_{k-1}} & S^{0}
}
$$
By exactly the same cell diagram chasing argument as the one in Step 1.1.2, we see that the restriction of the composite 
$$
\xymatrix{
\Sigma^{8k-3}C2 \ar@{^{(}->}[r] & X(8k+3)^{8k-1}_{8k-5}\ar@^{->}[r] & X(8k-3)^{\infty}_{8k-7}
}
$$
to the bottom cell $S^{8k-3}$ is zero. Therefore, we can rewrite $t''_{k}$ as the composite 
$$
\xymatrix{
\Sigma^{8k-3}C2 \ar@{->>}[r] & S^{8k-2}\ar[r]^-{\mathbf{1}} & X(8k-3)^{\infty}_{8k-7} \ar@^{->}[r] &  X(8k-4)^{\infty}_{8k-7}\ar[r]^-{f_{k-1}}& S^{0}
}
$$
for some map $\mathbf{1}$. By cellular approximation theorem, the map $\mathbf{1}$ maps through $X(8k-3)^{8k-3}_{8k-7}$:
$$
\xymatrix{
S^{8k-2}\ar[r]^-{\mathbf{2}} & X(8k-3)^{8k-3}_{8k-7} \ar@{^{(}->}[r] & X(8k-3)^{\infty}_{8k-7}. 
}
$$
Moreover, due to the $\eta$-attaching map in $X(8k-3)^{8k-3}_{8k-7}$ between the cells in dimensions $8k-5$ and $8k-3$, the composite
$$\xymatrix{
S^{8k-4}\ar[r]^-{\mathbf{2}} & X(8k-3)^{8k-3}_{8k-7} \ar@{->>}[r] & S^{8k-3}
}$$
must be zero. Therefore, the map $\mathbf{1}$ maps through ${X(8k-3)^{8k-4}_{8k-7}}$, and we can rewrite $t''_{k}$ as the composite
$$\xymatrix{
\Sigma^{8k-3}C2\ar@{->>}[r] & S^{8k-2}\ar[r]^-{\mathbf{3}} & X(8k-4)^{8k-4}_{8k-7}\ar@{^{(}->}[r] & X(8k-4)^{\infty}_{8k-7}\ar[r]^-{f_{k-1}}&S^{0}
}$$
for some map $\mathbf{3}$. By Theorem~\ref{thm: inductive fk}, there is an 
H$\mathbb{F}_{2}$-subcomplex 
$$\xymatrix{g_{k-1}:S^{8k-4} \ar@{^{(}->}[r] & X(8k-4)^{8k-4}_{8k-7} \ar@{^{(}->}[r] & X(8k+4)^{\infty}_{8k+1}.}$$ 
By Lemma~\ref{the 3 cell complex wk}, the 3 cell complex $X(8k-4)^{8k-4}_{8k-7}$ splits:  
$$X(8k-4)^{8k-4}_{8k-7} \simeq \Sigma^{8k-7} C2\vee S^{8k-4}.$$ 
Since $\pi_{4} = \pi_{5} = 0$, we have 
$$\pi_{8k-2} X(8k-4)^{8k-6}_{8k-7} = \pi_5 C2 = 0,$$
and the map $\mathbf{3}$ maps through the H$\mathbb{F}_{2}$-subcomplex $S^{8k-4}$. In other words, we can rewrite the composite
$$\xymatrix{
S^{8k-2}\ar[r]^-{\mathbf{3}} & X(8k-4)^{8k-4}_{8k-7} \ar@{^{(}->}[r] & X(8k-4)^{\infty}_{8k-7}
}$$
as the composite 
$$\xymatrix{
S^{8k-2}\ar[r]^-{\mathbf{4}} & S^{8k-4}\ar[r]^-{g_{k-1}}& X(8k-4)^{\infty}_{8k-7},
}$$
for some map $\mathbf{4}$ in $\pi_2$. Therefore we can rewrite $t''_{k}$ as the composite
$$\xymatrix{
\Sigma^{8k-3}C2 \ar@{->>}[r] & S^{8k-2}\ar[r]^-{\mathbf{4}} &S^{8k-4}\ar[r]^-{g_{k-1}} & X(8k-4)^{\infty}_{8k-7}\ar[r]^-{f_{k-1}} & S^{0}.
}$$
As in the proof of Proposition~\ref{proposition: construct f}, the composite
$$\xymatrix{
S^{8k-2}\ar[r]^-{\mathbf{4}}& S^{8k-4}\ar[r]^-{g_{k-1}} & X(8k-4)^{\infty}_{8k-7}\ar[r]^-{f_{k-1}} & S^{0}
}$$
is zero. Therefore, we have $t''_{k} = 0$. This completes the proof of \textbf{Claim 1}.\\



For \textbf{Claim 2}, note that the composite $2 \cdot t_k'$ maps through ${X(8k+2)_{8k-5}^{\infty}}$. Due to the $2$-attaching map in $X(8k+2)_{8k-5}^{\infty}$ between the cells in dimensions $8k-5$ and $8k-4$, the composite
$$\xymatrix{
S^{8k-5} \ar@{^{(}->}[r] & \Sigma^{8k-5}C\nu \ar[r]^-{2 \cdot \textup{id}} & \Sigma^{8k-5}C\nu \ar@{^{(}->}[r] & X(8k+3)_{8k-5}^{8k-1} \ar@^{->}[r] & X(8k+2)_{8k-5}^{\infty}
}$$
is zero. Therefore, we can rewrite $2\cdot t'_{k}$ as the composite
$$\xymatrix{
\Sigma^{8k-5}C\nu \ar@{->>}[r] & S^{8k-1} \ar[r]^-{\mathbf{5}} & X(8k+2)_{8k-5}^{\infty}\ar@^{->}[r] & X(8k-4)_{8k-7}^{\infty} \ar[r]^-{f_{k-1}} & S^{0},
}$$
where $\mathbf{5}$ is a map that induces a trivial homomorphism on ${H_{8k-1}(-;\mathbb{F}_{2})}$. By the cellular approximation theorem and the $2$-attaching map in $X(8k+2)_{8k-5}^{\infty}$ between cells of dimension $8k$ and $8k-1$, the map $\mathbf{5}$ maps through ${X(8k+2)_{8k-5}^{8k-2}}$: 
$$\xymatrix{
S^{8k-1}\ar[r]^-{\mathbf{6}} & X(8k+2)_{8k-5}^{8k-2} \ar@{^{(}->}[r] & X(8k+2)_{\infty}^{8k-2}.
}$$
Therefore, we can rewrite $2\cdot t'_{k}$ as the composite
$$\xymatrix{
\hspace{-0.3in}\Sigma^{8k-5}C\nu \ar@{->>}[r] & S^{8k-1} \ar[r]^-{\mathbf{6}} & X(8k+2)_{8k-5}^{8k-2} \ar@^{->}[r] & X(8k-4)_{8k-7}^{8k-4} \ar@{^{(}->}[r] & X(8k-4)^{\infty}_{8k-7} \ar[r]^-{f_{k-1}} & S^{0}.
}$$
By Lemma~\ref{the 3 cell complex wk}, the 3 cell complex $X(8k-4)^{8k-4}_{8k-7}$ splits:  
$$X(8k-4)^{8k-4}_{8k-7} \simeq \Sigma^{8k-7} C2\vee S^{8k-4}.$$ 
So we can write $2\cdot t'_{k}$ as the sum of the following two composites (\ref{equ claim 2 1}) and (\ref{equ claim 2 2}):
\begin{equation} \label{equ claim 2 1}
\xymatrix{
\Sigma^{8k-5}C\nu \ar@{->>}[r] & S^{8k-1} \ar[r]^-{\mathbf{7}} & \Sigma^{8k-7} C2 \ar@{^{(}->}[r] & X(8k-4)^{\infty}_{8k-7} \ar[r]^-{f_{k-1}} & S^{0},
}	
\end{equation}
\begin{equation} \label{equ claim 2 2}
\xymatrix{
\Sigma^{8k-5}C\nu \ar@{->>}[r] & S^{8k-1} \ar[r]^-{\mathbf{8}} & S^{8k-4} \ar@{^{(}->}[r]^-{g_{k-1}} & X(8k-4)^{\infty}_{8k-7} \ar[r]^-{f_{k-1}} & S^{0}.
}	
\end{equation}
For the map $\mathbf{7}$ in the composite (\ref{equ claim 2 1}), it corresponds to an element in the group
$$\pi_{8k-1} \Sigma^{8k-7} C2 = \pi_6 C2 = \mathbb{Z}/2,$$
which is generated by $\nu^2$ on the bottom cell of $\Sigma^{8k-7} C2$. Since $\nu^2$ is not detected by the spectrum $j'$, post-composing (\ref{equ claim 2 1}) with the map $S^0 \rightarrow j'$ is zero.

For the composite (\ref{equ claim 2 2}), note that by Part (iii) of Theorem~\ref{thm: inductive fk}, the composite $g_{k-1} \circ f_{k-1}$ is
$$\xymatrix{S^{8k-4} \ar[r]^-{a_{k-1}} & X(8k-12)^{8k-12}_{8k-15} \ar[r]^-{b_{k-1}} & S^0}.$$
Therefore, the composite (\ref{equ claim 2 2}) can be rewritten as
\begin{equation} \label{equ claim 2 3}
\xymatrix{
\Sigma^{8k-5}C\nu \ar@{->>}[r] & S^{8k-1} \ar[r]^-{\mathbf{8}} & S^{8k-4} \ar[r]^-{a_{k-1}} & X(8k-12)^{8k-12}_{8k-15} \ar[r]^-{b_{k-1}} & S^{0}.
}	
\end{equation}
Using again the splitting 
$$X(8k-12)^{8k-12}_{8k-15} \simeq \Sigma^{8k-15} C2\vee S^{8k-12},$$
the composite (\ref{equ claim 2 3}) can be written as the sum of the following two composites (\ref{equ claim 2 4}) and (\ref{equ claim 2 5}):
\begin{equation} \label{equ claim 2 4}
\xymatrix{
\Sigma^{8k-5}C\nu \ar@{->>}[r] & S^{8k-1} \ar[r]^-{\mathbf{8}} & S^{8k-4} \ar[r]^-{\mathbf{9}} & S^{8k-12} \ar[r] & S^{0},
}	
\end{equation}
\begin{equation} \label{equ claim 2 5}
\xymatrix{
\Sigma^{8k-5}C\nu \ar@{->>}[r] & S^{8k-1} \ar[r]^-{\mathbf{8}} & S^{8k-4} \ar[r]^-{\mathbf{10}} & \Sigma^{8k-15} C2 \ar[r] & S^{0}.
}	
\end{equation}
The composite (\ref{equ claim 2 4}) is zero, since $\mathbf{9} \circ \mathbf{8}$ corresponds to an element in 
$$\pi_8 \cdot \pi_3 = 0.$$
The composite (\ref{equ claim 2 5}) is zero, since $\mathbf{10} \circ \mathbf{8}$ corresponds to an element in 
$$\pi_{11} C2 \cdot \pi_3 = 0.$$
In fact, $\pi_{11} C2 = \mathbb{Z}/2 \oplus \mathbb{Z}/2$, which is generated by $\{Ph_2\}[0]$ and $\{Ph_1\}[1]\cdot \eta$. Both generators are annihilated by $\pi_3$.

Therefore, the composite (\ref{equ claim 2 2}), which equals to the composite (\ref{equ claim 2 3}), is zero.

In sum, we have that $2\cdot t'_{k} = 0$ in $j'$. This finishes the proof of \textbf{Claim 2}.

\subsection{Proof of Proposition~\ref{lem: differential to second lock}}

Recall that there is a map 
$$
j(8k+3):X(8k+3)\xrightarrow{s_{8k+3}}\Sigma^{-8k-3}\mathbb{C}P^{\infty}
$$
 that induces an isomorphism on $(H\mathbb{F}_{2})_{4m-1}(-)$ for any $m$ (see~formula (\ref{cofiber of adjacent columns})).  Truncating this map, we obtain a map
$$(s_{8k+3})^{8k-1}_{-1}:X(8k+3)^{8k-1}_{-1}\rightarrow\Sigma^{-1}Z, $$
where $$Z=\Sigma^{-8k-2}\mathbb{C}P^{8k+1}_{4k+1}=\Thom(\mathbb{C}P^{4k},(4k+1)(L-1)).$$
Here, $L$ denotes the canonical bundle on $\mathbb{C}P^{\infty}$.

The Thom isomorphism gives an identification
$$H^{*}(Z;\mathbb{Q})\cong U_{H}\cdot H^{*}(\mathbb{C}P^{4k};\mathbb{Q})\cong U_{H}\cdot \mathbb{Q}[x]/(x^{4k+1}),$$
where $x=c_{1}(L)$ and $U_{H}$ is the Thom class for homology. 

In order to apply Theorem~\ref{prop: detect AHSS differential} to $Z$, we require the following lemma:
\begin{lem}\label{lem: stunted projective space ko-injective}
For any odd integer $n> 0$ and any $m>n$, the spectrum $
\Sigma^{-2n}\mathbb{C}P^{m}_{n}
$ is $ko$-injective. (See Definition \ref{df: ko injective}.)
\end{lem}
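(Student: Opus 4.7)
The plan is to factor the composite $ch \circ c$ through complex $K$-theory and treat the two pieces separately. Set $W = \Sigma^{-2n}\mathbb{C}P^m_n = \Thom(\mathbb{C}P^{m-n}, n(L-1))$, so $W$ has cells only in even dimensions $0, 2, \ldots, 2(m-n)$. Since $\pi_* ku$ is concentrated in even degrees, the $ku$-based Atiyah--Hirzebruch spectral sequence for $W$ collapses at $E_2$, giving $ku^0(W) \cong \bigoplus_k H^{2k}(W;\mathbb{Z})$ as a free abelian group; the leading-term computation $ch(U \cdot (L-1)^i) = U \cdot x^i + (\text{higher order})$ then makes $ch$ injective on $ku^0(W)$.

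The substantive step is to prove $c: ko^0(W) \to ku^0(W)$ is injective. By Wood's cofiber sequence $\Sigma ko \xrightarrow{\eta} ko \xrightarrow{c} ku$, the kernel of $c$ equals $\eta \cdot ko^{-1}(W)$, so it suffices to show $ko^{-1}(W) = 0$. I would inspect the $ko$-Atiyah--Hirzebruch spectral sequence $E_2^{p,q} = H^p(W; \pi_{-q}ko) \Rightarrow ko^{p+q}(W)$. Because $W$ has no odd cells, the only potentially non-zero entries along the line $p+q = -1$ occur at $(p, -p-1)$ with $p$ even and $\pi_{p+1}ko \neq 0$; using that $\pi_* ko$ is non-trivial in odd degrees only when $* \equiv 1 \pmod 8$, this forces $p \equiv 0 \pmod 8$, each such bidegree contributing a single $\mathbb{Z}/2$ (lying in the $\eta$-layer generated by $\eta \alpha^{p/8}$, with $\alpha \in \pi_8 ko$).

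Each of these $\mathbb{Z}/2$'s should be killed by a $d_2$-differential, which on mod-$2$ reductions acts as $Sq^2$ on cohomology. Under the Thom identification, the Wu formula gives $Sq^2(y_i) = (n+i)\, y_{i+1} \pmod 2$ for $y_i \in H^{2i}(W;\mathbb{F}_2)$ the generator. For the relevant bidegrees $p \equiv 0 \pmod 8$ one has $i = p/2$ even, and the hypothesis that $n$ is odd makes $n + i$ odd, so $d_2: E_2^{p, -p-1} \to E_2^{p+2, -p-2}$ is an isomorphism of $\mathbb{F}_2$'s. Hence every candidate class dies at $E_3$, yielding $ko^{-1}(W) = 0$ and thus injectivity of $c$; combined with injectivity of $ch$ on $ku^0(W)$, this shows $W$ is $ko$-injective. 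The main technical point to pin down is the identification of this $d_2$ with $Sq^2$, which follows from the first $k$-invariant of $ko$ being $Sq^2$ in the relevant Postnikov layer; the parity hypothesis on $n$ is exactly what forces these $Sq^2$'s to act non-trivially on the classes one needs to eliminate, and if $n$ were even these $\mathbb{Z}/2$'s would survive and the lemma would fail.
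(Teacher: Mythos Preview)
Your overall strategy is sound, but there is a degree slip that derails the computation. From the Wood cofiber sequence $\Sigma ko \xrightarrow{\eta} ko \xrightarrow{c} ku$, applying $[W,-]$ gives the exact sequence $ko^{1}(W)\xrightarrow{\eta} ko^{0}(W)\xrightarrow{c} ku^{0}(W)$, since $[W,\Sigma ko]=ko^{1}(W)$ under the standard convention $E^{n}(W)=[W,\Sigma^{n}E]$. Thus $\ker c$ is the image of $ko^{1}(W)$, not of $ko^{-1}(W)$; indeed $\eta\cdot ko^{-1}(W)\subset ko^{-2}(W)$, so it does not even land in $ko^{0}(W)$. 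This matters: with the correct degree the $E_2$-contributions to $ko^{1}(W)$ lie at $p\equiv 2\pmod 8$ (from $\pi_{p-1}ko=\mathbb Z/2$), and they are killed by \emph{incoming} differentials $d_2\colon H^{p-2}(W;\pi_{p-2}ko)\to H^{p}(W;\pi_{p-1}ko)$, which act as $Sq^{2}\circ(\bmod\ 2)$. Here $i=(p-2)/2\equiv 0\pmod 4$ is even, so $n+i$ is odd and $Sq^2\neq 0$; hence $ko^{1}(W)=0$ and $c$ is injective. Your computation on the line $p+q=-1$ also has a separate gap on its own terms: when $2(m-n)\equiv 0\pmod 8$ the top-cell class at $p=2(m-n)$ has no outgoing $d_2$-target and is not hit by $d_2$ either (since $Sq^2\colon H^{p-2}\to H^{p}$ vanishes there, $n+(p-2)/2$ being even), so your claimed $ko^{-1}(W)=0$ fails in that case.

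For comparison, the paper argues more concretely: since $n$ is odd the bottom two cells of $W$ form $C\eta$, and $W$ filters into subquotients $\Sigma^{4j}C\eta$ together with one leftover $S^{2(m-n)}$ (with $4\mid 2(m-n)$) when $m$ is odd. Using $ko\wedge C\eta\simeq ku$, the $ko$-AHSS collapses at $E_3$, and one checks $c\colon ko^{0}\to ku^{0}$ is injective on each piece directly: on $\Sigma^{4j}C\eta$ it is the split inclusion $\pi_{4j+2}ku\hookrightarrow \pi_{4j+2}(ku\vee\Sigma^{2}ku)$, and on $S^{4\ell}$ it is $\pi_{4\ell}ko\to\pi_{4\ell}ku$, which is multiplication by $1$ or $2$. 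Your (corrected) Wood-sequence argument is a legitimate alternative that packages the same $Sq^{2}$ information without the explicit decomposition.
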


\begin{proof}

We show that for the spectrum $\Sigma^{-2n}\mathbb{C}P^{m}_{n}$, where $n>0$ is odd and $m>n$, the map
$$c: ko^0 (\Sigma^{-2n}\mathbb{C}P^{m}_{n}) \longrightarrow ku^0 (\Sigma^{-2n}\mathbb{C}P^{m}_{n})$$
is injective. Since the Chern character map is injective for this spectrum, this would prove the lemma by Definition \ref{df: ko injective}.

The complexification of real vector bundles corresponds to the following map on the spectra level
$$c: ko \longrightarrow ku.$$

For degree reasons, the $ku$-based Atiyah--Hirzebruch spectral sequence for $\Sigma^{-2n}\mathbb{C}P^{m}_{n}$ collapses at the $E_2$-page. In particular, the group $ku^0 (\Sigma^{-2n}\mathbb{C}P^{m}_{n})$ is a direct sum of copies of $\mathbb{Z}$'s.

Since $n>0$ is odd, the bottom two cells of $\Sigma^{-2n}\mathbb{C}P^{m}_{n}$ is $C\eta$. More generally, we can decompose $\Sigma^{-2n}\mathbb{C}P^{m}_{n}$ by its subquotients (with certain attaching maps among them) of the form $\Sigma^{4j} C\eta$ for $j \geq 0$, and with one possible copy of $S^{2m-2n}$ when $m$ is odd. In this case, we have that $2m-2n$ is divisible by 4. Since
$$ko \wedge C\eta \simeq ku,$$
the $ko$-based Atiyah--Hirzebruch spectral sequence for $\Sigma^{-2n}\mathbb{C}P^{m}_{n}$ collapses at the $E_3$-page. This means that we only need to check that the following maps are injective
\begin{equation} \label{equ lemma97 1}
c:ko^0 (\Sigma^{4j} C\eta) \longrightarrow ku^0 (\Sigma^{4j} C\eta),\end{equation}
\begin{equation} \label{equ lemma97 2}
	c:ko^0 (S^{2m-2n}) \longrightarrow ku^0 (S^{2m-2n}),
\end{equation}
where $j\geq 0$ and $2m-2n$ is divisible by 4.

Due to the compatibility of real and complex Bott periodicity, the map 
$${c:ko\longrightarrow ku}$$ 
maps $v_1^4$ to $v_1^4$ in $\pi_8$. So in particular, it induces an isomorphism on $\pi_{8k}$ for all $k \geq 0$. It is also well known that, the generator of ${\pi_4 ko}$ maps to $2v_1^2$ in ${\pi_4 ku}$. So it induces an injective homomorphism on $\pi_{8k+4}$ for all $k \geq 0$. This proves that the map (\ref{equ lemma97 2}) is injective. 

For the map (\ref{equ lemma97 1}), since the Spanier--Whitehead dual of $C\eta$ is ${\Sigma^{-2} C\eta}$, we may rewrite it as
$$\pi_{4j+2} ku = \pi_{4j+2} (ko \wedge C\eta) \longrightarrow \pi_{4j+2} (ku \wedge C\eta) = \pi_{4j+2} (ku \vee \Sigma^2 ku),$$
which is an inclusion of a splitting summand.

Combining the injectivity of the maps (\ref{equ lemma97 1}) and (\ref{equ lemma97 2}) completes the proof of the lemma.

\end{proof}

\begin{lem}\label{lem: bundle over Z with simple Chern character}
There exists an element $\phi\in k^0(Z)$ such that
\begin{equation}\label{equation: bundle over z with simple Chern character}
ch(\phi)=2^{4k-2}+d\cdot U_{H}x^{4k}
\end{equation}

for some $d$ with $\nu(d)=-2$.
\end{lem}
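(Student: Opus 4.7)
The plan is to construct the element $\phi$ as an explicit $\mathbb{Z}_{(2)}$-linear combination of basis elements of $k^{0}(Z)$ furnished by the K-theoretic Thom isomorphism, and then verify its Chern character by a direct Grothendieck--Riemann--Roch calculation.

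Concretely, since $Z = \Thom(\mathbb{C}P^{4k}, (4k+1)(L-1))$, the Thom isomorphism in K-theory identifies $k^{0}(Z)$ with $k^{0}(\mathbb{C}P^{4k})$ via multiplication by the Thom class $U_K \in k^{0}(Z)$, yielding the $\mathbb{Z}$-basis $\{U_K \cdot (L-1)^j\}_{j=0}^{4k}$. Applying GRR to the zero section computes
\[
ch\bigl(U_K (L-1)^j\bigr) \;=\; U_H \cdot \operatorname{Td}\bigl((4k+1)(L-1)\bigr)^{-1} (e^x-1)^j \pmod{x^{4k+1}},
\]
where $\operatorname{Td}(L) = x/(1-e^{-x})$. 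Writing $\phi = \sum_{j=0}^{4k} a_j U_K(L-1)^j$, I would then solve for the $a_j$ by imposing that the middle-degree components of $ch(\phi)$ vanish. Since each $ch(U_K(L-1)^j)$ has leading term $U_H x^j$, this system is upper-triangular with $1$'s on the diagonal and is recursively solvable: the $H^{0}$-equation forces $a_0 = 2^{4k-2}$, the remaining $a_j$ are determined step by step, and $d$ is read off from the coefficient of $x^{4k}$.

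The main obstacle will be verifying the $\mathbb{Z}_{(2)}$-integrality of the intermediate $a_j$ and computing the exact $2$-adic valuation of $d$. The denominators appearing in the $a_j$ are controlled by the coefficients of the Todd series raised to the $(4k+1)$-st power, which involve Bernoulli-number expressions whose $2$-adic valuations are bounded below by the von Staudt--Clausen theorem; combined with the prefactor $2^{4k-2}$ this should yield $\mathbb{Z}_{(2)}$-integrality for $k \geq 1$. For $d$ itself the key identity is $T^{4k+1}(e^x-1)^{4k} \equiv x^{4k} \pmod{x^{4k+1}}$ with $T = \operatorname{Td}(L)$, which collapses the top basis polynomial to $x^{4k}$ and thereby isolates $d$ as an explicit combination of the lower $a_j$. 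I expect the value $\nu(d)=-2$ to emerge from a careful $2$-adic bookkeeping on this combination, ultimately reflecting the classical $e$-invariant calculation for $\{P^{k-1}h_1^{3}\} \in \pi_{8k-5}(S^{0})$ after the normalization by $2^{4k-2}$.
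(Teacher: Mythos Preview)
Your setup is exactly the paper's: write $\phi$ in the basis $U_K\cdot w^j$ with $w=L-1$ and compute $ch(U_K)$ via the multiplicative sequence for the K-theoretic Thom class. (Minor point: the paper's convention gives $ch(U_K)=U_H\cdot\bigl(\tfrac{e^x-1}{x}\bigr)^{4k+1}$ rather than $\mathrm{Td}^{-1}=\bigl(\tfrac{1-e^{-x}}{x}\bigr)^{4k+1}$; the two differ by the unit $e^{(4k+1)x}$, so this is only a convention mismatch, not an error.) The paper also sets $a_{4k}=0$ from the outset, which pins $d$ down uniquely; you should do the same.

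Where your plan is genuinely incomplete is in the two places you flag as ``expected'': the $\mathbb{Z}_{(2)}$-integrality of the $a_j$ and the precise value $\nu(d)=-2$. The paper does not attack these through Bernoulli numbers or von Staudt--Clausen at all. Instead it makes the substitution $z=e^x-1$, which collapses the GRR identity to
\[
\Bigl(\tfrac{\ln(1+z)}{z}\Bigr)^{4k+1}\cdot\bigl(2^{4k-2}+d\,z^{4k}\bigr)\;=\;\sum_{i=0}^{4k-1}a_i z^i \pmod{z^{4k+1}}.
\]
Writing $\bigl(\tfrac{\ln(1+z)}{z}\bigr)^{4k+1}=\sum b_i z^i$, this gives the closed formulas $a_i=2^{4k-2}b_i$ and $d=2^{4k-2}b_{4k}$. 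The required $2$-adic facts are then purely combinatorial statements about the multinomial expansion of $\bigl(1-\tfrac{z}{2}+\tfrac{z^2}{3}-\cdots\bigr)^{4k+1}$: one checks directly that $\nu(b_{4k})=-4k$ (the term $(-z/2)^{4k}$ dominates) and $\nu(b_m)\ge -(4k-2)$ for $m\le 4k-1$. No Todd-genus denominators or Bernoulli estimates enter.

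Finally, your appeal to the $e$-invariant of $\{P^{k-1}h_1^{3}\}\in\pi_{8k-5}$ is both off-target and circular. This lemma feeds into the differential $2^{4k-1}[-1]\to\phi[8k-1]$ hitting the \emph{second} lock, which concerns $\pi_{8k-1}$, not $\pi_{8k-5}$; and in any case the $e$-invariant identifications in the paper are \emph{deduced from} Chern-character lemmas of this type, not the other way around.
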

\begin{proof}
There is a Thom isomorphism
$$
k^{0}(Z)\cong U_{K}\cdot k^{0}(\mathbb{C}P^{4k}) \cong U_{K}\cdot\mathbb{Z}_{(2)}[w]/(w^{4k+1}),
$$
where $w=L-1$ and $U_{K}$ is the $K$-theoretic Thom class for the virtual bundle $(4k+1)w$.  We have the relations 
$$ch(w)=e^{x}-1$$
and
\begin{eqnarray*}
ch(U_{K})&=&U_{H}\cdot \chi((4k+1)w) \\
&=&U_{H}\cdot \chi((4k+1)L) \\
&=&U_{H}\cdot \left(\frac{e^{x}-1}{x}\right)^{4k+1}.  
\end{eqnarray*}

Now, suppose 
$$\phi= U_{K} \cdot (a_{0}+a_{1}w+\cdots a_{4k-1} w^{4k-1}),$$ 
where $a_i \in \mathbb{Z}_{(2)}$ for all $0 \leq i \leq 4k-1$.  Our goal is to determine the coefficients $a_i$ so that condition~(\ref{equation: bundle over z with simple Chern character}) holds.  

Applying $ch(-)$ to both sides of the equation and using the formulas above, we get 
$$ch(\phi)= U_{H}\cdot \left(\frac{e^{x}-1}{x}\right)^{4k+1}\cdot \sum\limits_{i=0}^{4k-1} a_{i}(e^{x}-1)^{i}.$$
Now, make the substitution $z:=e^{x}-1$.  Then $x=\ln(z+1)$ and the above equation becomes
\begin{equation}\label{power series}
\left(\frac{\ln(z+1)}{z}\right)^{4k+1} \cdot ch(\phi)=U_{H}\cdot \sum\limits_{i=0}^{4k-1} a_{i}z^{i}\in U_{H}\cdot \mathbb{Q}[z]/(z^{4k+1}).\end{equation}

Condition (\ref{equation: bundle over z with simple Chern character}) requires
$$ch(\phi)=2^{4k-2}+ a\cdot U_{H}\cdot  z^{4k}$$ for some $a$ with $\nu(a)=-2$. 
By comparing the constant terms in (\ref{power series}), we deduce that $ch_{0}(\phi)=a_{0}$ and 
$$\left(\frac{\ln(z+1)}{z}\right)^{4k+1} \cdot \left(2^{4k-2}+d\cdot z^{4k}\right)=\sum\limits_{i=0}^{4k-1} a_{i}z^{i}+O(z^{4k+1}).$$

Let the power series expansion of $\left(\frac{\ln(z+1)}{z}\right)^{4k+1}$ be $1+b_{1}z+b_{2}z^{2}+\cdots$.  By comparing the coefficients of $z^i$ in the equation above, we obtain the relations
\begin{eqnarray*}
a_{0}&=&2^{4k-2}, \\
d&=&a_{0}\cdot b_{4k}, \\
a_{i}&=&2^{4k-2}\cdot b_{4k}, \text{ for }i=1,\ldots , 4k-1.
\end{eqnarray*}
By Lemma~\ref{lem:coeffb4n}, we see that $\nu(d)=-2$. By Lemma~\ref{lem:coeffbm<=4n-1}, we see that $a_{i}\in\mathbb{Z}_{(2)}$ for all $0\leq i\leq 4k-1$. Therefore, $\phi$ belongs to $k^0(X)$.  
\end{proof}
Now, set $\alpha=r(\phi)$.  Then one has 
$$
ch(c(\alpha))=2^{4k-1}+2d\cdot U_{H}x^{4k}.
$$
By Lemma~\ref{lem: stunted projective space ko-injective}, we can apply Theorem~\ref{prop: detect AHSS differential} to $Z$ and conclude the existence of the differential 
$$
2^{4k-1}[-1]\longrightarrow \gamma[8k-1]
$$
 in the $j''$-based Atiyah--Hirzebruch spectral sequence of $\Sigma^{-1}Z$, with $\gamma \neq 0$ in $\pi_{8k-1}j''$. By naturality of Atiyah--Hirzebruch spectral sequence, we can pullback this differential to $X(8k+3)^{8k-1}_{-1}$ using the map $j(8k+3)^{8k-1}_{-1}$. This finishes the proof of Proposition~\ref{lem: differential to second lock}.

\newpage
\section{Steps 6 and 7: first lock and second lock} \label{sec:Steps5and6}

In this section, we will prove the claims in Section~\ref{subsec:Step5} and Section~\ref{subsec:Step6}. 
\subsection{Construction of $Z(k)$}\label{subsec:constructionofZ(k)}
In this subsection, we will construct a spectrum $Z(k)$ for every $k \geq 0$.  This spectrum will be crucial for proving Proposition~\ref{pro: first lock for even} and Proposition~\ref{pro: first lock for odd}.  By Proposition~\ref{prop:TransferMaps}, there is a cofiber sequence
$$\begin{tikzcd}X(8k+4) \ar[r] & X(8k+3) \ar[r, "s_{8k+3}"] & \Sigma^{-(8k+3)} \mathbb{C}P^\infty.\end{tikzcd}$$
By restricting to the subquotient $(-)_{-1}^{8k-2}$, we obtain a cofiber sequence 
$$\begin{tikzcd}X(8k+4)_{-1}^{8k-2} \ar[r] & X(8k+3)_{-1}^{8k-2} \ar[r, "s_{8k+3}"] & \Sigma^{-(8k+3)} \mathbb{C}P_{4k+1}^{8k} \end{tikzcd}.$$

Consider the quotient map 
$$\begin{tikzcd}X(8k+4)_{-1}^{8k-2} \ar[r,twoheadrightarrow]& X(8k+4)_{8k-8}^{8k-2}. \end{tikzcd}$$
By Proposition~\ref{prop:DefinitionY(k)}, there is a 2 cell complex $Y(k)$ with cells in dimensions $8k-4$ and $8k-8$ such that it is an $\textup{H}\mathbb{F}_2$-quotient complex of ${X(8k+4)^{8k-2}_{8k-8}}$.	 There is a commutative diagram 
$$\begin{tikzcd} 
Y(k) \ar[r, "0"] & \ast \\
X(8k+4)_{-1}^{8k-2} \ar[u, twoheadrightarrow] \ar[r] &  X(8k+3)_{-1}^{8k-2} \ar[u, twoheadrightarrow, swap, "0"],
\end{tikzcd}$$
where the left vertical map is the composition 
$$\begin{tikzcd}X(8k+4)_{-1}^{8k-2} \ar[r,twoheadrightarrow]& X(8k+4)_{8k-8}^{8k-2} \ar[r, twoheadrightarrow] & Y(k). \end{tikzcd}$$ 

By the $3 \times 3$-Lemma \cite[Lemma 2.6]{MayTraces}, we can extend this commutative diagram to the following commtuative diagram, where the rows and columns are cofiber sequences: 
$$\begin{tikzcd} 
Y(k) \ar[r, "0"] & \ast \ar[r] & \Sigma Y(k) \\
X(8k+4)_{-1}^{8k-2} \ar[u, twoheadrightarrow] \ar[r] &  X(8k+3)_{-1}^{8k-2} \ar[u, twoheadrightarrow, swap, "0"] \ar[r] & \Sigma^{-(8k+3)} \mathbb{C}P_{4k+1}^{8k}\ar[u, twoheadrightarrow] \\ 
& X(8k+3)_{-1}^{8k-2} \ar[r, "\rho"] \ar[u, hookrightarrow, "\text{id}"] & \Sigma^{-1} Z(k) \ar[u, hookrightarrow].
\end{tikzcd}$$
The complex $Z(k)$ is defined to be the cofiber of the map 
$$\begin{tikzcd}  \Sigma^{-(8k+3)} \mathbb{C}P_{4k+1}^{8k} \ar[r, twoheadrightarrow] &\Sigma Y(k). \end{tikzcd}$$
By Lemma~\ref{cpx}(2), the map $\rho$ induces an isomorphism on $(H\mathbb{F}_2)_{4\ell-1}$ for all $\ell$. 

\begin{lem} \label{lem:Z(k)Property}
The complex $Z(k)$ satisfies the following properties: 
\begin{enumerate}
\item $Z(k)^{8k-8} = \Sigma^{-(8k+2)} \mathbb{C}P_{4k+1}^{8k-3}$; 
\item $Z(k)_{8k-8}^{8k-4} = \left\{\begin{array}{ll}S^{8k-4} \vee S^{8k-8} & k \text{ even,} \\
\Sigma^{8k-8}C\eta^3 & k \text{ odd.} \end{array} \right. $
\end{enumerate}
\end{lem}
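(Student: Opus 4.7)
My plan is to first establish the cell structure of $Z(k)$ via the cofiber sequence
$$\Sigma^{-(8k+3)}\mathbb{C}P^{8k}_{4k+1} \xrightarrow{q} \Sigma Y(k) \longrightarrow Z(k) \xrightarrow{\delta} \Sigma^{-(8k+2)}\mathbb{C}P^{8k}_{4k+1}$$
produced by the $3\times 3$ construction. The source has cells in odd dimensions $-1,1,\dots,8k-3$ and $\Sigma Y(k)$ has cells in dimensions $8k-7, 8k-3$; since $q_*$ is an iso on $H_{8k-7}$ and $H_{8k-3}$ and zero elsewhere, the long exact sequence in mod-2 homology forces $H_n(Z(k))=\mathbb{F}_2$ for $n\in\{0,2,\dots,8k-8,8k-4\}$ and $0$ otherwise. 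Furthermore, because the connecting map $\Sigma q$ is itself an $\textup{H}\mathbb{F}_2$-surjection (it hits the $(8k-6)$ and $(8k-2)$ cells of $\Sigma^{-(8k+2)}\mathbb{C}P^{8k}_{4k+1}$ isomorphically), $\delta$ exhibits $Z(k)$ as an $\textup{H}\mathbb{F}_2$-subcomplex of $\Sigma^{-(8k+2)}\mathbb{C}P^{8k}_{4k+1}$ in the sense of Definition~\ref{hf2}, missing precisely the cells in dimensions $8k-6$ and $8k-2$.

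For part (1), the range $[0,8k-8]$ avoids both ``bad'' dimensions $8k-6$ and $8k-2$. By the cellular approximation theorem applied to $\delta$, the restriction to the $(8k-8)$-skeleton lifts to a map
$$\delta^{8k-8}:Z(k)^{8k-8}\longrightarrow \Sigma^{-(8k+2)}\mathbb{C}P^{8k-3}_{4k+1}$$
into the $(8k-8)$-skeleton of the target. The homology computation above shows this is an isomorphism on $H_*(-;\mathbb{F}_2)$, and since both sides have free $\mathbb{Z}_{(2)}$-homology concentrated in even dimensions, it is also an isomorphism integrally $2$-locally. The $2$-local Whitehead theorem then closes the argument.

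For part (2), we must compute the attaching map $\xi_k\in\pi_{8k-5}(S^{8k-8})=\pi_3=\mathbb{Z}/24$ of the $2$-cell complex $Z(k)_{8k-8}^{8k-4}$. Restricting the cofiber sequence to the subquotient in dimensions $[8k-8,8k-4]$ relates $Z(k)_{8k-8}^{8k-4}$ to the $3$-cell complex $\Sigma^{-(8k+2)}\mathbb{C}P^{8k-1}_{8k-3}$, whose consecutive cells are joined by $\eta$. Collapsing the middle cell in the appropriate $\textup{H}\mathbb{F}_2$-subcomplex sense realizes $\xi_k$ as a Toda-bracket style secondary attaching of the form $\langle\eta,\cdot,\eta\rangle$; the three possibilities $\{0,\eta^3\}$ are distinguished by the $e_\mathbb{C}$-invariant, computed via the Chern character on $\Sigma^{-(8k+2)}\mathbb{C}P^{8k-1}_{8k-3}$ exactly in the style of Lemma~\ref{lem: bundle over Z with simple Chern character} and the combinatorial content of Appendix~\ref{sec:AppendixA}. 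The numerator $\bigl(\tfrac{\ln(1+z)}{z}\bigr)^{4k+1}$ evaluated at the top coefficient picks up a factor whose $2$-adic valuation changes exactly by one as $k$ changes parity, yielding attaching $0$ when $k$ is even and the nonzero $2$-torsion class $\eta^3$ when $k$ is odd.

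The main obstacle will be part (2). The subtlety is that $\pi_3=\mathbb{Z}/24$ has nontrivial odd torsion, so mod-$2$ homology alone cannot detect $\xi_k$; one must run a $K$-theoretic $e$-invariant computation to pin down the $2$-primary part and separately verify triviality of $\nu$-multiples. The parity-dependence forces a careful bookkeeping of $2$-adic valuations of binomial-coefficient terms coming from $\mathbb{C}P^{8k-1}_{8k-3}$, analogous to (and building on) the combinatorial estimates used in Section~\ref{sec:secondlock}; this is the technical heart of the argument and I expect it to consume the majority of the proof.
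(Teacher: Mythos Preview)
Your treatment of part~(1) is fine and is essentially a fleshed-out version of the paper's one-line remark.

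For part~(2), however, there is a genuine gap. First, a small error: in $\Sigma^{-(8k+2)}\mathbb{C}P^{8k-1}_{8k-3}$ only the bottom two cells are joined by $\eta$, since $Sq^2(x^{8k-3})=(8k-3)x^{8k-2}\neq 0$ but $Sq^2(x^{8k-2})=(8k-2)x^{8k-1}=0$; in fact the top cell splits off, so your Toda-bracket picture $\langle\eta,\cdot,\eta\rangle$ is not available. More seriously, the parity dichotomy you are after is \emph{not} visible from the stunted $\mathbb{C}P$ alone. By construction $Z(k)$ sits in a cofiber sequence $Z(k)\hookrightarrow \Sigma^{-(8k+2)}\mathbb{C}P^{8k}_{4k+1}\twoheadrightarrow \Sigma^2 Y(k)$, and the restriction of the second map to $S^{8k-4}\hookrightarrow S^{8k-4}\vee \Sigma^{8k-8}C\eta\to S^{8k-6}$ is either $0$ or $\eta^2$; these two possibilities produce exactly the two answers $S^{8k-8}\vee S^{8k-4}$ and $\Sigma^{8k-8}C\eta^3$. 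Which one occurs is determined by the map to $\Sigma^2 Y(k)$, i.e.\ by data coming from $X(8k+4)$, and no Chern-character computation on $\mathbb{C}P$ by itself can see it. (Note also that the Chern-character statements in Proposition~\ref{prop bundle construction 1} already \emph{assume} the conclusion of this lemma, so invoking them here would be circular.)

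The paper's argument supplies precisely the missing geometric input. It uses the transfer map $\mathrm{Tr}:\Thom(\mathbb{H}P^\infty,V)\to X(-1)$ of Proposition~\ref{transfer}: truncating, desuspending via Proposition~\ref{prop:periodicityX(m)}, and composing with $\rho$ yields an $H\mathbb{F}_2$-equivalence
\[
\Sigma^{-(8k+4)}\Thom(\mathbb{H}P^\infty,V)^{16k-1}_{16k-5}\;\xrightarrow{\ \simeq\ }\;\Sigma^{-1}Z(k)^{8k-4}_{8k-8}.
\]
Lemma~\ref{lem:HPTruncationTwist} (a Pontryagin-class computation) identifies the left side as $\Sigma^{8k-9}C(4k\nu)$, hence $Z(k)^{8k-4}_{8k-8}\simeq \Sigma^{8k-8}C(4k\nu)$; since $4k\nu=0$ for $k$ even and $4k\nu=4\nu=\eta^3$ for $k$ odd, the claim follows in one stroke. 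If you want to avoid the transfer, the honest alternative is to compute the attaching map of $Y(k)$ inside $X(8k+4)$ directly and feed that into the cofiber sequence---but that again requires input from $X(8k+4)$, not from $\mathbb{C}P$.
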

\begin{proof}
Property (1) is straightforward from the definition of $Z(k)$. To prove property (2), note that by truncating the transfer map (see (\ref{eq: transfer}))
$$
\text{Tr}:\Thom(\mathbb{H}P^{\infty},V)\longrightarrow X(-1),
$$
we obtain a $H\mathbb{F}_{2}$-sub map
$$\begin{tikzcd}
\mathbf{1}:\Thom(\mathbb{H}P^{\infty},V)^{16k-1}_{16k-5} \ar[r, hookrightarrow]& X(-1)^{16k-1}_{16k-5}.
\end{tikzcd}
$$
Desuspending $\mathbf{1}$ by $\Sigma^{-(8k+4)}(-)$ and applying Proposition~\ref{prop:periodicityX(m)}, we obtain the map 
$$\begin{tikzcd}
\mathbf{2}:\Sigma^{-(8k+4)}\Thom(\mathbb{H}P^{\infty},V)^{16k-1}_{16k-5} \ar[r, hookrightarrow]& X(8k+3)^{8k-5}_{8k-9}.
\end{tikzcd}
$$

By truncating the map $\rho: X(8k+3)_{-1}^{8k-2} \rightarrow \Sigma^{-1} Z(k)$, we obtain a $H\mathbb{F}_{2}$-quotient map
$$\begin{tikzcd}
\mathbf{3}:X(8k+3)^{8k-5}_{8k-9}\ar[r,twoheadrightarrow]& \Sigma^{-1}Z(k)^{8k-4}_{8k-8}.
\end{tikzcd}
$$
The composite 
$$\begin{tikzcd}
\mathbf{3}\circ\mathbf{2}:\Sigma^{-(8k+4)}\Thom(\mathbb{H}P^{\infty},V)^{16k-1}_{16k-5} \ar[r, hookrightarrow]& X(8k+3)^{8k-5}_{8k-9}\ar[r,twoheadrightarrow]& \Sigma^{-1}Z(k)^{8k-4}_{8k-8}
\end{tikzcd}$$
induces an isomorphism on $\mathbb{F}_2$-homology.  Therefore, it is a homotopy equivalence.  The claims now follow from Lemma~\ref{lem:HPTruncationTwist}.
\end{proof}

\begin{rem}\rm \label{rem:SchmidtError}
In the proof of \cite[Theorem 4.9]{Schmidt2003}, Schmidt made a minor error when computing $\pi^{11}\mathbb{C}P^7$.  This error led to Schmidt's proof of Jones Conjecture for $p = 4$.  

Note that Lemma~\ref{lem:Z(k)Property} is a crucial step in our proof of showing that the Jones conjecture is \textit{not} true when $p \equiv 4 \pmod{8}$.  If Schmidt's cohomotopy group computation were true, our statement of Lemma~\ref{lem:Z(k)Property}(2) would be different: $Z(1)_{0}^4 = S^0 \vee S^4$.  This would also lead to an affirmative answer for Jones conjecture for $p =4$ by using our subsequent arguments.  
\end{rem}

\begin{lem}\label{lem:ChernComplexInjective}
For any $m<8k-4$, the $m$-skeleton of $Z(k)$ is $ko$-injective.  
\end{lem}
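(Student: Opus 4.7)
The plan is to reduce the statement directly to Lemma~\ref{lem: stunted projective space ko-injective}. The first step is to identify, for every $m < 8k-4$, the $m$-skeleton $Z(k)^m$ with a stunted complex projective space of the form $\Sigma^{-(8k+2)} \mathbb{C}P^{j}_{4k+1}$.

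By Lemma~\ref{lem:Z(k)Property}(1), $Z(k)^{8k-8} = \Sigma^{-(8k+2)} \mathbb{C}P^{8k-3}_{4k+1}$. By Lemma~\ref{lem:Z(k)Property}(2), the subquotient $Z(k)_{8k-8}^{8k-4}$ is either $S^{8k-4}\vee S^{8k-8}$ or $\Sigma^{8k-8}C\eta^3$, each of which has cells only in dimensions $8k-8$ and $8k-4$. Thus $Z(k)$ has no cells in dimensions $8k-7$, $8k-6$, or $8k-5$, so $Z(k)^m = Z(k)^{8k-8}$ for $8k-8 \leq m \leq 8k-5$, and more generally $Z(k)^m \subseteq Z(k)^{8k-8}$ whenever $m \leq 8k-5$. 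Since the cells of $\Sigma^{-(8k+2)} \mathbb{C}P^{8k-3}_{4k+1}$ lie in the even dimensions $0, 2, \ldots, 8k-8$, I would conclude that $Z(k)^m$ is itself of the form $\Sigma^{-(8k+2)} \mathbb{C}P^{j}_{4k+1}$ for the unique index $4k+1 \leq j \leq 8k-3$ satisfying $2j - (8k+2) \leq m < 2(j+1)-(8k+2)$.

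The second step is to invoke Lemma~\ref{lem: stunted projective space ko-injective} with $n = 4k+1$, which is odd for every $k \geq 0$. This immediately gives $ko$-injectivity of $Z(k)^m = \Sigma^{-(8k+2)} \mathbb{C}P^{j}_{4k+1}$ whenever $j > 4k+1$, i.e.\ whenever $m \geq 2$. The remaining small cases are trivial: when $m \in \{0, 1\}$ the $m$-skeleton is $S^0$, where $ch \circ c : ko^0(S^0) = \mathbb{Z} \to \mathbb{Q}$ is the inclusion and therefore injective, and when $m < 0$ the skeleton is contractible.

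I do not anticipate an essential obstacle: the content is simply the identification of the low-dimensional portion of $Z(k)$ with a stunted projective space, since the harder input, the $ko$-injectivity of $\Sigma^{-2n}\mathbb{C}P^{m}_{n}$ for odd $n$, has already been established in Lemma~\ref{lem: stunted projective space ko-injective} via the injectivity of the complexification map $c: ko \to ku$ on the relevant $0$th cohomology.
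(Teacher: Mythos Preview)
Your proposal is correct and follows exactly the same route as the paper's proof: identify $Z(k)^m$ for $m<8k-4$ as a stunted projective space $\Sigma^{-(8k+2)}\mathbb{C}P^{j}_{4k+1}$ (via Lemma~\ref{lem:Z(k)Property}) and then invoke Lemma~\ref{lem: stunted projective space ko-injective} with the odd integer $n=4k+1$. Your version is in fact more careful than the paper's one-line argument, since you explicitly treat the boundary case $j=4k+1$ (i.e.\ $Z(k)^m=S^0$), which falls outside the hypothesis $m>n$ of Lemma~\ref{lem: stunted projective space ko-injective} but is trivially $ko$-injective.
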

\begin{proof}
Note that $Z(k)^{m}=\Sigma^{-8k-2}\mathbb{C}P^{l}_{4k+1}$ for some $l\geq 4k+1$.  Therefore, the claim follows from Lemma~\ref{lem: stunted projective space ko-injective}. 
\end{proof}

\subsection{Proof of Proposition~\ref{prop:Step6CommDiagram}}
Consider the map 
$$t_{k}: X(8k+3)^{8k-1}_{8k-5}\longrightarrow S^{0}$$
in Proposition~\ref{pro: order two between 2 locks}.  By properties (ii) and (iii) in Proposition~\ref{pro: order two between 2 locks}, there is a factorization of the map 
$t_k|_{X(8k+3)_{8k-5}^{8k-2}}: X(8k+3)_{8k-5}^{8k-2} \longrightarrow S^0$ as follows:
$$\begin{tikzcd} 
X(8k+3)_{8k-5}^{8k-2} \ar[rrr, "t_k|_{X(8k+3)_{8k-5}^{8k-2}}"] \ar[dd,twoheadrightarrow, swap] &&& S^0 \\ \\
S^{8k-5}\ar[rrruu, "t_k'|_{S^{8k-5}} = \{P^{k-1}h_1^3\}", swap] & 
\end{tikzcd}$$
Here, the vertical map is the restriction of the quotient map 
$$\begin{tikzcd} X(8k+3)^{8k-1}_{8k-5} \ar[r, twoheadrightarrow]& \Sigma^{8k-5}C\nu \end{tikzcd}$$
to the $(8k-2)$-skeleton. 

When restricted to the $(8k-2)$-skeleton, diagram~(\ref{diag: tau is quotient}) becomes the diagram 
$$\begin{tikzcd}
X(8k+3)^{8k-2} \ar[d, twoheadrightarrow] \ar[rrrrdd, "c(8k+3)^{8k-2}"]\\
X(8k+3)^{8k-2}_{-1} \ar[d, twoheadrightarrow] \\ 
X(8k+3)_{8k-5}^{8k-2} \ar[rrrr,"t_k|_{X(8k+3)_{8k-5}^{8k-2}}"] &&&& S^0 
\end{tikzcd}$$
This diagram, combined with the factorization above, produces the following diagram: 
$$\begin{tikzcd}
X(8k+3)^{8k-2} \ar[d, twoheadrightarrow] \ar[rrrrdd, "c(8k+3)^{8k-2}"]\\
X(8k+3)^{8k-2}_{-1} \ar[d, twoheadrightarrow] \\ 
X(8k+3)_{8k-5}^{8k-2} \ar[rr,twoheadrightarrow] &&S^{8k-5} \ar[rr, swap, "\{P^{k-1}h_1^3\}"]&& S^0. 
\end{tikzcd}$$

Given this commutative diagram, Proposition~\ref{prop:Step6CommDiagram} follows from Lemma~\ref{lem:Prop2.18Lemma}. 
\begin{lem}\label{lem:Prop2.18Lemma}
The following diagram commutes: 
$$\begin{tikzcd} 
X(8k+3)^{8k-2}_{-1} \ar[d, twoheadrightarrow] \ar[rr,"\rho"] && \Sigma^{-1}Z(k) \ar[rr,twoheadrightarrow] && S^{8k-5} \ar[d, "\{P^{k-1}h_1^3\}"]\\ 
X(8k+3)_{8k-5}^{8k-2} \ar[rr,twoheadrightarrow] &&S^{8k-5} \ar[rr, "\{P^{k-1}h_1^3\}"]&& S^0. 
\end{tikzcd}$$
\end{lem}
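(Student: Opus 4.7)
My strategy is to reduce the commutativity of the diagram to a short computation in the $2$-primary stable stems, after first showing that both composites factor through $X(8k+3)_{8k-5}^{8k-2}$. First, I will check that both of the composites $X(8k+3)^{8k-2}_{-1} \to S^{8k-5}$ appearing in the lemma factor through the quotient ${X(8k+3)^{8k-2}_{-1} \twoheadrightarrow X(8k+3)_{8k-5}^{8k-2}}$; for the lower row this is tautological, while for the upper row the restriction to $X(8k+3)^{8k-6}_{-1}$ lands via $\rho$ inside $\Sigma^{-1}Z(k)^{8k-6}$ by cellular approximation, and this equals $\Sigma^{-1}Z(k)^{8k-9}$ since by Lemma~\ref{lem:Z(k)Property} there are no cells of $\Sigma^{-1}Z(k)$ in dimensions $8k-8$, $8k-7$, or $8k-6$; this subcomplex then maps to zero under the top-cell quotient $\Sigma^{-1}Z(k) \twoheadrightarrow S^{8k-5}$. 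Call the resulting factorizations $\bar\rho$ and $\pi$ respectively; the task becomes showing $\{P^{k-1}h_1^3\} \circ \bar\rho = \{P^{k-1}h_1^3\} \circ \pi$.

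By Lemma~\ref{the 4 cell complex bw 12 locks} (restricted to the $(8k-2)$-skeleton), $X(8k+3)_{8k-5}^{8k-2}$ splits as $S^{8k-5} \vee \Sigma^{8k-3}C2$. On the $S^{8k-5}$ summand $\pi$ is the identity, and $\bar\rho$ is multiplication by an odd integer (since $\rho$ induces an $(H\mathbb{F}_2)_{8k-5}$-isomorphism); because $\{P^{k-1}h_1^3\}$ is $2$-torsion, both post-compositions agree on this summand. Since $\pi$ vanishes on the $\Sigma^{8k-3}C2$ summand, the proof reduces to verifying $\{P^{k-1}h_1^3\} \circ \phi = 0$ in $[\Sigma^{8k-3}C2, S^0]$ for every $\phi \in [\Sigma^{8k-3}C2, S^{8k-5}]$.

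Set $\alpha = \{P^{k-1}h_1^3\}$. Combining Proposition~\ref{pro: beta-eta-square} (with $m=0$) and Lemma~\ref{lem:induct ph1} yields $\{P^{k-1}h_1\} \cdot \eta^2 = \{P^{k-1}h_1^3\}$, i.e., $\alpha = \eta^2 \cdot \{P^{k-1}h_1\}$. The long exact sequence associated to the cofiber sequence $S^{8k-3} \xrightarrow{2} S^{8k-3} \to \Sigma^{8k-3}C2 \xrightarrow{q} S^{8k-2}$ presents $[\Sigma^{8k-3}C2, S^{8k-5}]$ as an extension of $\pi_2$ by $\pi_3/2\pi_3$, so any $\phi$ is determined by a bottom restriction in $\pi_2 = \{0, \eta^2\}$ together with a top contribution in $\pi_3$ modulo $2\pi_3$. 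Post-composing with $\alpha$, the bottom restriction of $\alpha \circ \phi$ is either $0$ or $\alpha \eta^2 = \eta^4 \cdot \{P^{k-1}h_1\} = 0$ (using $\eta^4 = 0$), while the top contribution lies in $\alpha \cdot \pi_3$, which is generated by $\alpha \nu = \nu \eta^2 \cdot \{P^{k-1}h_1\} = 0$ (using $\nu \eta = 0$, since $\pi_4 = 0$). Both vanish, forcing $\alpha \circ \phi = 0$. The most delicate part is the cellular-approximation argument in the first paragraph, verifying the factoring of the upper composite through the quotient; once $\alpha$ is rewritten as $\eta^2 \cdot \{P^{k-1}h_1\}$ via the identities from Section~\ref{sec:Step3}, the final stems calculation is immediate.
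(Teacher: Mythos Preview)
Your reduction is sound and runs parallel to the paper's, though organized a bit differently: the paper works with the \emph{difference} $\mathbf{1}-\mathbf{2}$ of the two composites $X(8k+3)^{8k-2}_{-1}\to S^{8k-5}$, observes that it vanishes on the subcomplex $X(8k+3)^{8k-5}_{-1}$ (both maps are literally the top-cell quotient there), and hence factors $\mathbf{1}-\mathbf{2}$ through the cofiber $X(8k+3)^{8k-2}_{8k-3}=\Sigma^{8k-3}C2$. Your route---factoring each composite separately through the quotient $X(8k+3)^{8k-2}_{8k-5}$ via cellular approximation, then splitting off $S^{8k-5}$ and handling it with the $2$-torsion of $\{P^{k-1}h_1^3\}$---is also valid, just slightly longer. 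Either way one arrives at the key claim: $\{P^{k-1}h_1^3\}\circ\phi=0$ for every $\phi\in[\Sigma^{8k-3}C2,S^{8k-5}]$.

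Your verification of this last claim has a genuine (if easily repaired) gap. You argue that the bottom restriction of $\alpha\circ\phi$ lies in $\alpha\cdot\pi_2=0$ and that ``the top contribution lies in $\alpha\cdot\pi_3=0$'', concluding $\alpha\circ\phi=0$. But when $\phi$ has nonzero bottom restriction $\eta^2$, the element $\alpha\circ\phi$ lies in the image of $q^*\colon\pi_{8k-2}\to[\Sigma^{8k-3}C2,S^0]$, and there is no reason this image should be contained in $q^*(\alpha\cdot\pi_3)$. Knowing that a map is zero on the associated graded of a two-step filtration does not force it to be zero. Concretely, you have shown $\alpha_*$ kills the subgroup $q^*(\pi_3/2\pi_3)$ and that $i^*\circ\alpha_*=0$, so $\alpha_*$ induces a map $\pi_2\to q^*(\pi_{8k-2})$; you have not shown this induced map vanishes.

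The fix is immediate from the factorization you already wrote down. Since $\alpha=\{P^{k-1}h_1\}\cdot\eta^2$, compute instead $\eta^2\circ\phi\in[\Sigma^{8k-3}C2,S^{8k-7}]$; this group is an extension of (a subgroup of) $\pi_4=0$ by (a quotient of) $\pi_5=0$, hence is zero. Thus $\eta^2\circ\phi=0$, and so $\alpha\circ\phi=\{P^{k-1}h_1\}\circ(\eta^2\circ\phi)=0$. This is precisely the paper's argument: it notes $\pi_2\cdot\eta^2=0$ and $\pi_5=0$ and uses $\eta^2\mid\{P^{k-1}h_1^3\}$ directly.
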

\begin{proof}
Let $\mathbf{1}$ denote the composition 
$$\begin{tikzcd} X(8k+3)^{8k-2}_{-1} \ar[r, twoheadrightarrow] & X(8k+3)_{8k-5}^{8k-2} \ar[r, twoheadrightarrow] &S^{8k-5},  \end{tikzcd}$$
and let $\mathbf{2}$ denote the composition
$$\begin{tikzcd} X(8k+3)^{8k-2}_{-1} \ar[r,"\rho"] & \Sigma^{-1}Z(k) \ar[r,twoheadrightarrow] &S^{8k-5}. \end{tikzcd}$$
We want to show that the map $\mathbf{1}-\mathbf{2}$ becomes 0 after post-composing with the map $\{P^{k-1}h_1^3\}$.  

It is straightforward to see that when restricted to the subcomplex $X(8k+3)_{-1}^{8k-5}$, $(\mathbf{1}-\mathbf{2})|_{X(8k+3)_{-1}^{8k-5}} = 0$.  This is because both $\mathbf{1}$ and $\mathbf{2}$ become the quotient map 
$$\begin{tikzcd} X(8k+3)_{-1}^{8k-5} \ar[r, twoheadrightarrow] &S^{8k-5}. \end{tikzcd}$$
This implies that the map $\mathbf{1}-\mathbf{2}$ factors through the fiber of the inclusion map $X(8k+3)_{-1}^{8k-5} \hookrightarrow X(8k+3)_{-1}^{8k-2}$, which is $\Sigma^{8k-3} C2$: 
$$\begin{tikzcd} 
X(8k+3)_{-1}^{8k-5} \ar[r,hookrightarrow] & X(8k+3)_{-1}^{8k-2} \ar[rd,swap, "\mathbf{1} - \mathbf{2}"]\ar[r, twoheadrightarrow]&X(8k+3)_{8k-3}^{8k-2} = \Sigma^{8k-3}C2 \ar[d] \\
&& S^{8k-5}. 
\end{tikzcd}$$

Given any map $\Sigma^{8k-3}C2 \to S^{8k-5}$, the composition map 
$$\begin{tikzcd}\Sigma^{8k-3}C2 \ar[r]& S^{8k-5} \ar[r, "\eta^2"]&S^{8k-7} \end{tikzcd}$$
is 0 because $\pi_2 \cdot \eta^2 = 0$ and $\pi_5 = 0$.  Since $\eta^2 | \{P^{k-1}h_1^3\}$, the composition 
$$\begin{tikzcd} \Sigma^{8k-3}C2 \ar[r] & S^{8k-5} \ar[rr,"\{P^{k-1}h_1^3\}" ]&& S^0 \end{tikzcd}$$
is zero.  This implies that $\{P^{k-1}h_1^3\}\circ (\mathbf{1} - \mathbf{2} ) = 0$, as desired. 
\end{proof}

\subsection{Bundles with simple Chern character}
In this subsection, we will construct virtual bundles over $Z(k)$ with simple Chern characters.  This will allow us to use Theorem~\ref{prop: detect AHSS differential} to establish differentials in the Atiyah--Hirzebruch spectral sequence. 

Recall from Section~\ref{sec:secondlock} the spectrum $Z$, which is defined as the Thom spectrum 
$$\Thom(\mathbb{C}P^{4k};(4k+1)(L-1)) =  \Sigma^{-(8k+2)} \CP_{4k+1}^{8k+1}.$$
By definition, $Z(k)$ is the fiber of a certain $\textup{H}\mathbb{F}_2$-quotient map 
$$\begin{tikzcd}Z^{8k-4} \ar[r, twoheadrightarrow, "\psi_k"] &S^{8k-6}.\end{tikzcd} $$
We denote the generator of $H^{2i}(Z^{8k-4};\mathbb{Z})$ by $x^i$.

\begin{lem}\label{lem: bundle over CP}
There exists an element $\gamma\in k^{0}(Z^{8k-4})$ such that 
\begin{equation}\label{equation: gamma chern class}
ch(\gamma)=2^{4k-5-\nu(k)}+c_{8k-8}x^{4k-4}+c_{8k-6}x^{4k-3}+c_{8k-4}x^{4k-2},
\end{equation}
with $v(c_{8k-8})=-1$ and $v(c_{8k-4})\geq 0$.
\end{lem}
\begin{proof} There is a Thom isomorphism
$$
k^{0}(Z^{8k-4})\cong U_{K}\cdot k^{0}(\mathbb{C}P^{4k-2}) \cong U_{K}\cdot\mathbb{Z}_{(2)}[w]/(w^{4k-1}),
$$
where $w=L-1$ and $U_{K}$ is the $K$-theoretic Thom class for the virtual bundle $(4k+1)w$.  We have the relations 
$$ch(w)=e^{x}-1$$
and
\begin{eqnarray*}
ch(U_{K})&=&U_{H}\cdot \chi((4k+1)w) \\
&=&U_{H}\cdot \chi((4k+1)L) \\
&=&U_{H}\cdot \left(\frac{e^{x}-1}{x}\right)^{4k+1}.  
\end{eqnarray*}

Suppose
$$\gamma=U_{K}\cdot \left(\sum_{i=0}^{4k-5}a_{i}w^{i}\right).$$
After taking Chern characters on both sides, we get  
$$ch(\gamma)=\left(\frac{e^{x}-1}{x}\right)^{4k+1}\cdot \sum_{i=1}^{4k-5}a_{i}(e^{x}-1)^{i}.$$
Just like before, we make the substitution $z=e^{x}-1$.  With this substitution, equation~(\ref{equation: gamma chern class}) is equivalent to the following equation:
\begin{equation}\label{equation: chern character in z}
\left(\frac{z}{\ln(z+1)}\right)^{4k+1}\cdot \sum_{i=0}^{4k-5}a_{i}z^{i}=2^{4k-5-\nu(k)}+o(z^{4k-4}).
\end{equation}
This equation is equivalent to the equation 
\begin{equation}\label{equation: chern character in z(2)}
\sum_{i=0}^{4k-5}a_{i}z^{i}=\left(\frac{\ln(z+1)}{z}\right)^{4k+1}\cdot (2^{4k-5-\nu(k)}+o(z^{4k-4})).
\end{equation}

By comparing coefficients on both sides of equation~(\ref{equation: chern character in z(2)}), we obtain the relations
$$a_{i}=2^{4k-5-\nu(k)}\cdot b_{i}$$
for all $0 \leq i \leq 4k-5$.  By Lemma~\ref{lem:b<=4n-5}, $\nu(b_{i}) \geq \nu(k) - (4k-5)$ for all $0 \leq i \leq 4k-5$.  Therefore, the coefficients $a_{i}\in \mathbb{Z}_{(2)}$ and we have found a $\gamma$ that satisfies equation (\ref{equation: gamma chern class}). 

To show that the rest of the coefficients in $ch(\gamma)$ satisfies the conditions of the lemma, note that by the definition of the coefficients $b_i$,
$$\left(\frac{z}{\ln(z+1)}\right)^{4k+1}\cdot \left(\sum_{i=0}^{\infty}2^{4k-5-\nu(k)}b_{i}z^{i}\right)=2^{4k-5-\nu(k)}.$$
Subtracting equation (\ref{equation: chern character in z}) from this equation and using the relation $z^{4k-1} = 0$, we obtain the following equation:
\begin{eqnarray*}
&&\left(\frac{z}{\ln(z+1)}\right)^{4k+1}\cdot 2^{4k-5-\nu(k)}\cdot (b_{4k-4}z^{4k-4}+b_{4k-3}z^{4k-3}+b_{4k-2}z^{4k-2}) \\ 
&=&ch_{8k-8}(\gamma)+ch_{8k-6}(\gamma)+ch_{8k-4}(\gamma).
\end{eqnarray*}
Substituting $e^{x}-1$ back as $z$, the above equation becomes
\begin{eqnarray*}
&&\left(\frac{e^x-1}{x}\right)^{4k+1}\cdot 2^{4k-5-\nu(k)}\cdot (b_{4k-4}(e^x-1)^{4k-4}+b_{4k-3}(e^x-1)^{4k-3}+b_{4k-2}(e^x-1)^{4k-2}) \\ 
&=&ch_{8k-8}(\gamma)+ch_{8k-6}(\gamma)+ch_{8k-4}(\gamma).
\end{eqnarray*}
After rearranging, we get 
\begin{eqnarray*}
&&\left(\frac{2^{4k-5-\nu(k)}}{x^{4k+1}} \right)\cdot (b_{4k-4}(e^{x}-1)^{8k-3}+b_{4k-3}(e^{x}-1)^{8k-2} +b_{4k-2}z^{4k-2}(e^{x}-1)^{8k-1}) \\
&=&ch_{8k-8}(\gamma)+ch_{8k-6}(\gamma)+ch_{8k-4}(\gamma)\\
&=& c_{8k-8}x^{4k-4} + c_{8k-6} x^{4k-3} + c_{8k-4} x^{4k-2}.
\end{eqnarray*}
Expanding the left hand side and comparing the coefficients of $x^{4k-4}$ and $x^{4k-2}$ on both sides of the equation, we obtain the relations
\begin{eqnarray*}
c_{8k-8}&=&2^{4k-5-\nu(k)}\cdot b_{4k-4}, \\
c_{8k-4}&=&2^{4k-5-\nu(k)}\cdot \left(\frac{(8k-3)(3k-1)}{3}b_{4k-4}+(4k-1)b_{4k-3}+b_{4k-2}\right)\\
&=&- \ 2^{4k-3-\nu(k)}b_{4k-3}+2^{4k-5-\nu(k)}\frac{(8k-3)(3k-1)}{3}b_{4k-4}\\
&&+ \ 2^{4k-5-\nu(k)}(b_{4k-2}-b_{4k-3}).
\end{eqnarray*}
By Lemma~\ref{lem:coeffb4n-4}, 
$$\nu(c_{8k-8}) = 4k-5 - \nu(k) + (\nu(k) - (4k-4)) = -1.$$ 
By Lemma~\ref{lem:coeffb4n-3}, \ref{lem:coeffb4n-4}, and \ref{lem:coeffb4n-2minusb4n-3}, when $n$ is odd, all three terms in the formula for $c_{8k-4}$ are 2-local integers, so $\nu(c_{8k-4}) \geq 0$.  When $n$ is even, the lemmas show that the first term is a 2-local integer while the other two terms are 2-local half-integers (they have 2-adic valuations $-1$), and so $\nu(c_{8k-4}) \geq 0$ again.  This concludes the proof of the lemma. 
\end{proof}

\begin{prop}\label{prop bundle construction 1}
There exists an element $\alpha_{k}\in ko^{0}(Z(k))$ such that 
\begin{enumerate}
\item When $k$ is even, 
\begin{eqnarray}\label{equation: chern character of alpha}
ch(c(\alpha_{k}))&=&2^{4k-4-\nu(k)}.
\end{eqnarray}
\item When $k$ is odd, 
\begin{eqnarray}\label{equation: chern character of alpha}
ch(c(\alpha_{k}))&=&2^{4k-4-\nu(k)}+dx^{4k-2}
\end{eqnarray}
with $\nu(d)=0$.
\end{enumerate}
\end{prop}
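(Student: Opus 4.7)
The plan is to construct $\alpha_k$ by realifying and transferring the element $\gamma \in k^0(Z^{8k-4})$ from Lemma~\ref{lem: bundle over CP}. First I would form $\gamma^{\mathbb{R}} := r(\gamma) \in ko^0(Z^{8k-4})$; using the identity $c \circ r = 1 + (\text{complex conjugation})$ together with the fact that conjugation acts on $H^*(\mathbb{C}P^\infty;\mathbb{Q})$ by $x \mapsto -x$, the odd-power term $c_{8k-6} x^{4k-3}$ cancels and one obtains
$$ch(c(\gamma^{\mathbb{R}})) = 2^{4k-4-\nu(k)} + 2c_{8k-8} x^{4k-4} + 2c_{8k-4} x^{4k-2},$$
with $\nu(2c_{8k-8}) = 0$ and $\nu(2c_{8k-4}) \geq 1$.

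Next I would restrict $\gamma^{\mathbb{R}}$ to the common $(8k-8)$-skeleton $Z^{8k-8} = Z(k)^{8k-8}$ and extend along the inclusion $Z(k)^{8k-8} \hookrightarrow Z(k)$. The obstruction to extending a $ko^0$-class over the single $(8k-4)$-cell attached to $Z(k)^{8k-8}$ lies in $ko^0(S^{8k-5}) = \pi_{8k-5}ko$, which is zero since $8k-5 \equiv 3 \pmod{8}$, so an extension $\widetilde{\alpha} \in ko^0(Z(k))$ exists with
$$ch(c(\widetilde{\alpha})) = 2^{4k-4-\nu(k)} + 2c_{8k-8} x^{4k-4} + D x^{4k-2}$$
for some $D \in \mathbb{Q}$ whose class modulo $2\mathbb{Z}_{(2)}$ is an invariant: different choices of extension differ by elements of $ko^0(S^{8k-4}) \cong \mathbb{Z}$ whose generator has complexified Chern character $2 x^{4k-2}$.

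To finish, I would adjust $\widetilde{\alpha}$ by pulling back $ko$-classes along the quotient $Z(k) \twoheadrightarrow Z(k)_{8k-8}^{8k-4}$. By Lemma~\ref{lem:Z(k)Property} this subquotient equals $S^{8k-8} \vee S^{8k-4}$ when $k$ is even and $\Sigma^{8k-8} C\eta^3$ when $k$ is odd; in either case the map $ko^0(Z(k)_{8k-8}^{8k-4}) \to ko^0(S^{8k-8}) = \mathbb{Z}$ is surjective (for $k$ odd this uses the exact sequence together with $\pi_{8k-5}ko = 0$), so the coefficient $2c_{8k-8}$ of $x^{4k-4}$ can be canceled, possibly at the cost of modifying the $x^{4k-2}$ coefficient by an integer. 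This produces the desired $\alpha_k$ with $ch(c(\alpha_k)) = 2^{4k-4-\nu(k)} + d \cdot x^{4k-2}$. The hard part will be verifying the 2-adic valuation of $d$: in the case $k$ even, one must show that $d$ lies in $2\mathbb{Z}_{(2)}$ so that a final pullback from $ko^0(S^{8k-4})$ annihilates it, while in the case $k$ odd, one must show that $\nu(d) = 0$. This requires tracking how $D$ and the modification coming from the cancellation of the $x^{4k-4}$ term interact modulo $2\mathbb{Z}_{(2)}$, and ultimately rests on the 2-adic valuation estimates of the $b_j$ from Appendix~\ref{sec:AppendixA}, combined with a comparison to the coefficient $c_{8k-4}$ from Lemma~\ref{lem: bundle over CP}.
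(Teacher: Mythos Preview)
Your outline is on the right track but takes a detour that makes the ``hard part'' harder than it needs to be, and leaves a genuine gap in pinning down $D$.

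The key point you are missing is that there is already a canonical map $Z(k)\to Z^{8k-4}$: by definition $Z(k)$ is the fiber of an $\textup{H}\mathbb{F}_2$-quotient map $Z^{8k-4}\twoheadrightarrow S^{8k-6}$, so the fiber inclusion gives the map. Hence you can pull back $r(\gamma)$ directly to obtain $\alpha'\in ko^0(Z(k))$ with the \emph{specific} Chern character
\[
ch(c(\alpha'))=2^{4k-4-\nu(k)}+2c_{8k-8}\,x^{4k-4}+2c_{8k-4}\,x^{4k-2}.
\]
No obstruction argument is needed, and the coefficient of $x^{4k-2}$ is exactly $2c_{8k-4}$, not merely determined modulo $2\mathbb{Z}_{(2)}$. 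Your restrict-then-extend maneuver throws this information away: you correctly observe that different extensions differ by $2\mathbb{Z}_{(2)}\cdot x^{4k-2}$, but you have no independent mechanism to compute $D\bmod 2$. The only way to recover it is to exhibit one explicit extension, and the pullback along $Z(k)\to Z^{8k-4}$ is precisely that.

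With $\alpha'$ in hand, the rest is a short computation. For $k$ even, $Z(k)^{8k-4}_{8k-8}\simeq S^{8k-8}\vee S^{8k-4}$, so the generators $\phi_1,\phi_2$ of $ko^0$ on the two summands have $ch(c(\phi_1))=x^{4k-4}$ and $ch(c(\phi_2))=2x^{4k-2}$; since $\nu(2c_{8k-8})=0$ and $\nu(c_{8k-4})\geq 0$, subtracting $2c_{8k-8}\,p^*\phi_1+c_{8k-4}\,p^*\phi_2$ kills both higher terms. For $k$ odd, $Z(k)^{8k-4}_{8k-8}\simeq \Sigma^{8k-8}C\eta^3$, and the class $\phi_3\in ko^0(C\eta^3)$ lifting the bottom-cell generator has $ch(c(\phi_3))=x^{4k-4}+e\,x^{4k-2}$ with $\nu(e)=0$ (this is the $e$-invariant of $\eta^3$). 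Subtracting $2c_{8k-8}\,p^*\phi_3$ leaves $d=2c_{8k-4}-2c_{8k-8}\cdot e$; since $\nu(2c_{8k-4})\geq 1$ while $\nu(2c_{8k-8}\cdot e)=0$, you get $\nu(d)=0$. This is exactly the paper's argument, and no further appeal to Appendix~\ref{sec:AppendixA} is required beyond what is already packaged in Lemma~\ref{lem: bundle over CP}.
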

\begin{proof}
When $k$ is even, let $\gamma'$ be the pullback of $\gamma$ under the map $Z(k)\rightarrow Z^{8k-4}$ and let $\alpha'=r(\gamma')$ ($r:k^0(Z(k)) \to ko^0(Z(k))$ is the restriction map).  By Lemma~\ref{lem: bundle over CP}, 
$$ch(c(\alpha'))=2^{4k-4-\nu(k)}+2c_{8k-8}x^{4k-4}+2c_{8k-4}x^{4k-2}.$$
Recall from Lemma~\ref{lem:Z(k)Property} that $Z(k)_{8k-8}^{8k-4}=S^{8k-8}\vee S^{8k-4}$ for even $k$. 
Let 
$$\phi_{1},\phi_{2}\in ko^{0}(Z(k)_{8k-8}^{8k-4})=ko^{0}(S^{8k-8})\oplus ko^{0} (S^{8k-4})$$
be the generators for the first and the second summand, respectively.  Since the composition map 
$$ko^0(S^{4m}) \stackrel{c}{\longrightarrow} k^0(S^{4m}) \stackrel{ch}{\longrightarrow} H^*(S^{4m};\mathbb{Q})$$
is multiplication by 1 when $m$ is even and multiplication by 2 when $m$ is odd, we have 
$$ch(c(\phi_{1}))=x^{4k-4}$$
and
$$ch(c(\phi_{2}))=2x^{4k-2}.$$

Now, set 
$$\alpha_{k}=\alpha'-2c_{8k-8}\cdot p_{0}^{*}(\phi_{1})-c_{8k-4}\cdot p^{*}_{0}(\phi_{2})
,$$ 
where 
$$p_{0}: Z(k)\twoheadrightarrow Z(k)_{8k-8}^{8k-4}$$ 
is the quotient map.  Note that this construction is valid because both $2c_{8k-8}$ and $c_{8k-4}$ belong to $\mathbb{Z}_{(2)}$ by Lemma~\ref{lem: bundle over CP}.  It follows that $\alpha_{k}$ satisfies (\ref{equation: chern character of alpha}).

When $k$ is odd, let $\gamma'$ be the pullback of $\gamma$ under the map $Z(k)\rightarrow Z^{8k-4}$ and let $\alpha'=r(\gamma')$.  By Lemma~\ref{lem: bundle over CP}, 
$$ch(c(\alpha'))=2^{4k-4-\nu(k)}+2c_{8k-8}x^{4k-4}+2c_{8k-4}x^{4k-2}.$$
Recall from Lemma~\ref{lem:Z(k)Property} that $Z(k)_{8k-8}^{8k-4}=\Sigma^{8k-8}C\eta^{3}$ for $k$ odd. There is an element 
$$\phi_{3}\in ko^{0}(Z(k)_{8k-8}^{8k-4})=ko^{0}(C\eta^{3})$$ 
such that $$ch(c(\phi_{3}))=x^{4k-4}+e x^{4k-2}$$ for some $e$ with $\nu(e)=0$ (this is because the $e$-invariant of $\eta^{3}$ has 2-adic evaluation $0$). 

Now, set 
$$\alpha_{k}=\alpha'-2c_{8k-8}\cdot p^{*}_{1}(\phi_{3}),$$
where 
$$p_1: Z(k) \twoheadrightarrow Z(k)_{8k-8}^{8k-4}$$
is the quotient map.  Then 
\begin{eqnarray*}
ch(c(\alpha_k)) &=& ch(c(\alpha')) -2 c_{8k-8} \cdot ch(c(p^*_1(\phi_3))) \\ 
&=& 2^{4k-4-\nu(k)}+2c_{8k-8}x^{4k-4}+2c_{8k-4}x^{4k-2} - 2c_{8k-8}\cdot (x^{4k-4}+e x^{4k-2}) \\ 
&=&2^{4k-4-\nu(k)} + (2c_{8k-4} - 2c_{8k-8}\cdot e)\cdot x^{4k-2}. 
\end{eqnarray*}
By Lemma~\ref{lem: bundle over CP}, $d = (2c_{8k-4} - 2c_{8k-8}\cdot e)$ has 2-adic valuation 0.  Therefore, $\alpha_k$ satisfies (\ref{equation: chern character of alpha}), as desired. 
\end{proof}

\subsection{First lock for $k$ odd}

In this subsection, we will prove Proposition~\ref{pro: first lock for odd}, which states that when $k$ is odd, the composition
$$\begin{tikzcd}\Sigma^{-1}Z(k) \ar[r, twoheadrightarrow]& S^{8k-5} \ar[rr,"\{P^{k-1}h_{1}^{3}\}"]&& S^0\end{tikzcd}$$
is zero. 
\begin{proof}[Proof of Proposition~\ref{pro: first lock for odd}]
Let $f: \Sigma^{-1} Z(k)_2^\infty \to S^0$ be the boundary map induced from the cofiber sequence 
$$S^{-1} \hookrightarrow \Sigma^{-1}Z(k) \longrightarrow \Sigma^{-1} Z(k)_2^\infty.$$
In other words, $f$ fits into the sequence
$$S^{-1} \hookrightarrow \Sigma^{-1}Z(k) \longrightarrow \Sigma^{-1} Z(k)_2^\infty \stackrel{f}{\longrightarrow} S^0.$$

We will show that the following diagram is commutative: 
\begin{equation} \label{diagram:2^4n-4Commute}
\begin{tikzcd} 
&S^0& \\
\Sigma^{-1} Z(k) \ar[r] & \Sigma^{-1} Z(k)_2^\infty \ar[r, twoheadrightarrow] \ar[u, "2^{4k-4}f"]& S^{8n-5} \ar[lu, swap, "\{P^{k-1}h_1^3\}"].
\end{tikzcd}
\end{equation}
Our proposition will follow from the commutativity of this diagram.  This is because taking $[-, S^0]$ in the cofiber sequence 
$$\Sigma^{-1}Z(k) \longrightarrow \Sigma^{-1} Z(k)_2^\infty \stackrel{f}{\longrightarrow} S^0$$
produces the sequence
$$[S^0, S^0] \longrightarrow [\Sigma^{-1}Z(k)_2^\infty, S^0] \longrightarrow [\Sigma^{-1}Z(k), S^0]. $$
In this sequence, the element 
$$2^{4k-4} \in [S^0, S^0]$$ 
first maps to 
$$2^{4k-4}f \in [\Sigma^{-1}Z(k)_2^\infty, S^0],$$ 
and then maps to 
$$g \in [\Sigma^{-1}Z(k), S^0]$$ by the commutativity of (\ref{diagram:2^4n-4Commute}).  Since the sequence is exact at $[\Sigma^{-1}Z(k)_2^\infty, S^0]$, we deduce that $g = 0$.  

It remains for us to prove that diagram (\ref{diagram:2^4n-4Commute}) is commutative.  Since $\Sigma^{-1}Z(k)_2^\infty$ has no 0-cells, the Adams filtration for the map $f$ is at least 1.  This implies that the Adams filtration of the map $2^{4k-4} f$ is at least $(4k-4) + 1 = 4k-3$.  Therefore, the map $2^{4k-4}f$ can be lifted through a map $\ell_{4k-3}: \Sigma^{-1}Z(k)_2^\infty \to T_{4k-3}$, where $T_i$ ($i \geq 1$) is the $i$th stage of the Adams tower of $S^0$.
$$\begin{tikzcd}
& T_{4k-3} \ar[d]& \\
& \vdots \ar[d] & \\
& T_2 \ar[r] \ar[d] &T_2 \wedge  \textup{H}\mathbb{F}_2 \\
& T_1 \ar[r] \ar[d] & T_1 \wedge \textup{H}\mathbb{F}_2 \\
\Sigma^{-1} Z(k)_2^\infty \ar[r, "2^{4k-4}f"] \ar[ru, "\ell_1"] \ar[ruu, "\ell_2"] \ar[ruuuu, "\ell_{4k-3}"]& S^0 \ar[r]& \textup{H}\mathbb{F}_2.
\end{tikzcd}$$

The cells of $\Sigma^{-1} Z(k)_2^\infty$ are in dimensions $1$, $3$, $\ldots$, $8k-9$, and $8k-5$.  Since $\pi_i(T_{4k-3}) = 0$ for all $1 \leq i \leq 8k-8$, the $(8k-9)$-skeleton of $\Sigma^{-1} Z(k)_2^\infty$ maps trivially to $S^0$ under the composition map 
$$\begin{tikzcd} (\Sigma^{-1} Z(k)_2^\infty)^{8k-9} \ar[r,hookrightarrow]& \Sigma^{-1} Z(k)_2^\infty \ar[r, "2^{4k-4}f"]& S^0. \end{tikzcd}$$
Therefore, there exists a map $S^{8k-5} \to T_{4k-3}$ such that the following diagram is commutative: 
$$\begin{tikzcd}
S^{8n-5} \ar[r, dashed] & T_{4k-3} \ar[d] \\ 
\Sigma^{-1} Z(k)_2^\infty \ar[r, swap, "2^{4k-4}f"] \ar[u, twoheadrightarrow] \ar[ru, "\ell_{4k-3}"] & S^0.
\end{tikzcd}$$

Let $\mu$ be the composition 
$$S^{8k-5} \longrightarrow T_{4k-3} \longrightarrow S^0.$$
To finish the proof of our proposition, it suffices to show that $\mu = \{P^{k-1}h_1^3\}$.  

Since the Adams filtration of $\mu$ is at least $4k-3$, $\mu$ can be $0$, $\{P^{k-1}h_2\}$, $2\{P^{k-1}h_2\}$, or $4\{P^{k-1}h_2\} = \{P^{k-1}h_1^3\}$.  We will compute the $e$-invariant of $e(\mu)$ and show that $\nu(e(\mu)) = 0$.  This will finish the proof because the 2-adic valuations for the $e$-invariants of the four possibilities above are
\begin{eqnarray*}
\nu(e(0))&\geq& 1, \\
\nu(e(\{P^{k-1}h_2\})) &=& -2,\\
\nu(e(2\{P^{k-1}h_2\})) &=& -1,\\
\nu(e(4\{P^{k-1}h_2\})) &=& 0.  
\end{eqnarray*}

Consider the diagram 
$$\begin{tikzcd}
\Sigma^{-1}Z(k) \ar[r] \ar[d, "\Sigma^{-1}h"] &\Sigma^{-1}Z(k)_2^\infty \ar[r, "f"] \ar[d, twoheadrightarrow]&S^0 \ar[r] \ar[d, "2^{4k-4}"] &Z(k) \ar[d, "h"] \\ 
\Sigma^{-1} C\mu \ar[r] & S^{8k-5} \ar[r, "\mu"] & S^0 \ar[r] & C\mu.
\end{tikzcd}$$
By the definition of the $e$-invariant, there exists an element $\xi \in ko^0(C\mu)$ such that 
$$ch(c(\xi)) = 1 + e(\mu).$$
This implies that when we pullback $\xi$ along the map $h: Z(k) \to C\mu$, the Chern character $ch(c(h^*\xi))$ is equal to 
\begin{eqnarray} \label{eqn:ChernCharxi}
ch(c(h^*\xi)) = 2^{4k-4} + e(\mu)x^{4k-2}. 
\end{eqnarray}

In Proposition~\ref{prop bundle construction 1}, we constructed an element $\alpha_k \in ko^0(Z(k))$ with Chern character
\begin{eqnarray} \label{eqn:ChernCharalpha1}
ch(c(\alpha_{k}))&=&2^{4k-4}+dx^{4k-2} \,\,\, (\nu(d) = 0). 
\end{eqnarray}
Subtracting equation (\ref{eqn:ChernCharalpha1}) from equation (\ref{eqn:ChernCharxi}), we get 
$$ch(c(h^*\xi - \alpha_k)) = (e(\mu) - d) x^{4k-2}. $$
In particular, this shows that when we restrict $h^*\xi - \alpha_k$ to the $(8k-8)$-skeleton $Z(k)^{8k-8}$, 
$$ch\left(c\left(\left.h^*\xi - \alpha_k \right|_{Z(k)^{8k-8}} \right) \right) =0. $$

By Lemma~\ref{lem:ChernComplexInjective}, 
$$\left.h^*\xi - \alpha_k \right|_{Z(k)^{8k-8}} = 0. $$
Therefore,
$$h^*\xi - \alpha_k = p^*(\phi)$$
for some $\phi \in ko^0(S^{8k-4})$.  Here, $p$ is the quotient map $p: Z(k) \twoheadrightarrow S^{8k-4}$.  The Chern character of $p^*\phi$ is 
$$ch(c(p^*\phi)) = a x^{4k-2},$$
where $\nu(a) \geq 1$.  From the relation
$$(e(\mu) - d) x^{4k-2}= ax^{4k-2},$$
we deduce that $e(\mu) = d + a$.  Since $\nu(d) = 0$ and $\nu(a) \geq 1$, $\nu(e(\mu)) = 0$.  This concludes the proof of the proposition. 
\end{proof}

\subsection{First lock for $k$ even} 
\begin{proof}[Proof of Proposition~\ref{pro: first lock for even}] 
In Proposition~\ref{prop bundle construction 1}, we showed that there exists an element $\alpha_{k}\in ko^{0}(Z(k))$ such that 
$$ch(c(\alpha_{k}))=2^{4k-4-\nu(k)}.$$ 
By Lemma~\ref{lem:ChernComplexInjective}, we can apply Theorem~\ref{prop: detect AHSS differential} to $Z(k)$.  Theorem~\ref{prop: detect AHSS differential} shows that the element 
$$2^{4k-4-\nu(k)}[-1]$$ 
is a permanent cycle in the $j''$-based Atiyah--Hirzebruch spectral sequence of $\Sigma^{-1}Z(k)$.  

The map $\rho$ constructed in Section~\ref{subsec:constructionofZ(k)} induces a map of spectral sequences from the $j''$-based Atiyah--Hirzebruch spectral sequence of $\Sigma^{-1}Z(k)$ to that of ${X(8k+3)^{8k-5}_{-1}}$.  Therefore, the element 
$$2^{4k-4-\nu(k)}[-1]$$
is also a permanent cycle in the $j''$-based Atiyah--Hirzebruch spectral sequence of $X(8k+3)^{8k-5}_{-1}$ and $X(8k+3)^{8k-5}$.  This finishes the proof of the proposition.  
\end{proof}

\appendix

\section{Coefficients of $\left(\frac{\ln(1+z)}{z}\right)^{4k+1}$}\label{sec:AppendixA}
Let $b_i$ be the coefficient of $z^i$ in the power series expansion of 
$$f(z) = \left(\frac{\ln(1+z)}{z}\right)^{4k+1}=\left(1-\frac{z}{2}+\frac{z^{2}}{3}-\frac{z^{3}}{4}+\cdots\right)^{4k+1}.$$
In this section, we prove several facts about the 2-adic valuations of $b_i$ that we are going to use in the rest of the paper.  

\begin{notation}
For any $r \in \mathbb{Q}$, let $\nu(r)$ be the 2-adic valuation of $r$.  For example, $\nu(4) = 2$, $\nu(3) = 0$, and $\nu\left(\frac{1}{8}\right) = -3$. 
\end{notation}

In the power series expansion of
$$f(z) = \left(\frac{\ln(1+z)}{z}\right)^{4k+1}=\left(1-\frac{z}{2}+\frac{z^{2}}{3}-\frac{z^{3}}{4}+\cdots\right)^{4k+1},$$
the coefficients for $z^m$ is 
$$b_{m}=\sum\limits_{(c_0, c_1, c_2, \ldots)}b_{(c_0, c_{1}, c_2, \ldots)},$$
where the sum ranges through all tuples $(c_0, c_1, c_2, \ldots)$ such that 
\begin{enumerate}
\item $c_i \geq0$ for all $i \geq 0$;
\item $c_0 + c_1 + c_2 + \cdots = 4k+1$;
\item $c_1 + 2c_2 + 3c_3 + \cdots = m$. 
\end{enumerate}
In all the cases that we are interested in, $m$ will always be at most $4k$, so the tuple $(c_0, c_1, c_2, \ldots)$ will always be finite.  Each tuple $(c_0, c_1, c_2, \ldots)$ corresponds to the monomial 
$$(1)^{c_0} \left(-\frac{z}{2} \right)^{c_1} \left(\frac{z^2}{3} \right)^{c_2} \cdots.$$
The number $b_{(c_0, c_{1}, c_2, \ldots)}$ is the coefficient of this monomial, which is 
$$b_{(c_0, c_1, c_2, \ldots)}=(-1)^{c_1 + c_3 + \cdots} \cdot \binom{4k+1}{c_0, c_1, c_2, \ldots} \cdot \frac{1}{2^{c_1}3^{c_2} \cdots}.$$
Here,
$$\binom{4k+1}{c_0, c_1, c_2, \ldots} = \frac{(4k+1)!}{c_0! c_1! c_2! \cdots}.$$ 
In particular, this number is an integer.  

\begin{lem}\label{lem:coeffb4n}
$\nu(b_{4k})=-4k$ for all $k \geq 0$.  
\end{lem}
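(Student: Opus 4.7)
The plan is to expand $b_{4k}$ using the multinomial formula and bound the $2$-adic valuation of each term, showing that a single term realizes the minimum valuation $-4k$ while all other terms have strictly higher valuation.

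First, I would write $b_{4k}$ as the sum
\[
b_{4k} = \sum_{(c_0,c_1,c_2,\ldots)} (-1)^{c_1+c_3+\cdots}\binom{4k+1}{c_0,c_1,c_2,\ldots}\prod_{i\geq 1}\frac{1}{(i+1)^{c_i}},
\]
where the sum is over tuples satisfying $\sum c_i = 4k+1$ and $\sum i c_i = 4k$. Since multinomial coefficients are integers, the $2$-adic valuation of each summand is at least $-\sum_{i\geq 1} c_i\,\nu(i+1)$.

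The key elementary input I would prove is the inequality $\nu(i+1)\leq i$ for every integer $i\geq 1$, with equality if and only if $i=1$. (This holds because $2^{\nu(i+1)}\leq i+1$, and the inequality $2^i \leq i+1$ fails for $i\geq 2$.) Summing the weighted version of this inequality against the constraint $\sum i c_i = 4k$ yields
\[
\sum_{i\geq 1} c_i\,\nu(i+1) \;\leq\; \sum_{i\geq 1} i\, c_i \;=\; 4k,
\]
with equality forcing $c_i = 0$ for all $i\geq 2$, which in turn forces $c_1 = 4k$ and $c_0 = 1$.

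Thus every summand has $2$-adic valuation at least $-4k$, and the only tuple potentially attaining $-4k$ is $(c_0,c_1)=(1,4k)$. Evaluating this distinguished term gives
\[
(-1)^{4k}\binom{4k+1}{1,4k}\frac{1}{2^{4k}} \;=\; \frac{4k+1}{2^{4k}},
\]
whose numerator is odd and whose $2$-adic valuation is exactly $-4k$. Since this term has strictly smaller valuation than any other summand (all others satisfy the inequality strictly), the non-Archimedean triangle inequality gives $\nu(b_{4k}) = -4k$, as desired. I do not anticipate any serious obstacle in this argument; the only subtle point is verifying that $\nu(i+1) < i$ strictly for $i\geq 2$, which is immediate.
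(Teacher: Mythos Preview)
Your proof is correct and follows essentially the same approach as the paper: both isolate the tuple $(c_0,c_1,\ldots)=(1,4k,0,\ldots)$ as the unique term of valuation $-4k$ and bound all remaining terms above $-(4k-1)$ using the inequality $\nu(i+1)\leq i$ (with equality only at $i=1$). Your write-up is slightly more explicit about why the key inequality holds, but otherwise the arguments are identical.
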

\begin{proof} 
For any tuple $(c_0, c_1, \ldots )$ with $\sum_{i \geq 0} c_i = 4k+1$ and $\sum_{i \geq 1} ic_i = 4k$, the valuation 
$$\nu\left(\frac{1}{2^{c_1}3^{c_2} \cdots} \right) \geq -(4k-1)$$
except when $(c_0, c_1, \ldots ) = (1, 4k, 0, \ldots )$.  Since 
\begin{eqnarray*}
b_{(1, 4k, 0, \ldots)} &=& (-1)^{4k}\cdot  \binom{4k+1}{1, 4k}\cdot \frac{1}{2^{4k}} \\ 
&=& \frac{(4k+1)}{2^{4k}},
\end{eqnarray*}
the valuation $\nu(b_{4k})$ is equal to $-4k$.  
\end{proof}

\begin{lem}\label{lem:coeffbm<=4n-1}
The inequality $\nu(b_m) \geq -(4k-2)$ holds for all $k \geq 1$ and \\$1\leq m \leq 4k-1$. 
\end{lem}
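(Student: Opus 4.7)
The plan is to bound $\nu(b_{(c_0, c_1, \ldots)})$ for each individual tuple in the expansion $b_m = \sum b_{(c_0, c_1, \ldots)}$, since $\nu(b_m) \geq \min \nu(b_{(c_0, c_1, \ldots)})$. Because multinomial coefficients are non-negative integers,
$$\nu(b_{(c_0, c_1, \ldots)}) = \nu\binom{4k+1}{c_0, c_1, \ldots} - \sum_{i \geq 1} c_i \nu(i+1) \geq -\sum_{i \geq 1} c_i \nu(i+1),$$
so the task reduces to bounding $\sum_{i \geq 1} c_i \nu(i+1)$ from above subject to $\sum_{i} c_i = 4k+1$ and $\sum_{i} i c_i = m$.

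The key elementary input is $\nu(i+1) \leq i$ for all $i \geq 1$, with equality if and only if $i = 1$, which follows from $2^{\nu(i+1)} \leq i+1 \leq 2i$. Combined with $\sum i c_i = m$, this gives $\sum c_i \nu(i+1) \leq m$, so every term has valuation $\geq -m$. When $m \leq 4k-2$, this already yields the desired bound.

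The delicate case is $m = 4k-1$. Here I would sharpen the estimate using $\nu(i+1) \leq i - 1$ for all $i \geq 2$ (verified case-by-case for $i = 2,3,4$ and via $\nu(i+1) \leq \log_2(i+1) \leq i-1$ for $i \geq 5$). Summing gives $\sum c_i \nu(i+1) \leq m - \sum_{i \geq 2} c_i$. If any $c_i$ with $i \geq 2$ is positive, the term's valuation is $\geq -(m-1) = -(4k-2)$. The only remaining tuple has $c_i = 0$ for all $i \geq 2$, forcing $c_1 = 4k-1$ and $c_0 = 2$; its multinomial coefficient is $\binom{4k+1}{2} = (4k+1)(2k)$ whose 2-adic valuation is $\geq 1$, so this term's valuation is $\geq 1 - (4k-1) = -(4k-2)$.

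In all cases every term has valuation $\geq -(4k-2)$, hence $\nu(b_m) \geq -(4k-2)$. The argument is essentially routine; the only subtlety is that the ``greedy'' tuple $c_1 = 4k-1$ would naively yield only $-(4k-1)$, and one needs the extra factor of $2$ hidden in $\binom{4k+1}{2} = (4k+1)\cdot 2k$ to recover the correct bound — precisely mirroring how the factor $\nu(k)$ appears in Lemma~\ref{lem:coeffb4n}.
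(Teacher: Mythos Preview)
Your proof is correct and follows essentially the same approach as the paper's: bound each term via $\nu(i+1)\leq i$ to handle $m\leq 4k-2$, then for $m=4k-1$ isolate the unique exceptional tuple $(c_0,c_1,\ldots)=(2,4k-1,0,\ldots)$ and observe that the extra factor of $2$ in $\binom{4k+1}{2}=(4k+1)(2k)$ rescues the bound. Your write-up is actually a bit more explicit than the paper's, which asserts the bound for non-exceptional tuples without spelling out the sharpening $\nu(i+1)\leq i-1$ for $i\geq 2$; one minor cosmetic slip is the closing reference to Lemma~\ref{lem:coeffb4n}, which computes $\nu(b_{4k})=-4k$ and involves no $\nu(k)$ factor.
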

\begin{proof}

For any positive integer $c$, we have the inequality 
$$\nu\left(\frac{1}{c+1}\right)\geq -c.$$ 
Equality is achieved only when $c=1$.  This implies that
\begin{equation}
\nu(b_{(c_0, c_{1},\cdots)})\geq \nu\left(\frac{1}{2^{c_1}3^{c_2} \cdots} \right) \geq -\sum_{i} i \cdot c_{i} = -m.
\end{equation}
From this, we deduce that $f(b_{m})\geq -(4k-2)$ for all $1\leq m \leq 4k-2$.

For $b_{4k-1}$, given any tuple $(c_0, c_1, c_2, \ldots)$ with $\sum_{i \geq 0} c_i = 4k+1$ and \\$\sum_{i \geq 1} ic_i= 4k-1$, the valuation 
$$\nu\left(\frac{1}{2^{c_1}3^{c_2} \cdots} \right) \geq -(4k-2)$$
except when $(c_0, c_1, c_2, \ldots) = (2, 4k-1, 0, \ldots)$.  Since 
\begin{eqnarray*}
b_{(2, 4k-1, 0, \ldots)} &=& (-1)^{4k-1} \cdot \binom{4k+1}{2, 4k-1}\cdot \frac{1}{2^{4k-1}} \\
&=& - \frac{(4k+1)k}{2^{4k-2}},
\end{eqnarray*}
the 2-adic valuation of the denominator is still at least $-(4k-2)$.  Therefore, $\nu(b_{4k-1}) \geq -(4k-2)$.  
\end{proof}

\begin{lem}\label{lem:coeffb4n-2}
$\nu(b_{4k-2}) = \nu(k) - (4k-3)$ for all $k \geq 1$. 
\end{lem}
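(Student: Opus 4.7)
The plan is to mirror the strategy of Lemmas~\ref{lem:coeffb4n} and~\ref{lem:coeffbm<=4n-1}: first isolate the unique dominant tuple, then bound the contributions of all other tuples. By the same inequality $\nu\bigl(1/(c+1)\bigr) \geq -c$ used in Lemma~\ref{lem:coeffbm<=4n-1}, the denominator estimate
$$\nu\left(\frac{1}{2^{c_1}3^{c_2}\cdots}\right) \;=\; -\sum_{i \geq 1} c_i\,\nu(i+1) \;\geq\; -\sum_{i\geq 1} i\,c_i \;=\; -(4k-2)$$
holds with equality \emph{only} for the tuple $(c_0,c_1,c_2,\ldots)=(3,4k-2,0,0,\ldots)$, since equality $\nu(i+1)=i$ forces $i=1$. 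A direct computation gives
$$b_{(3,4k-2,0,\ldots)} \;=\; (-1)^{4k-2}\binom{4k+1}{3,4k-2}\frac{1}{2^{4k-2}} \;=\; \frac{(4k+1)(4k)(4k-1)}{6\cdot 2^{4k-2}},$$
whose $2$-adic valuation is exactly $\nu(k)-(4k-3)$, since $\nu(4k)=\nu(k)+2$ and $4k\pm 1$ are odd.

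The main task is therefore to show that every \emph{other} admissible tuple contributes a term of strictly larger $2$-adic valuation, so that no cancellation can lower $\nu(b_{4k-2})$ below the value of the dominant term. Equivalently, for each non-dominant tuple one needs the sharper bound $\nu \geq \nu(k)-(4k-4)$; the naive estimate from the first step only produces $\nu \geq -(4k-3)$, which is weaker whenever $\nu(k)\geq 1$. This is the main obstacle.

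To handle it, I plan to classify the non-dominant tuples according to the first index $i\geq 2$ with $c_i\geq 1$, and simultaneously track two sources of extra $2$-adic valuation relative to the dominant tuple: first, the denominator improvement $\sum c_j\bigl(j-\nu(j+1)\bigr) \geq 0$, and second, the extra even factors pulled into the multinomial numerator $\binom{4k+1}{c_0,c_1,c_2,\ldots}$ by the forced presence of consecutive integers $4k, 4k-2, 4k-4,\ldots$ in the numerator of the associated factorial. The two closest competitors, $(4,4k-4,1,0,\ldots)$ and $(5,4k-5,0,1,0,\ldots)$, can be handled by direct computation, yielding valuations $\nu(k)-(4k-4)$ and $\nu(k)+\nu(k-1)-(4k-5)$ respectively, both strictly larger than $\nu(k)-(4k-3)$.

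The hard part will be keeping the case analysis uniform. The most delicate tuples are those for which the perturbation away from $(3,4k-2,0,\ldots)$ introduces only indices $i$ with $\nu(i+1)=0$ (equivalently, odd $i+1$: $i=2,4,6,10,\ldots$), since then the denominator yields no extra savings at all and the required factor of $2$ must come entirely from the arithmetic of the multinomial coefficient. In those cases I expect to use that inserting $c_i\geq 1$ for such $i$ forces the numerator of $(4k+1)!/(c_0!c_1!c_2!\cdots)$ to absorb a consecutive block of integers containing at least one additional even integer beyond $4k$, contributing the needed $2$-adic savings. A careful $2$-adic accounting along these lines should complete the proof.
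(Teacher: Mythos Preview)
Your strategy matches the paper's exactly: isolate the dominant tuple $(3,4k-2,0,\ldots)$, check its valuation is $\nu(k)-(4k-3)$, and then argue every other tuple contributes with strictly larger valuation, with the same two borderline tuples $(4,4k-4,1,0,\ldots)$ and $(5,4k-5,0,1,0,\ldots)$ singled out for direct computation. Your computations for those two cases are correct.

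The gap is in your treatment of the generic non-dominant tuple. You correctly observe that the naive denominator bound only gives $\nu\geq -(4k-3)$, which is short of the target $\nu(k)-(4k-4)$ by $\nu(k)+1$. Your proposed fix, that the multinomial numerator ``absorbs at least one additional even integer beyond $4k$,'' would recover only one factor of $2$; that is not enough when $\nu(k)$ is large. The issue is that you are trying to track $\nu(k)$ through the entire case analysis, and it is not clear how your heuristic produces $\nu(k)+1$ extra factors uniformly.

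The paper sidesteps this entirely with a ratio trick you have not found: divide every term by the dominant one. Using $c_0\geq 3$, one rewrites
\[
\frac{\binom{4k+1}{c_0,c_1,c_2,\ldots}}{\binom{4k+1}{3,4k-2}}
=\frac{3!}{c_0(c_0-1)(c_0-2)}\cdot\binom{4k-2}{c_0-3,c_1,c_2,\ldots},
\]
so the ratio of the full coefficients becomes
\[
\frac{2^{4k-2}}{2^{c_1}3^{c_2}4^{c_3}\cdots}\cdot\frac{6}{c_0(c_0-1)(c_0-2)}\cdot\binom{4k-2}{c_0-3,c_1,c_2,\ldots}.
\]
The last factor is an integer, and the claim reduces to showing the product of the first two factors is an even $2$-local integer for every non-dominant tuple. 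Crucially, $\nu(k)$ has vanished: the factor $\binom{4k+1}{3}=(4k+1)(4k)(4k-1)/6$ carrying it is common to numerator and denominator. What remains is a $k$-independent combinatorial inequality (using $c_0-3=\sum_{j\geq 2}(j-1)c_j$), which holds except at precisely your two exceptional tuples, where the third factor supplies the missing parity. This is the clean execution your final paragraph is searching for.
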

\begin{proof}
The coefficient of the monomial $1^3 \left(-\frac{z}{2} \right)^{4k-2}$ in $f(z)$ is 
\begin{eqnarray*}
\binom{4k+1}{3,4k-2}\cdot 1^3\cdot  \left(-\frac{z}{2} \right)^{4k-2} &=& \frac{(4k+1)(4k)(4k-1)}{3!}\cdot \frac{z^{4k-2}}{2^{4k-2}}\\
&=& \text{odd} \cdot \frac{k}{2^{4k-3}} \cdot z^{4k-2}. 
\end{eqnarray*}
The valuation of this number is exactly $\nu(k) - (4k-3)$.  We will prove that the coefficients of all the other monomials in $f(z)$ of degree $z^{4k-2}$ have 2-adic valuations strictly larger than $\nu(k) - (4k-3)$.  

Consider the monomial 
$$\binom{4k+1}{c_0, c_1, c_2, \ldots} \cdot (1)^{c_0} \cdot \left(\frac{z}{2} \right)^{c_1}\cdot  \left(\frac{z^2}{3} \right)^{c_2} \cdot \left(\frac{z^3}{4}\right)^{c_3} \cdots ,$$
where only finitely many of the $c_i$'s are nonzero and $c_1 + 2c_2 + 3c_3 + \cdots = 4k-2$.  To prove our claim above, it suffices to show that the fraction 
$$\frac{\binom{4k+1}{c_0, c_1, c_2, \ldots}\cdot  (1)^{c_0}\cdot \left(\frac{1}{2} \right)^{c_1} \cdot \left(\frac{1}{3} \right)^{c_2}\cdot  \left(\frac{1}{4}\right)^{c_3} \cdots }{\binom{4k+1}{3,4k-2}\cdot 1^3 \cdot \left(\frac{1}{2} \right)^{4k-2}}$$
is an even 2-local integer.  

This fraction is equal to 
\begin{eqnarray*}
&&\frac{2^{4k-2}}{2^{c_1}3^{c_2} 4^{c_3} \cdots} \cdot \frac{\binom{4k+1}{c_0, c_1, c_2, \ldots}}{\binom{4k+1}{3}} \\
&=& \frac{2^{4k-2}}{2^{c_1}3^{c_2} 4^{c_3} \cdots} \cdot \frac{(4k-2)!3!}{c_0! c_1! c_2! \cdots} \\ 
&=& \frac{2^{4k-2}}{2^{c_1}3^{c_2} 4^{c_3} \cdots} \cdot \frac{3!}{c_0(c_0-1)(c_0-2)} \cdot \frac{(4k-2)!}{(c_0-3)! c_1! c_2! \cdots} \\ 
&=& \frac{2^{4k-2}}{2^{c_1}3^{c_2} 4^{c_3} \cdots} \cdot \frac{3!}{c_0(c_0-1)(c_0-2)} \cdot \binom{4k-2}{c_0-3, c_1, c_2, \ldots}.
\end{eqnarray*}
The condition $c_1 + 2c_2 + 3c_3 + \cdots = 4k-2$ essentially guarantees that the product of the first two terms is an even integer when $(c_1, c_2, \ldots)$ differs from $(3, 4k-2, 0, \ldots)$.  There are two exception cases.  They are $(4, 4k-4, 1, 0, \ldots)$ and $(5, 4k-5, 0, 1, 0, \ldots)$.  

For the first exception case, the product is 
$$\frac{2^{4k-2}}{2^{4k-4}3^1} \cdot \frac{3!}{4 \cdot 3 \cdot 2} \cdot \binom{4k-2}{1, 4k-4, 1}.$$
The product of the first two terms is odd, but the last term is $\frac{(4k-2)(4k-3)}{1!1!}$, which is even.

For the second exception case, the product is 
$$\frac{2^{4k-2}}{2^{4k-5} \cdot 4^1} \cdot \frac{3!}{5\cdot 4 \cdot 3} \cdot \binom{4k-2}{2, 4k-5, 1}.$$
The product of the first two terms is odd, but the last term is 
$$\frac{(4k-2)(4k-3)(4k-4)}{2!1!},$$
which is even again.  
Therefore, $\nu(b_{4k-3}) = \nu(k) - (4k-3)$, as desired.  
\end{proof}

\begin{lem}\label{lem:coeffb4n-3}
$\nu(b_{4k-3}) = \nu(k) - (4k-3)$ for all $k \geq 1$. 
\end{lem}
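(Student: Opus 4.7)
The plan is to mimic the proof of Lemma~\ref{lem:coeffb4n-2}: identify a distinguished ``dominant'' monomial realizing the asserted $2$-adic valuation, and then show every other contribution has strictly larger $2$-adic valuation.

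I will take the dominant monomial to be $(1)^4\cdot(-z/2)^{4k-3}$, corresponding to the tuple $(c_0,c_1,c_2,\ldots)=(4,\,4k-3,\,0,\,0,\,\ldots)$. Its coefficient is
$$b_{(4,4k-3,0,\ldots)} \;=\; (-1)^{4k-3}\binom{4k+1}{4,\,4k-3}\frac{1}{2^{4k-3}} \;=\; (-1)^{4k-3}\,\frac{(4k+1)(4k)(4k-1)(4k-2)}{24\cdot 2^{4k-3}},$$
and direct computation of $2$-adic valuations gives $(\nu(k)+2)+1-3-(4k-3)=\nu(k)-(4k-3)$, matching the claim.

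For any other admissible tuple, the constraints $\sum_i c_i=4k+1$ and $\sum_{i\geq 1} ic_i=4k-3$ force $c_0\geq 4$, so I can form the ratio of its coefficient with the dominant one:
$$R \;=\; \frac{2^{4k-3}}{\prod_{i\geq 1}(i+1)^{c_i}} \cdot \frac{4!}{c_0(c_0-1)(c_0-2)(c_0-3)} \cdot \binom{4k-3}{c_0-4,\,c_1,\,c_2,\,\ldots},$$
and the task is to prove $\nu(R)\geq 1$ in every non-dominant case. The three factors split cleanly: the first has valuation $\sum_{i\geq 1} c_i(i-\nu(i+1))$, vanishing iff $c_i=0$ for all $i\geq 2$ (the dominant case); the second has valuation $2-\max_{r\in\{0,1,2,3\}}\nu(c_0-r)$; and the third is a non-negative integer.

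The technical crux, and the step I expect to be the main obstacle, is showing that the first two factor valuations already sum to at least $1$. This uses the identity $\sum_{i\geq 2}(i-1)c_i=c_0-4$ together with the fact that the ``cost ratio'' $(i-\nu(i+1))/(i-1)$ attains its minimum value $1/2$ only at $i=3$, whence the first factor has valuation at least $\lceil(c_0-4)/2\rceil$. Coupled with $\max_r\nu(c_0-r)\leq\lfloor\log_2 c_0\rfloor$, the remaining argument is a short case analysis on $c_0\bmod 8$: the ``small'' cases $c_0\in\{4,5,6,7\}$ are handled exactly as the exceptional cases in Lemma~\ref{lem:coeffb4n-2} (the second factor has valuation $0$, so the non-dominant hypothesis forces some $c_i>0$ with $i\geq 2$ and hence first factor $\geq 1$), while for $c_0\geq 8$ the elementary estimate $2^{N-1}\geq N+1$ applied with $N=\lfloor\log_2 c_0\rfloor$ yields $\nu(\mathrm{first})+\nu(\mathrm{second})\geq 1$.
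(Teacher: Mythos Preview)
Your proposal is correct and follows the same strategy as the paper: isolate the dominant tuple $(4,4k-3,0,\ldots)$, compute its valuation as $\nu(k)-(4k-3)$, form the ratio $R$ with a general tuple, and show $\nu(R)\geq 1$. The paper's own proof is a one-line reference back to Lemma~\ref{lem:coeffb4n-2}, simply asserting that ``the product of the first two terms is an even number''; you supply a systematic justification via the cost-ratio bound $(i-\nu(i+1))/(i-1)\geq 1/2$ and the estimate $2^{N-1}\geq N+1$. One minor point: your phrase ``case analysis on $c_0\bmod 8$'' is not quite what you do---the split is just $c_0\in\{5,6,7\}$ versus $c_0\geq 8$ (the case $c_0=4$ being forced to be dominant since $\sum_{i\geq2}(i-1)c_i=0$). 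Also worth noting: unlike Lemma~\ref{lem:coeffb4n-2}, here there are \emph{no} exceptional tuples requiring the multinomial factor, because for $c_0\in\{5,6,7\}$ the second factor already has valuation $0$ rather than a negative one; your argument makes this transparent, whereas the paper leaves it implicit.
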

\begin{proof}
The proof is very similar to the proof of Lemma~\ref{lem:coeffb4n-2}.  Given a monomial in $f(z)$ of degree $z^{4k-3}$, the smallest 2-adic valuation of its coefficient is achieved when $(c_0, c_1, c_2, \ldots) = (4, 4k-3, 0, \ldots)$.  This coefficient is 
$$\binom{4k+1}{4} \cdot \frac{1}{2^{4k-3}} = \frac{(4k+1)(4k)(4k-1)(4k-2)}{4!} \cdot \frac{1}{2^{4k-3}}.$$
Its 2-adic valuation is $\nu(k)-(4k-3)$.  

To prove that the 2-adic valuations of the all the other coefficients are strictly bigger than this number, we make a similar computation to the proof of Lemma~\ref{lem:coeffb4n-2} and reduce the problem into showing that the ratio
$$ \frac{2^{4k-3}}{2^{c_1}3^{c_2} 4^{c_3} \cdots} \cdot \frac{1}{\binom{c_0}{4}} \cdot \binom{4k-3}{c_0-4, c_1, c_2, \ldots}$$
is even when $c_1 +2c_2 + 3c_3 + \cdots = 4k-3$ and $(c_0, c_1, c_2, \ldots) \neq (4, 4k-3, 0, \ldots)$.   the product of the first two terms is an even number.  
\end{proof}

\begin{lem}\label{lem:coeffb4n-4}
$\nu(b_{4k-4}) = \nu(k)-(4k-4)$ for all $k \geq 1$.  
\end{lem}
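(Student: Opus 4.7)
The plan is to mirror the strategy used in the proofs of Lemmas~\ref{lem:coeffb4n-2} and \ref{lem:coeffb4n-3}. I will identify the dominant monomial contributing to $b_{4k-4}$ and then show that every other monomial contributes a coefficient whose 2-adic valuation is strictly larger. Since the set of admissible tuples $(c_0,c_1,c_2,\ldots)$ with $\sum_{i\geq 0}c_i = 4k+1$ and $\sum_{i\geq 1}ic_i = 4k-4$ is finite, this is a finite check.

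First I would single out the tuple $(c_0,c_1,c_2,\ldots) = (5, 4k-4, 0, \ldots)$. The corresponding coefficient is
$$\binom{4k+1}{5}\cdot \left(-\tfrac12\right)^{4k-4} = \frac{(4k+1)(4k)(4k-1)(4k-2)(4k-3)}{5!}\cdot\frac{1}{2^{4k-4}},$$
and a direct valuation count (the factors $4k$ and $4k-2$ supply $\nu(k)+3$, while $5!=120$ removes $3$) shows this has 2-adic valuation exactly $\nu(k) - (4k-4)$. Note also that $c_0 \geq 5$ for every admissible tuple, with equality precisely when $c_i = 0$ for all $i \geq 2$; hence the dominant tuple is characterized by $c_0 = 5$.

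Next I would form, for any other admissible tuple, the ratio
$$R(c_0,c_1,c_2,\ldots)\; =\; \frac{2^{4k-4}}{2^{c_1}3^{c_2}4^{c_3}\cdots}\cdot\frac{5!}{c_0(c_0-1)(c_0-2)(c_0-3)(c_0-4)}\cdot\binom{4k-4}{c_0-5,c_1,c_2,\ldots},$$
and show it is an even $2$-local integer. The generic reason this holds is that $\nu(j+1) < j$ for every $j\geq 2$, so the ``denominator valuation'' $c_1 + 2c_3 + c_5 + 2c_7 + \cdots$ is strictly less than $c_1 + 2c_2 + 3c_3 + \cdots = 4k-4$ as soon as some $c_j$ with $j\geq 2$ is nonzero. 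Consequently the first factor of $R$ already has $2$-adic valuation at least $1$, and this usually suffices to force $R$ to be even.

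The main obstacle, exactly as in Lemma~\ref{lem:coeffb4n-2}, will be a short list of exceptional tuples for which the product of the first two factors of $R$ is odd (because the ``front'' factor $5!/(c_0\cdots(c_0-4)) = 1/\binom{c_0}{5}$ contributes negative $2$-adic valuation that cancels the gain from the first factor). Representative exceptions are tuples such as $(6,4k-6,1,0,\ldots)$, $(7,4k-7,0,1,0,\ldots)$, and $(8,4k-9,1,1,0,\ldots)$. For each such tuple I would verify by a direct computation that the multinomial $\binom{4k-4}{c_0-5,c_1,c_2,\ldots}$ is even, using that the product of the consecutive integers $(4k-4)(4k-5)\cdots$ in its numerator supplies the missing factor(s) of $2$. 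Because all exceptional tuples have $c_0$ bounded (the front factor's $2$-adic contribution is bounded by $\log_2 \binom{c_0}{5}$, and the first factor's contribution grows with $c_0$ minus $5$), this enumeration is finite and terminates in a bookkeeping argument. Once every exceptional case is dispatched, no other monomial can attain the valuation $\nu(k)-(4k-4)$, so $\nu(b_{4k-4}) = \nu(k)-(4k-4)$ as claimed.
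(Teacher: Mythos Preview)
Your proposal is correct and follows essentially the same approach as the paper's proof: isolate the dominant tuple $(5,4k-4,0,\ldots)$, form the ratio against it, and handle a short list of exceptional tuples where the multinomial factor must supply the extra power of $2$. The paper is slightly sharper in that it identifies \emph{exactly one} exceptional tuple, namely $(c_0,c_1,c_2,c_3,\ldots)=(8,4k-9,1,1,0,\ldots)$; your first two ``representative exceptions'' $(6,4k-6,1,0,\ldots)$ and $(7,4k-7,0,1,0,\ldots)$ in fact already have $\nu\bigl(2^{4k-4}/(2^{c_1}3^{c_2}\cdots)\bigr)-\nu\binom{c_0}{5}=1>0$, so they need no special treatment.
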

\begin{proof}
The proof for this is again similar to the proof of Lemma~\ref{lem:coeffb4n-2} and Lemma~\ref{lem:coeffb4n-3}.  We claim that the smallest 2-adic valuation is achieved only when $c_0 = 5$, \\$c_1 = 4k-4$, and $c_i = 0$ for all $i \geq 2$.  The corresponding coefficient is 
$$\binom{4k+1}{5}\cdot \frac{1}{2^{4k-4}} = \frac{(4k+1)(4k)(4k-1)(4k-2)(4k-3)}{5!} \cdot \frac{1}{2^{4k-4}} = \text{odd} \cdot \frac{k}{2^{4k-4}}.$$
The 2-adic valuation for this number is $\nu(k) - (4k-4)$.  To prove that all the other coefficients have bigger valuations, we need to show that the ratio
$$ \frac{2^{4k-4}}{2^{c_1}3^{c_2} 4^{c_3} \cdots} \cdot \frac{1}{\binom{c_0}{5}} \cdot \binom{4k-4}{c_0-5, c_1, c_2, \ldots}$$
is even for all the other tuples $(c_0, c_1, \ldots)$ such that  $c_1 +2c_2 + 3c_3 + \cdots = 4k-4$.  The product of the first two terms will always be an even number except when $(c_1, c_2, c_3, \ldots) = (4k-9, 1, 1, 0, \ldots)$.  For this exceptional case, the ratio is 
$$\frac{2^{4k-4}}{2^{4n-9}\cdot 3^1\cdot 4^1} \cdot \frac{1}{\binom{8}{5}} \cdot \binom{4k-4}{3, 4k-9, 1, 1}$$
The product of the first two terms is odd but the last term is 
$$\frac{(4k-4)(4k-5)(4k-6)(4k-7)(4k-8)}{3!1!1!},$$
which is even.  
\end{proof}

\begin{lem}\label{lem:coeffb4n-2minusb4n-3}
We have 
$$\nu(b_{4k-2} - b_{4k-3}) \left\{\begin{array}{ll} = \nu(k) - (4k-4), & \text{{$k \geq 2$ even,}} \\
\geq \nu(k) - (4k-5), & \text{$k \geq 1$ odd} . \end{array}  \right. $$
\end{lem}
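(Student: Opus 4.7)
The approach is direct computation using the multinomial expansion. Writing
\begin{equation*}
b_m = \sum_{(c_0, c_1, c_2, \ldots)} (-1)^{c_1 + c_3 + \cdots} \binom{4k+1}{c_0, c_1, c_2, \ldots}\cdot \frac{1}{2^{c_1} 3^{c_2} 4^{c_3} \cdots},
\end{equation*}
the ``main'' tuples $(3, 4k-2, 0, \ldots)$ for $b_{4k-2}$ and $(4, 4k-3, 0, \ldots)$ for $b_{4k-3}$ have contributions whose difference is
\begin{equation*}
M = \frac{(4k+1)(4k-1)k}{3\cdot 2^{4k-3}} + \frac{(4k+1)(4k-1)k(2k-1)}{3\cdot 2^{4k-3}} = \frac{(4k+1)(4k-1)k^2}{3\cdot 2^{4k-4}},
\end{equation*}
with 2-adic valuation $2\nu(k) - (4k-4)$. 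The plan is to identify the next-to-leading contributions, combine them with $M$, and control the valuation of the total.

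Parameterize a non-main tuple by the values $(c_2, c_3, c_4, \ldots)$ (so $c_1 = m - \sum_{i\geq 2} i c_i$ and $c_0 = 4k+1 - \sum_{i\geq 1} c_i$ are determined), and set $S := \sum_{i \geq 2} c_i(i - \nu(i+1))$. The valuation of the tuple's contribution exceeds that of the main tuple by
\begin{equation*}
\Delta = \nu\!\left(\binom{4k+1}{c_0, c_1, \ldots}\bigg/\binom{4k+1}{c_0^{\mathrm{main}}, c_1^{\mathrm{main}}}\right) + S.
\end{equation*}
A finite case analysis over small $S$ (the only values that could yield $\Delta = 1$) shows that the unique tuple with $\Delta = 1$ is $(4, 4k-4, 1, 0, \ldots)$ in $b_{4k-2}$, contributing
\begin{equation*}
S_1 = \frac{k(2k-1)(4k+1)(4k-1)(4k-3)}{9\cdot 2^{4k-4}},
\end{equation*}
whose valuation is exactly $\nu(k) - (4k-4)$; all other non-main tuples satisfy $\Delta \geq 2$, i.e.\ valuation $\geq \nu(k) - (4k-5)$. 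In particular, $b_{4k-3}$ has no level-1 contribution.

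For $k \geq 2$ even, $\nu(k) \geq 1$ implies $\nu(M) = 2\nu(k) - (4k-4) > \nu(k) - (4k-4) = \nu(S_1)$, and every higher-level contribution sits at valuation strictly greater than $\nu(S_1)$. Hence $S_1$ is the unique dominant term, giving $\nu(b_{4k-2} - b_{4k-3}) = \nu(k) - (4k-4)$ exactly. For $k \geq 1$ odd, both $M$ and $S_1$ sit at valuation $-(4k-4)$, and the key cancellation is
\begin{equation*}
M + S_1 = \frac{k(4k+1)(4k-1)(8k^2 - 7k + 3)}{9\cdot 2^{4k-4}},
\end{equation*}
where $8k^2 - 7k + 3 \equiv k + 1 \equiv 0 \pmod 2$ when $k$ is odd, so $\nu(M + S_1) \geq -(4k-5)$. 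Combined with the universal level-$\geq 2$ bound, this gives $\nu(b_{4k-2} - b_{4k-3}) \geq \nu(k) - (4k-5)$.

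The principal obstacle is the finite enumeration establishing the uniqueness of the level-1 tuple and the universal bound $\Delta \geq 2$ on all remaining tuples; this reduces to a routine computation of $\nu$ of multinomial quotients, organized by the value of $S$, and would likely be cross-referenced with the parallel analysis in Lemmas~\ref{lem:coeffb4n-2} and \ref{lem:coeffb4n-3}.
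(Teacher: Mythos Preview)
Your approach is essentially the same as the paper's: both isolate the three dominant tuples $(3,4k-2,0,\ldots)$ and $(4,4k-4,1,0,\ldots)$ in $b_{4k-2}$ and $(4,4k-3,0,\ldots)$ in $b_{4k-3}$, bound all remaining terms at valuation $\geq \nu(k)-(4k-5)$, and then analyze the parity of the combined leading contribution (your $M+S_1$ equals the paper's $\frac{(4k+1)(4k-1)k}{3\cdot 2^{4k-4}}\bigl(\frac{(2k-1)(4k-3)}{3}+k\bigr)$). Your grouping into $M$ and $S_1$ makes the even-$k$ case marginally cleaner, but the content is identical.
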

\begin{proof}
To prove the lemma, it suffices to consider all the coefficients in $b_{4k-2}$ and $b_{4k-3}$ whose valuation is at most $\nu(k) - (4k-4)$.  For $b_{4k-2}$, they are the following: 
\begin{eqnarray*}
\binom{4k+1}{3, 4k-2}\cdot (1)^3\cdot \left(-\frac{z}{2} \right)^{4k-2} &=& \frac{(4k+1)(4k)(4k-1)}{3!} \cdot \frac{1}{2^{4k-2}} \cdot z^{4k-2} \\
&=& \frac{(4k+1)(4k-1)}{3} \cdot \frac{k}{2^{4k-3}} \cdot z^{4k-2}\\ 
\binom{4k+1}{4, 4k-4, 1}\cdot (1)^4\cdot \left(-\frac{z}{2} \right)^{4k-4}\cdot \left(\frac{z^2}{3} \right)^{1} &=&\frac{(4k+1)(4k)(4k-1)(4k-2)(4k-3)}{4!1!}\cdot \frac{1}{2^{4k-4}} \cdot \frac{1}{3} \cdot z^{4k-2} \\ 
&=& \frac{(4k+1)(4k-1)(2k-1)(4k-3)}{9} \cdot \frac{k}{2^{4k-4}} \cdot z^{4k-2}. 
\end{eqnarray*}
All the other coefficients have 2-adic valuations at least $\nu(k) - (4k-5)$.  For $b_{4k-3}$, only the term 
\begin{eqnarray*}
\binom{4k+1}{4, 4k-3}(1)^4\left(-\frac{z}{2} \right)^{4k-3} &=& -\frac{(4k+1)(4k)(4k-1)(4k-2)}{4!}\cdot \frac{1}{2^{4k-3}} \cdot z^{4k-3} \\ 
&=& - \frac{(4k+1)(4k-1)(2k-1)}{3}\cdot \frac{k}{2^{4k-3}} \cdot z^{4k-3}
\end{eqnarray*}
will matter.  All the other coefficients have 2-adic valuations at least $\nu(k) - (4k-5)$.  

We have 
\begin{eqnarray*}
&&\frac{(4k+1)(4k-1)}{3} \cdot \frac{k}{2^{4k-3}} + \frac{(4k+1)(4k-1)(2k-1)(4k-3)}{9} \cdot \frac{k}{2^{4k-4}} \\
&&- \left(- \frac{(4k+1)(4k-1)(2k-1)}{3}\cdot \frac{k}{2^{4k-3}} \right)  \\ 
&=& \frac{(4k+1)(4k-1)}{3} \cdot \frac{k}{2^{4k-4}} \cdot \left(\frac{1}{2} + \frac{(2k-1)(4k-3)}{3} + \frac{2k-1}{2} \right) \\ 
&=&\frac{(4k+1)(4k-1)}{3} \cdot \frac{k}{2^{4k-4}} \cdot \left(\frac{(2k-1)(4k-3)}{3} + k \right).
\end{eqnarray*}
When $k$ is even, $\frac{(2k-1)(4k-3)}{3} + k$ is odd, and the 2-adic valuation of the last expression is exactly $\nu(k) - (4k-4)$.  When $n$ is odd, $\frac{(2k-1)(4k-3)}{3} + k$ is even, and the 2-adic valuation of the last expression is at least $\nu(k) - (4k-4) + 1 = \nu(k) - (4k-5)$.  This proves the lemma. 
\end{proof}

\begin{lem}\label{lem:b<=4n-5}
For a fixed $k \geq 2$, the inequality $\nu(b_{m}) \geq \nu(k) - (4k-5)$ holds for all $m \leq 4k-5$.  
\end{lem}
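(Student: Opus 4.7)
The plan is to bound each individual term
$$b_{(c_0, c_1, c_2, \ldots)} = (-1)^{c_1 + c_3 + \cdots}\binom{4k+1}{c_0, c_1, c_2, \ldots}\prod_{i \geq 1}(i+1)^{-c_i}$$
appearing in the sum expressing $b_m$, and conclude via the ultrametric inequality $\nu(a+b) \geq \min\{\nu(a),\nu(b)\}$. As in the preceding lemmas of this appendix, any contributing tuple satisfies $\sum_{i \geq 0} c_i = 4k+1$ and $\sum_{i \geq 1} i c_i = m$; combined with $m \leq 4k-5$, these relations force
$$c_0 \;=\; (4k+1) - \sum_{i \geq 1} c_i \;\geq\; (4k+1) - \sum_{i \geq 1} i c_i \;=\; (4k+1)-m \;\geq\; 6.$$

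The next step is to factor out $\binom{4k+1}{6}$ from the multinomial coefficient using the identity
$$\binom{4k+1}{c_0, c_1, c_2, \ldots} \;=\; \binom{4k+1}{6} \cdot \frac{\binom{4k-5}{c_0-6, c_1, c_2, \ldots}}{\binom{c_0}{6}},$$
and to compute $\nu\bigl(\binom{4k+1}{6}\bigr)$ directly: the numerator $(4k+1)(4k)(4k-1)(4k-2)(4k-3)(4k-4)$ contains the three even factors $4k=4k$, $4k-2=2(2k-1)$, $4k-4=4(k-1)$, whose $2$-adic valuations sum to $\nu(k) + \nu(k-1) + 5$; subtracting $\nu(6!) = 4$ yields
$$\nu\!\left(\binom{4k+1}{6}\right) \;=\; \nu(k) + \nu(k-1) + 1 \;\geq\; \nu(k)+1,$$
where the last inequality uses $\nu(k-1) \geq 0$.

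What remains is the estimate
$$\nu\!\left(\frac{\binom{4k-5}{c_0-6, c_1, c_2, \ldots}}{\binom{c_0}{6}\prod_{i \geq 1}(i+1)^{c_i}}\right) \;\geq\; -(4k-4),$$
which combined with the previous paragraph would give $\nu(b_{(c_0, c_1, \ldots)}) \geq \nu(k) - (4k-5)$ as required. Since the multinomial in the numerator is a non-negative integer, it suffices to control the contribution of the denominator relative to this integer multinomial.

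The hard part will be this last estimate: the factor $\binom{c_0}{6}$ in the denominator can itself have large $2$-adic valuation (by Kummer's theorem, $\nu\bigl(\binom{2^n}{6}\bigr) = n-1$), and the product $\prod_{i \geq 1}(i+1)^{c_i}$ independently accumulates $2$-adic valuation up to $\sum_{i \geq 1} c_i\nu(i+1) \leq \sum_{i \geq 1} ic_i = m \leq 4k-5$. The plan to overcome both obstructions is to exploit the numerator $\binom{4k-5}{c_0-6, c_1, c_2, \ldots}$: Legendre's formula $\nu(n!) = n - s_2(n)$ combined with the Kummer carry formula shows that when $c_0-6$ has large binary digit sum (which happens precisely when $c_0$ is close to a power of $2$), the numerator multinomial carries compensating $2$-adic mass. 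A case analysis separating the generic regime (where the crude bound $\sum c_i\nu(i+1) \leq m$ together with $\nu(\binom{c_0}{6}) \leq \log_2(c_0) + O(1)$ suffices) from the exceptional regimes (where $c_0$ or $c_0 - 6$ is near a power of $2$, handled via the binary-digit interplay just described) completes the proof.
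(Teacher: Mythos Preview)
Your setup is correct and clean: the observation $c_0 \ge 6$, the identity
\[
\binom{4k+1}{c_0,c_1,\dots}=\binom{4k+1}{6}\cdot\frac{\binom{4k-5}{c_0-6,c_1,\dots}}{\binom{c_0}{6}},
\]
and the computation $\nu\bigl(\binom{4k+1}{6}\bigr)=1+\nu(k)+\nu(k-1)\ge \nu(k)+1$ are all fine. The gap is that the ``hard part'' is never actually done. Your final paragraph is a plan, not an argument, and the plan as stated does not work. Applying Legendre's formula $\nu(n!)=n-s_2(n)$ to both the numerator and to $\binom{c_0}{6}$ gives
\[
\nu\!\left(\frac{\binom{4k-5}{c_0-6,c_1,\dots}}{\binom{c_0}{6}}\right)=s_2(c_0)+\sum_{i\ge1}s_2(c_i)-s_2(4k-5)-2,
\]
so the term $s_2(c_0-6)$ that you were counting on for ``compensating $2$-adic mass'' cancels exactly against its appearance in $\nu\bigl(\binom{c_0}{6}\bigr)$. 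What survives is $s_2(c_0)$, which is \emph{small} precisely when $c_0$ is a power of $2$. Thus your exceptional-regime heuristic points in the wrong direction, and after the cancellation you still owe a nontrivial lower bound on $s_2(c_0)+\sum_{i\ge1}\bigl[s_2(c_i)-c_i\,\nu(i+1)\bigr]$ that you have not supplied. Likewise your ``generic'' crude bound only yields the desired inequality when $\nu\bigl(\binom{c_0}{6}\bigr)\le 1$, which already fails at $c_0=8$.

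The paper avoids the artificial denominator $\binom{c_0}{6}$ altogether. It compares each term directly to $k/2^{4k-5}$ and does a four-way case split on how many indices $i\ge1$ have $c_i\neq0$ (and whether that index is $1$); in each case a single factor $(4k+1)\cdot4k$ is peeled off the multinomial, producing the needed factor of $k$, and the remaining quotient is checked to be a $2$-local integer by elementary bounds of the form $\nu(c)\le c-1$ and $\nu(i+1)\le i$. This keeps everything integral and sidesteps the Kummer bookkeeping your route requires.
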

\begin{proof}
We claim that the 2-adic valuations of all the coefficients for $b_m$ satisfy $\nu(k) - (4k-5)$.  We will divide the proof into four cases: 
\vspace{0.1in}

\noindent Case 1: there exist $i, j \geq 1$ such that $c_i, c_j \neq 0$ in the tuple $(c_0, c_1, \ldots)$.  Consider the ratio 
\begin{eqnarray*}
&&\frac{\binom{4k+1}{c_0, c_1, c_2, \ldots} \cdot (1)^{c_0} \cdot \left(\frac{1}{2} \right)^{c_1} \cdot \left(\frac{1}{3} \right)^{c_2}\cdot  \left(\frac{1}{4}\right)^{c_3} \cdots}{\frac{k}{2^{4k-5}}} \\ 
&=& \frac{(4k+1)(4k)}{c_i c_j}  \cdot \binom{4k-1}{c_0, c_1, \ldots, c_i-1, \ldots, c_j-1, \ldots} \cdot \frac{1}{1^{c_0}2^{c_1}3^{c_2} \cdots} \cdot \frac{2^{4k-5}}{n} \\ 
&=& \binom{4k-1}{c_0, c_1, \ldots, c_i-1, \ldots, c_j-1, \ldots} \cdot \frac{4k+1}{ c_i c_j \cdot 1^{c_0}2^{c_1}3^{c_2} \cdots} \cdot 2^{4k-3}.
\end{eqnarray*}
Since $c_1 + 2c_2 + 3c_3 + \cdots = m \leq 4k-5$ and $\nu(c_ic_j) \leq c_i + c_j$, 
$$\nu(c_i c_j \cdot 1^{c_0}2^{c_1}3^{c_2} \cdots) \leq 4k-5$$
and the last expression is even.  Therefore, the 2-adic valuation of the coefficient is at least $\nu(k) - (4k-5)$.  
\vspace{0.1in}

\noindent Case 2: There exists only one $i \geq2$ such that $c_i \neq 0$, and that $c_i$ is at least 2.  Consider the ratio 
\begin{eqnarray*}
\frac{\binom{4k+1}{c_0, c_1, c_i}\cdot  \frac{1}{2^{c_1}(i+1)^{c_i}}}{\frac{k}{2^{4k-5}}} &=& \binom{4k-1}{c_0, c_1, c_i -2} \cdot \frac{(4k+1)(4k)}{c_i(c_i-1)} \cdot \frac{1}{2^{c_1}(i+1)^{c_i}}\cdot  \frac{2^{4k-5}}{k}\\ 
&=&\binom{4k-1}{c_0, c_1, c_i -2} \cdot (4k+1) \cdot \frac{2^{4k-3}}{c_i(c_i-1)2^{c_1}(i+1)^{c_i}}\\ 
\end{eqnarray*} 
Since $c_1 + 2c_2 + 3c_3 + \cdots = m \leq 4k-5$ and $\nu(c_i (c_i -1)) \leq c_i$, 
$$\nu(c_i(c_i-1)2^{c_1}(i+1)^{c_i}) \leq 4k-5$$
and the last expression is even.  
\vspace{0.1in}

\noindent Case 3: There exists only one $i \geq 2$ such that $c_i \neq 0 $, and that $c_i$ is 1.  Consider the ratio 
\begin{eqnarray*}
\frac{\binom{4k+1}{c_0, c_1, 1} \cdot \frac{1}{2^{c_1}(i+1)}}{\frac{k}{2^{4k-5}}}&=& \binom{4k-1}{c_0-1, c_1} \cdot \frac{(4k+1)4k}{c_0 \cdot 1} \cdot \frac{1}{2^{c_1}(i+1)} \cdot \frac{2^{4k-5}}{k} \\ 
&=&\binom{4k-1}{c_0-1, c_1} \cdot (4k+1) \cdot \frac{2^{4k-3}}{2^{c_1} (i+1) c_0} \\ 
&=&\binom{4k-1}{c_0-1, c_1} \cdot (4k+1) \cdot  \frac{2^{4k-3 - m+i}}{(i+1)(4k+i-m)} 
\end{eqnarray*} 
where we have used the facts that $c_1 + i = m$ and $c_0 + c_1 = 4k$.  Let $a = i+1$, and $b  = 4k+i -m$.  Then $a \geq 2+1 = 3$ and 
$$b-a = (4k+i-m) - (i+1) = 4k-m -1 \geq 4k- (4k-5) - 1 = 4.$$ 
The term 
$$\frac{2^{4k-3 - m+i}}{(i+1)(4k+i-m)}$$
in the last expression is equal to $\frac{2^{b-3}}{ab}$.  This number is an integer for all positive integers $(a,b)$ where $a \geq 3$ and $b-a \geq 4$. 

\vspace{0.1in}

\noindent Case 4: There exists no $i \geq 2$ such that $c_i \neq 0$.  Consider the ratio 
\begin{eqnarray*}
\frac{\binom{4k+1}{4k+1-m, m}\cdot \frac{1}{2^m}}{\frac{k}{2^{4k-5}}}&=& \binom{4k-1}{4k-1-m, m} \cdot \frac{(4k+1)(4k)}{(4k+1-m)(4k-m)} \cdot \frac{1}{2^m}\cdot \frac{2^{4k-5}}{n}\\ 
&=& \binom{4k-1}{4k-1-m, m} \cdot (4k+1) \cdot \frac{2^{4k-3-m}}{(4k+1-m)(4k-m)}
\end{eqnarray*} 
Since exactly one of $4k+1-m$ and $4k-m$ is even and $4k-m \geq 4k-(4k-5) = 5$, the number
$$\frac{2^{4k-3-m}}{(4k+1-m)(4k-m)}$$
is always an integer.  

\end{proof}

\section{Cell diagrams and the Atiyah--Hirzebruch spectral sequence}\label{sec:AppendixB}

The theory of cell diagrams is a very powerful tool when thinking of finite CW spectra. See \cite{BJM, WangXu, Xu} for example. We use them as illustration purpose in our paper. In this section, we recall the definition of cell diagrams from \cite{BJM} and talk about its connection to the Atiyah--Hirzebruch spectral sequence.

\begin{df} \rm
Let $Z$ be a finite CW spectrum. A cell diagram for $Z$ consists of nodes and edges. The nodes are in 1-1 correspondence with a chosen basis of the mod 2 homology of $Z$, and may be labeled with symbols to indicate the dimension. When two nodes are joined by an edge, then it is possible to form an $\textup{H}\mathbb{F}_2$-subquotient
$$Z'/Z'' = S^n \smile_f e^m,$$
\begin{displaymath}
    \xymatrix{
 *+[o][F-]{m} \ar@{-}[d]^{f}  \\
 *+[o][F-]{n}  }
\end{displaymath}
which is the cofiber of $f$ with certain suspension. Here $f$, the attaching map, is an element in the stable homotopy groups of spheres. For simplicity, we do not draw an edge if the corresponding $f$ is null.

Suppose we have two nodes labeled $n$ and $m$ with $n<m$, and there is no edge joining them. Then there are two possibilities.

The first one is that there is an integer $k$, and a sequence of nodes labeled $n_i, 0\leq i \leq k$, with $n=n_0<n_1<\cdots<n_k=m$, and edges joining the nodes $n_i$ to the nodes $n_{i+1}$. In this case we do not assert that there is an $\textup{H}\mathbb{F}_2$-subquotient of the form above; this does not imply that there is no such $\textup{H}\mathbb{F}_2$-subquotient.

The second one is that there is no such sequence as in the first case. In this case, there exists an $\textup{H}\mathbb{F}_2$-subquotient which a wedge of spheres $S^n\vee S^m$.
\end{df}

\begin{rem} \rm
In \cite{BJM}'s original definition, they use subquotients instead of $\textup{H}\mathbb{F}_2$-subquotients.
\end{rem}

The following example shows the indeterminacy of cell diagrams associated to a given CW spectrum.

\begin{exam}\rm
Let $f$ be the composite of the following two maps:

\begin{displaymath}
    \xymatrix{
 S^2 \ar[r]^{\eta^2} & S^0 \ar[r]^i & C\eta,
 }
\end{displaymath}
where the second map $i$ is the inclusion of the bottom cell. Consider $Cf$: the cofiber of $f$, which is a 3 cell complex with the following cell diagram:

\begin{displaymath}
    \xymatrix{
    *+[o][F-]{3} \\
    *+[o][F-]{2} \ar@{-}@/^1pc/[d]^{\eta} \\
    *+[o][F-]{0} }
\end{displaymath}
It is clear that the top cell of $Cf$ splits off, since $\eta^2$ can be divided by $\eta$. So we do not have to draw any attaching map from the cell in dimension 3 to the one in dimension 0. Note that the cofiber of $\eta^2$ is in fact an $\textup{H}\mathbb{F}_2$-subcomplex of $Cf$. \end{exam}

We give two more interesting examples.

\begin{exam} \label{cp3 and its dual} \rm
Consider the suspension spectrum of $\mathbb{C}P^3$. It is a 3 cell complex with cells in dimensions 2, 4 and 6. It was shown by Adams \cite{Adams} that, the secondary cohomology operation $\Psi$, which is associated to the relation
$$Sq^4 Sq^1 + Sq^2 Sq^1 Sq^2 + Sq^1 Sq^4 = 0,$$
is nonzero on this spectrum. In other words, there exists an attaching map between the cells in dimension 2 and 6, which is detected by $h_0h_2$ in the 3-stem of the Adams $E_\infty$ page. Note that $h_0h_2$ detects two homotopy classes: $2\nu, \ 6\nu$. Their difference is $4\nu = \eta^3$, which is divisible by $\eta$. Therefore, we have its cell diagram as the following:

\begin{displaymath}
    \xymatrix{
    *+[o][F-]{6} \ar@{-}@/_1pc/[dd]_{2\nu} \\
    *+[o][F-]{4} \ar@{-}@/^1pc/[d]^{\eta} \\
    *+[o][F-]{2} }
\end{displaymath}
We can also consider the Spanier--Whitehead dual of the suspension spectrum of $\mathbb{C}P^3$. It is a 3 cell complex with cells in dimensions -2, -4 and -6, with the following cell diagram
\begin{displaymath}
    \xymatrix{
    *+[o][F-]{-2} \ar@{-}@/_1pc/[dd]_{2\nu} \ar@{-}@/^1pc/[d]^{\eta}\\
    *+[o][F-]{-4}  \\
    *+[o][F-]{-6} }
\end{displaymath}
\end{exam}

In a way, the attaching maps drawn in the cell diagram of a CW spectrum correspond to certain differentials in its Atiyah--Hirzebruch spectral sequence. We illustrate this idea through Example~\ref{cp3 and its dual}. For notations regarding the Atiyah--Hirzebruch spectral sequence, we refer to Terminology~\ref{AHSS} and Sections 3 and 6 of \cite{WangXu}.

\begin{exam}\rm
For the suspension spectrum of $\mathbb{C}P^3$, the attaching map $\eta$ corresponds to the $d_2$-differential
$$1[4] \rightarrow \eta[2]$$
and its multiples
$$\alpha[4] \rightarrow \alpha\cdot\eta[2]$$
for any element $\alpha$ in the stable stems, in the Atiyah--Hirzebruch spectral sequence of $\mathbb{C}P^3$. The $2\nu$-attaching map then corresponds to the $d_4$-differential
$$1[6] \rightarrow 2\nu[2]$$
and its multiples. Note that $2[6] \rightarrow 4\nu[2] = \eta^3[2]$, which is already killed by a $d_2$-differential. Therefore $2[6]$ is a permanent cycle.

For its Spanier--Whitehead dual, the attaching map $\eta$ corresponds to the $d_2$-differential
$$1[-2] \rightarrow \eta[-4]$$
and its multiples. For the $2\nu$-attaching map, it does not correspond to a $d_4$-differential
$$1[-2] \not\rightarrow 2\nu[-6],$$
since $1[-2]$ already supports a nonzero $d_2$-differential so it is not present at the $E_4$-page anymore. However, this $d_4$-differential still ``exists", in the sense that some of its multiples still exist. More precisely, suppose that $\beta$ is an element in the stable stems such that $\beta\cdot \eta = 0$. Then $\beta[-2]$ survives to the $E_4$-page and we have a $d_4$-differential
$$\beta[-2] \rightarrow \beta\cdot 2\nu[-6],$$
which might or might not be zero, depending on whether $\beta\cdot 2\nu$ is zero. For example, we have a nonzero $d_4$-differential
$$2[-2] \rightarrow 4\nu[-6] = \eta^3[-6].$$
\end{exam}

\bibliographystyle{alpha}
\bibliography{reference}

\end{document}